\newtheorem{theorem}{Theorem}[section]
\newtheorem{lemma}[theorem]{Lemma}
\newtheorem{definition}[theorem]{Definition}
\newtheorem{proposition}[theorem]{Proposition}
\newtheorem{corollary}[theorem]{Corollary}
\newtheorem{remark}[theorem]{Remark}
\newtheorem{example}[theorem]{Example}
\numberwithin{equation}{section}
\newcommand{\bR}{\mathbf{R}}
\newcommand{\bL}{\mathbf{L}}
\def\pre-tr{\operatorname{pre-tr}}
\newcommand{\supp}{\operatorname{Supp}}
\newcommand{\Ext}{\operatorname{Ext}}
\newcommand{\res}{\operatorname{res}}
\newcommand{\Spec}{\operatorname{Spec}}
\newcommand{\id}{\operatorname{id}}
\newcommand{\Supp}{\operatorname{Supp}}
\DeclareMathAlphabet{\mathpzc}{OT1}{pzc}{m}{it}
\newcommand{\matfak}[4]{{\xymatrix@C3ex{{{#1}} \ar@<0.3ex>[r]^-{{#2}} & {{#3}} \ar@<0.3ex>[l]^-{{#4}}}}}
\newcommand{\op}{\operatorname}
\newcommand{\ra}{\rightarrow}
\newcommand{\la}{\leftarrow}
\newcommand{\hra}{\hookrightarrow}
\newcommand{\xra}[1]{\xrightarrow{#1}}
\newcommand{\xla}[1]{\xleftarrow{#1}}
\newcommand{\sira}{\xra{\sim}}
\newcommand{\xsira}[1]{\xrightarrow[\sim]{#1}}
\newcommand{\sila}{\overset{\sim}{\leftarrow}}
\newcommand{\xsila}[1]{\xleftarrow[\sim]{#1}}
\newcommand{\mar}{\ar@{|->}}
\newcommand{\sar}{\ar@{->>}}
\newcommand{\iar}{\ar@{^{(}->}}
\newcommand{\gar}{\ar@{=}}
\newcommand{\gleichar}{\ar@{}|{=}}
\newcommand{\congar}{\ar@{}|{\cong}}
\newcommand{\verteq}{\rotatebox{90}{=}}
\newcommand{\Bl}[1]{{\mathbb{#1}}}
\newcommand{\DZ}{\Bl{Z}}
\newcommand{\DN}{\Bl{N}}
\newcommand{\DF}{\Bl{F}}
\newcommand{\DA}{{\Bl{A}}}
\newcommand{\DP}{{\Bl{P}}}
\newcommand{\mf}[1]{{\mathfrak{#1}}}
\newcommand{\mfm}{{\mf{m}}}
\newcommand{\mfp}{{\mf{p}}}
\newcommand{\mfq}{{\mf{q}}}
\newcommand{\mfr}{{\mf{r}}}
\newcommand{\Hom}{\op{Hom}}
\newcommand{\reduced}{{\op{red}}}
\newcommand{\Sh}{{\op{Sh}}}
\newcommand{\Qcoh}{{\op{Qcoh}}}
\newcommand{\Coh}{{\op{Coh}}}
\newcommand{\mfPerf}{{{\mathfrak P}{\mathfrak e}{\mathfrak r}{\mathfrak f}}}
\newcommand{\InjSh}{{\op{InjSh}}}
\newcommand{\InjQcoh}{{\op{InjQcoh}}}
\newcommand{\inj}{{\op{inj}}}
\newcommand{\Cech}{{\mathrm{\check{C}ech}}}
\newcommand{\Cechobjstar}{{\mathrm{\check{C}ob}*}}
\newcommand{\Cechobjstarbox}{{\mathrm{\check{C}ob}*\boxtimes}}
\newcommand{\sheafHom}{{\mkern3mu\mathcal{H}{om}\mkern3mu}}
\newcommand{\End}{\op{End}}
\newcommand{\pr}{\op{pr}}
\newcommand{\gldim}{\op{gldim}}
\newcommand{\projdim}{\op{projdim}}
\newcommand{\ol}[1]{{\overline{#1}}}
\newcommand{\ul}[1]{{\underline{#1}}}
\newcommand{\cek}{\vee}
\newcommand{\inv}{^{-1}}
\newcommand{\can}{\op{can}}
\newcommand{\fib}{{\operatorname{fib}}}
\newcommand{\per}{\op{per}}
\newcommand{\cpt}{{\op{cpt}}}
\newcommand{\tildew}[1]{\widetilde{#1}}
\newcommand{\comp}{\circ}
\newcommand{\opp}{{\op{op}}}
\newcommand{\define}[1]{{\textbf{#1}}}
\newcommand{\keepdontpublish}[1]
{\ifthenelse{\boolean{showkeepdontpublishbool}}
  {
    \rule{0cm}{0.05cm} \\
    \rule{15cm}{0.05cm} \\
    {#1} \rule{0cm}{0.05cm} \\ 
    \rule{15cm}{0.05cm} \\
    \rule{1mm}{0mm}
  }
} 
\newcommand{\comline}[1]{\ifthenelse{\boolean{comment}}{{\bf
      \noindent\shortstack[r]{\rule{12cm}{0.02cm}\\#1\\\rule{12cm}{0.02cm}
        \pagestyle{myheadings}\markboth{Links}{Rechts} 
        }}\\}}       
\newcommand{\comtxt}[1]{\ifthenelse{\boolean{comment}}{{\bf #1}}} 
\newcommand{\comtxtn}[1]{\ifthenelse{\boolean{comment}}{{\bf #1} \\ }} 
\newcommand{\ncomtxt}[1]{\ifthenelse{\boolean{comment}}{\\ {\bf #1}}} 
\newcommand{\ncomtxtn}[1]{\ifthenelse{\boolean{comment}}{\\ {\bf #1} \\ }} 
\newcommand{\commar}[1]{\ifthenelse{\boolean{comment}}{\marginpar{{#1}}}{}}
\newcommand{\kommentar}[1]{}
\numberwithin{equation}{section}
\newcommand{\proj}{\op{proj}}
\newcommand{\hinj}{{\op{h-inj}}}
\newcommand{\Vb}{{\op{Vb}}}
\newcommand{\Cone}{\op{Cone}}
\renewcommand{\epsilon}{{\varepsilon}}
\newcommand{\oktaeder}[6]
{
  \xymatrix@dr{
    & {[1]{#1}} \ar[r] &  {[1]{#2}} \ar[r] & {[1]{#3}}\\
    {#3} \ar@(ur,ul)[ru] \ar[r] &
    {#5}
    \ar[r] \ar[u] & 
    {#6} 
    \ar@(dr,dl)[ru] \ar[u]\\ 
    {#2} \ar[u] \ar[r] & {#4} \ar@(dr,dl)[ru] \ar[u]\\
    {#1} \ar[u] \ar@(dr,dl)[ru]
  }
}
\newcommand{\oktaedergepunktetdown}[6]
{
  \xymatrix@dr{
    & {[1]{#1}} \ar[r] &  {[1]{#2}} \ar[r] & {[1]{#3}}\\
    {#3} \ar@(ur,ul)[ru] \ar@{..>}[r] &
    {#5}
    \ar@{..>}[r] \ar[u] & 
    {#6} 
    \ar@(dr,dl)[ru] \ar[u]\\ 
    {#2} \ar[u] \ar[r] & {#4} \ar@(dr,dl)[ru] \ar[u]\\
    {#1} \ar[u] \ar@(dr,dl)[ru]
  }
}
\newcommand{\oktaedergepunktetdownmitopt}[7]
{
  \xymatrix@dr#7{
    & {[1]{#1}} \ar[r] &  {[1]{#2}} \ar[r] & {[1]{#3}}\\
    {#3} \ar@(ur,ul)[ru] \ar@{..>}[r] &
    {#5}
    \ar@{..>}[r] \ar[u] & 
    {#6} 
    \ar@(dr,dl)[ru] \ar[u]\\ 
    {#2} \ar[u] \ar[r] & {#4} \ar@(dr,dl)[ru] \ar[u]\\
    {#1} \ar[u] \ar@(dr,dl)[ru]
  }
}
\newcommand{\oktaedergepunktetup}[6]
{
  \xymatrix@dr{
    & {[1]{#1}} \ar[r] &  {[1]{#2}} \ar[r] & {[1]{#3}}\\
    {#3} \ar@(ur,ul)[ru] \ar[r] &
    {#5}
    \ar[r] \ar@{..>}[u] & 
    {#6} 
    \ar@(dr,dl)[ru] \ar[u]\\ 
    {#2} \ar[u] \ar[r] & {#4} \ar@(dr,dl)[ru] \ar@{..>}[u]\\
    {#1} \ar[u] \ar@(dr,dl)[ru]
  }
}
\newcommand{\oktaederalles}[9]
{
  \xymatrix@dr{
    & {#7} \ar[r] &  {#8} \ar[r] & {#9}\\
    {#3} \ar@(ur,ul)[ru] \ar[r] &
    {#5}
    \ar[r] \ar[u] & 
    {#6} 
    \ar@(dr,dl)[ru] \ar[u]\\ 
    {#2} \ar[u] \ar[r] & {#4} \ar@(dr,dl)[ru] \ar[u]\\
    {#1} \ar[u] \ar@(dr,dl)[ru]
  }
}
\newcommand{\oktaedertemplatedotted}
{
  \xymatrix@dr{
    && {G} \ar[r]^-{g} 
    & {H} \ar[r]^-{h} 
    & {I} 
    \\ 
    {T4:} 
    & {C} \ar@(ur,ul)[ru]^-{c_1}
    \ar@{..>}[r]^-{c_2} 
    & {E} \ar@{..>}[r]^-{e_1} \ar[u]^-{e_2} 
    & {F}  \ar@(dr,dl)[ru]^-{f_1} \ar[u]_-{f_2}
    \\ 
    {T3:} 
    & {B} \ar[u]^-{b_1} \ar[r]^-{b_2} 
    & {D} \ar@(dr,dl)[ru]^-{d_1} \ar[u]_-{d_2} 
    \\  
    {T2:} 
    & {A} \ar[u]^-{a_1} \ar@(dr,dl)[ru]^-{a_2}
    \ar@{}[ru]|{\triangle}
    \\
    & {T1:}
  }
}
\newcommand{\oktaedertemplate}
{
  \xymatrix@dr{
    && {G} \ar[r]^-{g} 
    & {H} \ar[r]^-{h} 
    & {I} 
    \\ 
    {T4:} 
    & {C} \ar@(ur,ul)[ru]^-{c_1}
    \ar[r]^-{c_2} 
    & {E} \ar[r]^-{e_1} \ar[u]^-{e_2} 
    & {F}  \ar@(dr,dl)[ru]^-{f_1} \ar[u]_-{f_2}
    \\ 
    {T3:} 
    & {B} \ar[u]^-{b_1} \ar[r]^-{b_2} 
    & {D} \ar@(dr,dl)[ru]^-{d_1} \ar[u]_-{d_2} 
    \\  
    {T2:} &{A} \ar[u]^-{a_1} \ar@(dr,dl)[ru]^-{a_2}
    \ar@{}[ru]|{\triangle}
    \\
    & {T1:}
  }
}
\newcommand{\symeighttransone}[9]
{
  { } \ar@{}[r]^{#9} & { }\\
  {#1} \ar@(u,d)[ruu] & {#2} \ar@(u,d)[luu] & {#3} \ar[uu] & {#4} \ar[uu] & 
  {#5} \ar[uu] & {#6} \ar[uu] & {#7} \ar[uu] & {#8} \ar[uu] \\ 
}
\newcommand{\symeighttranstwo}[9]
{
  & { } \ar@{}[r]^{#9} & { }\\
  {#1} \ar[uu] & {#2} \ar@(u,d)[ruu] & {#3} \ar@(u,d)[luu] & {#4} \ar[uu] & 
  {#5} \ar[uu] & {#6} \ar[uu] & {#7} \ar[uu] & {#8} \ar[uu] \\ 
}
\newcommand{\symeighttransthree}[9]
{
  && { } \ar@{}[r]^{#9} & { }\\
  {#1} \ar[uu] &  {#2} \ar[uu] & {#3} \ar@(u,d)[ruu] & {#4} \ar@(u,d)[luu] &
  {#5} \ar[uu] & {#6} \ar[uu] & {#7} \ar[uu] & {#8} \ar[uu] \\ 
}
\newcommand{\symeighttransfour}[9]
{
  &&& { } \ar@{}[r]^{#9} & { }\\
  {#1} \ar[uu] &  {#2} \ar[uu] &  {#3} \ar[uu] & {#4} \ar@(u,d)[ruu] & 
  {#5} \ar@(u,d)[luu] & {#6} \ar[uu] & {#7} \ar[uu] & {#8} \ar[uu] \\
}
\newcommand{\symeighttransfive}[9]
{
  &&&& { } \ar@{}[r]^{#9} & { }\\
  {#1} \ar[uu] &  {#2} \ar[uu] &  {#3} \ar[uu] & {#4} \ar[uu] & 
  {#5} \ar@(u,d)[ruu] & {#6} \ar@(u,d)[luu] & {#7} \ar[uu] & {#8}
  \ar[uu] \\
}
\newcommand{\symeighttranssix}[9]
{
  &&&&& { } \ar@{}[r]^{#9} & { }\\
  {#1} \ar[uu] & {#2} \ar[uu] &  {#3} \ar[uu] & {#4} \ar[uu] & 
  {#5} \ar[uu] & 
  {#6} \ar@(u,d)[ruu] & {#7} \ar@(u,d)[luu] & {#8} \ar[uu] \\
}
\newcommand{\symeighttransseven}[9]
{
  &&&&&& { } \ar@{}[r]^{#9} & { }\\
  {#1} \ar[uu] & {#2} \ar[uu] &  {#3} \ar[uu] & {#4} \ar[uu] & 
  {#5} \ar[uu] & 
  {#6} \ar[uu] & 
  {#7} \ar@(u,d)[ruu] & {#8} \ar@(u,d)[luu] \\
}
\newcommand{\symdowneighttransone}[9]
{
  {#1} \ar@(d,u)[rdd] & {#2} \ar@(d,u)[ldd] & {#3} \ar[dd] & {#4} \ar[dd] & 
  {#5} \ar[dd] & {#6} \ar[dd] & {#7} \ar[dd] & {#8} \ar[dd] \\ 
  { } \ar@{}[r]^{#9} & { }\\
}
\newcommand{\symdowneighttranstwo}[9]
{
  {#1} \ar[dd] & {#2} \ar@(d,u)[rdd] & {#3} \ar@(d,u)[ldd] & {#4} \ar[dd] & 
  {#5} \ar[dd] & {#6} \ar[dd] & {#7} \ar[dd] & {#8} \ar[dd] \\ 
  & { } \ar@{}[r]^{#9} & { }\\
}
\newcommand{\symdowneighttransthree}[9]
{
  {#1} \ar[dd] &  {#2} \ar[dd] & {#3} \ar@(d,u)[rdd] & {#4}
  \ar@(d,u)[ldd] &
  {#5} \ar[dd] & {#6} \ar[dd] & {#7} \ar[dd] & {#8} \ar[dd] \\ 
  && { } \ar@{}[r]^{#9} & { }\\
}
\newcommand{\symdowneighttransfour}[9]
{
  {#1} \ar[dd] &  {#2} \ar[dd] &  {#3} \ar[dd] & {#4} \ar@(d,u)[rdd] & 
  {#5} \ar@(d,u)[ldd] & {#6} \ar[dd] & {#7} \ar[dd] & {#8} \ar[dd] \\
  &&& { } \ar@{}[r]^{#9} & { }\\
}
\newcommand{\symdowneighttransfive}[9]
{
  {#1} \ar[dd] &  {#2} \ar[dd] &  {#3} \ar[dd] & {#4} \ar[dd] & 
  {#5} \ar@(d,u)[rdd] & {#6} \ar@(d,u)[ldd] & {#7} \ar[dd] & {#8}
  \ar[dd] \\
  &&&& { } \ar@{}[r]^{#9} & { }\\
}
\newcommand{\symdowneighttranssix}[9]
{
  {#1} \ar[dd] & {#2} \ar[dd] &  {#3} \ar[dd] & {#4} \ar[dd] & 
  {#5} \ar[dd] & 
  {#6} \ar@(d,u)[rdd] & {#7} \ar@(d,u)[ldd] & {#8} \ar[dd] \\
  &&&&& { } \ar@{}[r]^{#9} & { }\\
}
\newcommand{\symdowneighttransseven}[9]
{
  {#1} \ar[dd] & {#2} \ar[dd] &  {#3} \ar[dd] & {#4} \ar[dd] & 
  {#5} \ar[dd] & 
  {#6} \ar[dd] & 
  {#7} \ar@(d,u)[rdd] & {#8} \ar@(d,u)[ldd] \\
  &&&&&& { } \ar@{}[r]^{#9} & { }\\
}
\newcommand{\quiverAzwei}[7]
{
  \xymatrix@R0pc@C2pc{
    {{#2}} \ar[r]^{#4} \ar@{}[r]^>(0){#1}_>(0){#3} &
    {{#6}}  \ar@{}[l]_>(0){#5}^>(0){#7}
  }
}
\newcommand{\quiverAeins}[3]
{
  \xymatrix@R0pc@C0pt{
    {{#2}} \ar@{}[r]^>(0){#1}_>(0){#3} & {}
  }
}
\newcommand{\bruhatAzwei}[9]
{
  \xymatrix@C17.3pt@R10pt{
    & 
    {{#1}} \ar[rd]^-{{#9}} \ar[ld]_-{{#7}} \ar[dddd]|(0.25){{#8}} \\
    {{#2}} \ar[dd]_-{{#8}} \ar[rrdd]|(0.25){{{#9}}} &&
    {{#3}} \ar[dd]^-{{#8}} \ar[lldd]|(0.25){{#7}} \\ 
    \\
    {{#4}} \ar[rd]_-{{#9}} &&
    {{#5}} \ar[ld]^-{{#7}} \\
    &
    {{#6}}
  }
}
\newcommand{\weylAzwei}[6]
{
  \xymatrix@C17.3pt@R10pt{
    & 
    {{#1}} \ar[rd] \ar[ld] \ar[dddd] \\
    {{#2}} \ar[dd] \ar[rrdd] &&
    {{#3}} \ar[dd] \ar[lldd] \\ 
    \\
    {{#4}} \ar[rd] &&
    {{#5}} \ar[ld] \\
    &
    {{#6}}
  }
}
\newcommand{\pretr}{{\op{pre-tr}}}
\newcommand{\thick}{\op{thick}}
\author{Valery A.~Lunts \and Olaf M.~Schn{\"u}rer}
\address{
  Department of Mathematics\\
  Indiana University\\
  Rawles Hall\\
  831 East 3rd Street\\
  Bloomington, IN 47405\\
  USA
}
\email{vlunts@indiana.edu} 
\address{
  Mathematisches Institut\\ 
  Universit{\"a}t Bonn\\
  Endenicher Allee 60\\
  53115 Bonn\\
  Germany
}
\email{olaf.schnuerer@math.uni-bonn.de}
\title[Enhancements of derived categories of 
coherent sheaves and applications]{New enhancements of derived
  categories of coherent sheaves and applications}
\thanks{}
\begin{document}

\begin{abstract}
  We introduce new
  % \v{C}ech 
  enhancements for the bounded derived category $D^b(\Coh(X))$ of
  coherent sheaves on a suitable scheme $X$ and for its
  subcategory $\mfPerf(X)$ of perfect complexes.  They are used
  for translating Fourier-Mukai functors to functors between
  derived categories of dg algebras, for relating homological
  smoothness of $\mfPerf(X)$ to geometric smoothness of $X,$ and
  for proving homological smoothness of $D^b(\Coh(X)).$ Moreover,
  we characterize properness of $\mfPerf(X)$ and
  $D^b(\Coh(X))$ geometrically.
\end{abstract}

\maketitle
\setcounter{tocdepth}{1}

\tableofcontents

\section{Introduction}
\label{sec:introduction}

Given quasi-compact separated schemes $X$ and $Y$ over a field
$k,$ 
any $K \in D(\Qcoh(X \times Y))$ 
gives rise to the Fourier-Mukai functor
\begin{equation*}
  \Phi_K= \bR q_*(p^*(-) \otimes^{\bL} K) \colon D(\Qcoh(X)) \ra
  D(\Qcoh(Y)) 
\end{equation*}
where $X \xla{p} X \times Y \xra{q} Y$ are the projections.
It is well-known that any choice of compact generators 
% of the derived categories 
provides dg algebras $A$ and $B$ such that 
% the derived categories 
$D(\Qcoh(X))$ and $D(\Qcoh(Y))$ are equivalent to the derived
categories $D(A)$ and $D(B)$ of dg modules, respectively.
% Similarly, one obtains\footnote{
%   conditions on $X$ and $Y$ and $X \times Y$!
% } an equivalence
% $D(\Qcoh(X \times Y)) \sira D(A^\opp \otimes B).$ If $M$ is the
% image of $K$ under this equivalence, 
% it is natural to expect that the diagram
It is therefore natural to expect that there is a dg $A^\opp
\otimes B$-module $M$ corresponding to $K$ such that the
diagram
\begin{equation*}
  % \label{eq:FM-intro}
  \xymatrix{
    {D(\Qcoh(X))} 
    \ar[d]^-{\sim}
    \ar[r]^-{\Phi_K} &
    {D(\Qcoh(Y))} \ar[d]^-{\sim}\\
    {D(A)} 
    \ar[r]^-{- \otimes^{\bL}_A M}
    &
    {D(B)}
  }
\end{equation*}
commutes.
%  where the dg $A$-$B$-bimodule $M$ corresponds to $K$
% under the 
% induced equivalence $D(\Qcoh(X \times Y)) \sira D(B \otimes
% A^\opp).$

\begin{theorem}[{Fourier-Mukai kernels and dg bimodules, see Theorem~\ref{t:translate-fm-to-dg}}]
  \label{t:FM-intro}
  Let $X$ and $Y$ be Noetherian separated schemes over a
  field $k$ such that $X \times Y$
  is Noetherian 
  and the following condition holds for both $X$ and $Y$: 
  any perfect complex is isomorphic to a strictly perfect complex
  (i.\,e.\ a bounded  
  complex of vector bundles).
  Then there is an equivalence
  $\theta \colon D(\Qcoh(X \times Y)) \sira D(A^\opp \otimes B)$
  such that for any $K \in D(\Qcoh(X \times Y))$ with
  corresponding $M=\theta(K) \in D(A^\opp \otimes B)$ 
  the diagram above commutes.
  % Then the diagram above 
  %  % \eqref{eq:FM-intro} 
  % commutes.
\end{theorem}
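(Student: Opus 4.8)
The plan is to reduce the statement to dg Morita theory, carried out inside compatible dg enhancements of all the derived categories in sight. First I would fix a compact generator $P$ of $D(\Qcoh(X))$ (one exists since $X$ is Noetherian and separated); being perfect, by hypothesis it may be taken to be strictly perfect, i.e.\ a bounded complex of vector bundles, and similarly fix a strictly perfect compact generator $Q$ of $D(\Qcoh(Y))$. In a chosen dg enhancement of $D(\Qcoh(-))$, set $A:=\bR\!\End(P)$ and $B:=\bR\!\End(Q)$; then $\bR\Hom_X(P,-)\colon D(\Qcoh(X))\sira D(A)$ and $\bR\Hom_Y(Q,-)\colon D(\Qcoh(Y))\sira D(B)$ are the two vertical equivalences of the diagram, lifting to equivalences of enhancements.

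Second I would build a good compact generator on the product. Put $G:=p^*(P^\vee)\otimes^{\bL}q^*(Q)$. As $P^\vee$ (the dual of a compact generator, hence again a compact generator of $D(\Qcoh(X))$) and $Q$ are perfect and $p^*,q^*$ are symmetric monoidal, $G$ is perfect, hence compact since $X\times Y$ is Noetherian; a Künneth-type argument --- here separatedness and Noetherianness of $X\times Y$ are used --- shows $G$ generates $D(\Qcoh(X\times Y))$. A Künneth formula for $\bR\Hom$, available because $P,Q$ are perfect and because over the field $k$ every complex is h-flat (so the tensor product of dg algebras needs no cofibrant replacement), then gives an isomorphism of dg algebras $\bR\!\End(G)\simeq \bR\!\End(P)^\opp\otimes_k\bR\!\End(Q)=A^\opp\otimes B$. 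By Keller's dg Morita theorem, $\theta:=\bR\Hom_{X\times Y}(G,-)\colon D(\Qcoh(X\times Y))\sira D(A^\opp\otimes B)$ is an equivalence, again lifting to enhancements.

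Third, the commutativity. Once everything is enhanced, transporting $\Phi_K$ along the two vertical equivalences yields a continuous quasi-functor $D(A)\to D(B)$, necessarily of the form $-\otimes^\bL_A M'$ with bimodule $M'$ recovered as its value on $A$, that is, $\bR\Hom_Y(Q,\Phi_K(P))$ with its $A=\bR\!\End(P)$-action. The adjunction $(q^*\dashv\bR q_*)$ gives $\bR\Hom_Y(Q,\Phi_K(P))\cong\bR\Hom_{X\times Y}(q^*Q,\,p^*P\otimes^\bL K)$, and the duality $(p^*P^\vee)^\vee\cong p^*P$ rewrites the right side as $\bR\Hom_{X\times Y}(q^*Q\otimes^\bL p^*P^\vee,\,K)=\bR\Hom_{X\times Y}(G,K)=\theta(K)$, compatibly with the $A$-actions coming from $\bR\!\End(G)=A^\opp\otimes B$; hence $M'\simeq\theta(K)$, which is the assertion. (Alternatively one checks the square objectwise, reconstructing $p^*N\cong p^*P\otimes^\bL_A\bR\Hom_X(P,N)$ from the generator and pulling $\bR\Hom_X(P,N)$ out of $\bR\Hom_{X\times Y}(q^*Q,-)$ via compactness of $q^*Q$: the natural map $\bR\Hom_{X\times Y}(q^*Q,E)\otimes^\bL_A V\to\bR\Hom_{X\times Y}(q^*Q,E\otimes^\bL_A V)$ is an isomorphism for all $V\in D(A)$, both sides being exact, coproduct-preserving in $V$, and equal for $V=A$.)

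The step I expect to be the main obstacle is not any of these in isolation but their mutual compatibility at the enhancement level: one needs a single dg (or $\infty$-categorical) enhancement of $D(\Qcoh(-))$ on which $p^*$, $-\otimes^\bL-$ and $\bR q_*$ are all realized by genuine dg functors, so that $\Phi_K$ and each of the equivalences $\bR\Hom_X(P,-)$, $\bR\Hom_Y(Q,-)$, $\theta$ lift to morphisms of enhancements and ``value on the generator'' becomes functorial in $K$; and one must retain enough control of h-flat versus h-injective models for $\bR\!\End(G)$ and the reconstruction isomorphism to be homotopically correct. Furnishing such enhancements is exactly the purpose of the constructions made earlier in the paper; granting them, the argument above goes through.
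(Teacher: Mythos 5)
Your outline matches the paper's strategy in skeleton: choose strictly perfect generators $E$ on $X$ and $F$ on $Y$, take $F\boxtimes E^\cek$ as the generator on the product, let $\theta=\bR\Hom(F\boxtimes E^\cek,-)$, and verify the square by evaluating on the generator and using the adjunctions together with the duality $p^*E\otimes^{\bL}K\cong \bR\sheafHom(p^*E^\cek,K)$. But the decisive step is exactly the one you assert rather than prove: that the chain of identifications $\bR\Hom_Y(F,\Phi_K(E))\cong\bR\Hom_{Y\times X}(q^*F\otimes p^*E^\cek,K)$ is compatible with the $A$-module structures (the left action coming through $\Phi_K$ applied to endomorphisms of $E$, the right one through the $E^\cek$-factor of the product generator), so that it is an isomorphism in $D(B\otimes A^\opp)$ and not merely of underlying complexes. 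Your fallback, ``grant a single enhancement on which $p^*$, $-\otimes-$ and $\bR q_*$ are all genuine dg functors,'' is not something the paper supplies, and no naive model does: h-injective complexes are adapted to $\bR\Hom$ and $\bR q_*$ but not preserved by $p^*$ and $\otimes$, h-flat complexes the other way around. This incompatibility of replacements is precisely why the statement had previously been asserted without proof, so deferring it begs the question; likewise your appeal to ``a continuous quasi-functor is $-\otimes^{\bL}_A M'$'' presupposes a quasi-functor lift of $\Phi_K$, which needs the same compatibility.

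The paper's proof closes this gap not by producing one universal enhancement but by mixing two: the $*$-\v{C}ech model $\mathcal{C}_*(E)$, for which every component of $p^*(\mathcal{C}_*(E))\otimes K'$ ($K'$ h-flat) is $q_*$-acyclic, so $\bR q_*$ is computed by the honest dg functor $q_*$ (Proposition~\ref{p:compute-FM-via-star-cech}); and the $!$-\v{C}ech model $\mathcal{C}_!(E^\cek)$, against which $\Hom$ already computes derived Homs (Lemma~\ref{l:hom-u!-qcoh}). The bridge is the strict isomorphism $p^*(\mathcal{C}_*(E))\otimes L\sira\sheafHom(p^*(\mathcal{C}_!(E^\cek)),L)$ of Lemma~\ref{l:C*-tensor-isom-sHomC!dual}, which is proved to intertwine the actions of $\End_{\Cech_*(X)}(E)$ and $\End_{\Cech_!(X)}(E^\cek)^\opp$, and the remaining comparison maps are checked by hand to be morphisms of dg $B\otimes A^\opp$-modules (the computation with $\rho_{\mathcal{C}_*(E)}$ at the end of the proof of Theorem~\ref{t:translate-fm-to-dg}). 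Some version of this explicit, action-compatible comparison (or a genuinely coherent $\infty$-categorical substitute) is needed for your final ``hence $M'\simeq\theta(K)$''; as written, that step is the missing content of the theorem.
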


% \footnote{
% vielleicht doch was schreiben wie:
% (oder oben:
% see Theorem~\ref{t:translate-fm-to-dg} for a more precise
% version)

We refer the reader to Theorem~\ref{t:translate-fm-to-dg}
for a more precise formulation of this theorem.

We believe that this theorem is an important step in the writing
of the dictionary between derived categories of schemes and those
of dg algebras. 
The commutativity of the diagram is claimed under more general
assumptions in
\cite[after Cor.~8.12]{toen-homotopy-of-dg-cats-morita} without
% details on the 
proof. The main difficulty in the proof of
Theorem~\ref{t:FM-intro} 
arises from the fact 
that the different functors involved (inverse image, tensor
product, direct image, $\bR\!\Hom$) are usually computed via
different types of replacements (h-flat, h-injective) and it is
hard to treat these functors compatibly.

Our main tools to overcome these difficulties are 
new enhancements of the categories
$\mfPerf(X),$ $D^b(\Coh(X)),$ 
and $D^-(\Coh(X))$
that we introduce in this article
(see Propositions~\ref{p:abstract-cech-!-object-enhancement},
\ref{p:abstract-cech-*-object-enhancement},
\ref{p:shriek-cech-enhancement-D-minus-and-b-Coh}). 
These enhancements are modeled
on left and right ($!$ and $*$) \v{C}ech resolutions and are
%,
% Essentially they are
% They are, roughly speaking, 
%roughly speaking, 
certain
non-full subcategories of 
the dg category
% $C(\Sh(X))$ 
of complexes of sheaves of $\mathcal{O}_X$-modules.
% , consisting of
% \v{C}ech resolutions of suitably bounded complexes of vector
% bundles. 
% We give two more results whose proofs illustrate the usefulness of
% these enhancements.
% The following two theorems constitute two other applications of
% these \v{C}ech enhancements.

They also enable us to prove the following two
theorems. 
We call the category $\mfPerf(X)$ (resp.\ $D^b(\Coh(X))$)
smooth over $k$ if its h-injective enhancement is smooth over $k$
as a dg $k$-category (see Definition~\ref{d:Perf-DbCoh-smooth}).

\begin{theorem} 
  [{Homological versus geometric smoothness, see Theorem~\ref{t:mfPerf-abstract-Cechobj-smooth-vs-diagonal-sheaf-perfect}}]
  \label{t:perf-smooth-intro}
  Let $X$ be a Noetherian separated scheme over a field $k$ such that
  $X \times X$ is Noetherian and any perfect complex on $X$ is
  isomorphic to a strictly perfect complex.
  Let $\Delta \colon  X \ra X \times X$ be the diagonal
  (closed) immersion.
  Then the following two conditions are equivalent:
  \begin{enumerate}
  \item $\mfPerf(X)$ is smooth over $k;$
  \item $\Delta_*(\mathcal{O}_X) \in
  \mfPerf(X \times X).$
  \end{enumerate}
  % The triangulated category $\mfPerf(X)$ is
  % % Then the dg category 
  % % $\Cech_!(X)$ is 
  % % (homologically)
  % smooth over $k$ if and only if $\Delta_*(\mathcal{O}_X) \in
  % \mfPerf(X \times X).$
  % Assume now that 
  If $X$ is in addition of finite type
  over $k,$  
  % satisfying
  % % such that both $X$ and $X \times X$ satisfy 
  % condition~\ref{enum:GSP}.
  % Then 
  % the above two conditions 
  they are also equivalent to:
  \begin{enumerate}[resume]
  \item $X$ is smooth over $k.$
  \end{enumerate}
  % Then
  % $X$ is smooth over $k$ if and only if $\mfPerf(X)$ 
  % % dg category $\Cech_!(X)$ 
  % is smooth over $k.$ 
  % \footnote{
  %   Auskommentiert noch weitere nette Bedingungen, die 
  %   zur Zeit in
  %   Theorem~\ref{t:mfPerf-abstract-Cechobj-smooth-vs-diagonal-sheaf-perfect} 
  %   genannt werden.
  % }
  In particular, if 
  $X$ is a separated scheme of finite type over $k$ 
  having
  the resolution
  property, i.\,e.\ any coherent sheaf is a quotient of a vector
  bundle, for example if $X$ is quasi-projective over $k$,
  then the above three conditions are equivalent.
\end{theorem}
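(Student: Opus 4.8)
The plan is to prove the three equivalences in sequence, with the h-injective enhancements and the \v{C}ech-type enhancements $\Cechobj$ from Propositions~\ref{p:abstract-cech-!-object-enhancement}--\ref{p:shriek-cech-enhancement-D-minus-and-b-Coh} doing the bookkeeping. For the equivalence (a)$\Leftrightarrow$(b), the key is to transport the diagonal bimodule across Theorem~\ref{t:FM-intro}. By definition, $\mfPerf(X)$ is smooth over $k$ iff its h-injective enhancement $\cA$ is a smooth dg $k$-category, i.e.\ the diagonal bimodule $\cA$ is perfect in $D(\cA^{\opp}\otimes_k\cA)$. Applying Theorem~\ref{t:FM-intro} with $Y=X$ gives an equivalence $\theta\colon D(\Qcoh(X\times X))\sira D(A^{\opp}\otimes B)$ (with $B=A$ the dg algebra attached to a compact generator of $D(\Qcoh(X))$) under which the Fourier--Mukai functor $\Phi_K$ corresponds to $-\otimes^{\bL}M$ for $M=\theta(K)$. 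First I would show that $\Phi_{\Delta_*(\cO_X)}$ is the identity functor of $D(\Qcoh(X))$, hence the diagonal bimodule over $A$ corresponds under $\theta$ to $K=\Delta_*(\cO_X)$; then smoothness of $\mfPerf(X)$ translates to perfectness of $\Delta_*(\cO_X)$ in $D(\Qcoh(X\times X))$, and one checks this is the same as lying in $\mfPerf(X\times X)$ (perfect complexes in $\Qcoh$ are exactly the compact objects, and $X\times X$ is Noetherian separated). The subtlety here is matching the abstract ``diagonal bimodule of the enhancement'' with the honest bimodule coming from $A$; this is where the explicit $\Cechobj$-enhancements are needed, since Theorem~\ref{t:FM-intro} is proved using exactly these models and respects the monoidal/bimodule structure.

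For (b)$\Leftrightarrow$(c) under the finite-type hypothesis, the plan is to use the standard characterization of smoothness via the diagonal. If $X$ is smooth of finite type over $k$, then $\Delta\colon X\to X\times X$ is a regular immersion, so $\Delta_*(\cO_X)$ has a finite locally free resolution on $X\times X$, i.e.\ it is perfect (here one uses that $X\times X$ is Noetherian so that pseudo-coherence plus finite Tor-dimension gives perfectness). Conversely, if $\Delta_*(\cO_X)\in\mfPerf(X\times X)$, then $\cO_X=\Delta^*\Delta_*(\cO_X)$ would have finite Tor-dimension over $\cO_X$ via the Koszul-type computation; more directly, perfectness of $\Delta_*(\cO_X)$ forces $\Delta$ to be a perfect (finite Tor-dimension) morphism, and combined with $\Delta$ being a closed immersion of finite-type $k$-schemes, one deduces that $\Delta$ is a regular immersion, whence $X$ is smooth. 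I would cite the relevant lemmas on regular immersions and perfect complexes (this is classical, e.g.\ via the cotangent complex $L_{X/k}$ being perfect, or via local freeness of $\Tor_*^{\cO_{X\times X}}(\cO_X,\cO_X)$).

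The last sentence, about schemes with the resolution property, is then a reduction to the main equivalence: one must check that a separated scheme of finite type over $k$ with the resolution property satisfies the running hypothesis that every perfect complex is isomorphic to a strictly perfect complex. This is essentially the statement that on a Noetherian separated scheme with the resolution property, perfect complexes admit bounded resolutions by vector bundles, together with the fact that the resolution property is inherited by $X\times X$ (a product of two schemes with the resolution property again has it, since a vector bundle surjecting onto a coherent sheaf can be built from the two factors by pullback and tensor). Quasi-projective schemes have the resolution property (ample line bundle), so they are covered. I expect the main obstacle to be the first equivalence (a)$\Leftrightarrow$(b), specifically the identification of the diagonal bimodule of the enhancement with $\theta(\Delta_*(\cO_X))$: this requires knowing that the equivalence $\theta$ of Theorem~\ref{t:FM-intro} is not just an equivalence of triangulated categories but is compatible with composition of Fourier--Mukai kernels / tensor product of bimodules in the way needed to read off ``identity functor $\leftrightarrow$ diagonal bimodule.'' Everything else is either a direct translation or standard commutative algebra.
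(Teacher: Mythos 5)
Your reduction of (a)$\Leftrightarrow$(b) to Theorem~\ref{t:translate-fm-to-dg} has a genuine gap exactly at the point you flag: a dg bimodule $M$ is \emph{not} determined up to isomorphism in $D(A^{\opp}\otimes B)$ by the triangulated functor $-\otimes^{\bL}_A M$, so from ``$\Phi_{\Delta_*(\mathcal{O}_X)}\cong\id$'' you cannot conclude that $\theta(\Delta_*(\mathcal{O}_X))$ is the diagonal bimodule; that would require a compatibility of $\theta$ with composition of kernels, respectively tensor product of bimodules, which Theorem~\ref{t:translate-fm-to-dg} does not assert and the paper never proves. The paper's proof does not go through Fourier--Mukai functors at all: it picks a classical generator $E$ of $\mfPerf'(X)$ given by a bounded complex of vector bundles, reduces smoothness of $\mfPerf(X)$ to $A\in\per(A\otimes A^{\opp})$ for $A=\End_{\Cech_!(X)}(E)$, forms the explicit object $P=\mathcal{D}_\mathcal{J}(\mathcal{C}_!(E))\boxtimes\mathcal{C}_!(E)$, shows via the K\"unneth-type statement (Proposition~\ref{p:tensor-product-of-endos-vs-endos-of-boxproduct}) that $A^{\opp}\otimes A\ra\End(P)$ is, after composing with the map to an injective resolution, a quasi-isomorphism, obtains the equivalence $\Hom(P,-)\colon D_\Qcoh(\Sh(X\times X))\sira D(A^{\opp}\otimes A)$ from Proposition~\ref{p:homotopy-categories-triang-via-dg-algebras}, and then \emph{computes directly}, by a chain of adjunctions together with the key homotopy equivalence of Corollary~\ref{c:obtain-homotopy-equiv}, that an h-injective lift of $\Delta_*(\mathcal{O}_X)$ is sent to the diagonal bimodule $A^{\opp}$. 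This explicit computation is precisely the content your plan postpones, and it is the heart of the proof.

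Your sketch of (b)$\Rightarrow$(c) also rests on a faulty intermediate claim: ``perfect pushforward $+$ closed immersion of finite type $\Rightarrow$ regular immersion'' is false as a general lemma (e.g.\ $\Spec(A/\mfm^2)\subset\Spec A$ for $A$ regular local of dimension $\geq 2$ has finite Tor-dimension but is not a regular immersion); for the diagonal it does hold, but only because $X$ is then smooth, which is what one is trying to prove, so the argument as stated is circular. The paper instead reduces to the affine case (Proposition~\ref{p:affine-glatt-diagonaleperfekt-regular}): from $R\in\per(R\otimes R)$ one tensors the finite projective resolution of the diagonal with any $R$-module to get finite global dimension, hence regularity, and then applies $(-\otimes\ol{k})$ to the resolution to get regularity of $\ol{R}$ and hence smoothness -- this base-change step is essential over imperfect fields and is absent from your plan (regularity alone does not give (c)). Finally, for the ``in particular'' clause you do not need the resolution property (or strict perfection of perfect complexes) on $X\times X$: the theorem only requires condition~\ref{enum:GSP} on $X$ itself (Remark~\ref{rem:onGSP}), and $X\times X$ is automatically Noetherian for $X$ of finite type over $k$, so your proposed lemma that the resolution property passes to products is unnecessary (and not available in the paper).
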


\begin{theorem} 
  [{see Theorem~\ref{t:D-b-Coh-smooth}}]
  \label{t:DbCoh-smooth-intro}
  Let $X$ be a separated scheme of finite type over a perfect
  field $k$ that has the resolution property.
  % (i.\,e.\ any coherent sheaf is a quotient of a vector
  % bundle).  
  Then $D^b(\Coh(X))$ is smooth over $k.$
\end{theorem}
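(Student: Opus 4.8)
The strategy is to reduce smoothness of $D^b(\Coh(X))$ to a statement asserting that $\Delta_*(\cO_X)$ is generated by external tensor products inside $D^b(\Coh(X\times X))$, and then to prove that statement by an induction in which the hypothesis that $k$ is perfect enters through generic smoothness.

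\emph{Step 1: reduction.} First I would establish, for $D^b(\Coh(X))$, the exact analogue of the equivalence (a)$\Leftrightarrow$(b) of Theorem~\ref{t:perf-smooth-intro}. Using the new \v{C}ech enhancement of $D^b(\Coh(-))$ from Proposition~\ref{p:shriek-cech-enhancement-D-minus-and-b-Coh} together with the translation of Fourier--Mukai kernels into dg bimodules underlying Theorems~\ref{t:FM-intro} and~\ref{t:perf-smooth-intro}, one identifies the $\cB$-$\cB$-bimodule category of the h-injective enhancement $\cB$ of $D^b(\Coh(X))$ (cf.\ Definition~\ref{d:Perf-DbCoh-smooth}) with an enhancement of the thick triangulated subcategory
\[
  \cT^X \;:=\; \big\langle\, \mathcal F\boxtimes\mathcal G \;:\; \mathcal F,\mathcal G\in D^b(\Coh(X)) \,\big\rangle \;\subseteq\; D^b(\Coh(X\times X)),
\]
where $\mathcal F\boxtimes\mathcal G=p^*\mathcal F\otimes^{\bL}q^*\mathcal G$ (this lies in $D^b(\Coh(X\times X))$ by the K\"unneth formula, the base being a field), in such a way that the diagonal bimodule corresponds to $\Delta_*(\cO_X)$ and the representable bimodules to the external products $\mathcal F\boxtimes\mathcal G$ (here the fully faithfulness of external product, i.e.\ $\RsheafHom_{X\times X}(\mathcal F\boxtimes\mathcal G,\mathcal F'\boxtimes\mathcal G')\cong\RsheafHom_X(\mathcal F,\mathcal F')\otimes^{\bL}\RsheafHom_X(\mathcal G,\mathcal G')$, is again a K\"unneth statement over the field $k$). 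Consequently $D^b(\Coh(X))$ is smooth over $k$ if and only if $\Delta_*(\cO_X)\in\cT^X$; we only need the "if" direction.

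\emph{Step 2: a generation lemma.} I would then prove, for all separated schemes $X,Y$ of finite type over $k$ that have the resolution property, the relative statement
\[
  D^b(\Coh(X\times Y)) \;=\; \cT^{X,Y}\;:=\;\big\langle\, \mathcal F\boxtimes\mathcal G \;:\; \mathcal F\in D^b(\Coh(X)),\ \mathcal G\in D^b(\Coh(Y))\,\big\rangle ,
\]
by induction on $N(X,Y):=\dim X+\dim Y+c(X)+c(Y)$, where $c$ of a scheme is $0$ if it is smooth over $k$, $1$ if it is reduced but not smooth, and $2$ otherwise. If $X$ and $Y$ are both smooth then $X\times Y$ is smooth over $k$, hence regular of finite Krull dimension, so $D^b(\Coh(X\times Y))=\mfPerf(X\times Y)$; and $\mfPerf(X\times Y)=\langle\mfPerf(X)\boxtimes\mfPerf(Y)\rangle$ because the external product of compact generators of $D(\Qcoh(X))$ and $D(\Qcoh(Y))$ is a compact generator of $D(\Qcoh(X\times Y))$. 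This is the base case. Otherwise one of the factors, say $X$, is the "worse" one, and $c(X)\ge 1$. If $X$ is not reduced, d\'evissage along the nilradical gives $D^b(\Coh(X))=\langle i_*D^b(\Coh(X_{\reduced}))\rangle$ for the closed immersion $i\colon X_{\reduced}\hookrightarrow X$, whence by the projection formula $\cT^{X,Y}=(i\times\id)_*\cT^{X_{\reduced},Y}$ and likewise $D^b(\Coh(X\times Y))=\langle(i\times\id)_*D^b(\Coh(X_{\reduced}\times Y))\rangle$, so the claim for $(X,Y)$ follows from that for $(X_{\reduced},Y)$, which has smaller $N$. If $X$ is reduced but singular, choose by generic smoothness (here $k$ is perfect) a dense open $j\colon U\hookrightarrow X$ smooth over $k$ with reduced closed complement $i\colon Z\hookrightarrow X$, $\dim Z<\dim X$; then $U$ and $Z$ again have the resolution property and are of finite type over $k$, and $N(U,Y),N(Z,Y)<N(X,Y)$. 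The restriction $D^b(\Coh(X\times Y))\to D^b(\Coh(U\times Y))$ is a Verdier localization with kernel $D^b_{Z\times Y}(\Coh(X\times Y))$, is essentially surjective, and carries $\cT^{X,Y}$ onto $\cT^{U,Y}=D^b(\Coh(U\times Y))$ (inductive hypothesis, using that every object of $D^b(\Coh(U))$ lifts to $D^b(\Coh(X))$); hence a roof in the quotient shows that every object of $D^b(\Coh(X\times Y))$ lies in the thick subcategory generated by $\cT^{X,Y}$ and $D^b_{Z\times Y}(\Coh(X\times Y))$. Finally $D^b_{Z\times Y}(\Coh(X\times Y))\subseteq\cT^{X,Y}$: by d\'evissage along thickenings of $Z\times Y$ it is generated by $(i\times\id)_*D^b(\Coh(Z\times Y))$, which by the inductive hypothesis is $(i\times\id)_*\cT^{Z,Y}$, and $(i\times\id)_*(\mathcal G\boxtimes\mathcal F)=(i_*\mathcal G)\boxtimes\mathcal F$ by the projection formula. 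This completes the induction, and taking $Y=X$ yields $\Delta_*(\cO_X)\in D^b(\Coh(X\times X))=\cT^X$, so $D^b(\Coh(X))$ is smooth over $k$ by Step~1.

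The part I expect to require the most care is Step~1: making precise, via the \v{C}ech enhancements, that the bimodule category of the enhancement of $D^b(\Coh(X))$ is controlled by $D^b(\Coh(X\times X))$, with the diagonal bimodule corresponding to $\Delta_*(\cO_X)$ and the representables to external products — in effect, transporting the proof of Theorem~\ref{t:perf-smooth-intro} from $\mfPerf$ to $D^b(\Coh)$. Everything in Step~2 is formal once one has the standard facts for bounded derived categories of coherent sheaves on Noetherian schemes (Verdier localization with support, essential surjectivity of restriction, d\'evissage, and flat base change / the projection formula); the only genuine geometric input is generic smoothness over the perfect field $k$, and a harmless but necessary bookkeeping point is that the resolution property descends to open subschemes, to closed subschemes, and to $X_{\reduced}$.
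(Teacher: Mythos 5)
The gap is in your Step 1, and it is not just the ``care'' you anticipate: the two features you want from the bimodule identification --- representables corresponding to external products $\mathcal F\boxtimes\mathcal G$ with $\mathcal F,\mathcal G\in D^b(\Coh(X))$, \emph{and} the diagonal bimodule corresponding to $\Delta_*(\mathcal O_X)$ --- cannot be achieved simultaneously once $X$ is singular, so the asserted criterion ``smooth if $\Delta_*(\mathcal O_X)\in\mathcal T^X$'' is unsupported. If you normalize the comparison so that the diagonal bimodule is $\Delta_*(\mathcal O_X)$ (i.e.\ via Fourier--Mukai kernels, $K\mapsto \bR\Hom_X(G,\Phi_K(G))$ for a classical generator $G$, which is the only way the identity kernel gives the identity bimodule), then a box product goes to $\bR\Hom_X(G,\mathcal G)\otimes_k\bR\Gamma(X,\mathcal F\otimes^{\bL}G)$, and the second factor need not be perfect over $A=\End(G)$: for $X=\Spec k[x]/(x^2)$, $G=\mathcal O_X\oplus k$ and $\mathcal F=k$, the complex $k\otimes^{\bL}_{k[x]/(x^2)}G$ has nonzero cohomology in every degree $\leq 0$, while every perfect $A$-module has bounded below cohomology because $H^i(A)=\Ext^i(G,G)=0$ for $i<0$. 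Conversely, the identification under which box products \emph{do} become perfect bimodules is the one the paper uses: one takes as generator on the product $\mathcal D_{\mathcal J}(G)\boxtimes G$ for a dualizing complex $\mathcal J$, so that its endomorphism dg algebra is $A^{\opp}\otimes A$ (Lemma~\ref{l:duality-and-!-Cech-D-b-Coh} together with the K\"unneth statement, Proposition~\ref{p:tensor-product-of-endos-vs-endos-of-boxproduct}); but then the diagonal bimodule corresponds to $\Delta_*(\mathcal J)$, not $\Delta_*(\mathcal O_X)$. In the $\mfPerf$ case these coincide because there $\mathcal J\simeq\mathcal O_X$ does the job, which is exactly why transporting Theorem~\ref{t:perf-smooth-intro} verbatim silently fails; the naive dual $\bR\sheafHom(-,\mathcal O_X)$ does not preserve $D^b(\Coh(X))$ on a singular scheme, and the dualizing complex is what replaces it.

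Once Step 1 is corrected to ``$D^b(\Coh(X))$ is smooth if $\Delta_*(\mathcal J)$ lies in the thick subcategory generated by box products'', your Step 2 does close the argument, since $\Delta_*(\mathcal J)\in D^b(\Coh(X\times X))$. Step 2 itself is essentially Theorem~\ref{t:generator-of-DbCoh-product} (over a perfect field, the box product of classical generators classically generates $D^b(\Coh(X\times Y))$), which the paper imports from \cite{lunts-categorical-resolution}; the proof there is a d\'evissage using generic smoothness much like the one you sketch, and --- importantly --- it needs no resolution property. You should therefore not thread ``the resolution property passes to open subschemes'' through the induction: that descent is not known in general (closed subschemes and $X_{\reduced}$ are unproblematic), and in the paper the resolution property is only used on $X$ itself, to realize the generator as a bounded above complex of vector bundles for the \v{C}ech enhancement (Proposition~\ref{p:D-minus-Coh(Sh)-strict-coherent}). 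With these two repairs your route coincides, in substance, with the paper's proof of Theorem~\ref{t:D-b-Coh-smooth}.
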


To our knowledge
Theorem~\ref{t:perf-smooth-intro}
is ``well-known'' folklore. However, it seems there is no proof
available in the literature, cf.\ the footnote in the introduction of
\cite{shklyarov-serre-duality-cpt-smooth-arXiv}.
Versions of  
Theorems~\ref{t:perf-smooth-intro}
and \ref{t:DbCoh-smooth-intro}
are claimed in
\cite[Prop~3.13, Thm.~6.3]{lunts-categorical-resolution}. However
the proof of the key Proposition~6.17 there is incomplete.

Note the following consequence of
Theorem~\ref{t:perf-smooth-intro} (see Corollary~\ref{c:smooth-quasi-projective}):
if $X$ is a smooth quasi-compact separated scheme 
% of finite type
over a field $k,$ then $\mfPerf(X) =D^b(\Coh(X))$ is
smooth over $k.$ 

We also characterize properness of the
categories $\mfPerf(X)$ and $D^b(\Coh(X))$ geometrically.  We
call a triangulated $k$-linear category $\mathcal{T}$ proper over
$k$ if it has a classical generator and $\dim_k (\bigoplus_{n \in
  \DZ} \Hom_\mathcal{T}(E,[n]F)) < \infty$ for all objects $E,$
$F \in \mathcal{T}$ (see Definition~\ref{d:tricat-proper}).

\begin{theorem} 
  [{Homological versus geometric properness, see Theorem~\ref{t:scheme-proper-iff-Perf-proper}}]
  \label{t:intro-scheme-proper-iff-Perf-proper}
  % Let $X$ be a separated scheme $X$ of finite type over a field
  % $k$
  % that has the resolution property.
  % Then $X$ is
  % proper over $k$ if and only if $\mfPerf(X)$ is proper
  % over $k.$
  %
  Let $X$ be a separated scheme of finite type over a field
  $k.$
  If $X$ is proper over $k$, then $\mfPerf(X)$ is proper over
  $k.$ 
  If $X$ has the resolution property, the converse is also true.
  % and $\mfPerf(X)$ is
  % proper over $k$, then $X$ is proper over $k.$
\end{theorem}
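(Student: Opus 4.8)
The plan is to prove the two implications separately. \textbf{From properness of $X$ to properness of $\mfPerf(X)$.} Since $X$ is Noetherian and separated, $\mfPerf(X)$ is the subcategory of compact objects of $D(\Qcoh(X))$ and admits a classical generator (standard for such $X$). So I only need $\dim_k \bigoplus_{n \in \DZ} \Hom_{\mfPerf(X)}(E, [n]F) < \infty$ for all $E, F \in \mfPerf(X)$. Here $\Hom_{\mfPerf(X)}(E,[n]F) = H^n\bigl(\bR\Gamma(X, \RsheafHom(E,F))\bigr)$, and for perfect $E$ one has $\RsheafHom(E,F) \cong E^\vee \otimes^{\bL}_{\cO_X} F$, which is again perfect and hence lies in $D^b(\Coh(X))$. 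Because $X \to \Spec k$ is proper, the functor $\bR\Gamma(X,-)$ carries $D^b(\Coh(X))$ into the bounded derived category of finite-dimensional $k$-vector spaces (coherence of higher direct images along proper morphisms), which gives the required finiteness at once.

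\textbf{From properness of $\mfPerf(X)$ plus the resolution property to properness of $X$.} The key step is to upgrade the finiteness we are given --- which a priori only concerns perfect complexes --- to finiteness of $\bR\Gamma(X,\cF)$ for \emph{every} coherent sheaf $\cF$. A vector bundle $\cV$ is perfect, so $\bigoplus_n H^n(X,\cV) = \bigoplus_n \Hom_{\mfPerf(X)}(\cO_X, [n]\cV)$ is finite-dimensional; combined with Grothendieck vanishing this says $\bR\Gamma(X,\cV)$ has bounded, degreewise finite-dimensional cohomology. For a general coherent $\cF$ I would use the resolution property to choose a complex $P^\bullet$ of vector bundles concentrated in non-positive degrees with a quasi-isomorphism $P^\bullet \xra{\sim} \cF$ (unbounded below in general). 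Computing $\bR\Gamma$ by \v{C}ech cohomology with respect to a finite affine cover of the separated Noetherian scheme $X$, the spectral sequence $E_1^{p,q} = H^q(X,P^p) \Rightarrow H^{p+q}(X,\cF)$ has, for each fixed total degree $n$, only finitely many nonzero entries (since $p \le 0$ and $0 \le q \le \dim X$), each finite-dimensional by the previous sentence; hence $H^n(X,\cF)$ is finite-dimensional and vanishes outside $[0,\dim X]$, so $\bR\Gamma(X,\cF)$ lies in $D^b$ of finite-dimensional $k$-vector spaces.

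Granting this, the proof concludes geometrically. Properness is unchanged by nilpotents and is stable under finite closed unions, so I may assume $X$ is integral. If $X$ were not proper, I would fix a Nagata compactification $X \hookrightarrow \bar X$ with $\bar X$ proper integral and $X$ dense; then $Z := \bar X \setminus X$ is nonempty closed of dimension $< \dim X$. Choosing a closed point $z \in Z$, not every integral curve through $z$ can be contained in the lower-dimensional closed set $Z$, so there is an integral closed curve $C \subseteq \bar X$ with $z \in C$ and $C \not\subseteq Z$; then $C' := C \cap X$ is an integral curve, closed in $X$, equal to the proper curve $C$ with the nonempty finite set $C \cap Z$ removed, hence not proper. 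Its normalization is a smooth affine curve, so by Chevalley's theorem (a scheme admitting a finite surjective morphism from an affine scheme is affine) $C'$ is affine, say $C' = \Spec A$ with $A$ a one-dimensional finitely generated $k$-domain; thus $\dim_k A = \infty$. For the closed immersion $i \colon C' \hookrightarrow X$, the coherent sheaf $i_*\cO_{C'}$ satisfies $H^0(X, i_*\cO_{C'}) = A$, which is infinite-dimensional --- contradicting the previous paragraph. Hence $X$ is proper over $k$.

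The hard part is the first step of the converse: ``$\mfPerf(X)$ proper'' only controls vector bundles, and on a singular $X$ the structure sheaf of the offending curve need not be perfect, so one genuinely has to resolve an arbitrary coherent sheaf by bundles and tame the (possibly unbounded-below) resolution using the cohomological-dimension bound in the spectral sequence; the resolution property is precisely what makes this available. The geometric extraction of a non-proper closed curve inside a non-proper integral scheme, together with Chevalley's affineness criterion applied to its normalization, is routine but needs the small local argument indicated.
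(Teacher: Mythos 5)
Your proof of the forward implication is the paper's argument: a classical generator exists by Bondal--Van den Bergh, and $\Hom(E,[n]F)\cong H^n(X;\bR\sheafHom(E,F))$ is finite-dimensional and vanishes for almost all $n$ by coherence of proper pushforward plus finite cohomological dimension (Lemma~\ref{l:fin-coho-dim}). For the converse your strategy also matches the paper's (resolve by vector bundles via the resolution property, and exploit an affine closed curve with infinite-dimensional global sections), but you package the key finiteness step differently. The paper works entirely inside $\mfPerf(X)$: it brutally truncates the vector-bundle resolution $P$ of $i_*\mathcal{O}_C$ in degrees $<-|S|$ to get a perfect complex $Q$ and shows directly that $\Hom(\mathcal{O}_X,Q)\cong H^0(\Gamma(X;\mathcal{C}_*(P)))\cong\Gamma(C;\mathcal{O}_C)$ is infinite-dimensional, the point being that the \v{C}ech complex has length $|S|$ so the truncation does not affect $H^0$. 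You instead prove the stronger intermediate statement that properness of $\mfPerf(X)$ plus the resolution property forces $\dim_k H^n(X,\mathcal{F})<\infty$ for every coherent $\mathcal{F}$, using the hypercohomology spectral sequence $E_1^{p,q}=H^q(X,P^p)$ and Grothendieck vanishing where the paper uses the length of the \v{C}ech complex; then the curve gives the contradiction. The two routes rest on the same inputs (resolution by bundles plus a cohomological-dimension bound), so they are essentially equivalent; yours yields a more quotable intermediate finiteness statement, while the paper's truncation avoids any discussion of convergence for unbounded-below complexes.

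Two places where you are thinner than the paper. First, the convergence of your spectral sequence for an unbounded-below resolution does need justification: one must know that $\bR\Gamma(X,-)$ of the unbounded complex $P^\bullet$ is computed by the total \v{C}ech complex, which is exactly where the finite affine cover and finite cohomological dimension enter (this is the paper's identification $\Gamma(X;\mathcal{C}_*(A))\cong\bR\Gamma(X;A)$); you gesture at this, and since each anti-diagonal of the \v{C}ech double complex is finite the argument does go through, but it should be said. Second, and more seriously, the existence of a closed integral curve through the boundary point $z$ not contained in $Z$ is precisely the content of the paper's Lemma~\ref{l:affine-curve}, which is proved by prime avoidance and induction on dimension, with a separate case when $\{z\}$ is itself an irreducible component of the boundary. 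Your one-line ``not every integral curve through $z$ can be contained in $Z$'' is true but is not a proof (one must first produce curves through $z$ meeting $X$, e.g.\ by connecting $z$ to a closed point of $X$ by an integral curve, or by the paper's hypersurface-cutting argument); this step should either be argued as in Lemma~\ref{l:affine-curve} or given a citation. Also, ``smooth'' should be ``regular'' for the normalization over an imperfect field, though regularity suffices for your affineness argument.
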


\begin{theorem} 
  [{see Theorem~\ref{t:scheme-proper+reg-iff-DbCoh-proper}}]
  \label{t:intro-scheme-proper+reg-iff-DbCoh-proper}
  Let $X$ be a separated scheme $X$ of finite type over a field
  $k.$
  Then $D^b(\Coh(X))$ is proper over $k$ if and only if $X$ is
  proper over $k$ and regular.
\end{theorem}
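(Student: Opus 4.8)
The plan is to prove both implications by combining the characterisation of properness of $\mfPerf(X)$ (Theorem~\ref{t:intro-scheme-proper-iff-Perf-proper}) with the comparison between $\mfPerf(X)$ and $D^b(\Coh(X))$ that is governed by regularity of $X$. The key observation is that $\mfPerf(X) = D^b(\Coh(X))$ precisely when $X$ is regular: for a regular Noetherian separated scheme every coherent sheaf has a finite locally free resolution, hence every bounded complex of coherent sheaves is perfect, while the reverse inclusion always holds; conversely, if $X$ is not regular there is a closed point $x$ whose residue field $k(x)$, viewed as a skyscraper sheaf, lies in $D^b(\Coh(X))$ but has unbounded $\Tor$ against itself and so is not perfect. (Note that a scheme of finite type over a field is automatically Noetherian and $X\times_k X$ is Noetherian, so the hypotheses of Theorems~\ref{t:perf-smooth-intro} and \ref{t:intro-scheme-proper-iff-Perf-proper} are available whenever their resolution-property hypothesis is met, and a regular scheme of finite type over $k$ has the resolution property locally but we will not need the global resolution property in the regular case since then $\mfPerf(X)=D^b(\Coh(X))$ directly.)

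\textbf{The ``if'' direction.} Assume $X$ is proper over $k$ and regular. Since $X$ is regular, $\mfPerf(X) = D^b(\Coh(X))$ as triangulated $k$-linear categories, by the finite-resolution argument above. Since $X$ is proper over $k$, Theorem~\ref{t:intro-scheme-proper-iff-Perf-proper} shows $\mfPerf(X)$ is proper over $k$: it has a classical generator, and all $\Hom$-spaces between objects, summed over all shifts, are finite-dimensional over $k$. Transporting this along the equality $\mfPerf(X)=D^b(\Coh(X))$ gives that $D^b(\Coh(X))$ is proper over $k$.

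\textbf{The ``only if'' direction.} Assume $D^b(\Coh(X))$ is proper over $k$. First, $\mfPerf(X)$ is a full triangulated subcategory of $D^b(\Coh(X))$ that is closed under direct summands and contains $\cO_X$, which classically generates it; hence $\mfPerf(X)$ inherits properness (a classical generator of $\mfPerf(X)$ still classically generates $\mfPerf(X)$, and the $\Hom$-finiteness is inherited from the ambient category). Therefore, by Theorem~\ref{t:intro-scheme-proper-iff-Perf-proper}, once we know $X$ has the resolution property we may conclude $X$ is proper over $k$; but we should sidestep the resolution-property hypothesis — the cleanest route is to use properness of $D^b(\Coh(X))$ directly. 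Regularity: if $X$ were not regular, pick a point $x$ where the local ring $\cO_{X,x}$ is not regular; then $k(x)$ (suitably interpreted as a coherent sheaf with support $\overline{\{x\}}$) sits in $D^b(\Coh(X))$, and a Serre-type computation shows $\bigoplus_n \Hom_{D^b(\Coh(X))}(k(x),[n]k(x))$ contains $\bigoplus_n \Ext^n_{\cO_{X,x}}(k(x),k(x))$, which is infinite-dimensional because $\cO_{X,x}$ has infinite global dimension — contradicting properness. So $X$ is regular. Properness of $X$ over $k$: with $X$ now regular we again have $\mfPerf(X)=D^b(\Coh(X))$, so $\mfPerf(X)$ is proper; a regular scheme of finite type over $k$ has the resolution property (every coherent sheaf is a quotient of a vector bundle, Zariski-locally, and one can globalise using a projective closure together with the regularity — this is the standard fact that smooth or regular finite-type $k$-schemes admit such resolutions), so Theorem~\ref{t:intro-scheme-proper-iff-Perf-proper} applies and yields that $X$ is proper over $k$.

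\textbf{Main obstacle.} The delicate point is the regularity implication: one must produce, for a non-regular point $x$, an actual object of $D^b(\Coh(X))$ whose self-$\Ext$ is infinite-dimensional over $k$, and verify that local $\Ext$-groups over $\cO_{X,x}$ genuinely contribute (not just over the larger residue-field algebra but over $k$). This requires a careful local-to-global argument — completing at $x$, using that $\cO_{X,x}$ non-regular forces $\mathrm{gldim}\,\cO_{X,x}=\infty$ by Serre's theorem, hence $\Ext^n_{\cO_{X,x}}(k(x),k(x))\neq 0$ for infinitely many $n$, and then noting each such group is a nonzero $k(x)$-vector space, so contributes at least $[k(x):k]\geq 1$ dimensions over $k$ — combined with the comparison $\Hom_{D^b(\Coh(X))}(k(x),[n]k(x))\cong \Ext^n_{\cO_{X,x}}(k(x),k(x))$ coming from the fact that skyscraper-type sheaves see only the local ring. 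The secondary technical point is the clean identification $\mfPerf(X)=D^b(\Coh(X))$ for regular $X$ and its compatibility with the enhancements used in Theorem~\ref{t:intro-scheme-proper-iff-Perf-proper}; this should follow formally but deserves an explicit remark.
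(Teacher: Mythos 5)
Your proposal is correct, and for the ``if'' direction and for the regularity implication it coincides with the paper's argument: regular plus finite type gives $\mfPerf(X)=D^b(\Coh(X))$ (the paper's Proposition~\ref{p:regular-vs-singularity-cat}; note that your phrase ``every coherent sheaf has a finite locally free resolution'' should be read locally, since globally that already presupposes the resolution property, but only local finite resolutions are needed for perfectness), and a non-regular closed point $x$ yields $\Ext^n_{\mathcal{O}_{X,x}}(\kappa(x),\kappa(x))\neq 0$ for infinitely many $n$, transported into $D^b(\Coh(X))$ exactly as you describe (the paper does this via the adjunction along $\Spec\mathcal{O}_{X,x}\to X$, which is the same computation as your ``skyscrapers see only the local ring''). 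Where you genuinely diverge is in deducing properness of $X$: the paper applies Lemma~\ref{l:affine-curve} directly, taking an affine closed curve $i\colon C\subset X$ in a non-proper $X$ and noting that $\mathcal{O}_X$ and $i_*\mathcal{O}_C$ are both coherent with $\dim_k\Hom(\mathcal{O}_X,i_*\mathcal{O}_C)=\infty$, so non-properness of $D^b(\Coh(X))$ needs neither regularity nor the resolution property. You instead first establish regularity, identify $D^b(\Coh(X))$ with $\mfPerf(X)$, invoke that a regular Noetherian separated scheme has the resolution property (Kleiman, as the paper itself uses in Corollary~\ref{c:smooth-quasi-projective}), and then quote the converse half of Theorem~\ref{t:intro-scheme-proper-iff-Perf-proper}. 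This is logically sound and non-circular, and it buys brevity by reusing the $\mfPerf$-theorem as a black box; the paper's route is more self-contained and shows the stronger statement that non-properness of $X$ alone already kills properness of $D^b(\Coh(X))$, independently of any regularity or resolution-property hypothesis, though both arguments ultimately rest on the same affine-curve lemma.
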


The proofs of the two
Theorems~\ref{t:intro-scheme-proper-iff-Perf-proper} 
and \ref{t:intro-scheme-proper+reg-iff-DbCoh-proper}
are short
and independent of the other results of this article.
A statement similar to Theorem~\ref{t:intro-scheme-proper-iff-Perf-proper}
appeared in the recent preprint
\cite{orlov-smooth-proper-glueing-arxiv}.

% Contents of the paper:
% section~\ref{sec:deriv-categ-sheav-subcats} we 
% section~\ref{sec:enhancements}
% section~\ref{sec:smoothn-categ-schem}
% section~\ref{sec:four-mukai-funct}
% appendix~\ref{sec:some-results-sheaf}
% appendix~\ref{sec:descr-triang-categ}
% appendix~\ref{sec:vcech-enhanc-loc-integral}

% Let us finally mention that
% appendix~\ref{sec:vcech-enhanc-loc-integral} is included because
% its results are 
% used and referred to in \cite{valery-olaf-matrix-factorizations-and-motivic-measures}.
% In this appendix  
% we define and study \v{C}ech enhancements
% for locally integral schemes.
% % in appendix~\ref{sec:vcech-enhanc-loc-integral}. 
% In particular we prove that the
% duality $\bR\sheafHom(-, \mathcal{O}_X) \colon \mfPerf(X)^\opp
% \sira \mfPerf(X)$ lifts to a dg endofunctor
% of the $*$-\v{C}ech enhancement of 
% $\mfPerf(X)$ (see Corollary~\ref{c:duality-lift-well-defined}). 

Let us finally mention that
we define and study \v{C}ech enhancements
for locally integral schemes in
appendix~\ref{sec:vcech-enhanc-loc-integral}; this appendix is
included because its results are 
used and referred to in \cite{valery-olaf-matrix-factorizations-and-motivic-measures}.

\subsection*{Acknowledgements}
\label{sec:acknowledgements}

% We are grateful Ragnar-Olaf Buchweitz for providing the reference
% \cite{avramov-vigue-poirrier-hochschild-homology-criteria-for-smoothness}. 
We thank Ragnar-Olaf Buchweitz, Henning Krause, Alexander
Kuz\-net\-sov, Daniel Pomerleano, Anatoly Preygel, Paolo
Stellari, and Greg Stevenson for helpful discussions. The
results of this article were reported on at a workshop in
Oberwolfach in May 2014, see \cite{olaf-OWR-enhancements}. We
thank the participants for their interest. We also thank the
referee for detailed comments.

The first author was supported by NSA grant H98230-14-1-0110.
The second author was supported by postdoctoral
fellowships of the DAAD and the DFG,
% German Academic Exchange Service (DAAD)
and by the SPP 1388 and the SFB/TR 45 of the DFG.
% The second author acknowledges support by a postdoctoral
% fellowship of the German Academic Exchange Service (DAAD), and by
% the Collaborative Research Center SFB Transregio 45 and the
% priority program SPP 1388 of the German Science foundation
% (DFG). He thanks these institutions.

% \newpage

\subsection*{Conventions}
\label{sec:conventions}

When we take products of schemes (resp.\ tensor products of
algebras 
or dg (= differential $\DZ$-graded) algebras
or modules over algebras) and work over a field $k$ we
write $\times$ 
(resp.\ $\otimes$) instead of $\times_k=\times_{\Spec k}$ (resp.\
$\otimes_k$). If $\mathcal{F}$ and $\mathcal{G}$ are sheaves of
$\mathcal{O}_X$-modules on a
ringed space $(X, \mathcal{O}_X),$ we usually abbreviate
$\mathcal{F} \otimes \mathcal{G} =
\mathcal{F} \otimes_{\mathcal{O}_X} \mathcal{G},$
$\sheafHom(\mathcal{F}, \mathcal{G})=
\sheafHom_{\mathcal{O}_X}(\mathcal{F}, \mathcal{G})$ and
$\mathcal{F}^\cek=\sheafHom(\mathcal{F}, \mathcal{O}_X).$
If $\mathcal{A}$ is a dg category, 
$D(\mathcal{A})$ denotes the derived category of dg
$\mathcal{A}$-modules and $\per(\mathcal{A})$ its subcategory
of compact (or perfect) objects.

\section{Derived categories of sheaves and subcategories}
\label{sec:deriv-categ-sheav-subcats}

Let $X$ be a scheme. By a sheaf we mean a sheaf of
$\mathcal{O}_X$-modules.
We denote by $\Sh(X)$ (resp.\ $\Qcoh(X)$) the category of sheaves
(resp.\ quasi-coherent sheaves) on $X.$

% of quasi-coherent sheaves 
% on $X$ by $C(\Qcoh(X)),$ by $Z^0(C(\Qcoh(X)))$ the category with
% the same objects but closed degree zero morphisms, and by
% $[C(\Qcoh(X))]$ the homotopy category of $C(\Qcoh(X)).$
% We use similar notation for other dg categories.  

\subsection{Derived categories of sheaves}
% and subcategories}
\label{sec:deriv-categ-sheav}

Let $D(\Sh(X))$ (resp.\ $D(\Qcoh(X))$) denote the
(unbounded) 
derived category of sheaves (resp.\ quasi-coherent sheaves) on
$X.$ Let $D_{\Qcoh}(\Sh(X))$ be the full subcategory of $D(\Sh(X))$
consisting of objects with quasi-coherent cohomologies.
By
$\mfPerf'(X)$ we denote the full 
subcategory of $D(\Sh(X))$) whose objects are perfect, i.\,e.\
locally isomorphic to a bounded complex of vector
bundles;
% (= locally free sheaves of finite type).
it is a thick subcategory (see
\cite[Prop.~2.2.13]{thomason-trobaugh-higher-K-theory}).
If $X$ is quasi-compact and quasi-separated, 
then 
$\mfPerf'(X)$ consists precisely of the compact objects of
$D_\Qcoh(\Sh(X)),$ and $D_\Qcoh(\Sh(X))$ is generated by a single
perfect object (see \cite[Thm.~3.1.1]{bondal-vdbergh-generators}).

Assume that our scheme $X$ is quasi-compact and separated.
Then the obvious
functor $D(\Qcoh(X)) \ra D(\Sh(X))$ defines an equivalence
\begin{equation*}
  D(\Qcoh(X)) \sira D_\Qcoh(\Sh(X)) \subset D(\Sh(X))
\end{equation*}
(see \cite[Cor.~5.5]{neeman-homotopy-limits}).
By
$\mfPerf(X)$
% (resp.\ $\mfPerf'(X)$) 
we denote the full 
subcategory of $D(\Qcoh(X))$ corresponding to $\mfPerf'(X)$ under
this equivalence; it consists precisely of those objects that are locally
isomorphic to a bounded complex of vector bundles.
% (resp.\ $D(\Sh(X))$) 
% whose objects are perfect, i.\,e.\
% locally isomorphic to a bounded complex of vector
% bundles (= locally free sheaves of finite type).
% It is a thick subcategory (see
% \cite[Prop.~2.2.13]{thomason-trobaugh-higher-K-theory} for
% $\mfPerf'(X)$).
% Then 
% $\mfPerf(X)$ (resp.\
% $\mfPerf'(X)$) consists precisely of the compact objects of $D(\Qcoh(X))$
% (resp.\ $D(\Sh(X))$) (see \cite[Thm.~3.1.1]{bondal-vdbergh-generators}).

Assume in addition that $X$ is Noetherian. Let
$\Coh(X)$ be the category of coherent sheaves on $X$ and 
% let
$D^-(\Coh(X))$ 
%be 
its bounded above derived
category. The obvious functor $D^-(\Coh(X)) \ra
D(\Qcoh(X))$ then defines an equivalence
\begin{equation}
  \label{eq:D-minus-coh-noetherian}
  D^-(\Coh(X)) \sira D^-_\Coh(\Qcoh(X)) \subset D(\Qcoh(X))
\end{equation}
where $D^-_\Coh(\Qcoh(X)) \subset D(\Qcoh(X))$ is the full
subcategory of complexes whose cohomologies are bounded above and
coherent 
(see \cite[Exp.~II, Prop.~2.2.2,
p.~167]{berthelot-grothendieck-illusie-SGA-6}). This of course
remains true if we replace ``$-$'' by ``$b$'' and ``bounded
above'' by ``bounded''. 
Since $X$ is quasi-compact we have $\mfPerf(X) \subset D^b_\Coh(\Qcoh(X)).$

The relations between the above categories are summarized by the
following diagram
where the upper index ``$\cpt$'' stands for ``compact objects''.

% \begin{equation*}
%   \xymatrix@C1.5em{
%     {D^b(\Coh(X))} \ar[r]^-\sim & 
%     {D^b_{\Coh}(\Qcoh(X))} \ar@{}[r]|-{\subset} &
%     {D(\Qcoh(X))} \ar[r]^-\sim &
%     {D_{\Qcoh}(\Sh(X))} \ar@{}[r]|-{\subset} &
%     {D(\Sh(X))} \\
%     & 
%     {\mfPerf(X)} 
%     \ar@{}[u]|-{\cup}
%     \ar@{}[r]|-{=} 
%     &
%     {D(\Qcoh(X))^\cpt} \ar[r]^-\sim \ar@{}[u]|-{\cup} &
%     {D_{\Qcoh}(\Sh(X))^\cpt} \ar@{}[u]|-{\cup} \ar@{}[r]|-{=} &
%     {\mfPerf'(X)} 
%   }
% \end{equation*}
\begin{equation*}
  \xymatrix@C1.5em{
    {D^-(\Coh(X))} \ar[r]^-\sim & 
    {D^-_{\Coh}(\Qcoh(X))} \ar@{}[r]|-{\subset} &
    {D(\Qcoh(X))} \ar[r]^-\sim &
    {D_{\Qcoh}(\Sh(X))} \ar@{}[r]|-{\subset} &
    {D(\Sh(X))} \\
    {D^b(\Coh(X))} \ar[r]^-\sim \ar@{}[u]|-{\cup} & 
    {D^b_{\Coh}(\Qcoh(X))} \ar@{}[r]|-{\supset} \ar@{}[u]|-{\cup} &
    {D(\Qcoh(X))^\cpt} \ar[r]^-\sim \ar@{}[u]|-{\cup} &
    {D_{\Qcoh}(\Sh(X))^\cpt} \ar@{}[u]|-{\cup} \\
    & 
    &
    {\mfPerf(X)} \ar[r]^-\sim
    \ar@{}[u]|-{\verteq} % \iar[luu] 
    &
    {\mfPerf'(X).} \ar@{}[u]|-{\verteq} 
  }
\end{equation*}
% Here the upper index $\cpt$ stands for "compact objects".
% For $* \in \{b, +, -\}$ and $? \in \{\Coh, \Qcoh\}$ 
% we define
% $D^*_?(\Sh(X))$ to be the full subcategory of $D(\Sh(X))$
% consisting of objects whose cohomology is bounded/bounded below/bounded above and such
% that each individual cohomology is a coherent/quasi-coherent
% sheaf. 
%For example,\footnote{
%  taucht beides vorne schon auf
%} we have $D^-(\Coh(X)) \sira D^-_\Coh(\Sh(X))$
%and $D^b(\Coh(X)) \sira D^b_\Coh(\Sh(X)).$

\begin{proposition}
  \label{p:regular-vs-singularity-cat}
  Let $X$ be a Noetherian separated scheme.
  Then $\mfPerf(X)=D^b(\Coh(X))$ implies that $X$ is
  regular.
  If $X$ is of finite dimension, the converse is also true. 
\end{proposition}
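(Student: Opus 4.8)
The plan is to prove the two implications separately using standard characterizations of regularity via finite global dimension on the stalks, together with the fact that on a Noetherian scheme perfect complexes are exactly those of finite Tor-dimension, or equivalently those that are locally quasi-isomorphic to bounded complexes of finite free modules.

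First I would prove that $\mfPerf(X)=D^b(\Coh(X))$ implies $X$ regular. Regularity is a local condition: $X$ is regular if and only if the local ring $\cO_{X,x}$ is regular for every (closed) point $x\in X$, which by the Auslander--Buchsbaum--Serre theorem is equivalent to $\cO_{X,x}$ having finite global dimension. Fix a point $x$ and a coherent sheaf $\cF$; by hypothesis $\cF\in D^b(\Coh(X))=\mfPerf(X)$, so $\cF$ is a perfect complex, hence its stalk $\cF_x$ has finite Tor-dimension over $\cO_{X,x}$. Applying this with $\cF=\cO_Z$ for $Z$ the reduced closed subscheme $\{\ol{\{x\}}\}$, or more directly with $\cF$ ranging over all coherent sheaves, one sees that every finitely generated $\cO_{X,x}$-module has finite projective dimension (finite Tor-dimension plus finite generation over a Noetherian local ring gives finite projective dimension). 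In particular the residue field $\kappa(x)$ has finite projective dimension over $\cO_{X,x}$, so $\cO_{X,x}$ has finite global dimension and is therefore regular. Since $x$ was arbitrary, $X$ is regular.

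For the converse, assume $X$ is regular of finite dimension $d$. I need $D^b(\Coh(X))\subset\mfPerf(X)$ (the inclusion $\mfPerf(X)\subset D^b(\Coh(X))$ was already noted in the excerpt, using quasi-compactness). Let $\cF\in D^b(\Coh(X))$; to show $\cF$ is perfect it suffices to check this locally, so we may assume $X=\Spec R$ with $R$ a regular Noetherian ring of Krull dimension $d$. Then $R$ has global dimension $d$ (this is where finite-dimensionality enters: a regular ring of infinite Krull dimension need not have finite global dimension). Hence every bounded complex of finitely generated $R$-modules admits a bounded resolution by finitely generated projective (hence, Zariski-locally, free) $R$-modules of length at most $d+$(amplitude of $\cF$), so it is perfect. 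Globalizing, $\cF\in\mfPerf(X)$, and the two categories coincide.

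The step I expect to be the main obstacle is the clean passage between the global statement "$\cF$ is a perfect complex on $X$" and the purely local, module-theoretic assertions about $\cO_{X,x}$ — in particular making sure that "perfect" is tested stalk-by-stalk and that finite Tor-dimension of all coherent stalks upgrades to finite global dimension of the local ring (one must invoke that a Noetherian local ring of finite weak/Tor dimension has finite global dimension, equivalently that it is regular). The finite-dimensionality hypothesis in the converse is essential precisely to conclude $\gldim R<\infty$ from regularity, and I would flag explicitly that this is the only place it is used.
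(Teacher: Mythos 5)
Your proposal is correct and takes essentially the same route as the paper: Auslander--Buchsbaum--Serre together with finite projective dimension of the residue field for the forward implication (you extract $\kappa(x)$ as the stalk of $\mathcal{O}_{\ol{\{x\}}}$ at an arbitrary point, the paper uses $i_*(\kappa(x))$ at closed points), and for the converse the same local syzygy/finite-global-dimension argument on affine opens, with finite dimensionality entering exactly where you flag it.
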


\begin{proof}
  We always have $\mfPerf(X) \subset D^b(\Coh(X)).$
  Recall the Auslander-Buchsbaum-Serre theorem 
  % (\cite[Thm.~19.2]{matsumura-comm-ring},
  (\cite[Thm.~2.2.7]{bruns-herzog-cm},
  \cite[IV.D]{serre-local-algebra}) which says that a Noetherian
  local ring $(A, \mfm)$ is regular if and only if
  $\gldim A < \infty$ if and only if
  % every finitely generated $A$-module $M$ has finite projective
  % dimension 
  % if and only if 
  $A/\mfm$ has finite projective dimension as an $A$-module;
  moreover, if
  $A$ is regular, then $\dim A= \gldim A.$

  % It is well-known that a Noetherian ring $R$ of finite Krull
  % dimension is regular if and only if it has finite global
  % dimension.

  Assume that $\mfPerf(X)=D^b(\Coh(X)).$
  Since any point of $X$ contains a closed point in its closure
  and the localization of a regular local ring is regular it is
  enough to show that the local ring of each closed point is
  regular.

  Let $x \in X$ be a closed point. Equip $\{x\}=\ol{\{x\}}$ with
  the induced 
  reduced scheme structure and let $i\colon \{x\} \ra X$ be the
  closed embedding. View $\kappa(x)=\mathcal{O}_{X,x}/\mfm_x$
  as a coherent sheaf on $\{x\}.$ Then $i_*(\kappa(x)) \in
  \Coh(X) \subset D^b(\Coh(X)) =\mfPerf(X).$ This implies that
  the restriction of $i_*(\kappa(x))$ to an affine open
  neighborhood $U$ 
  % $\Spec R$ 
  of $x$ has a finite resolution by
  finitely generated projective
  $\mathcal{O}_X(U)$-modules. Taking the stalk at 
  $x$ shows that the $\mathcal{O}_{X,x}$-module
  $(i_*(\kappa(x)))_x=\kappa(x)$ has finite projective dimension.
  Hence $\mathcal{O}_{X,x}$ is regular.

  Assume that $X$ is regular and of finite dimension. By
  intelligent truncation it is sufficient to show that any $F \in
  \Coh(X)$ is in $\mfPerf(X).$ Let $U=\Spec R \subset X$ be an
  affine open subset and $d=\dim R <\infty.$ 
  Choose an exact sequence $0 \ra K \ra P^{-d} \ra \dots \ra P^0 \ra
  F|_U \ra 0$ where all $P^i$ are
  finitely generated projective $R$-modules.
  Localizing at an arbitrary $\mfp \in \Spec R$ shows that
  $K_\mfp$ is a projective $R_\mfp$-module
  (here we use that $\gldim R_\mfp=\dim R_\mfp \leq d$).
  Hence $K$ is a
  projective $R$-module. This shows that $F|_U \in \mfPerf(U)$
  and hence $F \in \mfPerf(X).$
\end{proof}

\subsection{Resolution property}
\label{sec:resolution-property}

A Noetherian scheme is said to have the resolution property if
any coherent sheaf is a quotient of a vector bundle.
For example, 
any Noetherian separated scheme that is integral and locally
factorial (for example regular)
has the resolution property, by a theorem of Kleiman
\cite[Ex.~III.6.8]{Hart};
any Noetherian scheme with an ample family of line bundles
has the resolution property, by
\cite[Lemma.~2.1.3(b)]{thomason-trobaugh-higher-K-theory}.

We say that a scheme $X$ satisfies condition~\ref{enum:RES}
or that $X$ is a \ref{enum:RES}-scheme (for ``resolution'')
if
\begin{enumerate}[label=(RES)]
\item
  \label{enum:RES}
  $X$ is a Noetherian separated scheme of finite 
  % Krull 
  dimension
  that has the resolution property.
  % , i.\,e.\ any coherent sheaf is a quotient of a vector bundle.
\end{enumerate}

\begin{proposition}
  \label{p:D-minus-Coh(Sh)-strict-coherent}
  If $X$ satisfies condition~\ref{enum:RES} then 
  any object of $D^-_\Coh(\Sh(X))$ 
  is isomorphic in $D(\Sh(X))$ to a bounded above complex of
  vector bundles.
\end{proposition}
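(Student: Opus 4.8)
The plan is to reduce to the case of a bounded above complex of coherent sheaves and then resolve it by vector bundles through a hand‑built descending induction, using the resolution property at each step. For the reduction, recall that $X$ is quasi‑compact and separated, so $D(\Qcoh(X)) \sira D_\Qcoh(\Sh(X))$ by \cite[Cor.~5.5]{neeman-homotopy-limits}; an object of $D(\Sh(X))$ with bounded above coherent cohomology lies in $D_\Qcoh(\Sh(X))$, corresponds to an object of $D^-_\Coh(\Qcoh(X))$ (cohomology is preserved, and a quasi-coherent sheaf with coherent underlying sheaf is coherent since $X$ is Noetherian), and by \eqref{eq:D-minus-coh-noetherian} is therefore isomorphic in $D(\Sh(X))$ to a bounded above complex $F^\bullet$ of coherent sheaves, say with $F^i=0$ for $i>n$; write $d_F^i$ for its differential. (Alternatively one may invoke \cite[Exp.~II]{berthelot-grothendieck-illusie-SGA-6} for this reduction.) It now suffices to construct a quasi-isomorphism $\phi\colon P^\bullet\to F^\bullet$ with each $P^i$ a vector bundle and $P^i=0$ for $i>n$.

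I would build $P^\bullet$ by descending induction on $q\le n$, maintaining at stage $q$ a complex $(\cdots\to P^q\xra{d^q}P^{q+1}\to\cdots\to P^n\to 0)$ of vector bundles together with a chain map $\phi$ to $F^\bullet$ such that (a) $H^j(\phi)$ is an isomorphism for all $j\ge q+1$, and (b) the induced map $\ker(d^q)\to H^q(F^\bullet)$ is surjective; the start ($q=n+1$, all terms zero) is trivial because $H^{>n}(F^\bullet)=0$. For the inductive step, let $K\subseteq\ker(d^{q+1})$ be the kernel of the surjection $\ker(d^{q+1})\twoheadrightarrow H^{q+1}(F^\bullet)$ provided by (b); form the fibre product $P'=F^q\times_{\im(d_F^q)}K$, where the map $K\to\im(d_F^q)$ is the restriction of $\phi^{q+1}$ (which lands there by the definition of $K$); choose, using the resolution property, a vector bundle $Q\twoheadrightarrow P'$ and a vector bundle $R\twoheadrightarrow\ker(d_F^q\colon F^q\to F^{q+1})$; and set $P^q=Q\oplus R$, with $d^q$ equal to the composite $Q\twoheadrightarrow P'\to K\hookrightarrow P^{q+1}$ on $Q$ and $0$ on $R$, and $\phi^q$ equal to $Q\twoheadrightarrow P'\to F^q$ on $Q$ and $R\twoheadrightarrow\ker(d_F^q)\hookrightarrow F^q$ on $R$. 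A routine check shows that $\phi$ remains a chain map, that $\im(d^q)=K$ so that $H^{q+1}(\phi)$ is the isomorphism $\ker(d^{q+1})/K\xra{\sim}H^{q+1}(F^\bullet)$ (establishing (a) at stage $q$), and that $R\subseteq\ker(d^q)$ maps onto $H^q(F^\bullet)$ (establishing (b)). Passing to the limit yields a bounded above complex $P^\bullet$ of vector bundles, and condition (a), read off in the limit, says exactly that $\phi$ is a quasi-isomorphism.

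The crux is this inductive step. Since vector bundles are not projective objects of $\Sh(X)$, there is no Cartan--Eilenberg shortcut and the resolution must be assembled explicitly; this forces $P^q$ to be chosen as a vector bundle surjecting onto the a priori only coherent sheaves $P'$ and $\ker(d_F^q)$, while simultaneously arranging that $d^q$ kills precisely the part of $H^{q+1}(F^\bullet)$ already accounted for and that $\ker(d^q)$ still surjects onto $H^q(F^\bullet)$ — getting all three of these to hold at once is the one delicate point. Noetherianness of $X$ is what guarantees that the auxiliary sheaves occurring here (kernels and images of the $d_F^i$, the cohomologies $H^i(F^\bullet)$, and the fibre products $P'$) are coherent, so that the resolution property can be applied; separatedness enters only through the reduction to a complex of coherent sheaves, and the finite‑dimensionality in \ref{enum:RES} is not needed for this statement.
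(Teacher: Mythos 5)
Your argument is correct, but it takes a different route from the paper. The paper identifies $D^-_\Coh(\Sh(X))$ with the pseudo-coherent complexes via \cite[Example~2.2.8]{thomason-trobaugh-higher-K-theory} and then simply observes that the proof of \cite[Prop.~2.3.1(e)]{thomason-trobaugh-higher-K-theory} goes through once its key input \cite[Lemma~2.1.3(c)]{thomason-trobaugh-higher-K-theory} is verified, which is done using \cite[Exercise II.5.15]{Hart} together with the resolution property (that lemma is needed there because the sheaf being covered is a priori only quasi-coherent). You instead first reduce, via $D(\Qcoh(X))\sira D_\Qcoh(\Sh(X))$ and the Noetherian equivalence \eqref{eq:D-minus-coh-noetherian}, to an honest bounded above complex of coherent sheaves, and then run an explicit descending induction in which the auxiliary sheaves (kernels, images, cohomologies, and your fibre product $P'$) are all coherent, so the resolution property applies to them directly and no analogue of \cite[Lemma~2.1.3(c)]{thomason-trobaugh-higher-K-theory} or \cite[Exercise II.5.15]{Hart} is needed; your inductive step (with $\im(d^q)=K$ forcing $H^{q+1}(\phi)$ to become an isomorphism while the summand $R$ keeps $\ker(d^q)\to H^q(F^\bullet)$ surjective) is the standard resolution argument and checks out. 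What each approach buys: yours is self-contained and elementary but leans on Noetherianness through the reduction to coherent complexes, whereas the paper's citation of the pseudo-coherence machinery is shorter and would make sense in greater generality; your closing observation that finite dimensionality in \ref{enum:RES} is not used is consistent with the paper's proof as well.
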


If on a Noetherian scheme $X$ any coherent sheaf is isomorphic in
$D(\Sh(X))$ to a bounded above complex of vector bundles, then it
is easy to see (using intelligent truncation) that $X$ has the
resolution property.  

\begin{proof}
  The objects of $D^-_\Coh(\Sh(X))$ are precisely the 
  pseudo-coherent complexes, by
  \cite[Example~2.2.8]{thomason-trobaugh-higher-K-theory}.

  Now observe that the proof of 
  \cite[Prop.~2.3.1.(e)]{thomason-trobaugh-higher-K-theory}
  works without the assumption that $X$ has an ample family.
  Its important ingredient
  \cite[Lemma~2.1.3(c)]{thomason-trobaugh-higher-K-theory}
  is true in our setting. Namely, if
  $\mathcal{G} \ra \mathcal{F}$ is an epimorphism
  of quasi-coherent sheaves with $\mathcal{F}$ coherent,
  then 
  there is a vector bundle $\mathcal{E}$ and a morphism
  $\mathcal{E} \ra \mathcal{G}$ such that the composition
  $\mathcal{E} \ra \mathcal{G} \ra \mathcal{F}$ is an
  epimorphism onto $\mathcal{F}.$ This follows from
  \cite[Exercise II.5.15]{Hart} and the resolution property.
\end{proof}

\subsection{Strictly perfect complexes}
\label{sec:strictly-perf-compl}

We say that a scheme $X$ satisfies condition~\ref{enum:GSP}
or that $X$ is a \ref{enum:GSP}-scheme (for
``globally strictly perfect'') if
\begin{enumerate}[label=(GSP)]
\item
  \label{enum:GSP}
  $X$ is a quasi-compact separated
  scheme such that any perfect
  complex on $X$ is isomorphic in $D(\Sh(X))$ to a bounded
  complex of vector bundles (= a strictly perfect
  complex). 
\end{enumerate}

\begin{example}
  \label{exam:perfect-on-affine}
  Any affine scheme $U$ satisfies condition~\ref{enum:GSP} by
  \cite[Prop.~2.3.1(d)]{thomason-trobaugh-higher-K-theory}.
  
  Another way to see this is as follows.
  Let $R=\Gamma(U, \mathcal{O}_U).$
  Since $U$ is quasi-compact and separated, we have
  $D(R)=D(\Qcoh(U)) \sira D_\Qcoh(\Sh(U))$ where $D(R)$ is the derived
  category of $R$-modules.
  It is well-known that $D(R)^\cpt=\per(R)$ where $\per(R)$
  consists of those complexes that are isomorphic to a bounded
  complex of projective $R$-modules.
  Hence $\per(R)=\mfPerf(U)\sira
  \mfPerf'(U).$ 

  In particular, the restriction of a perfect complex on an
  arbitrary scheme to any affine open subscheme is 
  isomorphic to a bounded complex of vector bundles.
\end{example}

We give some criteria for a scheme to satisfy
condition~\ref{enum:GSP} in 
the following Remark~\ref{rem:onGSP}. These criteria show: Any
scheme $X$ which is 
quasi-projective over an 
affine scheme satisfies condition~\ref{enum:GSP} (since it has an
ample family of line bundles).
Any regular (or, more generally, integral locally factorial)
Noetherian separated scheme 
satisfies condition~\ref{enum:GSP}
because it has the resolution property.
% by a theorem of Kleiman \cite[Ex.~III.6.8]{Hart}).

\begin{remark}
  \label{rem:onGSP}
  \rule{1mm}{0mm}
  \begin{enumerate}
  \item
    \label{enum:ample-family}
    A separated scheme $X$ that 
    has an ample family of line bundles (and hence is
    quasi-compact) satisfies
    condition~\ref{enum:GSP}, by
    \cite[Prop.~2.3.1(d)]{thomason-trobaugh-higher-K-theory}.
  \item 
    \label{enum:tt-lemma213c}
    Let $X$ be a quasi-compact separated scheme.
    Assume that for any epimorphism $\mathcal{G} \ra \mathcal{F}$
    of 
    quasi-coherent sheaves with $\mathcal{F}$ of finite type,
    there is a vector bundle $\mathcal{E}$ and a morphism
    $\mathcal{E} \ra \mathcal{G}$ such that the composition
    $\mathcal{E} \ra \mathcal{G} \ra \mathcal{F}$ is an
    epimorphism onto $\mathcal{F}.$
    Then $X$ satisfies 
    condition~\ref{enum:GSP}. This follows by inspection of the
    proof of 
    \cite[Prop.~2.3.1]{thomason-trobaugh-higher-K-theory},
    our condition being its important ingredient
    \cite[Lemma~2.1.3(c)]{thomason-trobaugh-higher-K-theory}.
  \item 
    \label{enum:resolution-property}
    Let $X$ be a Noetherian separated scheme that has the
    resolution property.
    % , i.\,e.\ any coherent sheaf is a quotient of a vector bundle.
    (This is a little bit weaker than condition~\ref{enum:RES}.)
    Then $X$ satisfies condition~\ref{enum:GSP}. 
    This follows easily from \ref{enum:tt-lemma213c} using
    \cite[Exercise II.5.15]{Hart}. 
  \end{enumerate}
  We refer the reader to 
  \cite[2.1.2]{thomason-trobaugh-higher-K-theory}
  and the discussion in \cite[section 2]{totaro-resolution} for
  examples and more information. 
\end{remark}

% \newpage

\subsection{Some useful facts}
\label{sec:some-useful-facts}

The category of injective sheaves (resp.\ injective
quasi-coherent sheaves) on a scheme $X$ is denoted $\InjSh(X)$
(resp.\ $\InjQcoh(X)$).

\begin{theorem}
  \label{t:injective-in-qcoh-vs-all-OX}
  Let $X$ be a locally Noetherian scheme.
  \begin{enumerate}
  \item
    \label{enum:qcoh-embeds-in-inj-OX-that-qcoh}
    Every object of $\Qcoh(X)$ can be embedded in an object of
    $\InjSh(X) \cap \Qcoh(X).$ 
  \item
    \label{enum:inj-qcoh-equal-inj-OX-that-qcoh}
    The injective objects in $\Qcoh(X)$ are precisely the
    injective objects of $\Sh(X)$ that are
    quasi-coherent, $\InjQcoh(X) = \InjSh(X) \cap
    \Qcoh(X).$
  \item 
    \label{enum:inj-qcoh-restrict-to-inj-qcoh}
    If $I \in \Qcoh(X)$ is an injective object and $U \subset
    X$ is open, then $I|_U \in \Qcoh(U)$ is again injective.
  \item 
    \label{enum:inj-and-inj-qcoh-arbitrary-sums}
    Any direct sum of objects of $\InjSh(X)$ (resp.\
    $\InjQcoh(X)$) is in $\InjSh(X)$ (resp.\ $\InjQcoh(X)$).
  \end{enumerate}
\end{theorem}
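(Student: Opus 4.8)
The four assertions are classical: they form the standard package of facts about injective quasi-coherent sheaves on a locally Noetherian scheme, going back to Grothendieck and treated in \cite[\S III.3, \S III.6]{Hart} (for $X$ Noetherian; the locally Noetherian case is identical), in Hartshorne's \emph{Residues and Duality}, Chapter~II, and in \cite{stacks-project}. The plan is to isolate two substantive inputs and then deduce everything formally. The first input~(I) is the sheaf-theoretic Bass--Papp theorem: on a locally Noetherian scheme an arbitrary direct sum of injective $\mathcal{O}_X$-modules is injective; I would prove it by the usual Zorn's-lemma argument that reduces injectivity of an $\mathcal{O}_X$-module to the lifting property against inclusions of coherent ideal sheaves (legitimate because $X$ is locally Noetherian), for which $\Ext^1_{\mathcal{O}_X}(-,-)$ commutes with direct sums in the second variable. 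The second input~(II), the crux of the whole theorem, is the nontrivial half of \ref{enum:inj-qcoh-equal-inj-OX-that-qcoh}: an injective object of $\Qcoh(X)$ is injective in $\Sh(X)$.

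\textbf{Deducing the statements from (I) and (II).} The easy half of \ref{enum:inj-qcoh-equal-inj-OX-that-qcoh} --- an injective $\mathcal{O}_X$-module that happens to lie in $\Qcoh(X)$ is injective in $\Qcoh(X)$ --- is immediate, because $\Qcoh(X)\hookrightarrow\Sh(X)$ is a fully faithful exact embedding, so a monomorphism in $\Qcoh(X)$ is a monomorphism in $\Sh(X)$ and lifts along it exist. Together with~(II) this proves $\InjQcoh(X)=\InjSh(X)\cap\Qcoh(X)$, i.e.\ \ref{enum:inj-qcoh-equal-inj-OX-that-qcoh}. For \ref{enum:inj-qcoh-restrict-to-inj-qcoh}: restriction along an open immersion $j\colon U\hookrightarrow X$ is the inverse image functor $j^{*}$, which has the exact left adjoint $j_{!}$ (extension by zero) and therefore preserves injectives in $\Sh(-)$; hence for $I\in\InjQcoh(X)$ we get $I|_{U}\in\InjSh(U)$ by~(II), and since $I|_{U}\in\Qcoh(U)$ it lies in $\InjQcoh(U)$ by \ref{enum:inj-qcoh-equal-inj-OX-that-qcoh} applied to $U$. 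For \ref{enum:inj-and-inj-qcoh-arbitrary-sums}: the assertion for $\InjSh(X)$ is precisely~(I); and an arbitrary direct sum of quasi-coherent sheaves is quasi-coherent, so if all $I_{k}\in\InjQcoh(X)$ then $\bigoplus_{k}I_{k}\in\InjSh(X)\cap\Qcoh(X)=\InjQcoh(X)$. For \ref{enum:qcoh-embeds-in-inj-OX-that-qcoh}: $\Qcoh(X)$ is a Grothendieck abelian category, hence has enough injectives, so any $\mathcal{F}\in\Qcoh(X)$ embeds into some $I\in\InjQcoh(X)$, which by \ref{enum:inj-qcoh-equal-inj-OX-that-qcoh} lies in $\InjSh(X)\cap\Qcoh(X)$.

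\textbf{Proof of~(II) and the main obstacle.} For~(II) I would invoke Matlis's structure theory of injective modules over Noetherian rings. The affine base case is: for a Noetherian ring $A$ and an injective $A$-module $J$, the associated quasi-coherent sheaf $\widetilde{J}$ on $\Spec A$ is injective in $\Sh(\Spec A)$. Using Matlis's decomposition $J\cong\bigoplus_{i}E_{A}(A/\mathfrak{p}_{i})$ together with~(I), this reduces to the case $J=E_{A}(A/\mathfrak{p})$; that module is an $A_{\mathfrak{p}}$-module supported on $V(\mathfrak{p})$, so $\widetilde{J}=(\iota_{\mathfrak{p}})_{*}\mathcal{E}$ with $\iota_{\mathfrak{p}}\colon\Spec A_{\mathfrak{p}}\to\Spec A$ and $\mathcal{E}$ the quasi-coherent sheaf associated to $E_{A_{\mathfrak{p}}}(\kappa(\mathfrak{p}))$, and one finishes by a Baer/Zorn argument over the local ring $A_{\mathfrak{p}}$ (or simply cites \emph{Residues and Duality}, Chapter~II, for the affine statement). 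Globalizing: for $X$ locally Noetherian and $\mathcal{I}\in\InjQcoh(X)$, gluing the affine picture yields a decomposition $\mathcal{I}\cong\bigoplus_{\alpha}\mathfrak{J}(x_{\alpha})$, where $\mathfrak{J}(x)=(\iota_{x})_{*}\mathcal{E}_{x}$ with $\iota_{x}\colon\Spec\mathcal{O}_{X,x}\to X$ and $\mathcal{E}_{x}$ associated to the injective hull $E_{\mathcal{O}_{X,x}}(\kappa(x))$; since $\iota_{x}$ is flat, $\iota_{x}^{*}$ is exact and its right adjoint $(\iota_{x})_{*}$ preserves injectives, so each $\mathfrak{J}(x)$ is injective in $\Sh(X)$ by the affine base case, whence $\mathcal{I}$ is injective in $\Sh(X)$ by~(I). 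The main obstacle is exactly~(II), and inside it the affine base case ``injective $A$-module $\Rightarrow$ injective $\mathcal{O}_{\Spec A}$-module'': this does \emph{not} follow formally from injectivity in $\Qcoh$, since Baer's criterion in $\Sh(\Spec A)$ must be tested against \emph{all} ideal sheaves of $\mathcal{O}_{\Spec A}$, not only the quasi-coherent ones --- it is precisely here that the finiteness built into ``Noetherian'' is indispensable, entering through Matlis theory (and, at bottom, the flasqueness of $\widetilde{J}$).
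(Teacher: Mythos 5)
Your argument is correct, and it is essentially the approach the paper itself relies on: the paper's proof of this theorem is nothing but a pointer to Hartshorne's \emph{Residues and Duality}, II.\S 7 (as explained in \cite[Thm.~2.1]{valery-olaf-matfak-semi-orth-decomp}, cf.\ \cite[Lemma~2.1.3]{conrad-grothendieck-duality-bc}), and the two inputs you isolate --- closure of $\InjSh(X)$ under direct sums on a locally Noetherian scheme, and the Matlis-theoretic analysis showing that quasi-coherent injectives are injective as $\mathcal{O}_X$-modules --- are exactly the content of that reference. One caution about the order of deduction in your sketch of (II): obtaining the decomposition $\mathcal{I}\cong\bigoplus_\alpha \mathfrak{J}(x_\alpha)$ for an injective object $\mathcal{I}$ of $\Qcoh(X)$ by ``gluing the affine picture'' is circular if it means restricting $\mathcal{I}$ to affines and applying Matlis there, since injectivity of $\mathcal{I}|_U$ is part \ref{enum:inj-qcoh-restrict-to-inj-qcoh}, which you derive only afterwards; the standard (and easy) repair is to prove \ref{enum:qcoh-embeds-in-inj-OX-that-qcoh} first and directly --- embed an arbitrary quasi-coherent sheaf into a direct sum of the sheaves $\mathfrak{J}(x)$ using an affine cover and Matlis on each affine --- and then deduce (II) by splitting $\mathcal{I}$ off such a sum inside $\Qcoh(X)$, each $\mathfrak{J}(x)$ being injective in $\Sh(X)$ simply because it is a skyscraper whose stalk is injective over $\mathcal{O}_{X,x}$ and the stalk functor is exact (this is also the clean substitute for your ``Baer/Zorn over the local ring'' phrase, where no Baer criterion is literally available for $\mathcal{O}_X$-modules). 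This reordering has the side benefit of avoiding the appeal to the Grothendieck property of $\Qcoh(X)$ for a possibly non-quasi-compact locally Noetherian $X$, which is a nontrivial theorem not covered by the Thomason--Trobaugh reference used elsewhere in the paper.
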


\begin{proof}
  This follows from \cite[II.\S 7]{hartshorne-residues-duality}
  as explained in
  \cite[Thm.~2.1]{valery-olaf-matfak-semi-orth-decomp}
%  \cite[Thm.~\ref{semi:t:injective-in-qcoh-vs-all-OX}]{valery-olaf-matfak-semi-orth-decomp}
  (cf.\ \cite[Lemma\ 2.1.3]{conrad-grothendieck-duality-bc}).
\end{proof}

\begin{lemma}
  \label{l:qcqs-preserves-qcoh}
  If $f:X \ra Y$ is a quasi-compact quasi-separated morphism of
  schemes (for
  example an affine morphism or a morphism with Noetherian
  source or a morphism between quasi-compact quasi-separated
  schemes), then $f_*:\Sh(X) \ra \Sh(Y)$ preserves
  coproducts (even filtered colimits),
  maps $\Qcoh(X)$ to 
  $\Qcoh(Y),$ and the
  induced functor 
  $f_*:\Qcoh(X) \ra \Qcoh(Y)$ preserves coproducts.
\end{lemma}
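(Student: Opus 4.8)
The plan is to reduce the whole statement to pushforward along a morphism of \emph{affine} schemes, where all three claims are classical, and to keep track of everything through a \v{C}ech-type presentation of $f_*$. First, all three assertions are local on $Y$: for an open $V\subseteq Y$ one has $(f_*\mathcal F)|_V=(f|_{f^{-1}(V)})_*(\mathcal F|_{f^{-1}(V)})$, restriction to $V$ is exact and commutes with all colimits, and quasi-coherence is a local condition. So I may assume $Y=\Spec A$. Then $f$ quasi-compact forces $X$ quasi-compact, so $X=\bigcup_{k=1}^{n}U_k$ for finitely many affine opens $U_k$, and since $f$ (equivalently $X$) is quasi-separated, each $U_k\cap U_l$ is quasi-compact, so I fix finite affine open covers $U_k\cap U_l=\bigcup_{m}U_{klm}$.

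Next I would use that for \emph{any} sheaf $\mathcal F$ on $X$ the sheaf axiom for $\{U_k\}$, applied also to $\mathcal F|_{U_k\cap U_l}$ and the cover $\{U_{klm}\}_m$, exhibits $f_*\mathcal F$ as the kernel
\[
  f_*\mathcal F=\ker\Bigl(\,\textstyle\prod_{k}(f\comp j_k)_*(\mathcal F|_{U_k})\ \lra\ \prod_{k,l,m}(f\comp j_{klm})_*(\mathcal F|_{U_{klm}})\,\Bigr),
\]
where $j_k\colon U_k\hookrightarrow X$ and $j_{klm}\colon U_{klm}\hookrightarrow X$ are the inclusions and all products are finite; each $f\comp j_k$ and each $f\comp j_{klm}$ is a morphism of affine schemes. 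Thus the lemma is reduced to the following inputs: (a) for a morphism $g$ of affine schemes, $g_*$ maps quasi-coherent sheaves to quasi-coherent sheaves and commutes with filtered colimits; (b) $\Qcoh(Y)$ is closed in $\Sh(Y)$ under kernels and finite products; (c) in the Grothendieck abelian category $\Sh(Y)$ filtered colimits are exact (axiom AB5) and commute with finite products, hence with the finite limit above; (d) restriction $\mathcal F\mapsto\mathcal F|_U$ commutes with all colimits, being left adjoint to $(j_U)_*$. Assertions (b)--(d) are formal, and combining them with (a) over the finite index sets gives at once that $f_*\colon\Sh(X)\ra\Sh(Y)$ preserves filtered colimits and carries $\Qcoh(X)$ into $\Qcoh(Y)$.

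For the affine base case (a), write $g=\Spec\phi\colon\Spec B\ra\Spec A$. Preservation of quasi-coherence is the classical computation $g_*\widetilde M=\widetilde{{}_A M}$ for a $B$-module $M$. For filtered colimits, the distinguished opens $D(a)\subseteq\Spec A$ form a basis, $(g_*\mathcal G)(D(a))=\mathcal G(g^{-1}D(a))$, and $g^{-1}D(a)$ is a distinguished open of $\Spec B$, hence affine, hence a quasi-compact quasi-separated space; on such a space the global-sections functor commutes with filtered colimits of sheaves of abelian groups. The step I expect to be the main obstacle is precisely this last, purely topological fact, and it is the only place where the quasi-compactness and quasi-separatedness hypotheses on $f$ are genuinely used: the usual argument represents a section of $\colim_i\mathcal G_i$ over a space $W$ on a finite cover of $W$ by quasi-compact opens with quasi-compact pairwise intersections, by sections at finite levels, then matches these up at a common further level on the finitely many overlaps (injectivity being analogous). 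I would cite this standard statement (see e.g.\ the Stacks Project, or EGA~III) rather than reprove it; everything else is formal.

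Finally I would dispose of the coproduct statements and the listed examples. As a right adjoint $f_*$ preserves products, in particular finite coproducts; together with the preservation of filtered colimits (any coproduct is the filtered colimit of its finite subcoproducts) this shows $f_*\colon\Sh(X)\ra\Sh(Y)$ preserves arbitrary coproducts. The inclusion $\Qcoh\hookrightarrow\Sh$ preserves and reflects coproducts on both $X$ and $Y$, since an arbitrary direct sum of quasi-coherent sheaves is again quasi-coherent; hence $f_*\colon\Qcoh(X)\ra\Qcoh(Y)$ preserves coproducts as well. Each example in the statement is a quasi-compact quasi-separated morphism: an affine morphism is quasi-compact and separated; a morphism with Noetherian source is quasi-compact, and has quasi-compact diagonal, because every subspace of a Noetherian space is quasi-compact; and a morphism between quasi-compact quasi-separated schemes is quasi-compact and quasi-separated by a routine finite-cover argument.
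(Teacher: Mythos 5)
Your proof is correct. Note, though, that the paper does not give an argument at all: its ``proof'' of this lemma is a citation to \cite[Lemmata~B.6, B.12]{thomason-trobaugh-higher-K-theory} and \cite[Cor.~10.27]{goertz-wedhorn-AGI}. What you have written out is essentially the standard argument underlying those references: reduce to an affine base $Y=\Spec A$, use quasi-compactness and quasi-separatedness of $f$ to present $f_*\mathcal F$ as the kernel of a map between finite products of pushforwards along morphisms of affine schemes, and then feed in the affine case ($g_*\widetilde M=\widetilde{{}_AM}$, plus commutation of sections over quasi-compact opens of a spectral space with filtered colimits) together with exactness of filtered colimits and closure of $\Qcoh$ under kernels and finite products; the coproduct statements and the verification of the three listed examples are handled correctly as well. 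So the content matches the cited proofs (the kernel presentation is exactly how G\"ortz--Wedhorn prove quasi-coherence of $f_*$, and Thomason--Trobaugh prove colimit preservation by the same reduction); what your version buys is self-containedness, with the only genuinely nonformal external input being the topological fact about filtered colimits of sheaves on a quasi-compact quasi-separated space, which you correctly isolate and may cite.
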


\begin{proof}
  See \cite[Lemmata~B.6,
  B.12]{thomason-trobaugh-higher-K-theory}. The statement that
  $f_*$ maps $\Qcoh(X)$ to $\Qcoh(Y)$ is also shown in
  \cite[Cor.~10.27]{goertz-wedhorn-AGI}.
\end{proof}

\begin{lemma}
  \label{l:u'-ls-qls-acyclic}
  Let $X \xra{f} Y \xra{g} Z$
  be morphisms. Assume that 
  $g$ is a quasi-compact separated morphism and that 
  $g \comp f$ is an affine morphism. 
  % $f$ and $g \comp
  % f$ are affine morphisms and that $g$ is 
  % quasi-compact 
  % and either 
  % semi-separated,\footnote{
  % separated $\Rightarrow$
  % semi-separated $\Rightarrow$
  % quasi-separated
  % }
  %   \footnote{
  %   if $g \comp f$ is affine and $g$ is separated, then $f$ is
  %   affine by (CANC) \cite{goertz-wedhorn-AGI}.
  % }
  %   or quasi-separated with $Y$ locally Noetherian.
  %   \footnote{
  %   Murfet, Prop 44 in section 6 in 
  %   Section 3.8 - Higher Direct Images of Sheaves,
  %   
  %   has slightly different assumptions.
  % }
  Then for any $M \in \Qcoh(X),$ the object $f_*(M) \in \Qcoh(Y)$
  is acyclic with respect to the functor 
  $g_*:\Qcoh(Y) \ra
  \Qcoh(Z).$
  % (and also with respect to $g_*:\Sh(Y) \ra \Sh(Z)$) 
  % \footnote{
  % f\"ur Klammeraussage brauche wohl nicht die "komischen
  % Annahmen" an $g$?
  % }
\end{lemma}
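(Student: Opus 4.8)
The plan is to reduce the statement to the fact that affine morphisms have no higher direct images on quasi-coherent sheaves, by first observing that $f$ itself is affine.

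\emph{Step 1: $f$ is affine.} Write $h = g\comp f$. Factor $f$ as $X \xra{(\id_X,f)} X\timesk_Z Y \xra{p_2} Y$, the graph of $f$ followed by the second projection. The projection $p_2$ is the base change of $h\colon X \to Z$ along $g\colon Y \to Z$, hence affine because $h$ is affine. The graph $(\id_X,f)$ fits into a cartesian square with the diagonal $\Delta_{Y/Z}\colon Y \to Y\timesk_Z Y$ (it is the base change of $\Delta_{Y/Z}$ along $f\timesk_Z\id_Y$), and $\Delta_{Y/Z}$ is a closed immersion since $g$ is separated; hence $(\id_X,f)$ is a closed immersion, in particular affine. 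A composition of affine morphisms is affine, so $f$ is affine.

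\emph{Step 2: conclusion.} Now $f$ and $h$ are affine and $g$ is quasi-compact and separated, so all three morphisms are quasi-compact and separated; by Lemma~\ref{l:qcqs-preserves-qcoh} the functors $f_*$, $g_*$, $h_*$ restrict to functors $\Qcoh(X)\to\Qcoh(Y)$, $\Qcoh(Y)\to\Qcoh(Z)$, $\Qcoh(X)\to\Qcoh(Z)$ respectively, and $h_* = g_*\comp f_*$ on quasi-coherent sheaves. Since $f$ and $h$ are affine and $M$ is quasi-coherent, one has $R^i f_*(M)=0$ and $R^i h_*(M)=0$ for $i>0$ (quasi-coherent sheaves have vanishing higher cohomology on affine schemes), i.e.\ $\bR f_*(M)=f_*(M)$ and $\bR h_*(M)=h_*(M)$ are concentrated in degree $0$. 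Combining this with $\bR g_*\comp\bR f_* = \bR(g\comp f)_* = \bR h_*$ yields
\begin{equation*}
  \bR g_*\bigl(f_*(M)\bigr) = \bR g_*\bigl(\bR f_*(M)\bigr) = \bR h_*(M) = h_*(M),
\end{equation*}
which is concentrated in degree $0$. Hence $R^i g_*(f_*(M))=0$ for all $i>0$, that is, $f_*(M)$ is acyclic for $g_*\colon\Qcoh(Y)\to\Qcoh(Z)$.

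\emph{On the bookkeeping and the main obstacle.} The only point requiring care is that ``$R^i g_*$'' above must mean the derived functor of $g_*\colon\Qcoh(Y)\to\Qcoh(Z)$ rather than of $g_*$ on all $\mathcal{O}$-modules; since $g$ (and $f$, $h$) are quasi-compact and separated these two agree on complexes with quasi-coherent cohomology, so there is no ambiguity. If one prefers to bypass this, one can reduce at once to $Z=\Spec A$ affine (acyclicity with respect to $g_*$ being local on $Z$): then $X=h^{-1}(Z)$ is affine, and for any finite affine open cover $\{Y_j\}$ of the quasi-compact separated scheme $Y$ the preimages $\{f^{-1}(Y_j)\}$ form a finite affine open cover of $X$ with affine intersections, so that $\check{C}^\bullet(\{Y_j\},f_*(M)) = \check{C}^\bullet(\{f^{-1}(Y_j)\},M)$ computes $R\Gamma(X,M)$, which has no cohomology in positive degrees because $X$ is affine. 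In either form the genuinely substantive step is Step~1; I expect spotting and verifying that $f$ is affine to be the only real obstacle, everything after that being formal manipulation with cohomology of affine morphisms and composition of derived pushforwards.
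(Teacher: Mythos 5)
Your proof is correct and follows essentially the same route as the paper: note that $f$ is affine (the paper asserts this; you verify it via the graph factorization), use vanishing of higher direct images of quasi-coherent sheaves under affine morphisms together with the degenerate Leray composition $\bR g_* \comp \bR f_* = \bR (g\comp f)_*$ (the paper cites EGA~III, Cor.~1.3.2 and 1.3.4 for exactly this), and then pass from derived functors on $\Sh$ to those on $\Qcoh$ (the paper handles this comparison by citing Thomason--Trobaugh, Cor.~B.9, which is the fact you assert in your bookkeeping remark). So there is no gap; the only difference is in which steps are spelled out versus cited.
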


\begin{proof}
  Note that $f$ is then affine
  and hence $f_*(M) \in \Qcoh(Y)$ by
  Lemma~\ref{l:qcqs-preserves-qcoh}. 
  From \cite[Cor.~1.3.2 and Cor.~1.3.4]{EGAIII-i} we see that
  $(\bR^p_\Sh g_*)(f_*(M))$ vanishes for all $p>0$ where
  $\bR^p_\Sh g_*$ 
  is the $p$-th right derived functor of $g: \Sh(Y) \ra \Sh(Z).$
  This means that $f_*(M)$ is acyclic with respect to 
  $g_*:\Sh(Y) \ra \Sh(Z).$  
  From \cite[Cor.~B.9]{thomason-trobaugh-higher-K-theory}
  we obtain 
  $(\bR^p_\Qcoh g_*)(f_*(M))=0$ for all $p>0$ where $\bR^p_\Qcoh g_*$
  is the $p$-th right derived functor of $g: \Qcoh(Y) \ra
  \Qcoh(Z).$
  This means that $f_*(M)$ is acyclic with respect to 
  $g_*:\Qcoh(Y) \ra \Qcoh(Z).$  
\end{proof}

\begin{lemma}
  \label{l:fin-coho-dim}
  If $f \colon X \ra Y$ is a morphism between quasi-compact
  separated schemes over a field $k$
  then 
  the functor $f_*: \Qcoh(X) \ra \Qcoh(Y)$ has finite
  cohomological dimension. 
\end{lemma}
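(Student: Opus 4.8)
The plan is to produce, for every $M \in \Qcoh(X)$, a resolution of $M$ by $f_*$-acyclic quasi-coherent sheaves whose length is bounded by a number depending only on $X$ (not on $M$). Such a uniform bound immediately forces $\bR^q f_* = 0$ in high degrees, which is exactly finite cohomological dimension. The natural candidate for such a resolution is the \v{C}ech resolution attached to a finite affine open cover of $X$, and the acyclicity of its terms will be read off from Lemma~\ref{l:u'-ls-qls-acyclic}, which was set up precisely for this.

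In detail: since $X$ is quasi-compact, I would fix a finite affine open cover $X = \bigcup_{i=1}^n U_i$. Because $X$ is separated, every finite intersection $U_{i_0\dots i_p} := U_{i_0}\cap\dots\cap U_{i_p}$ is affine, and hence the open immersion $j_{i_0\dots i_p}\colon U_{i_0\dots i_p}\hra X$ is an affine morphism (a morphism from an affine scheme to a separated scheme is affine). The alternating \v{C}ech complex then gives an exact sequence of sheaves
\begin{equation*}
  0 \ra M \ra \mathcal{C}^0 \ra \mathcal{C}^1 \ra \dots \ra \mathcal{C}^{n-1} \ra 0,
  \qquad
  \mathcal{C}^p = \bigoplus_{i_0<\dots<i_p}(j_{i_0\dots i_p})_*\bigl(M|_{U_{i_0\dots i_p}}\bigr),
\end{equation*}
whose terms lie in $\Qcoh(X)$, since pushforward along an affine (indeed quasi-compact quasi-separated) morphism preserves quasi-coherence by Lemma~\ref{l:qcqs-preserves-qcoh}. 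Thus this is a resolution of $M$ in $\Qcoh(X)$ of length $n-1$, and crucially this length does not depend on $M$.

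Next I would check that each $\mathcal{C}^p$ is acyclic for $f_*\colon\Qcoh(X)\ra\Qcoh(Y)$. As right derived functors commute with finite direct sums, it is enough to treat one summand $(j_{i_0\dots i_p})_*(M|_{U_{i_0\dots i_p}})$, and here I would apply Lemma~\ref{l:u'-ls-qls-acyclic} to the composition $U_{i_0\dots i_p}\xra{j_{i_0\dots i_p}}X\xra{f}Y$: the morphism $f$ is quasi-compact and separated (being a morphism between quasi-compact separated schemes), while $f\comp j_{i_0\dots i_p}\colon U_{i_0\dots i_p}\ra Y$ is affine because it is a morphism from an affine scheme to the separated scheme $Y$. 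Hence each summand, and therefore each $\mathcal{C}^p$, is $f_*$-acyclic. Feeding the resolution above into the standard dimension-shifting computation of derived functors via acyclic resolutions, I get $\bR^q f_* M \cong H^q(f_*\mathcal{C}^\bullet) = 0$ for all $q \geq n$, so $f_*$ has cohomological dimension at most $n-1 < \infty$.

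I expect the only real friction to be the two hypothesis checks feeding into Lemma~\ref{l:u'-ls-qls-acyclic}, and among those the one worth spelling out is that a morphism from an affine scheme to a separated scheme is affine; this follows from the cancellation property for affine morphisms applied to $U_{i_0\dots i_p}\ra Y\ra\Spec k$, using that $U_{i_0\dots i_p}$ is affine over $k$ and $Y$ is separated over $k$. Everything else (exactness of the augmented \v{C}ech complex, compatibility of $f_*$ with finite direct sums, computation of $\bR f_*$ by $f_*$-acyclic resolutions) is routine.
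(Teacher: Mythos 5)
Your proof is correct and follows essentially the same route as the paper: a finite affine open cover of $X$, the resulting finite $*$-\v{C}ech resolution with quasi-coherent terms, affineness of each composition $U_{i_0\dots i_p}\hra X\xra{f}Y$ via cancellation against the separated morphism $Y\ra\Spec k$, and Lemma~\ref{l:u'-ls-qls-acyclic} to get $f_*$-acyclicity of the terms, hence $\bR f_*$ is computed by a uniformly bounded complex. No gaps.
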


\begin{proof}
  The functor $f_*: \Qcoh(X) \ra \Qcoh(Y)$ is well-defined by
  Lemma~\ref{l:qcqs-preserves-qcoh}.
  If $U$ is an affine open subset of $X$ note that the
  composition $U \hra X \xra{f} Y$ is affine because its
  composition with the separated morphism $Y \ra \Spec k$ is
  affine. 
  Fixing some ordered finite
  affine open covering of $X$ provides 
  for any 
  $A \in 
  \Qcoh(X)$
  the finite $*$-\v{C}ech-resolution $A \ra
  \mathcal{C}_*(A)$, cf. \eqref{eq:F-*-Cech-resolution}.
  Lemma~\ref{l:u'-ls-qls-acyclic} then implies that 
  $\bR f_*(A)\cong f_*(\mathcal{C}_*(A))$ in $D(\Qcoh(Y)).$
\end{proof}

% \subsubsection{Enhancement using injective quasi-coherent sheaves}
% \label{sec:enhanc-using-inject}

% \newpage

\section{Enhancements}
\label{sec:enhancements}

We denote the dg category of
complexes in a (pre)additive category $\mathcal{A}$ by
$C(\mathcal{A}).$ If $\mathcal{D}$ is any dg category, we denote
by $Z^0(\mathcal{D})$ the category with the same objects but
closed degree zero morphisms, and by $[\mathcal{D}]$ the
homotopy category of $\mathcal{D}.$

For example, $[C(\Qcoh(X))]$ denotes the category of complexes of
quasi-coherent sheaves on $X$ whose morphisms are given by
homotopy classes of degree
zero maps that commute with the respective differentials.

\subsection{Injective enhancements}
\label{sec:inject-enhanc}

Let $X$ be a scheme. Then $\Sh(X)$ is a Grothendieck category
(\cite[18.1.6.(v)]{KS-cat-sh}), the full dg
subcategory $C^\hinj(\Sh(X))$ of $C(\Sh(X))$ consisting of
h-injective objects is pretriangulated and the canonical functor
$[C^\hinj(\Sh(X))] \ra D(\Sh(X))$ is an equivalence of
triangulated categories
(\cite[Thm.~14.3.1.(iii)]{KS-cat-sh}), so $C^\hinj(\Sh(X))$
is naturally an 
enhancement of $D(\Sh(X)).$ 
% The category of injective sheaves is denoted $\InjSh(X).$
Similarly, if $X$ is a quasi-compact and quasi-separated scheme,
then $\Qcoh(X)$ is a Grothendieck category
(\cite[B.3]{thomason-trobaugh-higher-K-theory}), so that the full
dg subcategory $C^\hinj(\Qcoh(X))$ of $C(\Qcoh(X))$ consisting of
h-injective objects is an enhancement of $D(\Qcoh(X)).$
% The category of injective quasi-coherent sheaves is
% denoted $\InjQcoh(X).$

Assume that $X$ is quasi-compact and separated.
By restricting to suitable subcategories we obtain enhancements
of all the full triangulated subcategories of $D(\Qcoh(X))$ and
$D(\Sh(X))$ mentioned so far. We give some examples.

The full dg subcategory 
$C^\hinj_\Qcoh(\Sh(X))$
of $C^\hinj(\Sh(X))$ consisting of complexes
with quasi-coherent cohomologies is 
an enhancement of $D(\Qcoh(X)) \sira D_\Qcoh(\Sh(X)).$

The full dg subcategory
$\mfPerf^\hinj(X)$
of $C^\hinj(\Qcoh(X))$ 
of objects belonging to $\mfPerf(X)$
and also its
full dg subcategory
$\mfPerf^\inj(X)$ of
bounded below complexes of injective quasi-coherent
sheaves
% This dg category is pretriangulated and 
% the canonical functor 
% \begin{equation*}
%   [\mfPerf^\inj(X)] \ra \mfPerf(X)
% \end{equation*}
% is an equivalence of triangulated categories, so
% $\mfPerf^\inj(X)$ is 
are enhancements of $\mfPerf(X) \sira \mfPerf'(X).$
% Its full dg subcategory
% $\mfPerf^\inj(X)$) of
% bounded below complexes of injective quasi-coherent
% sheaves also enhances $\mfPerf(X).$
Similarly, by considering arbitrary sheaves instead of
quasi-coherent ones, we obtain enhancements $\mfPerf'^\hinj(X)$ and
$\mfPerf'^\inj(X)$ of $\mfPerf'(X).$

If $X$ is a Noetherian separated scheme,
$D^b(\Coh(X))
\sira D^b_\Coh(\Qcoh(X))
\sira D^b_\Coh(\Sh(X))$ has the following enhancements:
the full dg
subcategories
$C_\Coh^{\hinj,b}(\Sh(X))$ 
of $C^\hinj(\Sh(X))$
and 
$C_\Coh^{\hinj,b}(\Qcoh(X))$ 
of $C^\hinj(\Qcoh(X))$
consisting of objects with
bounded coherent
cohomology; the full dg
subcategory
$C_\Coh^{+,b}(\InjSh(X))$ 
of 
$C_\Coh^{\hinj,b}(\Sh(X))$ 
consisting of bounded below complexes of
injective sheaves with bounded coherent
cohomology; the full dg
subcategory
$C_\Coh^{+,b}(\InjQcoh(X))$ 
of 
$C_\Coh^{\hinj,b}(\Qcoh(X))$ 
consisting of bounded below complexes of
injective quasi-coherent sheaves with bounded coherent
cohomology.

\begin{remark}
  \label{rem:fibrant-explained}
  Instead of h-injective complexes we could also use 
  fibrant complexes in order to define enhancements; any
  complex admits a monomorphic quasi-isomorphism to a fibrant
  complex; a complex is fibrant
  if and only if it is an h-injective complex of injective
  objects; in particular, any bounded below complex of
  injective objects is fibrant;
  cf.~\cite[12]{wolfgang-olaf-locallyproper}.
\end{remark}

\subsection{\v{C}ech enhancements}
\label{sec:vcech-enhanc}

\subsubsection{\v{C}ech resolutions }
\label{sec:vcech-resolutions-}

Let $X$ be a quasi-compact separated scheme. 
If $j \colon V \hra X$ is the
inclusion of an open subscheme and $F$ is a sheaf on $X,$ we define
$\leftidx{_V}{F}{}:=j_*j^*(F)=j_*j^!(F)$ and
$\leftidx{^V}{F}{}:=j_!j^!(F)=j_!j^*(F).$
Let $\mathcal{U}=(U_s)_{s \in S}$ be an ordered finite
open covering
of $X$;
% by quasi-compact (e.\,g.\ affine) open subschemes; 
here
``ordered finite'' means that $S$ is a totally ordered finite
set. 
% Fix a total order $<$ on $S.$ 
As usual, we abbreviate $U_I:=
\bigcap_{i \in I} U_i$ for a subset $I \subset S,$ and write
$U_{s_0s_1 \dots s_n}$ instead of $U_{\{s_0,s_1, \dots, s_n\}}.$
For any sheaf $F$ on $X$ we can consider its 
% (finite\footnote{
% zwei richtige Interpretationen: $S$ finite und Komplex bounded
% }
% ordered) 
bounded $*$-\v{C}ech resolution
\begin{equation}
  \label{eq:F-*-Cech-resolution}
  F \ra 
  \mathcal{C}_*(F):=
  % \mathcal{C}_{\ord, \mathcal{U}}^\bullet(F):=
  \Big(\prod_{s_0 \in S} \leftidx{_{U_{s_0}}}{F}{} \ra
  \prod_{s_0, s_1 \in S,\; s_0 < s_1} \leftidx{_{U_{s_0s_1}}}{F}{} \ra
  \dots
  \Big)
\end{equation}
and its 
% (finite ordered) 
bounded $!$-\v{C}ech resolution
\begin{equation}
  \label{eq:F-!-Cech-resolution}
  \Big(
  \mathcal{C}_!(F):=
  \Big(
  \ldots 
  \ra
  \prod_{s_0, s_1 \in S,\; s_0 < s_1} \leftidx{^{U_{s_0s_1}}}{F}{}
  \ra
  \prod_{s_0 \in S} \leftidx{^{U_{s_0}}}{F}{} 
  \Big)
  \Big)\ra F
\end{equation}
with the usual differentials. 
Both $\mathcal{C}_*(F)$ and $\mathcal{C}_!(F)$ depend  on
$\mathcal{U}$ but we do not emphasize this in the notation.
(Choosing another total ordering on $S$ gives rise to isomorphic
resolutions.)

More generally, if $F$ is a complex of sheaves on $X,$ 
then $\mathcal{C}_*(F)$ and
$\mathcal{C}_!(F)$ are defined as the
totalizations of the obvious double complexes
and we have
natural quasi-isomorphisms $F \ra \mathcal{C}_*(F)$ and
$\mathcal{C}_!(F) \ra F$ (use \cite[Lemma~III.4.2]{Hart} for the
$*$-\v{C}ech resolution, look at the stalks for the $!$-\v{C}ech
resolution, and use \cite[Thm.~1.9.3]{KS}). 
This means that $F \mapsto \mathcal{C}_*(F)$ and $F \mapsto
\mathcal{C}_!(F)$ define two dg functors
\begin{equation*}
  \mathcal{C}_*, \mathcal{C}_!\colon C(\Sh(X)) \ra C(\Sh(X)).
\end{equation*}
They come with morphisms $\id \ra
\mathcal{C}_*$ and $\mathcal{C}_! \ra \id$ of dg functors that
are given at each object by a quasi-isomorphism.

The complexes $\mathcal{C}_*(F)$ (resp.\ $\mathcal{C}_!(F)$) are
complexes of sheaves that are
finite direct sums of objects 
$\leftidx{_V}{(F^i)}{}$ (resp.\ $\leftidx{^V}{(F^i)}{}$) where
$V$ is a finite
intersection of elements of $\mathcal{U}$ and $i \in \DZ.$
 Note that $\mathcal{C}_*(F)$ and $\mathcal{C}_!(F)$ are bounded
(resp.\ bounded above, bounded below) if $F$ is  bounded
(resp.\ bounded above, bounded below). 
If $F$ is a complex of vector bundles
(= locally free sheaves of finite type),
$\mathcal{C}_!(F)$ is a complex of flat sheaves.
If all $U_s$ are quasi-compact and
$F \in C(\Qcoh(X))$ we have $\mathcal{C}_*(F) \in
C(\Qcoh(X))$ by 
Lemma~\ref{l:qcqs-preserves-qcoh}
because 
all
inclusions $U_I \ra X$ are quasi-compact 
and separated. 

\subsubsection{Some auxiliary constructions}
\label{sec:some-auxil-constr}

Let $X$ be a scheme. 
We define an additive category 
$\Vb^\subset(X)$
% (where $\Vb$ stands for vector bundles, and the
% index indicates,\footnote{
%   vielleicht eher sp\"ater erw\"ahnen? oder gar nicht? Auch,
%   wieso Index unten.
% } 
% when morphism spaces might be nonzero)
as follows. Its objects are finite formal direct sums of pairs
$(U,P)$ where $U \subset X$
is an open subset and
$P$ is a vector bundle on $U.$
Morphism spaces are defined by
\begin{equation*}
  \Hom_{\Vb^\subset(X)}((U,P), (V,Q)) :=
  \begin{cases}
    \Hom_{\mathcal{O}_V}(j_!P, Q), & \text{if $U \subset V$,
      where $j\colon U \subset V$;}\\
    0, & \text{otherwise.}
  \end{cases}
\end{equation*}
Identities are obvious, and composition is defined by
\begin{align*}
  \Hom_{\Vb^\subset(X)}((V,Q), (W,R))
  \times
  \Hom_{\Vb^\subset(X)}((U,P), (V,Q))
  & \ra
  \Hom_{\Vb^\subset(X)}((U,P), (W,R)),\\
  (g,f) & \mapsto
  \begin{cases}
    g \comp j'_!f, & \text{if $U \subset V \subset W$,}\\
    & \text{where $j'\colon V \subset W$;}\\
    0, & \text{otherwise.}
  \end{cases}
\end{align*}
% Given objects $(U,P)$ and $(V,Q)$ of $\Vb_\supset(X)$ we have
% \begin{equation}
%   \label{eq:low-U-supset-V}
%   \Hom_{\mathcal{O}_X}(u_*P, v_*Q)
%   =
%   \Hom_{\mathcal{O}_V}(P|_V,Q)
%   \quad \text{if $U \supset V$}
% \end{equation}
% by Lemma~\ref{l:Hom-u*Pv*Q} where $u\colon U \subset X$ and $V\colon
% V \subset X$ are the inclusions.
% Hence 

Similarly, we define an additive category $\Vb_\supset(X).$ It has
the same objects as $\Vb^\subset(X),$ morphism spaces
\begin{equation*}
  \Hom_{\Vb_\supset(X)}((V,Q), (U,P)) :=
  \begin{cases}
    \Hom_{\mathcal{O}_V}(Q, j_*P), & 
    \text{if $V \supset U$,
      where $j\colon U \subset V$;}\\
    0, & \text{otherwise,}
  \end{cases}
\end{equation*}
and obvious identities and composition.

\begin{lemma}
  \label{l:Hom-u!Pv!Q-v*Qu*P}
  Let $U \subset V \subset X$ be open subschemes of a scheme $X,$ 
  with inclusion morphisms  
  $u \colon U \subset X$, $j\colon U \subset V$ and $v \colon V
  \subset X$. 
  Let $P \in \Sh(U)$ and $Q \in \Sh(V)$.
  Then restriction to $V$ and $U$ yields 
  % canonical 
  natural isomorphisms
  \begin{equation*}
    \Hom_{\mathcal{O}_X}(u_!P, v_!Q)
    =
    \Hom_{\mathcal{O}_V}(j_!P,Q)    
    =
    \Hom_{\mathcal{O}_U}(P,j^*Q)
  \end{equation*}
  and
  \begin{equation*}
    \Hom_{\mathcal{O}_X}(v_*Q, u_*P)
    =
    \Hom_{\mathcal{O}_V}(Q,j_*P)
    =
    \Hom_{\mathcal{O}_U}(j^*Q,P).
  \end{equation*}
  % that are natural in $P \in \Sh(U)$ and $Q \in \Sh(V)$.
  % \begin{equation*}
  %   \Hom_{\mathcal{O}_X}(u_!P, v_!Q)
  %   =
  %   \Hom_{\mathcal{O}_U}(P,Q|_U).
  % \end{equation*}
\end{lemma}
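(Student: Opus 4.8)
The plan is to reduce the whole statement to the standard adjunctions attached to the open immersions $j$, $u$, $v$, together with the compatibility of extension-by-zero and of pushforward with composition. I would first recall the formal input. For the inclusion $j \colon U \hra V$ of an open subscheme one has adjunctions $j_! \dashv j^* \dashv j_*$ on categories of sheaves of $\mathcal{O}$-modules, and the unit $\id \to j^* j_!$ of the first adjunction and the counit $j^* j_* \to \id$ of the second are isomorphisms; equivalently $j_!$ and $j_*$ are fully faithful with common retraction $j^*$. The same holds for $u$ and $v$. Moreover, since $u = v \comp j$, there are canonical identifications $u_! \cong v_! j_!$ and $u_* \cong v_* j_*$ (checked on stalks, resp.\ on sections over opens). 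Finally, under the identification $j^* j_! \cong \id$ the $(j_!,j^*)$-adjunction isomorphism $\Hom_{\mathcal{O}_V}(j_! A, B) \isomoto \Hom_{\mathcal{O}_U}(A, j^* B)$ is just $f \mapsto j^*(f)$, i.e.\ restriction to $U$; and under $j^* j_* \cong \id$ the $(j^*,j_*)$-adjunction isomorphism $\Hom_{\mathcal{O}_V}(C, j_* A) \isomoto \Hom_{\mathcal{O}_U}(j^* C, A)$ is $g \mapsto j^*(g)$, again restriction to $U$. The analogous statements hold with $v$ (or $u$) in place of $j$.

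Granting this, the first chain is obtained in two steps. Using $u_! \cong v_! j_!$, then the $(v_!,v^*)$-adjunction (= restriction to $V$), then $v^* v_! \cong \id$, one gets
\[
\Hom_{\mathcal{O}_X}(u_! P, v_! Q) \;\cong\; \Hom_{\mathcal{O}_X}(v_!(j_! P), v_! Q) \;\cong\; \Hom_{\mathcal{O}_V}(j_! P,\, v^* v_! Q) \;=\; \Hom_{\mathcal{O}_V}(j_! P, Q);
\]
then the $(j_!,j^*)$-adjunction (= restriction to $U$) gives $\Hom_{\mathcal{O}_V}(j_! P, Q) \cong \Hom_{\mathcal{O}_U}(P, j^* Q)$. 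Since every identification used is canonical, the composite $\Hom_{\mathcal{O}_X}(u_! P, v_! Q) \cong \Hom_{\mathcal{O}_U}(P, j^* Q)$ is nothing but $u^* = j^* v^*$, i.e.\ restriction to $U$. The second chain is dual: $u_* \cong v_* j_*$ and the $(v^*,v_*)$-adjunction with $v^* v_* \cong \id$ yield, by restriction to $V$,
\[
\Hom_{\mathcal{O}_X}(v_* Q, u_* P) \;\cong\; \Hom_{\mathcal{O}_X}(v_* Q, v_*(j_* P)) \;\cong\; \Hom_{\mathcal{O}_V}(v^* v_* Q,\, j_* P) \;=\; \Hom_{\mathcal{O}_V}(Q, j_* P),
\]
and the $(j^*,j_*)$-adjunction (= restriction to $U$) gives $\Hom_{\mathcal{O}_V}(Q, j_* P) \cong \Hom_{\mathcal{O}_U}(j^* Q, P)$, the total composite again being restriction. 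Naturality in $P$ and $Q$ is immediate, since restriction of morphisms is functorial and all the isomorphisms above are natural.

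The only point that needs care — and it is bookkeeping rather than substance — is verifying that the abstract adjunction isomorphisms genuinely coincide with the concrete restriction maps: this amounts to identifying the units $\id \to j^* j_!$, $\id \to v^* v_!$ and the counits $j^* j_* \to \id$, $v^* v_* \to \id$ with the evident isomorphisms, and to the canonical identifications $u_! \cong v_! j_!$, $u_* \cong v_* j_*$. All of these are checked stalkwise for the $(-)_!$ functors and on sections over opens for the $(-)_*$ functors, and none of them is delicate. So the proof is a short chain of standard adjunctions, the content of the lemma being precisely the identification of the resulting isomorphism with restriction to $V$ and then to $U$.
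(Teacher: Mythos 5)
Your proof is correct and follows the same route as the paper's own (very terse) argument: decompose $u = v\comp j$, use $u_!P = v_!j_!P$, $u_*P = v_*j_*P$ together with the adjunctions for the open immersions and the unit/counit isomorphisms $Q \sira v^*v_!Q$, $v^*v_*Q \sira Q$. Your extra care in identifying the adjunction isomorphisms with the restriction maps is exactly the bookkeeping the paper leaves implicit.
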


\begin{proof}
  All four isomorphisms are obtained from adjunction isomorphisms
  using
  $u_!P= v_!j_!P$, $Q \sira v^*v_!Q$,
  and $u_*P=v_*j_*P$, $v^*v_*Q \sira Q$.
  % The
  % adjunction isomorphism
  % $\Hom_{\mathcal{O}_X}(v_!j_!P, v_!Q) =
  % \Hom_{\mathcal{O}_V}(j_!P,v^*v_!Q)$ and $Q \sira
  % v^*v_!Q$ yield the first isomorphism. The second
  % isomorphism is the adjunction isomorphism. 
  % 
  % Alt, Rollen von U und V vertauscht. 
  % Lemma~\ref{l:push-and-restrict-to-open}
  % applied to \eqref{eq:cart-UV-open-in-X} with $u'=\id_V$ yields
  % $v^*u_*P\sira u'_*v'^*P=P|_V$.
  % 
  % Lemma~\ref{l:proper-base-change-open}
  % applied to \eqref{eq:cart-UV-open-in-X} with $v'=\id_U$ yields
  % $u^* v_!Q \sira v'_! u'^*Q=Q|_U.$
\end{proof}

\begin{remark}
  \label{rem:alternative-def-Vb}
  Lemma~\ref{l:Hom-u!Pv!Q-v*Qu*P} provides alternative
  equivalent definitions of the categories
  $\Vb^\subset(X)$ and $\Vb_\supset(X)$. A priori it just gives
  equivalent descriptions of the morphism spaces, but when working
  with them, composition is defined in the obvious
  way. 
\end{remark}

\paragraph{Realization functors and duality}
\label{sec:real-funct-dual}

Lemma~\ref{l:Hom-u!Pv!Q-v*Qu*P} shows that we obtain faithful
additive realization functors 
\begin{align}
  \label{eq:real-subset}
  \Vb^\subset(X) \ra \Sh(X), & \quad
  % \notag
  (U,P) \mapsto u_!P, \text{ and}\\
  \label{eq:real-supset}
  \Vb_\supset(X) \ra \Sh(X), & \quad
  (U,P) \mapsto u_*P,
\end{align}
where $u\colon U \subset X$ is the inclusion.

% \footnote{
% Kann unten nicht $\Qcoh(X)$ schreiben ohne weitere
% Voraussetzungen. 
% }

\begin{remark}
  \label{rem:not-full}
  In general, these realization functors are not full,
  see 
  Remarks~\ref{rem:Hom-UPVQ-weaker-assumptions}
  and
  \ref{rem:shriek-extension-counterexample-to-lemma}.\ref{enum:shriek-extension}.
  Lemmata~\ref{l:Hom-UPVQ-star} and
  \ref{l:Hom-UPVQ-shriek} provide classes of objects where these
  functors are full.
\end{remark}

We define an additive functor
\begin{align*}
  (-)^\cek: \Vb^\subset(X)^\opp & \ra \Vb_\supset(X),\\
  (U,P) & \mapsto (U,P^\cek),
\end{align*}
which is given on morphism spaces by
\begin{align}
  \label{eq:transpose-morphisms}
  \Hom_{\Vb^\subset(X)}((U,P), (V,Q)) & \ra
  \Hom_{\Vb_\supset(X)}((V,Q^\cek), (U,P^\cek)),\\
  \notag
  f & \mapsto f^\cek
\end{align}
where we use $(j_!P)^\cek=\sheafHom(j_!P,
\mathcal{O}_V) \cong j_*\sheafHom(P,
j^*\mathcal{O}_V)=j_*(P^\cek)$ if 
$j\colon U \subset V$
(cf.\ \cite[Exp.~I, Cor.~1.5]{SGA-2-new} for the isomorphism).
Using Lemma~\ref{l:Hom-u!Pv!Q-v*Qu*P}, the map
\eqref{eq:transpose-morphisms} for $U \subset V$ 
is given by 
\begin{equation*}
  (-)^\cek \colon \Hom_{\mathcal{O}_U}(P, Q|_U) \ra
  \Hom_{\mathcal{O}_U}(Q^\cek|_U, P^\cek)
\end{equation*}
where we use the obvious identification $(Q|_U)^\cek=Q^\cek|_U$.
% identification $(Q|_U)^\cek=\sheafHom(Q|_U,
% \mathcal{O}_V|_U)=\sheafHom(Q, \mathcal{O}_V)|_U=Q^\cek|_U$.
This description clearly shows that $(-)^\cek$ is a duality with
inverse $(-)^\cek$ defined 
similarly, $\id \sira ((-)^\cek)^\cek.$ 

If $u\colon U \ra X$ is an open immersion and $P$ a vector bundle
on $U$ we have
% \footnote{
%   Brauche ich das folgende noch?
%  
%   Irgendwo sollte stehen, dass
%   \begin{equation*}
%     \sheafHom(\mathcal{C}_!(P), \mathcal{O}_X) =
%     \mathcal{C}_*(P^\cek). 
%   \end{equation*}
%   vgl.\ \cite[Lemma~7.4]{bondal-larsen-lunts-grothendieck-ring}.
%   Das sollte direkt folgen aus 
%   \begin{align*}
%     \sheafHom(\leftidx{^U}{P}{}, \mathcal{O}_X) 
%     & = \sheafHom(u_!u^!P, \mathcal{O}_X)\\
%     & = u_*\sheafHom(u^!P, u^!\mathcal{O}_X)\\
%     & = u_*\sheafHom(u^*P, u^*\mathcal{O}_X)\\
%     & = u_*u^*\sheafHom(P, \mathcal{O}_X) && \text{(see
%       Remark~\ref{rem:Cech-of-dual})}\\
%     & = u_*u^*(P^\cek).
%   \end{align*}
% }
just observed that
% \begin{equation*}
$\sheafHom(u_!P, \mathcal{O}_X)
%% = \sheafHom(u_!u^*P, \mathcal{O}_X)
%\sira u_*\sheafHom(P, u^!\mathcal{O}_X)
%= u_*\sheafHom(P, \mathcal{O}_U)
\cong u_*(P^\cek).$
% \end{equation*}
% \begin{equation*}
%   \sheafHom(\leftidx{^U}{P}{}, \mathcal{O}_X)
%   % = \sheafHom(u_!u^*P, \mathcal{O}_X)
%   = u_*\sheafHom(u^*P, \mathcal{O}_U)
%   = u_*((u^*P)^\cek)
%   = u_*u^*(P^\cek)
%   = \leftidx{_U}{(P^\cek)}{}.
% \end{equation*}
This implies that the diagram
\begin{equation}
  \label{eq:Vb-sup-sub-duality}
  \xymatrix{
    {\Vb^\subset(X)^\opp} \ar[d]_-{(-)^\cek}^-{\sim} \ar[rr]^-{(U,P)
      \mapsto u_!P}_-{\eqref{eq:real-subset}}  
    && {\Sh(X)^\opp} \ar[d]^-{(-)^\cek=\sheafHom(-,\mathcal{O}_X)}\\
    {\Vb_\supset(X)} \ar[rr]^-{(U,P) \mapsto
      u_*P}_-{\eqref{eq:real-supset}}  
    && {\Sh(X)}
  }
\end{equation}
commutes up to a natural isomorphism.

\subsubsection{Construction of \v{C}ech enhancements}
\label{sec:constr-vcech-enhanc}

Assume that $X$ is quasi-compact separated.
Let $\mathcal{U}=(U_s)_{s \in S}$ 
be an ordered
finite 
% affine
open covering of $X.$
% where all $U_s$ are quasi-compact.
We fix this covering for the rest of this section; mostly we
will even assume that all $U_s$ are affine. 
% We fix an ordered
% finite affine open covering $\mathcal{U}=(U_s)_{s \in S}$ of
% $X$ 
% for the rest of this section. 
%\footnote{ statt finite koennte auch verlangen, dass es gibt N so
%  dass erhalte leere menge, falls schneide mehr als N
%  Mitglieder...  }
Although many constructions will depend on
this fixed covering we usually do not indicate this in our
notation.
If $P$ is a vector bundle on $X$ we may
consider the complex
\begin{equation*}
  % \label{eq:F-*-Cech-resolution-abstract}
  % (\mathcal{U},P)_{\supset}
  P_\supset:=\Big(
  \prod_{s_0 \in S} (U_{s_0}, P|_{U_{s_0}}) \ra
  \prod_{s_0, s_1 \in S,\; s_0 < s_1} (U_{s_0s_1}, P|_{U_{s_0s_1}}) 
  \ra
  \dots
  \Big)
\end{equation*}
(whose first term is in degree zero) in $\Vb_\supset(X)$ whose
differentials are defined in the obvious way such that the image
of this complex under the functor \eqref{eq:real-supset} is the
complex $\mathcal{C}_*(P)$ (see \eqref{eq:F-*-Cech-resolution}).
Here we use that the additive category $\Vb_\supset(X)$ has
finite products. Similarly, if $P$ is a complex of vector bundles
on $X,$ we define $P_\supset \in C(\Vb_\supset(X))$ as the
totalization of the obvious double complex. This defines a dg
functor $(-)_\supset$ from the dg category of complexes of vector
bundles to $C(\Vb_\supset(X)).$

We define the dg category $\Cech_*(X)$ 
as follows. Its objects are bounded complexes $P$ of vector
bundles on $X,$ and morphism spaces are defined by 
\begin{equation*}
  \Hom_{\Cech_*(X)}(P,Q) := \Hom_{C(\Vb_\supset(X))}(P_\supset, Q_\supset)
\end{equation*}
with obvious identities and composition. 
There is an obvious full and faithful dg functor $\Cech_*(X) \ra
C(\Vb_\supset(X)),$ $P \mapsto P_\supset.$ If we compose it with the dg functor induced
by \eqref{eq:real-supset} we obtain a faithful dg functor
\begin{equation}
  \label{eq:*-cech-realization}
  \mathcal{C}_*: \Cech_*(X) \ra 
  % C(\Qcoh(X)) \subset braucht qcqs 
  C(\Sh(X))
\end{equation}
which we call $\mathcal{C}_*$ since it maps
an object $P$ to $\mathcal{C}_*(P).$ In general, this functor is
not full, see Remark~\ref{rem:realization-full}.

% \footnote{
% could call it $*$-\v{C}ech realization functor.
% }

Similarly, for each vector bundle $P$ we consider the complex
\begin{equation*}
  P^\subset:=
  \Big(
  \ldots 
  \ra
  \prod_{s_0, s_1 \in S,\; s_0 < s_1} (U_{s_0s_1},P|_{U_{s_0s_1}})
  \ra
  \prod_{s_0 \in S} (U_{s_0},P|_{U_{s_0}})
  \Big)
\end{equation*}
(whose last term is in degree zero) in $\Vb^\subset(X)$ whose
differentials are defined in the obvious way such that the image of
this complex under the functor \eqref{eq:real-subset} is
$\mathcal{C}_!(P)$ (see \eqref{eq:F-!-Cech-resolution}). By
totalization we define $P^\subset$ for complexes $P$ of vector
bundles. Let $\Cech_!(X)$ (resp.\ $\Cech_!^-(X)$) be the dg
category with objects bounded (resp.\ bounded above)
% \footnote{
%   keine Bedingung hier, dass alle Kohogarben koh\"arent; 
%   wir verwenden das wohl nur im Noetherschen Fall.
% }
complexes of
vector bundles and morphism spaces
\begin{equation*}
  \Hom_{\Cech_!^{\natural}(X)}(P,Q) :=
  \Hom_{C(\Vb^\subset(X))}(P^\subset, Q^\subset) 
\end{equation*}
(where 
$\Cech_!^{\natural}(X)$ is
$\Cech_!(X)$ (resp.\ $\Cech_!^-(X)$))
% (where
% $\natural \in \{\emptyset, -\}$
% and $\Cech_!^{\emptyset}(X):=\Cech_!(X)$) 
with obvious identities
and composition.  There is an obvious full and faithful dg
functor $\Cech_!^\natural(X) \ra C(\Vb^\subset(X)),$ $P \mapsto
P^\subset.$ If we
compose it with the dg functor induced by \eqref{eq:real-subset}
we obtain a faithful dg functor
\begin{equation}
  \label{eq:!-cech-realization}
  \mathcal{C}_!: \Cech_!^\natural(X) \ra C(\Sh(X)) 
  % \subset C(\Sh(X))
\end{equation}
which we call $\mathcal{C}_!$ since it maps
an object $P$ to $\mathcal{C}_!(P).$ In general, this functor is
not full, see Remark~\ref{rem:realization-full}.
% \footnote{
% could call it $!$-\v{C}ech realization functor.
% }

Obviously, the dg categories 
$\Cech_*(X),$ 
$\Cech_!(X),$ 
$\Cech_!^-(X)$
contain all shifts $[m]P$ of their objects $P,$ and the
functors 
\eqref{eq:*-cech-realization} and
\eqref{eq:!-cech-realization}
% $\mathcal{C}_*$ and $\mathcal{C}_!$ 
are compatible with
shifts.  

\begin{remark}
  \label{rem:realization-full}
  If $X$ is a Noetherian separated integral scheme
  and all $U_s$ are affine
  then the
  realization 
  functor 
  \eqref{eq:!-cech-realization}
  is full (and faithful)
  % This follows from
  % Remark~\ref{rem:first-realization-full}. 
  as follows from 
  Lemma~\ref{l:Hom-UPVQ-shriek}.
  If $X$ is in addition Nagata then 
  \eqref{eq:*-cech-realization} is full (and faithful) by
  Lemma~\ref{l:Hom-UPVQ-star}. 

  These statements are not longer true in general if $X$ is Nagata
  quasi-compact separated 
  and locally integral: let $X$ be the disjoint union
  of two non-empty affine Nagata integral schemes $U$ and
  $V$ and 
  consider the open covering $\mathcal{U}=\{X,U,V\}.$
\end{remark}

% Diagram~\ref{eq:Vb-sup-sub-duality} gives rise 
% \footnote{
%   hatte: 
%   (cf.\ \cite[Def.~11.3.12]{KS-cat-sh} for
%   the vertical arrows)

%   vielleicht besser nicht festlegen...
% }
% to the
% diagram
% \begin{equation}
%   \label{eq:dg-level-Vb-sup-sub-duality}
%   \xymatrix{
%     {C(\Vb^\subset(X))^\opp} \ar[d]_-{(-)^\cek}^-{\sim} \ar[r]
%     & {C(\Sh(X))^\opp} \ar[d]^-{\sheafHom(-,\mathcal{O}_X)}\\
%     {C(\Vb_\supset(X))} \ar[r]
%     & {C(\Sh(X))}
%   }
% \end{equation}
% of dg categories which is commutative up to a natural
% isomorphism. 

%From diagram~\eqref{eq:dg-level-Vb-sup-sub-duality}
Diagram~\eqref{eq:Vb-sup-sub-duality} 
induces a similar diagram on the level of complexes
whose left vertical arrow
$(-)^\cek \colon
C(\Vb^\subset(X))^\opp \sira
C(\Vb_\supset(X))$
maps 
$P^\subset$ to $(P^\cek)_\supset$. From this diagram 
% From diagram~\eqref{eq:dg-level-Vb-sup-sub-duality}
% (whose left vertical arrow maps $P^\subset$ to
% $(P^\cek)_\supset$) 
we obtain the diagram
\begin{equation}
  \label{eq:duality-cech-enhancements}
  \xymatrix{
    {\Cech_!(X)^\opp} \ar[d]_-{(-)^\cek}^-{\sim}
    \ar[r]^-{\mathcal{C}_!} 
    & {C(\Sh(X))^\opp} \ar[d]^-{\sheafHom(-,\mathcal{O}_X)}\\
    {\Cech_*(X)} \ar[r]^-{\mathcal{C}_*}
    & {C(\Sh(X))}
  }
\end{equation}
of dg categories which is commutative up to a natural
isomorphism.
% We emphasize that the
Its left vertical arrow is an
isomorphism of dg 
categories (and given on objects by $P \mapsto P^\cek$).

\begin{remark}
  \label{rem:warning-Vb-sub-sub-needs-dual}
  Let $P$ and $Q$ be bounded vector bundles on $X.$ 
  If $U$ and $V$ are open subsets of $X$ then
  \begin{equation*}
    % \label{eq:from-star-to-shriek}
    \Hom_{\Vb^\subset(X)}((U,P|_U), (V,Q|_V))
    =
    \Hom_{\Vb_\supset(X)}((V,P|_V), (U,Q|_U))
  \end{equation*}
  since both sides are equal to $\Hom_{\mathcal{O}_U}(P|_U,
  Q|_U)$ if $U \subset V,$ 
  by Lemma~\ref{l:Hom-u!Pv!Q-v*Qu*P},
  and zero otherwise.
  These equalities combine to an isomorphism of dg modules
  \begin{equation*}
    \Hom_{\Cech_!(X)}(P,Q) \sira
    \Hom_{\Cech_*(X)}(P,Q).
  \end{equation*}
  It is however not true that these isomorphisms (together with
  the identity map on objects) define an isomorphism of dg
  categories 
  $\Cech_!(X) \ra \Cech_*(X)$: compatibility with composition is
  violated (except for very trivial situations).
\end{remark}

The following proposition is the main ingredient for showing that
our construction provides enhancements (Propositions~\ref{p:abstract-cech-!-object-enhancement},
\ref{p:abstract-cech-*-object-enhancement},
\ref{p:shriek-cech-enhancement-D-minus-and-b-Coh}).

\begin{proposition}
  \label{p:abstract-cech-!-bounded-above-full-faithful}
  % For all $P,$ $Q \in \Cech_!^-(X),$ the morphism
  Let $X$ be a quasi-compact separated scheme with
  an 
  ordered finite affine open covering 
  $\mathcal{U}=(U_s)_{s \in S}.$
  Let $P$ and $Q$ be complexes of vector bundles on
  $X,$ with $P$
  bounded above. Then the morphism
  \begin{equation}
    \label{shriek-enhancement-iso}
%    \mathcal{C}_! \colon 
%    \Hom_{[\Cech^-_!(X)]}(P,Q) 
%    =
    \Hom_{[C(\Vb^\subset(X))]}(P^\subset, Q^\subset)
    \ra
    \Hom_{D(\Sh(X))}(\mathcal{C}_!(P), \mathcal{C}_!(Q))
  \end{equation}
  induced by \eqref{eq:real-subset}
  is an isomorphism.
\end{proposition}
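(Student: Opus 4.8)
The plan is to pass to the homotopy category and reduce everything to \v{C}ech complexes. Write $\mathcal{H}^\bullet := \sheafHom^\bullet_{\mathcal{O}_X}(P,Q)$ for the (internal) Hom complex of sheaves; since $P$ is a bounded above complex of vector bundles, the functors $\sheafHom_{\mathcal{O}_X}(P^i,-)$ are exact, and (the relevant bicomplex being bounded in the $P$-direction) $\mathcal{H}^\bullet$ represents $\bR\sheafHom_{\mathcal{O}_X}(P,Q)$; moreover $\mathcal{H}^\bullet$ has quasi-coherent terms. Let $q\colon \mathcal{C}_!(Q)\to Q$ be the augmentation quasi-isomorphism; post-composing with the isomorphism $q_*\colon \Hom_{D(\Sh(X))}(\mathcal{C}_!(P),\mathcal{C}_!(Q))\xsira{}\Hom_{D(\Sh(X))}(\mathcal{C}_!(P),Q)$ turns \eqref{shriek-enhancement-iso} into the composition
\begin{equation*}
  \Hom_{[C(\Vb^\subset(X))]}(P^\subset, Q^\subset)
  \xra{H^0(\pi)}
  \Hom_{[C(\Sh(X))]}(\mathcal{C}_!(P), Q)
  \xra{\ \mathrm{loc}\ }
  \Hom_{D(\Sh(X))}(\mathcal{C}_!(P), Q),
\end{equation*}
where $\pi\colon \Hom^\bullet_{C(\Vb^\subset(X))}(P^\subset, Q^\subset)\to \Hom^\bullet_{C(\Sh(X))}(\mathcal{C}_!(P), Q)$ is the chain map obtained by composing the dg realization \eqref{eq:!-cech-realization} (applied to morphisms) with $q\circ(-)$, and $\mathrm{loc}$ is the localization map. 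It therefore suffices to show that $\mathrm{loc}$ is an isomorphism and that $\pi$ is a quasi-isomorphism.

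That $\mathrm{loc}$ is an isomorphism: the standard natural isomorphism $\sheafHom_{\mathcal{O}_X}(j_!E, G)\cong j_*\sheafHom_{\mathcal{O}_U}(E, j^*G)$, for an open immersion $j\colon U\hra X$, a vector bundle $E$ on $U$ and a sheaf $G$, applied termwise to $\mathcal{C}_!(P)$ yields an isomorphism of complexes of sheaves $\sheafHom^\bullet_{\mathcal{O}_X}(\mathcal{C}_!(P),Q)\cong \mathcal{C}_*(\mathcal{H}^\bullet)$. As each $U_s$ is affine and $X$ is separated, every finite intersection $U_I$ is affine and every inclusion $U_I\hra X$ is an affine morphism; hence $\mathcal{C}_*(\mathcal{H}^\bullet)$ is a complex of $\Gamma(X,-)$-acyclic sheaves, while the augmentation $\mathcal{H}^\bullet\to\mathcal{C}_*(\mathcal{H}^\bullet)$ is a quasi-isomorphism (apply the exact functor $\mathcal{C}_*$ to the termwise \v{C}ech resolutions; the \v{C}ech direction is finite). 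Together with the finiteness of the covering (which bounds the cohomological dimension of $\Gamma(X,-)$ in the range we need) this shows that $\Hom^\bullet_{C(\Sh(X))}(\mathcal{C}_!(P),Q)=\Gamma\big(X,\sheafHom^\bullet_{\mathcal{O}_X}(\mathcal{C}_!(P),Q)\big)$ computes $\bR\Hom_{\Sh(X)}(P,Q)\cong\bR\Hom_{\Sh(X)}(\mathcal{C}_!(P),Q)$.

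That $\pi$ is a quasi-isomorphism: unwinding definitions, $q$ is zero on the summands of $\mathcal{C}_!(Q)$ indexed by subsets of $S$ of cardinality $\ge 2$ and is the signed sum of the adjunction counits on the singleton summands; consequently, for a morphism $f\colon P^\subset\to Q^\subset$, the map $\pi(f)\colon\mathcal{C}_!(P)\to Q$ is built out of the components of $f$ landing in the singleton summands of $Q^\subset$, summed over the singleton index. Now filter both $\Hom^\bullet_{C(\Vb^\subset(X))}(P^\subset,Q^\subset)$ and $\Hom^\bullet_{C(\Sh(X))}(\mathcal{C}_!(P),Q)=\Gamma(X,\mathcal{C}_*(\mathcal{H}^\bullet))$ by the cardinality of the subset of $S$ indexing the source complex $P^\subset$ (equivalently, by the \v{C}ech degree of $P^\subset$, resp.\ of $\mathcal{C}_!(P)$); these are finite filtrations and $\pi$ respects them. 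Fix $I\subset S$. Since $\Hom_{\Vb^\subset(X)}((U_I,A),(U_J,B))$ equals $\Hom_{\mathcal{O}_{U_I}}(A,B|_{U_I})$ when $J\subset I$ and vanishes otherwise, and the former does not depend on $J$, the $I$-part of the source is canonically isomorphic to $C_\bullet(\Delta_I)\otimes_{\DZ}\Hom^\bullet_{\mathcal{O}_{U_I}}(P^\bullet|_{U_I},Q^\bullet|_{U_I})$, where $\Delta_I$ is the full simplex on the vertex set $I$ and $C_\bullet(\Delta_I)$ its simplicial chain complex (the \v{C}ech differential of $Q^\subset$, restricted to subsets of $I$ and with these constant coefficients, is exactly the simplicial boundary, up to Koszul signs). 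The $I$-part of the target is $\Gamma(U_I,\mathcal{H}^\bullet|_{U_I})=\Hom^\bullet_{\mathcal{O}_{U_I}}(P^\bullet|_{U_I},Q^\bullet|_{U_I})$, and $\pi$ restricts on these $I$-parts to $\varepsilon\otimes\id$, where $\varepsilon\colon C_\bullet(\Delta_I)\to\DZ$ is the augmentation. As $\Delta_I$ is contractible, $\varepsilon$ is a chain homotopy equivalence of bounded complexes of free abelian groups, so $\varepsilon\otimes\id$ is a chain homotopy equivalence, in particular a quasi-isomorphism. Hence $\pi$ is a quasi-isomorphism on all (finitely many) associated graded pieces, and therefore a quasi-isomorphism.

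The main obstacle is the bookkeeping in the last two steps: the termwise identification $\sheafHom^\bullet_{\mathcal{O}_X}(\mathcal{C}_!(P),Q)\cong\mathcal{C}_*\big(\sheafHom^\bullet_{\mathcal{O}_X}(P,Q)\big)$, and—above all—pinning down the chain map $\pi$, matching the restricted \v{C}ech differential with the simplicial boundary map (signs!), and matching the two filtrations so that the computation genuinely reduces, one subset $I$ at a time, to the contractibility of the simplex $\Delta_I$.
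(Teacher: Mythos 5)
Your second step (that $\pi$ is a quasi-isomorphism) is essentially sound and is, in substance, the same simplex computation the paper performs, just organized by a finite filtration on the \v{C}ech degree of the source instead of brutal truncation; the only slip there is the vanishing condition: $\Hom_{\Vb^\subset(X)}((U_I,A),(U_J,B))$ is nonzero exactly when $U_I \subset U_J$, not when $J \subset I$, so the relevant simplex has vertex set $M(U_I)=\{s \in S \mid U_I \subset U_s\}$ rather than $I$. That is easily repaired and does not affect contractibility.

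The genuine gap is in your first step, the claim that the localization map
$\mathrm{loc}\colon \Hom_{[C(\Sh(X))]}(\mathcal{C}_!(P),Q) \ra \Hom_{D(\Sh(X))}(\mathcal{C}_!(P),Q)$
is an isomorphism for $P$ merely bounded above. Your justification rests on three assertions that fail or are unproved precisely because $P$ may be unbounded below. First, $\mathcal{H}^n=\prod_{i}\sheafHom(P^i,Q^{n+i})$ is then an infinite product of vector bundles, and infinite products of quasi-coherent sheaves formed in $\Sh(X)$ are in general not quasi-coherent; so the statement ``$\mathcal{H}^\bullet$ has quasi-coherent terms'' is false, and with it the claim that the terms of $\mathcal{C}_*(\mathcal{H}^\bullet)$ are $\Gamma(X,-)$-acyclic (pushforwards from affines of non-quasi-coherent sheaves, and products of quasi-coherent sheaves on affines, need not be acyclic, since products in $\Sh$ are not exact). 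Second, the finite-cohomological-dimension bound you invoke for $\Gamma(X,-)$ comes from \v{C}ech arguments for quasi-coherent sheaves (Lemma~\ref{l:fin-coho-dim}); it does not apply to these product sheaves, and even granting acyclic terms one needs an actual lemma of the type \cite[Lemma~12.4.(b)]{wolfgang-olaf-locallyproper} with its hypotheses verified. Third, already the assertion that the naive internal Hom $\mathcal{H}^\bullet$ (a product totalization, since the bicomplex is only half-bounded in the $P$-direction, contrary to what you write) represents $\bR\sheafHom(P,Q)$ requires a limit/Mittag-Leffler argument, which is delicate for inverse limits of sheaves. The paper avoids all of this: it first proves the statement for $P$ bounded, where $\mathcal{C}_!(P)$ has only finitely many summands $u_!(P^p|_{U_J})$ and Lemma~\ref{l:hom-u!-qcoh} (adjunction to the affine opens) gives the homotopy-versus-derived comparison, and then treats bounded above $P$ by writing $P=\varprojlim \sigma^{\geq a}(P)$, replacing $\mathcal{C}_!(Q)$ by an h-injective complex $\mathcal{K}$ (so that homotopy Hom equals derived Hom automatically at each stage), and passing to the limit of Hom complexes of abelian groups with surjective transition maps via Lemma~\ref{l:surj-inv-system-acyclic} and Corollary~\ref{c:qiso-surj-inv-system}. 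Your proof is missing this limit step or any substitute for it, so as written it only covers the case of bounded $P$ (and even there several of the acyclicity claims should be argued rather than asserted).
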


\begin{remark}
  The assumption that $P$ is bounded above is necessary: 
  Let $X=\Spec A$ where $A=k[\epsilon]/(\epsilon^2)$ with $k$ a
  field, and consider the trivial covering $\mathcal{U}=\{X\}$ of $X.$ 
  Let $P$ be the complex $\ldots \ra A \xra{\epsilon} A
  \xra{\epsilon} A \ra \dots.$ Then $P=\mathcal{C}_!(P)$ is zero
  in $D(\Sh(X))$ but $0\not= \id
  \in \End_{[C(\Vb^\subset(X))]}(P^\subset) = \End_{[C(\Qcoh(X))]}(P).$
\end{remark}

\begin{proof}
  We first prove this under the additional assumption that $P$ is
  bounded.
  We can even assume that $P$ is a vector bundle sitting in
  a single degree: brutal truncation provides a sequence of
  closed degree zero morphisms $\sigma^{\geq p}(P) \ra P \ra
  \sigma^{<p}(P) \ra 
  [1]\sigma^{\geq p}(P)$ in the dg category of complexes of
  vector bundles, for any $p \in \DZ$, and we can apply the
  functors 
  $(-)^\subset$ and $\mathcal{C}_!$ to this sequence.
  % (Since $\Vb^\subset(X)$ is a preadditive category it is not
  % clear that its homotopy category
  % $[C(\Vb^\subset(X))]$ is triangulated, but it is a full
  % subcategory of the triangulated homotopy category of the
  % additive envelope of 
  % $\Vb^\subset(X).$)

  Let $q \colon \mathcal{C}_!(Q) \ra Q$ be the $!$-\v{C}ech
  resolution and 
  $q'\colon Q^\subset \ra (X,Q)$ 
  the
  obvious morphism 
  in $Z^0(C(\Vb^\subset(X)))$ 
  whose image under 
  the functor induced by \eqref{eq:real-subset}
  is $q.$ Consider the commutative
  diagram
  \begin{equation}
    \label{eq:diagram-for-shriek-enhancement-iso}
    \xymatrix{
      {\Hom_{[C(\Vb^\subset(X))]}(P^\subset, Q^\subset)}
      \ar[r]
      % ^-{\mathcal{C}_!} 
      % \ar[d]_-{q_* \comp \mathcal{C}_!} 
      \ar[d]_-{q'_*}
      &
      {\Hom_{D(\Sh(X))}(\mathcal{C}_!(P), \mathcal{C}_!(Q))}
      \ar[dd]_-{q_*}^-{\sim} \\
      {\Hom_{[C(\Vb^\subset(X))]}(P^\subset, (X,Q))}
      \ar[d]^-{\sim}
      \\
      {\Hom_{[C(\Sh(X))]}(\mathcal{C}_!(P),Q)}
      \ar[r]^-{\can} & 
      {\Hom_{D(\Sh(X))}(\mathcal{C}_!(P), Q)}
    }
  \end{equation}
  whose right vertical arrow $q_*$ and lower left vertical
  arrow are obviously isomorphisms.
  Since $\mathcal{C}_!(P)$ is a
  bounded 
  complex with components finite products of objects
  $\leftidx{_{U_J}}{P^p}{}$, where $\emptyset \not= J \subset S$ 
  and $p \in \DZ$, 
  Lemma~\ref{l:hom-u!-qcoh} below implies that
  $\can$ is an
  isomorphism as well.
  %since $\mathcal{C}_!(P)$ consists of
  %For any $\emptyset \not= I \subset S$ and
  %$m \in \DZ$ the morphism
  % 
  % We claim that $\can$ is an
  % isomorphism as well.  For any $\emptyset \not= I \subset S$ and
  % $m \in \DZ$ the morphism
  % \begin{equation*}
  %   \Hom_{[C(\Sh(U_I))]}(P|_{U_I},[m]Q|_{U_I})
  %   \ra
  %   \Hom_{D(\Sh(U_I))}(P|_{U_I}, [m]Q|_{U_I})
  % \end{equation*}
  % is an isomorphism since $P_{U_I}$ is a projective
  % $\mathcal{O}_{U_I}(U_I)$-module. Hence 
  % \begin{equation*}
  %   \Hom_{[C(\Sh(X))]}(\leftidx{^{U_I}}{P}{},[m]Q)
  %   \ra
  %   \Hom_{D(\Sh(X))}(\leftidx{^{U_I}}{P}{}, [m]Q)
  % \end{equation*}
  % is an isomorphism. This implies that $\can$ is an
  % isomorphism.

  We need to show that the upper left vertical map $q'_*$ in 
  \eqref{eq:diagram-for-shriek-enhancement-iso}
  is an isomorphism. Equivalently, we show that
  \begin{equation*}
    q'_* \colon  
    % \Hom_{\Cech^-_!(X)}(P,Q)
    \Hom_{C(\Vb^\subset(X))}(P^\subset, Q^\subset)
    \ra
    \Hom_{C(\Vb^\subset(X))}(P^\subset, (X,Q))
  \end{equation*}
  is a quasi-isomorphism for any vector bundle $P$ considered as
  a complex concentrated
  in degree zero.
  Applying brutal truncation to $P^\subset$ (and passing to
  direct summands and shifting) we see that
  % Both sides have a filtration coming from the (finite) "brutal
  % filtration" on $P^\subset.$
  % Hence passing to the associated graded parts and directs
  % summands 
  it is 
  enough to show that for an arbitrary fixed $\emptyset \not= I
  \subset S$ the morphism
  \begin{equation}
    \label{eq:basic-part}
    \Hom_{C(\Vb^\subset(X))}((V,P|_V), Q^\subset)
    \ra
    \Hom_{C(\Vb^\subset(X))}((V,P|_V), (X,Q))
  \end{equation}
  is a quasi-isomorphism where $V:=U_I.$ 
  This morphism is the morphism associated to a morphism of
  double complexes whose $b$-th row (for $b \in \DZ$) is
  \begin{equation}
    \label{eq:basic-part-row}
    \Hom_{C(\Vb^\subset(X))}((V,P|_V), (Q^b)^\subset)
    \ra
    \Hom_{C(\Vb^\subset(X))}((V,P|_V), (X,Q^b)).
  \end{equation}
  Since the rows of both double complexes involved are uniformly
  bounded,
  the morphism
  \eqref{eq:basic-part}
  is a quasi-isomorphism as soon as we have shown that
  \eqref{eq:basic-part-row} is a quasi-isomorphism for any
  $b \in \DZ$, see \cite[Thm.~1.9.3]{KS}.

  Hence it is enough to show that
  \eqref{eq:basic-part} is a quasi-isomorphism if $Q$ is a vector
  bundle sitting in degree zero.
  The degree zero component
  of the right hand side is 
  \begin{equation*}
    H:=\Hom_{\mathcal{O}_V}(P|_V, Q|_V),
  \end{equation*}
  by Lemma~\ref{l:Hom-u!Pv!Q-v*Qu*P},
  and all other components vanish.
  The graded components of the left-hand side are direct sums
  of objects 
  \begin{equation*}
    \Hom_{\Vb^\subset(X)}((V,P|_V), (U_J,Q|_{U_J})) =
    \begin{cases}
      H=\Hom_{\mathcal{O}_V}(P|_V, Q|_V), & \text{if $V \subset U_J$;}\\
      0, & \text{otherwise.}
    \end{cases}
  \end{equation*}
  for non-empty $J \subset S.$
  % Note that $V$ is a normal integral Noetherian scheme and that
  % $U_J \cap V$ is affine. 
  By assumption, $M(V):=\{s \in S \mid V \subset U_s\}$ is
  non-empty, and we have $V \subset U_J$ if and only if $J
  \subset M(V).$ 
  Hence the 
  left-hand side of
  \eqref{eq:basic-part} is the chain complex
  \begin{equation*}
    \ldots \ra 
    \prod_{s_0, s_1 \in M(V), \; s_0 < s_1} H \ra
    \prod_{s_0 \in M(V)} H \ra 0 \ra 
    \dots
  \end{equation*}
  of a (non-empty) simplex with coefficients in $H.$ The map
  \eqref{eq:basic-part} is the augmentation map to $H$ which is
  a homotopy equivalence and in particular a quasi-isomorphism.
  This proves that \eqref{shriek-enhancement-iso} is an
  isomorphism if $P$ is bounded.

  Now let $P$ be a bounded above complex of vector bundles.
%  with
%  bounded coherent cohomologies.
  For $a \in \DZ$ denote by $P^{\geq a}$ the brutal truncation of
  $P$ which is zero in all degrees $<a$ and coincides with $P$ in
  all other degrees, and let $P^{\geq
    -{\infty}}:=P.$ 
  Then $P$ 
  is the filtered colimit of the filtered 
  diagram
  $0 \hra \dots \hra
  P^{\geq a+1} \hra P^{\geq a} \hra \dots$
  in the category $Z^0(C(\Qcoh(X))).$
  % We then have a directed system with filtered colimit
  
  Let $\kappa \colon  \mathcal{C}_!(Q) \ra \mathcal{K}$ be a
  quasi-isomorphism 
  with $\mathcal{K}$ an h-injective complex of sheaves.
  % bounded below complex of injective
  % (quasi-coherent if needed) sheaves.\footnote{
  % h-injective seems to be sufficient. Can assume all above since
  % $Q$ has bounded coherent cohomologies.
  % }
  Consider for any $a \in \DZ \cup \{-\infty\}$ the commutative
  diagram
  \begin{equation}
    \label{eq:shriek-enhancement-unbounded}
    \xymatrix{
      % {\Hom_{[\Cech^-_!(X)]}(P^{\geq a},Q)}
      {\Hom_{[C(\Vb^\subset(X))]}((P^{\geq a})^\subset, Q^\subset)}
      \ar[rd]^-{\can \comp \rho}
      %^-{\mathcal{C}_!}
      \ar[d]_-{\rho}
      % \ar[d]_-{\kappa_* \comp \mathcal{C}_!} 
      \\
      {\Hom_{[C(\Sh(X))]}(\mathcal{C}_!(P^{\geq
          a}),\mathcal{C}_!(Q))}
      \ar[d]_-{\kappa_*}
      \ar[r]^-{\can}
      & 
      {\Hom_{D(\Sh(X))}(\mathcal{C}_!(P^{\geq a}), \mathcal{C}_!(Q))}
      \ar[d]_-{\kappa_*}^-{\sim} 
      \\
      {\Hom_{[C(\Sh(X))]}(\mathcal{C}_!(P^{\geq a}),\mathcal{K})}
      \ar[r]^-{\can}_-{\sim} &
      {\Hom_{D(\Sh(X))}(\mathcal{C}_!(P^{\geq a}), \mathcal{K})}
    }
  \end{equation}
  whose right vertical arrow $\kappa_*$ and lower horizontal arrow
  $\can$ obviously are isomorphisms; hence the diagonal arrow
  $\can \comp \rho$ is an 
  isomorphism if and only if the vertical composition $\kappa_* \comp
  \rho$ is an 
  isomorphism. We 
  already know that $\can \comp \rho$ is an isomorphism for all $a
  \in 
  \DZ,$ and this is also true if we replace $(P^{\geq a})^{\subset}$ and
  $\mathcal{C}_!(P^{\geq a})$ by their shifts $[m](P^{\geq a})^{\subset}$ and
  $[m]\mathcal{C}_!(P^{\geq a}).$ Hence we know that
  \begin{equation}
    \label{eq:iota-a}
    \kappa_* \comp \rho \colon 
    \Hom_{C(\Vb^\subset(X))}((P^{\geq a})^\subset, Q^\subset)
    % {\Hom_{\Cech^-_!(X)}(P^{\geq a},Q)}
    \ra
    {\Hom_{C(\Sh(X))}(\mathcal{C}_!(P^{\geq a}),\mathcal{K})}
  \end{equation}
  is a quasi-isomorphism for all $a \in \DZ,$ and we need to
  prove this for $a = -\infty.$
  Both sides of \eqref{eq:iota-a}, for $a
  \in \DZ,$ 
  form inverse systems of dg modules with surjective transitions
  maps. Hence the inverse limit of the quasi-isomorphisms 
  \eqref{eq:iota-a}, for $a \in \DZ,$ is again a
  quasi-isomorphism,
  by Corollary~\ref{c:qiso-surj-inv-system},
  and 
  % it is easy to see that 
  this inverse
  limit is canonically isomorphic to the morphism 
  \eqref{eq:iota-a} for $a=-\infty.$
  Hence $\can \comp \rho$ in
  \eqref{eq:shriek-enhancement-unbounded} 
  is an isomorphism for $a=-\infty.$ 
  This proves the proposition. 
\end{proof}

\begin{lemma}
  \label{l:hom-u!-qcoh}
  Let $X$ be a scheme and $u\colon U \subset X$ the inclusion of
  an affine open subscheme. Let $P$ be a bounded above complex of
  vector bundles on $U$ and $G \in C(\Qcoh(X))$. Then the
  canonical map is an isomorphism
  \begin{equation*}
    \Hom_{[C(\Sh(X))]}(u_!P, G)
    \sira
    \Hom_{D(\Sh(X))}(u_!P, G).
  \end{equation*}
\end{lemma}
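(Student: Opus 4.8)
The plan is to descend along the open immersion $u$ to the affine scheme $U$, where the statement becomes a standard fact about h-projective complexes of modules. The functors $u_!\colon\Sh(U)\ra\Sh(X)$ (extension by zero) and $u^*\colon\Sh(X)\ra\Sh(U)$ (restriction) form an adjoint pair of exact functors of abelian categories. Passing to complexes, the adjunction isomorphism extends to an isomorphism of Hom-complexes, hence induces an isomorphism $\Hom_{[C(\Sh(X))]}(u_!P,G)\cong\Hom_{[C(\Sh(U))]}(P,u^*G)$. Since $u_!$ and $u^*$ are exact we have $\bL u_!=u_!$ and $\bL u^*=u^*$, so the adjunction descends to $\Hom_{D(\Sh(X))}(u_!P,G)\cong\Hom_{D(\Sh(U))}(P,u^*G)$; and because the derived unit and counit are induced by the underlying ones, these two isomorphisms are compatible with the canonical functors $[C(\Sh(-))]\ra D(\Sh(-))$. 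Thus I would be reduced to proving that the canonical map $\Hom_{[C(\Sh(U))]}(P,u^*G)\ra\Hom_{D(\Sh(U))}(P,u^*G)$ is an isomorphism.

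Now $u^*G$ is a complex of quasi-coherent sheaves on $U$, and $\Qcoh(U)$ is a full abelian subcategory of $\Sh(U)$; moreover, since $U$ is affine, hence quasi-compact and separated, $D(\Qcoh(U))\sira D_\Qcoh(\Sh(U))\subset D(\Sh(U))$ by \cite[Cor.~5.5]{neeman-homotopy-limits}, and both $P$ (a complex of vector bundles) and $u^*G$ lie in $D_\Qcoh(\Sh(U))$. Hence both source and target of the map above may be computed inside $\Qcoh(U)$, and it suffices to show that $\Hom_{[C(\Qcoh(U))]}(P,u^*G)\ra\Hom_{D(\Qcoh(U))}(P,u^*G)$ is an isomorphism. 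Writing $R:=\Gamma(U,\mathcal{O}_U)$ and using the equivalence of $\Qcoh(U)$ with the category of $R$-modules, under which vector bundles correspond to finitely generated projective modules, the complex $P$ becomes a bounded above complex $P'$ of finitely generated projective $R$-modules and $u^*G$ a complex $G'$ of $R$-modules. A bounded above complex of projective modules is h-projective, so the canonical map $\Hom_{[C(R)]}(P',G')\ra\Hom_{D(R)}(P',G')$ is an isomorphism, which is the assertion.

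The only point requiring genuine care is the bookkeeping: one must check that the chain of identifications above is compatible with the canonical functors to the derived categories, so that the composite really is the map appearing in the statement. This amounts to the (routine) compatibility of the adjunction $(u_!,u^*)$ on sheaves with its derived analogue and with the localizations $[C(\Sh(-))]\ra D(\Sh(-))$, together with the fact that the inclusion $C(\Qcoh(U))\hra C(\Sh(U))$ commutes with localization. No single step is hard; the substantive inputs are only the h-projectivity of bounded above complexes of projective modules and the full faithfulness of $D(\Qcoh(U))\ra D(\Sh(U))$ for affine $U$.
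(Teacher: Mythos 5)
Your proposal is correct and follows essentially the same route as the paper's proof: use the exact adjunction $(u_!,u^*)$ to move both the homotopy-category and derived-category Homs to $U$, identify them with Homs in $C(\Qcoh(U))$ resp.\ $D(\Qcoh(U))$ via fullness of $\Qcoh(U)\subset\Sh(U)$ and the equivalence $D(\Qcoh(U))\sira D_\Qcoh(\Sh(U))$, and conclude since $P$ corresponds to a bounded above complex of projective $\Gamma(U,\mathcal{O}_U)$-modules, hence is h-projective. The compatibility bookkeeping you flag is exactly what the paper's commutative diagram records.
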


\begin{proof}
  Our map appears as the left vertical map in the 
  commutative diagram
  \begin{equation*}
    \xymatrix{
      {\Hom_{[C(\Sh(X))]}(u_!P, G)} \ar[d] \ar[r]^-{\sim} 
      & 
      {\Hom_{[C(\Sh(U))]}(P, u^*G)} \ar[d] \ar@{}[r]|-{=}
      &
      {\Hom_{[C(\Qcoh(U))]}(P, u^*G)} \ar[d]^-{\sim}
      \\
      {\Hom_{D(\Sh(X))}(u_!P, G)} \ar[r]^-{\sim} 
      & 
      {\Hom_{D(\Sh(U))}(P, u^*G)} 
      &
      {\Hom_{D(\Qcoh(U))}(P, u^*G)} \ar[l]_-{\sim}
    }
  \end{equation*}
  whose left horizontal arrows are isomorphisms because
  $u_!$ is exact and left adjoint to the exact functor $u^*$,
  whose lower right horizontal arrow
  (which is well defined because $u^*G \in C(\Qcoh(U))$)
  is an isomorphism because
  $D(\Qcoh(U)) \sira D_\Qcoh(\Sh(U))$ is an equivalence ($U$
  being quasi-compact and separated),
  and
  whose right vertical arrow 
  % is well-defined since and 
  is an isomorphism because $U$ is
  affine and hence $P$ can be viewed as a bounded above complex
  of projective
  $\mathcal{O}_U(U)$-modules. 
\end{proof}

\begin{lemma}
  \label{l:surj-inv-system-acyclic}
  Let $A_0 \xla{p_1} A_1 \xla{p_2} A_2 \la \dots$ be a directed
  inverse 
  system of acyclic complexes of abelian groups with all
  transition maps $p_i$ surjective. Then its inverse limit
  $\varprojlim A_i$ is also acyclic.
\end{lemma}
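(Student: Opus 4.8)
The plan is to deduce this from the short exact sequence of complexes that computes the inverse limit and $\varprojlim^1$ of a tower. Write each $A_i = (A_i^n)_{n \in \DZ}$, and note that the inverse limit and all the products below are formed degreewise. Since each transition map $p_i \colon A_i \ra A_{i-1}$ is a morphism of complexes, it is surjective in every degree; hence for each fixed $n$ the abelian groups $A_i^n$ form an inverse system of surjections.

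First I would form the sequence of complexes
\[
  0 \ra \varprojlim_i A_i \xra{\iota} \prod_i A_i \xra{\delta} \prod_i A_i \ra 0,
  \qquad \delta\big((x_i)_i\big) = \big(x_i - p_{i+1}(x_{i+1})\big)_i .
\]
Here $\iota$ and $\delta$ are morphisms of complexes because the $p_i$ commute with the differentials, $\iota$ is the canonical inclusion, $\ker \delta = \varprojlim_i A_i$ by definition, and exactness in the middle is immediate. The essential point is surjectivity of $\delta$, and this is exactly where the surjectivity of the transition maps is used: fix a degree $n$, let $(a_i)_i \in \prod_i A_i^n$ be given, put $x_0 := 0$, and choose inductively $x_{i+1} \in A_{i+1}^n$ with $p_{i+1}(x_{i+1}) = x_i - a_i$ (possible since $p_{i+1}$ is surjective); then $\delta\big((x_i)_i\big) = (a_i)_i$. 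This is the standard argument that an inverse system of surjections has vanishing $\varprojlim^1$.

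Finally, products are exact in $\Ab$, so $H^n\big(\prod_i A_i\big) = \prod_i H^n(A_i) = 0$ for all $n$ because each $A_i$ is acyclic; feeding this into the long exact cohomology sequence attached to the short exact sequence above forces $H^n\big(\varprojlim_i A_i\big) = 0$ for all $n$, which is the assertion. I do not expect any serious obstacle here; the only things that require a little care are the bookkeeping that $\prod$, $\varprojlim$ and $\delta$ are genuinely formed degreewise and that $\delta$ is a chain map, both of which are routine.
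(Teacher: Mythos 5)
Your proof is correct, but it follows a different route from the paper's. You realize $\varprojlim_i A_i$ as the kernel of the telescope map $\delta\colon \prod_i A_i \ra \prod_i A_i$, $(x_i)_i \mapsto (x_i - p_{i+1}(x_{i+1}))_i$, prove $\delta$ is degreewise surjective by the usual inductive lifting (this is exactly where surjectivity of the $p_i$ enters, i.e.\ the vanishing of $\varprojlim^1$), and then conclude from exactness of products in $\Ab$ (so $H^n(\prod_i A_i) = \prod_i H^n(A_i) = 0$) together with the long exact cohomology sequence of the resulting short exact sequence of complexes. The paper instead argues inside each complex: it splits $A_i$ into short exact sequences $0 \ra K_i^j \ra A_i^j \ra K_i^{j+1} \ra 0$ with $K_i^j$ the image of the differential (using acyclicity), observes that the induced maps $K_{i+1}^j \ra K_i^j$ are surjective, hence the towers $(K_i^j)_i$ are Mittag-Leffler, and then invokes exactness of $\varprojlim$ on such towers to see that the limit sequences $0 \ra \varprojlim_i K_i^j \ra \varprojlim_i A_i^j \ra \varprojlim_i K_i^{j+1} \ra 0$ remain exact, which is acyclicity of the limit complex. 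Your argument works uniformly at the level of complexes and cleanly isolates the two homological inputs ($\varprojlim^1$-vanishing for surjective towers and exactness of products), at the mild cost of invoking the long exact sequence; the paper's argument stays entirely with towers of abelian groups and the Mittag-Leffler criterion, tracking the image subgroups by hand. Both are complete and standard; there is no gap in yours.
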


\begin{proof}
  We split each complex $A_i$ into short exact sequences
  $0 \ra K_i^j \ra A_i^j \ra K_i^{j+1} \ra 0$
  where $K_i^j$ is the image of $A_i^{j-1} \ra A_i^j.$
  The maps $p_{i+1}$ induce surjective maps $K_{i+1}^j \ra
  K_i^j.$ 
  For each $j \in \DZ$ we obtain a short
  exact sequence  
  $0 \ra K_i^j \ra A_i^j \ra K_i^{j+1} \ra 0$ of directed inverse
  systems of abelian groups and the direct system $(K_i^j)_{i \in
    \DN}$ is Mittag-Leffler. Hence 
  $0 \ra \varprojlim_i K_i^j \ra \varprojlim_i A_i^j \ra
  \varprojlim_i K_i^{j+1} \ra 0$ is exact.
  This implies that
  $\varprojlim A_i$ is acyclic.
\end{proof}

\begin{corollary}
  \label{c:qiso-surj-inv-system}
  Let $X=(X_i)_{i \in \DN}$ and $Y=(Y_i)_{i \in \DN}$ be directed
  inverse
  systems of complexes of abelian groups with surjective
  transition maps. Assume that $\phi=(\phi_i)_{i \in \DN}: X \ra
  Y$ is a morphism of directed inverse systems such that each
  $\phi_i\colon X_i \ra Y_i$ is a quasi-isomorphism. Then the
  induced morphism $\varprojlim \phi_i \colon \varprojlim X_i \ra
  \varprojlim Y_i$
  % \begin{equation*}
  %   \varprojlim_i \phi_i \colon
  %   \varprojlim_i X_i \ra
  %   \varprojlim_i Y_i
  % \end{equation*}
  on the inverse limits is a quasi-isomorphism.
\end{corollary}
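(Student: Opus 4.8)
The plan is to reduce the statement to Lemma~\ref{l:surj-inv-system-acyclic} by passing to mapping cones. For each $i \in \DN$, form the mapping cone $C_i := \cone(\phi_i)$, whose degree-$n$ term is $X_i^{n+1} \oplus Y_i^n$ with the usual cone differential. Since $\phi_i$ is a quasi-isomorphism, the long exact cohomology sequence of the cone shows that $C_i$ is acyclic. Moreover $(C_i)_{i \in \DN}$ is again a directed inverse system of complexes of abelian groups, with transition maps $C_{i+1}^n = X_{i+1}^{n+1} \oplus Y_{i+1}^n \ra X_i^{n+1} \oplus Y_i^n = C_i^n$ induced componentwise by the transition maps of $X$ and $Y$; these are surjective because a direct sum of two surjections is a surjection. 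Hence Lemma~\ref{l:surj-inv-system-acyclic} applies and yields that $\varprojlim_i C_i$ is acyclic.

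Next I would identify $\varprojlim_i C_i$ with $\cone(\varprojlim_i \phi_i)$. This is checked degreewise: in degree $n$ one has $\varprojlim_i (X_i^{n+1} \oplus Y_i^n) \cong (\varprojlim_i X_i^{n+1}) \oplus (\varprojlim_i Y_i^n)$, since inverse limits commute with the finite direct sum (= finite product) $\oplus$, and this isomorphism is visibly compatible with the cone differentials, which are built from the differentials of $X_i$, $Y_i$ and the maps $\phi_i$. Thus $\cone(\varprojlim_i \phi_i)$ is acyclic, which by the long exact cohomology sequence of this cone means exactly that $\varprojlim_i \phi_i \colon \varprojlim_i X_i \ra \varprojlim_i Y_i$ is a quasi-isomorphism.

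There is essentially no hard step here; the only point deserving a word of care is the commutation of $\varprojlim$ with the formation of mapping cones, which is immediate because the cone is assembled degreewise from finite direct sums and shifts of the given complexes, and $\varprojlim$ commutes with finite products. (One could alternatively phrase the whole argument via the snake/long-exact-sequence machinery for the short exact sequences of inverse systems $0 \ra X_i \ra C_i \ra [1]Y_i \ra 0$ with surjective transition maps, but the cone argument above is the cleanest route.)
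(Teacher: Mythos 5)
Your proof is correct and follows essentially the same route as the paper: pass to the mapping cones $\Cone(\phi_i)$, note they form an inverse system of acyclic complexes with surjective transition maps, apply Lemma~\ref{l:surj-inv-system-acyclic}, identify $\varprojlim \Cone(\phi_i)$ with $\Cone(\varprojlim \phi_i)$, and conclude via the long exact sequence. The only difference is that you spell out the (easy) verifications of surjectivity of the cone transition maps and the commutation of $\varprojlim$ with the degreewise finite direct sums, which the paper leaves implicit.
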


\begin{proof}
  Consider the short exact sequences $Y_i \ra \Cone(\phi_i) \ra
  [1]X_i$ of complexes with the obvious transition
  maps which are all surjective. By assumption each
  $\Cone(\phi_i)$ is acyclic, so  
  $\varprojlim \Cone(\phi_i) =
  \Cone(\varprojlim \phi_i)$ is acyclic, by
  Lemma~\ref{l:surj-inv-system-acyclic}. 
  Now take the long exact sequence associated to the 
  short exact sequence 
  $\varprojlim Y_i \ra \Cone(\varprojlim \phi_i) \ra
  [1]\varprojlim X_i$ of complexes.
  % We have a
  % short exact sequence of complexes
  % \begin{equation*}
  %   \varprojlim Y_i \ra
  %   \Cone(\varprojlim \phi_i) \ra
  %   [1]\varprojlim X_i
  % \end{equation*}
  % which we can view as a triangle
  % \begin{equation*}
  %   \varprojlim X_i \xra{\varprojlim \phi_i}
  %   \varprojlim Y_i \ra
  %   \Cone(\varprojlim \phi_i) \ra
  %   [1]\varprojlim X_i.
  % \end{equation*}
  % Now take the long exact cohomology sequence.
\end{proof}

\begin{proposition}
  \label{p:abstract-cech-!-object-enhancement}
  Let $X$ be a \ref{enum:GSP}-scheme with an ordered finite
  affine open covering $\mathcal{U}=(U_s)_{s \in S}.$ Then
  $\Cech_!(X)$ is a pretriangulated dg category and the functor
  \begin{equation*}
    \mathcal{C}_! \colon [\Cech_!(X)] \ra \mfPerf'(X)
  \end{equation*}
  induced by \eqref{eq:!-cech-realization}
  is an equivalence of triangulated categories.
  Hence the dg category $\Cech_!(X)$
  is naturally an enhancement of $\mfPerf'(X).$ 
  We call it the 
  \define{$!$-\v{C}ech enhancement}.
  % of $\mfPerf'(X).$ 
\end{proposition}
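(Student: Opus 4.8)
The plan is to derive both assertions of the proposition --- that $\Cech_!(X)$ is pretriangulated and that $\mathcal{C}_!$ induces a triangulated equivalence onto $\mfPerf'(X)$ --- from Proposition~\ref{p:abstract-cech-!-bounded-above-full-faithful} together with the hypothesis that $X$ is a \ref{enum:GSP}-scheme. First I would record the consequence of that proposition in the form we need. By the very definition of $\Cech_!(X)$ one has $\Hom_{\Cech_!(X)}(P,Q)=\Hom_{C(\Vb^\subset(X))}(P^\subset,Q^\subset)$, so for bounded complexes of vector bundles $P$ and $Q$ the morphism appearing in Proposition~\ref{p:abstract-cech-!-bounded-above-full-faithful} is exactly
\[
  \Hom_{[\Cech_!(X)]}(P,Q)\ra\Hom_{D(\Sh(X))}(\mathcal{C}_!(P),\mathcal{C}_!(Q)),
\]
and it is an isomorphism. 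Applying the proposition to $P$ and to each shift $[n]Q$, and using that $\mathcal{C}_!$ is compatible with shifts, this upgrades to the statement that the chain map
\[
  \Hom^\bullet_{\Cech_!(X)}(P,Q)=\Hom^\bullet_{C(\Vb^\subset(X))}(P^\subset,Q^\subset)\ra\bR\Hom_{\Sh(X)}(\mathcal{C}_!(P),\mathcal{C}_!(Q))
\]
induced by the realization~\eqref{eq:real-subset} (followed by an h-injective resolution of the target) is a quasi-isomorphism. In particular $[\mathcal{C}_!]\colon[\Cech_!(X)]\ra D(\Sh(X))$ is fully faithful.

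Next I would identify the essential image. For a bounded complex of vector bundles $P$ the natural quasi-isomorphism $\mathcal{C}_!(P)\ra P$ shows $\mathcal{C}_!(P)\cong P$ in $D(\Sh(X))$, and $P$ is perfect, so the essential image is contained in $\mfPerf'(X)$. Conversely, since $X$ is a \ref{enum:GSP}-scheme, every object of $\mfPerf'(X)$ is isomorphic in $D(\Sh(X))$ to a bounded complex of vector bundles $P$, hence to $\mathcal{C}_!(P)$. Thus $[\mathcal{C}_!]$ is an equivalence of categories $[\Cech_!(X)]\ra\mfPerf'(X)$.

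It then remains to show that $\Cech_!(X)$ is pretriangulated, for then $[\Cech_!(X)]$ carries a triangulated structure making $[\mathcal{C}_!]$ exact, and the equivalence above is automatically one of triangulated categories because $\mfPerf'(X)$ is a triangulated subcategory of $D(\Sh(X))$. Shifts of objects of $\Cech_!(X)$ already lie in $\Cech_!(X)$ and represent the correct shifted dg modules (as $(-)^\subset$ commutes with shifts), so the only point is to produce cones. Given a closed degree-zero morphism $f\colon P\ra Q$ in $\Cech_!(X)$ --- i.e.\ a chain map $P^\subset\ra Q^\subset$ --- the realization~\eqref{eq:real-subset} yields an honest chain map $\mathcal{C}_!(f)\colon\mathcal{C}_!(P)\ra\mathcal{C}_!(Q)$ in $C(\Sh(X))$, whose cone in $D(\Sh(X))$ lies in $\mfPerf'(X)$; by \ref{enum:GSP} it is isomorphic to $\mathcal{C}_!(R)$ for some bounded complex of vector bundles $R$, fitting into a triangle $\mathcal{C}_!(P)\ra\mathcal{C}_!(Q)\ra\mathcal{C}_!(R)\ra[1]\mathcal{C}_!(P)$. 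Using the quasi-isomorphism of Hom-complexes from the first step, the fact that $\bR\Hom_{\Sh(X)}(\mathcal{C}_!(T),-)$ sends this triangle to a triangle, and naturality in $T$, one checks that $\Hom^\bullet_{\Cech_!(X)}(T,R)$ is quasi-isomorphic, naturally in $T\in\Cech_!(X)$, to $\Cone\big(\Hom^\bullet_{\Cech_!(X)}(T,f)\big)$; since both are semi-free dg $\Cech_!(X)$-modules this is a homotopy equivalence, i.e.\ $R$ is a cone of $f$ inside $\Cech_!(X)$. Hence $\Cech_!(X)$ is pretriangulated. Alternatively, composing $\mathcal{C}_!$ with a dg fibrant replacement exhibits a quasi-equivalence $\Cech_!(X)\ra\mfPerf'^\hinj(X)$ onto the already-established h-injective enhancement, and pretriangulatedness transfers along quasi-equivalences.

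The step I expect to be the main obstacle is precisely the construction of cones in $\Cech_!(X)$. The naive idea of taking the cone of $f$ as a complex of vector bundles fails, because a closed morphism of $\Cech_!(X)$ is in general not a morphism of complexes of vector bundles --- the realization functors are not full, cf.\ Remark~\ref{rem:realization-full}. One is therefore forced to pass through $D(\Sh(X))$, invoke condition~\ref{enum:GSP} to replace the cone there by a strictly perfect complex, and then use the strengthened, Hom-complex form of Proposition~\ref{p:abstract-cech-!-bounded-above-full-faithful} to verify that this replacement really represents the cone as a dg $\Cech_!(X)$-module; organizing the relevant natural transformations carefully is where the work lies.
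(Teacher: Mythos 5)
Your proposal is correct, and for full faithfulness and essential surjectivity it coincides with the paper's proof: both rest on Proposition~\ref{p:abstract-cech-!-bounded-above-full-faithful} (applied to all shifts, giving the quasi-isomorphism of Hom-complexes) and on condition~\ref{enum:GSP} together with the quasi-isomorphism $\mathcal{C}_!(R)\ra R$. Where you diverge is the pretriangulatedness step. The paper argues top-down: the realization extends to the pretriangulated envelope $\Cech_!(X)^\pretr$, the induced functor $[\Cech_!(X)^\pretr]\ra\mfPerf'(X)$ is again an equivalence (same d\'evissage from the proposition plus essential surjectivity already on $[\Cech_!(X)]$), and hence $[\Cech_!(X)]\sira[\Cech_!(X)^\pretr]$, which is pretriangulatedness by definition. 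You instead build cones by hand: strictify the cone of $\mathcal{C}_!(f)$ in $D(\Sh(X))$ via \ref{enum:GSP} to some bounded complex $R$ of vector bundles, and then identify the dg module $\Cone(\hat f)$ with the representable $\hat R$ using the Hom-complex quasi-isomorphisms; note that the comparison you get this way is naturally a zigzag through modules of the form $\Hom_{C(\Sh(X))}(\mathcal{C}_!(-),K)$ with $K$ h-injective, but since $\hat R$ and $\Cone(\hat f)$ are h-projective (finitely generated semi-free) this still yields a genuine homotopy equivalence, so your conclusion stands; the dg Yoneda lemma then reduces cones of morphisms between arbitrary objects of the envelope to this case. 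Your route is more explicit and shows exactly where \ref{enum:GSP} enters; the paper's is shorter and avoids manipulating dg modules. One caveat on your ``alternative'': pretriangulatedness is indeed invariant under quasi-equivalence, but ``composing $\mathcal{C}_!$ with a dg fibrant replacement'' is not literally a dg functor unless you have a dg-functorial resolution, so as stated that shortcut needs either such a functorial resolution or a zigzag argument.
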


\begin{proof}
  Certainly $\mathcal{C}_!$ is
  well-defined.
  By condition~\ref{enum:GSP}, any object of $\mfPerf'(X)$ is
  isomorphic to a bounded complex $R$ of vector bundles and hence
  to $\mathcal{C}_!(R).$ This shows that $\mathcal{C}_!$ is
  essentially surjective.
  The full and faithful dg
  functor $\Cech_!(X) \ra C(\Vb^\subset(X)),$ $P \mapsto
  P^\subset,$ and 
  Proposition~\ref{p:abstract-cech-!-bounded-above-full-faithful}
  then show that 
  $\mathcal{C}_! \colon [\Cech_!(X)] \ra \mfPerf'(X)$
  is an equivalence of categories.
  We already observed that $\Cech_!(X)$ contains all shifts of
  its objects, but 
  we need to prove that $\Cech_!(X)$ is pretriangulated.
  The functor \eqref{eq:!-cech-realization}
  extends to a dg functor from the pretriangulated envelope 
  $\Cech_!(X)^\pretr$ of
  $\Cech_!(X)$ to $C(\Sh(X))$ which obviously induces an
  equivalence 
  $\mathcal{C}_! \colon [\Cech_!(X)^\pretr] \ra \mfPerf'(X).$
  Hence $[\Cech_!(X)] \sira [\Cech_!(X)^\pretr]$ and
  $\Cech_!(X)$ is pretriangulated.
\end{proof}

% \newpage

\begin{proposition}
  \label{p:abstract-cech-*-object-enhancement}
  Let $X$ be a \ref{enum:GSP}-scheme with
  an 
  ordered finite affine open covering 
  $\mathcal{U}=(U_s)_{s \in S}.$
  Then the functor
  \begin{equation*}
    \mathcal{C}_* \colon [\Cech_*(X)] \ra \mfPerf(X)
  \end{equation*}
  is an equivalence of triangulated categories.
  Hence the dg category $\Cech_*(X)$
  is naturally an enhancement of $\mfPerf(X).$ 
  We call it the \define{$*$-\v{C}ech enhancement}.
  % of $\mfPerf(X).$ 
\end{proposition}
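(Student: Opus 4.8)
The plan is to deduce this from the $!$-version, Proposition~\ref{p:abstract-cech-!-object-enhancement}, via the duality diagram~\eqref{eq:duality-cech-enhancements}, whose left vertical arrow is an isomorphism of dg categories $(-)^\cek \colon \Cech_!(X)^\opp \sira \Cech_*(X)$, $P \mapsto P^\cek$, and which commutes up to natural isomorphism with the naive $\sheafHom(-,\mathcal{O}_X)$ on the right. After passing to homotopy categories I want to produce a square
\begin{equation*}
  \xymatrix{
    {[\Cech_!(X)]^\opp} \ar[r]^-{[(-)^\cek]}_-{\sim} \ar[d]_-{(\mathcal{C}_!)^\opp}
    & {[\Cech_*(X)]} \ar[d]^-{\mathcal{C}_*}\\
    {\mfPerf'(X)^\opp} \ar[r]^-{\RsheafHom(-,\mathcal{O}_X)}_-{\sim} & {\mfPerf'(X)}
  }
\end{equation*}
commuting up to natural isomorphism, in which the lower arrow is the contravariant autoequivalence of $\mfPerf'(X)$ given by derived dualization, with biduality isomorphism $\id \sira \RsheafHom(\RsheafHom(-,\mathcal{O}_X),\mathcal{O}_X)$ (standard, as perfect complexes are locally strictly perfect). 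Since by Proposition~\ref{p:abstract-cech-!-object-enhancement} the left vertical arrow is an equivalence and both horizontal arrows are equivalences, such a commuting square forces $\mathcal{C}_*$ to be an equivalence onto $\mfPerf'(X) \sira \mfPerf(X)$.

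The formal points I would handle first. As $X$ is separated and the $U_s$ are affine, each $U_I \hra X$ is affine, hence quasi-compact; so for a bounded complex of vector bundles $P$ we have $\mathcal{C}_*(P) \in C(\Qcoh(X))$ by Lemma~\ref{l:qcqs-preserves-qcoh}, and the $*$-\v{C}ech augmentation $P \ra \mathcal{C}_*(P)$ is a quasi-isomorphism, so $\mathcal{C}_*(P) \in \mfPerf(X)$. Condition~\ref{enum:GSP} then gives essential surjectivity of $\mathcal{C}_*$ onto $\mfPerf'(X) \sira \mfPerf(X)$, exactly as in the $!$-case. For pretriangulatedness I would run the argument from the end of the proof of Proposition~\ref{p:abstract-cech-!-object-enhancement}: $\mathcal{C}_*$ extends to the pretriangulated envelope $\Cech_*(X)^\pretr$, inducing an equivalence $[\Cech_*(X)^\pretr] \sira \mfPerf'(X)$, so once full faithfulness is in hand the inclusion $[\Cech_*(X)] \hra [\Cech_*(X)^\pretr]$ is an equivalence and $\Cech_*(X)$ is pretriangulated. (Alternatively, $\Cech_*(X) \cong \Cech_!(X)^\opp$ and the opposite of a pretriangulated dg category is pretriangulated.)

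It remains to establish the commuting square above, which is equivalent to full faithfulness. The key observation is that for a bounded complex of vector bundles $R$, the complex of flat sheaves $\mathcal{C}_!(R)$ is special enough that its naive dual $\sheafHom(\mathcal{C}_!(R),\mathcal{O}_X)$ — which by~\eqref{eq:duality-cech-enhancements}, applied termwise via $\sheafHom(u_!P,\mathcal{O}_X)\cong u_*(P^\cek)$, is exactly $\mathcal{C}_*(R^\cek)$ — is canonically isomorphic in $D(\Sh(X))$ to $R^\cek$ (using the $*$-\v{C}ech augmentation on the right and $\mathcal{C}_!(R)\cong R$ on the left, together with exactness of $\sheafHom(-,\mathcal{O}_X)$ on vector bundles), hence to $\RsheafHom(\mathcal{C}_!(R),\mathcal{O}_X)$, all compatibly with the \v{C}ech augmentations. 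Feeding this into the natural isomorphism that makes~\eqref{eq:duality-cech-enhancements} commute, one reads off the displayed square of triangulated categories. I expect this last step — promoting the dg-level commutativity of~\eqref{eq:duality-cech-enhancements} with the \emph{underived} $\sheafHom(-,\mathcal{O}_X)$ into a commuting square involving the \emph{derived} dual on $\mfPerf'(X)$, with the naturality this entails — to be the only genuine obstacle; everything else runs parallel to the $!$-case.
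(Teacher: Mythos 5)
Your proposal is correct in substance, but it takes a genuinely different route from the paper. The paper's own proof of Proposition~\ref{p:abstract-cech-*-object-enhancement} is a direct ``variation'' of the proofs of Propositions~\ref{p:abstract-cech-!-bounded-above-full-faithful} and \ref{p:abstract-cech-!-object-enhancement} (cf.\ also Proposition~\ref{p:cech-*-object-enhancement}): one reruns the augmentation/simplex computation on the $*$-side, using the morphism $(X,P)\ra P_\supset$ realizing $P \ra \mathcal{C}_*(P)$ and the analogue of Lemma~\ref{l:hom-u!-qcoh} (adjunction along the affine inclusions $U_I\subset X$), to show directly that $\Hom_{[C(\Vb_\supset(X))]}(P_\supset,Q_\supset)\ra\Hom_{D(\Sh(X))}(\mathcal{C}_*(P),\mathcal{C}_*(Q))$ is bijective. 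You instead deduce the $*$-case from the $!$-case through the dg isomorphism $(-)^\cek\colon\Cech_!(X)^\opp\sira\Cech_*(X)$ and the derived biduality \eqref{eq:duality-perf} on $\mfPerf'(X)$; this is legitimate and in effect anticipates the mechanism the paper only deploys later in Lemma~\ref{l:duality-and-!-Cech-perf}. The one step you must make precise is the one you flag: the isomorphism $\sheafHom(\mathcal{C}_!(R),\mathcal{O}_X)\cong\bR\sheafHom(\mathcal{C}_!(R),\mathcal{O}_X)$ has to be the \emph{canonical} comparison map, natural in $R\in\Cech_!(X)$, not merely an abstract isomorphism; otherwise the square of triangulated categories does not come with a filling $2$-isomorphism and full faithfulness does not follow. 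The clean way is to fix a quasi-isomorphism $\mathcal{O}_X\ra\mathcal{J}$ with $\mathcal{J}$ h-injective and show that $\sheafHom(\mathcal{C}_!(R),\mathcal{O}_X)\ra\sheafHom(\mathcal{C}_!(R),\mathcal{J})$ is a quasi-isomorphism; your two-out-of-three square does exactly this once one checks that the $\sheafHom(-,\mathcal{O}_X)$-dual of the augmentation $\mathcal{C}_!(R)\ra R$ is identified, under the termwise isomorphisms $\sheafHom(u_!u^*R^p,\mathcal{O}_X)\cong u_*u^*(R^{p\cek})$, with the $*$-\v{C}ech augmentation $R^\cek\ra\mathcal{C}_*(R^\cek)$, which is a quasi-isomorphism by \cite[Lemma~III.4.2]{Hart}. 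Notably this keeps you within condition~\ref{enum:GSP} and avoids the Noetherian hypotheses under which the paper proves the analogous statements (Proposition~\ref{p:duality-cech-enhancements}, Lemma~\ref{l:duality-and-!-Cech-perf}). Trade-offs: the paper's direct argument needs no duality and is the template that also yields the bounded-above enhancements of Proposition~\ref{p:shriek-cech-enhancement-D-minus-and-b-Coh}, while your route is shorter given Proposition~\ref{p:abstract-cech-!-object-enhancement} but is confined to bounded complexes (which suffices here, since all objects of $\Cech_*(X)$ are bounded); finally, remember to pass between $D(\Sh(X))$ and $D(\Qcoh(X))$ via $D(\Qcoh(X))\sira D_\Qcoh(\Sh(X))$ when landing in $\mfPerf(X)$.
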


\begin{proof}
  This is just a variation of the proofs of
  Propositions~\ref{p:abstract-cech-!-bounded-above-full-faithful}
  and \ref{p:abstract-cech-!-object-enhancement}, cf.\ also 
  the proof of Proposition~\ref{p:cech-*-object-enhancement}.
\end{proof}

The following Proposition~\ref{p:duality-cech-enhancements}
is not used in this article.

\begin{proposition}
  \label{p:duality-cech-enhancements}
  Let $X$ be a Noetherian separated scheme with
  an 
  ordered finite affine open covering 
  $\mathcal{U}=(U_s)_{s \in S}.$
  Then the diagram 
  \begin{equation*}
    \xymatrix{
      {[\Cech_!(X)]^\opp} \ar[d]_-{(-)^\cek}^-{\sim}
      \ar[r]^-{\mathcal{C}_!} 
      & {D(\Sh(X))^\opp} \ar[d]^-{\bR\sheafHom(-,\mathcal{O}_X)}\\
      {[\Cech_*(X)]} \ar[r]^-{\mathcal{C}_*}
      & {D(\Sh(X))}
    }
  \end{equation*}
  of categories
  induced from diagram~\eqref{eq:duality-cech-enhancements}
  commutes up to a natural
  isomorphism. 
  (It is a diagram of triangulated categories if $X$
  satisfies condition~\ref{enum:GSP}.)
\end{proposition}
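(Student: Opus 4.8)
The plan is to deduce the assertion from the already-established fact that diagram~\eqref{eq:duality-cech-enhancements} of dg categories commutes up to a natural isomorphism, by checking that replacing $\sheafHom(-,\mathcal{O}_X)$ by its derived functor introduces no error on objects coming from $\Cech_!(X)$. Write $\can\colon [C(\Sh(X))]\ra D(\Sh(X))$ for the localization functor. Applying the passage to homotopy categories to~\eqref{eq:duality-cech-enhancements} and post-composing with $\can$ yields a natural isomorphism of functors $[\Cech_!(X)]^\opp \ra D(\Sh(X))$,
\[
  \can\circ\sheafHom(\mathcal{C}_!(-),\mathcal{O}_X)\ \cong\ \mathcal{C}_*\big((-)^\cek\big),
\]
whose right-hand side is precisely the ``down-then-right'' composite in the diagram of the proposition; note that the left vertical arrow $(-)^\cek$ of~\eqref{eq:duality-cech-enhancements} is an isomorphism of dg categories, so this makes sense without~\ref{enum:GSP}. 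Since the ``right-then-down'' composite is $\bR\sheafHom(\mathcal{C}_!(-),\mathcal{O}_X)$, the proposition follows once we know that the canonical natural transformation $\can\circ\sheafHom(-,\mathcal{O}_X)\Ra\bR\sheafHom(-,\mathcal{O}_X)$ becomes an isomorphism after precomposition with $\mathcal{C}_!\colon[\Cech_!(X)]^\opp\ra[C(\Sh(X))]^\opp$; being a natural transformation, it suffices to verify this on objects.

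So the heart of the matter is the objectwise claim: for every bounded complex of vector bundles $P$ on $X$, the canonical morphism $\sheafHom(\mathcal{C}_!(P),\mathcal{O}_X)\ra\bR\sheafHom(\mathcal{C}_!(P),\mathcal{O}_X)$ is an isomorphism in $D(\Sh(X))$. To see this I would use the $!$-\v{C}ech resolution $\iota_P\colon\mathcal{C}_!(P)\ra P$, which is a quasi-isomorphism, and the commuting naturality square in $D(\Sh(X))$
\[
  \xymatrix{
    {\sheafHom(P,\mathcal{O}_X)} \ar[r]^-{\sheafHom(\iota_P,\mathcal{O}_X)} \ar[d] & {\sheafHom(\mathcal{C}_!(P),\mathcal{O}_X)} \ar[d]\\
    {\bR\sheafHom(P,\mathcal{O}_X)} \ar[r]^-{\bR\sheafHom(\iota_P,\mathcal{O}_X)}_-{\sim} & {\bR\sheafHom(\mathcal{C}_!(P),\mathcal{O}_X).}
  }
\]
The bottom arrow is an isomorphism because $\bR\sheafHom(-,\mathcal{O}_X)$ is a functor on $D(\Sh(X))$ and $\iota_P$ is a quasi-isomorphism. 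The left arrow is an isomorphism because $P$ is a bounded complex of locally free sheaves of finite rank, so $\sheafHom(P,-)$ preserves quasi-isomorphisms (a local computation), whence $\sheafHom(P,\mathcal{O}_X)$ already represents $\bR\sheafHom(P,\mathcal{O}_X)$. For the top arrow, diagram~\eqref{eq:duality-cech-enhancements} identifies $\sheafHom(\mathcal{C}_!(P),\mathcal{O}_X)$ with $\mathcal{C}_*(P^\cek)$, and unravelling the termwise identities $\sheafHom(\leftidx{^{V}}{(P^i)}{},\mathcal{O}_X)=\leftidx{_{V}}{((P^i)^\cek)}{}$ (for $V$ a finite intersection of members of $\mathcal{U}$) shows that under this identification $\sheafHom(\iota_P,\mathcal{O}_X)$ becomes the $*$-\v{C}ech augmentation $P^\cek\ra\mathcal{C}_*(P^\cek)$ of~\eqref{eq:F-*-Cech-resolution}, hence a quasi-isomorphism. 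Therefore the remaining right-hand arrow is an isomorphism, proving the objectwise claim and hence the proposition. When $X$ in addition satisfies~\ref{enum:GSP}, Propositions~\ref{p:abstract-cech-!-object-enhancement} and~\ref{p:abstract-cech-*-object-enhancement} make $[\Cech_!(X)]$ and $[\Cech_*(X)]$ triangulated with $\mathcal{C}_!,\mathcal{C}_*$ exact equivalences onto $\mfPerf'(X)=\mfPerf(X)$, while $(-)^\cek$ induces an exact anti-equivalence and $\bR\sheafHom(-,\mathcal{O}_X)$ is a contravariant exact functor, so the square is one of triangulated categories.

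I expect the one genuinely fiddly point to be the bookkeeping in the last highlighted step: verifying that the natural isomorphism furnished by~\eqref{eq:duality-cech-enhancements} is compatible with the coaugmentation $\mathcal{C}_!\Ra\id$ on one side and the augmentation $\id\Ra\mathcal{C}_*$ on the other, so that $\sheafHom(\iota_P,\mathcal{O}_X)$ really is the $*$-\v{C}ech augmentation and not merely quasi-isomorphic to it; this is a degreewise check using $\sheafHom(u_!(-),\mathcal{O}_X)\cong u_*((-)^\cek)$ and the counit/unit adjunctions, in the spirit of Lemma~\ref{l:Hom-u!Pv!Q-v*Qu*P}. Everything else is formal manipulation with derived functors, h-injective resolutions, and the exactness of $\sheafHom(P,-)$ for $P$ strictly perfect.
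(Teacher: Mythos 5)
Your proposal is correct, and its global structure coincides with the paper's: both reduce the statement to the objectwise claim that the canonical map $\sheafHom(\mathcal{C}_!(P),\mathcal{O}_X)\ra\bR\sheafHom(\mathcal{C}_!(P),\mathcal{O}_X)$ is an isomorphism in $D(\Sh(X))$ for $P$ a bounded complex of vector bundles, and then quote the dg-level commutativity of diagram~\eqref{eq:duality-cech-enhancements}. Where you genuinely diverge is in the proof of that objectwise claim. The paper argues componentwise: for $u\colon U\subset X$ affine open, $\sheafHom(\leftidx{^U}{P}{},-)\cong u_*\sheafHom(u^*P,u^*(-))$ is exact on $\Qcoh(X)$, so applying it to a resolution $\mathcal{O}_X\ra\mathcal{I}$ by injective quasi-coherent sheaves (which are injective as sheaves by Theorem~\ref{t:injective-in-qcoh-vs-all-OX}, whence $\mathcal{I}$ is h-injective) exhibits $\sheafHom(\leftidx{^U}{P}{},\mathcal{O}_X)$ as computing $\bR\sheafHom(\leftidx{^U}{P}{},\mathcal{O}_X)$; this is where the Noetherian hypothesis and the affineness of the $U_s$ enter. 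You instead run a three-out-of-four argument in the naturality square attached to the quasi-isomorphism $\iota_P\colon\mathcal{C}_!(P)\ra P$, using that $\sheafHom(P,-)\cong P^\cek\otimes(-)$ preserves quasi-isomorphisms for strictly perfect $P$, and that under the termwise identification $\sheafHom(u_!(-),\mathcal{O}_X)\cong u_*((-)^\cek)$ the dual of the $!$-coaugmentation is the $*$-\v{C}ech augmentation $P^\cek\ra\mathcal{C}_*(P^\cek)$, hence a quasi-isomorphism; the compatibility check you flag is exactly the dual-of-counit-equals-unit bookkeeping and does go through. Your route buys something: it avoids injective quasi-coherent resolutions entirely, so it does not actually use the Noetherian hypothesis (quasi-compact separated with the fixed finite covering suffices), whereas the paper's componentwise acyclicity argument is shorter on bookkeeping but leans on Theorem~\ref{t:injective-in-qcoh-vs-all-OX} and the affineness of the covering. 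Your handling of the parenthetical triangulated statement via Propositions~\ref{p:abstract-cech-!-object-enhancement} and~\ref{p:abstract-cech-*-object-enhancement} is also fine.
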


\begin{proof}
  Let $u:U \ra X$ be the inclusion of an affine open subscheme
  and $P$ a vector bundle on $X.$ Then the functor
  $\sheafHom(\leftidx{^U}{P}{}, -) \cong u_*\sheafHom(u^*(P),
  u^*(-))\colon \Qcoh(X) \ra \Qcoh(X)$ is (well-defined by
  Lemma~\ref{l:qcqs-preserves-qcoh}) and exact since $u$ is
  affine.  Hence, if $\mathcal{O}_X \ra \mathcal{I}$ is a
  resolution by injective quasi-coherent sheaves, we obtain a
  quasi-isomorphism $\sheafHom(\leftidx{^U}{P}{}, \mathcal{O}_X)
  \ra \sheafHom(\leftidx{^U}{P}{}, \mathcal{I}).$

  By
  Theorem~\ref{t:injective-in-qcoh-vs-all-OX}.\ref{enum:inj-qcoh-equal-inj-OX-that-qcoh},
  $\mathcal{I}$ consists of injective sheaves and hence, being
  bounded below, is an h-injective complex of sheaves
  (Remark~\ref{rem:fibrant-explained}).  Hence
  $\sheafHom(\leftidx{^U}{P}{}, \mathcal{I}) \cong
  \bR\sheafHom(\leftidx{^U}{P}{}, \mathcal{O}_X)$ in $D(\Sh(X)).$

  These facts imply that the canonical morphism
  $\sheafHom(\mathcal{C}_!(Q), \mathcal{O}_X) \ra 
  \bR\sheafHom(\mathcal{C}_!(Q), \mathcal{O}_X)$ is an
  isomorphism for any bounded complex $Q$ of vector bundles on
  $X.$ 
  Now use the commutativity of
  diagram~\eqref{eq:duality-cech-enhancements}.
\end{proof}

Let $\Cech^{-,b}_!(X)$ 
be the full dg subcategory of $\Cech^-_!(X)$ of those objects $P$
whose
cohomology is bounded, i.\,e.\ $H^i(P) =0$ for all but finitely
many $i \in \DZ.$
 
\begin{proposition}
  \label{p:shriek-cech-enhancement-D-minus-and-b-Coh}
  Let $X$ be a \ref{enum:RES}-scheme with an ordered finite
  affine open covering $\mathcal{U}=(U_s)_{s \in S}.$ Then the dg
  categories $\Cech^-_!(X)$ and $\Cech^{-,b}_!(X)$ are
  pretriangulated and the functors
  \begin{align*}
    \mathcal{C}_! \colon [\Cech^-_!(X)] & \ra
    D^-_\Coh(\Sh(X)) \quad \text{and}\\
    \mathcal{C}_! \colon [\Cech^{-,b}_!(X)] & \ra D^b_\Coh(\Sh(X))
  \end{align*}
  % \begin{equation*}
  %   \mathcal{C}_! \colon [\Cech^-_!(X)] \ra
  %   D^-_\Coh(\Sh(X)) \text{ and }
  %   \mathcal{C}_! \colon [\Cech^{-,b}_!(X)] \ra D^b_\Coh(\Sh(X))
  % \end{equation*}
  are equivalences 
  of triangulated categories.
  Hence the dg category $\Cech^-_!(X)$ (resp.\ $\Cech^{-,b}_!(X)$)
  is naturally an enhancement of
  $D^-(\Coh(X)) \sira D^-_\Coh(\Qcoh(X)) \sira D^-_\Coh(\Sh(X))$
  (resp.\ of 
  $D^b(\Coh(X)) \sira D^b_\Coh(\Qcoh(X)) \sira
  D^b_\Coh(\Sh(X))$),
  cf.\ equivalence \eqref{eq:D-minus-coh-noetherian}.
  We call it the \define{$!$-\v{C}ech enhancement}. 
  % of $D^-(\Coh(X)) of $D^b(\Coh(X)).$
\end{proposition}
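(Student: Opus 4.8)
The plan is to copy the proof of Proposition~\ref{p:abstract-cech-!-object-enhancement} almost verbatim, with condition~\ref{enum:GSP} and $\mfPerf'(X)$ replaced by condition~\ref{enum:RES} and $D^-_\Coh(\Sh(X))$ (resp.\ $D^b_\Coh(\Sh(X))$), and with Proposition~\ref{p:D-minus-Coh(Sh)-strict-coherent} taking over the role played by the defining property of a \ref{enum:GSP}-scheme. First I would check that $\mathcal{C}_!$ is well-defined with the asserted target. For a bounded above complex $P$ of vector bundles the natural quasi-isomorphism $\mathcal{C}_!(P) \to P$ identifies $\mathcal{C}_!(P)$ with $P$ in $D(\Sh(X))$, and $P$ has bounded above cohomology which is coherent since $X$ is Noetherian; hence $\mathcal{C}_!(P) \in D^-_\Coh(\Sh(X))$, and $\mathcal{C}_!(P) \in D^b_\Coh(\Sh(X))$ as soon as $H^i(P) = 0$ for $|i| \gg 0$, i.\,e.\ as soon as $P \in \Cech^{-,b}_!(X)$.

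For essential surjectivity I would invoke Proposition~\ref{p:D-minus-Coh(Sh)-strict-coherent}: every object of $D^-_\Coh(\Sh(X))$ is isomorphic in $D(\Sh(X))$ to a bounded above complex $R$ of vector bundles, hence to $\mathcal{C}_!(R)$; if the object additionally lies in $D^b_\Coh(\Sh(X))$ then $H^\bullet(R)$ is bounded, so $R \in \Cech^{-,b}_!(X)$. For full faithfulness I would use that $P \mapsto P^\subset$ defines a full and faithful dg functor $\Cech^-_!(X) \to C(\Vb^\subset(X))$ and combine it with Proposition~\ref{p:abstract-cech-!-bounded-above-full-faithful} (which applies because the objects of $\Cech^-_!(X)$ are bounded above), concluding that $\Hom_{[\Cech^-_!(X)]}(P,Q) \to \Hom_{D(\Sh(X))}(\mathcal{C}_!(P),\mathcal{C}_!(Q))$ is bijective. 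The statement for $\Cech^{-,b}_!(X)$ then follows, it being a full dg subcategory of $\Cech^-_!(X)$ whose essential image lands in the full subcategory $D^b_\Coh(\Sh(X))$.

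It remains to see that $\Cech^-_!(X)$ and $\Cech^{-,b}_!(X)$ are pretriangulated. Both evidently contain all shifts of their objects. For a closed degree zero morphism $f \colon P \to Q$ the cone is represented by the ordinary mapping cone $\cone(f)$ of complexes of vector bundles; since $(-)^\subset$ is a dg functor one has $\cone(f)^\subset \cong \cone(f^\subset)$ in $C(\Vb^\subset(X))$. This cone is again bounded above, and the long exact cohomology sequence shows that $\cone(f) \in \Cech^{-,b}_!(X)$ whenever $P,Q \in \Cech^{-,b}_!(X)$. Just as in the proof of Proposition~\ref{p:abstract-cech-!-object-enhancement}, I would then extend $\mathcal{C}_!$ to a dg functor on the pretriangulated envelope $\Cech^-_!(X)^\pretr$ (resp.\ $\Cech^{-,b}_!(X)^\pretr$), check by d\'evissage along distinguished triangles that it remains fully faithful and essentially surjective — hence induces an equivalence with $D^-_\Coh(\Sh(X))$ (resp.\ $D^b_\Coh(\Sh(X))$) — and deduce $[\Cech^-_!(X)] \sira [\Cech^-_!(X)^\pretr]$ and the bounded analogue, so both dg categories are pretriangulated. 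Composing with the equivalences $D^-(\Coh(X)) \sira D^-_\Coh(\Qcoh(X)) \sira D^-_\Coh(\Sh(X))$ of~\eqref{eq:D-minus-coh-noetherian} and their bounded versions then yields the enhancement claim.

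I do not expect a genuine obstacle: the two substantial inputs, Propositions~\ref{p:abstract-cech-!-bounded-above-full-faithful} and~\ref{p:D-minus-Coh(Sh)-strict-coherent}, are already in place, and what remains is essentially a repackaging of Proposition~\ref{p:abstract-cech-!-object-enhancement}. The two points that need a little care are, first, that Proposition~\ref{p:abstract-cech-!-bounded-above-full-faithful} is tailored precisely to \emph{bounded above} source complexes (which is why unbounded complexes of vector bundles are excluded as objects), and second, the verification that $\Cech^{-,b}_!(X)$ is closed under mapping cones, i.\,e.\ that boundedness of cohomology is preserved — which is exactly the long exact cohomology sequence. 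If forced to single out a main difficulty, it is this closure-under-cones step together with making sure the dg-categorical cone in $\Cech^{-,b}_!(X)$ is indeed computed by the naive mapping cone of complexes of vector bundles.
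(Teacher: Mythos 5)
Your proposal is correct and follows essentially the same route as the paper: well-definedness from Noetherianness of $X$, essential surjectivity from Proposition~\ref{p:D-minus-Coh(Sh)-strict-coherent}, full faithfulness from Proposition~\ref{p:abstract-cech-!-bounded-above-full-faithful}, and pretriangulatedness via the pretriangulated-envelope argument of Proposition~\ref{p:abstract-cech-!-object-enhancement}. The extra check that $\Cech^{-,b}_!(X)$ is closed under cones (long exact cohomology sequence) is a correct and welcome, if routine, addition the paper leaves implicit.
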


\begin{proof}
  If $P$ is a bounded above complex of vector bundles on $X$, all
  its cohomology sheaves $H^i(P)$ are coherent since $X$ is
  Noetherian.
  Hence both functors are well-defined. They are essentially
  surjective by 
  Proposition~\ref{p:D-minus-Coh(Sh)-strict-coherent}.
  Proposition~\ref{p:abstract-cech-!-bounded-above-full-faithful}
  then shows that both functors are equivalences of categories.
  As 
  in the proof of Proposition~\ref{p:abstract-cech-!-object-enhancement}
  one shows that the dg categories $\Cech^-_!(X)$ and
  $\Cech^{-,b}_!(X)$ are pretriangulated.
\end{proof}

% \subsubsection{Some compatibilities}
% \label{sec:some-compatibilities}

\subsubsection{Pullbacks}
\label{sec:pullbacks}

We discuss some properties of the categories introduced in 
section~\ref{sec:some-auxil-constr}.
These results will be used in 
section~\ref{sec:four-mukai-funct}. 

Let $f \colon Y \ra X$ be a morphism of schemes.  
Then there are obvious functors
\begin{align*}
  % \label{eq:pullback-subset}
  f^* \colon \Vb^\subset(X) \ra \Vb^\subset(Y), & \quad
  % \notag
  (U,P) \mapsto (f\inv(U), f^*P), \text{ and}\\
  % \label{eq:poullback-supset}
  f^* \colon \Vb_\supset(X) \ra \Vb_\supset(Y), & \quad
  (U,P) \mapsto (f\inv(U), f^*P).
\end{align*}
Note that their definition on morphism spaces uses
both identifications from Lemma~\ref{l:base-change-ringed-spaces-open}, or Remark~\ref{rem:alternative-def-Vb}.
Both these pullback functors commute with the realizations
functors, 
i.\,e.\ we have diagrams
\begin{equation}
  \label{eq:upper*-realization}
  \xymatrix{
    {\Vb^\subset(X)} \ar[d]^-{f^*} \ar[rr]^-{(U,P)
      \mapsto u_!P}_-{\eqref{eq:real-subset}}  
    && {\Sh(X)} \ar[d]^-{f^*}\\
    {\Vb^\subset(Y)} \ar[rr]^-{(U',P') \mapsto
      u'_!P'}_-{\eqref{eq:real-subset}}  
    && {\Sh(Y),}
  }
  \quad\quad
  \xymatrix{
    {\Vb_\supset(X)} \ar[d]^-{f^*} \ar[rr]^-{(U,P)
      \mapsto u_*P}_-{\eqref{eq:real-supset}}  
    && {\Sh(X)} \ar[d]^-{f^*}\\
    {\Vb_\supset(Y)} \ar[rr]^-{(U',P') \mapsto
      u'_*P'}_-{\eqref{eq:real-supset}}  
    && {\Sh(Y)}
  }
\end{equation}
that commute up to natural isomorphisms coming from
Lemma~\ref{l:base-change-ringed-spaces-open}.
Pullback also commutes with the duality, i.\,e.\ there is a
diagram 
\begin{equation}
  \label{eq:Vb-sup-sub-duality-pull}
  \xymatrix{
    {\Vb^\subset(X)^\opp} \ar[r]^-{(-)^\cek}_-{\sim} \ar[d]^-{f^*}
    & 
    {\Vb_\supset(X)} \ar[d]^-{f^*}
    \\
    {\Vb^\subset(Y)^\opp} \ar[r]^-{(-)^\cek}_-{\sim} 
    & 
    {\Vb_\supset(Y)} 
  }
\end{equation}
which commutes up to a natural isomorphism, as follows from
Lemma~\ref{l:pullback-sheafHom}.

\begin{lemma}
  \label{l:pullback-sheafHom}
  Let $f\colon Y \ra X$ be a morphism of schemes. Given
  $\mathcal{F},$ $\mathcal{G} \in \Sh(X)$, there is a natural
  morphism 
  $f^*\sheafHom(\mathcal{F}, \mathcal{G}) \ra
  \sheafHom(f^*\mathcal{F}, f^*\mathcal{G})$
  which is an isomorphism if $\mathcal{F}$ is a vector bundle.
\end{lemma}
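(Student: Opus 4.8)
The plan is to first write down the natural morphism using the internal tensor--hom adjunction together with the strong monoidality of $f^*$, and then to check the isomorphism statement by a purely local computation. For the construction, note that by the internal-hom adjunction on $Y$, giving an $\mathcal{O}_Y$-linear morphism $f^*\sheafHom(\mathcal{F},\mathcal{G}) \ra \sheafHom(f^*\mathcal{F}, f^*\mathcal{G})$ is the same as giving an $\mathcal{O}_Y$-linear morphism $f^*\sheafHom(\mathcal{F},\mathcal{G}) \otimes f^*\mathcal{F} \ra f^*\mathcal{G}$. I would use the canonical isomorphism $f^*\mathcal{A} \otimes f^*\mathcal{B} \cong f^*(\mathcal{A}\otimes\mathcal{B})$ to identify the source with $f^*\big(\sheafHom(\mathcal{F},\mathcal{G})\otimes\mathcal{F}\big)$, and then apply $f^*$ to the evaluation morphism $\sheafHom(\mathcal{F},\mathcal{G})\otimes\mathcal{F} \ra \mathcal{G}$ on $X$. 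Naturality in $\mathcal{F}$ and $\mathcal{G}$ is immediate since the adjunction, the monoidality isomorphism, and the evaluation map are all natural. (Equivalently, one may obtain the map by applying $-\otimes_{f^{-1}\mathcal{O}_X}\mathcal{O}_Y$ to the tautological morphism $f^{-1}\sheafHom_{\mathcal{O}_X}(\mathcal{F},\mathcal{G}) \ra \sheafHom_{f^{-1}\mathcal{O}_X}(f^{-1}\mathcal{F}, f^{-1}\mathcal{G})$; the two constructions agree.)

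For the isomorphism claim: being an isomorphism of sheaves on $Y$ is local, so it suffices to prove the assertion after restricting to $f^{-1}(U)$ for each member $U$ of an open cover of $X$. Choosing $U$ so that $\mathcal{F}|_U \cong \mathcal{O}_U^{\oplus n}$ is free of finite rank, we get $f^*\mathcal{F}|_{f^{-1}(U)} \cong \mathcal{O}_{f^{-1}(U)}^{\oplus n}$, and the three functors $\sheafHom(-,\mathcal{G})$, $f^*$, and $\sheafHom(f^*(-), f^*\mathcal{G})$ each turn the finite biproduct $\mathcal{O}_U^{\oplus n}$ into the corresponding finite biproduct, compatibly with the natural morphism of the Lemma. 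Hence it is enough to treat $\mathcal{F}=\mathcal{O}_X$, and there the map is, under the canonical identifications $\sheafHom(\mathcal{O}_X,\mathcal{G})=\mathcal{G}$, $f^*\mathcal{O}_X=\mathcal{O}_Y$ and $\sheafHom(\mathcal{O}_Y, f^*\mathcal{G})=f^*\mathcal{G}$, just the identity on $f^*\mathcal{G}$.

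The only real work is the bookkeeping in the second step: one must check that the natural morphism is compatible with restriction to opens and with the biproduct decompositions, so that the reduction to the rank-one case is legitimate. This is routine --- it amounts to the usual fact that the evaluation and projection morphisms are natural and additive --- and there is no conceptual obstacle; the statement is standard (cf.\ \cite[Chapter~II]{Hart}), but the direct argument above is short enough to include.
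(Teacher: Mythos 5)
Your proposal is correct, and it simply spells out the standard construction (adjunction/monoidality of $f^*$ plus local reduction to the free case) that the paper dismisses with the one-word proof ``Obvious.'' Nothing further is needed.
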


\begin{proof}
  Obvious.
\end{proof}

The pullback functors, the realization functors and the dualities
combine to a cube of functors with five commutative faces whose 
sixth face is given by 
\begin{equation*}
  % \label{eq:Vb-sup-sub-duality}
  \xymatrix{
    {\Sh(X)^\opp} 
    % \ar[r]^-{(-)^\cek}
    \ar[rrr]^-{(-)^\cek=\sheafHom(-, \mathcal{O}_X)}
    \ar[d]^-{f^*} 
    &&& 
    {\Sh(X)} \ar[d]^-{f^*}
    \\
    {\Sh(Y)^\opp}
    % \ar[r]^-{(-)^\cek}
    \ar[rrr]^-{(-)^\cek=\sheafHom(-, \mathcal{O}_Y)}
    &&& 
    {\Sh(Y)} 
  }
\end{equation*}
which comes with a morphism of functors $f^*\sheafHom(-,
\mathcal{O}_X) \ra \sheafHom(f^*(-), \mathcal{O}_Y)$ coming from
Lemma~\ref{l:pullback-sheafHom} which is an isomorphism on all
objects of the form $u_!P$ where $u\colon U \subset X$ is an
affine inclusion of an open subscheme and $P$ is a vector bundle
on $U$ (use Lemmata~\ref{l:pullback-sheafHom},
\ref{l:base-change-ringed-spaces-open}.\ref{enum:proper-base-change-open},
\ref{l:push-affine-then-pull} and the obvious adjunctions).

% \subsubsection{An isomorphism of functors}
% \label{sec:an-isom-funct}

%\paragraph{}
\subsubsection{}
\label{sec:morphism-functors}

% \subsection{A compatibility}
% \label{sec:compatibility}

Let $M$ be a sheaf on a scheme $X$.
Let $(U, P) \in \Vb^\subset(X)$ and let $u\colon U \subset X$ be
the inclusion. There are morphisms
\begin{equation}
  \label{eq:compatible}
  (u_*P^\cek) \otimes M 
  \ra 
  u_*(P^\cek \otimes u^*M) 
  \sira 
  u_*\sheafHom(P, u^*M)
  \sira
  \sheafHom(u_!P, M)
\end{equation}
constructed as follows: the first morphism corresponds to the
obvious morphism $u^*((u_*P^\cek) \otimes M) \sira (u^*u_*P^\cek)
\otimes u^*M \ra P^\cek \otimes u^*M$ under the adjunction
$(u^*,u_*)$ and is an isomorphism if $M$ is quasi-coherent and
$u$ is affine, by Lemma~\ref{l:push-from-open-and-tensor}; the
second morphism is the isomorphism coming from the obvious
isomorphism $P^\cek \otimes u^*M \sira \sheafHom(P, u^*M)$, and
the third morphism is the adjunction isomorphism.

\begin{lemma}
  \label{l:compatible}
  Let $M$ be a quasi-coherent sheaf on a scheme $X$.
  Then mapping an object
  $(U,P)$ of $\Vb^\subset(X)$ to the morphism
  \eqref{eq:compatible} defines a morphism
  $\tau$ from the functor
  \begin{equation*}
    \Vb^\subset(X)^\opp
    \xsira{(-)^\cek}
    \Vb_\supset(X)
    \xrightarrow[\eqref{eq:real-supset}]{(U,P) \mapsto u_*P}
    \Sh(X) 
    \xra{(-\otimes M)}
    \Sh(X)
  \end{equation*}
  to the functor
  \begin{equation*}
    \Vb^\subset(X)^\opp
    \xrightarrow[\eqref{eq:real-subset}]{(U,P) \mapsto u_!P}
    \Sh(X)^\opp 
    \xra{\sheafHom(-, M)}
    \Sh(X).
  \end{equation*}
  Moreover, if 
  $(U,P)$ is an object of $\Vb^\subset(X)$ such that the
  inclusion $U \subset X$ is affine, then
  $\tau_{(U,P)}$ is an isomorphism.
\end{lemma}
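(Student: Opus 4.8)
The plan is to treat the two assertions separately; the isomorphism claim is the easy one. In the factorization \eqref{eq:compatible} of $\tau_{(U,P)}$, the second morphism is an isomorphism because $P$ is a vector bundle, the third is an isomorphism by adjunction, and the first morphism $(u_* P^\cek) \otimes M \to u_*(P^\cek \otimes u^* M)$ is an isomorphism as soon as $u$ is affine and $M$ is quasi-coherent, by Lemma~\ref{l:push-from-open-and-tensor}; hence $\tau_{(U,P)}$ is an isomorphism whenever $U \subset X$ is affine. The real content is the naturality of $\tau$: for every morphism $f\colon (U,P) \to (V,Q)$ of $\Vb^\subset(X)$ the naturality square relating $\tau_{(U,P)}$, $\tau_{(V,Q)}$ and the two functors $F_1$ (given by $(U,P)\mapsto (u_*P^\cek)\otimes M$) and $F_2$ (given by $(U,P)\mapsto\sheafHom(u_!P,M)$) must commute. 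Writing $u\colon U \subset X$ and $v\colon V \subset X$ for the inclusions, if $U \not\subset V$ then $f = 0$ and the square is trivial; so I would assume $U \subset V$, write $j\colon U \subset V$ so that $u = v\circ j$, and, via Lemma~\ref{l:Hom-u!Pv!Q-v*Qu*P}, identify $f$ with a morphism $\bar f\colon P \to Q|_U$ of vector bundles on $U$, and $f^\cek$ with $\bar f^\cek\colon Q^\cek|_U \to P^\cek$.

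To organize the naturality check I would factor $\tau$ as a vertical composite of three morphisms of functors $\Vb^\subset(X)^\opp \to \Sh(X)$, one for each arrow of \eqref{eq:compatible}, interpolating the assignments $(U,P) \mapsto u_*(P^\cek \otimes u^* M)$ and $(U,P) \mapsto u_* \sheafHom(P, u^* M)$ between $F_1$ and $F_2$. The outer two functors $F_1$ and $F_2$ are manifestly functorial on $\Vb^\subset(X)^\opp$, being built from the realization functors \eqref{eq:real-supset} and \eqref{eq:real-subset}. For the two interpolating assignments I would define the action on a morphism $f$ with $U \subset V$ by combining restriction along $j$ (which commutes with $\otimes$, and, $j$ being an open immersion, with $\sheafHom$), the canonical map $\mathcal{G} \to u_*u^*\mathcal{G}$, the identifications $u^* v_* \cong j^*$ and $(v^* M)|_U = u^* M$, and $\bar f$, and then check functoriality directly from these descriptions. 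With this done, naturality of $\tau$ reduces to three standard naturality statements, one for each of the morphisms in \eqref{eq:compatible}: for the projection-formula morphism $(u_* P^\cek)\otimes M \to u_*(P^\cek \otimes u^* M)$; for the $u_*$-pushforward of the canonical isomorphism $P^\cek \otimes u^* M \xrightarrow{\sim} \sheafHom(P, u^* M)$ attached to the vector bundle $P$; and for the adjunction isomorphism $u_* \sheafHom(P, u^* M) \xrightarrow{\sim} \sheafHom(u_! P, M)$. After the reductions above and the factorization $u = v\circ j$, each of these follows from the naturality of the corresponding transformation on $\Sh(U)$ or $\Sh(V)$ together with its compatibility with $j_!$, $j_*$, $j^*$ and the units and counits of the $(j_!,j^*)$ and $(j^*,j_*)$ adjunctions.

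The hard part is not conceptual but bookkeeping. Because the realization functors $\Vb^\subset(X) \to \Sh(X)$ and $\Vb_\supset(X) \to \Sh(X)$ are not full (Remark~\ref{rem:not-full}), the interpolating assignments above are genuinely functors on $\Vb^\subset(X)^\opp$ rather than restrictions of endofunctors of $\Sh(X)$, so functoriality of these assignments and commutativity of all the naturality squares have to be verified by hand from the explicit morphism descriptions furnished by Lemma~\ref{l:Hom-u!Pv!Q-v*Qu*P}, while keeping careful track of the four pairwise naturally isomorphic functors and of the units and counits of the various open-immersion adjunctions. I expect the write-up to reduce to a sequence of routine, if somewhat lengthy, diagram chases, with no difficulty of substance.
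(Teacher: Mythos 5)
Your proposal is correct and follows essentially the same route as the paper: the isomorphism claim is exactly the observation already made in the construction of \eqref{eq:compatible} (using Lemma~\ref{l:push-from-open-and-tensor} for the affine case), and naturality is checked by reducing to a morphism with $U \subset V$, describing it via Lemma~\ref{l:Hom-u!Pv!Q-v*Qu*P}, and chasing the diagram built from the three maps in \eqref{eq:compatible}. The only (cosmetic) difference is organizational: rather than introducing intermediate functors on $\Vb^\subset(X)^\opp$ and checking naturality column by column, the paper interpolates a middle row given by the same chain of canonical maps applied to the sheaf $j_!P$ on $V$, so that the top square is naturality for the honest sheaf morphism $\alpha\colon j_!P \ra Q$ and the bottom square consists of the identifications $u_*=v_*j_*$, $u_!=v_!j_!$, $(j_!P)^\cek\cong j_*(P^\cek)$ — the same bookkeeping you describe, packaged in one diagram.
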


\begin{proof}
  We need to show that any morphism $\alpha \colon (U,P) \ra
  (V,Q)$ in $\Vb^\subset(X)$ gives rise to a commutative square
  in $\Sh(X)$.
  We can assume that $j\colon U \subset V$. Then our morphism is
  given by $\alpha\colon j_!P \ra Q$, and 
  $\alpha^\cek \colon Q^\cek \ra (j_!P)^\cek=\sheafHom(j_!P,
  \mathcal{O}_V)=j_*\sheafHom(P, \mathcal{O}_U)=j_*(P^\cek)$.
  Let $u \colon U \subset X$ and
  $v\colon V \subset X$.
  The morphisms 
  $\tau_{(V,Q)}$ resp.\ $\tau_{(U,P)}$ appear as the 
  upper resp.\ lower row in the 
  commutative diagram 
  \begin{equation*}
    \xymatrix{
      {(v_*Q^\cek) \otimes M} 
      \ar[r]
      \ar[d]^-{v_*(\alpha^\cek)\otimes \id_M}
      &
      {v_*(Q^\cek \otimes v^*M)}
      \ar[r]^-{\sim}
      \ar[d]^-{v_*(\alpha^\cek \otimes \id_{v^*M})}
      &
      {v_*\sheafHom(Q, v^*M)}
      \ar[r]^-{\sim}
      \ar[d]^-{v_*(\alpha^*)}
      &
      {\sheafHom(v_!Q, M)}
      \ar[d]^-{(v_!\alpha)^*}
      \\
      % {(v_*\sheafHom(j_!P, \mathcal{O}_V)) \otimes M} 
      {(v_*((j_!P)^\cek)) \otimes M} 
      \ar[r]
      \ar[d]^{\sim}
      &
      % {v_*(\sheafHom(j_!P, \mathcal{O}_V) \otimes v^*M)}
      {v_*((j_!P)^\cek \otimes v^*M)}
      \ar[r]
      \ar[d]
      &
      {v_*\sheafHom(j_!P, v^*M)}
      \ar[r]^-{\sim}
      \ar[d]^{\sim}
      &
      {\sheafHom(v_!j_!P, M)}
      \ar[d]^{\sim}
      \\
      {(u_*P^\cek) \otimes M} 
      \ar[r]
      &
      {u_*(P^\cek \otimes u^*M)}
      \ar[r]^-{\sim}
      &
      {u_*\sheafHom(P, u^*M)}
      \ar[r]^-{\sim}
      &
      {\sheafHom(u_!P, M)}
    }
  \end{equation*}
  whose non-labeled morphisms are the
  obvious ones. The outer square is the one we need.
  The last statement is clear from above.
  % It remains to be shown that each morphism $\alpha \colon
  % (U,P) \ra 
  % (V,Q)$ in $\Vb^\subset(X)$ gives rise to a commutative square
  % in $\Sh(X)$. We leave the tedious but straightforward proof to
  % the reader. 
\end{proof}

\begin{corollary}
  \label{c:compatible}
  Let $X$ and $Y$ be schemes over a field $k,$ let $p \colon Y
  \times X \ra X$ be the second projection, and let
  $M$ be a quasi-coherent sheaf on $Y \times X.$ 
  Then there is a morphism $\tau'$ from the functor
  \begin{equation*}
    \Vb^\subset(X)^\opp
    \xsira{(-)^\cek}
    \Vb_\supset(X)
    \xrightarrow[\eqref{eq:real-supset}]{(U,P) \mapsto u_*P}
    \Sh(X)
    \xra{p^*}
    \Sh(Y \times X) 
    \xra{(-\otimes M)}
    \Sh(Y \times X)
  \end{equation*}
  to the functor
  \begin{equation*}
    \Vb^\subset(X)^\opp
    \xrightarrow[\eqref{eq:real-subset}]{(U,P) \mapsto u_!P}
    \Sh(X)^\opp 
    \xra{p^*}
    \Sh(Y \times X)^\opp 
    \xra{\sheafHom(-, M)}
    \Sh(Y \times X).
  \end{equation*}
  Moreover, if 
  $(U,P)$ is an object of $\Vb^\subset(X)$ such that the
  inclusion $U \subset X$ is affine, then
  $\tau'_{(U,P)}$ is an isomorphism.
\end{corollary}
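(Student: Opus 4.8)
The plan is to obtain $\tau'$ by restricting the morphism $\tau$ of Lemma~\ref{l:compatible} — applied to the scheme $Y \times X$ and the quasi-coherent sheaf $M$ — along the pullback functor $p^* \colon \Vb^\subset(X) \to \Vb^\subset(Y \times X)$ from section~\ref{sec:pullbacks}. Thus $\tau'$ will simply be $\tau$ (for $Y \times X$ and $M$) whiskered by $p^*$, once we have identified the two functors in the statement with the two functors of Lemma~\ref{l:compatible} precomposed with $p^*$.

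First I would record that, for $(U,P) \in \Vb^\subset(X)$ with inclusion $u \colon U \subset X$, one has $p^*(U,P) = (Y \times U, p_U^*P)$, where $p_U \colon Y \times U \to U$ is the restriction of $p$ and $\widetilde u \colon Y \times U \subset Y \times X$ is the corresponding open immersion. Using the right-hand square in \eqref{eq:upper*-realization} together with \eqref{eq:Vb-sup-sub-duality-pull} (or, directly, Lemma~\ref{l:pullback-sheafHom}), there is a natural isomorphism $p^*(u_*(P^\cek)) \cong \widetilde u_*((p_U^*P)^\cek)$; hence the source functor of the statement, $(U,P) \mapsto p^*(u_*(P^\cek)) \otimes M$, is naturally isomorphic to the composite of $p^*$ with the source functor of Lemma~\ref{l:compatible} for $Y \times X$ and $M$. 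Likewise, by the left-hand square in \eqref{eq:upper*-realization} there is a natural isomorphism $p^*(u_!P) \cong \widetilde u_!(p_U^*P)$, so the target functor $(U,P) \mapsto \sheafHom(p^*(u_!P),M)$ is naturally isomorphic to the composite of $p^*$ with the target functor of Lemma~\ref{l:compatible} for $Y \times X$ and $M$. Defining $\tau'$ as $\tau$ whiskered by $p^*$ and transported along these two natural isomorphisms gives the desired morphism of functors.

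For the final assertion, suppose the inclusion $u \colon U \subset X$ is affine. Since $Y \times U = (Y \times X) \times_X U$, the open immersion $\widetilde u \colon Y \times U \subset Y \times X$ is the base change of $u$ along $p$, hence again affine; as it is the inclusion of the open subscheme underlying $p^*(U,P) \in \Vb^\subset(Y \times X)$, Lemma~\ref{l:compatible} shows that $\tau_{p^*(U,P)}$ is an isomorphism, and therefore so is $\tau'_{(U,P)}$.

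I do not expect a genuine obstacle here: the content lies entirely in Lemma~\ref{l:compatible} and in the compatibility of the realization and duality functors with $p^*$ recorded in section~\ref{sec:pullbacks}. The only point requiring a little care is to check that these various natural isomorphisms are mutually compatible, so that $\tau'$ is genuinely natural in $(U,P)$; this reduces to the fact that the base-change identifications $p^*u_* \cong \widetilde u_* p_U^*$ and $p^*u_! \cong \widetilde u_! p_U^*$ entering \eqref{eq:upper*-realization} are the canonical ones coming from Lemma~\ref{l:base-change-ringed-spaces-open}.
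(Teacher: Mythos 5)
Your proposal is correct and follows essentially the same route as the paper: identify the two functors in the statement with the composites of the Lemma~\ref{l:compatible} functors (for $Y\times X$ and $M$) with the pullback $p^*\colon \Vb^\subset(X)\to \Vb^\subset(Y\times X)$, via the commutative diagrams \eqref{eq:upper*-realization} and \eqref{eq:Vb-sup-sub-duality-pull}, and then whisker $\tau$ by $p^*$. Your explicit remark that $Y\times U\subset Y\times X$ is affine as a base change of $u$ makes the final assertion cleanly explicit, which the paper leaves implicit.
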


\begin{proof}
  The commutative diagrams
  \eqref{eq:upper*-realization}
  and \eqref{eq:Vb-sup-sub-duality-pull}
  show that the first functor is isomorphic to the composition
  \begin{equation*}
    \Vb^\subset(X)^\opp
    \xra{p^*}
    \Vb^\subset(Y \times X)
    \xsira{(-)^\cek}
    \Vb_\supset(Y \times X)
    \xrightarrow[\eqref{eq:real-supset}]{(U,P) \mapsto u_*P}
    \Sh(Y \times X)
    \xra{(-\otimes M)}
    \Sh(Y \times X)
  \end{equation*}
  and that the second functor is isomorphic to the composition
  \begin{equation*}
    \Vb^\subset(X)^\opp
    \xra{p^*}
    \Vb^\subset(Y \times X)^\opp 
    \xrightarrow[\eqref{eq:real-subset}]{(U,P) \mapsto u_!P}
    \Sh(Y \times X)^\opp 
    \xra{\sheafHom(-, M)}
    \Sh(Y \times X).
  \end{equation*}
  Hence we can use Lemma~\ref{l:compatible}.
\end{proof}

\subsection{All enhancements are equivalent}
\label{sec:all-enhancements-are}

\begin{remark}
  \label{rem:enhancements-equiv}
  All enhancements mentioned above are equivalent when defined,
  i.\,e.\ the corresponding dg categories are quasi-equivalent.
  In the non-obvious cases this can be proved using the method of
  \cite[Prop.~2.50]{valery-olaf-matfak-semi-orth-decomp}.  
\end{remark}

\begin{remark}
  \label{rem:uniqueness-enhancements}
  In many cases all enhancements of $\mfPerf(X)$ (resp.\
  $D^b(\Coh(X))$) are quasi-equivalent, for example for $X$ a
  quasi-projective scheme over a field $k,$ by \cite[Thm.~2.12,
  Thm.~2.13]{lunts-orlov-enhancement}.
  Paolo Stellari informed the second author that he and Alberto
  Canonaco can prove uniqueness of enhancements of $\mfPerf(X)$
  and $D^b(\Coh(X))$ for a Noetherian semi-separated scheme $X$
  having the resolution property; in the meantime, their preprint
  has appeared, see
  \cite{canonaco-stellari-uniqueness-of-dg-enhancements}.  
\end{remark}

If we work on a scheme over a
ring $R,$ all the above
constructions and results have obvious $R$-linear analogs,
e.\,g.\ all 
enhancements discussed above are then dg $R$-categories in the
obvious way.
% field $k,$ all the above
% constructions and results have obvious $k$-linear analogs,
% e.\,g.\ all 
% enhancements discussed above are then dg $k$-categories in the
% obvious way.

There are other enhancements one could consider, for example
enhancements using Drinfeld dg quotient categories or ``morphism
oriented \v{C}ech enhancements'', see
\cite{valery-olaf-matfak-semi-orth-decomp}.
We do not consider
these two types of enhancements in this article because they seem
to be badly behaved
with respect to products of schemes. The \v{C}ech
enhancements from section~\ref{sec:vcech-enhanc} were found
starting from the ``object oriented $*$-\v{C}ech enhancements''
discussed in appendix~\ref{sec:vcech-enhanc-loc-integral} 
(and used in \cite{valery-olaf-matrix-factorizations-and-motivic-measures})
which
are based on
\cite[Lemma~6.7]{bondal-larsen-lunts-grothendieck-ring}.

\section{Smoothness of categories and schemes}
\label{sec:smoothn-categ-schem}

Let $k$ be a field.
Recall that a dg $k$-category $\mathcal{A}$ is smooth over $k$ if 
$\mathcal{A}$ is a perfect dg $\mathcal{A} \otimes
\mathcal{A}^\opp$-module, cf.\
\cite[Def.~3.7, Rem.~3.9]{valery-olaf-smoothness-equivariant},
and that 
smoothness is invariant under quasi-equivalences and even under
Morita equivalences (see \cite[Lemma~2.30]{valery-olaf-matrix-factorizations-and-motivic-measures}).

\begin{definition}
  \label{d:Perf-DbCoh-smooth}
  Let $X$ be a quasi-compact separated scheme over a field $k$.
  We say 
  that the triangulated 
  category $\mfPerf(X) \cong \mfPerf'(X)$ is \textbf{smooth over
    $k$} if the dg 
  $k$-category $\mfPerf^\hinj(X)$ is smooth over $k$.
  Similarly, if $X$ is a Noetherian separated scheme over a field
  $k$, we say that the 
  triangulated category $D^b(\Coh(X))$ is \textbf{smooth over
    $k$} if the dg $k$-category $C_\Coh^{\hinj,b}(\Qcoh(X))$ is
  smooth over $k$.
\end{definition}

\begin{remark}
  \label{rem:Perf-DbCoh-smooth}
  In the above definition we could have chosen any
  of the 
  equivalent enhancements from section~\ref{sec:enhancements},
  cf.\ Remark~\ref{rem:enhancements-equiv}. 
  If a uniqueness result for enhancements 
  of $\mfPerf(X)$ (resp.\ $D^b(\Coh(X))$)  
  is known
  (cf.\ Remark~\ref{rem:uniqueness-enhancements})
  one can test
  $k$-smoothness on any enhancement.
  % It seems reasonable to call a triangulated dg $k$-category
  % $\mathcal{T}$ $k$-smooth if it has a $k$-smooth enhancement and
  % a unique enhancement.
\end{remark}

%The following theorem generalizes
%Theorem~\ref{t:mfPerf-Cechobj-smooth-vs-diagonal-sheaf-perfect}
%by removing the local integrality assumption.

\begin{theorem}  [{Homological versus geometric smoothness}]
  \label{t:mfPerf-abstract-Cechobj-smooth-vs-diagonal-sheaf-perfect}
  Let $X$ be a Noetherian \ref{enum:GSP}-scheme over a field $k$
  and assume that $X \times X$ is also Noetherian.
  % Fix an ordered finite affine open covering 
  % $\mathcal{U}=(U_s)_{s \in S}$
  % of $X$
  Let $\Delta \colon  X \ra X \times X$ be the diagonal
  (closed) immersion.
  Then the following two conditions are equivalent:
  \begin{enumerate}
  \item
    \label{enum:mfPerf-smooth}
    $\mfPerf(X)$ is smooth over $k;$
  \item 
    \label{enum:diagonal-cpt}
    $\Delta_*(\mathcal{O}_X) \in
  \mfPerf(X \times X).$
  \end{enumerate}
  % The triangulated category $\mfPerf(X)$ is
  % % Then the dg category 
  % % $\Cech_!(X)$ is 
  % % (homologically)
  % smooth over $k$ if and only if $\Delta_*(\mathcal{O}_X) \in
  % \mfPerf(X \times X).$
  % Assume now that 
  If $X$ is in addition of finite type
  over $k,$  
  % satisfying
  % % such that both $X$ and $X \times X$ satisfy 
  % condition~\ref{enum:GSP}.
  % Then 
  they
  % above two conditions 
  are also equivalent to:
  \begin{enumerate}[resume]
  \item 
    \label{enum:geometrically-smooth}
    $X$ is smooth over $k.$
  \end{enumerate}
  % Then
  % $X$ is smooth over $k$ if and only if $\mfPerf(X)$ 
  % % dg category $\Cech_!(X)$ 
  % is smooth over $k.$ 
  In particular, if 
  $X$ is a separated scheme of finite type over $k$ 
  having
  % such that both $X$ and $X \times X$ have 
  the resolution
  property (for example if $X$ is quasi-projective over $k$),
  then the above three conditions are equivalent.
\end{theorem}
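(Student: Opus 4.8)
The plan is to establish $\text{(a)}\Leftrightarrow\text{(b)}$ under the stated hypotheses by means of the Fourier--Mukai translation of Theorem~\ref{t:FM-intro}, then to obtain $\text{(b)}\Leftrightarrow\text{(c)}$ in the finite type case by geometric arguments, and finally to deduce the ``in particular'' clause by checking hypotheses. Since $X$ is a Noetherian \ref{enum:GSP}-scheme over $k$ and $X\times X$ is Noetherian, Theorem~\ref{t:FM-intro} applies with $Y=X$; choosing the same compact generator on both factors one may take $B=A$, where $A$ is a dg $k$-algebra together with a $k$-linear equivalence $D(A)\sira D(\Qcoh(X))$ carrying $A$ to a perfect generator, hence restricting to $\per(A)\sira\mfPerf(X)$. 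One thus obtains an equivalence $\theta\colon D(\Qcoh(X\times X))\sira D(A^\opp\otimes A)$ intertwining $\Phi_K$ with $-\otimes^\bL_A\theta(K)$. Applying this to $K=\Delta_*(\mathcal{O}_X)$ and using the projection formula $p^*(-)\otimes^\bL\Delta_*(\mathcal{O}_X)\cong\Delta_*(\Delta^*p^*(-))$ for the closed immersion $\Delta$ together with $p\comp\Delta=\id_X=q\comp\Delta$, one finds $\Phi_{\Delta_*(\mathcal{O}_X)}(F)\cong\bR q_*\Delta_*F=\bR(q\comp\Delta)_*F=F$ naturally in $F$, so that $-\otimes^\bL_A\theta(\Delta_*(\mathcal{O}_X))\cong\id_{D(A)}$; this forces $\theta(\Delta_*(\mathcal{O}_X))\cong A$ in $D(A^\opp\otimes A)$, the diagonal bimodule.

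Next, since $\theta$ is an equivalence it identifies compact objects; the compact objects of $D(\Qcoh(X\times X))\cong D_\Qcoh(\Sh(X\times X))$ are exactly $\mfPerf(X\times X)$ (as $X\times X$ is quasi-compact separated), and those of $D(A^\opp\otimes A)$ are $\per(A^\opp\otimes A)$. Hence $\Delta_*(\mathcal{O}_X)\in\mfPerf(X\times X)$ if and only if $A\in\per(A\otimes A^\opp)$, i.e.\ if and only if $A$ is smooth over $k$; and since the dg algebra $A$ is Morita equivalent to the enhancement $\mfPerf^\hinj(X)$ and smoothness is a Morita invariant, this holds if and only if $\mfPerf(X)$ is smooth over $k$ in the sense of Definition~\ref{d:Perf-DbCoh-smooth}. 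This proves $\text{(a)}\Leftrightarrow\text{(b)}$. (Alternatively one could bypass Theorem~\ref{t:FM-intro} and argue directly with the $*$-\v{C}ech enhancement: the external tensor product and the product covering give a dg functor $\Cech_*(X)\otimes_k\Cech_*(X)\ra\Cech_*(X\times X)$ which is a Morita equivalence onto an enhancement of $\mfPerf(X\times X)$ --- full faithfulness being a Künneth computation refining Proposition~\ref{p:abstract-cech-*-object-enhancement}, essential surjectivity using that box products of perfect complexes generate $\mfPerf(X\times X)$ --- under which, after the self-duality $\Cech_*(X)^\opp\simeq\Cech_*(X)$ induced by $(-)^\cek$, the diagonal bimodule corresponds to $\Delta_*(\mathcal{O}_X)$.)

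Assume now in addition that $X$ is of finite type over $k$, so that $X\times X$ is automatically Noetherian. For $\text{(c)}\Rightarrow\text{(b)}$: if $X$ is smooth over $k$ then so is $X\times X$ (its structure morphism factors as $X\times X\xra{q}X\ra\Spec k$, a composite of smooth morphisms), hence $X\times X$ is regular and of finite dimension, so $\mfPerf(X\times X)=D^b(\Coh(X\times X))$ by Proposition~\ref{p:regular-vs-singularity-cat}, and this contains the coherent sheaf $\Delta_*(\mathcal{O}_X)$. For $\text{(b)}\Rightarrow\text{(c)}$: perfectness of $\Delta_*(\mathcal{O}_X)$ is preserved under the flat base change to $\ol k$ (flat base change for $\Delta_*$, and stability of perfect complexes under pullback), while smoothness of $X$ over $k$ is equivalent to that of $X_{\ol k}$ over $\ol k$ by faithfully flat descent, so one may assume $k=\ol k$. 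Covering $X$ by affine opens $\Spec R$ with $R$ of finite type over $k$, perfectness of $\Delta_*(\mathcal{O}_X)$ near the diagonal says precisely that $R\in\per(R\otimes_k R)$, as a module via multiplication $R\otimes_k R\ra R$. Localizing at a closed point, with residue field $k$ and local ring $\mathcal{O}$, a change-of-rings isomorphism $\Tor^{R\otimes_k R}_\bullet(R,k)\cong\Tor^{\mathcal{O}}_\bullet(k,k)$ makes the left-hand side bounded, so $\projdim_{\mathcal{O}}(k)<\infty$, whence $\mathcal{O}$ is regular by the Auslander--Buchsbaum--Serre theorem (recalled in the proof of Proposition~\ref{p:regular-vs-singularity-cat}); since every point of $X$ specializes to a closed point and localizations of regular local rings are regular, $X$ is regular, and being of finite type over the perfect field $k$ it is smooth over $k$. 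This gives $\text{(b)}\Leftrightarrow\text{(c)}$.

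Finally, if $X$ is separated of finite type over $k$ with the resolution property, then $X$ is Noetherian and separated, satisfies condition~\ref{enum:GSP} by Remark~\ref{rem:onGSP}.\ref{enum:resolution-property}, and $X\times X$ is of finite type, hence Noetherian, so all the hypotheses above hold and (a), (b), (c) are equivalent; the quasi-projective case is included since such an $X$ has an ample family of line bundles and therefore the resolution property. The main obstacle in this plan will be the change-of-rings isomorphism $\Tor^{R\otimes_k R}_\bullet(R,k)\cong\Tor^{\mathcal{O}}_\bullet(k,k)$ --- more precisely, justifying the classical equivalence ``$R\in\per(R\otimes_k R)$ if and only if $\Spec R$ is smooth over $k$'' for $R$ of finite type over $k$; the remaining steps are either formal consequences of Theorem~\ref{t:FM-intro} or immediate from Proposition~\ref{p:regular-vs-singularity-cat} and standard properties of smooth morphisms.
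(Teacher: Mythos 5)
Your reduction of (b)$\Leftrightarrow$(c) to the affine statement, the base change to $\ol{k}$, and the Tor change-of-rings argument $\Tor_\bullet^{R\otimes R}(R,k)\cong\Tor_\bullet^{R_\mathfrak m}(k,k)$ followed by Auslander--Buchsbaum--Serre is a legitimate variant of the paper's Propositions~\ref{p:glatt-diagonaleperfekt-regular} and \ref{p:affine-glatt-diagonaleperfekt-regular} (the paper instead observes that $M\otimes_R P\ra M$ is a finite projective resolution for every $R$-module $M$, and handles non-perfect $k$ by the same passage to $\ol k$), so that half is fine. The problem is the first half. The step ``this forces $\theta(\Delta_*(\mathcal{O}_X))\cong A$ in $D(A^\opp\otimes A)$'' does not follow from Theorem~\ref{t:translate-fm-to-dg}. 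That theorem only gives you an isomorphism of \emph{triangulated functors} $-\otimes^\bL_A M\cong\theta_Y\comp\Phi_K\comp(\theta_X\comp\can\inv)\inv$, so from $\Phi_{\Delta_*(\mathcal{O}_X)}\cong\id$ you learn that $-\otimes^\bL_A M$ is isomorphic to the identity of $D(A)$ as a triangulated functor. The assignment from $D(A^\opp\otimes A)$ to triangulated endofunctors of $D(A)$ is neither full nor faithful: evaluating at $A$ gives $M\cong A$ only as a right dg $A$-module, and naturality only controls the induced $H^\bullet(A)$-$H^\bullet(A)$-bimodule structure on cohomology, not the isomorphism class of $M$ in the derived category of bimodules. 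Since (a) is the statement that the \emph{bimodule} $A$ is perfect while (b), transported by the equivalence $\theta_{Y\times X}$, is the statement that the \emph{bimodule} $M$ is perfect, you genuinely need the bimodule-level identification $M\cong A$ (or at least mutual generation of the same thick subcategory), and that identification is exactly the nontrivial content. This is why the paper does not deduce the theorem from Theorem~\ref{t:translate-fm-to-dg}: its proof builds the explicit equivalence $F=\Hom_{C(\Sh(X\times X))}(P,-)$ with $P=\mathcal{D}_\mathcal{J}(\mathcal{C}_!(E))\boxtimes\mathcal{C}_!(E)$, using the duality quasi-isomorphism of Lemma~\ref{l:duality-and-!-Cech-perf}, the K\"unneth statement Proposition~\ref{p:tensor-product-of-endos-vs-endos-of-boxproduct}, Corollary~\ref{c:obtain-homotopy-equiv} and Proposition~\ref{p:homotopy-categories-triang-via-dg-algebras}, and then verifies by a direct chain of adjunction identifications that $F$ sends an h-injective lift of $\Delta_*(\mathcal{O}_X)$ to $A^\opp$.

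Your parenthetical alternative (external tensor product of $*$-\v{C}ech enhancements, self-duality via $(-)^\cek$, matching the diagonal bimodule with $\Delta_*(\mathcal{O}_X)$) is much closer to what the paper actually does (compare Theorem~\ref{t:mfPerf-Cechobj-smooth-vs-diagonal-sheaf-perfect} in the appendix and the proof in section~4), but the two claims you leave as sketches --- quasi-full-faithfulness of $\boxtimes$ on the enhancements and the identification of the image of $\Delta_*(\mathcal{O}_X)$ with the diagonal bimodule --- are precisely the technical heart (in the paper: Lemma~\ref{l:dg-functor-boxtimes-full-and-faithful}, Lemma~\ref{l:boxtimes-cech-object-to-diagonal-cech-K-iso-D}, Lemma~\ref{l:map-to-double-dual-homotopy-equi}, respectively the section~4 analogues), so this sketch does not close the gap either. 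A minor further point: in Theorem~\ref{t:translate-fm-to-dg} the source algebra is formed in $\Cech_*(X)$ and the target algebra in $\Cech_!(Y)$, so with $Y=X$ and the same generator you get $B$ quasi-isomorphic to $A$ rather than literally equal; this is harmless by Morita invariance of smoothness, but should be said.
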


\begin{corollary}
  \label{c:smooth-quasi-projective}
  Let $X$ be 
  a smooth quasi-compact separated scheme 
  % of finite type (for example a smooth quasi-projective scheme) 
  over a field $k.$ Then $\mfPerf(X) =D^b(\Coh(X))$ is
  smooth over $k.$ 
\end{corollary}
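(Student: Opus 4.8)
The plan is to deduce the Corollary from Theorem~\ref{t:mfPerf-abstract-Cechobj-smooth-vs-diagonal-sheaf-perfect} and Proposition~\ref{p:regular-vs-singularity-cat}, so the work consists in checking that $X$ satisfies the hypotheses of those two results. First I would note that a scheme smooth over $k$ is locally of finite type over $k$; since $X$ is assumed quasi-compact, it is therefore of finite type over $k$, hence Noetherian and of finite Krull dimension, and $X \times X$ is of finite type over $k$ as well, hence also Noetherian. Moreover $X$ is regular, being smooth over a field.

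Next I would verify that $X$ is a \ref{enum:GSP}-scheme: a regular Noetherian separated scheme satisfies condition~\ref{enum:GSP}, as recorded in Section~\ref{sec:strictly-perf-compl} (such a scheme has the resolution property, and then Remark~\ref{rem:onGSP}.\ref{enum:resolution-property} applies). Thus $X$ is a Noetherian \ref{enum:GSP}-scheme of finite type over $k$, $X \times X$ is Noetherian, and $X$ is smooth over $k$.

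Now Theorem~\ref{t:mfPerf-abstract-Cechobj-smooth-vs-diagonal-sheaf-perfect} applies, and since $X$ is of finite type over $k$ the three conditions there are equivalent; as condition~\ref{enum:geometrically-smooth} holds by hypothesis, so does condition~\ref{enum:mfPerf-smooth}, i.\,e.\ $\mfPerf(X)$ is smooth over $k$. Finally, by Proposition~\ref{p:regular-vs-singularity-cat}, the fact that $X$ is regular and of finite dimension gives $\mfPerf(X) = D^b(\Coh(X))$, whence $D^b(\Coh(X)) = \mfPerf(X)$ is smooth over $k$. I expect no real obstacle here: the Corollary is essentially bookkeeping, the substance residing in Theorem~\ref{t:mfPerf-abstract-Cechobj-smooth-vs-diagonal-sheaf-perfect} and Proposition~\ref{p:regular-vs-singularity-cat}; the only point deserving a moment's care is the standard implication that a quasi-compact scheme smooth over $k$ is of finite type over $k$, which is what places us in the finite-type case of the Theorem and supplies the finite-dimensionality needed for the Proposition.
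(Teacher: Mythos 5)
Your proposal is correct and follows essentially the same route as the paper's own proof: deduce finite type (hence finite dimension and Noetherianness of $X$ and $X\times X$) from smoothness plus quasi-compactness, get the resolution property (hence condition~\ref{enum:GSP}) from regularity, apply Theorem~\ref{t:mfPerf-abstract-Cechobj-smooth-vs-diagonal-sheaf-perfect} to conclude smoothness of $\mfPerf(X)$, and use Proposition~\ref{p:regular-vs-singularity-cat} for the identification $\mfPerf(X)=D^b(\Coh(X))$. The only difference is cosmetic: you verify the hypotheses of the main body of the theorem directly, whereas the paper invokes its ``in particular'' clause for finite-type schemes with the resolution property.
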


\begin{proof}[{Proof of
    Corollary~\ref{c:smooth-quasi-projective}}]  
  Smoothness over $k$ and quasi-compactness imply that 
  $X$ is of finite type over $k$ and hence of finite
  dimension, so  
  Proposition~\ref{p:regular-vs-singularity-cat}
  shows that $\mfPerf(X)=D^b(\Coh(X)).$
  Any regular Noetherian separated scheme 
  has the resolution property, by a theorem of Kleiman
  \cite[Ex.~III.6.8]{Hart}, so that we can apply
  Theorem~\ref{t:mfPerf-abstract-Cechobj-smooth-vs-diagonal-sheaf-perfect}.
\end{proof}

% \begin{theorem}
%   \label{t:smooth-equivalences}
%   Let $X$ be a scheme over a field $k.$ Assume that one of the
%   following 
%   conditions holds
%   (where \ref{enum:sm-equi-qproj}
%   $\Rightarrow$ 
%   \ref{enum:sm-equi-for-elf}
%   $\Rightarrow$ 
%   \ref{enum:sm-equi-for-general}).
%   \begin{enumerate}
%   \item
%     \label{enum:sm-equi-qproj}
%     $X$ is quasi-projective over $k$.
%   \item 
%     \label{enum:sm-equi-for-elf}
%     $X$ is a separated scheme of finite type over $k$ 
%     having
%     % such that both $X$ and $X \times X$ have 
%     the resolution
%     property.
%     % and every
%     % coherent sheaf on $X$ or $X 
%     % \times X$ is a quotient of a vector bundle.
%   \item 
%     \label{enum:sm-equi-for-general}
%     $X$ is a separated scheme of finite type over $k$ 
%     satisfying
%     % such that both $X$ and $X \times X$ satisfy 
%     condition~\ref{enum:GSP}.
%    \end{enumerate}
%   Then
%   $X$ is smooth over $k$ if and only if $\mfPerf(X)$ 
%   % dg category $\Cech_!(X)$ 
%   is smooth over $k.$ 
% \end{theorem}

\begin{theorem} 
  \label{t:D-b-Coh-smooth}
  Let $X$ be a separated scheme of finite type over a perfect
  field $k$ that has the resolution property.
  % and
  % satisfies\footnote{
  %   this is automatic for "of finite type over $k$".
  % }
  % condition~\ref{enum:DC}.
  % Fix an ordered finite affine open covering 
  % $\mathcal{U}=(U_s)_{s \in S}$ of $X.$ 
  Then $D^b(\Coh(X))$ is smooth over
  $k.$
\end{theorem}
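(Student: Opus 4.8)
The plan is to follow the scheme of the proof of Theorem~\ref{t:mfPerf-abstract-Cechobj-smooth-vs-diagonal-sheaf-perfect}: reduce $k$-smoothness of $D^b(\Coh(X))$ to membership of $\Delta_*(\mathcal O_X)$ in a suitable subcategory of $D^b(\Coh(X\times X))$, and then verify that membership by a dévissage in which the hypothesis that $k$ be perfect enters decisively. First, $X$ satisfies condition~\ref{enum:RES} (finite type over a field forces finite dimension), so by Proposition~\ref{p:shriek-cech-enhancement-D-minus-and-b-Coh} the dg category $\mathcal A:=\Cech^{-,b}_!(X)$ is an enhancement of $D^b(\Coh(X))$, quasi-equivalent (Remark~\ref{rem:enhancements-equiv}) to the one used in Definition~\ref{d:Perf-DbCoh-smooth}; since smoothness of dg categories is a Morita invariant, it suffices to show that $\mathcal A$ is a perfect $\mathcal A\otimes\mathcal A^\opp$-module. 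Using the realization functors, the duality~\eqref{eq:duality-cech-enhancements}, the compatibility statements of \S\ref{sec:morphism-functors} (Lemma~\ref{l:compatible}, Corollary~\ref{c:compatible}) and the Künneth formula for derived global sections over $k$ — exactly as in the proof of Theorem~\ref{t:mfPerf-abstract-Cechobj-smooth-vs-diagonal-sheaf-perfect}, but now with the $!$-\v{C}ech enhancement of $D^b(\Coh)$ — one obtains a fully faithful functor from $\per(\mathcal A\otimes\mathcal A^\opp)$ into $D^b_\Coh(\Sh(X\times X))\cong D^b(\Coh(X\times X))$ whose essential image is the thick subcategory $\mathcal T(X\times X)$ generated by the derived external products $E\boxtimes^{\bL}F$ with $E,F\in D^b(\Coh(X))$, and which sends the diagonal $\mathcal A$-bimodule to $\Delta_*(\mathcal O_X)$ (the Fourier--Mukai kernel of the identity functor, cf.\ Theorem~\ref{t:FM-intro}). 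Here one uses that every object of $D^b(\Coh(X))$ is represented by a bounded-above complex of vector bundles (Proposition~\ref{p:D-minus-Coh(Sh)-strict-coherent}), that $\mathcal C_!$ of such a complex is a bounded-above complex of flat sheaves, and that $E\boxtimes^{\bL}F$ again has bounded coherent cohomology (no higher $k$-Tor occurs, $k$ being a field). Consequently $D^b(\Coh(X))$ is smooth over $k$ if and only if $\Delta_*(\mathcal O_X)\in\mathcal T(X\times X)$.

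The heart of the proof is the following statement, which I would establish for \emph{reduced} schemes $A$ and $B$ by induction on $\dim A+\dim B$: if $A,B$ are reduced separated $k$-schemes of finite type with the resolution property, then $D^b(\Coh(A\times B))=\mathcal T(A\times B)$, the thick subcategory generated by all $E\boxtimes^{\bL}F$ with $E\in D^b(\Coh(A))$, $F\in D^b(\Coh(B))$. The base case is that $A$ and $B$ are both smooth; then $A\times B$ is smooth, $\mfPerf(A\times B)=D^b(\Coh(A\times B))$ by Proposition~\ref{p:regular-vs-singularity-cat} (cf.\ Corollary~\ref{c:smooth-quasi-projective}), and the version of the theory for perfect complexes — that $\mfPerf(A\times B)$ is the thick closure of external products of perfect complexes on $A$ and $B$, which is part of the proof of Theorem~\ref{t:mfPerf-abstract-Cechobj-smooth-vs-diagonal-sheaf-perfect} — gives the claim, since on smooth schemes perfect complexes coincide with the objects of $D^b(\Coh)$.

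For the inductive step, assume $A$ is not smooth (the case $\dim A+\dim B=0$ is already covered, as zero-dimensional reduced schemes of finite type over the perfect field $k$ are smooth). Since $k$ is perfect, the smooth locus $A^\circ\subseteq A$ is dense and open, and its complement $\Sigma_A$, with its reduced structure, is a reduced finite-type $k$-scheme with the resolution property and $\dim\Sigma_A<\dim A$; define $\Sigma_B$, $B^\circ$ likewise, and set $Z:=(\Sigma_A\times B)\cup(A\times\Sigma_B)$, with open complement $A^\circ\times B^\circ$. Three points combine. (i) For closed immersions $\iota,\iota'$ the projection formula gives $(\iota\times\iota')_*(E\boxtimes^{\bL}F)\cong(\iota_*E)\boxtimes^{\bL}(\iota'_*F)$; combined with the inductive hypothesis for $\Sigma_A\times B$ and $A\times\Sigma_B$ (of strictly smaller total dimension) and the standard fact that $D^b_Z(\Coh(A\times B))$ is generated as a thick subcategory by the pushforwards from $\Sigma_A\times B$ and $A\times\Sigma_B$, this yields $D^b_Z(\Coh(A\times B))\subseteq\mathcal T(A\times B)$. (ii) Restriction $r\colon D^b(\Coh(A\times B))\to D^b(\Coh(A^\circ\times B^\circ))$ is a Verdier localization with kernel $D^b_Z(\Coh(A\times B))$; by (i) that kernel lies in $\mathcal T(A\times B)$, so $r(\mathcal T(A\times B))$ is a thick subcategory; since it contains $r(E\boxtimes^{\bL}F)$ for all $E,F$ and $D^b(\Coh(A))\to D^b(\Coh(A^\circ))$ is essentially surjective, it contains every external product on $A^\circ\times B^\circ$, hence (by the base case applied to the smooth $A^\circ$, $B^\circ$) equals $D^b(\Coh(A^\circ\times B^\circ))$. (iii) Given $F\in D^b(\Coh(A\times B))$, choose $\widetilde F\in\mathcal T(A\times B)$ with $r(\widetilde F)\cong r(F)$ by (ii) and represent this isomorphism by a roof $\widetilde F\xleftarrow{s}H\xrightarrow{g}F$ with $\Cone(s),\Cone(g)\in D^b_Z(\Coh(A\times B))\subseteq\mathcal T(A\times B)$; the triangles on $s$ and $g$ then give first $H\in\mathcal T(A\times B)$ and then $F\in\mathcal T(A\times B)$, completing the induction. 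Finally, for the original (possibly non-reduced) $X$: the nilradical filtration exhibits $\mathcal O_X$ in the extension closure of $\iota_{\reduced*}\Coh(X_{\reduced})$ inside $\Coh(X)$, so $\Delta_{X*}(\mathcal O_X)$ lies in the thick closure of $(\iota_{\reduced}\times\iota_{\reduced})_*\Delta_{X_{\reduced}*}\Coh(X_{\reduced})$ — using $\Delta_X\circ\iota_{\reduced}=(\iota_{\reduced}\times\iota_{\reduced})\circ\Delta_{X_{\reduced}}$ — which by the claim for $X_{\reduced}$ and point (i) is contained in $\mathcal T(X\times X)$. Hence $\Delta_*(\mathcal O_X)\in\mathcal T(X\times X)$, the diagonal bimodule is perfect over $\mathcal A\otimes\mathcal A^\opp$, and $D^b(\Coh(X))$ is smooth over $k$.

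I expect the main obstacle to be bookkeeping rather than any isolated deep point: checking that the realization and comparison functors for the \v{C}ech enhancements are compatible with external products and with the duality (so that the diagonal bimodule really realizes to $\Delta_*(\mathcal O_X)$ and $\per(\mathcal A\otimes\mathcal A^\opp)$ really corresponds to $\mathcal T(X\times X)$), that external products of bounded complexes of coherent sheaves stay bounded with coherent cohomology, and that the localization manipulations in (i)--(iii) stay inside $\mathcal T$; the one genuinely indispensable input is generic smoothness over a perfect field, which cannot be dropped — for instance $D^b(\Coh(\Spec L))$ is not smooth over $k$ when $L/k$ is a non-trivial purely inseparable field extension.
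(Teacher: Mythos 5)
Your reduction step contains a genuine gap, and it sits exactly where the paper has to work hardest. You claim that running the argument of Theorem~\ref{t:mfPerf-abstract-Cechobj-smooth-vs-diagonal-sheaf-perfect} ``with the $!$-\v{C}ech enhancement of $D^b(\Coh)$'' yields a quasi-fully faithful comparison identifying $\per(\mathcal A\otimes\mathcal A^\opp)$ with the thick subcategory $\mathcal T(X\times X)$ generated by external products and sending the diagonal bimodule to $\Delta_*(\mathcal O_X)$. But that argument hinges on $\mathcal D_{\mathcal J}=\bR\sheafHom(-,\mathcal J)$ with $\mathcal J\cong\mathcal O_X$, which is a duality only on perfect complexes (equivalence~\eqref{eq:duality-perf}, Lemma~\ref{l:duality-and-!-Cech-perf}): on $D^b_\Coh$ the functor $\bR\sheafHom(-,\mathcal O_X)$ is in general not a duality (it need not preserve bounded coherent cohomology and $\id\to\mathcal D^2$ fails on non-perfect objects), and at the \v{C}ech level the duality~\eqref{eq:duality-cech-enhancements} does not extend to $\Cech^{-,b}_!(X)$, since the dual of a bounded above complex of vector bundles is bounded below. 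This is precisely why the paper introduces a dualizing complex $\mathcal J$ (section~\ref{sec:dual-bound-deriv}, equivalence~\eqref{eq:duality-D-b-Coh}, Lemma~\ref{l:duality-and-!-Cech-D-b-Coh}); the K\"unneth kernel then is $\mathcal D_{\mathcal J}(\mathcal C_!(G))\boxtimes\mathcal C_!(G)$, and the object of $D^b(\Coh(X\times X))$ corresponding to the diagonal bimodule is $\Delta_*(\mathcal J)$, \emph{not} $\Delta_*(\mathcal O_X)$ (this is the explicit computation, via Corollary~\ref{c:obtain-homotopy-equiv}, at the end of the paper's proof). So the criterion ``smooth iff $\Delta_*(\mathcal O_X)\in\mathcal T(X\times X)$'' is not delivered by the method you cite; what it delivers is ``smooth iff $\Delta_*(\mathcal J)\in\mathcal T(X\times X)$''. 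Appealing to Theorem~\ref{t:FM-intro} does not close this, since that theorem concerns $D(\Qcoh)$ with perfect generators.

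The repair is harmless for the rest of your plan: since $\mathcal D_{\mathcal J}$ is an anti-autoequivalence of $D^b(\Coh(X))$, the thick closure of external products is unchanged, and your d\'evissage together with the nilradical filtration applied to $\mathcal J$ instead of $\mathcal O_X$ gives $\Delta_*(\mathcal J)\in\mathcal T(X\times X)$. Two remarks on that second half. First, your induction is essentially a reproof of Theorem~\ref{t:generator-of-DbCoh-product} (which the paper simply quotes): once one knows that an external product of classical generators classically generates $D^b(\Coh(X\times X))$ over the perfect field $k$, the paper's comparison makes $\per(A^\opp\otimes A)\simeq D^b_\Coh(\Sh(X\times X))$ and membership of $\Delta_*(\mathcal J)$ is automatic, so after fixing the reduction your route collapses onto the paper's. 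Second, in your step (ii) the image $r(\mathcal T(A\times B))$ under the localization is only visibly a triangulated subcategory (because the kernel lies in $\mathcal T$); closure under direct summands, which you need to hit the full thick closure provided by the base case, requires an extra argument (for instance Thomason's $K_0$ trick, or running step (iii) with $F\oplus[1]F$ and using thickness of $\mathcal T(A\times B)$ at the end). That point is fixable; the identification of the diagonal bimodule with $\Delta_*(\mathcal O_X)$ is the substantive gap.
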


The rest of this section is devoted to the proof of these two
theorems.

\begin{remark}
  \label{rem:tautology-for-Spec-R}
  The first two conditions of 
  Theorem~\ref{t:mfPerf-abstract-Cechobj-smooth-vs-diagonal-sheaf-perfect}
  are tautologically equivalent for any affine scheme over a
  field. 
  Namely, assume that $U=\Spec R$ is an affine scheme over a
  field $k.$ 
  Then Example~\ref{exam:perfect-on-affine} shows that
  $\mfPerf(U)=\per(R)$ and $\mfPerf(U \times U) = \per(R \otimes
  R).$ The dg
  category $C^b(\proj(R))$ of bounded complexes of finitely
  generated projective  
  $R$-modules is an enhancement of 
  $\mfPerf(U)=\per(R),$ in fact it is the enhancement
  $\Cech_*(U)=\Cech_!(U)$ for the trivial open covering of $U.$  
  Viewing $R$ as a dg category, the obvious inclusion $R \ra
  C^b(\proj(R))$ is a Morita equivalence.
  This implies that $\mfPerf(U)$ is $k$-smooth if and only if
  $R$ is $k$-smooth
  as a dg algebra, i.\,e.\  
  $R \in
  \per(R \otimes R),$
  if and only if $\Delta_*(\mathcal{O}_U) \in
  \mfPerf(U \times U).$
\end{remark}

% \begin{enumerate}
% \item 
% Thm.~6.3 in \cite{lunts-categorical-resolution}.
% We strengthen and weaken assumptions:

% We need now separated Noetherian scheme with dualizing complex
% (for example implied by "finite type" laut Valera),
% enough 
% locally free sheaves (at the moment vector bundles, maybe could
% weaken to locally free sheaves).

% \footnote{
%   could just ask for 
%   the existence of dualizing sheaf. etc. so that
%   our methods work.
% }

% Everything over perfect field.

% \item 
%   laut Valera:
%   Krause's big category is $[C(\InjSh(X))]$ where $D^b(\Coh(X))$
%   are the compact objects. There is a functor 
%   $[C(\InjSh(X))] \ra D(\Sh(X))$ but it has a kernel.  

%   I guess our proof that $D^b(\Coh(X))$ is smooth should also
%   show that $[C(\InjSh(X))]$ is smooth. Use 
%   dafuer maybe doch
%   Prop.~\ref{p:homotopy-categories-triang-via-dg-algebras}
%   in general form, also for $D^b(\Coh(X))$?
% \end{enumerate}

% \newpage

\subsection{K\"unneth formula and some consequences}
\label{sec:kunneth-formula-some}

We refer to Appendix~\ref{sec:extern-tens-prod} for the
definition of the bifunctor $\boxtimes.$ This bifunctor is exact
(Lemma~\ref{l:boxtimes-exact}) and computed on all complexes
of 
sheaves, in particular on objects of the derived categories, in
the naive way (Remark~\ref{rem:boxtimes-exact}).
 
\begin{proposition}
  [{K\"unneth formula, cf.\
    \cite[Thm.~14]{kempf-elementary-proofs}}] 
  \label{p:kuenneth-formula}
  Let $X$ and $Y$ be quasi-compact separated schemes over a field
  $k.$
  Let $I$ (resp.\ $J$) be a 
  % bounded below 
  complex of
  $\Gamma(X,-)$-acyclic (resp.\ $\Gamma(Y,-)$-acyclic)
  % (for example injective)
  quasi-coherent sheaves on $X$ (resp.\ $Y$)
  % with bounded cohomologies, 
  and let $\sigma\colon I \boxtimes J \ra L$ be a
  quasi-isomorphism 
  where $L$ is a 
  % bounded below 
  complex of $\Gamma(X \times
  Y,-)$-acyclic 
  % (for example injective)
  quasi-coherent sheaves (the global section functors are
  considered as functors 
  between 
  categories of quasi-coherent sheaves).
  Then the composition
  \begin{equation*}
    % \label{eq:kuenneth-iso-explicit}
    \Gamma(X,I) \otimes \Gamma(Y,J)
    \xra{\boxtimes}
    \Gamma(X \times Y, I \boxtimes J)
    \xra{\Gamma(\sigma)}
    \Gamma(X \times Y, L)
  \end{equation*}
  is a quasi-isomorphism of dg modules.
\end{proposition}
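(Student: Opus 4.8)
The plan is to reduce the statement to the case where the schemes are affine, where everything can be computed with honest complexes of modules and the classical Künneth spectral sequence (or the flat resolution argument) applies, and then to bootstrap back to the general quasi-compact separated case using \v{C}ech resolutions. First I would observe that, since the source schemes $X$ and $Y$ are quasi-compact and separated, we may fix finite ordered affine open coverings $\mathcal{U}=(U_s)_{s\in S}$ of $X$ and $\mathcal{V}=(V_t)_{t\in T}$ of $Y$. All the finite intersections $U_I$ and $V_K$ are then affine (since $X$ and $Y$ are separated), and $U_I\times V_K$ is again affine, giving a finite affine open covering of $X\times Y$. The key point of the reduction is that $\Gamma$-acyclicity lets us compute global sections of $I$, $J$, $L$ via the $*$-\v{C}ech resolutions from \eqref{eq:F-*-Cech-resolution}: because each inclusion $U_I\hookrightarrow X$ is affine (its composite with $X\to\Spec k$ is), Lemma~\ref{l:u'-ls-qls-acyclic} shows that $\leftidx{_{U_I}}{F}{}$ is $\Gamma(X,-)$-acyclic for any quasi-coherent $F$, so $\Gamma(X,I)\simeq\Gamma(X,\mathcal{C}_*(I))=\Tot\big(\Gamma(X,\leftidx{_{U_I}}{I}{})\big)_I$, and similarly for $J$ and $L$.

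The next step is to make the comparison map explicit on \v{C}ech resolutions. The bifunctor $\boxtimes$ is exact and computed naively (Lemma~\ref{l:boxtimes-exact}, Remark~\ref{rem:boxtimes-exact}), and one has a natural isomorphism $\leftidx{_{U_I}}{I}{}\boxtimes\leftidx{_{V_K}}{J}{}\cong\leftidx{_{U_I\times V_K}}{(I\boxtimes J)}{}$ (push-forward along open immersions commutes with external products, both being computed objectwise on the affine pieces). Hence $\mathcal{C}_*(I)\boxtimes\mathcal{C}_*(J)$ is the totalization of the double \v{C}ech complex of $I\boxtimes J$ with respect to the product covering $(U_I\times V_K)$, which is a $*$-\v{C}ech resolution computing $\Gamma(X\times Y, I\boxtimes J)$ in $\Gamma$-acyclic pieces. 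Taking global sections turns the square in the statement into a square of totalizations whose $(I,K)$-summand is the map
\begin{equation*}
  \Gamma(U_I, I|_{U_I})\otimes\Gamma(V_K, J|_{V_K})\ra\Gamma(U_I\times V_K, (I\boxtimes J)|_{U_I\times V_K}),
\end{equation*}
i.e.\ the affine Künneth map $\Gamma(U_I,-)\otimes\Gamma(V_K,-)\to\Gamma(U_I\times V_K,-)$ on $\boxtimes$. By \cite[Thm.~1.9.3]{KS} (the convergence of the spectral sequence of a double complex with uniformly bounded rows/columns — the \v{C}ech direction is finite here), it suffices to prove the statement affine-locally.

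So the heart of the argument is the affine case: $X=\Spec A$, $Y=\Spec B$, $X\times Y=\Spec(A\otimes B)$. Then $\Gamma(X,I)$ is a complex of $A$-modules, $\Gamma(Y,J)$ a complex of $B$-modules, $\Gamma(X\times Y, I\boxtimes J)$ is $\Gamma(X,I)\otimes\Gamma(Y,J)$ computed naively as an $A\otimes B$-module complex (again by naivety of $\boxtimes$), and $L$ computes $\bR\Gamma(X\times Y, I\boxtimes J)$ since $L$ is a complex of $\Gamma$-acyclics quasi-isomorphic to $I\boxtimes J$. Thus the claim becomes: for a complex $M^\bullet$ of $A$-modules and a complex $N^\bullet$ of $B$-modules, the naive tensor product $M^\bullet\otimes_k N^\bullet$ already computes the derived tensor product $M^\bullet\otimes_k^{\bL}N^\bullet$ over the base field $k$. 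This is exactly where working over a \emph{field} is essential: every $k$-module is flat, so $N^\bullet$ is a complex of flat $k$-modules and the naive tensor product is the derived one (one can argue via the standard hyper-Tor spectral sequence, which degenerates because $\Tor^k_{>0}=0$, or cite \cite[Thm.~14]{kempf-elementary-proofs} directly). I expect this affine core to be essentially formal given the field hypothesis; the main obstacle in the whole proof is rather the bookkeeping in the reduction step — checking that the various natural maps ($\boxtimes$ commuting with $\leftidx{_{U}}{(-)}{}$, the \v{C}ech totalizations, the compatibility of $\sigma$ with the resolutions) fit together into a genuine morphism of double complexes, so that the row/column-wise quasi-isomorphism criterion of \cite[Thm.~1.9.3]{KS} can be invoked. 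Once that diagram chase is in place, the proposition follows.
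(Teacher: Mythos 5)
Your skeleton is the same as the paper's: fix finite affine open coverings, pass to the $*$-\v{C}ech resolutions whose pieces are $\Gamma$-acyclic by Lemma~\ref{l:u'-ls-qls-acyclic}, note that on the \v{C}ech level the K\"unneth map becomes the tautological affine identification $\Gamma(U_I,-)\otimes\Gamma(V_K,-)=\Gamma(U_I\times V_K,-\boxtimes-)$ (quasi-coherence), and use the field hypothesis to keep $\otimes_k$ exact. However, as written there are two genuine gaps. First, $I$, $J$, $L$ are \emph{unbounded} complexes, and your repeated appeal to the principle that a complex of $\Gamma$-acyclic quasi-coherent sheaves computes $\bR\Gamma$ (equivalently, that $\Gamma$ turns the quasi-isomorphisms $I\to\mathcal{C}_*(I)$, $\sigma$, etc.\ into quasi-isomorphisms) is not automatic in that setting; this is exactly why the paper brings in the finite cohomological dimension of $\Gamma$ (Lemma~\ref{l:fin-coho-dim}) together with \cite[Lemma~12.4.(b)]{wolfgang-olaf-locallyproper}. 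One can avoid that input for the augmentation maps by filtering along the uniformly bounded \v{C}ech direction and applying \cite[Thm.~1.9.3]{KS} rowwise, but that has to be said; stated as a blanket fact it is false.

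Second, your reduction ``it suffices to prove the statement affine-locally'' does not reach the target $\Gamma(X\times Y,L)$: $L$ is an arbitrary complex of acyclic quasi-coherent sheaves with no decomposition over the covering, so it has no $(I,K)$-summands and does not sit in your square of totalizations. The comparison of $\Gamma(\mathcal{C}_*(I)\boxtimes\mathcal{C}_*(J))$ with $\Gamma(L)$ is the remaining substantive step. You can complete it either by also \v{C}ech-resolving $I\boxtimes J$ and $L$ over $X\times Y$ with respect to the product covering and comparing (columnwise over the affine pieces $\Gamma$ is exact on quasi-coherent sheaves, then use the finite \v{C}ech filtration), or as the paper does: choose a fibrant resolution $\lambda\colon L\to L'$, a map $\tau\colon\mathcal{C}_*(I)\boxtimes\mathcal{C}_*(J)\to L'$ with $\tau\comp(i\boxtimes j)=\lambda\comp\sigma$ up to homotopy, and use the finite-cohomological-dimension argument to see that $\Gamma(\lambda)$ and $\Gamma(\tau)$ are quasi-isomorphisms. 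With one of these completions your argument goes through; without it the part of the statement involving $L$ is not proved.
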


From the following proof of this proposition one easily deduces 
% (using 
% Remark~\ref{rem:fibrant-explained})
% Lemma~\ref{l:fibobject-compwise-injective})
% and
% Lemma~\ref{l:compute-right-derived}.\ref{enum:RF-for-fin-cohodim} 
an isomorphism 
\begin{equation*}
  % \label{eq:kuenneth-iso}
  \bR \Gamma(X, -) \otimes \bR \Gamma(Y, ?) 
  \sira 
  \bR \Gamma(X \times Y, - \boxtimes ?)
\end{equation*}
of functors $D(\Qcoh(X)) \times D(\Qcoh(Y)) \ra
D(\Qcoh(\Spec k)).$ 

\begin{proof}
  Let $\mathcal{U}$ and $\mathcal{V}$ 
  be ordered finite affine open coverings of $X$ and $Y,$
  respectively.
  Consider the 
  $*$-\v{C}ech resolutions
  $i\colon I \ra
  \mathcal{C}_*(I)$
  and $j\colon J \ra
  \mathcal{C}_*(J).$
  By
  Lemma~\ref{l:u'-ls-qls-acyclic},
  % \cite[Thm.~13]{kempf-elementary-proofs},
  the complex $\mathcal{C}_*(I)$ (resp.\ $\mathcal{C}_*(J)$)
  consists of 
  $\Gamma(X,-)$-acyclic (resp.\ $\Gamma(Y,-)$-acyclic)
  quasi-coherent sheaves.
  Similarly, using Lemma~\ref{l:boxtimes-restriction-extension}.\ref{enum:boxtimes-*-extension},
  % From the proof of 
  % \cite[Thm.~14]{kempf-elementary-proofs} we deduce that
  $i \boxtimes j \colon I \boxtimes J \ra 
  \mathcal{C}_*(I) \boxtimes
  \mathcal{C}_*(J)$
  is
  a quasi-isomorphism to a
  %where $\mathcal{C}_*(I) \boxtimes \mathcal{C}_*(J)$ is 
  componentwise 
  $\Gamma(X \times Y, -)$-acyclic complex,
  and
  there is a canonical
  isomorphism (essentially an equality)
  \begin{equation*}
    \gamma \colon  
    \Gamma(X, \mathcal{C}_*(I))
    \otimes \Gamma(Y, \mathcal{C}_*(J))
    \sira
    \Gamma(X \times Y, 
    \mathcal{C}_*(I) \boxtimes
    \mathcal{C}_*(J)).
  \end{equation*}
  Let $\lambda\colon L \ra L'$ be a quasi-isomorphism with $L'$ 
  % a bounded below 
  a fibrant
  complex of 
  % injective 
  quasi-coherent sheaves.
  Let $\tau \colon \mathcal{C}_*(I) \boxtimes \mathcal{C}_*(J)
  \ra L'$ be a quasi-isomorphism such that $\tau \comp
  (i \boxtimes j)= \lambda \comp \sigma$ in the homotopy category
  $[C(\Qcoh(X \times Y))]$ (we could even assume that this holds
  in $Z^0(C(\Qcoh(X \times Y)))$ since $i \boxtimes j$ is a
  trivial cofibration and $L'$ is fibrant).  
  Consider the diagram
  \begin{equation*}
    % \label{eq:kuenneth-iso-explicit-diagram}
    \xymatrix{
      {\Gamma(X, \mathcal{C}_*(I))
        \otimes \Gamma(Y, \mathcal{C}_*(J))}
      \ar[r]^-{\sim}_-{\gamma} &
      {\Gamma(X \times Y, 
        \mathcal{C}_*(I) \boxtimes
        \mathcal{C}_*(J))} \ar[r]^-{\Gamma(\tau)}
      & 
      {\Gamma(X \times Y, L')} 
      \\
      {\Gamma(X,I) \otimes \Gamma(Y,J)}
      \ar[r]^-{\boxtimes}
      \ar[u]^-{\Gamma(i) \otimes \Gamma(j)}
      &
      {\Gamma(X \times Y, I \boxtimes J)}
      \ar[r]^-{\Gamma(\sigma)}
      \ar[u]^-{\Gamma(i \boxtimes j)} 
      &
      {\Gamma(X \times Y, L)} \ar[u]^-{\Gamma(\lambda)}
    }
  \end{equation*}
  which is commutative in the homotopy category $[C(\Qcoh(\Spec
  k))].$ 

  Since the functor $\Gamma(X,-): \Qcoh(X) \ra \Qcoh(\Spec
  k)$ is of finite cohomological dimension 
  (by Lemma~\ref{l:fin-coho-dim})
  % since we can compute
  % $\bR 
  % \Gamma(X,A)$ for $A \in 
  % \Qcoh(X)$ using the finite $*$-\v{C}ech-resolution $A \ra
  % \mathcal{C}_*(A).$ 
  % Lemma~\ref{l:compute-right-derived}.\ref{enum:RF-for-fin-cohodim}
  % Since $I$ is $\Gamma(X,-)$-acyclic, 
  the map $\Gamma(i)$ is a
  quasi-isomorphism
  (by
  \cite[Lemma~12.4.(b)]{wolfgang-olaf-locallyproper}). 
  Similarly,  
  $\Gamma(j)$ is a 
  quasi-isomorphism, and $\Gamma(i) \otimes \Gamma(j)$ is a
  quasi-isomorphism because $k$ is a field.
  Similarly, since both $\lambda$ and $\tau$ are
  quasi-isomorphisms between 
  complexes
  of $\Gamma(X \times Y,-)$-acyclic quasi-coherent
  sheaves
  (use 
  Remark~\ref{rem:fibrant-explained}),
  % Lemma~\ref{l:fibobject-compwise-injective}),
  % so 
  % Lemma~\ref{l:compute-right-derived}.\ref{enum:RF-for-fin-cohodim}
  % again shows that
 $\Gamma(\lambda)$ and
  $\Gamma(\tau)$ are  
  quasi-isomorphisms.
  This implies the proposition.
\end{proof}

\begin{proposition}
  \label{p:tensor-product-of-endos-vs-endos-of-boxproduct}
  Let $X$ and $Y$ be Noetherian separated schemes over a field
  $k$ such 
  that $X \times Y$ is also
  Noetherian.  
  Let $E \in D^-_\Coh(\Sh(X))$ and $F \in D^-_\Coh(\Sh(Y))$ be
  objects that are isomorphic to bounded above complexes of
  vector 
  bundles.
  % (which is the case if $X$ and $Y$
  % satisfy
  % condition~\ref{enum:RES}, by  
  % Proposition~\ref{p:D-minus-Coh(Sh)-strict-coherent}).
  Let $I$ and $J$ be bounded below complexes of injective
  quasi-coherent sheaves on $X$ and $Y,$ respectively.
  Let $\tau\colon I \boxtimes J \ra T$ be a quasi-isomorphism
  with $T$ a 
  bounded below complex of injective quasi-coherent sheaves.
  Then the composition
  \begin{equation*}
    {\leftidx{_{C(\Sh(X))}}{(E, I)}{} 
      \otimes 
      \leftidx{_{C(\Sh(Y))}}{(F,J)}{}}
    \xra{\boxtimes}
    {\leftidx{_{C(\Sh(X \times Y))}}{(E \boxtimes F, I \boxtimes J)}{}}
    \xra{\tau_*}
    {\leftidx{_{C(\Sh(X \times Y))}}{(E \boxtimes F, T)}{}}
  \end{equation*}
  is a quasi-isomorphisms of dg modules. Here
  we abbreviate
  $\leftidx{_?}{(-,-)}{}=\Hom_?(-,-).$
\end{proposition}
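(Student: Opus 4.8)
The plan is to reduce to the case where $E$ and $F$ are themselves bounded above complexes of vector bundles, then to rewrite the three $\Hom$-complexes as global sections of complexes of injective quasi-coherent sheaves, and finally to apply the K\"unneth formula (Proposition~\ref{p:kuenneth-formula}).

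\textbf{Reduction.} Since $I$, $J$, and $T$ are bounded below complexes of injective quasi-coherent sheaves, they are bounded below complexes of injective $\mathcal{O}$-modules by Theorem~\ref{t:injective-in-qcoh-vs-all-OX}.\ref{enum:inj-qcoh-equal-inj-OX-that-qcoh}, hence fibrant and in particular h-injective complexes of sheaves (Remark~\ref{rem:fibrant-explained}). Thus $\Hom_{C(\Sh(X))}(-,I)$, $\Hom_{C(\Sh(Y))}(-,J)$ and $\Hom_{C(\Sh(X \times Y))}(-,T)$ send quasi-isomorphisms to quasi-isomorphisms and compute, on cohomology, the groups $\Hom_{D(\Sh(-))}(-,[n]\cdot)$. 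Since $\boxtimes$ is exact (Lemma~\ref{l:boxtimes-exact}) and computed naively (Remark~\ref{rem:boxtimes-exact}), it preserves acyclic complexes and hence descends to an exact bifunctor on derived categories. Combining this with the fact that $\tau$ is a quasi-isomorphism, the composite in the statement becomes, after passing to cohomology, the external-product pairing $\bigoplus_{a+b=n}\Hom_{D(\Sh(X))}(E,[a]I)\otimes_k\Hom_{D(\Sh(Y))}(F,[b]J)\to\Hom_{D(\Sh(X\times Y))}(E\boxtimes F,[n](I\boxtimes J))$ followed by the isomorphism $\tau_*$; in particular it depends on $E$ and $F$ only through their images in $D^-_\Coh(\Sh(X))$ and $D^-_\Coh(\Sh(Y))$. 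As being a quasi-isomorphism is a property of the underlying complexes, we may therefore assume $E=P$ and $F=Q$ are bounded above complexes of vector bundles.

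\textbf{Rewriting the $\Hom$-complexes.} For a vector bundle $\mathcal{E}$ and an injective quasi-coherent sheaf $\mathcal{I}$ on a locally Noetherian scheme, $\sheafHom(\mathcal{E},\mathcal{I})=\mathcal{E}^\cek\otimes\mathcal{I}$ is again injective quasi-coherent: $\mathcal{E}\otimes(-)$ is an exact left adjoint of $\sheafHom(\mathcal{E},-)$, tensoring with $\mathcal{E}^\cek$ preserves quasi-coherence, and one invokes Theorem~\ref{t:injective-in-qcoh-vs-all-OX}; such a sheaf is $\Gamma$-acyclic. Hence, $P$ being bounded above and $I$ bounded below, $\sheafHom(P,I)$ is a bounded below complex of injective quasi-coherent sheaves on $X$, and $\Hom_{C(\Sh(X))}(P,I)=\Gamma(X,\sheafHom(P,I))$; likewise $\sheafHom(Q,J)$ on $Y$ and $\sheafHom(P\boxtimes Q,T)$ on $X\times Y$ (here $P\boxtimes Q$ is again a bounded above complex of vector bundles, and we use that $X\times Y$ is Noetherian). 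Moreover there is a natural isomorphism of complexes $\sheafHom(P,I)\boxtimes\sheafHom(Q,J)\sira\sheafHom(P\boxtimes Q,I\boxtimes J)$ — componentwise a routine identity for external tensor products and vector bundles, cf.\ Lemma~\ref{l:pullback-sheafHom} — and $\sheafHom(P\boxtimes Q,\tau)\colon\sheafHom(P\boxtimes Q,I\boxtimes J)\to\sheafHom(P\boxtimes Q,T)$ is a quasi-isomorphism, because $P\boxtimes Q$ is a bounded above complex of vector bundles, so that $\sheafHom(P\boxtimes Q,-)$ carries the quasi-isomorphism $\tau$ between bounded below complexes to a quasi-isomorphism (a double-complex argument, bounded below in both directions with exact columns by exactness of $\sheafHom(\mathcal{E},-)$ for $\mathcal{E}$ a vector bundle).

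\textbf{Conclusion via K\"unneth.} Put $L:=\sheafHom(P\boxtimes Q,T)$ and let $\sigma$ be the composite $\sheafHom(P,I)\boxtimes\sheafHom(Q,J)\sira\sheafHom(P\boxtimes Q,I\boxtimes J)\xra{\sheafHom(P\boxtimes Q,\tau)}L$, a quasi-isomorphism onto a complex of $\Gamma(X\times Y,-)$-acyclic quasi-coherent sheaves. Proposition~\ref{p:kuenneth-formula}, applied to $\sheafHom(P,I)$, $\sheafHom(Q,J)$, $L$ and $\sigma$, shows that $\Gamma(X,\sheafHom(P,I))\otimes\Gamma(Y,\sheafHom(Q,J))\xra{\boxtimes}\Gamma(X\times Y,\sheafHom(P,I)\boxtimes\sheafHom(Q,J))\xra{\Gamma(\sigma)}\Gamma(X\times Y,L)$ is a quasi-isomorphism of dg modules. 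Under the identifications above this composite is exactly the one in the statement for $E=P$, $F=Q$: the K\"unneth map $\boxtimes$ goes over to $\phi\otimes\psi\mapsto\phi\boxtimes\psi$ on $\Hom$-complexes, and $\Gamma(\sigma)$ goes over to $\tau_*$ precomposed with the isomorphism $\sheafHom(P,I)\boxtimes\sheafHom(Q,J)\cong\sheafHom(P\boxtimes Q,I\boxtimes J)$; verifying this amounts to unwinding the definitions of the external tensor product of sheaves and of homomorphisms. The main obstacle is the first step: one must be certain that the displayed composite really depends only on the derived-category classes of $E$ and $F$, which rests on $I$, $J$, $T$ (but \emph{not} $I\boxtimes J$) being h-injective together with $\tau$ being a quasi-isomorphism; the remaining work is bookkeeping with external tensor products and the mild double-complex argument for the exactness of $\sheafHom(P\boxtimes Q,-)$ on bounded below complexes.
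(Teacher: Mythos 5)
Your proof is correct, and its skeleton is the same as the paper's: reduce to the case where $E$ and $F$ are bounded above complexes of vector bundles (legitimate because $I$, $J$, $T$ are h-injective as bounded below complexes of injectives, via Theorem~\ref{t:injective-in-qcoh-vs-all-OX}), observe that the relevant $\sheafHom$-complexes are bounded below complexes of injective quasi-coherent sheaves and hence componentwise $\Gamma$-acyclic, prove that the sheaf-level composite $\sheafHom(P,I)\boxtimes\sheafHom(Q,J)\xra{\can}\sheafHom(P\boxtimes Q,I\boxtimes J)\xra{\tau_*}\sheafHom(P\boxtimes Q,T)$ is a quasi-isomorphism, and then conclude with Proposition~\ref{p:kuenneth-formula}. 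The one place where you argue differently is the middle quasi-isomorphism (the paper's \eqref{eq:boxtimes-sheafhom-noch-neuer}): the paper proves only that the \emph{composite} is a quasi-isomorphism, by a local argument — restrict to affine opens using Theorem~\ref{t:injective-in-qcoh-vs-all-OX}.\ref{enum:inj-qcoh-restrict-to-inj-qcoh}, note that in each cohomological degree only finitely many components of the projective-module complexes matter, and reduce to the tautological map $I\otimes J\ra T$. You instead prove the stronger statement that $\can$ is an isomorphism of complexes (valid here because the vector-bundle identity $\sheafHom(V,M)\otimes\sheafHom(W,N)\cong\sheafHom(V\otimes W,M\otimes N)$ holds componentwise and the boundedness hypotheses make all products in the Hom-complexes finite, so source and target are indexed by the same finite sets of quadruples), and separately that $\sheafHom(P\boxtimes Q,-)$ carries $\tau$ to a quasi-isomorphism via exactness of $\sheafHom$ out of a vector bundle plus the double-complex lemma of \cite[Thm.~1.9.3]{KS}. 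Both routes work; yours avoids the reduction to the affine case at the price of the finiteness bookkeeping needed to see that $\can$ really is bijective, which is exactly where the bounded-above/bounded-below hypotheses enter and should not be waved away as purely ``routine''.
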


\begin{proof}
  Since an injective quasi-coherent sheaf on a Noetherian scheme
  is also an injective sheaf, 
  by Theorem~\ref{t:injective-in-qcoh-vs-all-OX}.\ref{enum:inj-qcoh-equal-inj-OX-that-qcoh}, 
  we 
  % can replace $E$ and $F$ by isomorphic objects in
  % $D^-_\Coh(\Sh(X)$ and $D^-_\Coh(\Sh(Y),$ respectively.
  % Since $D^-_\Coh(\Qcoh(X)) \sira D^-_\Coh(\Sh(X),$ and
  % similarly 
  % for $Y,$ we 
  can assume
  that $E$ and $F$ are bounded above complexes of
  vector bundles.
  % , by Proposition~\ref{p:D-minus-Coh(Sh)-strict-coherent}. 
  
  Then $\sheafHom(E,I),$ $\sheafHom(F,J),$ and
  $\sheafHom(E 
  \boxtimes F, T)$ are bounded below complexes of injective
  quasi-coherent sheaves. In particular their components are
  $\Gamma$-acyclic.
  We will prove below that 
  % By Lemma~\ref{l:boxtimes-sheafhom-for-sheaves},
  the composition
  \begin{equation}
    \label{eq:boxtimes-sheafhom-noch-neuer}
    \tau_* \comp \can \colon
    {\sheafHom(E,I) \boxtimes \sheafHom(F,J)} 
    \xra{\can}
    {\sheafHom(E \boxtimes F, I \boxtimes J)}
    \xra{\tau_*}
    {\sheafHom(E \boxtimes F, T)}
  \end{equation}
  is a quasi-isomorphism.
  Assuming this for a moment, Proposition~\ref{p:kuenneth-formula}
  shows that the composition 
  $\Gamma(\tau_* \comp \can) \comp \boxtimes$ in the
  commutative diagram
  \begin{equation*}
    \xymatrix{
      {\Gamma(\sheafHom(E,I)) \otimes \Gamma(\sheafHom(F,J))}
      \ar[r]^-{\boxtimes} 
      \ar[d]_-{\boxtimes} 
      & 
      {\Gamma(\sheafHom(E,I) \boxtimes \sheafHom(F,J))} 
      % \ar[r]^-{\Gamma(\sigma)} 
      \ar[dl]^-{\Gamma(\can)}
      \ar[d]^-{\Gamma(\tau_* \comp \can)}
      % & 
      % {\Gamma(L)} 
      % \ar[d]^-{\Gamma(\lambda)}
      \\
      {\Gamma(\sheafHom(E \boxtimes F, I \boxtimes J))}
      \ar[r]_-{\Gamma(\tau_*)}
      &
      {\Gamma(\sheafHom(E \boxtimes F, T))}
    }
  \end{equation*}
  is a quasi-isomorphism, and this implies the proposition.

  Now let us prove that \eqref{eq:boxtimes-sheafhom-noch-neuer}
  is a quasi-isomorphism. 
  The claim is local on $X$ and $Y$,
  by  
  Theorem~\ref{t:injective-in-qcoh-vs-all-OX}.\ref{enum:inj-qcoh-restrict-to-inj-qcoh}.
  Hence we can assume that
  $X=\Spec A$
  and $Y=\Spec B$ are affine. 
  Then $E$ and $F$ are bounded above
  complexes of finitely generated projective modules over $A$ and
  $B,$ respectively.  
  When testing whether \eqref{eq:boxtimes-sheafhom-noch-neuer}
  induces an 
  isomorphism on cohomology in a fixed degree, only finitely many
  components of $E$ and $F$ are involved. 
  Now use that 
  \begin{equation*}
    \Hom_A(A, I) \otimes \Hom_B(B,J)
    \xsira{\can}
    \Hom_{A \otimes B}(A \otimes B, I \otimes J)
    \ra
    \Hom_{A \otimes B}(A \otimes B, T)
  \end{equation*}
  is obviously a quasi-isomorphism since it identifies with $I
  \otimes J = I \otimes J \ra T.$
\end{proof}

% \newpage

\subsection{Some preparations}
\label{sec:some-preparations}

\begin{lemma}
  \label{l:sheafHom-to-inj}
  Let $I$ be an injective sheaf on a scheme $X.$
  % \begin{itemize}
  % \item 
  If $F$ is a flat sheaf on $X,$ then $\sheafHom(F,I)$ is
  an injective sheaf.
  % \item 
  If $P$ is a vector bundle on $X$ and $U \subset X$ an open
  subscheme, 
  then $\leftidx{^U}{P}{}$ is flat and
  $\sheafHom(\leftidx{^U}{P}{},I)$ is an injective sheaf. 
  If $X$ is Noetherian and $I$ is injective
  quasi-coherent, then 
  $\sheafHom(\leftidx{^U}{P}{},I)$ is injective quasi-coherent.
  % \end{itemize}
\end{lemma}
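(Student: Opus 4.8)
The plan is to prove the three assertions in turn, each reducing to the previous one together with the structural results already at hand. For the first assertion — that $\sheafHom(F,I)$ is injective when $F$ is flat and $I$ is injective — I would use the tensor--Hom adjunction: for every $\mathcal{G}\in\Sh(X)$ there is a natural isomorphism $\Hom_{\mathcal{O}_X}(\mathcal{G},\sheafHom(F,I))\cong\Hom_{\mathcal{O}_X}(\mathcal{G}\otimes F,I)$. Since $F$ is flat the functor $(-)\otimes F$ is exact, and since $I$ is injective the functor $\Hom_{\mathcal{O}_X}(-,I)$ is exact; hence the composite $\Hom_{\mathcal{O}_X}(-,\sheafHom(F,I))$ is exact, i.e.\ $\sheafHom(F,I)$ is injective.

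For the second assertion, write $j\colon U\hra X$ for the open immersion, so $\leftidx{^U}{P}{}=j_!j^*(P)=j_!(P|_U)$. Flatness of a sheaf of modules can be checked on stalks, and the stalk of $j_!(P|_U)$ at a point $x$ is $P_x$, a free $\mathcal{O}_{X,x}$-module, if $x\in U$, and $0$ otherwise; hence $\leftidx{^U}{P}{}$ is flat. Applying the first assertion with $F=\leftidx{^U}{P}{}$ then shows that $\sheafHom(\leftidx{^U}{P}{},I)$ is injective.

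For the third assertion I would first record the natural isomorphism $\sheafHom(j_!(P|_U),I)\cong j_*\,\sheafHom_{\mathcal{O}_U}(P|_U,j^*I)=j_*\,\sheafHom_{\mathcal{O}_U}(P|_U,I|_U)$, obtained by sheafifying the $(j_!,j^*)$-adjunction over open subsets of $X$ (this is the general form, for an injective $I$ in place of $\mathcal{O}_X$, of the identity used around \eqref{eq:transpose-morphisms}; it uses that $j$ is an open immersion, so $j^!=j^*$ and the restriction of an extension-by-zero along an open is again an extension-by-zero). Now $X$ is Noetherian, so by Theorem~\ref{t:injective-in-qcoh-vs-all-OX}.\ref{enum:inj-qcoh-restrict-to-inj-qcoh} the sheaf $I|_U$ is injective quasi-coherent on $U$, in particular an injective sheaf by Theorem~\ref{t:injective-in-qcoh-vs-all-OX}.\ref{enum:inj-qcoh-equal-inj-OX-that-qcoh}. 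Since $P|_U$ is a vector bundle, $\sheafHom_{\mathcal{O}_U}(P|_U,I|_U)\cong(P|_U)^\cek\otimes_{\mathcal{O}_U}I|_U$ is quasi-coherent (a tensor product of quasi-coherent sheaves), and it is injective by the first assertion applied on $U$, because the dual $(P|_U)^\cek$ is again a vector bundle and hence flat; by Theorem~\ref{t:injective-in-qcoh-vs-all-OX}.\ref{enum:inj-qcoh-equal-inj-OX-that-qcoh} it is therefore an injective object of $\Qcoh(U)$. Finally $j$ has Noetherian source, so $j_*\colon\Qcoh(U)\ra\Qcoh(X)$ is well defined by Lemma~\ref{l:qcqs-preserves-qcoh} and, being right adjoint to the exact functor $j^*\colon\Qcoh(X)\ra\Qcoh(U)$, preserves injective objects; thus $\sheafHom(\leftidx{^U}{P}{},I)\cong j_*\,\sheafHom_{\mathcal{O}_U}(P|_U,I|_U)$ is injective quasi-coherent on $X$.

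The only nonroutine point — the "main obstacle" — is setting up the isomorphism $\sheafHom(j_!\mathcal{F},\mathcal{G})\cong j_*\,\sheafHom_{\mathcal{O}_U}(\mathcal{F},j^*\mathcal{G})$ carefully, and keeping track throughout of which notion of injectivity is in force (injective object of $\Sh$ versus of $\Qcoh$); on the locally Noetherian scheme $X$ these two notions, together with restriction to opens and pushforward along $j$, are all reconciled by Theorem~\ref{t:injective-in-qcoh-vs-all-OX}, and everything else is stalk computations and standard adjunctions.
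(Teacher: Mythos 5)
Your proof is correct and follows essentially the same route as the paper's: the tensor--Hom adjunction for the first claim, stalkwise flatness of $j_!(P|_U)$ for the second, and the adjunction isomorphism $\sheafHom(j_!(P|_U),I)\cong j_*\sheafHom(P|_U,I|_U)$ together with Theorem~\ref{t:injective-in-qcoh-vs-all-OX} and Lemma~\ref{l:qcqs-preserves-qcoh} for the third. The only cosmetic difference is that you keep the Hom sheaf on $U$ and push forward at the end, while the paper moves $j_*$ past the Hom to rewrite the object as $P^\cek\otimes u_*u^*I$ on $X$; both variants rest on the same adjunctions and the same structural inputs.
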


\begin{proof}
  The functor
  $\Hom_{\Sh(X)}(-, \sheafHom(F,I)) \cong
  \Hom_{\Sh(X)}(- \otimes F, I)$
  is exact. This proves first and second claim since 
  $\leftidx{^U}{P}{} = u_!u^*P$ is certainly flat, where
  $u \colon U \hra X$ is the open immersion.

  We have
  $\sheafHom(u_!u^* P,I) \cong  
  u_*\sheafHom(u^*P,u^*I) \cong 
  \sheafHom(P,u_*u^*I) \cong
  P^\cek \otimes u_*u^*I,$
  reproving injectivity since $u^*=u^!$ and $u_*$ preserve
  injective sheaves.
  % (since $u_!$ and $u^*$ are exact left adjoints). 
  Let $X$ be Noetherian and $I$ injective
  quasi-coherent. Then $u^*I$ is injective quasi-coherent by 
  Theorem~\ref{t:injective-in-qcoh-vs-all-OX}.\ref{enum:inj-qcoh-restrict-to-inj-qcoh},
  and $u_*$ preserves quasi-coherence 
  by Lemma~\ref{l:qcqs-preserves-qcoh}
  since $X$ and $U$ are Noetherian.
  Hence $u_*u^*I$ and 
  $P^\cek \otimes u_*u^*I$
  are injective quasi-coherent.
\end{proof}

\begin{corollary}
  \label{c:cplx-sheafhom-flat-inj-is-inj}
  Let $X$ be a scheme.
  % any scheme, not necessarily Noetherian.
  Let $F \in C(\Sh(X))$ be a bounded above complex of
  flat sheaves
  and $I \in C(\Sh(X))$ a bounded below complex of injective
  sheaves.
  % (for example, if $X$ is locally Noetherian, of injective
  % quasi-coherent sheaves). 
  Then $\sheafHom(F,I)$ is a bounded below complex of injective
  sheaves. In particular, it is h-injective and fibrant as an
  object of 
  $Z^0(C(\Sh(X))).$ 
\end{corollary}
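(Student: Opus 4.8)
The plan is to verify the statement componentwise using Lemma~\ref{l:sheafHom-to-inj} and then to read off the ``in particular'' part from Remark~\ref{rem:fibrant-explained}.

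\textbf{Step 1: identify the components.} Choose $b$ with $F^p = 0$ for $p > b$ and $a$ with $I^q = 0$ for $q < a$. The degree-$n$ term of $\sheafHom(F,I)$ is the product
\begin{equation*}
  \sheafHom(F,I)^n = \prod_{p \in \DZ} \sheafHom(F^p, I^{p+n}),
\end{equation*}
and the factor indexed by $p$ vanishes unless $p \leq b$ and $p+n \geq a$. Thus for each $n$ only finitely many factors are nonzero, and $\sheafHom(F,I)^n = 0$ whenever $n < a-b$; in particular $\sheafHom(F,I)$ is bounded below.

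\textbf{Step 2: the components are injective.} Each nonzero factor $\sheafHom(F^p, I^{p+n})$ is an injective sheaf by the first assertion of Lemma~\ref{l:sheafHom-to-inj}, since $F^p$ is flat and $I^{p+n}$ is injective. A product of injective objects of an abelian category (with products) is again injective, because $\Hom_{\Sh(X)}(-, \prod_\alpha J_\alpha) \cong \prod_\alpha \Hom_{\Sh(X)}(-, J_\alpha)$ is a product of exact functors valued in abelian groups, hence exact. Therefore $\sheafHom(F,I)^n$ is an injective sheaf for every $n$, so $\sheafHom(F,I)$ is a bounded below complex of injective sheaves.

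\textbf{Step 3: conclude.} By Remark~\ref{rem:fibrant-explained}, any bounded below complex of injective objects is fibrant, and a fibrant complex is in particular an h-injective complex (of injective objects). This yields the final assertion. There is no genuine obstacle: the only point requiring the hypotheses is that for each fixed $n$ only finitely many of the sheaves $\sheafHom(F^p, I^{p+n})$ are nonzero, which is precisely where ``$F$ bounded above'' and ``$I$ bounded below'' are used — this is what makes $\sheafHom(F,I)$ bounded below and lets Lemma~\ref{l:sheafHom-to-inj} be applied factorwise.
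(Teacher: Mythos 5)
Your proposal is correct and follows essentially the same route as the paper: componentwise reduction to Lemma~\ref{l:sheafHom-to-inj} (each graded piece of $\sheafHom(F,I)$ is a finite product of injectives $\sheafHom(\text{flat},\text{injective})$, with the boundedness hypotheses ensuring finiteness and the lower bound), followed by Remark~\ref{rem:fibrant-explained} for the fibrant/h-injective conclusion. You merely spell out the indexing and the closure of injectives under products in more detail than the paper does.
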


\begin{proof}
  The boundedness condition is obvious, and each component of
  $\sheafHom(F,I)$ is injective as a finite product of sheaves 
  $\sheafHom(\mathcal{F}, \mathcal{I})$ where $\mathcal{F}$ is a
  flat sheaf and $\mathcal{I}$ is an injective
  sheaf (use Lemma~\ref{l:sheafHom-to-inj}).
  The last claim follows from
  Remark~\ref{rem:fibrant-explained}.
\end{proof}

\begin{corollary}
  \label{c:cplx-sheafhom-!-ext-to-inj-qcoh}
  Let $X$ be a Noetherian scheme,
  $I \in C(\Sh(X))$ a bounded below complex of injective
  quasi-coherent sheaves  
  and
  $F \in C(\Sh(X))$ a bounded above complex of
  sheaves whose components are finite products of sheaves of the
  form $\leftidx{^U}{P}{}$
  where $P$ is a vector bundle on $X$ and $U \subset X$ is an open
  subscheme.
  Then $\sheafHom(F,I)$ is a bounded below complex
  of injective sheaves and injective quasi-coherent
  sheaves and fibrant and h-injective as an object
  of $Z^0(C(\Sh(X)))$ or
  $Z^0(C(\Qcoh(X))).$
\end{corollary}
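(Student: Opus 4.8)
The plan is to argue componentwise, mimicking the proof of Corollary~\ref{c:cplx-sheafhom-flat-inj-is-inj} but using the quasi-coherent refinement in Lemma~\ref{l:sheafHom-to-inj}, and then to invoke Remark~\ref{rem:fibrant-explained} in each of the two categories $\Sh(X)$ and $\Qcoh(X)$. First I would dispose of the boundedness claim: since $F$ is bounded above and $I$ bounded below, the term $\sheafHom(F,I)^{n}=\prod_{i}\sheafHom(F^{i},I^{i+n})$ is a \emph{finite} product of sheaves, and it vanishes for $n$ sufficiently negative; hence $\sheafHom(F,I)$ is bounded below.

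Next I would identify the component sheaves. By hypothesis each $F^{i}$ is a finite product of sheaves of the form $\leftidx{^U}{P}{}$ with $P$ a vector bundle on $X$ and $U\subset X$ open, and each $I^{j}$ is injective quasi-coherent; therefore $\sheafHom(F,I)^{n}$ is a finite product of sheaves $\sheafHom(\leftidx{^U}{P}{},\mathcal{I})$ with $\mathcal{I}$ injective quasi-coherent. This is exactly the situation covered by the last assertion of Lemma~\ref{l:sheafHom-to-inj} (and this is the one place where the Noetherian hypothesis on $X$ is used), so each such sheaf is injective quasi-coherent, and hence also an injective object of $\Sh(X)$ by Theorem~\ref{t:injective-in-qcoh-vs-all-OX}.\ref{enum:inj-qcoh-equal-inj-OX-that-qcoh}. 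A finite product of injective objects is injective in any abelian category (for $\Sh(X)$ one may alternatively cite Theorem~\ref{t:injective-in-qcoh-vs-all-OX}.\ref{enum:inj-and-inj-qcoh-arbitrary-sums}), so $\sheafHom(F,I)$ is simultaneously a bounded below complex of injective sheaves and a bounded below complex of injective quasi-coherent sheaves.

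Finally I would conclude with Remark~\ref{rem:fibrant-explained}: a bounded below complex of injective objects in a Grothendieck category is fibrant, and fibrant complexes are h-injective. Since $X$ is Noetherian it is in particular quasi-compact and quasi-separated, so $\Qcoh(X)$ is a Grothendieck category (as $\Sh(X)$ always is); applying the remark in both categories yields that $\sheafHom(F,I)$ is fibrant and h-injective as an object of $Z^{0}(C(\Sh(X)))$ and of $Z^{0}(C(\Qcoh(X)))$. I do not expect any real obstacle here; the only points requiring a little care are that the component sheaves be injective \emph{both} as sheaves and as quasi-coherent sheaves, which is precisely the content of Lemma~\ref{l:sheafHom-to-inj} together with Theorem~\ref{t:injective-in-qcoh-vs-all-OX}, and that the implication ``bounded below complex of injectives $\Rightarrow$ fibrant $\Rightarrow$ h-injective'' be legitimate inside $\Qcoh(X)$, which is why one notes that $\Qcoh(X)$ is Grothendieck.
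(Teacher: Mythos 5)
Your proposal is correct and follows essentially the same route as the paper: boundedness is immediate, each component of $\sheafHom(F,I)$ is a finite product of sheaves $\sheafHom(\leftidx{^U}{P}{},J)$ with $J$ injective quasi-coherent, hence injective quasi-coherent by Lemma~\ref{l:sheafHom-to-inj}, and one concludes with Theorem~\ref{t:injective-in-qcoh-vs-all-OX}.\ref{enum:inj-qcoh-equal-inj-OX-that-qcoh} and Remark~\ref{rem:fibrant-explained}. Your extra remarks (where the Noetherian hypothesis enters, why the fibrant/h-injective implication is legitimate in $\Qcoh(X)$) are accurate but not needed beyond what the paper's argument already uses.
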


\begin{proof}
  The boundedness condition is obvious, and each component of
  $\sheafHom(F,I)$ is 
  injective quasi-coherent as 
  a finite product of sheaves 
  $\sheafHom(\leftidx{^U}{P}{}, J)$ where $P$ and $U$
  are as above and $J$ is an injective quasi-coherent
  sheaf (use Lemma~\ref{l:sheafHom-to-inj}).
  Now use
  Theorem~\ref{t:injective-in-qcoh-vs-all-OX}.\ref{enum:inj-qcoh-equal-inj-OX-that-qcoh} 
  and 
  Remark~\ref{rem:fibrant-explained}.
\end{proof}

\begin{lemma}
  \label{l:obtain-homotopy-equiv}
  Let $X$ be a 
  scheme over a field $k.$ 
  Let $\Delta \colon X \ra X \times X$
  be the diagonal
  immersion
  and let
  $p,$ $q \colon  X \times X \ra X$ be first and second
  projection. 
  Let $\mathcal{I}$ be an h-injective complex of sheaves on $X$
  and  
  $\Delta_*(\mathcal{I}) \ra \mathcal{K}$ a quasi-isomorphism
  with $\mathcal{K}$ an h-injective complex of sheaves on $X
  \times X.$ 
  Let $j \colon  U \ra X$ be the immersion of an 
  open subscheme
  and $P$ a vector bundle on $U.$
  Then the obvious morphism
  \begin{equation}
    \label{eq:obtain-homotopy-equiv}
    p_*(\sheafHom(q^* j_! P, \Delta_*\mathcal{I})) 
    \ra
    p_*(\sheafHom(q^* j_! P, \mathcal{K}))
  \end{equation}
  in $C(\Sh(X))$
  is a quasi-isomorphism between h-injective objects and hence a
  homotopy equivalence (= an isomorphism in $[C(\Sh(X))]$).
  If $\mathcal{I}$ and $\mathcal{K}$ are even fibrant,
  % (resp.\ bounded below), 
  the same is
  true for both complexes in
  \eqref{eq:obtain-homotopy-equiv}. 
\end{lemma}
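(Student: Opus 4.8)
The plan is to combine one explicit sheaf-theoretic computation identifying the source of \eqref{eq:obtain-homotopy-equiv} with the observation that its source and target both represent one common object of $D(\Sh(X))$. First I would introduce the open immersion $j'\colon X\times U\hra X\times X$ induced by $j$, the projection $\bar q\colon X\times U\ra U$, and the graph $\Delta'\colon U\ra X\times U$, $u\mapsto(j(u),u)$; one checks $j'\comp\Delta'=\Delta\comp j$, $\bar q\comp\Delta'=\id_U$, and $p\comp j'\comp\Delta'=q\comp j'\comp\Delta'=j$. Using base change for $j_!$ (so $q^*j_!P\cong j'_!(\bar q^*P)$), the internal-hom adjunction for the open immersion $j'$, flat base change $j'^*\Delta_*\cong\Delta'_*j^*$ (Lemma~\ref{l:base-change-ringed-spaces-open}), the projection formula $\sheafHom(E,i_*\mathcal{M})\cong i_*\sheafHom(i^*E,\mathcal{M})$ for $E$ locally free (cf.\ Lemma~\ref{l:pullback-sheafHom}), and $\Delta'^*\bar q^*P\cong P$, I would obtain for every complex $\mathcal{G}$ of sheaves on $X$ a natural isomorphism
\begin{equation*}
  \sheafHom(q^*j_!P,\Delta_*\mathcal{G})\cong\Delta_*j_*\sheafHom(P,\mathcal{G}|_U),
\end{equation*}
and hence, applying $p_*$ and using $p\comp\Delta=\id_X$ together with $\sheafHom(j_!P,\mathcal{G})\cong j_*\sheafHom(P,\mathcal{G}|_U)$, a natural isomorphism $p_*\sheafHom(q^*j_!P,\Delta_*\mathcal{G})\cong\sheafHom(j_!P,\mathcal{G})$. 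With $\mathcal{G}=\mathcal{I}$ this identifies the source of \eqref{eq:obtain-homotopy-equiv} with $\sheafHom(j_!P,\mathcal{I})$.

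Next I would verify the h-injectivity and fibrancy claims. The key elementary fact is that $\sheafHom(\mathcal{F},\mathcal{J})$ is h-injective whenever $\mathcal{F}$ is a flat sheaf and $\mathcal{J}$ is h-injective, because $-\otimes\mathcal{F}$ is exact, so $\mathcal{A}\otimes\mathcal{F}$ is acyclic for $\mathcal{A}$ acyclic, and therefore $\Hom^\bullet(\mathcal{A},\sheafHom(\mathcal{F},\mathcal{J}))\cong\Hom^\bullet(\mathcal{A}\otimes\mathcal{F},\mathcal{J})$ is acyclic. Both $j_!P$ and $q^*j_!P$ are flat sheaves (cf.\ Lemma~\ref{l:sheafHom-to-inj}), so $\sheafHom(j_!P,\mathcal{I})$ and $\sheafHom(q^*j_!P,\mathcal{K})$ are h-injective on $X$, resp.\ on $X\times X$; since $k$ is a field the projection $p$ is flat, whence $p^*$ is exact and $p_*$ preserves h-injective complexes, so $p_*\sheafHom(q^*j_!P,\mathcal{K})$ is h-injective on $X$. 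If moreover $\mathcal{I}$ and $\mathcal{K}$ are fibrant, i.e.\ complexes of injective sheaves, then $\sheafHom$ of a flat sheaf into them is again a complex of injective sheaves by Lemma~\ref{l:sheafHom-to-inj}, and $p_*$ preserves injective sheaves (as $p^*$ is exact), so both complexes in \eqref{eq:obtain-homotopy-equiv} are then fibrant.

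The remaining, and I expect hardest, point is that \eqref{eq:obtain-homotopy-equiv} is a quasi-isomorphism; granting this, it is automatically a homotopy equivalence, being a quasi-isomorphism between h-injective complexes. The plan is to show that the source and the target each represent $\bR p_*\bR\sheafHom(q^*j_!P,\Delta_*\mathcal{I})$, compatibly with \eqref{eq:obtain-homotopy-equiv}. For the target this is immediate: $\mathcal{K}$ is an h-injective resolution of $\Delta_*\mathcal{I}$, so $\sheafHom(q^*j_!P,\mathcal{K})$ represents $\bR\sheafHom(q^*j_!P,\Delta_*\mathcal{I})$, and it is h-injective, so $p_*$ of it represents $\bR p_*$ of it. For the source I would use $\bR\sheafHom(q^*j_!P,-)\cong\bR j'_*\bigl(\sheafHom(\bar q^*P,j'^*(-))\bigr)$ (derived adjunction for the open immersion $j'$, with no higher $\sheafHom$ since $\bar q^*P$ is locally free), the exactness of $j'^*$ and $\Delta_*$ (giving $j'^*\Delta_*\mathcal{I}=\Delta'_*(\mathcal{I}|_U)$ and the underived projection formula $\sheafHom(\bar q^*P,\Delta'_*(\mathcal{I}|_U))=\Delta'_*\sheafHom(P,\mathcal{I}|_U)$), the identity $\bR p_*\bR j'_*=\bR(p\comp j')_*$ with $p\comp j'\comp\Delta'=j$, and the h-injectivity of $\sheafHom(P,\mathcal{I}|_U)$ on $U$ (so that $j_*$ agrees with $\bR j_*$ on it), to compute $\bR p_*\bR\sheafHom(q^*j_!P,\Delta_*\mathcal{I})\cong j_*\sheafHom(P,\mathcal{I}|_U)\cong\sheafHom(j_!P,\mathcal{I})$, which by the first paragraph is the source $p_*\sheafHom(q^*j_!P,\Delta_*\mathcal{I})$. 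Thus \eqref{eq:obtain-homotopy-equiv} is the comparison map between two models of one object of $D(\Sh(X))$, hence a quasi-isomorphism. (Here $\Delta$, being the diagonal, is an immersion, and a closed immersion when $X$ is separated, so $\bR\Delta'_*=\Delta'_*$; in the general case one factors $\Delta'$ into a closed and an open immersion and argues in the same way.)
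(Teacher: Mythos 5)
Your proposal is correct for separated $X$, and at the decisive step it takes a genuinely different route from the paper. The paper identifies \eqref{eq:obtain-homotopy-equiv} with $p'_*(Q\otimes j'^*\Delta_*\mathcal{I})\ra p'_*(Q\otimes j'^*\mathcal{K})$, where $p'=p\comp j'$ and $Q$ is the dual of the pullback of $P$ to $X\times U$, and then quotes Spaltenstein: the source is $\Delta'_*$ of an h-injective complex, hence h-limp, the target is h-injective, and \cite[Cor.~5.17]{spaltenstein} gives that $p'_*$ of this quasi-isomorphism is again a quasi-isomorphism; h-injectivity and fibrancy of both $p'_*$-images are then obtained exactly as you do, from flatness of $p'$ and the identification of the source with $j_*\sheafHom(P,\mathcal{I}|_U)\cong\sheafHom(j_!P,\mathcal{I})$. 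You instead argue that source and target both compute $\bR p_*\bR\sheafHom(q^*j_!P,\Delta_*\mathcal{I})$, using the composition formula for derived direct images together with exactness of $\Delta'_*$. This does work, but the compatibility you only allude to is where the content sits: exhibiting an abstract isomorphism between the two objects of $D(\Sh(X))$ proves nothing about the map \eqref{eq:obtain-homotopy-equiv} itself. What you must show is that the canonical map $p'_*(\Delta'_*M)\ra\bR p'_*(\Delta'_*M)$, with $M=\sheafHom(P,\mathcal{I}|_U)$, is an isomorphism; this follows because it is a factor of the canonical map $j_*M\ra\bR j_*M$ (an isomorphism, $M$ being h-injective), the remaining factors being invertible since $\Delta'_*M\ra\bR\Delta'_*M$ is an isomorphism when $\Delta'_*$ is exact and $\bR p'_*\bR\Delta'_*\cong\bR(p'\comp\Delta')_*$. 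Spelled out this way, your argument is a legitimate alternative to the K-limp argument in the separated case.

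The genuine gap is the non-separated case, which the lemma as stated covers and which the paper's argument handles uniformly. If $X$ is not separated, $\Delta'$ is only a locally closed immersion; writing $\Delta'=o\comp c$ with $c$ a closed immersion and $o$ an open immersion, your reduction needs the canonical map $o_*(c_*M)\ra\bR o_*(c_*M)$ to be an isomorphism. But $c_*M$ is in general not h-injective ($c^*$ is not exact), and its adaptedness to $o_*$ is not automatic: the natural way to get it is to observe that $c_*M$ is K-limp and that K-limp complexes are acyclic for direct images, i.e.\ exactly the input \cite[Prop.~5.15, Cor.~5.17]{spaltenstein} on which the paper's proof is built. So ``argue in the same way'' hides the real content there; either import the K-limp argument at that point, or restrict your statement to separated $X$, which is all that Corollary~\ref{c:obtain-homotopy-equiv} and the subsequent applications use.
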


\begin{proof}
  The following commutative diagram with two cartesian squares
  explains our notation. 
  \begin{equation*}
    \xymatrix{
      {U} \ar[r]^-{j} \ar[d]^-{\Delta'} & 
      {X} \ar[d]^-{\Delta}\\
      {X \times U} \ar[r]^-{j'} \ar[d]^-{q'} 
      % \ar@/_/[rr]_{p'}
      & 
      {X \times X} \ar[d]^-{q} \ar[r]^-{p} & 
      {X} \\
      {U} \ar[r]^-{j} &
      {X}       
    }
  \end{equation*}
  Define $p':= p \comp j'\colon X \times U \ra X.$
  % Note that $\Delta,$ $\Delta'$ (as closed immersions), $j$
  % and $p'$
  % are affine (and hence separated and quasi-compact). 
  % Here $p':= p \comp j'.$ Note that $q \comp \Delta =\id_X$ and
  % $q' \comp \Delta'=\id_U.$
  Observe that 
  \begin{align*}
    p_*(\sheafHom(q^* j_! P, -) 
    \sila &
    p_*(\sheafHom(j'_! q'^*P,
    -)
    &&
      % since $q^* j_! \sira j'_! q'^*$ 
    \text{(by Lemma~\ref{l:base-change-ringed-spaces-open}.\ref{enum:proper-base-change-open})}\\ 
    \sila &
    p'_*(\sheafHom(q'^*P,
    j'^*(-))
    && \text{(by adjunction)}\\
      % $(j'_!, j'^!=j'^*)$)\\
    \sila &
    p'_*(Q \otimes
    j'^*(-))
  \end{align*}
  where we abbreviate $Q:=(q'^*P)^\cek.$
  Hence the morphism \eqref{eq:obtain-homotopy-equiv}
  is identified with the morphism
  \begin{equation}
    \label{eq:obtain-homotopy-equiv-simplified}
    p'_*(Q \otimes
    j'^*\Delta_*\mathcal{I})
    \ra
    p'_*(Q \otimes
    j'^*\mathcal{K}).
  \end{equation}
  This morphism is obtained by applying $p'_*$ to the 
  morphism
  \begin{equation*}
    Q \otimes
    j'^*\Delta_*\mathcal{I}
    \ra
    Q \otimes
    j'^*\mathcal{K}
  \end{equation*}
  which is certainly a quasi-isomorphism.
  To see that \eqref{eq:obtain-homotopy-equiv-simplified}
  is a quasi-isomorphism it is hence enough to show that
  $Q \otimes j'^*\Delta_*\mathcal{I}$
  is h-limp (:= K-limp) and that
  $Q \otimes j'^*\mathcal{K}$
  is h-injective, by \cite[Cor.~5.17]{spaltenstein}.
  
  The claim for $Q \otimes j'^*\mathcal{K}$ is clear because
  $j'_!$ and $(-\otimes Q^\cek)$ are exact and left 
  adjoint to $j'^*$ and $\sheafHom(Q^\cek,-) \sila (Q \otimes -)$,
  respectively.
  From 
  Lemma~\ref{l:base-change-ringed-spaces-open}.\ref{enum:push-and-restrict-to-open}
  and 
  the projection formula \cite[Exercise~II.5.1.(d)]{Hart}
  % \footnote{
  % oder \cite[0.5.4.8, p.~118]{EGAI-new}
  % oder vielleicht allgemeiner ? \cite[Cor.~I.9.3.9,
  % p.~367]{EGA%I-new} 
  % }
  we obtain
  \begin{equation}
    \label{eq:rewrite-Q-jdeltaI}
    Q \otimes j'^*\Delta_*\mathcal{I} 
    \sira Q \otimes \Delta'_* j^*\mathcal{I}
    \sira \Delta'_*((\Delta'^*Q) \otimes j^*\mathcal{I}).
  \end{equation}
  Similarly as above, 
  % $j^*\mathcal{I}$ and hence 
  $(\Delta'^*Q) \otimes
  j^*\mathcal{I}$ is h-injective, 
  % complexes of
  % sheaves on $U,$ 
  and in particular h-limp. Hence 
  $\Delta'_*((\Delta'^*Q) \otimes j^*\mathcal{I})$ is h-limp by 
  \cite[Prop.~5.15.(b)]{spaltenstein}. This proves that 
  $Q \otimes j'^*\Delta_*\mathcal{I}$
  is h-limp.
  We conclude that 
  \eqref{eq:obtain-homotopy-equiv-simplified}
  is a quasi-isomorphism.
  
  Flatness of $p'$ implies that $p'^*:\Sh(X) \ra \Sh(X \times
  U)$ is exact, so 
  % This implies that 
  $p'_*(Q \otimes j'^*\mathcal{K})$ is h-injective.
  On the other hand,
  \eqref{eq:rewrite-Q-jdeltaI} yields
  \begin{equation*}
    % \label{eq:identify-LHS}
    p'_*(Q \otimes
    j'^*\Delta_*\mathcal{I}) \sira
    p'_*\Delta'_*((\Delta'^*Q) \otimes j^*\mathcal{I})
    =j_*((\Delta'^*Q) \otimes j^*\mathcal{I})
  \end{equation*}
  and exactness of $j^*$ 
  and h-injectivity of $(\Delta'^*Q) \otimes
  j^*\mathcal{I}$ show that this object is h-injective.

  Recall from Remark~\ref{rem:fibrant-explained} that a
  fibrant complex of sheaves is the same thing as an h-injective
  complex of injective sheaves.  If $\mathcal{I}$ and
  $\mathcal{K}$ are componentwise injective, the exact left
  adjoint functors used above also show that $p'_*(Q \otimes
  j'^*\mathcal{K})$ and $j_*((\Delta'^*Q) \otimes j^*\mathcal{I})
  \sila p'_*(Q \otimes j'^*\Delta_*\mathcal{I})$ are
  componentwise injective. This proves the last claim.
\end{proof}

\begin{corollary}
  \label{c:obtain-homotopy-equiv}
  Let $X,$ $p,$ $q,$ $\Delta,$ $\mathcal{I},$ $\mathcal{K}$ and
  $\Delta_*\mathcal{I} \ra \mathcal{K}$ be as in 
  Lemma~\ref{l:obtain-homotopy-equiv} and assume in addition that
  $X$ is quasi-compact separated and that
  $\mathcal{I}$ and $\mathcal{K}$ are bounded below complexes of
  injective sheaves.
  Let $P$ be a bounded above complex of vector bundles on $X.$
  We also fix an ordered
  finite affine open covering $\mathcal{U}=(U_s)_{s \in S}$
  of $X.$
  Then the obvious morphism
  \begin{equation}
    \label{eq:obtain-homotopy-equiv-2}
    p_*(\sheafHom(q^*(\mathcal{C}_!(P)), \Delta_*\mathcal{I})) 
    \ra
    p_*(\sheafHom(q^*(\mathcal{C}_!(P)), \mathcal{K}))
  \end{equation}
  in $C(\Sh(X))$
  is a quasi-isomorphism between bounded below 
  complexes of injective sheaves and hence a
  homotopy equivalence.
  % (= an isomorphism in $[C(\Sh(X))]$).
  % between h-injective objects and hence a
  % homotopy equivalence (= an isomorphism in $[C(\Sh(X))]$).
  % is a quasi-isomorphism between bounded below 
  % complexes of injective quasi-coherent sheaves and hence a
  % homotopy equivalence (= an isomorphism in $[C(\Qcoh(X))]$).
\end{corollary}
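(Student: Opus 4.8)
The plan is to reduce the statement to Lemma~\ref{l:obtain-homotopy-equiv}, applied to the individual \v{C}ech summands $\leftidx{^{U_I}}{(P^i)}{}$ of $\mathcal{C}_!(P)$, by the same truncation arguments used in the proof of Proposition~\ref{p:abstract-cech-!-bounded-above-full-faithful}.

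First I would establish that both complexes in \eqref{eq:obtain-homotopy-equiv-2} are bounded below complexes of injective sheaves. Since $P$ is a bounded above complex of vector bundles, $\mathcal{C}_!(P)$ is a bounded above complex of flat sheaves whose components are finite products of sheaves $\leftidx{^{U_I}}{(P^i)}{}=(u_I)_!(P^i|_{U_I})$ with $\emptyset\neq I\subset S$, $i\in\DZ$, and $u_I\colon U_I\hookrightarrow X$ the inclusion; applying the exact functor $q^*$ keeps all of this. For the $\mathcal{K}$-side the claim is then immediate from Corollary~\ref{c:cplx-sheafhom-flat-inj-is-inj} and the fact that $p_*$, being right adjoint to the exact functor $p^*$, preserves injective sheaves and boundedness. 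For the $\Delta_*\mathcal{I}$-side one cannot argue this way, because $\Delta_*$ does not preserve injective $\mathcal{O}$-modules; instead I would use the last assertion of Lemma~\ref{l:obtain-homotopy-equiv}: as $\mathcal{I}$ and $\mathcal{K}$ are bounded below complexes of injective sheaves, hence fibrant by Remark~\ref{rem:fibrant-explained}, the complex $p_*(\sheafHom(q^*(u_I)_!(P^i|_{U_I}),\Delta_*\mathcal{I}))$ is fibrant, and being bounded below it is a bounded below complex of injective sheaves; since every component of $p_*(\sheafHom(q^*\mathcal{C}_!(P),\Delta_*\mathcal{I}))$ is a finite product of components of such complexes, it too is injective, and boundedness below is clear.

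Next I would prove that \eqref{eq:obtain-homotopy-equiv-2} is a quasi-isomorphism. The functor $p_*\sheafHom(q^*\mathcal{C}_!(P),-)$ evaluated on a bounded below complex is locally finite: each of its cohomology sheaves involves only finitely many of the (possibly infinitely many) components of $\mathcal{C}_!(P)$, so, after replacing $P$ by its brutal truncations $\sigma^{\geq a}P$, it is enough to treat $P$ bounded. For $P$ bounded, brutal truncation gives short exact sequences $0\to\sigma^{\geq p}P\to P\to\sigma^{<p}P\to 0$ of complexes of vector bundles; applying $\mathcal{C}_!$, then the exact functor $q^*$, then $\sheafHom(-,\mathcal{K})$ respectively $\sheafHom(-,\Delta_*\mathcal{I})$ (which, by the first step, send these to short exact sequences with injective components, hence degreewise split ones), and then $p_*$, produces short exact sequences of complexes, so two-out-of-three for the associated long exact sequences together with induction on the length of $P$ reduce us to $P$ a single vector bundle $P^0$ in degree $0$; one more such truncation argument in the \v{C}ech direction of the bounded complex $\mathcal{C}_!(P^0)$ reduces us to replacing $\mathcal{C}_!(P^0)$ by a single summand $(u_I)_!(P^0|_{U_I})$, and then \eqref{eq:obtain-homotopy-equiv-2} is precisely the morphism \eqref{eq:obtain-homotopy-equiv} of Lemma~\ref{l:obtain-homotopy-equiv}, hence a quasi-isomorphism. (Alternatively one assembles the whole reduction into a single morphism of double complexes with uniformly bounded rows and applies \cite[Thm.~1.9.3]{KS}, as in the proof of Proposition~\ref{p:abstract-cech-!-bounded-above-full-faithful}.) Finally, a quasi-isomorphism between bounded below complexes of injective sheaves is a homotopy equivalence, such complexes being h-injective by Remark~\ref{rem:fibrant-explained}.

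The main obstacle I anticipate is the bookkeeping of this truncation-and-assembly --- especially carrying out the reduction from $\mathcal{C}_!(P)$ to a single \v{C}ech summand while $\mathcal{C}_!(P)$ is unbounded below, which is what the local-finiteness observation is for --- together with the subtlety that $\Delta_*$ does not preserve injective $\mathcal{O}$-modules, so the injectivity of the $\Delta_*\mathcal{I}$-side of \eqref{eq:obtain-homotopy-equiv-2} has to be extracted from the last sentence of Lemma~\ref{l:obtain-homotopy-equiv} (equivalently, from its proof, where that complex is rewritten as a pushforward along an open immersion of a vector bundle tensored with $\mathcal{I}|_{U_I}$).
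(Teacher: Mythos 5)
Your proposal is correct and follows essentially the same route as the paper: both complexes are handled by reducing, via brutal truncation of $P$ together with the local finiteness in each fixed degree and passage to the individual \v{C}ech summands $(u_I)_!(P^i|_{U_I})$, to Lemma~\ref{l:obtain-homotopy-equiv}, whose fibrancy statement supplies the injectivity and whose quasi-isomorphism statement supplies the rest. The only (harmless) deviations are cosmetic: you obtain injectivity on the $\mathcal{K}$-side directly from Corollary~\ref{c:cplx-sheafhom-flat-inj-is-inj} and run the bounded case by induction on degreewise split short exact sequences, where the paper simply truncates, passes to direct summands, and compares degrees $d-1,d,d+1$ with a truncation $\sigma^{\geq a}(P)$ for $a\ll 0$.
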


\begin{proof}
  Both complexes in \eqref{eq:obtain-homotopy-equiv-2}
  are bounded below
  since
  $\mathcal{C}_!(P)$ is bounded above and both
  $\Delta_*\mathcal{I}$ and $\mathcal{K}$ are bounded below.
  Recall from Remark~\ref{rem:fibrant-explained} that a
  bounded below complex of injective sheaves is the same thing as
  a bounded below fibrant complex.
  If $P$ is bounded, brutal truncation and passing to
  direct summands reduces the statement of the corollary to that
  of Lemma~\ref{l:obtain-homotopy-equiv}.

  % Since $\mathcal{C}_!(P)$ consists of finitely many summands of
  % the form $\leftidx{^{U_I}}{(P^m)}{}$ in each degree we see as
  % in 
  % the proof of 
  % Lemma~\ref{l:obtain-homotopy-equiv}
  % that both complexes consist of injective quasi-coherent
  % sheaves.
  
  Now assume that $P$ is bounded above. Fix $d \in \DZ$ and
  limit attention to the components of
  the morphism \eqref{eq:obtain-homotopy-equiv-2}
  in degrees $d-1,$ $d$ and $d+1.$
  These components coincide
  with those of 
  \begin{equation*}
    p_*(\sheafHom(q^*(\mathcal{C}_!(\sigma^{\geq a}(P))),
    \Delta_*\mathcal{I})  
    \ra
    p_*(\sheafHom(q^*(\mathcal{C}_!(\sigma^{\geq a}(P))),
    \mathcal{K}) 
  \end{equation*}
  if $a \ll 0$ is sufficiently small (where $\sigma^{\geq a}(P)$
  denotes 
  the brutal truncation of $P$), and we already know that this
  morphism is a quasi-isomorphism between (bounded below)
  complexes of injective sheaves.
  This implies that
  \eqref{eq:obtain-homotopy-equiv-2} 
  induces an isomorphism on the $d$-th cohomology sheaves, and
  that the degree $d$ components of both sides of
  \eqref{eq:obtain-homotopy-equiv-2} are injective sheaves.
\end{proof}

% \newpage

%\subsection{Dualities and texorpdfstring{$!$}{!}-\v{C}ech
%  enhancement}
\subsection{Dualities and $!$-\v{C}ech enhancements}
\label{sec:dual-cech-enhanc}

Let $X$ be a scheme. Given $\mathcal{J} \in C(\Sh(X))$ we define
the dg functor
\begin{equation*}
  \mathcal{D}_\mathcal{J} :=
  \sheafHom(-,\mathcal{J}) \colon 
  C(\Sh(X))^\opp \ra C(\Sh(X)).
\end{equation*}
If $\mathcal{J}$ is an h-injective complex of sheaves, $\mathcal{D}_\mathcal{J}$
induces the functor
\begin{equation}
  \label{eq:RHom-J-D-Sh}
  \mathcal{D}_\mathcal{J}=\sheafHom(-,\mathcal{J})=\bR\sheafHom(-,\mathcal{J})
  \colon  
  D(\Sh(X))^\opp \ra D(\Sh(X))
\end{equation}
of triangulated categories. 
There are two important cases where this functor restricts to an
equivalence on $\mfPerf'(X)$ or $D^b_\Coh(\Sh(X)).$

\subsubsection{Duality for perfect complexes}
\label{sec:dual-perf-compl}

Let $\mathcal{J}$ be a bounded below complex of
injective sheaves on $X$ which is isomorphic to $\mathcal{O}_X$
in $D(\Sh(X)).$  
In this case the functor
\eqref{eq:RHom-J-D-Sh} is isomorphic to 
$\bR\sheafHom(-,\mathcal{O}_X)$
and induces
an equivalence
\begin{equation}
  \label{eq:duality-perf}
  \mathcal{D}_\mathcal{J} \colon 
  \mfPerf'(X)^\opp \sira \mfPerf'(X)
  % D^b_\Coh(\Qcoh(X))
  % D^b(\Coh(X)) \sira D^b(\Coh(X))^\opp
\end{equation}
of triangulated categories
which
is a duality since 
$\id \sira \mathcal{D}_\mathcal{J}^2$ naturally
(\cite[\href{http://stacks.math.columbia.edu/tag/08DQ}{Lemma~08DQ}]{stacks-project}).

\begin{lemma}
  \label{l:duality-and-!-Cech-perf}
  Let $X$ be a quasi-compact separated scheme, 
  let $\mathcal{J}$ be a bounded below complex of
  injective sheaves on $X$ which is isomorphic to $\mathcal{O}_X$
  in $D(\Sh(X)),$ and  
  fix an ordered
  finite affine open covering
  $\mathcal{U}=(U_s)_{s \in S}$ of $X.$
  Then 
  % Under the above assumptions on $\mathcal{J},$
  the dg functor
  \begin{equation*}
    \mathcal{D}_\mathcal{J} \comp \mathcal{C}_! \colon  \Cech_!(X)
    \xra{\mathcal{C}_!} C(\Sh(X)) 
    \xra{\mathcal{D}_\mathcal{J}} 
    C(\Sh(X))^\opp 
  \end{equation*}
  lands in the full dg subcategory of bounded below complexes of
  injective sheaves 
  % $C^+(\InjSh(X))^\opp$\footnote{
  % notation $C^+$ explained somewhere? Muss aufpassen, denn
  % hier 
  % meint es bounded below complex (and not condition on
  % cohomologies)  
  % }
  and 
  is quasi-fully faithful, i.\,e.\ the morphism
  \begin{equation}
    \label{eq:D-C!-qiso-perf}
    \mathcal{D}_\mathcal{J} \comp \mathcal{C}_!
    \colon  \Hom_{\Cech_!(X)}(P,Q) \ra
    \Hom_{C(\Sh(X))}(\mathcal{D}_\mathcal{J}(\mathcal{C}_!(Q)),
    \mathcal{D}_\mathcal{J}(\mathcal{C}_!(P))) 
  \end{equation}
  is a quasi-isomorphism for all $P, Q \in \Cech_!(X).$
\end{lemma}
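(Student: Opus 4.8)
The plan is to establish the two assertions of the lemma separately. The first follows directly from the preparatory results of Section~\ref{sec:some-preparations}; the second reduces, after passing to cohomology, to Proposition~\ref{p:abstract-cech-!-bounded-above-full-faithful} together with the duality \eqref{eq:duality-perf} on $\mfPerf'(X)$.

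For the first assertion I would argue as follows. Let $P \in \Cech_!(X)$, i.\,e.\ a bounded complex of vector bundles on $X$. Then $\mathcal{C}_!(P)$ is a bounded complex whose components are finite direct sums of sheaves of the form $\leftidx{^V}{(P^i)}{}$ with $V$ a finite intersection of members of $\mathcal{U}$ and $i \in \DZ$; these are flat by Lemma~\ref{l:sheafHom-to-inj}, so $\mathcal{C}_!(P)$ is a bounded, in particular bounded above, complex of flat sheaves. Corollary~\ref{c:cplx-sheafhom-flat-inj-is-inj} then shows that $\mathcal{D}_\mathcal{J}(\mathcal{C}_!(P)) = \sheafHom(\mathcal{C}_!(P), \mathcal{J})$ is a bounded below complex of injective sheaves, which proves the first assertion; by Remark~\ref{rem:fibrant-explained} this complex, and each of its shifts $[n]\mathcal{D}_\mathcal{J}(\mathcal{C}_!(P))$, is h-injective — a fact I will need below.

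For the second assertion I would fix $P, Q \in \Cech_!(X)$ and show that \eqref{eq:D-C!-qiso-perf} induces an isomorphism on $H^n$ for every $n \in \DZ$. Note first that $\mathcal{C}_!(P)$ and $\mathcal{C}_!(Q)$, being quasi-isomorphic to bounded complexes of vector bundles, lie in $\mfPerf'(X)$. Since $[n]\mathcal{D}_\mathcal{J}(\mathcal{C}_!(P))$ is h-injective, the target complex of \eqref{eq:D-C!-qiso-perf} has $n$-th cohomology $\Hom_{D(\Sh(X))}(\mathcal{D}_\mathcal{J}\mathcal{C}_!(Q), [n]\mathcal{D}_\mathcal{J}\mathcal{C}_!(P))$. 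On the other hand $\Hom_{\Cech_!(X)}(P,Q) = \Hom_{C(\Vb^\subset(X))}(P^\subset, Q^\subset)$, and applying Proposition~\ref{p:abstract-cech-!-bounded-above-full-faithful} to the pair $(P, [n]Q)$ — which is legitimate since $P$ is bounded above and the constructions $(-)^\subset$ and $\mathcal{C}_!$ commute with shifts — identifies its $n$-th cohomology with $\Hom_{D(\Sh(X))}(\mathcal{C}_!(P), [n]\mathcal{C}_!(Q))$. Under these identifications the map induced on $H^n$ by $\mathcal{D}_\mathcal{J}\comp\mathcal{C}_!$ becomes the map $[f] \mapsto [\mathcal{D}_\mathcal{J}(f)]$ coming from the derived functor $\mathcal{D}_\mathcal{J}$ of \eqref{eq:RHom-J-D-Sh}; as $\mathcal{D}_\mathcal{J}$ restricts to an equivalence $\mfPerf'(X)^\opp \sira \mfPerf'(X)$ by \eqref{eq:duality-perf} and both $\mathcal{C}_!(P)$ and $[n]\mathcal{C}_!(Q)$ lie in $\mfPerf'(X)$, this map is bijective. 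Hence \eqref{eq:D-C!-qiso-perf} is a quasi-isomorphism, as claimed.

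The only part requiring care rather than new ideas is this last identification: one must unwind the definitions of all the maps on homotopy classes of morphisms and track the shifts carefully, so as to confirm that passing to cohomology really turns the chain-level dg functor $\mathcal{D}_\mathcal{J}\comp\mathcal{C}_!$ into the composite of the Proposition~\ref{p:abstract-cech-!-bounded-above-full-faithful} identification with the derived duality $\mathcal{D}_\mathcal{J}$. Beyond this bookkeeping I do not expect a genuine obstacle: the substantive input is that Proposition~\ref{p:abstract-cech-!-bounded-above-full-faithful} already produces morphisms in $D(\Sh(X))$ out of $\mathcal{C}_!$, and that $\mathcal{D}_\mathcal{J}$ automatically sends each $\mathcal{C}_!(-)$ to an h-injective complex, so that the right-hand side of \eqref{eq:D-C!-qiso-perf} likewise computes derived $\Hom$ without any further fibrant replacement.
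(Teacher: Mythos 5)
Your proof is correct and follows essentially the same route as the paper's: boundedness below and injectivity via Corollary~\ref{c:cplx-sheafhom-flat-inj-is-inj} (using flatness of the sheaves $\leftidx{^{U_I}}{(P^p)}{}$), and then the quasi-isomorphism statement obtained by combining Proposition~\ref{p:abstract-cech-!-bounded-above-full-faithful}, h-injectivity of $\mathcal{D}_\mathcal{J}(\mathcal{C}_!(P))$, and the duality equivalence \eqref{eq:duality-perf}, exactly the three isomorphisms in the paper's commutative square (your shifting of $Q$ instead of $P$ is an immaterial variant).
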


\begin{proof}
  For $P \in \Cech_!(X)$, the object
  $\mathcal{D}_\mathcal{J}(\mathcal{C}_!(P)) =
  \sheafHom(\mathcal{C}_!(P), 
  \mathcal{J})$ is a bounded below complex of injective sheaves
  and h-injective by
  Corollary~\ref{c:cplx-sheafhom-flat-inj-is-inj} since all
  sheaves $\leftidx{^{U_I}}{(P^p)}{}$ are flat.

  Let $P, Q \in \Cech_!(X).$ 
  % Let $\kappa \colon  \mathcal{C}_!(Q) \ra
  % \mathcal{K})$ be a quasi-isomorphism with $\mathcal{K}$ a
  % bounded below complex of injective (quasi-coherent) sheaves.
  For $m \in \DZ$ consider the commutative diagram
  \begin{equation*}
    % \label{eq:}
    \xymatrix{
      {\Hom_{[\Cech_!(X)]}([m]P,Q)}
      \ar[r]^-{\mathcal{D}_\mathcal{J} \comp \mathcal{C}_!}
      \ar[d]^{\mathcal{C}_!} & 
      {\Hom_{[C(\Sh(X))]}([m]\mathcal{D}_\mathcal{J}(\mathcal{C}_!(Q)),
        \mathcal{D}_\mathcal{J}(\mathcal{C}_!(P)))} 
      \ar[d]^-{\can} \\
      {\Hom_{D(\Sh(X))}([m]\mathcal{C}_!(P),\mathcal{C}_!(Q))}
      \ar[r]^-{\mathcal{D}_\mathcal{J}} &
      {\Hom_{D(\Sh(X))}
        ([m]\mathcal{D}_\mathcal{J}(\mathcal{C}_!(Q)),  
        \mathcal{D}_\mathcal{J}(\mathcal{C}_!(P)))}   
    }
  \end{equation*}
  whose arrows are induced by the indicated functors.
  The left vertical arrow $\mathcal{C}_!$ is an isomorphism
  by Proposition~\ref{p:abstract-cech-!-bounded-above-full-faithful},
  % by Proposition~\ref{p:abstract-cech-!-object-enhancement},
  the lower horizontal arrow $\mathcal{D}_\mathcal{J}$ is an
  isomorphism by the 
  duality 
  equivalence~\eqref{eq:duality-perf},
  and the right vertical
  arrow $\can$ is an isomorphism since 
  $\mathcal{D}_\mathcal{J}(\mathcal{C}_!(P))$
  is h-injective.
  Hence the upper horizontal arrow is an isomorphism, and this
  just means that \eqref{eq:D-C!-qiso-perf} is a
  quasi-isomorphism. 
\end{proof}

\subsubsection{Duality for bounded derived categories of coherent sheaves}
\label{sec:dual-bound-deriv}

Let $X$ be a locally Noetherian scheme that has a dualizing
complex in the sense of \cite[V.2,
  p.~258]{hartshorne-residues-duality}.
Let $\mathcal{J}$ be a dualizing
complex.
We can 
(by \cite[V.2, p.~257]{hartshorne-residues-duality}, using 
Theorem~\ref{t:injective-in-qcoh-vs-all-OX}.\ref{enum:inj-qcoh-equal-inj-OX-that-qcoh})
and will assume that $\mathcal{J}$
is a bounded complex of injective
quasi-coherent sheaves on $X.$
Then the functor \eqref{eq:RHom-J-D-Sh} 
induces an equivalence
\begin{equation}
  \label{eq:duality-D-b-Coh}
  \mathcal{D}_\mathcal{J} \colon 
  D^b_\Coh(\Sh(X))^\opp \sira D^b_\Coh(\Sh(X))
  % D^b_\Coh(\Qcoh(X))
  % D^b(\Coh(X)) \sira D^b(\Coh(X))^\opp
\end{equation}
of triangulated categories
(see \cite[V.2]{hartshorne-residues-duality}) satisfying
$\id \sira \mathcal{D}_\mathcal{J}^2.$

% We say that a scheme $X$ satisfies condition~\ref{enum:DC}
% (for "dualizing complex") if
% \begin{enumerate}[label=(DC)]
% \item
%   \label{enum:DC}
%   $X$ is a locally Noetherian scheme that has a dualizing
%   complex
%   in the sense of \cite[V.2,
%   p.~258]{hartshorne-residues-duality}.
% \end{enumerate}
% For example, any scheme of finite type over a field
% satisfies condition~\ref{enum:DC}
% (\cite[V.10, p.~299]{hartshorne-residues-duality}).

% Assume that $X$ satisfies
% condition~\ref{enum:DC} and let $\mathcal{J}$ be a dualizing
% complex. We can 
% (by \cite[V.2, p.~257]{hartshorne-residues-duality}, using 
% Theorem~\ref{t:injective-in-qcoh-vs-all-OX}.\ref{enum:inj-qcoh-equal-inj-OX-that-qcoh})
% and will assume that $\mathcal{J}$
% is a bounded complex of injective
% quasi-coherent sheaves on $X.$
% Then the functor \eqref{eq:RHom-J-D-Sh} 
% induces an equivalence
% \begin{equation}
%   \label{eq:duality-D-b-Coh}
%   \mathcal{D}_\mathcal{J} \colon 
%   D^b_\Coh(\Sh(X)) \sira D^b_\Coh(\Sh(X))^\opp
%   % D^b_\Coh(\Qcoh(X))
%   % D^b(\Coh(X)) \sira D^b(\Coh(X))^\opp
% \end{equation}
% of triangulated categories
% (see \cite[V.2]{hartshorne-residues-duality}) satisfying
% % . This equivalence
% % is a duality since 
% $\id \sira \mathcal{D}_\mathcal{J}^2.$
% % naturally.

\begin{lemma}
  \label{l:duality-and-!-Cech-D-b-Coh}
  Let $X$ be a Noetherian separated scheme having a dualizing
  complex $\mathcal{J}.$ Assume that $\mathcal{J}$
  % be a dualizing complex and assume that $\mathcal{J}$ 
  is a bounded complex of injective
  quasi-coherent sheaves on $X.$
  % Assume that $X$ and $\mathcal{J}$ are as above.
  % In addition assume that $X$ is quasi-compact and separated.
  Fix
  an ordered
  finite affine open covering 
  $\mathcal{U}=(U_s)_{s \in S}$ of $X.$
  Then the dg functor 
  \begin{equation*}
    \mathcal{D}_\mathcal{J} \comp \mathcal{C}_! \colon  \Cech_!^{-,b}(X)
    \xra{\mathcal{C}_!} C(\Sh(X)) 
    \xra{\mathcal{D}_\mathcal{J}} 
    C(\Sh(X))^\opp 
  \end{equation*}
  lands in the full dg subcategory of bounded below complexes of
  injective quasi-coherent sheaves 
  % $C^+(\InjSh(X))^\opp$\footnote{
  % notation $C^+$ explained somewhere? Muss aufpassen, denn
  % hier 
  % meint es bounded below complex (and not condition on
  % cohomologies)  
  % }
  and 
  is quasi-fully faithful, i.\,e.\ the morphism
  \begin{equation*}
    % \label{eq:D-C!-qiso}
    \mathcal{D}_\mathcal{J} \comp \mathcal{C}_!
    \colon  \Hom_{\Cech_!^{-,b}(X)}(P,Q) \ra
    \Hom_{C(\Sh(X))}(\mathcal{D}_\mathcal{J}(\mathcal{C}_!(Q)),
    \mathcal{D}_\mathcal{J}(\mathcal{C}_!(P))) 
  \end{equation*}
  is a quasi-isomorphism for all $P, Q \in \Cech_!^{-,b}(X).$
\end{lemma}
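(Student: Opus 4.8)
The plan is to mimic the proof of Lemma~\ref{l:duality-and-!-Cech-perf} almost verbatim, substituting the duality equivalence~\eqref{eq:duality-D-b-Coh} for $D^b_\Coh(\Sh(X))$ in place of the duality equivalence~\eqref{eq:duality-perf} for perfect complexes, and substituting Corollary~\ref{c:cplx-sheafhom-!-ext-to-inj-qcoh} in place of Corollary~\ref{c:cplx-sheafhom-flat-inj-is-inj}.

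First I would establish the claim about the image of the dg functor. For $P \in \Cech_!^{-,b}(X)$ the complex $\mathcal{C}_!(P)$ is bounded above and its components are finite products of sheaves $\leftidx{^{U_I}}{(P^p)}{}$ with $\emptyset \neq I \subseteq S$ and $p \in \DZ$. Since $\mathcal{J}$ is a bounded complex of injective quasi-coherent sheaves and $X$ is Noetherian, Corollary~\ref{c:cplx-sheafhom-!-ext-to-inj-qcoh} shows that $\mathcal{D}_\mathcal{J}(\mathcal{C}_!(P)) = \sheafHom(\mathcal{C}_!(P), \mathcal{J})$ is a bounded below complex of injective quasi-coherent sheaves, in particular h-injective as an object of $C(\Sh(X))$.

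For the quasi-full-faithfulness I would first observe that $\mathcal{C}_!(P) \in D^b_\Coh(\Sh(X))$ whenever $P \in \Cech_!^{-,b}(X)$: this holds because $P$ has bounded cohomology and $X$ is Noetherian, exactly as in the proof of Proposition~\ref{p:shriek-cech-enhancement-D-minus-and-b-Coh}, so that the duality~\eqref{eq:duality-D-b-Coh} may be invoked on these objects. Then, for $P, Q \in \Cech_!^{-,b}(X)$ and any $m \in \DZ$, I would run the commutative diagram
\begin{equation*}
  \xymatrix{
    {\Hom_{[\Cech_!^{-,b}(X)]}([m]P,Q)}
    \ar[r]^-{\mathcal{D}_\mathcal{J} \comp \mathcal{C}_!}
    \ar[d]^-{\mathcal{C}_!} &
    {\Hom_{[C(\Sh(X))]}([m]\mathcal{D}_\mathcal{J}(\mathcal{C}_!(Q)), \mathcal{D}_\mathcal{J}(\mathcal{C}_!(P)))}
    \ar[d]^-{\can} \\
    {\Hom_{D(\Sh(X))}([m]\mathcal{C}_!(P), \mathcal{C}_!(Q))}
    \ar[r]^-{\mathcal{D}_\mathcal{J}} &
    {\Hom_{D(\Sh(X))}([m]\mathcal{D}_\mathcal{J}(\mathcal{C}_!(Q)), \mathcal{D}_\mathcal{J}(\mathcal{C}_!(P)))}
  }
\end{equation*}
whose left vertical arrow is an isomorphism by Proposition~\ref{p:abstract-cech-!-bounded-above-full-faithful} (applicable since $[m]P$ is bounded above), whose lower horizontal arrow is an isomorphism by~\eqref{eq:duality-D-b-Coh}, and whose right vertical arrow $\can$ is an isomorphism because $\mathcal{D}_\mathcal{J}(\mathcal{C}_!(P))$ is h-injective. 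Hence the upper horizontal arrow is an isomorphism for every $m$, which is precisely the assertion that the displayed morphism is a quasi-isomorphism of dg modules.

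I expect no serious obstacle here: the argument is structurally identical to that of Lemma~\ref{l:duality-and-!-Cech-perf}, and the only points requiring care are (i) that $\mathcal{C}_!(P)$, for $P$ merely bounded above, is not itself h-injective, so the comparison must be made against the h-injective \emph{target} $\mathcal{D}_\mathcal{J}(\mathcal{C}_!(P))$, and (ii) that $\mathcal{C}_!(P)$ has bounded coherent cohomology, which is what allows the $D^b_\Coh(\Sh(X))$-duality~\eqref{eq:duality-D-b-Coh} rather than the $\mfPerf'$-duality to be used. Both are immediate once the earlier results are in hand.
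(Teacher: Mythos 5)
Your proposal is correct and follows exactly the route the paper takes: the paper's proof of this lemma is literally "proved in the same way as Lemma~\ref{l:duality-and-!-Cech-perf}, using Corollary~\ref{c:cplx-sheafhom-!-ext-to-inj-qcoh} instead of Corollary~\ref{c:cplx-sheafhom-flat-inj-is-inj} and the duality equivalence~\eqref{eq:duality-D-b-Coh} instead of \eqref{eq:duality-perf}," which is precisely the substitution you make, with the diagram argument you spell out being the one from Lemma~\ref{l:duality-and-!-Cech-perf}. Your added remarks (that $\mathcal{C}_!(P)$ has bounded coherent cohomology so that \eqref{eq:duality-D-b-Coh} applies, and that one compares against the h-injective target $\mathcal{D}_\mathcal{J}(\mathcal{C}_!(P))$ via Proposition~\ref{p:abstract-cech-!-bounded-above-full-faithful}) are exactly the points the paper's argument relies on.
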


\begin{proof}
  This is proved in the same way as
  Lemma~\ref{l:duality-and-!-Cech-perf},
  using 
  Corollary~\ref{c:cplx-sheafhom-!-ext-to-inj-qcoh}
  instead of
  Corollary~\ref{c:cplx-sheafhom-flat-inj-is-inj} 
  and
  the duality equivalence~\eqref{eq:duality-D-b-Coh}
  instead of \eqref{eq:duality-perf}.
\end{proof}

% \newpage

%\subsection{Proof of Theorem~\ref{t:mfPerf-abstract-Cechobj-smoo%th-vs-diagonal-sheaf-perfect}}
%\label{sec:proof-perf-smooth}

\subsection{Homological smoothness and the structure sheaf of the
  diagonal}
\label{sec:homol-smoothn-struct-II}

% \subsection{Smoothness and the structure sheaf of the diagonal}
% \label{sec:smoothn-struct-sheaf}

\begin{proof}[Proof of
  Theorem~\ref{t:mfPerf-abstract-Cechobj-smooth-vs-diagonal-sheaf-perfect}]
  Let $X$ be a Noetherian \ref{enum:GSP}-scheme over a field $k$
  and assume that $X \times X$ is also Noetherian.
  Fix an ordered finite affine open covering 
  $\mathcal{U}=(U_s)_{s \in S}$ of $X.$ 
  Then 
  $k$-smoothness of $\mfPerf(X) \sira \mfPerf'(X)$ is equivalent
  to $k$-smoothness 
  of its enhancement $\Cech_!(X)$ 
  (Proposition~\ref{p:abstract-cech-!-object-enhancement},
  Remark~\ref{rem:Perf-DbCoh-smooth}). 

  Let $E$ be a classical generator of $\mfPerf'(X)$ (which exists
  by \cite[2.1, 3.1]{bondal-vdbergh-generators}).
  We can and will assume that $E$ is a bounded complex of vector
  bundles, 
  by condition~\ref{enum:GSP}. (We can even assume that $E$ is
  a vector bundle
  by replacing $E$ by the direct sum of its
  components.) Then $E \in \Cech_!(X).$ 
  By
  \cite[Prop.~2.18]{valery-olaf-matrix-factorizations-and-motivic-measures},
%  \cite[Prop.~\ref{mm-p:pretriang-classical-generator}]{valery-olaf-matrix-factorizations-and-motivic-measures},
  $k$-smoothness of $\Cech_!(X)$ is equivalent to
  $k$-smoothness of the dg algebra
  \begin{equation*}
    A:=\End_{\Cech_!(X)}(E),
  \end{equation*}
  i.\,e.\ to the condition $A \in \per(A \otimes A^{\opp}).$
  We also know that
  $E^\cek$ is a classical generator of $\mfPerf'(X)$, by
  \eqref{eq:duality-perf}, 
  and that 
  $E^\cek \boxtimes E$ is a classical
  generator of $\mfPerf'(X \times X)$
  and a compact
  generator of 
  $D_\Qcoh(\Sh(X \times X)),$
  by \cite[Lemma~3.4.1, 3.1,
  2.1]{bondal-vdbergh-generators}.

  Let $\mathcal{O}_X \ra \mathcal{J}$ be a resolution by
  injective quasi-coherent sheaves. Note that $\mathcal{J}$
  consists of 
  injective sheaves (by
  Theorem~\ref{t:injective-in-qcoh-vs-all-OX}.\ref{enum:inj-qcoh-equal-inj-OX-that-qcoh}).
  As before we abbreviate
  $\mathcal{D}_\mathcal{J}=\sheafHom(-, \mathcal{J}).$
  Lemma~\ref{l:duality-and-!-Cech-perf} shows that
  \begin{equation}
    \label{eq:A-D-lower-shriek-qiso-dga-perf}
    \mathcal{D}_\mathcal{J} \comp \mathcal{C}_!
    \colon
    A =
    \End_{\Cech_!(X)}(E)
    \ra
    \End_{C(\Sh(X))}(\mathcal{D}_\mathcal{J}(\mathcal{C}_!(E)))^\opp
  \end{equation}
  is a quasi-isomorphism of dg algebras.
  Note that $\mathcal{C}_!(E) \ra E$ and 
  $\mathcal{D}_\mathcal{J}(E) \ra
  \mathcal{D}_\mathcal{J}(\mathcal{C}_!(E))$
  are quasi-isomorphisms. In particular, we
  find a quasi-isomorphism $i:\mathcal{C}_!(E) \ra I$ 
  % and
  % $j:\mathcal{D}_\mathcal{J}(\mathcal{C}_!(E)) \ra J$ 
  with $I$
  % and $J$ 
  a bounded below 
  complex
  of injective quasi-coherent sheaves on $X.$
  The object $\mathcal{D}_\mathcal{J}(\mathcal{C}_!(E))$ is
  already a bounded below complex of injective quasi-coherent
  sheaves, by 
  Corollary~\ref{c:cplx-sheafhom-!-ext-to-inj-qcoh}.
  Let
  $\tau\colon \mathcal{D}_\mathcal{J}(\mathcal{C}_!(E))
  \boxtimes I
  \ra T$ be a quasi-isomorphism with $T$ a
  bounded below complex of injective quasi-coherent
  sheaves. 
  Consider the object 
  \begin{equation*}
    P:=\mathcal{D}_\mathcal{J}(\mathcal{C}_!(E)) \boxtimes
    \mathcal{C}_!(E)
  \end{equation*}
  % is a classical generator of $\mfPerf'(X \times X)$ because it
  and the commutative diagram
  \begin{equation*}
    \xymatrix{
      {A^\opp \otimes A} \ar[d]^-{(\mathcal{D}_\mathcal{J} \comp
        \mathcal{C}_!) \otimes 
        \mathcal{C}_!}
      \ar[rd]^-{(\mathcal{D}_\mathcal{J} \comp
      \mathcal{C}_!) \boxtimes 
      \mathcal{C}_!} 
      \\
      {
        \leftidx{_X}{(\mathcal{D}_\mathcal{J}(\mathcal{C}_!(E)),
          \mathcal{D}_\mathcal{J}(\mathcal{C}_!(E)))}{} 
        \otimes
        \leftidx{_X}{(\mathcal{C}_!(E), \mathcal{C}_!(E))}{}
      }
      \ar[r]^-{\boxtimes}
      \ar[d]^-{\id \otimes i_*}
      &
      {\leftidx{_{X \times X}}{(P,P)}{}} 
      % \ar[r]^-{\tau_*}
      \ar[d]^-{(\id \boxtimes i)_*}
      % &
      % {\leftidx{_{X \times X}}{(P, T)}{}} 
      \ar[r]^-{(\tau \comp (\id \boxtimes i))_*}
      &
      {\leftidx{_{X \times X}}{(P, T)}{}}
      \ar@{}[d]|-{\verteq}
      \\
      {\leftidx{_X}{(\mathcal{D}_\mathcal{J}(\mathcal{C}_!(E)),
          \mathcal{D}_\mathcal{J}(\mathcal{C}_!(E)))}{}  
        \otimes 
        \leftidx{_X}{(\mathcal{C}_!(E),I)}{}}
      \ar[r]^-{\boxtimes}
      &
      {\leftidx{_{X \times X}}{(P, 
          \mathcal{D}_\mathcal{J}(\mathcal{C}_!(E)) \boxtimes
          I)}{}}  
      \ar[r]^-{\tau_*}
      &
      {\leftidx{_{X \times X}}{(P, T)}{}}
    }
  \end{equation*}
  where 
  we abbreviate
  $\leftidx{_?}{(-,-)}{}:=\Hom_{C(\Sh(?))}(-,-).$
  The left vertical composition $(\mathcal{D}_\mathcal{J} \comp \mathcal{C}_!)
  \otimes (i_* \comp \mathcal{C}_!)$ is a
  quasi-isomorphism because it is the tensor product over the
  field 
  $k$ of the quasi-isomorphism
  \eqref{eq:A-D-lower-shriek-qiso-dga-perf}
  with the quasi-isomorphism
  $i_* \comp \mathcal{C}_! \colon A \ra
  \Hom_{C(\Sh(X))}(\mathcal{C}_!(E),I)$ (use
  Proposition~\ref{p:abstract-cech-!-object-enhancement}
  and   
  the fact that the complex $I$ of sheaves is h-injective
  as a 
  bounded below complex of 
  injective 
  (quasi-coherent) sheaves
  (Theorem~\ref{t:injective-in-qcoh-vs-all-OX}.\ref{enum:inj-qcoh-equal-inj-OX-that-qcoh}
  and Remark~\ref{rem:fibrant-explained})). 
  % and Lemma~\ref{l:duality-and-!-Cech-D-b-Coh}
  % (and since $k$ is a field), and so is
  % $\id \otimes i_*.$
  The composition in the lower row is a quasi-isomorphism by
  Proposition~\ref{p:tensor-product-of-endos-vs-endos-of-boxproduct}.
  Hence the composition of the morphism 
  \begin{equation}
    \label{eq:AoppoA-EndP-perf}
    A^\opp \otimes A \xra{(\mathcal{D}_\mathcal{J} \comp
      \mathcal{C}_!) \boxtimes 
      \mathcal{C}_!} 
    \End_{C(\Sh(X \times X))}(P)
  \end{equation}
  of dg algebras
  with the upper right horizontal map $(\tau \comp (\id \boxtimes
  i))_*$ in the above diagram is a 
  quasi-isomorphism. 
  % Note that the composition 
  % of the first two maps in this row is
  % a morphism of dg algebras
  % Its composition with $(\tau \comp i \boxtimes j)_*$ is a
  % quasi-isomorphism.

  Recall the enhancement $C^\hinj_\Qcoh(\Sh(X \times X))$ of
  $D(\Qcoh(X \times X)) \sira D_\Qcoh(\Sh(X \times X))$ from
  section~\ref{sec:inject-enhanc}.
  Note that $\tau \comp (\id \boxtimes i)\colon P \ra T$ is a
  quasi-isomorphism and that $T$ is an h-injective complex of
  sheaves on the Noetherian scheme $X \times X$
  (Theorem~\ref{t:injective-in-qcoh-vs-all-OX}.\ref{enum:inj-qcoh-equal-inj-OX-that-qcoh} 
  and Remark~\ref{rem:fibrant-explained})
  and a compact
  generator of 
  $[C^\hinj_\Qcoh(\Sh(X \times X))] \sira
  D_\Qcoh(\Sh(X \times X))$
  because it is isomorphic to $E^\cek
  \boxtimes E$ (use Remark~\ref{rem:boxtimes-exact}).
  We apply
  Proposition~\ref{p:homotopy-categories-triang-via-dg-algebras}.\ref{hoI-via-B}
  to the dg subcategory $C^\hinj_\Qcoh(\Sh(X \times X))$ of
  $C(\Sh(X \times X))$ 
  and $\beta$ there the morphism
  \eqref{eq:AoppoA-EndP-perf} and obtain an equivalence
  \begin{equation*}
    F:=\Hom_{C(\Sh(X \times X))}(P,-) \colon 
    [C^\hinj_\Qcoh(\Sh(X \times X))] \sira D(A^\opp \otimes A)
  \end{equation*}
  of triangulated categories.
  The category on the left identifies with
  $D_\Qcoh(\Sh(X \times X)),$
  and $F$ induces an equivalence 
  $\mfPerf'(X \times X) \sira \per(A^\opp \otimes A)$ on the
  subcategories of compact objects.
  Recall that $A$ is k-smooth if and only if $A^\opp$ is
  $k$-smooth (see e.\,g.\
  \cite[Remark~3.11]{valery-olaf-smoothness-equivariant}).  
  We claim that $F$ maps (an h-injective lift of)
  $\Delta_*(\mathcal{O}_X)$ to (an object isomorphic to) $A^\opp.$

  Let 
  $\Delta_*(\mathcal{J}) \ra \mathcal{K}$ be a quasi-isomorphism
  with $\mathcal{K}$ a bounded below complex of injective
  quasi-coherent sheaves on $X \times X.$
  Then $\mathcal{K}$
  consists of 
  injective sheaves (by
  Theorem~\ref{t:injective-in-qcoh-vs-all-OX}.\ref{enum:inj-qcoh-equal-inj-OX-that-qcoh}).
  Let $p,$ $q \colon X \times X \ra X$ be first and second
  projection. 
  We have canonical identifications and quasi-isomorphisms of dg
  $A^\opp \otimes A$-modules
  \begin{align*}
    F(\mathcal{K}) & =
    \Hom_{C(\Sh(X \times
      X))}(\mathcal{D}_\mathcal{J}(\mathcal{C}_!(E)) \boxtimes  
    \mathcal{C}_!(E), \mathcal{K}) \\
    & = \Hom_{C(\Sh(X \times
      X))} (p^*\mathcal{D}_\mathcal{J}(\mathcal{C}_!(E)) \otimes
    q^*\mathcal{C}_!(E), \mathcal{K})\\
    & = \Hom_{C(\Sh(X \times X))}(p^*\mathcal{D}_\mathcal{J}(\mathcal{C}_!(E)),
    \sheafHom(q^*\mathcal{C}_!(E), \mathcal{K})) && \text{(by
      adjunction)}\\
    & = \Hom_{C(\Sh(X))}(\mathcal{D}_\mathcal{J}(\mathcal{C}_!(E)),
    p_*(\sheafHom(q^*\mathcal{C}_!(E), \mathcal{K}))) && \text{(by
      adjunction)}\\
    % \text{(quasi-isomorphism)}
    & 
    \la
    % \xla{\text{homotopy equiv.}} 
    \Hom_{C(\Sh(X))}(\mathcal{D}_\mathcal{J}(\mathcal{C}_!(E)),
    p_*(\sheafHom(q^*(\mathcal{C}_!(E)), \Delta_*\mathcal{J})))
    && \text{(homotopy equiv.\ by Cor.~\ref{c:obtain-homotopy-equiv})}\\ 
    & = \Hom_{C(\Sh(X))}(\mathcal{D}_\mathcal{J}(\mathcal{C}_!(E)),
    p_*\Delta_*(\sheafHom(\Delta^*q^*(\mathcal{C}_!(E)),
    \mathcal{J})))
    && \text{(by adjunction)}\\
    & = \Hom_{C(\Sh(X))}(\mathcal{D}_\mathcal{J}(\mathcal{C}_!(E)),
    \sheafHom(\mathcal{C}_!(E),
    \mathcal{J})) && \\
    & =
    \Hom_{C(\Sh(X))}(\mathcal{D}_\mathcal{J}(\mathcal{C}_!(E)),
    \mathcal{D}_\mathcal{J}(\mathcal{C}_!(E))) 
    && \text{(by definition)}\\
    % \text{(quasi-isomorphism)}
    & 
    \la
    % \xla{\text{qiso}} 
    A^\opp
    % \Hom_{\Cech_!(X)}(E, E)^\opp
    && \text{(quasi-isom.\ by 
      % Lemma~\ref{l:duality-and-!-Cech-D-b-Coh}
      \eqref{eq:A-D-lower-shriek-qiso-dga-perf}).}
    % \\
    % & = A^\opp.
  \end{align*}
  This proves our claim and shows the equivalence of 
  % the two conditions~
  \ref{enum:mfPerf-smooth} 
  and \ref{enum:diagonal-cpt}
  in
  Theorem~\ref{t:mfPerf-abstract-Cechobj-smooth-vs-diagonal-sheaf-perfect}. 

  If $X$ is in addition of finite type
  over $k,$ the equivalence of 
  % the two conditions~
  \ref{enum:diagonal-cpt} and
  \ref{enum:geometrically-smooth} follows from 
  Proposition~\ref{p:glatt-diagonaleperfekt-regular}.
  Finally, recall from
  Remark~\ref{rem:onGSP}.\ref{enum:resolution-property} that 
  any separated scheme of finite type over $k$ 
  having
  the resolution
  property satisfies
  condition~\ref{enum:GSP}.
\end{proof}

\subsection{Geometric smoothness and the structure sheaf of the
  diagonal}
\label{sec:geom-smoothn-diagonal}

% \subsection{The homological meaning of geometric smoothness}
% \label{sec:homol-mean-geom}

%\subsection{Smoothness, perfection of the diagonal sheaf, and regularity}
%\label{sec:smoothn-perf-struct}

\begin{proposition}
  \label{p:glatt-diagonaleperfekt-regular}
  Let $X$ be a separated scheme locally
  of finite type over a field $k,$ and let $\Delta\colon X \ra X
  \times X$ be the diagonal (closed) immersion.
  Consider the following conditions.
  \begin{enumerate}
  \item 
    \label{enum:glatt}
    $X$ is smooth over $k.$
  \item
    \label{enum:diagonaleperfekt}
    $\Delta_*(\mathcal{O}_X) \in \mfPerf'(X \times X).$ 
  \item 
    \label{enum:regular}
    $X$ is regular.
  \end{enumerate}
  Then \ref{enum:glatt} and 
  \ref{enum:diagonaleperfekt} are equivalent, and they imply
  \ref{enum:regular}. If the field $k$ is perfect, these
  three conditions are equivalent.
\end{proposition}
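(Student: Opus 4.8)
The plan is to prove Proposition~\ref{p:glatt-diagonaleperfekt-regular} by establishing the chain of implications \ref{enum:glatt} $\Rightarrow$ \ref{enum:diagonaleperfekt} $\Rightarrow$ \ref{enum:regular}, then \ref{enum:diagonaleperfekt} $\Rightarrow$ \ref{enum:glatt}, and finally closing the loop \ref{enum:regular} $\Rightarrow$ \ref{enum:glatt} under the hypothesis that $k$ is perfect. Since everything is local on $X\times X$ and the diagonal is a closed immersion, the statement reduces to a purely local assertion about Noetherian local $k$-algebras; I would pick affine opens $U=\Spec R$ of $X$ and consider $\Delta|_U\colon \Spec R \to \Spec(R\otimes R)$, which corresponds to the multiplication map $R\otimes R \to R$. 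Perfectness of $\Delta_*\mathcal{O}_X$ as an object of $D(\Sh(X\times X))$ is then equivalent (by Example~\ref{exam:perfect-on-affine}, since restriction of perfect complexes to affine opens gives complexes of projectives) to $R$ being a perfect $R\otimes R$-module, i.e.\ having a finite resolution by finitely generated projective $(R\otimes R)$-modules.

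First I would treat \ref{enum:glatt} $\Leftrightarrow$ \ref{enum:diagonaleperfekt}. If $X$ is smooth over $k$ then the diagonal $\Delta\colon X \to X\times X$ is a regular (local complete intersection) immersion, hence $\Delta_*\mathcal{O}_X$ is perfect on $X\times X$: locally the Koszul complex on a regular sequence cutting out the diagonal gives a finite free resolution. Conversely, if $\Delta_*\mathcal{O}_X$ is perfect, then localizing at a closed point $x$ and its diagonal image, $R=\mathcal{O}_{X,x}$ has finite projective dimension over $R\otimes_k R$ (after a further localization; here one uses that $X$ is of finite type so closed points are dense and $\kappa(x)$ is finite over $k$). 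By a standard criterion (Hochschild dimension; cf.\ the flatness/smoothness criterion for algebras of finite type over a field), finiteness of $\mathrm{pd}_{R\otimes_k R}(R)$ forces $R$ to be smooth over $k$. That \ref{enum:glatt} implies \ref{enum:regular} is immediate since smooth morphisms to $\Spec k$ have regular source.

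The remaining implication \ref{enum:regular} $\Rightarrow$ \ref{enum:glatt} in the perfect-field case is the heart of the matter, and I expect it to be the main obstacle. Over a perfect field a Noetherian local ring which is regular and essentially of finite type is automatically smooth; the classical reference is that for $R$ of finite type over a perfect field $k$, regularity of $R$ at a prime $\primp$ is equivalent to smoothness of $R$ over $k$ at $\primp$ (this fails over imperfect fields, which is why perfectness is needed). The cleanest route is: regularity of $X$ gives geometric regularity of every local ring because $k$ perfect implies $X\times_k \ol{k}$ is still regular (regularity is preserved under the perfect—hence separable—base extension to $\ol k$), and a scheme locally of finite type over $k$ is smooth over $k$ iff it is geometrically regular. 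Alternatively one invokes the Jacobian criterion together with the fact that over a perfect field the singular locus defined by the Jacobian ideal coincides with the non-regular locus.

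Assembling these pieces, I would write: \ref{enum:glatt} $\Leftrightarrow$ \ref{enum:diagonaleperfekt} unconditionally (Koszul resolution one direction, Hochschild-dimension criterion the other), \ref{enum:glatt} $\Rightarrow$ \ref{enum:regular} trivially, and for $k$ perfect \ref{enum:regular} $\Rightarrow$ \ref{enum:glatt} via the equivalence of regularity and geometric regularity under perfect base change, yielding smoothness. The technical care goes into the reduction to the local affine situation and into quoting the precise form of the smoothness-via-Hochschild-dimension statement; since $X$ is only assumed locally of finite type, I would be careful that all arguments are local and that "perfect complex" is tested affine-locally, as guaranteed by Example~\ref{exam:perfect-on-affine}.
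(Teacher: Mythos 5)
Your proposal is correct in substance and follows the same skeleton as the paper: both reduce, via locality of all three conditions and Example~\ref{exam:perfect-on-affine}, to the assertion that for a finitely generated $k$-algebra $R$ the conditions ``$R$ smooth over $k$'' and ``$R\in\per(R\otimes R)$'' are equivalent and imply regularity, with the perfect-field converse being the classical coincidence of regularity and $k$-smoothness over a perfect field (the paper simply cites \cite[Cor.~3.33]{Liu}; your geometric-regularity argument is the same fact). The differences lie in the two nontrivial implications. For smoothness implying perfectness of the diagonal you resolve $\Delta_*\mathcal{O}_X$ by Koszul complexes of the regular immersion $\Delta$; the paper instead notes that $X\times X$ is smooth, hence $R\otimes R$ is a regular Noetherian ring of finite global dimension, so the diagonal bimodule automatically admits a finite projective resolution --- both arguments are fine. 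For the converse you invoke a ``Hochschild-dimension'' smoothness criterion as a black box; the paper makes this self-contained in Proposition~\ref{p:affine-glatt-diagonaleperfekt-regular}: from a finite projective $R\otimes R$-resolution $P\to R$ one gets, for every $R$-module $M$, the finite projective resolution $M\otimes_R P\to M$, so $R$ has finite global dimension and is regular; applying this after the flat base change $-\otimes_k\ol{k}$ gives regularity of $\ol{R}$ and hence smoothness of $R$ over $k$. This elementary route also proves perfectness-implies-regularity directly over an arbitrary field, whereas in your layout that implication is only reached through the cited criterion, so you should either cite it precisely (it follows, e.g., from the Avramov--Vigu\'e-Poirrier theorem the paper uses for its Hochschild condition) or reproduce the paper's tensor argument. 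One small imprecision to fix: $\mathcal{O}_{X,x}\otimes_k\mathcal{O}_{X,x}$ is not the local ring of $X\times X$ at the diagonal point, so the sentence about localizing at a closed point should be dropped in favour of the global affine reduction you already set up, namely that perfectness of $\Delta_*\mathcal{O}_U$ on $U\times U$ for $U=\Spec R$ affine open means exactly $R\in\per(R\otimes R)$.
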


\begin{proof}
  Conditions~\ref{enum:glatt} and \ref{enum:regular}
  are obviously local on $X,$ and the same is true for 
  condition~\ref{enum:diagonaleperfekt}: if $(U_i)_{i \in I}$ is an
  open covering of $X,$ then the sets $(U_i \times U_i)_{i \in
    I}$ together 
  with $(X \times X)  \setminus \Delta(X)$
  form
  an open covering
  of $X \times X,$ and the restriction of
  $\Delta_*(\mathcal{O}_X)$ to 
  $X \times X \setminus \Delta(X)$ is zero.

  Hence we can assume that $X=\Spec R$ for $R$ a finitely
  generated $k$-algebra. Then the condition 
  $\Delta_*(\mathcal{O}_X) \in
  \mfPerf'(X \times X)$ means precisely that $R \in \per(R
  \otimes R),$ by Example~\ref{exam:perfect-on-affine}.
  Now all claims follow from the following
  Proposition~\ref{p:affine-glatt-diagonaleperfekt-regular}. 
\end{proof}

\begin{proposition}
  \label{p:affine-glatt-diagonaleperfekt-regular}
  Let $R$ be a finitely generated algebra over a field $k$ and
  put $X=\Spec R.$ 
  Consider the following conditions.
  \begin{enumerate}[label=(\roman*)]
  \item 
    \label{enum:affine-glatt}
    $X$ is smooth over $k.$
  \item
    \label{enum:affine-diagonaleperfekt}
    $R \in \per(R \otimes R).$
%    \footnote{
%      Im affinen Fall k\"onnte man von endlicher projektiver
%      Dimension reden.
%    }
  \item
    \label{enum:affine-projdim-finite}
    $\projdim_{R \otimes R} R < \infty.$
  \item 
    \label{enum:affine-HH}
    There are a positive even integer $i$ and a positive odd
    integer $j$ such that $HH_i(R,R)=0$ and $HH_j(R,R)=0.$
    Here $HH_*(R,R)$ denotes the Hochschild homology of the
    $k$-algebra $R$ with
    values in $R.$ 
  \item 
    \label{enum:affine-regular}
    $R$ is regular.
  \end{enumerate}
  Then the four conditions \ref{enum:affine-glatt},
  \ref{enum:affine-diagonaleperfekt}, 
  \ref{enum:affine-projdim-finite} and
  \ref{enum:affine-HH} 
  are equivalent, and they imply
  \ref{enum:affine-regular}. If the field $k$ is perfect, all
  five conditions are equivalent.
\end{proposition}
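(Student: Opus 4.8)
The plan is to establish the equivalences via the cycle \ref{enum:affine-glatt} $\Rightarrow$ \ref{enum:affine-projdim-finite} $\Rightarrow$ \ref{enum:affine-HH} $\Rightarrow$ \ref{enum:affine-glatt}, together with the essentially formal equivalence \ref{enum:affine-diagonaleperfekt} $\Leftrightarrow$ \ref{enum:affine-projdim-finite}, and then to read off \ref{enum:affine-regular} from \ref{enum:affine-glatt}, the converse over a perfect field being a standard fact. Throughout I write $R^e:=R\otimes R$; since $R$ is commutative this is the enveloping algebra, it is again a finitely generated $k$-algebra (hence Noetherian of finite Krull dimension), and $R$ is a finitely generated $R^e$-module via the multiplication map $R^e\to R$.

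First, \ref{enum:affine-diagonaleperfekt} $\Leftrightarrow$ \ref{enum:affine-projdim-finite} is the standard fact that, over a Noetherian ring, a finitely generated module is perfect (i.\,e.\ isomorphic in the derived category to a bounded complex of finitely generated projectives) precisely when it has finite projective dimension; one applies this to the $R^e$-module $R$. For \ref{enum:affine-glatt} $\Rightarrow$ \ref{enum:affine-projdim-finite}: if $X$ is smooth over $k$ then so is $X\times X=\Spec R^e$, hence $R^e$ is regular; being regular of finite Krull dimension it has finite global dimension by the Auslander--Buchsbaum--Serre theorem (cf.\ the proof of Proposition~\ref{p:regular-vs-singularity-cat}), so every finitely generated $R^e$-module, in particular $R$, has finite projective dimension. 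For \ref{enum:affine-projdim-finite} $\Rightarrow$ \ref{enum:affine-HH}: since $k$ is a field there is a natural identification $HH_n(R,R)\cong\Tor^{R^e}_n(R,R)$, and the right-hand side vanishes for every $n>\projdim_{R^e}R$; choosing a positive even integer $i$ and a positive odd integer $j$, both larger than $\projdim_{R^e}R$, yields $HH_i(R,R)=HH_j(R,R)=0$.

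The remaining implication \ref{enum:affine-HH} $\Rightarrow$ \ref{enum:affine-glatt} is the heart of the matter and the only place where a nontrivial external input enters: it is the rigidity theorem for Hochschild homology of a commutative algebra essentially of finite type over a field, to the effect that such an $R$ is smooth over $k$ as soon as $HH_i(R,R)=0$ for a single positive even $i$ and $HH_j(R,R)=0$ for a single positive odd $j$ --- equivalently, as soon as $HH_n(R,R)=0$ for all $n\gg 0$. (This goes back to Avramov and Vigué-Poirrier; alternative proofs proceed through André--Quillen homology and the homotopy Lie algebra of $R$ over $k$.) I expect this to be the only genuine obstacle: everything else is bookkeeping, whereas here one must check that the rigidity statement is available under exactly the hypotheses at hand ($R$ Noetherian and finitely generated over the field $k$) and cite it correctly.

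It remains to treat \ref{enum:affine-regular}. Any scheme smooth over a field has regular local rings, so \ref{enum:affine-glatt} $\Rightarrow$ \ref{enum:affine-regular}, and hence \ref{enum:affine-regular} is implied by each of the four equivalent conditions. If in addition $k$ is perfect, then a scheme of finite type over $k$ is regular if and only if it is smooth over $k$ (regularity being geometrically stable over a perfect field), which gives \ref{enum:affine-regular} $\Rightarrow$ \ref{enum:affine-glatt} and completes the proof.
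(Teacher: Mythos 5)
Your proof is correct, but its logical architecture differs from the paper's in a way worth noting. You close the loop through Hochschild homology, via \ref{enum:affine-glatt} $\Rightarrow$ \ref{enum:affine-projdim-finite} $\Rightarrow$ \ref{enum:affine-HH} $\Rightarrow$ \ref{enum:affine-glatt}, so the Avramov--Vigu\'e-Poirrier rigidity theorem becomes load-bearing for the entire equivalence of \ref{enum:affine-glatt}, \ref{enum:affine-diagonaleperfekt}, \ref{enum:affine-projdim-finite}; and you obtain \ref{enum:affine-regular} simply from \ref{enum:affine-glatt} (smooth $\Rightarrow$ regular). The paper instead treats \ref{enum:affine-HH} as a curiosity kept outside the main cycle: it proves \ref{enum:affine-projdim-finite} $\Rightarrow$ \ref{enum:affine-regular} directly, by tensoring a finite projective $R\otimes R$-resolution $P \to R$ with an arbitrary $R$-module $M$ (both $P$ and $R$ being complexes of flat $R$-modules, $M\otimes_R P \to M$ is a finite projective resolution, bounding $\gldim R$), and then proves \ref{enum:affine-projdim-finite} $\Rightarrow$ \ref{enum:affine-glatt} by base change to an algebraic closure $\ol{k}$, applying that same implication to $\ol{R}$ and descending smoothness from regularity of $\ol{R}$; AVP is invoked only for \ref{enum:affine-HH} $\Rightarrow$ \ref{enum:affine-glatt}. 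Your route is shorter and the bookkeeping steps (the Noetherian ``perfect $=$ finite projective dimension for finitely generated modules'' identification, $HH_n(R,R)\cong\Tor^{R\otimes R}_n(R,R)$ over a field, smoothness of $X\times X$) are all fine, but it buys this brevity at the cost of making a deep external theorem indispensable even for the equivalence of the first three conditions, whereas the paper's argument for those is elementary and self-contained, with the direct \ref{enum:affine-projdim-finite} $\Rightarrow$ \ref{enum:affine-regular} argument being the small piece of genuine content you bypass.
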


We give condition~\ref{enum:affine-HH} just for curiosity and
treat it separately in the following proof.

\begin{proof}
  \ref{enum:affine-glatt} $\Rightarrow$
  \ref{enum:affine-diagonaleperfekt}: Then $X \times X$ is smooth over
  $k$ and in particular regular
  (\cite[Cor.~6.32]{goertz-wedhorn-AGI}. 
  So $R \otimes R$
  is regular or, equivalently, of 
  finite global dimension. This means that the diagonal bimodule
  $R$ has a finite resolution by finitely generated projective $R
  \otimes R$-modules (since $R \otimes R$ is Noetherian).

  \ref{enum:affine-diagonaleperfekt} $\Rightarrow$
  \ref{enum:affine-projdim-finite}: trivial.

  \ref{enum:affine-projdim-finite} $\Rightarrow$ \ref{enum:affine-regular}:
  Let $P \ra R$
  be a finite resolution by
  projective $R \otimes R$-modules.
  Let $M$ be any $R$-module. Then $M \otimes_R P \ra M \otimes_R R
  = M$ is a
  quasi-isomorphism
  since both $P$ and $R$ consist of flat left $R$-modules (they
  are h-flat as dg left $R$-modules). 
% consider the commutative square obtained by
%  tensoring a projective resolution $Q \ra M$ over $R$ by $P \ra R.$
  Each $M \otimes_R P^i$ is a projective $R$-module
  as a direct summand of a direct sum of
  modules of the form $M \otimes_R (R \otimes R) = M \otimes R.$
  Hence $M \otimes_R P \ra M$ is a finite projective resolution,
  so the projective dimension of $M$ over $R$ is smaller than the
  length of the resolution $P.$
  Hence $R$ has finite global dimension and is regular.

  \ref{enum:affine-projdim-finite} $\Rightarrow$
  \ref{enum:affine-glatt}:
  Let $\ol{k}$ be an
  algebraic closure of $k$ and put $\ol{R}=R \otimes \ol{k}.$
  Applying $(-\otimes \ol{k})$ to a finite projective resolution
  of $R$ 
  shows that $\projdim_{\ol{R} \otimes_{\ol{k}} \ol{R}} \ol{R} < \infty.$
  We already know that
  \ref{enum:affine-projdim-finite} implies
  \ref{enum:affine-regular}. Hence
  $\ol{R}$ is regular, so
  $R$ is smooth over $k$ (by \cite[Cor.~6.32]{goertz-wedhorn-AGI}).  
  
%  \ref{enum:affine-glatt} $\Rightarrow$ \ref{enum:affine-regular}: By
%  \cite[Cor.~6.32]{goertz-wedhorn-AGI}.  

% \ref{enum:affine-diagonaleperfekt} $\Rightarrow$ \ref{enum:affine-regular}:
% %  Hence $\Delta_*(\mathcal{O}_X) \in \mfPerf(X \times X)$ means
%   We know $R \in \per(R \otimes R),$ i.\,e.\ $R$ is
%   (homologically) smooth over $k.$
%   Then $R$ has finite global dimension by\footnote{ 
%     Assume $R \in \per(R \otimes R).$ Hence $R$ has a finite
%     resolution $\leftidx{_R}{P}{_R} \ra R$ by finitely generated
%     projective $R 
%     \otimes R$-modules. Let $M$ be a (right) $R$-module and let
%     $Q_R \ra M$ be a projective resolution. Then $M=M \otimes_R
%     \leftidx{_R}{R}{_R}=M \otimes_R^\bL \leftidx{_R}{R}{_R}= Q_R
%     \otimes_R \leftidx{_R}{R}{_R},$ 

%     footnote{ Kann auch direkt
%       $M=M \otimes_R \leftidx{_R}{R}{_R}=M \otimes_R^\bL
%       \leftidx{_R}{R}{_R}= M \otimes_R P$ schreiben.  } 
    
%     and both
%     morphisms $Q_R \otimes_R \leftidx{_R}{R}{_R} \la Q_R
%     \otimes_R \leftidx{_R}{P}{_R} 
%     \ra M \otimes_R \leftidx{_R}{P}{_R}$ are quasi-isomorphisms. 
%     Hence $M \cong M \otimes_R \leftidx{_R}{P}{_R}$ in $D(R).$
%     Note that the
%     finitely many components of $M \otimes_R \leftidx{_R}{P}{_R}$
%     are direct summands of finite direct sums of projective
%     $R$-modules of 
%     the form $M \otimes_R R \otimes R = M \otimes R.$ This
%     implies that $M$ has finite projective dimension.

%     have global bound coming form "amplitude" of $P.$
%   }
%   \cite[Lemma~3.6(b)]{lunts-categorical-resolution}
%   and hence is regular.

  \ref{enum:affine-projdim-finite} $\Rightarrow$
  \ref{enum:affine-HH}: trivial.

  \ref{enum:affine-HH} $\Rightarrow$
  \ref{enum:affine-glatt}:
  %By assumption we have $R \in \per(R \otimes R).$
  %This implies that the Hochschild homology
  %$HH(R)=\Tor^{R \otimes R}(R,R)$ lives in only finitely many
  %degrees. 
  This is the main theorem of
  \cite{avramov-vigue-poirrier-hochschild-homology-criteria-for-smoothness}.

  Finally, if $k$ is perfect, it is well known that regularity
  and $k$-smoothness are equivalent (see e.\,g.\
  \cite[Cor.~3.33]{Liu}). 
\end{proof}

\subsection{Smoothness of bounded derived categories of coherent sheaves}
\label{sec:smoothn-bound-deriv}

We will need the following two results for the proof of 
Theorem~\ref{t:D-b-Coh-smooth}.

\begin{theorem}
  \label{t:generator-of-DbCoh-finite-type-any-field}
  Let $X$ be 
  a separated scheme of finite type over a
  field.
  % $k.$
  % or a
  % a Noetherian separated scheme over a perfect field
  % $k.$ Nicht so klar, siehe 6.32 in \cite{goertz-wedhorn-AGI}
  % wo finite type angenommen.
  Then the category $D^b(\Coh(X))$ has a classical generator.
\end{theorem}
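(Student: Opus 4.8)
The goal is to produce a single object $\mathcal{G}\in D^b(\Coh(X))$ whose thick closure is all of $D^b(\Coh(X))$. Since, by intelligent truncation, $D^b(\Coh(X))$ is generated by $\Coh(X)$ under shifts and cones, it suffices to arrange that every coherent sheaf on $X$ lies in the thick triangulated subcategory $\langle\mathcal{G}\rangle$ generated by $\mathcal{G}$. Note that $X$ is Noetherian, being of finite type over a field.

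\emph{The affine case.} Suppose first $V=\Spec(R)$ with $R$ a finitely generated $k$-algebra. A surjection $k[x_1,\dots,x_N]\twoheadrightarrow R$ realises $V$ as a closed subscheme of $\mathbb{A}^N_k$, hence as a locally closed subscheme of $\mathbb{P}^N_k$; let $\bar V\subseteq\mathbb{P}^N_k$ be its scheme-theoretic closure, a projective $k$-scheme, and $\jmath\colon V\hookrightarrow\bar V$ the resulting open immersion. Then $D^b(\Coh(\bar V))$ has a classical generator: this is the classical resolution-of-the-diagonal argument on $\mathbb{P}^N_k$ (restrict Beilinson's resolution of $\mathcal{O}_\Delta$ on $\mathbb{P}^N_k\times\mathbb{P}^N_k$ to $\bar V\times\bar V$; cf.\ \cite{bondal-vdbergh-generators} and the references therein). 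Since $\bar V$ is Noetherian, every coherent sheaf on $V$ extends to a coherent sheaf on $\bar V$, so $\jmath^*\colon D^b(\Coh(\bar V))\to D^b(\Coh(V))$ is exact and essentially surjective; as the preimage of a thick subcategory under an exact functor is thick, $\jmath^*$ sends a classical generator to a classical generator. Finally, replacing a complex $C$ by $\bigoplus_i\coho^i(C)$ does not shrink the generated thick subcategory, so we may take the classical generator of $D^b(\Coh(V))$ to be a single coherent sheaf.

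\emph{The general case: Noetherian induction on closed subschemes of $X$.} I claim that for every reduced closed subscheme $Y\subseteq X$ the category $D^b(\Coh(Y))$ has a classical generator; the case $Y=X$ is the theorem. For $Y=\emptyset$ this is trivial. In general pick a nonempty affine open $V\subseteq Y$ and let $Z\subseteq Y$ be its complement with the reduced structure, a reduced closed subscheme of $X$ properly contained in $Y$; write $\iota\colon Z\hookrightarrow Y$. By the affine case $D^b(\Coh(V))$ has a classical generator $G_V\in\Coh(V)$; extend it to a coherent sheaf $\bar G_V$ on $Y$. By the inductive hypothesis $D^b(\Coh(Z))$ has a classical generator $G_Z$. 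I claim $\mathcal{G}:=\bar G_V\oplus\iota_*G_Z$ is a classical generator of $D^b(\Coh(Y))$. Let $\mathcal{D}_Z\subseteq D^b(\Coh(Y))$ be the full triangulated subcategory of complexes whose cohomology is supported on $Z$. Because $Y$ is Noetherian and quasi-compact, any $F\in\Coh(Y)$ supported on $Z$ satisfies $\mathcal{I}_Z^{\,n}F=0$ for $n\gg 0$, so $F$ has a finite filtration with subquotients push-forwards of coherent sheaves on $Z$; together with truncation this gives $\mathcal{D}_Z=\langle\iota_*\Coh(Z)\rangle$, and since $\iota_*$ is exact and $\langle G_Z\rangle=D^b(\Coh(Z))$ we get $\langle\iota_*G_Z\rangle=\mathcal{D}_Z$. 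Hence $\mathcal{D}_Z\subseteq\langle\mathcal{G}\rangle$. Now invoke the localization theorem for bounded derived categories of coherent sheaves on a Noetherian scheme: restriction $Q=(-)|_V\colon D^b(\Coh(Y))\to D^b(\Coh(V))$ induces an equivalence $D^b(\Coh(Y))/\mathcal{D}_Z\sira D^b(\Coh(V))$. The exact functor $Q$ carries the thick subcategory $\langle\mathcal{G}\rangle$ onto a triangulated subcategory of $D^b(\Coh(V))$ containing $Q(\bar G_V)=G_V$, whose thick closure is therefore all of $D^b(\Coh(V))$; combined with $\ker Q=\mathcal{D}_Z\subseteq\langle\mathcal{G}\rangle$ and closure of $\langle\mathcal{G}\rangle$ under direct summands, a short diagram chase with roofs in the Verdier quotient yields $\langle\mathcal{G}\rangle=Q^{-1}(D^b(\Coh(V)))=D^b(\Coh(Y))$.

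The part I expect to be the real obstacle is the localization input: that restriction realises $D^b(\Coh(V))$ as the Verdier quotient $D^b(\Coh(Y))/\mathcal{D}_Z$. Essential surjectivity is the routine extension-of-coherent-sheaves statement, but full faithfulness of the induced functor --- that every morphism of bounded complexes of coherent sheaves over $V$ is realised, after modification by a complex with cohomology supported on $Z$, by a morphism over $Y$ --- is the substantive point; if one wishes to avoid quoting it, the alternative is Rouquier's hands-on argument that a whole ``generation sequence'' for $F|_V$ from $G_V$ can be transported step by step to $Y$ at the cost of objects supported on $Z$, which then get absorbed by $\iota_*G_Z$. Two further points needing care, but routine, are the identification $\mathcal{D}_Z=\langle\iota_*\Coh(Z)\rangle$ via the $\mathcal{I}_Z$-adic filtration and the reduction of the affine case to the projective one through Beilinson's resolution of the diagonal.
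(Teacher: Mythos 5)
There is a genuine gap, and it sits exactly where the real content of the theorem lies: your affine base case. The ``restrict Beilinson's resolution of the diagonal to $\bar V\times\bar V$'' argument cannot produce a classical generator of $D^b(\Coh(\bar V))$ when $\bar V$ is singular. What Beilinson's resolution gives (for a closed subscheme $\bar V\subset\DP^N_k$) is that $\bigoplus_{j=0}^N\mathcal{O}_{\bar V}(-j)$ classically generates $\mfPerf(\bar V)$; since these are perfect complexes and $\mfPerf(\bar V)$ is thick, the thick subcategory they generate can never exceed $\mfPerf(\bar V)$, which is a \emph{proper} subcategory of $D^b(\Coh(\bar V))$ whenever $\bar V$ is not regular (Proposition~\ref{p:regular-vs-singularity-cat}) --- and the projective closure of an arbitrary affine $k$-scheme of finite type is singular in general. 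Moreover the restricted complex is quasi-isomorphic to the derived restriction of $\mathcal{O}_{\Delta_{\DP^N}}$, not to $\mathcal{O}_{\Delta_{\bar V}}$, so there is no Fourier--Mukai identity functor to run the usual argument with. In effect your affine case (equivalently, ``$D^b(\operatorname{mod} R)$ has a classical generator for every finitely generated $k$-algebra $R$'') is not easier than the theorem itself; assuming it via this sketch makes the argument circular.

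The surrounding scaffolding is fine and is in fact the same shell as the argument the paper invokes (Lunts, Prop.~6.8): Noetherian/dimension induction, the identification of the supported subcategory via the $\mathcal{I}_Z$-adic filtration, the localization $D^b(\Coh(Y))/\mathcal{D}_Z\simeq D^b(\Coh(V))$, and pulling back thick subcategories (you do also need the small extra step reducing $X$ to $X_{\mathrm{red}}$ by the same nilpotent-filtration trick, since your induction only covers reduced closed subschemes). The missing idea is the choice of the open stratum: it must be the \emph{regular} locus $U$ of (the reduced scheme) $Y$, not an arbitrary affine open. For $U$ regular of finite dimension one has $D^b(\Coh(U))=\mfPerf(U)$ (Proposition~\ref{p:regular-vs-singularity-cat}), and $\mfPerf(U)$ has a classical generator by Bondal--Van den Bergh; this is the legitimate base input. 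The only subtlety over a non-perfect field --- and the paper's sole addition to Lunts' proof --- is that one cannot use the smooth locus, but the regular locus is still open and dense because schemes locally of finite type over a field are excellent (\cite[Rem.~6.25.(4)]{goertz-wedhorn-AGI}). With that replacement your induction goes through; without it, the base case is unproven.
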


\begin{proof}
  The proof of
  \cite[Prop.~6.8]{lunts-categorical-resolution}
  % \cite[, Prop.~6.8]{lunts-categorical-resolution}
  (where the field is assumed to be perfect)
  also works over an arbitrary field because
  the 
  regular locus of 
  a scheme that is locally of finite type
  over a field is open,
  by \cite[Rem.~6.25.(4)]{goertz-wedhorn-AGI}.
\end{proof}

\begin{theorem}
  \label{t:generator-of-DbCoh-product}
  Let $X$ and $Y$ be separated schemes of finite type over a
  perfect 
  field $k.$ 
  If
  $S$ and $T$ are classical generators of $D^b(\Coh(X))$
  and $D^b(\Coh(Y)),$ respectively,
  then $S \boxtimes T$ is a
  classical generator of $D^b(\Coh(X \times Y)).$
\end{theorem}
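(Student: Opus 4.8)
The plan is to reduce the statement to a statement about which objects generate, using the key fact that over a \emph{perfect} field $k$ the product $X \times Y$ of two finite-type separated schemes is again separated of finite type, so that $D^b(\Coh(X \times Y))$ has a classical generator by Theorem~\ref{t:generator-of-DbCoh-finite-type-any-field}; call one such generator $U$. It then suffices to show that $U$ lies in the thick subcategory $\langle S \boxtimes T\rangle$ generated by $S \boxtimes T$, and conversely (which is trivial) that $S \boxtimes T \in D^b(\Coh(X\times Y))$. Boundedness and coherence of $S \boxtimes T$ are clear since $\boxtimes$ is exact and computed naively (Lemma~\ref{l:boxtimes-exact}, Remark~\ref{rem:boxtimes-exact}) and $X\times Y$ is Noetherian. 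The real content is: the thick subcategory generated by $S\boxtimes T$ contains all of $D^b(\Coh(X\times Y))$.

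First I would record the elementary properties of $\langle S \boxtimes T\rangle$: for fixed $A \in D^b(\Coh(X))$ the assignment $B \mapsto A \boxtimes B$ is a triangulated functor $D^b(\Coh(Y)) \to D^b(\Coh(X\times Y))$ commuting with direct summands, hence if $B \in \langle T\rangle = D^b(\Coh(Y))$ then $A\boxtimes B \in \langle A \boxtimes T\rangle$; symmetrically in the first variable. Combining, $A \boxtimes B \in \langle S \boxtimes T\rangle$ for all $A \in D^b(\Coh(X))$, $B \in D^b(\Coh(Y))$; in particular every object of the form $F \boxtimes G$ with $F \in \Coh(X)$, $G \in \Coh(Y)$ lies in $\langle S \boxtimes T\rangle$. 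Next, by intelligent truncation it is enough to show $\mathcal{H} \in \langle S \boxtimes T\rangle$ for every single coherent sheaf $\mathcal{H}$ on $X \times Y$. The plan here is a Noetherian-induction / dévissage argument on the support of $\mathcal{H}$: using that coherent sheaves on $X\times Y$ can be filtered so that the subquotients are pushforwards from integral closed subschemes, and that on a suitable open dense subset of such an integral closed subscheme a coherent sheaf becomes, up to the relevant thick subcategory, built from ``box'' sheaves $F\boxtimes G$ — one reduces to showing the structure sheaf $\mathcal{O}_{X\times Y}$, and more generally $\mathcal{O}_{X'\times Y'}$ for closed subschemes, lies in the thick span of boxes, which follows from $\mathcal{O}_{X\times Y} = \mathcal{O}_X \boxtimes \mathcal{O}_Y$ together with the analogous reduction for the factors and the fact that closed immersions $X'\hookrightarrow X$ give $X'\times Y' \hookrightarrow X\times Y$. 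The cleanest packaging is: let $\mathcal{C} \subseteq D^b(\Coh(X\times Y))$ be the thick subcategory generated by all $F \boxtimes G$ with $F,G$ coherent; show $\mathcal{C} = D^b(\Coh(X\times Y))$ by Noetherian induction on closed subsets $Z \subseteq X \times Y$ with $Z = \Supp(\mathcal{H})$, and then observe $\mathcal{C} \subseteq \langle S\boxtimes T\rangle$ by the first paragraph.

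The main obstacle is the geometric input in the induction step: showing that a coherent sheaf supported on an integral closed subscheme $W \subseteq X\times Y$ is, generically on $W$ and up to the thick subcategory $\mathcal{C}$, expressible via external products from $X$ and $Y$. This is where perfectness of $k$ and finite type are used to keep everything Noetherian and to invoke openness of the regular locus, as in Theorem~\ref{t:generator-of-DbCoh-finite-type-any-field}. One way to handle it: pick a closed point $(x,y)$ in the regular locus of a minimal-support counterexample, use that $\mathcal{O}_{X\times Y,(x,y)}$ is regular so its residue field $\kappa(x)\otimes_k\kappa(y)$ (a finite product of fields since $k$ is perfect) has a finite $\mathcal{O}_{X\times Y, (x,y)}$-projective resolution, spread this out to a resolution of $i_*\kappa$ on a neighborhood by bundles, and recognize those skyscraper-type sheaves as summands of $F\boxtimes G$'s; then a standard devissage reduces an arbitrary $\mathcal{H}$ to such skyscrapers plus sheaves with strictly smaller support, closing the induction. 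I would present the argument in this order — reduce to generation statement, prove the box-product properties of $\mathcal{C}$, then the Noetherian induction with the residue-field resolution as its engine.
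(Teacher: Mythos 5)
Your second paragraph --- that $A\boxtimes B\in\thick(S\boxtimes T)$ for all $A\in D^b(\Coh(X))$ and $B\in D^b(\Coh(Y))$, because $(-)\boxtimes B$ and $A\boxtimes(-)$ are triangulated and commute with summands --- is correct, and it is in fact essentially all that the paper itself proves: the paper quotes the proof of \cite[Thm.~6.3]{lunts-categorical-resolution} for the existence of \emph{particular} classical generators $S',T'$ such that $S'\boxtimes T'$ classically generates $D^b(\Coh(X\times Y))$, and then replaces $S'$ by $S$ and $T'$ by $T$ by exactly your thick-subcategory argument. What the rest of your plan attempts is a from-scratch proof of that quoted input, and this is where there is a genuine gap.

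Two steps of your dévissage fail. First, the reduction to showing that $\mathcal{O}_{X'\times Y'}$ for closed subschemes $X'\subseteq X$, $Y'\subseteq Y$ lies in the thick span of boxes is not available: an integral closed subscheme $W\subseteq X\times Y$ is in general not of the form $X'\times Y'$ (the diagonal in $X\times X$ is the basic example), and coherent sheaves supported on such non-product $W$ --- e.g.\ $\Delta_*\mathcal{O}_X$ --- are precisely the hard case; nothing in your filtration by integral supports produces them from external products. Second, the engine of your induction, ``a standard devissage reduces an arbitrary $\mathcal{H}$ to such skyscrapers plus sheaves with strictly smaller support,'' does not exist: a thick subcategory is closed only under finitely many extensions, and a coherent sheaf with positive-dimensional irreducible support is not a finite extension of skyscrapers and sheaves of strictly smaller support. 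Your residue-field observation does show that a skyscraper at a closed point is a direct summand of the box $\kappa(x)\boxtimes\kappa(y)$ (this uses perfectness of $k$, and needs no regularity of $\mathcal{O}_{X\times Y,(x,y)}$ --- which may in any case fail at every point of the support you are considering), but this gives no access to positive-dimensional supports. The actual argument behind \cite[Thm.~6.3]{lunts-categorical-resolution} runs an induction on dimension in which, for each integral closed $W\subseteq X\times Y$, one constructs from box products an object together with a morphism to (or from) the given sheaf which is an isomorphism over a dense open (smooth, using perfectness of $k$) subset of $W$, so that the cone is supported in strictly smaller dimension; producing such a comparison object is the real content of the theorem, and it is missing from your plan.
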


\begin{remark}
  The assumption that $k$ is perfect is necessary:
  Let $k \subset K$ be a finite purely inseparable field
  extension (e.\,g.\ $\DF_2(t^2) \subset \DF_2(t)$) 
  and consider $X=Y=\Spec K.$
  Since $K \otimes K$ is not reduced (i.\,e.\ it contains a
  non-zero nilpotent element), $X
  \times X$ is not regular and $\mfPerf(X \times X)
  \subsetneq D^b(\Coh(X \times X)),$
  by Proposition~\ref{p:regular-vs-singularity-cat}.
  Consider the classical generators $E=F=K$ 
  of $D^b(\Coh(X))=\mfPerf(X).$
  Then $E \boxtimes F= K \otimes K$
  is a
  classical 
  generator of 
  $\mfPerf(X \times X)$ but not of
  $D^b(\Coh(X \times X)).$
  % $\DF_2(t^2) \subset \DF_2(t)$ and let $X=Y=\Spec K.$
  % % Then $K \otimes K \cong K[X]/(X-t)^2 \cong
  % % K[\epsilon]/(\epsilon^2).$ 
  % Then $E=F=K$ are classical generators of
  % $D^b(\Coh(X))=\mfPerf(X),$ but $E \boxtimes F= K \otimes K
  % \cong K[X]/(X-t)^2 \cong K[\epsilon]/(\epsilon^2)$ is not a
  % classical 
  % generator of $D^b(\Coh(X \times X)),$
  % it only generates $\mfPerf(X \times X)$ classically.
\end{remark}

\begin{proof}
  % See \cite[Thm.~7.38]{rouquier-dimensions} or
  % \cite[Thm.~6.3]{lunts-categorical-resolution} for the first
  % statement.  
  The proof of
  \cite[Thm.~6.3]{lunts-categorical-resolution} shows that there
  are classical generators $S'$ and $T'$ of $D^b(\Coh(X))$
  and $D^b(\Coh(Y)),$ respectively, such that $S' \boxtimes
  T'$ is a classical generator of $D^b(\Coh(X \times Y)).$
  From $S' \in \thick(S)$ we obtain $S' \boxtimes T' \in
  \thick(S \boxtimes T'),$ so $S \boxtimes T'$ is a
  classical generator of $D^b(\Coh(X \times Y)).$
  Similarly, we see that $S \boxtimes T$ is a
  classical generator of $D^b(\Coh(X \times Y)).$
\end{proof}

We now have all ingredients for the proof of
Theorem~\ref{t:D-b-Coh-smooth}; it is similar to that of
Theorem~\ref{t:mfPerf-abstract-Cechobj-smooth-vs-diagonal-sheaf-perfect}.

\begin{proof}[Proof of Theorem~\ref{t:D-b-Coh-smooth}]
  Let $X$ be a separated scheme of finite type over a perfect
  field $k$ that has the resolution property.
  The category
  $D^b(\Coh(X)) \sira D^b_\Coh(\Qcoh(X)) \sira
  D^b_\Coh(\Sh(X))$ 
  has a classical generator $G$
  (Theorem~\ref{t:generator-of-DbCoh-finite-type-any-field}).
  Since $X$ is a \ref{enum:RES}-scheme
  we can and will assume that $G$ is a bounded above
  complex 
  of vector bundles with bounded 
  coherent cohomology
  (Proposition~\ref{p:D-minus-Coh(Sh)-strict-coherent}). 

  Fix an ordered finite affine open covering
  $\mathcal{U}=(U_s)_{s \in S}$ of $X$ and consider
  $G$ as an object of the enhancement
  $\Cech_!^{-,b}(X)$
  of $D^b_\Coh(\Sh(X))$
  (Proposition~\ref{p:shriek-cech-enhancement-D-minus-and-b-Coh}). 
  By
  \cite[Prop.~2.18]{valery-olaf-matrix-factorizations-and-motivic-measures},
%  \cite[Prop.~\ref{mm-p:pretriang-classical-generator}]{valery-olaf-matrix-factorizations-and-motivic-measures},
  $k$-smoothness of 
  $\Cech_!^{-,b}(X)$
  is equivalent to
  $k$-smoothness of the dg algebra
  \begin{equation*}
    A:=\End_{\Cech_!^{-,b}(X)}(G).  
  \end{equation*}

  Since any scheme of finite type over a field
  has a dualizing complex 
  (\cite[V.10, p.~299]{hartshorne-residues-duality}),
  % Since $X$ satisfies condition~\ref{enum:DC},
  there is a bounded
  complex $\mathcal{J}$ of injective quasi-coherent sheaves on
  $X$ that is a dualizing complex. Abbreviate
  $\mathcal{D}_\mathcal{J}=\sheafHom(-, \mathcal{J}).$
  Lemma~\ref{l:duality-and-!-Cech-D-b-Coh} shows that
  \begin{equation}
    \label{eq:A-D-lower-shriek-qiso-dga}
     A =
    \End_{\Cech_!^{-,b}(X)}(G)
    \ra
    \End_{C(\Sh(X))}(\mathcal{D}_\mathcal{J}(\mathcal{C}_!(G)))^\opp
  \end{equation}
  is a quasi-isomorphism of dg algebras.
  The object $\mathcal{D}_\mathcal{J}(G)$ 
  is a classical generator of $D^b_\Coh(\Sh(X))$ by the duality
  equivalence~\eqref{eq:duality-D-b-Coh}, and 
  $\mathcal{D}_\mathcal{J}(G) \boxtimes G$ is a classical
  generator of 
  $D^b(\Coh(X \times X)) \sira 
  % D^b_\Coh(\Qcoh(X \times X)) \sira
  D^b_\Coh(\Sh(X \times X)),$ by
  Theorem~\ref{t:generator-of-DbCoh-product}.
  Note that $\mathcal{C}_!(G) \ra G$ and 
  $\mathcal{D}_\mathcal{J}(G) \ra
  \mathcal{D}_\mathcal{J}(\mathcal{C}_!(G))$
  are quasi-isomorphisms. In particular, we
  find a quasi-isomorphism $i:\mathcal{C}_!(G) \ra I$ 
  % and
  % $j:\mathcal{D}_\mathcal{J}(\mathcal{C}_!(G)) \ra J$ 
  with $I$
  % and $J$ 
  a bounded below 
  complex
  of injective quasi-coherent sheaves on $X.$
  Note that $\mathcal{D}_\mathcal{J}(\mathcal{C}_!(G))$ is
  already a bounded below complex of injective quasi-coherent
  sheaves, by 
  Corollary~\ref{c:cplx-sheafhom-!-ext-to-inj-qcoh}.
  Let
  $\tau\colon \mathcal{D}_\mathcal{J}(\mathcal{C}_!(G))
  \boxtimes I
  \ra T$ be a quasi-isomorphism with $T$ a
  bounded below complex of injective quasi-coherent
  sheaves. 
  The object 
  \begin{equation*}
    P:=\mathcal{D}_\mathcal{J}(\mathcal{C}_!(G)) \boxtimes
    \mathcal{C}_!(G) 
  \end{equation*}
  is a classical generator 
  of $D^b_\Coh(\Sh(X \times X))$ 
  because it is isomorphic
  to 
  $\mathcal{D}_\mathcal{J}(G) \boxtimes G.$
  As in the proof of
  Theorem~\ref{t:mfPerf-abstract-Cechobj-smooth-vs-diagonal-sheaf-perfect}
  we see that the composition
  of the morphism 
  \begin{equation*}
    A^\opp \otimes A \xra{(\mathcal{D}_\mathcal{J} \comp
      \mathcal{C}_!) \boxtimes 
      \mathcal{C}_!} 
    \End_{C(\Sh(X \times X))}(P)
  \end{equation*}
  of dg algebras
  with the morphism
  \begin{equation*}
    {(\tau \comp (\id \boxtimes i))_*}
    \colon
    \End_{C(\Sh(X \times X))}(P),
    \ra
    \Hom_{C(\Sh(X \times X))}(P, T)
  \end{equation*}
  is a quasi-isomorphism. 
  Recall the
  enhancement
  $C_\Coh^{+,b}(\InjSh(X \times X))$ 
  of $D^b(\Coh(X \times X)) \sira D^b_\Coh(\Sh(X \times X))$ 
  from 
  section~\ref{sec:inject-enhanc}.
  Note that $T \in C_\Coh^{+,b}(\InjSh(X \times X))$ 
  by 
  Theorem~\ref{t:injective-in-qcoh-vs-all-OX}.\ref{enum:inj-qcoh-equal-inj-OX-that-qcoh}
  and that
  $D^b(\Coh(X \times X))$
  is
  Karoubian by \cite{le-chen-karoubi-trcat-bdd-t-str}. 
%  All objects of
%  $C_\Coh^{+,b}(\InjSh(X \times X))$ 
%  % $C_\Coh^{+,b}(\InjQcoh(X \times X))$ 
%  are h-injective, by Remark~\ref{rem:fibrant-explained}. 
%  bounded below complexes
%  of injective sheaves and hence fibrant, by
%  Theorem~\ref{t:injective-in-qcoh-vs-all-OX}.\ref{enum:inj-qcoh-equal-inj-OX-that-qcoh} 
%  and Remark~\ref{rem:fibrant-explained}. 
  We are allowed to apply
  Proposition~\ref{p:homotopy-categories-triang-via-dg-algebras}.\ref{enum:hoI-via-B-small}
  in this setting and see that
  \begin{equation*}
    % \label{eq:Hom-from-P-equi-DbCoh}
    F:=\Hom_{C(\Sh(X \times X))}(P,-) \colon
    [C_\Coh^{+,b}(\InjSh(X \times X))] 
    % [C_\Coh^{+,b}(\InjQcoh(X \times X))] 
    \ra 
    \per(A^\opp 
    \otimes A)
  \end{equation*} 
  is an equivalence of triangulated categories. 
  Certainly $\Delta_*(\mathcal{J}) \in D^b_\Coh(\Sh(X \times
  X)).$
  We claim that
  $F$ maps 
  $\Delta_*(\mathcal{J})$ 
  (or rather a lift of this object to 
  $C_\Coh^{+,b}(\InjSh(X \times X))$)
  to (an object isomorphic to) $A^\opp.$

  Let 
  $\Delta_*(\mathcal{J}) \ra \mathcal{K}$ be a quasi-isomorphism
  with $\mathcal{K}$ a bounded below complex of injective
  quasi-coherent sheaves on $X \times X,$
  so $\mathcal{K} \in C_\Coh^{+,b}(\InjSh(X \times X))$
  by 
  Theorem~\ref{t:injective-in-qcoh-vs-all-OX}.\ref{enum:inj-qcoh-equal-inj-OX-that-qcoh}.
  % Then $\mathcal{K}$ consists of injective sheaves by 
  % Theorem~\ref{t:injective-in-qcoh-vs-all-OX}.\ref{enum:inj-qcoh-equal-inj-OX-that-qcoh}
  % and hence lies in  
  % $C_\Coh^{+,b}(\InjSh(X \times X)).$
  Let $p,$ $q \colon X \times X \ra X$ be first and second
  projection. 
  We have canonical identifications and quasi-isomorphisms of dg
  $A^\opp \otimes A$-modules
  \begin{align*}
    F(\mathcal{K}) 
    % & =
    % \Hom_{C(\Sh(X \times
    % X))}(\mathcal{D}_\mathcal{J}(\mathcal{C}_!(G)) \boxtimes  
    % \mathcal{C}_!(G), \mathcal{K}) \\
    & = \Hom_{C(\Sh(X \times
      X))} (p^*\mathcal{D}_\mathcal{J}(\mathcal{C}_!(G)) \otimes
    q^*\mathcal{C}_!(G), \mathcal{K})\\
    & = \Hom_{C(\Sh(X \times X))}(p^*\mathcal{D}_\mathcal{J}(\mathcal{C}_!(G)),
    \sheafHom(q^*\mathcal{C}_!(G), \mathcal{K})) && \text{(by
      adjunction)}\\
    & = \Hom_{C(\Sh(X))}(\mathcal{D}_\mathcal{J}(\mathcal{C}_!(G)),
    p_*(\sheafHom(q^*\mathcal{C}_!(G), \mathcal{K}))) && \text{(by
      adjunction)}\\
    % \text{(quasi-isomorphism)}
    & 
    \la
    % \xla{\text{homotopy equiv.}} 
    \Hom_{C(\Sh(X))}(\mathcal{D}_\mathcal{J}(\mathcal{C}_!(G)),
    p_*(\sheafHom(q^*(\mathcal{C}_!(G)), \Delta_*\mathcal{J})))
    && \text{(homotopy equiv.\ by Cor.~\ref{c:obtain-homotopy-equiv})}\\ 
    & = \Hom_{C(\Sh(X))}(\mathcal{D}_\mathcal{J}(\mathcal{C}_!(G)),
    p_*\Delta_*(\sheafHom(\Delta^*q^*(\mathcal{C}_!(G)),
    \mathcal{J})))
    && \text{(by adjunction)}\\
    & = \Hom_{C(\Sh(X))}(\mathcal{D}_\mathcal{J}(\mathcal{C}_!(G)),
    \sheafHom(\mathcal{C}_!(G),
    \mathcal{J})) && \\
    & =
    \Hom_{C(\Sh(X))}(\mathcal{D}_\mathcal{J}(\mathcal{C}_!(G)),
    \mathcal{D}_\mathcal{J}(\mathcal{C}_!(G))) 
    && \text{(by definition)}\\
    % \text{(quasi-isomorphism)}
    & 
    \la
    % \xla{\text{qiso}} 
    A^\opp
    % \Hom_{\Cech_!^{-,b}(X)}(G, G)^\opp
    && \text{(quasi-isom.\ by 
      % Lemma~\ref{l:duality-and-!-Cech-D-b-Coh}
      \eqref{eq:A-D-lower-shriek-qiso-dga}).}
    % \\
    % & = A^\opp.
  \end{align*}
  This proves our claim and shows that $A^\opp \in \per(A^\opp
  \otimes A).$ Hence $A^\opp$ is smooth over $k.$ This is
  equivalent to $A$ being smooth over $k$ (see e.\,g.\
  \cite[Remark~3.11]{valery-olaf-smoothness-equivariant}).  
\end{proof}

\section{Properness of categories and schemes}
\label{sec:prop-categ-schem}

\begin{lemma}
  \label{l:affine-curve}
  Let $X$ be a separated scheme of finite type over a field $k$
  which is not proper over $k.$ Then there is 
  an 
  affine closed curve $C \subset X.$
  (By a curve we mean a separated scheme of finite type over the
  field $k$ which is integral and of dimension one.)
  In particular, the $k$-vector space
  $\Hom_{\mathcal{O}_X}(\mathcal{O}_X,
  \mathcal{O}_C)=\Gamma(C;\mathcal{O}_C)$ is
  infinite-dimensional. 
\end{lemma}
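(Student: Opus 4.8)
The plan is to produce a non‑proper integral curve $C$ inside $X$ — such a curve is automatically affine — and then to deduce infinite‑dimensionality of $\Gamma(C,\mathcal{O}_C)$ by Noether normalization. First I would reduce to the case that $X$ is integral. If every integral component $X_0$ of $X_{\mathrm{red}}$ were proper over $k$, then the natural finite surjective morphism from the disjoint union of these components onto $X_{\mathrm{red}}$ would exhibit $X_{\mathrm{red}}$ as the image of a proper $k$‑scheme, so $X_{\mathrm{red}}$, and hence $X$ (a scheme over $k$ is proper if and only if its reduction is), would be proper over $k$ — a contradiction. So I pick an integral component $X_0 \subseteq X_{\mathrm{red}} \subseteq X$ which is not proper over $k$ and replace $X$ by $X_0$; a closed curve in $X_0$ is closed in $X$. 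Henceforth $X$ is integral of dimension $d \geq 1$ (if $d=0$ then $X$ would be the spectrum of a finite field extension of $k$, hence proper).

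Next I would argue by induction on $d$. By Nagata's compactification theorem, choose an open immersion $X \hookrightarrow \overline{X}$ with dense image and $\overline{X}$ proper over $k$, taking $\overline{X}$ integral (the reduced closure of $X$); since $X$ is not proper, $B := \overline{X}\setminus X$ is a non‑empty closed subset. If $d = 1$ then $X$ is an integral non‑proper curve over $k$, which is affine — a separated integral curve of finite type over a field is affine if and only if it is not proper — so we may take $C = X$. If $d \geq 2$, the crucial step is the following dimension‑reduction lemma: for any closed point $\beta \in B$ there is an integral closed subscheme $W \subseteq \overline{X}$ with $\dim W = d-1$, $\beta \in W$, and $W \not\subseteq B$. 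Granting it, $Z := W \cap X = W \setminus B$ is an integral closed subscheme of $X$ of dimension $d-1$, and $W$ — proper over $k$ as a closed subscheme of $\overline{X}$ — is a compactification of $Z$ with $Z \neq W$ (since $\beta \in W \setminus Z$); hence $Z$ is not proper over $k$, so by the induction hypothesis $Z$, and therefore $X$, contains an affine closed curve.

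To prove the lemma I would pass to an affine open $\Spec A \ni \beta$ of $\overline{X}$, with $A$ a finitely generated $k$‑domain of dimension $d$; write $\mathfrak{m}$ for the maximal ideal corresponding to $\beta$ and $B \cap \Spec A = V(I)$ with $0 \neq I \subseteq \mathfrak{m}$, and let $\mathfrak{q}_1,\dots,\mathfrak{q}_r$ be the minimal primes of $I$ contained in $\mathfrak{m}$. By prime avoidance choose $g \in \mathfrak{m}$ contained in no $\mathfrak{q}_j$ that differs from $\mathfrak{m}$. A minimal prime $\mathfrak{p}$ over $(g)$ with $\mathfrak{p}\subseteq\mathfrak{m}$ exists (as $\beta \in V(g)$) and has height $1$ by Krull's principal ideal theorem, so $\dim A/\mathfrak{p} = d-1$ by the dimension formula for finitely generated algebras over a field; and $\mathfrak{p}$ contains no $\mathfrak{q}_j$ (such a containment would force $\mathfrak{q}_j = \mathfrak{p}$ by equality of heights, hence $g \in \mathfrak{q}_j$, whereas $\mathfrak{q}_j = \mathfrak{m}$ is impossible for height reasons), whence $I \not\subseteq \mathfrak{p}$ and $W := \overline{V(\mathfrak{p})}$ meets the requirements. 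For the final assertion, writing $C = \Spec A$ with $\dim A = 1$, Noether normalization gives a finite injection $k[t] \hookrightarrow A$, so $\Gamma(C,\mathcal{O}_C) = A$ is infinite‑dimensional over $k$; and this space is $\Hom_{\mathcal{O}_X}(\mathcal{O}_X,\iota_*\mathcal{O}_C) = \Gamma(X,\iota_*\mathcal{O}_C)$ for $\iota\colon C\hookrightarrow X$ the inclusion. The main obstacle is the dimension‑reduction lemma — concretely, ensuring that the hypersurface section through $\beta$ is not absorbed into the boundary $B$ — though once phrased via prime avoidance and Krull's theorem it is short; apart from that the argument rests only on the two standard inputs of Nagata compactification and the affineness of non‑proper curves.
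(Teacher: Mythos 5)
Your proof is correct and follows essentially the same strategy as the paper's: reduce to $X$ integral, observe that a non-proper integral curve is affine (the paper cites the same fact), and in dimension $\geq 2$ use a Nagata compactification $\overline{X}$, cut down by a hypersurface through a boundary point chosen by prime avoidance against the boundary components, and induct on dimension; the final infinite-dimensionality is handled by Noether normalization where the paper notes that a finite-dimensional $\Gamma(C,\mathcal{O}_C)$ would be Artinian. The one organizational difference is that the paper splits into two cases at the boundary point $y$ --- when $\{y\}$ is itself an irreducible component of the boundary it produces a one-dimensional closed irreducible subset through $y$ directly, and only otherwise does it run the prime-avoidance hypersurface argument --- whereas your dimension-reduction lemma treats both cases uniformly, which is a mild streamlining. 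One small slip to repair: in the degenerate case where $\mathfrak{m}$ is the only minimal prime of $I$ contained in $\mathfrak{m}$ (exactly the paper's first case), your avoidance condition is vacuous and as literally stated would allow $g=0$, for which Krull's theorem only gives $\operatorname{ht}\mathfrak{p}\leq 1$ and $\mathfrak{p}=(0)$ is possible; you must additionally require $g\neq 0$ (e.g.\ include the zero ideal among the primes to be avoided, which is harmless since $\mathfrak{m}\neq 0$ when $d\geq 2$) so that the minimal prime over $(g)$ has height exactly one. With that word added, the lemma and the induction go through as you describe.
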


\begin{proof}
  If $C$ is an affine curve then certainly $\dim_k
  \Gamma(C; \mathcal{O}_C)=\infty$ because otherwise $\Gamma(C,
  \mathcal{O}_C)$ would be an Artinian ring.

  By
  \cite[Prop.~12.58.(5), Prop.~12.59]{goertz-wedhorn-AGI}
  we can assume that $X$ is integral (and not proper over $k$).

  The case $\dim X=0$ can not occur:
  if $\dim X=0$ we can view $X$ as a
  closed subscheme of some $\DA^n_k,$ 
  by \cite[Prop.~5.20]{goertz-wedhorn-AGI},
  and all points of $X$ are
  closed points of $\DA^n_k$; then the composition 
  $X \subset \DA^n_k \subset \DP^n_k$ is a closed immersion so
  that $X$ is even projective over $k$ in contradiction to our
  assumption. 

  % Hence $\dim X \geq 1.$

  If $\dim X=1$ take $C=X$ which is an affine closed curve
  by
  \cite[\href{http://stacks.math.columbia.edu/tag/0A28}{Lemma~0A28}]{stacks-project}. 

  Assume that $\dim X > 1.$
  By Nagata's compactification theorem
  (see \cite{luetkeboehmert-compactification-nagata,
    conrad-compactification-nagata},
  \cite[Thm.~12.70]{goertz-wedhorn-AGI}) we can assume that $X$
  is an open dense subscheme of a scheme $Y$ which is proper over
  $k.$ In particular, $Y$ is separated and of finite type over
  $k.$ By replacing $Y$ with $Y_\reduced$ (which still contains
  $X$ as an open dense subscheme) we can assume that $Y$ is
  integral.  

  Since $X$ is not proper over $k$ we have $X \subsetneq Y,$
  so there is a closed point
  $y \in Y \setminus X,$ by
  \cite[\href{http://stacks.math.columbia.edu/tag/005E}{Lemma~005E}]{stacks-project}. 
%  \cite[Section~3.12 or Exercise~3.13]{goertz-wedhorn-AGI}.

  If $\{y\}$ is an irreducible component of $Y \setminus X$ then
  $y$ is contained in an irreducible closed subset $D
  \subset Y$ of dimension one. Then 
  $D \cap X$ is a non-empty irreducible closed subset of $X,$
  and we can take $C$ to be $D \cap X$ equipped with the induced
  reduced scheme 
  structure, by
  \cite[\href{http://stacks.math.columbia.edu/tag/0A28}{Lemma~0A28}]{stacks-project},
  because $C$ is not proper over $k$ (otherwise,
  since $D$ is
  separated over $k,$
  the inclusion $C \subset D$ 
  would be proper, and in particular closed, 
  so that $C$ would be a non-empty closed and open strict subset
  of the 
  irreducible set $D$).

  Otherwise all irreducible components $C_1, \dots, C_n$ of $Y
  \setminus X$ containing $y$ are strictly bigger than $\{y\}.$
  Let $\Spec A = U \subset Y$ be an affine open neighborhood
  of $y,$ and let $\mfm \subset A$ be the maximal ideal
  corresponding to $y.$ Let $\mfp_i \in \Spec A$ be the prime
  ideal corresponding to $C_i \cap U.$
  Then $\mfp_i \subsetneq \mfm$ for all
  $i=1,\dots, n.$ By \cite[Prop.~1.11.(i)]{atiyah-macdonald} there
  is an element $f \in \mfm$ with $f \not\in \bigcup_{i=1}^n
  \mfp_i.$ Then $\mathcal{V}(f) \subset U$ is equi-codimensional
  of codimension one in $U,$ by
  \cite[Thm.~5.32]{goertz-wedhorn-AGI}, and contains $y.$
  Let $D$ be an irreducible component of $\mathcal{V}(f)$
  containing $y.$
  Then $\dim D=\dim U-1=\dim Y -1=\dim X-1,$ and $D$ contains a
  point of 
  $X$: otherwise $D \subset Y 
  \setminus X,$ so $D \subset C_i$ for some $i,$ so
  $D=C_i \cap U$ because $\dim D=\dim X-1 \geq \dim (Y \setminus
  X) \geq 
  \dim C_i=\dim (C_i \cap U),$ hence $C_i \cap U=D \subset \mathcal{V}(f)$ and we obtain
  the contradiction $f \in \mfp_i.$

  Let $\ol{D}$ be the closure of $D$ in $Y$ equipped with the
  induced 
  reduced scheme structure. Then $\ol{D} \cap X$ is non-empty and
  closed in $X,$
  has dimension 
  $\dim \ol{D} = \dim D =\dim X-1,$ and is not proper
  over $k$ 
  because otherwise the open immersion $\ol{D} \cap X
  \subsetneq \ol{D}$ would be closed.
  % proper since $\ol{D}$ is separated over $k.$ 
  By induction we find an affine closed curve $C
  \subset \ol{D} \cap X.$
\end{proof}

\begin{definition}
  \label{d:tricat-proper}
  Let $k$ be a field and $\mathcal{T}$ a triangulated $k$-linear
  category.  
  We say that $\mathcal{T}$ is
  \define{proper over $k$} if $\mathcal{T}$ has a classical
  generator and 
  \begin{equation*}
    % \label{eq:locprop}
    \dim_k \Big(\bigoplus_{n \in \DZ} \Hom_\mathcal{T}(E,[n]F)\Big) < \infty
  \end{equation*}
  for all objects $E,$ $F \in \mathcal{T}.$
  We say that $\mathcal{T}$ is
  \define{locally proper over $k$} if 
  the second condition holds.
  % \eqref{eq:locprop}
  % holds for all objects $E,$ $F \in \mathcal{T}.$
\end{definition}

\begin{remark}
  \label{rem:tricat-proper-enhancement}
  The terminology of Definition~\ref{d:tricat-proper} is
  motivated by the corresponding terminology for dg
  $k$-categories, see e.\,g.\
  \cite[Def.~2.11]{valery-olaf-matrix-factorizations-and-motivic-measures}.  If $k$
  is a field and $\mathcal{T}$ a triangulated $k$-linear category
  admitting a $k$-linear enhancement, then $\mathcal{T}$ is
  locally proper (resp.\ proper) over $k$ if and only if a/any
  $k$-linear enhancement of $\mathcal{T}$ is locally proper
  (resp.\ proper) over $k$ as a dg $k$-category (use
  \cite[Prop.~2.18]{valery-olaf-matrix-factorizations-and-motivic-measures},
%  \cite[Prop.~\ref{mm-p:pretriang-classical-generator}]{valery-olaf-matrix-factorizations-and-motivic-measures},
  the fact that a dg $k$-algebra is locally proper if and only if
  it is proper, and that, given a pretriangulated dg category
  $\mathcal{E}$ having a compact generator, its homotopy category
  $[\mathcal{E}]$ has a classical generator).
\end{remark}

\begin{remark}
  \label{rem:perf-proper}
  Let $X$ be a quasi-compact separated scheme over a field $k.$
  Then $\mfPerf(X)$ has a classical generator,
  by \cite[2.1, 3.1]{bondal-vdbergh-generators}, so
  $\mfPerf(X)$ is proper over $k$ if and only if 
  it is locally proper over $k.$
  Similarly, 
  if $X$ is 
  % a Noetherian separated scheme over a perfect field
  % $k$ or 
  a separated scheme of finite type over a
  field $k,$ 
  then $D^b(\Coh(X))$ has a
  classical generator by
  Theorem~\ref{t:generator-of-DbCoh-finite-type-any-field},
  so $D^b(\Coh(X))$ is proper over $k$ if and only if it is
  locally proper over $k.$
\end{remark}

% A dg algebra $A$ over a field $k$ is $k$-proper if and only if
% $\dim_k \bigoplus_{n \in \DZ} H^n(A) < \infty.$

% In the above definition we could have chosen any of the
% equivalent enhancements from section~\ref{sec:enhancements},
% cf.\ Remarks~\ref{rem:enhancements-equiv} and \ref{rem:Perf-DbCoh-smooth}.

\begin{theorem} 
  [{Homological versus geometric properness}]
  \label{t:scheme-proper-iff-Perf-proper}
  % Let $X$ be a separated scheme $X$ of finite type over a field
  % $k$
  % that has the resolution property.
  % Then $X$ is
  % proper over $k$ if and only if $\mfPerf(X)$ is proper
  % over $k.$
  %
  Let $X$ be a separated scheme of finite type over a field
  $k.$
  If $X$ is proper over $k$, then $\mfPerf(X)$ is proper over
  $k.$ 
  If $X$ has the resolution property, the converse is also true.
  % and $\mfPerf(X)$ is
  % proper over $k$, then $X$ is proper over $k.$
\end{theorem}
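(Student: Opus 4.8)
The plan is to prove the two implications separately; the forward implication is a routine finiteness statement, and essentially all the work is in the converse.

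For the forward direction, suppose $X$ is proper over $k$. By \cite[2.1, 3.1]{bondal-vdbergh-generators} the category $\mfPerf(X)$ has a classical generator (Remark~\ref{rem:perf-proper}), so it suffices to check that $\bigoplus_{n \in \DZ}\Hom_{\mfPerf(X)}(E,[n]F)$ is finite-dimensional over $k$ for all $E,F \in \mfPerf(X)$. Since $E$ is perfect, its derived dual $\RsheafHom(E,\mathcal{O}_X)$ is again perfect, hence so is $\RsheafHom(E,F)\cong \RsheafHom(E,\mathcal{O}_X)\otimes^{\bL}F$; in particular it lies in $D^b_\Coh(\Qcoh(X))$. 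Writing $\bigoplus_n\Hom_{\mfPerf(X)}(E,[n]F)=\bigoplus_n H^n\bigl(\bR\Gamma(X,\RsheafHom(E,F))\bigr)$, I would invoke the finiteness theorem for cohomology of proper morphisms (coherent cohomology of $X/k$ is finite-dimensional), together with finiteness of the cohomological dimension of $\Gamma(X,-)$ (Lemma~\ref{l:fin-coho-dim}) and the hypercohomology spectral sequence, to conclude that this graded vector space is finite-dimensional.

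For the converse I would argue by contraposition: assuming $X$ is not proper over $k$, I will produce $E,F \in \mfPerf(X)$ with $\dim_k\Hom_{\mfPerf(X)}(E,F)=\infty$. By Lemma~\ref{l:affine-curve} there is an affine closed curve $i\colon C\hookrightarrow X$, and $\Gamma(C,\mathcal{O}_C)=\Hom_{\mathcal{O}_X}(\mathcal{O}_X,i_*\mathcal{O}_C)$ is infinite-dimensional over $k$. The coherent sheaf $i_*\mathcal{O}_C$ on $X$ is in general not perfect, so I would approximate it: using the resolution property repeatedly, pick an exact resolution $\dots\to\mathcal{E}_1\to\mathcal{E}_0\to i_*\mathcal{O}_C\to 0$ by vector bundles on $X$ and, for $N\geq\dim X$, let $P_N$ be the brutal truncation $(\mathcal{E}_N\to\dots\to\mathcal{E}_0)$ placed in degrees $-N,\dots,0$. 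Then $P_N$ is a strictly perfect complex, so $P_N\in\mfPerf(X)$, with $H^0(P_N)=i_*\mathcal{O}_C$, $H^j(P_N)=0$ for $-N<j<0$, and $H^{-N}(P_N)$ some coherent sheaf $\mathcal{K}$. Taking $E=\mathcal{O}_X$ and $F=P_N$, it then remains to show that $\Hom_{\mfPerf(X)}(\mathcal{O}_X,P_N)=H^0(\bR\Gamma(X,P_N))$ is infinite-dimensional.

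This last point is the main obstacle, and I would settle it with the hypercohomology spectral sequence $E_2^{p,q}=H^p(X,H^q(P_N))\Rightarrow H^{p+q}(\bR\Gamma(X,P_N))$. Only the rows $q=0$ and $q=-N$ are nonzero, and the $q=0$ row is $H^p(C,\mathcal{O}_C)$, which equals $\Gamma(C,\mathcal{O}_C)$ for $p=0$ and vanishes for $p>0$ since $C$ is affine. Every differential into or out of $E_r^{0,0}$ vanishes: the incoming ones originate in columns of negative index, and the only possibly nonzero outgoing one is $d_{N+1}\colon E_{N+1}^{0,0}\to E_{N+1}^{N+1,-N}$, whose target is a subquotient of $H^{N+1}(X,\mathcal{K})=0$ by Grothendieck's vanishing theorem because $N+1>\dim X$. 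Hence $E_\infty^{0,0}=\Gamma(C,\mathcal{O}_C)$ is a subquotient of $H^0(\bR\Gamma(X,P_N))$, so $\dim_k\Hom_{\mfPerf(X)}(\mathcal{O}_X,P_N)=\infty$; thus $\mfPerf(X)$ is not locally proper, and a fortiori not proper, over $k$. The crux is precisely that $i_*\mathcal{O}_C$ is not perfect: one must both use the resolution property to construct the perfect approximation $P_N$ and use finiteness of $\dim X$ to keep the degree-$(-N)$ cohomology of $P_N$ from interfering with the infinite-dimensional contribution in degree $0$.
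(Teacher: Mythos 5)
Your proof is correct and follows essentially the same route as the paper: the forward direction is the same finiteness argument (coherent cohomology of a proper scheme is finite-dimensional, plus finite cohomological dimension of $\Gamma(X,-)$), and the converse uses exactly the paper's two key ingredients, Lemma~\ref{l:affine-curve} and a brutally truncated vector bundle resolution of $i_*\mathcal{O}_C$ supplied by the resolution property. The only difference is the endgame: the paper truncates at degree $-|S|$ and observes that the $*$-\v{C}ech complexes $\mathcal{C}_*(Q)$ and $\mathcal{C}_*(P)$ agree in degrees $\geq -1$, which yields an actual isomorphism $\Hom_{D(\Qcoh(X))}(\mathcal{O}_X,Q)\cong\Gamma(C;\mathcal{O}_C)$, whereas you truncate at $N\geq\dim X$ and run the hypercohomology spectral sequence, using Grothendieck vanishing in degrees $>\dim X$ to show $\Gamma(C;\mathcal{O}_C)$ survives as a subquotient of $\Hom(\mathcal{O}_X,P_N)$ — which is all you need for infinite-dimensionality. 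Both devices are legitimate under the hypotheses (finite type over $k$ gives both a finite affine cover and finite dimension); yours trades the explicit \v{C}ech bookkeeping for a standard spectral-sequence argument, at the small cost of getting only a subquotient rather than an identification.
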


\begin{proof}
  Assume that $X$ is proper over $k.$ 
  For $E, F \in \mfPerf(X)$ (or even $F \in D^b(\Coh(X)) \sira
  D^b_\Coh(\Qcoh(X))$) 
  and $n 
  \in \DZ,$ the dimension of
  \begin{equation*}
    \Hom_{D(\Qcoh(X))}(E,[n]F) \cong
    \Hom_{D(\Qcoh(X))}(\mathcal{O}_X, [n]\bR\sheafHom(E, F))
    \cong H^n(X; \bR\sheafHom(E, F))
  \end{equation*}
  is finite since $X$ is proper over $k$
  and $\bR\sheafHom(E, F) \in D^b_\Coh(\Qcoh(X))$,
  by \cite[Thm.~3.2.1]{EGAIII-i}. Moreover, since $X$ is
  quasi-compact, 
  $H^n(X; \bR\sheafHom(E, F))$ is nonzero for at most finitely
  many $n \in \DZ$
  (by Lemma~\ref{l:fin-coho-dim}).   
  This implies that $\mfPerf(X)$
  is proper over $k.$

  Assume now that $X$ has the resolution property and is not
  proper over $k.$ 
  Lemma~\ref{l:affine-curve} shows that $X$ contains an affine
  closed curve $C,$ and in particular $\dim_k \Gamma(C;
  \mathcal{O}_C)=\infty.$ 
  Let $i \colon C \subset X$ be the inclusion. Since $X$ has the
  resolution property, $i_*\mathcal{O}_C$ has a resolution $\dots
  \ra P^{-1} \ra P^0 \ra i_* \mathcal{O}_C \ra 0$ by vector
  bundles. Let $\mathcal{U}=(U_s)_{s \in S}$ be an ordered
  finite affine open covering of $X.$
  Given any complex $A$ of quasi-coherent sheaves on $X$ we have
  $\Gamma(X;\mathcal{C}_*(A)) \cong \bR\Gamma(X; A)$ by
  \cite[Lemma~12.4.(b)]{wolfgang-olaf-locallyproper}
  % Lemma~\ref{l:compute-right-derived}.\ref{enum:RF-for-fin-cohodim} 
  because all components of $\mathcal{C}_*(A)$ are acyclic with
  respect to the functor $\Gamma(X;-)\colon \Qcoh(X) \ra
  \Qcoh(\Spec k)$
  % =\Mod(k)$
  of finite cohomological
  dimension
  (by Lemmata~\ref{l:u'-ls-qls-acyclic} and
  \ref{l:fin-coho-dim}).   
  Let $Q$ be obtained from $P$ by replacing all components in
  degrees $< -|S|$ by zero. Then the complexes
  $\mathcal{C}_*(Q)$ and $\mathcal{C}_*(P)$ coincide in all degrees
  $\geq -1$, and we obtain
  \begin{multline*}
    \Hom_{D(\Qcoh(X))}(\mathcal{O}_X, Q)
    \cong
    H^0 (\bR\Gamma(X; Q))
    \cong
    H^0 (\Gamma(X; \mathcal{C}_*(Q))
    =
    H^0 (\Gamma(X; \mathcal{C}_*(P))\\
    \cong
    H^0 (\bR\Gamma(X; P))
    \cong
    H^0 (\bR\Gamma(X; i_*\mathcal{O}_C)
    \cong \Gamma(X; i_*\mathcal{O}_C)
    \cong \Gamma(C; \mathcal{O}_C).
  \end{multline*}
  Since $\mathcal{O}_X$ and $Q$ are in $\mfPerf(X)$ and
  $\dim_k \Gamma(C; \mathcal{O}_C)=\infty$ we see that
  $\mfPerf(X)$ is not proper over $k.$
\end{proof}

\begin{theorem} 
  \label{t:scheme-proper+reg-iff-DbCoh-proper}
  Let $X$ be a separated scheme $X$ of finite type over a field
  $k.$
  Then $D^b(\Coh(X))$ is proper over $k$ if and only if $X$ is
  proper over $k$ and regular.
\end{theorem}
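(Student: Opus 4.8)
The plan is to prove the two implications separately, reducing both to Theorem~\ref{t:scheme-proper-iff-Perf-proper} and Proposition~\ref{p:regular-vs-singularity-cat}. For the implication ``$X$ proper over $k$ and regular $\Rightarrow$ $D^b(\Coh(X))$ proper over $k$'', I would first recall that a scheme of finite type over $k$ has finite dimension, so regularity of $X$ together with Proposition~\ref{p:regular-vs-singularity-cat} gives $D^b(\Coh(X)) = \mfPerf(X)$. Then the first (unconditional) half of Theorem~\ref{t:scheme-proper-iff-Perf-proper} shows that $\mfPerf(X)$, hence $D^b(\Coh(X))$, is proper over $k$; it already has a classical generator by Remark~\ref{rem:perf-proper}, so there is nothing else to check.

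For the converse, assume $D^b(\Coh(X))$ is proper over $k.$ The first step is to show that $X$ is regular. Suppose it is not. Since $X$ is Jacobson and the localization of a regular local ring is regular, there is a closed point $x \in X$ with $\mathcal{O}_{X,x}$ non-regular (choose a closed point in the closure of any non-regular point, exactly as in the proof of Proposition~\ref{p:regular-vs-singularity-cat}). Equip $\{x\}=\overline{\{x\}}$ with its reduced structure, let $i\colon \{x\}\hookrightarrow X$ be the closed immersion, and set $\mathcal{F}:=i_*\kappa(x)\in\Coh(X)\subset D^b(\Coh(X)).$ Because $\mathcal{F}$ is supported at the single closed point $x$, the graded $k$-vector space $\bigoplus_{n\in\DZ}\Hom_{D^b(\Coh(X))}(\mathcal{F},[n]\mathcal{F})=\bigoplus_{n\ge 0}\Ext^n_{\mathcal{O}_X}(\mathcal{F},\mathcal{F})$ can be computed on an affine open neighbourhood $\Spec R=U\ni x$, with $x$ corresponding to a maximal ideal $\mfm\subset R$, and equals $\bigoplus_{n\ge 0}\Ext^n_R(R/\mfm,R/\mfm)$. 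By the Auslander--Buchsbaum--Serre theorem recalled in the proof of Proposition~\ref{p:regular-vs-singularity-cat}, the residue field $\kappa(x)=R_\mfm/\mfm R_\mfm$ has infinite projective dimension over the non-regular ring $\mathcal{O}_{X,x}=R_\mfm$; since the minimal free resolution of $\kappa(x)$ therefore never terminates and has differentials with entries in the maximal ideal, one gets $\Ext^n_{R_\mfm}(\kappa(x),\kappa(x))\cong\kappa(x)^{\neq 0}$ for every $n\ge 0$, and as $\Ext$ commutes with localization for finitely generated modules over a Noetherian ring this forces $\Ext^n_R(R/\mfm,R/\mfm)\neq 0$ for all $n\ge 0.$ Hence that graded vector space is infinite-dimensional over $k$, contradicting (local) properness of $D^b(\Coh(X))$. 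So $X$ is regular.

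Finally, knowing $X$ is regular, Proposition~\ref{p:regular-vs-singularity-cat} again yields $D^b(\Coh(X))=\mfPerf(X)$, and a regular Noetherian separated scheme has the resolution property by Kleiman's theorem \cite[Ex.~III.6.8]{Hart}. Thus the second half of Theorem~\ref{t:scheme-proper-iff-Perf-proper} applies to $\mfPerf(X)=D^b(\Coh(X))$ and shows that $X$ is proper over $k$, which finishes the argument.

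Modulo invoking the earlier results, the only real obstacle is the bookkeeping in the regularity step: one must make sure that the self-$\Ext$ of the skyscraper $\mathcal{F}$ computed in $D^b(\Coh(X))$ (equivalently in $D(\Sh(X))$) genuinely reduces to $\Ext^\ast_R(R/\mfm,R/\mfm)$. This uses that a sheaf supported at a closed point is flasque --- so that $\sheafHom(\mathcal{F},-)$ applied to an injective resolution is a complex of $\Gamma$-acyclic sheaves and the local-to-global $\Ext$ spectral sequence degenerates --- together with the fact that restriction to an open subscheme preserves injectivity; everything else is a routine assembly of Theorem~\ref{t:scheme-proper-iff-Perf-proper} and Proposition~\ref{p:regular-vs-singularity-cat}.
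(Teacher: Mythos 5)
Your proposal is correct, and the forward implication is exactly the paper's argument (Proposition~\ref{p:regular-vs-singularity-cat} plus the first half of Theorem~\ref{t:scheme-proper-iff-Perf-proper}). For the converse you take a somewhat different route. The paper argues contrapositively and keeps the two failures independent: if $X$ is not proper it invokes Lemma~\ref{l:affine-curve} directly, noting that $\mathcal{O}_X$ and $i_*\mathcal{O}_C$ are coherent, so $\dim_k\Hom(\mathcal{O}_X,i_*\mathcal{O}_C)=\infty$ already kills properness of $D^b(\Coh(X))$ without any regularity or resolution-property input; and if $X$ is not regular it uses the same skyscraper idea as you, but with different bookkeeping, namely the factorization $\{x\}\ra\Spec\mathcal{O}_{X,x}\xra{l}X$ and the adjunction $(l^*=\bL l^*,\,l_*=\bR l_*)$ to identify $\Hom_{D(\Qcoh(X))}(l_*\kappa(x),[n]l_*\kappa(x))$ with $\Ext^n_{\mathcal{O}_{X,x}}(\kappa(x),\kappa(x))$, rather than your flasqueness/local-to-global-$\sheafExt$ argument (both are sound). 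You instead establish regularity first and then deduce properness of $X$ by feeding $D^b(\Coh(X))=\mfPerf(X)$ (Proposition~\ref{p:regular-vs-singularity-cat}, using finite dimension) and the resolution property of a regular Noetherian separated scheme (Kleiman, exactly as the paper itself uses it in Corollary~\ref{c:smooth-quasi-projective}) into the converse half of Theorem~\ref{t:scheme-proper-iff-Perf-proper}. Your reduction makes the properness step a one-line citation, at the cost of making it depend on the regularity step and on Kleiman's theorem; the paper's direct use of Lemma~\ref{l:affine-curve} is marginally more self-contained and shows that either failure of $X$ (properness or regularity) separately forces $D^b(\Coh(X))$ to be non-proper. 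No gaps; note only that your appeal to Theorem~\ref{t:scheme-proper-iff-Perf-proper} is not circular, since its converse is proved in the paper independently of this theorem.
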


\begin{proof}
  If $X$ is regular and proper, then $D^b(\Coh(X))=\mfPerf(X)$ 
  by Proposition~\ref{p:regular-vs-singularity-cat},
  and Theorem~\ref{t:scheme-proper-iff-Perf-proper} shows that
  this category is proper over $k.$

  Let $X$ be 
  % regular but 
  not proper. 
  Then,  by Lemma~\ref{l:affine-curve}, 
  there is an affine closed curve $i\colon C \subset X$ with
  $\dim_k \Hom_{D(\Qcoh(X))}(\mathcal{O}_X, i_*\mathcal{O}_C)=
  \infty.$
  Since $\mathcal{O}_X$ and $i_*\mathcal{O}_C$ are coherent
  sheaves on $X$ this
  shows that $D^b(\Coh(X))$ is not proper.

  Now assume that $X$ is not regular. Then there is a non-regular
  closed point $x \in X.$ Then
  $\kappa(x)=\mathcal{O}_{X,x}/\mfm_{x}$ has infinite
  projective dimension as an $\mathcal{O}_{X,x}$-module. This
  implies that
  $\Ext_{\mathcal{O}_{X,x}}^n(\kappa(x), \kappa(x))\not=0$ for
  all $n \in \DN$ (apply $\Hom_{\mathcal{O}_{X,x}}(-,\kappa(x))$
  to a minimal free resolution of $\kappa(x)$).
  Equip $\{x\}=\ol{\{x\}}$ with the induced reduced scheme
  structure and let 
  $i\colon \{x\} \ra X$ be the closed embedding.
  It factors as the composition $\{x\}
  \xra{c} \Spec \mathcal{O}_{X,x} \xra{l} 
  X$ of affine morphisms.
  We can view $\kappa(x)=\mathcal{O}_{X,x}/\mfm_x$ as a coherent
  sheaf on 
  $\Spec \mathcal{O}_{X,x}$ and as a coherent sheaf on 
  $\{x\}.$ Hence
  $l_*(\kappa(x))=l_*(c_*(\kappa(x)))=i_*(\kappa(x)) \in \Coh(X).$
  For $n \in \DZ$ the adjunction $(l^*=\bL l^*, l_*=\bR l_*)$
  yields
  \begin{equation*}
    \Hom_{D(\Qcoh(X))}(l_*(\kappa(x)), [n]l_*(\kappa(x)))
    % \cong \Hom_{D(\Mod(\mathcal{O}_{X,x}))}(l^*l_*(\kappa(x)),
    % [n]\kappa(x)) 
    % \cong \Hom_{D(\mathcal{O}_{X,x})}(K, [n]K)
    \cong \Ext_{\mathcal{O}_{X,x}}^n(\kappa(x), \kappa(x)).
  \end{equation*}
  This shows that 
  % $\bigoplus_{n \in \DZ} \Hom_{D(\Qcoh(X))}l_*(\kappa(x)),
  % [n]l_*(\kappa(x)))$ 
  % is infinite-dimensional over $k.$ Thus 
  $D^b(\Coh(X))$ is not proper over $k.$
\end{proof}

\section{Fourier-Mukai functors}
\label{sec:four-mukai-funct}

Our aim in this section is to prove
Theorem~\ref{t:translate-fm-to-dg}.
Recall that for a quasi-compact separated scheme $X$ the category
$D(\Qcoh(X)) \sira D_\Qcoh(\Sh(X))$ is generated by a single
perfect object (see
\cite[Thm.~3.1.1]{bondal-vdbergh-generators}) and that
the
dg subcategory $C^\hinj(\Qcoh(X))$ of $C(\Qcoh(X))$ consisting of
h-injective objects is an enhancement of $D(\Qcoh(X))$ (see
section~\ref{sec:inject-enhanc}).  

\begin{proposition}
  \label{p:DQcohX-via-!-and-*-dg-algebra}
  Let $X$ be a \ref{enum:GSP}-scheme. Fix 
  an 
  ordered finite affine open covering 
  $\mathcal{U}=(U_s)_{s \in S}$ of $X.$
  Let
  $E \in C(\Qcoh(X))$ be a bounded
  complex of vector bundles that is a generator of
  $D(\Qcoh(X)).$ Consider the dg ($\DZ$-)algebras
  $A=\End_{\Cech_*(X)}(E)$ and $B=\End_{\Cech_!(X)}(E).$ 
  Then: 
  \begin{enumerate}
  \item 
    \label{enum:star-equiv}
    The functor
%    $\Hom_{C(\Qcoh(X))}(\mathcal{C}_*(E),-) \colon
%    [C(\Qcoh(X))] \ra D(A)$ restricts to an equivalence of
%    triangulated categories
    \begin{equation*}
      \Hom_{C(\Qcoh(X))}(\mathcal{C}_*(E),-) \colon
      [C^\hinj(\Qcoh(X))] \sira D(A)
    \end{equation*}
    is an equivalence of triangulated categories.
    % The triangulated functor
    % $\Hom_{C(\Qcoh(X))}(\mathcal{C}_*(E),-) \colon
    % [C(\Qcoh(X))] \ra D(A)$ restricts to an equivalence of
    % triangulated categories
    % \begin{equation*}
    %   \Hom_{C(\Qcoh(X))}(\mathcal{C}_*(E),-) \colon
    %   [C^\hinj(\Qcoh(X))] \sira D(A).
    % \end{equation*}
  \item 
    \label{enum:shriek-equiv}
    The triangulated functor  
    \begin{equation}
      \label{eq:shriek-equiv-precursor}
        \Hom_{C(\Sh(X))}(\mathcal{C}_!(E),-) \colon [C(\Qcoh(X))]
        \ra D(B)    
    \end{equation}
    maps acyclic objects to zero and factors uniquely to an
    equivalence 
    \begin{equation}
      % \theta_X:= 
      \label{eq:shriek-equiv}
      \Hom_{C(\Sh(X))}(\mathcal{C}_!(E),-) \colon
      D(\Qcoh(X)) 
      \sira D(B) 
    \end{equation}
    of triangulated categories.
  \end{enumerate}
\end{proposition}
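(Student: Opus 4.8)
The plan is to treat both parts by the same mechanism: realize the endomorphism dg algebra of the relevant \v{C}ech enhancement as the derived endomorphism algebra of a compact generator, and then invoke the abstract tilting statement Proposition~\ref{p:homotopy-categories-triang-via-dg-algebras}.\ref{hoI-via-B}. Recall that this says, roughly, that for a full pretriangulated dg subcategory $\mathcal{E}$ of a dg category $\mathcal{D}$, an object $P$ of $\mathcal{D}$, a dg algebra $B$ and a homomorphism $\beta\colon B \to \End_{\mathcal{D}}(P)$, the functor $\Hom_{\mathcal{D}}(P,-)\colon [\mathcal{E}] \to D(B)$ is an equivalence provided there is a quasi-isomorphism $\phi\colon P \to T$ with $T\in\mathcal{E}$ a compact generator of $[\mathcal{E}]$ such that the composition $B\xrightarrow{\beta}\End_{\mathcal{D}}(P)\xrightarrow{\phi_*}\Hom_{\mathcal{D}}(P,T)$ is a quasi-isomorphism. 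The content in both parts is the verification of this last condition.

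For part~\ref{enum:star-equiv} I would take $\mathcal{D}=C(\Qcoh(X))$ (note $\mathcal{C}_*(E)\in C(\Qcoh(X))$ since all $U_s$ are affine, by Lemma~\ref{l:qcqs-preserves-qcoh}), $\mathcal{E}=C^\hinj(\Qcoh(X))$, $P=\mathcal{C}_*(E)$, $B=A$, and $\beta\colon A=\End_{\Cech_*(X)}(E)\to\End_{C(\Qcoh(X))}(\mathcal{C}_*(E))$ the map induced by the dg functor $\mathcal{C}_*$. Choose an h-injective resolution $\phi\colon\mathcal{C}_*(E)\to T$ in $C(\Qcoh(X))$. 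Since $E$ is perfect, hence compact in $D(\Qcoh(X))\cong D_\Qcoh(\Sh(X))$, and generates $D(\Qcoh(X))$, and $T\cong\mathcal{C}_*(E)\cong E$ in $D(\Qcoh(X))$, the object $T$ is a compact generator of $[\mathcal{E}]=D(\Qcoh(X))$. Finally, passing to $m$-th cohomology and identifying $\Hom_{D(\Qcoh(X))}(\mathcal{C}_*(E),[m]T)\cong\Hom_{D(\Qcoh(X))}(\mathcal{C}_*(E),[m]\mathcal{C}_*(E))$ via $\phi$, the composition $\phi_*\comp\beta$ becomes the realization map $\Hom_{[\Cech_*(X)]}(E,[m]E)\to\Hom_{\mfPerf(X)}(\mathcal{C}_*(E),[m]\mathcal{C}_*(E))$, which is an isomorphism by Proposition~\ref{p:abstract-cech-*-object-enhancement}. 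Hence the abstract statement applies.

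For part~\ref{enum:shriek-equiv} the subtlety is that $\mathcal{C}_!(E)$ is only a complex of (flat) sheaves, so one must work inside $\mathcal{D}=C(\Sh(X))$. The key auxiliary fact I would establish first is that the canonical map $\Hom_{[C(\Sh(X))]}(\mathcal{C}_!(E),M)\to\Hom_{D(\Sh(X))}(\mathcal{C}_!(E),M)$ is an isomorphism for every $M\in C(\Qcoh(X))$; this follows from Lemma~\ref{l:hom-u!-qcoh} applied to the components of $\mathcal{C}_!(E)$ — finite products of sheaves $u_!P$ with $u$ an affine open immersion (a finite intersection of the $U_s$, affine since $X$ is separated) and $P$ a vector bundle — together with a reduction along the brutal truncation of the bounded complex $\mathcal{C}_!(E)$, exactly as in the proof of Proposition~\ref{p:abstract-cech-!-bounded-above-full-faithful}. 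Taking $M$ acyclic, the right-hand side vanishes, so $\Hom_{C(\Sh(X))}(\mathcal{C}_!(E),-)$ kills acyclic quasi-coherent complexes and the functor \eqref{eq:shriek-equiv-precursor} factors uniquely through the Verdier quotient $[C(\Qcoh(X))]\to D(\Qcoh(X))$. The same fact shows that the functor sends each quasi-isomorphism $M\to N$ with $M\in C(\Qcoh(X))$ and $N\in C^\hinj_\Qcoh(\Sh(X))$ to a quasi-isomorphism, so the factored functor is identified, via $D(\Qcoh(X))\cong D_\Qcoh(\Sh(X))=[C^\hinj_\Qcoh(\Sh(X))]$, with $N\mapsto\Hom_{C(\Sh(X))}(\mathcal{C}_!(E),N)$ on $C^\hinj_\Qcoh(\Sh(X))$. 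That this last functor is an equivalence onto $D(B)$ I would deduce from Proposition~\ref{p:homotopy-categories-triang-via-dg-algebras}.\ref{hoI-via-B} applied with $\mathcal{D}=C(\Sh(X))$, $\mathcal{E}=C^\hinj_\Qcoh(\Sh(X))$, $P=\mathcal{C}_!(E)$, $B=\End_{\Cech_!(X)}(E)$, $\beta'$ induced by $\mathcal{C}_!$, and $\phi\colon\mathcal{C}_!(E)\to T$ an h-injective resolution: $T\in C^\hinj_\Qcoh(\Sh(X))$ is a compact generator of $[\mathcal{E}]$ just as in part~\ref{enum:star-equiv}, and $\phi_*\comp\beta'$ is a quasi-isomorphism since on cohomology it is the realization map $\Hom_{[\Cech_!(X)]}(E,[m]E)\to\Hom_{\mfPerf'(X)}(\mathcal{C}_!(E),[m]\mathcal{C}_!(E))$, an isomorphism by Propositions~\ref{p:abstract-cech-!-bounded-above-full-faithful} and \ref{p:abstract-cech-!-object-enhancement}.

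The main obstacle — and where essentially all the real content sits — is the verification that $\phi_*\comp\beta$ and $\phi_*\comp\beta'$ are quasi-isomorphisms, i.e.\ that the \v{C}ech dg categories $\Cech_*(X)$ and $\Cech_!(X)$, despite having non-full realization functors, have the ``correct'' morphism complexes; this is precisely what Propositions~\ref{p:abstract-cech-*-object-enhancement}, \ref{p:abstract-cech-!-bounded-above-full-faithful} and \ref{p:abstract-cech-!-object-enhancement} provide, and they do the heavy lifting. The remaining ingredients — compactness and generation of $T$, the bookkeeping between the equivalent models $C^\hinj(\Qcoh(X))$ and $C^\hinj_\Qcoh(\Sh(X))$, the reduction underlying the displayed isomorphism in part~\ref{enum:shriek-equiv}, and checking that the $B$-module structures coincide with the ones in the statement — are routine once that is in hand.
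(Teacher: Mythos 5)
Your argument is correct and rests on the same pillars as the paper's proof: the quasi-isomorphisms $B\to\End(P)\to\Hom(P,I)$ supplied by Propositions~\ref{p:abstract-cech-*-object-enhancement}, \ref{p:abstract-cech-!-bounded-above-full-faithful}, \ref{p:abstract-cech-!-object-enhancement}, the isomorphism of Lemma~\ref{l:hom-u!-qcoh} (the paper's \eqref{eq:C!-to-qcoh}) to get the factorization in part~\ref{enum:shriek-equiv}, and the tilting Proposition~\ref{p:homotopy-categories-triang-via-dg-algebras}.\ref{hoI-via-B}. Part~\ref{enum:star-equiv} is essentially verbatim the paper's argument. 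One caveat: your paraphrase of Proposition~\ref{p:homotopy-categories-triang-via-dg-algebras} omits one of its hypotheses, namely that $z^*\colon\Hom_{\mathcal{C}}(I,J)\to\Hom_{\mathcal{C}}(P,J)$ be a quasi-isomorphism for \emph{all} $J$ in the subcategory $\mathcal{I}$; being a quasi-isomorphism in the ambient category is not by itself enough. With your choices this hypothesis happens to be automatic (in part~\ref{enum:star-equiv} because $T$ and $J$ are h-injective in $C(\Qcoh(X))$ and $\phi$ is a quasi-isomorphism of quasi-coherent complexes; in part~\ref{enum:shriek-equiv} because you take $\mathcal{I}=C^\hinj_\Qcoh(\Sh(X))$, whose objects are h-injective in $C(\Sh(X))$), so no gap results, but you should state the verification.

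In part~\ref{enum:shriek-equiv} your route genuinely differs from the paper's in the choice of $\mathcal{I}$. The paper applies the tilting proposition inside $C(\Sh(X))$ with $\mathcal{I}=C^\hinj(\Qcoh(X))$, i.e.\ complexes of quasi-coherent sheaves that are h-injective only in $C(\Qcoh(X))$; there the hypothesis on $z^*$ is \emph{not} automatic, and the paper checks it separately using \eqref{eq:C!-to-qcoh} together with $D(\Qcoh(X))\sira D_\Qcoh(\Sh(X))$. The payoff is that $C^\hinj(\Qcoh(X))\subset C(\Qcoh(X))$, so the passage from the equivalence on $[C^\hinj(\Qcoh(X))]$ to the factored functor \eqref{eq:shriek-equiv} is immediate. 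You instead take $\mathcal{I}=C^\hinj_\Qcoh(\Sh(X))$, which makes the $z^*$-hypothesis free but moves the work to the identification of the factored functor on $D(\Qcoh(X))$ with $\Hom_{C(\Sh(X))}(\mathcal{C}_!(E),-)$ on $C^\hinj_\Qcoh(\Sh(X))$; this identification does go through (for $M\in C(\Qcoh(X))$ and a quasi-isomorphism $M\to N$ with $N$ h-injective one uses Lemma~\ref{l:hom-u!-qcoh} on the source and h-injectivity on the target), but it involves a comparison across two different enhancements that you should spell out rather than call routine, since the naturality of the resolution-based comparison is exactly the kind of point that needs a sentence or two. Net effect: both arguments carry the same total weight, distributed differently.
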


\begin{proof}
  % We abbreviate $\Hom_X=\Hom_{C(\Sh(X))}$ and
  % $\End_X=\End_{C(\Sh(X))}$, and similarly for $X$ and $X
  % \times X.$ 
  %
  \ref{enum:star-equiv}
  Note that $\mathcal{C}_*(E)$ is a complex of
  quasi-coherent sheaves, and let
  $z\colon \mathcal{C}_*(E) \ra I$ be a quasi-isomorphism with $I
  \in C^\hinj(\Qcoh(X)).$
  Then the first map in  
  \begin{equation}
    \label{eq:A-qiso-via-hinj}
    A=\End_{\Cech_*(X)}(E)
    \xra{\mathcal{C}_*} \End_{C(\Qcoh(X))}(\mathcal{C}_*(E)) 
    \xra{z_*} \Hom_{C(\Qcoh(X))}(\mathcal{C}_*(E), I)
  \end{equation}
  is a morphism of dg algebras and the composition 
  is a quasi-isomorphism: its $n$-th cohomology appears as the
  first row in the following commutative diagram
  \begin{equation*}
    % \label{eq:A-qiso-via-hinj}
    \xymatrix{
      {\Hom_{[\Cech_*(X)]}(E,[n]E)]} \ar[r]^-{\mathcal{C}_*}
      \ar[rd]^-{\mathcal{C}_*}_-{\sim} 
      & 
      {\Hom_{[C(\Qcoh(X))]}(\mathcal{C}_*(E),[n]\mathcal{C}_*(E))}
      \ar[r]^-{z_*}
      \ar[d]
      &
      {\Hom_{[C(\Qcoh(X))]}(\mathcal{C}_*(E), [n]I)}
      \ar[d]^-{\sim}\\
      &
      {\Hom_{D(\Qcoh(X))}(\mathcal{C}_*(E),[n]\mathcal{C}_*(E))}
      \ar[r]^-{z_*}_-{\sim}
      &
      {\Hom_{D(\Qcoh(X))}(\mathcal{C}_*(E), [n]I)}
    }
  \end{equation*}
  whose diagonal arrow is an isomorphism by
  Proposition~\ref{p:abstract-cech-*-object-enhancement}. 
  Since $E$ is a compact generator of 
  $D(\Qcoh(X))$
  (see
  \cite[Thm.~3.1.1]{bondal-vdbergh-generators})
  we see that $I$ is a compact generator of 
  $[C^\hinj(\Qcoh(X))]$
  and can apply 
  Proposition~\ref{p:homotopy-categories-triang-via-dg-algebras}.\ref{hoI-via-B}.

  \ref{enum:shriek-equiv}
  Since $\mathcal{C}_!(E)$ is a
  bounded 
  complex with components finite products of objects
  $\leftidx{_{U_J}}{E^p}{}$, where $\emptyset \not= J \subset T$ 
  and $p \in \DZ$, 
  Lemma~\ref{l:hom-u!-qcoh} implies that
  \begin{equation}
    \label{eq:C!-to-qcoh}
    \Hom_{[C(\Sh(X))]}(\mathcal{C}_!(E), G) \sira
    \Hom_{D(\Sh(X))}(\mathcal{C}_!(E), G)        
  \end{equation}
  is an isomorphism for every $G \in C(\Qcoh(X))$.  This shows
  that the 
  functor
  \eqref{eq:shriek-equiv-precursor} maps acyclic objects to zero
  and hence
  factors uniquely to the functor~\eqref{eq:shriek-equiv} we
  claim to be an equivalence.

  Let $E \ra I$ be a quasi-isomorphism with $I \in
  C^\hinj(\Qcoh(X))$ and let $z$ be the composition
  $\mathcal{C}_!(E) \ra E \ra I$ in $C(\Sh(X))$.
  Then the composition
  \begin{equation}
    \label{eq:B-qiso-via-hinj}
    B=\End_{\Cech_!(X)}(E)
    \xra{\mathcal{C}_!} \End_{C(\Sh(X))}(\mathcal{C}_!(E)) 
    \xra{z_*} \Hom_{C(\Sh(X))}(\mathcal{C}_!(E), I)
  \end{equation}
  is a quasi-isomorphism: this is proved as the fact that
  \eqref{eq:A-qiso-via-hinj} is a quasi-isomorphism, using
  Proposition~\ref{p:abstract-cech-!-object-enhancement} and
  \eqref{eq:C!-to-qcoh}
  (we only use here that $I$ is a 
  complex of quasi-coherent sheaves).
  Similarly, 
  using the equivalence
  $D(\Qcoh(X)) \sira D_\Qcoh(\Sh(X))$ ($X$ being quasi-compact
  separated) and
  \eqref{eq:C!-to-qcoh} again, one proves that
  \begin{equation*}
    z^*\colon
    \Hom_{C(\Qcoh(X))}(I, J)=
    \Hom_{C(\Sh(X))}(I, J) \ra \Hom_{C(\Sh(X))}(\mathcal{C}_!(E), J)   
  \end{equation*}
  is a
  quasi-isomorphism for all $J \in C^\hinj(\Qcoh(X)).$
  Hence
  Proposition~\ref{p:homotopy-categories-triang-via-dg-algebras}.\ref{hoI-via-B}
  shows that the restriction of 
  \eqref{eq:shriek-equiv-precursor} to 
  $[C^\hinj(\Qcoh(X))]$
  is an equivalence.
  This obviously implies that \eqref{eq:shriek-equiv}
  is an equivalence.
\end{proof}

\begin{proposition}
  \label{p:DQcoh-XY-via-!-dg-algebra}
  Let $X$ and $Y$ be Noetherian \ref{enum:GSP}-schemes over a
  field $k$ and assume that $Y \times X$ is also Noetherian.  Let
  $E \in C(\Qcoh(X))$ and $F \in C(\Qcoh(Y))$ be bounded
  complexes of vector bundles that are generators of
  $D(\Qcoh(X))$ and $D(\Qcoh(Y)),$ respectively.  
  Fix ordered finite affine open coverings $\mathcal{U}=(U_s)_{s
    \in S}$ of 
  $X$ and $\mathcal{V}=(V_t)_{t \in T}$ of $Y.$ Consider the 
  dg ($k$-)algebras
  $A=\End_{\Cech_*(X)}(E)$ and $B=\End_{\Cech_!(Y)}(F).$ 
  Then there is a triangulated functor
  \begin{equation}
    \label{eq:shriek-box-shriek-equiv-precursor}
    \Hom_{C(\Sh(Y \times X))}(\mathcal{C}_!(F) \boxtimes
    \mathcal{C}_!(E^\cek),-) \colon [C(\Qcoh(Y \times X))]
    \ra D(B \otimes A^\opp)
  \end{equation}
  which factors uniquely to an equivalence
  \begin{equation}
    \label{eq:shriek-box-shriek-equiv}
    \Hom_{C(\Sh(Y \times X))}(\mathcal{C}_!(F) \boxtimes
    \mathcal{C}_!(E^\cek),-) \colon D(\Qcoh(Y \times X))
    \sira D(B \otimes A^\opp)
  \end{equation}
  of triangulated categories.
\end{proposition}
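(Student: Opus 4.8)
The plan is to imitate the proof of Proposition~\ref{p:DQcohX-via-!-and-*-dg-algebra}.\ref{enum:shriek-equiv}, with $Y \times X$ in place of $X$, with $\mathcal{P} := \mathcal{C}_!(F) \boxtimes \mathcal{C}_!(E^\cek)$ in the role of $\mathcal{C}_!(E)$, and with the dg algebra $B \otimes A^\opp$. First I would fix the ordered finite affine open covering $\mathcal{V} \times \mathcal{U} = (V_t \times U_s)$ of $Y \times X$; it is affine because each finite intersection $V_J \times U_K$ equals $\Spec(\Gamma(V_J, \mathcal{O}_Y) \otimes \Gamma(U_K, \mathcal{O}_X))$. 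Then I would record two facts about $\mathcal{P}$. First, $\mathcal{P}$ is a bounded complex of sheaves whose components are finite direct sums of sheaves $w_!P$, where $w \colon V_J \times U_K \hra Y \times X$ is the inclusion of an affine open subscheme and $P$ is a vector bundle on it; this uses the identity $j_!P \boxtimes k_!Q = (j\times k)_!(P \boxtimes Q)$ for open immersions $j, k$, which follows from Lemma~\ref{l:boxtimes-restriction-extension}. Second, since $\boxtimes$ is exact in each variable (Lemma~\ref{l:boxtimes-exact}) and $\mathcal{C}_!(F) \to F$, $\mathcal{C}_!(E^\cek) \to E^\cek$ are quasi-isomorphisms, we get $\mathcal{P} \sira F \boxtimes E^\cek$; and, because $E$ is a compact generator of $D(\Qcoh(X)) \sira D_\Qcoh(\Sh(X))$, the duality equivalence~\eqref{eq:duality-perf} makes $E^\cek$ a classical generator of $\mfPerf'(X)$, so that (exactly as in the proof of Theorem~\ref{t:mfPerf-abstract-Cechobj-smooth-vs-diagonal-sheaf-perfect}) $F \boxtimes E^\cek$, hence $\mathcal{P}$, is a classical generator of $\mfPerf'(Y \times X)$ and a compact generator of $D_\Qcoh(\Sh(Y \times X)) \sira D(\Qcoh(Y \times X))$, by \cite[Lemma~3.4.1, 3.1, 2.1]{bondal-vdbergh-generators}. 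I would also set up the $B \otimes A^\opp$-module structure on $\mathcal{P}$: the realization dg functor $\mathcal{C}_!$ gives $B = \End_{\Cech_!(Y)}(F) \to \End_{C(\Sh(Y))}(\mathcal{C}_!(F))$, while the isomorphism of dg categories $(-)^\cek \colon \Cech_!(X)^\opp \sira \Cech_*(X)$ (the left arrow of diagram~\eqref{eq:duality-cech-enhancements}) identifies $A = \End_{\Cech_*(X)}(E)$ with $\End_{\Cech_!(X)}(E^\cek)^\opp$ and hence gives $A^\opp \cong \End_{\Cech_!(X)}(E^\cek) \to \End_{C(\Sh(X))}(\mathcal{C}_!(E^\cek))$; applying the bifunctor $\boxtimes$ then produces a dg algebra morphism $\beta \colon B \otimes A^\opp \to \End_{C(\Sh(Y \times X))}(\mathcal{P})$, so that the functor \eqref{eq:shriek-box-shriek-equiv-precursor} is well defined.

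Next I would establish the key local statement: for every $G \in C(\Qcoh(Y \times X))$ the canonical map $\Hom_{[C(\Sh(Y \times X))]}(\mathcal{P}, G) \sira \Hom_{D(\Sh(Y \times X))}(\mathcal{P}, G)$, and likewise for all shifts of $\mathcal{P}$, is an isomorphism. This follows from Lemma~\ref{l:hom-u!-qcoh}, applied to the affine open subschemes $V_J \times U_K \subset Y \times X$, by the usual dévissage: brutal truncation of $\mathcal{P}$, passage to finite direct sums, and the five lemma comparing the cohomological functors $\Hom_{[C(\Sh)]}(-, G)$ and $\Hom_{D(\Sh)}(-, G)$ along the localization $[C(\Sh(Y\times X))] \to D(\Sh(Y\times X))$. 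Taking $G$ acyclic shows that \eqref{eq:shriek-box-shriek-equiv-precursor} sends acyclic complexes to zero, so it factors uniquely through the Verdier localization $[C(\Qcoh(Y\times X))] \to D(\Qcoh(Y\times X))$; this gives the functor \eqref{eq:shriek-box-shriek-equiv}.

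To see that \eqref{eq:shriek-box-shriek-equiv} is an equivalence, I would work on the enhancement $C^\hinj_\Qcoh(\Sh(Y\times X))$ of $D(\Qcoh(Y\times X)) \sira D_\Qcoh(\Sh(Y\times X))$ and apply Proposition~\ref{p:homotopy-categories-triang-via-dg-algebras}.\ref{hoI-via-B}. Choose quasi-isomorphisms $F \to I_F$ and $E^\cek \to I_E$ to bounded below complexes of injective quasi-coherent sheaves, together with a quasi-isomorphism $\tau \colon I_F \boxtimes I_E \to T$ to a bounded below complex of injective quasi-coherent sheaves on $Y \times X$, and let $z \colon \mathcal{P} \to T$ be the resulting quasi-isomorphism; then $T$ is h-injective as a complex of sheaves on the Noetherian scheme $Y \times X$ and, being isomorphic to $F \boxtimes E^\cek$, is a compact generator of $[C^\hinj_\Qcoh(\Sh(Y\times X))]$. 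The two inputs needed are: (ii) $z^* \colon \Hom_{C(\Sh(Y\times X))}(T, G) \to \Hom_{C(\Sh(Y\times X))}(\mathcal{P}, G)$ is a quasi-isomorphism for all $G$ in $C^\hinj_\Qcoh(\Sh(Y\times X))$, which is immediate from the local statement above and the h-injectivity of $T$ and $G$; and (i) $z_* \comp \beta \colon B \otimes A^\opp \to \Hom_{C(\Sh(Y\times X))}(\mathcal{P}, T)$ is a quasi-isomorphism. For (i) I would factor this map as $B \otimes A^\opp \to \Hom_{C(\Sh(Y))}(\mathcal{C}_!(F), I_F) \otimes \Hom_{C(\Sh(X))}(\mathcal{C}_!(E^\cek), I_E) \xra{\tau_* \comp \boxtimes} \Hom_{C(\Sh(Y\times X))}(\mathcal{P}, T)$, where the first arrow is the tensor product over $k$ of the two quasi-isomorphisms $B \to \Hom_{C(\Sh(Y))}(\mathcal{C}_!(F), I_F)$ and $A^\opp \cong \End_{\Cech_!(X)}(E^\cek) \to \Hom_{C(\Sh(X))}(\mathcal{C}_!(E^\cek), I_E)$ obtained from Proposition~\ref{p:abstract-cech-!-object-enhancement} exactly as in the proof of Proposition~\ref{p:DQcohX-via-!-and-*-dg-algebra}.\ref{enum:shriek-equiv}, and the second arrow is a quasi-isomorphism by Proposition~\ref{p:tensor-product-of-endos-vs-endos-of-boxproduct} (whose hypotheses are met because $\mathcal{C}_!(F) \cong F$ and $\mathcal{C}_!(E^\cek) \cong E^\cek$ are isomorphic to bounded complexes of vector bundles). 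Proposition~\ref{p:homotopy-categories-triang-via-dg-algebras}.\ref{hoI-via-B} then gives that $\Hom_{C(\Sh(Y\times X))}(\mathcal{P}, -) \colon [C^\hinj_\Qcoh(\Sh(Y\times X))] \sira D(B \otimes A^\opp)$ is an equivalence, whence, the source being an enhancement of $D(\Qcoh(Y\times X))$, the functor \eqref{eq:shriek-box-shriek-equiv} is an equivalence. The hard part will be (i): making sure all the $B$- and $A^\opp$-module structures genuinely match up, so that the Künneth-type quasi-isomorphism of Proposition~\ref{p:tensor-product-of-endos-vs-endos-of-boxproduct} is a quasi-isomorphism of dg $B \otimes A^\opp$-modules, and keeping the duality identification $A^\opp \cong \End_{\Cech_!(X)}(E^\cek)$ straight throughout.
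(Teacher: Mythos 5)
Your proposal is correct and follows essentially the same route as the paper's proof: the same dg algebra morphism $B\otimes A^\opp\to\End_{C(\Sh(Y\times X))}(\mathcal{C}_!(F)\boxtimes\mathcal{C}_!(E^\cek))$ built from $\mathcal{C}_!$ and the duality $(-)^\cek\colon\Cech_!(X)^\opp\sira\Cech_*(X)$, the same use of Lemma~\ref{l:hom-u!-qcoh} together with Lemma~\ref{l:boxtimes-restriction-extension} to get the factorization over $D(\Qcoh(Y\times X))$, and the same combination of injective resolutions, Proposition~\ref{p:tensor-product-of-endos-vs-endos-of-boxproduct} and Proposition~\ref{p:homotopy-categories-triang-via-dg-algebras} to conclude. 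The compatibility of module structures you flag as the delicate point is exactly what the paper's commutative diagram (with the maps $j_*\comp\mathcal{C}_!$, $i_*\comp\mathcal{C}_!\comp\alpha\inv$ and $(\tau\comp(j\boxtimes i))_*$) records, so no gap remains.
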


\begin{proof}
  Recall the duality isomorphism $(-)^\cek\colon \Cech_!(X)^\opp
  \sira \Cech_*(X)$ of dg categories from   
  \eqref{eq:duality-cech-enhancements}.
  It yields an isomorphism 
  \begin{equation*}
    \alpha \colon \End_{\Cech_!(X)}(E^\cek) \sira
    A^\opp=\End_{\Cech_*(X)}(E)^\opp 
  \end{equation*}
  of dg algebras. 
  Let
  $P:=\mathcal{C}_!(F) \boxtimes \mathcal{C}_!(E^\cek).$
  The morphism of dg algebras
  \begin{equation}
    \label{eq:BotimesAopp-EndP}
    \mathcal{C}_! \boxtimes (\mathcal{C}_! \comp \alpha\inv)
    \colon 
    B \otimes A^\opp
    \ra
    \End_{C(\Sh(Y \times X))}(P)
  \end{equation}
  then defines the triangulated functor
  \eqref{eq:shriek-box-shriek-equiv-precursor}.
  Lemmata~\ref{l:hom-u!-qcoh} 
  and
  \ref{l:boxtimes-restriction-extension}.\ref{enum:boxtimes-!-extension}
  show that
  \begin{equation*}
    % \label{eq:C!-to-qcoh}
    \Hom_{[C(\Sh(Y \times X))]}(P, G) \sira 
    \Hom_{D(\Sh(Y \times X))}(P, G)        
  \end{equation*}
  is an isomorphism for every $G \in C(\Qcoh(Y \times X))$.  
  This implies that the 
  functor
  \eqref{eq:shriek-box-shriek-equiv-precursor}
  factors uniquely to the
  functor~\eqref{eq:shriek-box-shriek-equiv} 
  we
  claim to be an equivalence.

  Recall that $F \boxtimes E^\cek$ is a compact 
  generator of $D_\Qcoh(\Sh(Y \times X))$ by
  \cite[Lemma~3.4.1]{bondal-vdbergh-generators},
  % \footnote{
  % Soll erw\"ahnen, dass $D(\Qcoh(X))\sira D_\Qcoh(\Sh(X))$ und
  % dasselbe f\"ur $Y$?
  % }
  and so is the quasi-isomorphic object $P.$  
  % because $\mathcal{C}_!(F) \boxtimes \mathcal{C}_!(E^\cek) \ra
  % F 
  % \boxtimes E^\cek$ is a
  % quasi-isomorphism.
  Let $E^\cek \ra I$ and $F \ra J$ be 
  quasi-isomorphisms
  with $I$
  and $J$ 
  bounded below 
  complexes
  of injective quasi-coherent sheaves on $X$ and $Y$,
  respectively. Let $i$ be the composition $\mathcal{C}_!(E^\cek)
  \ra 
  E^\cek \ra I$ and $j$ the composition $\mathcal{C}_!(F) \ra F
  \ra J.$  
  Then $j \boxtimes i \colon P \ra J \boxtimes I$ is a
  quasi-isomorphism.
  % \footnote{
  % again Bemerkungen zum externen Tensorprodukt.
  % }
  Let
  $\tau\colon J \boxtimes I
  \ra T$ be a quasi-isomorphism with $T$ a
  bounded below complex of injective quasi-coherent
  sheaves. 

  Consider the commutative diagram
  \begin{equation*}
    \xymatrix{
      {B \otimes A^\opp} 
      \ar[d]_-{\mathcal{C}_! \otimes 
        (\mathcal{C}_! \comp \alpha\inv)}
      \ar[rd]^-{\mathcal{C}_! \boxtimes 
        (\mathcal{C}_! \comp \alpha\inv)} 
      \\
      {
        \leftidx{_X}{(\mathcal{C}_!(F),
          \mathcal{C}_!(F))}{} 
        \otimes
        \leftidx{_X}{(\mathcal{C}_!(E^\cek), \mathcal{C}_!(E^\cek))}{}
      }
      \ar[r]_-{\boxtimes}
      \ar[d]^-{j_* \otimes i_*}
      &
      {\leftidx{_{Y \times X}}{(P,P)}{}} 
      % \ar[r]^-{\tau_*}
      \ar[d]^-{(j \boxtimes i)_*}
      % &
      % {\leftidx{_{Y \times X}}{(P, T)}{}} 
      \ar[r]^-{(\tau \comp (j \boxtimes i))_*}
      &
      {\leftidx{_{Y \times X}}{(P, T)}{}}
      \ar@{}[d]|-{\verteq}
      \\
      {\leftidx{_X}{(\mathcal{C}_!(F), J)}{}  
        \otimes 
        \leftidx{_X}{(\mathcal{C}_!(E^\cek),I)}{}}
      \ar[r]^-{\boxtimes}
      &
      {\leftidx{_{Y \times X}}{(P, J \boxtimes I)}{}}  
      \ar[r]^-{\tau_*}
      &
      {\leftidx{_{Y \times X}}{(P, T)}{}}
    }
  \end{equation*}
  where 
  we abbreviate
  $\leftidx{_?}{(-,-)}{}:=\Hom_{C(\Sh(?))}(-,-).$
  The proof that the composition in
  \eqref{eq:B-qiso-via-hinj} is a quasi-isomorphism
  implies that both $j_* \comp \mathcal{C}_!$ and $i_* \comp
  \mathcal{C}_!$ are quasi-isomorphisms.
  Hence the left vertical composition $(j_* \comp \mathcal{C}_!)
  \otimes (i_* \comp \mathcal{C}_! \comp \alpha\inv)$ is a
  quasi-isomorphism.
  The composition in the lower row is a quasi-isomorphism by
  Proposition~\ref{p:tensor-product-of-endos-vs-endos-of-boxproduct}. 
  Hence, the composition of the diagonal arrow
  $\mathcal{C}_! \boxtimes (\mathcal{C}_! \comp \alpha\inv)$,
  our morphism \eqref{eq:BotimesAopp-EndP}
  % \begin{equation*}
  %   % \label{eq:BoAopp-EndP}
  %   B \otimes A^\opp 
  %   \xra{\mathcal{C}_! \boxtimes (\mathcal{C}_! \comp \alpha\inv)}
  %   \End_{C(\Sh(X \times X))}(P)
  % \end{equation*}
  of dg algebras,
  with the upper right horizontal map $(\tau \comp (j \boxtimes
  i))_*$ is a 
  quasi-isomorphism.
  We then proceed as 
  in the proof of
  Proposition~\ref{p:DQcohX-via-!-and-*-dg-algebra}.\ref{enum:shriek-equiv}.
  %
  % we then see that
  % the restriction \eqref{eq:shriek-box-shriek-equiv-precursor}
  % 
  % Proposition~\ref{p:homotopy-categories-triang-via-dg-algebras}.\ref{hoI-via-B} 
  % yields an equivalence
  % \begin{equation*}
  %   \Hom_{C(\Sh(Y \times X))}(P,-) \colon 
  %   [C^\hinj(\Qcoh(Y \times X))] 
  %   \ra D(B \otimes A^\opp)  
  % \end{equation*}
  % is an equivalence, and so is $\theta_{Y \times X}.$
\end{proof}

Let $X$ and $Y$ be quasi-compact separated schemes over a field
$k.$ Let $p \colon Y \times X \ra X$ and $q \colon Y \times X
\ra Y$ be second 
and first projection. 
For $K \in C(\Qcoh(Y \times X))$
let $\phi_K$ be the composition 
\begin{equation*}
  \phi_K\colon 
  C(\Qcoh(X)) \xra{p^*}
  C(\Qcoh(Y \times X)) \xra{- \otimes K}
  C(\Qcoh(Y \times X)) \xra{q_*}
  C(\Qcoh(Y))
\end{equation*}
of dg functors (which is well-defined by
Lemma~\ref{l:qcqs-preserves-qcoh}).
Recall that on a quasi-compact separated scheme 
any complex of quasi-coherent sheaves admits a quasi-isomorphism
from an h-flat complex of quasi-coherent sheaves
\cite[Lemma~8]{murfet-der-cat-qcoh-sheaves}.
The Fourier-Mukai functor with kernel $K$ is defined as the 
composition
\begin{equation*}
  \Phi_K \colon D(\Qcoh(X)) \xra{p^*=\bR p^*}
  D(\Qcoh(Y \times X)) \xra{-\otimes^\bL K}
  D(\Qcoh(Y \times X)) \xra{\bR q_*}
  D(\Qcoh(Y))
\end{equation*}
of triangulated functors.

% We denote the induced functor on homotopy
% categories 
% % $[C(\Qcoh(X))] \ra [C(\Qcoh(Y))]$ 
% and also the composition  
% $[C(\Qcoh(X))] \ra [C(\Qcoh(Y))] \ra D(\Qcoh(Y))$
% by $\phi_K.$

\begin{proposition}
  \label{p:compute-FM-via-star-cech}
  Let $X$ and $Y$ be quasi-compact separated schemes over a field
  $k.$
  Let $K' \ra K$ be a quasi-isomorphism in $Z^0(C(\Qcoh(Y \times
  X)))$ 
  % with $K' \ra K$ is a quasi-isomorphism 
  with $K'$ an h-flat complex
  of quasi-coherent sheaves.
  % be an h-flat resolution in $Z^0(\Qcoh(X \times Y)).$
  Let $\mathcal{U}=(U_s)_{s \in S}$
  be an ordered finite affine open covering 
  of $X.$ 
  Then the composition $\can \comp \phi_{K'} \comp \mathcal{C}_*$
  of triangulated functors in the diagram
  \begin{equation*}
    \xymatrix{
      {[C(\Qcoh(X))]} \ar[r]^-{\mathcal{C}_*} \ar[d]^-{\can} &
      {[C(\Qcoh(X))]} \ar[r]^-{\phi_{K'}} &
      {[C(\Qcoh(Y))]} \ar[d]^-{\can} \\
      {D(\Qcoh(X))}
      \ar@{..>}[rr]^-{\ol{\phi_{K'} \comp \mathcal{C}_*}}
      &&
      {D(\Qcoh(Y))} 
    }
  \end{equation*}
  maps acyclic
  complexes to zero and hence factors uniquely
  to the indicated 
  triangulated 
  functor 
  $\ol{\phi_{K'} \comp \mathcal{C}_*}$ making the diagram
  commutative. This functor 
  $\ol{\phi_{K'} \comp \mathcal{C}_*}$ is isomorphic to the
  Fourier-Mukai functor $\Phi_K=\bR q_*(p^*(-) \otimes^\bL K).$
  Moreover, 
  all functors in the above diagram preserve coproducts.
  % Then the composition
  % \begin{equation*}
  %   [C(\Qcoh(X))] \xra{\mathcal{C}_*}
  %   [C(\Qcoh(X))] \xra{\phi_{K'}}
  %   [C(\Qcoh(Y))] \ra D(\Qcoh(Y))
  % \end{equation*}
  % of triangulated functors maps acyclic complexes to zero
  % and hence factors uniquely to a triangulated functor
  % \begin{equation*}
  %   \ol{\phi_{K'} \comp \mathcal{C}_*}
  %   \colon
  %   D(\Qcoh(X)) \ra
  %   D(\Qcoh(Y))
  % \end{equation*}
  % % Hence there is a unique functor 
  % % $\ol{\phi_{K'} \comp \mathcal{C}_*}$
  % % of triangulated categories such that the diagram
  % % % and
  % % % hence 
  % % % factors through 
  % % % % the Verdier localization functor 
  % % % $[C(\Qcoh(X))]
  % % % \ra D(\Qcoh(X))$ so that we get 
  % % % a commutative diagram
  % % \begin{equation*}
  % %   \xymatrix{
  % %     {[C(\Qcoh(X))]} \ar[rr]^-{\phi_{K'} \comp \mathcal{C}_*}
  % %     \ar[d] &&
  % %     [C(\Qcoh(Y))] \ar[d]\\
  % %     {D(\Qcoh(X))} \ar[rr]^{\ol{\phi_{K'} \comp
  % %     \mathcal{C}_*}} && 
  % %     {D(\Qcoh(Y))}
  % %   }
  % % \end{equation*}
  % % is commutative.
\end{proposition}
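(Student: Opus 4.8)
The plan is to recognize the dg functor $\phi_{K'}\comp\mathcal{C}_*$ as an explicit model computing $\bR q_*\bigl(p^*(-)\otimes^{\bL}K\bigr)$, where $p\colon Y\times X\to X$ and $q\colon Y\times X\to Y$ are the two projections. Note first that $p$ and $q$ are flat (base changes of $X\to\Spec k$ and $Y\to\Spec k$), that $Y\times X$ is quasi-compact separated, and that $q$ is a quasi-compact separated morphism of quasi-compact separated $k$-schemes, so $q_*\colon\Qcoh(Y\times X)\to\Qcoh(Y)$ is well defined and of finite cohomological dimension by Lemma~\ref{l:fin-coho-dim}; also $\mathcal{C}_*$ is a dg endofunctor of $C(\Qcoh(X))$ because the $U_s$ are affine (hence quasi-compact). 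The heart of the argument is the following claim: for every $A\in C(\Qcoh(X))$ the complex $p^*(\mathcal{C}_*(A))\otimes K'$ is a complex of $q_*$-acyclic quasi-coherent sheaves on $Y\times X$.

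\textbf{Proof of the claim.} Each term of $\mathcal{C}_*(A)$ is a finite direct sum of sheaves $\leftidx{_{U_I}}{(A^i)}{}=(j_I)_*(j_I)^*(A^i)$ with $\emptyset\neq I\subseteq S$, where $U_I=\bigcap_{s\in I}U_s$ is affine and $j_I\colon U_I\hookrightarrow X$ is an affine open immersion. Applying $p^*$ and using base change for open immersions (Lemma~\ref{l:base-change-ringed-spaces-open}.\ref{enum:proper-base-change-open}) gives $p^*\bigl((j_I)_*(j_I)^*A^i\bigr)\cong(j_I')_*\bigl(p_I^*(j_I)^*A^i\bigr)$, where $j_I'\colon Y\times U_I\hookrightarrow Y\times X$ is the base change of $j_I$ and $p_I\colon Y\times U_I\to U_I$ the restriction of $p$; the sheaf in parentheses is quasi-coherent on $Y\times U_I$. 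Tensoring (after totalization, termwise) with $K'$ and applying the projection formula for the affine open immersion $j_I'$ (Lemma~\ref{l:push-from-open-and-tensor}) identifies $(j_I')_*(M)\otimes K'^{\,m}\cong(j_I')_*\bigl(M\otimes(j_I')^*K'^{\,m}\bigr)$, again a push-forward from $Y\times U_I$ of a quasi-coherent sheaf. Finally $q\comp j_I'\colon Y\times U_I\to Y$ is affine (base change of the affine morphism $U_I\to\Spec k$) and $q$ is quasi-compact separated, so Lemma~\ref{l:u'-ls-qls-acyclic} shows each such push-forward is $q_*$-acyclic; since $q$ is quasi-compact separated, arbitrary direct sums of $q_*$-acyclic quasi-coherent sheaves are $q_*$-acyclic, proving the claim.

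\textbf{Conclusion.} If $A$ is acyclic, the natural quasi-isomorphism $A\to\mathcal{C}_*(A)$ makes $\mathcal{C}_*(A)$ acyclic, hence $p^*\mathcal{C}_*(A)$ acyclic ($p^*$ exact) and $p^*\mathcal{C}_*(A)\otimes K'$ acyclic ($K'$ h-flat); by the claim and finite cohomological dimension of $q_*$ we get $q_*\bigl(p^*\mathcal{C}_*(A)\otimes K'\bigr)\cong\bR q_*\bigl(p^*\mathcal{C}_*(A)\otimes K'\bigr)=0$ (use \cite[Lemma~12.4.(b)]{wolfgang-olaf-locallyproper} together with Lemma~\ref{l:fin-coho-dim}), so $\phi_{K'}(\mathcal{C}_*(A))$ is acyclic. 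Thus $\can\comp\phi_{K'}\comp\mathcal{C}_*$ kills the thick subcategory of acyclic complexes and factors uniquely through $\can\colon[C(\Qcoh(X))]\to D(\Qcoh(X))$, giving the functor $\ol{\phi_{K'}\comp\mathcal{C}_*}$. To identify it with $\Phi_K$, apply to $A$ the chain of natural morphisms
\begin{multline*}
  \Phi_K(A)=\bR q_*\bigl(p^*A\otimes^{\bL}K\bigr)
  \xleftarrow{\;\sim\;}\bR q_*\bigl(p^*A\otimes K'\bigr)\\
  \xrightarrow{\;\sim\;}\bR q_*\bigl(p^*\mathcal{C}_*(A)\otimes K'\bigr)
  \xleftarrow{\;\sim\;}q_*\bigl(p^*\mathcal{C}_*(A)\otimes K'\bigr)=\phi_{K'}(\mathcal{C}_*(A)),
\end{multline*}
where the first arrow is an isomorphism since $K'$ is h-flat, the second is induced by $A\to\mathcal{C}_*(A)$ after applying the exact $p^*$ and $-\otimes K'$, and the third is an isomorphism by the claim and finite cohomological dimension of $q_*$; all three are natural in $A$, yielding $\Phi_K\cong\ol{\phi_{K'}\comp\mathcal{C}_*}$. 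For the coproduct statement: $\mathcal{C}_*$ preserves coproducts because $(j_I)^*$ does (left adjoint), $(j_I)_*$ does on quasi-coherent sheaves by Lemma~\ref{l:qcqs-preserves-qcoh} ($j_I$ affine), and totalization of a double complex with a fixed finite number of columns commutes with coproducts; $p^*$ and $-\otimes K'$ preserve coproducts; $q_*$ preserves coproducts on $\Qcoh$ by Lemma~\ref{l:qcqs-preserves-qcoh}; both instances of $\can$ preserve coproducts (the coproduct in $[C(\Qcoh)]$ and in $D(\Qcoh)$ is the termwise coproduct of complexes); and $\ol{\phi_{K'}\comp\mathcal{C}_*}$ preserves coproducts since it is isomorphic to $\Phi_K$.

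The main obstacle I expect is the bookkeeping in the claim: correctly threading base change for open immersions, the affine projection formula, and Lemma~\ref{l:u'-ls-qls-acyclic} so as to land in $q_*$-acyclic quasi-coherent sheaves, compounded by the fact that $A$ (and hence everything downstream) is unbounded, which rules out naive dévissage and forces the comparison $q_*\simeq\bR q_*$ on complexes of $q_*$-acyclics via finite cohomological dimension.
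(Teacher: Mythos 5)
Your proposal is correct and follows essentially the same route as the paper: the key claim that every component of $p^*(\mathcal{C}_*(A))\otimes K'$ is $q_*$-acyclic (via affine base change, the projection formula for the affine open immersions, Lemma~\ref{l:u'-ls-qls-acyclic}, and stability of $q_*$-acyclic quasi-coherent sheaves under coproducts), then $q_*\simeq\bR q_*$ on such complexes by finite cohomological dimension, the comparison with $\Phi_K$ through the quasi-isomorphism $p^*A\otimes K'\to p^*\mathcal{C}_*(A)\otimes K'$, and the same coproduct bookkeeping. Two small repairs: the isomorphism $p^*\,(j_I)_*\cong (j_I')_*\,p_I^*$ should be cited as Lemma~\ref{l:push-affine-then-pull} rather than Lemma~\ref{l:base-change-ringed-spaces-open}.\ref{enum:proper-base-change-open} (which is the $!$-extension statement), and the coproduct-preservation of $\ol{\phi_{K'}\comp\mathcal{C}_*}$ should be deduced directly from the commutative square and essential surjectivity of $\can$ (as you do for the other functors), not from the isomorphism with $\Phi_K$, whose coproduct-preservation is not otherwise established.
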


\begin{proof}
  Let $p \colon Y \times X \ra X$ and $q \colon Y \times X
  \ra Y$ be the projections. 
  Let $A \in C(\Qcoh(X)).$
  We claim that all components of $p^*(\mathcal{C}_*(A)) \otimes
  K'$ are acyclic with respect to the functor
  $q_*\colon \Qcoh(Y \times X) \ra \Qcoh(Y)$.

  Let $B \in \Qcoh(X)$ and $M \in \Qcoh(Y \times X).$
  If $u\colon U \subset X$ is the inclusion of an affine open
  subset, then
  \begin{equation*}
    p^*(\leftidx{_U}{B}{}) \otimes M 
    \sira
    \leftidx{_{Y \times U}}{(p^*B)}{} \otimes M 
    \sira
    \leftidx{_{Y \times U}}{(p^*B \otimes M)}{}
  \end{equation*}
  by Lemmata~\ref{l:push-affine-then-pull}
  and \ref{l:push-from-open-and-tensor} because $u$ and $u' \colon
  Y \times U \ra Y \times X$ are affine. Since also $q \comp u'$
  is affine, 
  Lemma~\ref{l:u'-ls-qls-acyclic} shows that 
  $\leftidx{_{Y \times U}}{(p^*B \otimes M)}{}$ is acyclic with
  respect to $q_*\colon \Qcoh(Y \times X) \ra \Qcoh(Y).$
  Since arbitrary coproducts of $q_*$-acyclic quasi-coherent
  sheaves are again $q_*$-acyclic, by \cite[Lemma~B.6, Cor.~B.9]{thomason-trobaugh-higher-K-theory},
  each component of
  $p^*(\mathcal{C}_*(A)) \otimes K'$ 
  is $q_*$-acyclic.
  
  Note moreover that $q_*\colon \Qcoh(Y \times X) \ra \Qcoh(Y)$
  has finite
  cohomological dimension
  (by Lemma~\ref{l:fin-coho-dim}).
  % and \cite[B.9, B.11]{thomason-trobaugh-higher-K-theory}.
  Then \cite[Lemma~12.4.(b)]{wolfgang-olaf-locallyproper}
  % Lemma~\ref{l:compute-right-derived}.\ref{enum:RF-for-fin-cohodim}
  shows that 
  \begin{equation*}
    q_*(p^*(\mathcal{C}_*(A)) \otimes K')
    \ra
    \bR q_*(p^*(\mathcal{C}_*(A)) \otimes K')
  \end{equation*}
  is an isomorphism in $D(\Qcoh(Y)).$
  Note that 
  \begin{equation*}
    p^*(A) \otimes K' 
    \ra
    p^*(\mathcal{C}_*(A)) \otimes K'
  \end{equation*}
  is a quasi-isomorphism because $p$ is flat and $K'$ is h-flat.
  The object on the left is isomorphic to 
  $p^*(A) \otimes^\bL K$ in $D(\Qcoh(Y \times X)).$
  Applying $\bR q_*$ yields isomorphisms
  \begin{equation*}
    \bR q_*(p^*(A) \otimes^\bL K)
    \cong \bR q_*(p^*(A) \otimes K') 
    \sira
    \bR q_*(p^*(\mathcal{C}_*(A)) \otimes K')
  \end{equation*}
  in $D(\Qcoh(Y)).$
  In particular,
  $q_*(p^*(\mathcal{C}_*(A)) \otimes K')$ is acyclic if $A$ is
  acyclic. 
  This shows that $\ol{\phi_{K'} \comp \mathcal{C}_*}$ exists
  and is isomorphic to $\Phi_K.$
  
  The functors $\can$ in the diagram preserve coproducts by
  \cite[Lemma~1.5]{neeman-homotopy-limits}.
  Lemma~\ref{l:qcqs-preserves-qcoh} shows that the functors $q_*$
  and $u_*$ preserves coproducts, where $u \colon U \subset X$ is
  the inclusion of an affine open subset. This implies that
  $\phi_{K'}= q_* (p^*(-) \otimes K')$, $\mathcal{C}_*$, and
  $\ol{\phi_{K'} \comp \mathcal{C}_*}$ preserve coproducts.
\end{proof}

\begin{theorem}
  [{Fourier-Mukai kernels and dg bimodules}]
  \label{t:translate-fm-to-dg}
  Let $X$ and $Y$ be Noetherian \ref{enum:GSP}-schemes over a
  field $k$ and assume that $Y \times X$ is also Noetherian.  Let
  $E \in C(\Qcoh(X))$ and $F \in C(\Qcoh(Y))$ be bounded
  complexes of vector bundles that are generators of
  $D(\Qcoh(X))$ and $D(\Qcoh(Y)),$ respectively.  
  Fix ordered finite affine open coverings $\mathcal{U}=(U_s)_{s
    \in S}$ of 
  $X$ and $\mathcal{V}=(V_t)_{t \in T}$ of $Y$ and let
  $A=\End_{\Cech_*(X)}(E)$ and $B=\End_{\Cech_!(Y)}(F).$ 
  Denote the equivalences of triangulated categories 
  provided by Propositions~\ref{p:DQcohX-via-!-and-*-dg-algebra}
  and \ref{p:DQcoh-XY-via-!-dg-algebra}
  as follows
  % Then the triangulated functors 
  \begin{align}
    \notag
    % \label{eq:thetaX}
    \theta_X:= \Hom_{C(\Qcoh(X))}(\mathcal{C}_*(E),-) \colon &
    [C^\hinj(\Qcoh(X))] 
    \sira D(A),\\
    \notag
    % \label{eq:thetaY}
    \theta_Y:= \Hom_{C(\Sh(Y))}(\mathcal{C}_!(F),-) \colon &
    D(\Qcoh(Y)) 
    \sira D(B), \\
    % \text{ and}\\
    \notag
    % \label{eq:thetaYX}
    \theta_{Y \times X} :=
    \Hom_{C(\Sh(Y \times X))}(\mathcal{C}_!(F) \boxtimes
    \mathcal{C}_!(E^\cek),-) \colon & D(\Qcoh(Y \times X))
    \sira D(B \otimes A^\opp),
  \end{align}
  and let $\can\inv$ be a
  quasi-inverse of the equivalence $\can \colon
  [C^\hinj(\Qcoh(X))] \sira D(\Qcoh(X)).$
  % are equivalences and 
  Then for any $K \in D(\Qcoh(Y \times X))$ 
  with corresponding $M=\theta_{Y \times X}(K) \in D(B \otimes
  A^\opp)$ 
  the diagram
  \begin{equation*}
    % \label{eq:FM-diagram}
    \xymatrix{
      {D(\Qcoh(X))} 
      \ar[d]_-{\theta_X \comp \can\inv}^-{\sim}
      \ar[rrrr]^-{\Phi_K=\bR q_*(p^*(-) \otimes^{\bL} K)} &&&&
      {D(\Qcoh(Y))} \ar[d]_-{\theta_Y}^-{\sim}\\
      {D(A)} 
      \ar[rrrr]^-{- \otimes^{\bL}_A M}
      % \ar[rrrr]^-{- \otimes^{\bL}_A \theta_{Y \times X}(K)}
      % \ar@{=>}[rrrru]^-{\tau^K}
      &&&&
      {D(B)}
    }
  \end{equation*}
  commutes up to an isomorphism $\tau^K$ of triangulated
  functors. 
  % \begin{equation*}
  %   \xymatrix{
  %     {[C^\hinj(\Qcoh(X))]} \ar[r]^-{\can}_-{\sim}
  %     \ar[d]^-{\theta_X}_{\sim} & 
  %     {D(\Qcoh(X))} \ar[rrr]^-{\bR q_*(K
  %       \otimes^{\bL}_{\mathcal{O}_{Y \times X}} p^*(-))} &&&
  %     {D(\Qcoh(Y))} \ar[d]^-{\theta_Y}_{\sim}\\
  %     {D(A)} \ar[rrrr]^-{- \otimes^{\bL}_A \theta_{Y \times
  %         X}(K)}
  %     % \ar@{=>}[rrrru]^-{\tau^K}
  %     &&&&
  %     {D(B)}
  %   }
  % \end{equation*}
  % commutes up to a natural isomorphism $\tau^K$\footnote{
  %   auskommentiert in diagram.
  % } 
  % of triangulated
  % functors for each $K \in D(\Qcoh(Y \times X)).$
\end{theorem}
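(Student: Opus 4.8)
The plan is to reduce the commutativity of the square to a single identification of bimodules associated with the generator $E$, and then to establish that identification by a chain of canonical isomorphisms, in the spirit of the displayed computations in the proofs of Theorems~\ref{t:mfPerf-abstract-Cechobj-smooth-vs-diagonal-sheaf-perfect} and~\ref{t:D-b-Coh-smooth}.

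\emph{Reduction to the generator.} Fix a quasi-isomorphism $K' \ra K$ in $Z^0(C(\Qcoh(Y \times X)))$ with $K'$ h-flat. Since the dg functor $\Hom_{C(\Sh(Y \times X))}(\mathcal{C}_!(F) \boxtimes \mathcal{C}_!(E^\cek),-)$ kills acyclic complexes of quasi-coherent sheaves and so factors through $D(\Qcoh(Y \times X))$ (proof of Proposition~\ref{p:DQcoh-XY-via-!-dg-algebra}, using Lemma~\ref{l:hom-u!-qcoh} and its analogue for external tensor products), the bimodule $M = \theta_{Y \times X}(K)$ is represented in $D(B \otimes A^\opp)$ by the genuine dg module $\mathcal{M} := \Hom_{C(\Sh(Y \times X))}(\mathcal{C}_!(F) \boxtimes \mathcal{C}_!(E^\cek),K')$. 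By Proposition~\ref{p:compute-FM-via-star-cech}, $\Phi_K$ is isomorphic to $\ol{\phi_{K'} \comp \mathcal{C}_*}$, so (using Proposition~\ref{p:DQcohX-via-!-and-*-dg-algebra}.\ref{enum:shriek-equiv}) the composite $\theta_Y \comp \Phi_K$ is modeled, on the enhancement $[C^\hinj(\Qcoh(X))]$, by the dg functor $I \mapsto \Hom_{C(\Sh(Y))}(\mathcal{C}_!(F), q_*(p^*\mathcal{C}_*(I) \otimes K'))$. Both $\theta_Y \comp \Phi_K$ and $(-\otimes^\bL_A M) \comp \theta_X \comp \can\inv$ are triangulated functors $D(\Qcoh(X)) \ra D(B)$ preserving arbitrary coproducts ($\Phi_K$ by Proposition~\ref{p:compute-FM-via-star-cech}, the rest formally), $D(\Qcoh(X)) \sira D_\Qcoh(\Sh(X))$ is generated by the compact object $\mathcal{C}_*(E) \sira E$, and both functors arise from dg functors; hence, as in the proofs of Theorems~\ref{t:mfPerf-abstract-Cechobj-smooth-vs-diagonal-sheaf-perfect} and~\ref{t:D-b-Coh-smooth} (via Proposition~\ref{p:homotopy-categories-triang-via-dg-algebras}), each is of the form $-\otimes^\bL_A (\text{its value on the free module } A)$, and it suffices to identify these two bimodules. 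The object of $D(A)$ that corresponds to $E$ under $\theta_X \comp \can\inv$ is the free module $A$, so the task is to produce an isomorphism in $D(B \otimes A^\opp)$
\[
  \theta_Y(\Phi_K(E)) \;\cong\; M ,
\]
where the right $A$-action on the left side is the tautological one coming from $\mathcal{C}_* \colon A \ra \End_{C(\Qcoh(X))}(\mathcal{C}_*(E))$.

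\emph{The chain of isomorphisms.} By the above, $\theta_Y(\Phi_K(E))$ is represented by $\Hom_{C(\Sh(Y))}(\mathcal{C}_!(F), q_*(p^*\mathcal{C}_*(E) \otimes K'))$. Apply Corollary~\ref{c:compatible} — with the quasi-coherent sheaf there taken componentwise equal to $K'$ — to the complex $(E^\cek)^\subset \in C(\Vb^\subset(X))$; since all $U_s$ are affine, the morphism $\tau'$ is an isomorphism, and (using that $(-)^\cek$ sends $(E^\cek)^\subset$ to $E_\supset$) it reads $p^*\mathcal{C}_*(E) \otimes K' \sira \sheafHom(p^*\mathcal{C}_!(E^\cek), K')$. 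Combining this with the $(\otimes, \sheafHom)$ and $(q^*, q_*)$ adjunctions and the definition $\mathcal{C}_!(F) \boxtimes \mathcal{C}_!(E^\cek) = q^*\mathcal{C}_!(F) \otimes p^*\mathcal{C}_!(E^\cek)$ gives isomorphisms of complexes
\begin{align*}
  \Hom_{C(\Sh(Y))}(\mathcal{C}_!(F), q_*(p^*\mathcal{C}_*(E) \otimes K'))
  & \cong \Hom_{C(\Sh(Y))}(\mathcal{C}_!(F), q_*\sheafHom(p^*\mathcal{C}_!(E^\cek), K'))\\
  & \cong \Hom_{C(\Sh(Y \times X))}(q^*\mathcal{C}_!(F) \otimes p^*\mathcal{C}_!(E^\cek), K')\\
  & = \Hom_{C(\Sh(Y \times X))}(\mathcal{C}_!(F) \boxtimes \mathcal{C}_!(E^\cek), K') \;=\; \mathcal{M} .
\end{align*}
It remains to check that this identification is compatible with the left $B$-action (carried throughout by $\mathcal{C}_!(F)$) and with the right $A$-action: by naturality of $\tau'$ in the object $(E^\cek)^\subset \in C(\Vb^\subset(X))$ together with the duality isomorphism $(-)^\cek \colon \Cech_!(X)^\opp \sira \Cech_*(X)$ of~\eqref{eq:duality-cech-enhancements}, the $A$-action through $\mathcal{C}_*(E)$ goes over to the action $\mathcal{C}_! \comp \alpha\inv$ through $\mathcal{C}_!(E^\cek)$ used to define the $B \otimes A^\opp$-structure on $M$ (cf.~\eqref{eq:BotimesAopp-EndP}). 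This yields $\theta_Y(\Phi_K(E)) \cong M$ in $D(B \otimes A^\opp)$, and hence the theorem.

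\emph{The main obstacle.} The diagram chase of the second step is routine; the substance lies in the first step — making precise that the commutativity of the whole square is controlled by a single bimodule and that the right $A$-action obtained by transporting $\Phi_K$ across the three equivalences is exactly the one dictated by the \v{C}ech duality $\alpha$. This is where the functoriality packaged in Proposition~\ref{p:homotopy-categories-triang-via-dg-algebras} must be used, exactly as in the two smoothness theorems. A minor but essential technical point is to feed the h-flat representative $K'$ (rather than an h-injective one) into $\theta_{Y \times X}$: this is permitted because $\mathcal{C}_!(F) \boxtimes \mathcal{C}_!(E^\cek)$ already computes derived $\Hom$ against arbitrary quasi-coherent complexes, and it is what lets the chain of the second step proceed without the homotopy-equivalence corrections (Corollary~\ref{c:obtain-homotopy-equiv}) that were needed in Theorems~\ref{t:mfPerf-abstract-Cechobj-smooth-vs-diagonal-sheaf-perfect} and~\ref{t:D-b-Coh-smooth}.
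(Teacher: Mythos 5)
Your proposal is correct and follows essentially the same route as the paper's proof: replace the kernel by an h-flat complex $K'$, reduce via coproduct preservation to checking on the compact generator coming from $E$, and identify $\theta_Y(\Phi_K(E))$ with $M$ as a dg $B \otimes A^\opp$-module by the adjunction chain based on Corollary~\ref{c:compatible} (in the paper, Lemma~\ref{l:C*-tensor-isom-sHomC!dual}), with the $A$-action matched through the \v{C}ech duality. The one step you leave implicit --- that the composite $\theta_Y \comp \Phi_K \comp \can$ is of the form $-\otimes_A^{\bL}(\text{its value at } A)$ --- is exactly what the paper makes explicit by constructing the evaluation transformation $\sigma^{K'}$ and verifying it on the generator; note that Proposition~\ref{p:homotopy-categories-triang-via-dg-algebras} only provides the equivalences $\theta$, not this representation of the functor, so that construction (or an equivalent one) is still needed.
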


\begin{proof}
  In this proof we abbreviate
  $\Hom_X=\Hom_{C(\Sh(X))}$
  and
  $\End_X=\End_{C(\Sh(X))},$ and similarly for $Y$ and $Y \times X.$

  Let $K \in D(\Qcoh(Y \times X))$ and let $K' \ra K$ be a
  quasi-isomorphism 
  % in $Z^0(C(\Qcoh(Y \times X)))$ 
  % with $K' \ra K$ is a quasi-isomorphism 
  with $K'$ an h-flat complex
  of quasi-coherent sheaves.
  Consider the composition
  \begin{equation*}
    C(\Qcoh(X)) \xra{\mathcal{C}_*} C(\Qcoh(X)) \xra{\phi_{K'}}
    C(\Qcoh(Y)) \xra{\Hom_Y(\mathcal{C}_!(F),-)} C(B)
  \end{equation*}
  of dg functors where we write $C(B)$ for the dg category of dg
  $B$-modules.
  Applying it to $\mathcal{C}_*(E)$ we define
  \begin{equation*}
    M':=
    \Hom_Y(\mathcal{C}_!(F),\phi_{K'}(\mathcal{C}_*(\mathcal{C}_*(E))))   \end{equation*}
  which is naturally a dg $B \otimes A^\opp$-module where the
  $B$-action is obvious and the $A^\opp$-action comes from the
  morphism
  of dg algebras
  \begin{equation*}
    A=\End_{\Cech_*(X)}(E)
    \xra{\mathcal{C}_*} \End_X(\mathcal{C}_*(E))
    \xra{\phi_{K'} \comp \mathcal{C}_*} 
    \End_Y(\phi_{K'}(\mathcal{C}_*(\mathcal{C}_*(E)))). 
  \end{equation*}
  
  Consider the diagram
  \begin{equation*}
    \xymatrix{
      {[C^\hinj(\Qcoh(X))]} \ar[r]^-{\iota}
      \ar[rd]^-{\sim}_-{\can}
      % ^-{\iota} 
      \ar[dd]_-{\theta_X=\Hom_X(\mathcal{C}_*(E),-)}^-{\sim} &
      {[C(\Qcoh(X))]} \ar[r]^-{\mathcal{C}_*} \ar[d] &
      {[C(\Qcoh(X))]} \ar[r]^-{\phi_{K'}} &
      {[C(\Qcoh(Y))]} \ar[d] \\
      & 
      {D(\Qcoh(X))}
      \ar[rr]^-{\ol{\phi_{K'} \comp \mathcal{C}_*}}
      &&
      {D(\Qcoh(Y))} 
      \ar[d]_-{\theta_Y=\Hom_Y(\mathcal{C}_!(F),-)}^-{\sim} \\
      {D(A)} \ar[rrr]^-{-\otimes^\bL_A M'} 
      &&& {D(B)}
    }
  \end{equation*}
  % \begin{equation*}
  %   \xymatrix{
  %   {[C^\hinj(\Qcoh(X))]} \ar[r]^-{\iota} 
  %   \ar[d]^-{\Hom_X(\mathcal{C}_*(E),-)} &
  %   {[C(\Qcoh(X))]} \ar[r]^-{\mathcal{C}_*} &
  %   {[C(\Qcoh(X))]} \ar[r]^-{\phi_{K'}} &
  %   {[C(\Qcoh(Y))]} \ar[r] &
  %   {D(\Qcoh(Y))} \ar[d]_-{\Hom_Y(\mathcal{C}_!(F),-)} \\
  %   {D(A)} \ar[rrrr]^-{-\otimes^\bL_A M'} 
  %   &&&& {D(B)}
  % }
  % \end{equation*}
  of triangulated functors.
  The triangle and the little rectangle in this diagram are
  commutative, by Proposition~\ref{p:compute-FM-via-star-cech}.
  Let 
  \begin{equation*}
    \sigma^{K'} \colon
    (- \otimes_A^\bL M') \comp \theta_X \ra
    \theta_Y \comp \ol{\phi_{K'} \comp \mathcal{C}_*} \comp \can
  \end{equation*}
  % $\sigma^{K'}$
  be the morphism of triangulated
  functors $[C^\hinj(\Qcoh(X))] \ra D(B)$  
  between the two outer
  paths from the top left corner to the bottom right corner
  given on objects as follows: for $J \in [C^\hinj(\Qcoh(X))]$ let
  $\sigma^{K'}_J$ be the composition
  \begin{equation*}
    \Hom_X(\mathcal{C}_*(E), J) \otimes_A^\bL M'
    % \Hom_Y(\mathcal{C}_!(F),
    % \phi_{K'}(\mathcal{C}_*(\mathcal{C}_*(E)))) 
    \ra
    \Hom_X(\mathcal{C}_*(E), J) \otimes_A M'  
    % \Hom_Y(\mathcal{C}_!(F),
    % \phi_{K'}(\mathcal{C}_*(\mathcal{C}_*(E))))  
    \ra
    \Hom_Y(\mathcal{C}_!(F),
    \phi_{K'}(\mathcal{C}_*(J)))
  \end{equation*}
  with obvious first map and second map given by $f
  \otimes m \mapsto \phi_{K'}(\mathcal{C}_*(f)) \comp m.$

  We claim that $\sigma^{K'}$ is an isomorphism of functors.
  Note that 
  source $(- \otimes_A^\bL M') \comp \theta_X$ and target
  $\theta_Y \comp \ol{\phi_{K'} \comp \mathcal{C}_*} \comp \can$
  of $\sigma^{K'}$ commute with coproducts, by
  Proposition~\ref{p:compute-FM-via-star-cech}
  (however, this is not true for the functor $\iota$ in the above
  diagram, cf.\ Remark~\ref{rem:coprod-h-inj}).
  Hence it is enough to show that $\sigma^{K'}$ evaluates to an
  isomorphism at an arbitrary compact generator of 
  $[C^\hinj(\Qcoh(X))].$

  As in the proof of Proposition~\ref{p:DQcohX-via-!-and-*-dg-algebra}.\ref{enum:star-equiv},
  the complex $\mathcal{C}_*(E)$ of
  quasi-coherent sheaves admits a quasi-isomorphism
  $z\colon \mathcal{C}_*(E) \ra I$ to an object $I
  \in C^\hinj(\Qcoh(X)).$ Then $I$ 
  is a compact generator
  of
  $[C^\hinj(\Qcoh(X))]$ and we need to show that $\sigma^{K'}_I$
  is an isomorphism.
  Recall that the composition in \eqref{eq:A-qiso-via-hinj}
  is a quasi-isomorphism. If we use it to compute the left
  derived tensor product, the morphism $\sigma^{K'}_I$ is given by the
  map
  \begin{equation*}
    \sigma_I^{K'} \colon \Hom_Y(\mathcal{C}_!(F),
    \phi_{K'}(\mathcal{C}_*(\mathcal{C}_*(E))))=M'
    =
    A \otimes_A M' 
    \ra
    % \xra{z_* \comp \mathcal{C}_*}
    % \Hom_X(\mathcal{C}_*(E), I) \otimes_A M' \ra
    \Hom_Y(\mathcal{C}_!(F),
    \phi_{K'}(\mathcal{C}_*(I)))
  \end{equation*}
  which maps an element $m \in M'$ to $\phi_{K'}(\mathcal{C}_*(z))
  \comp m.$
  Note that 
  \begin{equation*}
    \phi_{K'}(\mathcal{C}_*(z))\colon
    \phi_{K'}(\mathcal{C}_*(\mathcal{C}_*(E))) \ra
    \phi_{K'}(\mathcal{C}_*(I))     
  \end{equation*}
  is a quasi-isomorphism between complexes of quasi-coherent
  sheaves, by
  Proposition~\ref{p:compute-FM-via-star-cech}, and that
  $\Hom_Y(\mathcal{C}_!(F),-)$ maps such quasi-isomorphisms 
  to quasi-isomorphisms of dg $B$-modules, by
  Proposition~\ref{p:DQcohX-via-!-and-*-dg-algebra}.\ref{enum:shriek-equiv}. This
  shows that $\sigma_I^{K'}$ is an isomorphism in $D(B)$
  and proves that $\sigma^{K'}$ is an isomorphism of functors.
  
  Since we already know that
  $\ol{\phi_{K'} \comp \mathcal{C}_*}$ and $\Phi_K=\bR q_*(p^*(-)
  \otimes^\bL K)$ are isomorphic, by 
  Proposition~\ref{p:compute-FM-via-star-cech},
  it remains to show that
  $\theta_{Y \times X}(K) \cong M'$ in $D(B \otimes A^\opp).$

  Observe that we have isomorphisms of dg $B
  \otimes A^\opp$-modules
  \begin{align*}
    % \notag
    \theta_{Y \times X}(K')= &
    \Hom_{Y \times X}(\mathcal{C}_!(F) \boxtimes
    \mathcal{C}_!(E^\cek),K')\\
    % \notag
    = &
    \Hom_{Y \times X}(q^*(\mathcal{C}_!(F)) \otimes p^*(\mathcal{C}_!(E^\cek)),K') 
    % & \text{(by definition)}
    \\  
    % \notag
    = &
    \Hom_{Y \times X}(q^*(\mathcal{C}_!(F)),
    \sheafHom(p^*(\mathcal{C}_!(E^\cek)),K')) & \text{(by
      adjunction)}\\  
    % \notag
    = &
    \Hom_{Y}(\mathcal{C}_!(F),
    q_*(\sheafHom(p^*(\mathcal{C}_!(E^\cek)),K'))) & \text{(by
      adjunction)}\\  
    % \label{eq:isom-comp-with-duality}
    \sila &
    \Hom_{Y}(\mathcal{C}_!(F),
    q_*(p^*(\mathcal{C}_*(E)) \otimes K')) & \text{(by
      Lemma~\ref{l:C*-tensor-isom-sHomC!dual})}\\  
    % \notag
    = &
    \Hom_{Y}(\mathcal{C}_!(F),
    \phi_{K'}(\mathcal{C}_*(E))).
    % & \text{(by definition)}
    % \\
    % \label{eq:qis-compatible}
    % \xrightarrow[\text{quasi-isom.}]{\phi_{K'}(\rho_{\mathcal{C}_*(E)}) \comp ?} & 
    % \Hom_{Y}(\mathcal{C}_!(F),
    % \phi_{K'}(\mathcal{C}_*(\mathcal{C}_*(E)))) &
    % \text{(see below)}\\
    % \notag = & M'.
  \end{align*}
  Let $\rho_{\mathcal{C}_*(E)} \colon \mathcal{C}_*(E) \ra
  \mathcal{C}_*(\mathcal{C}_*(E))$ be the canonical
  quasi-isomorphism 
  from $\mathcal{C}_*(E)$ to its $*$-\v{C}ech resolution (see
  \eqref{eq:F-*-Cech-resolution}). 
  The proof of Proposition~\ref{p:compute-FM-via-star-cech}
  shows that all components of $p^*(\mathcal{C}_*(E)) \otimes K'$
  and
  of $p^*(\mathcal{C}_*(\mathcal{C}_*(E))) \otimes K'$
  are acyclic with respect to the functor $q_*\colon \Qcoh(Y
  \times X) \ra 
  \Qcoh(Y)$ of finite cohomological dimension. 
  Since 
  $p^*(\rho_{\mathcal{C}_*(E)}) \otimes \id_{K'}$
  is a quasi-isomorphism,
  \cite[Lemma~12.4.(b)]{wolfgang-olaf-locallyproper}
  % Lemma~\ref{l:compute-right-derived}.\ref{enum:RF-for-fin-cohodim}
  shows that its $q_*$-image
  $\phi_{K'}(\rho_{\mathcal{C}_*(E)})$ is a quasi-isomorphism
  between complexes of quasi-coherent sheaves. As above, 
  $\Hom_Y(\mathcal{C}_!(F),-)$ preserves such
  quasi-isomorphisms, so that we get a quasi-isomorphism
  \begin{equation*}
    \Hom_{Y}(\mathcal{C}_!(F),
    \phi_{K'}(\mathcal{C}_*(E)))
    % & \text{(by definition)}
    % \label{eq:qis-compatible}
    \xra{\phi_{K'}(\rho_{\mathcal{C}_*(E)})
      \comp ?}  
    % \xrightarrow[\text{quasi-isom.}]{\phi_{K'}(\rho)
    % \comp ?}  
    \Hom_{Y}(\mathcal{C}_!(F),
    \phi_{K'}(\mathcal{C}_*(\mathcal{C}_*(E)))) 
    =M'
  \end{equation*}
  of dg $B$-modules. Let us argue that it is in fact a
  morphism of dg $B 
  \otimes A^\opp$-modules
  where the $A^\opp$-action on the left-hand side comes from the
  morphism 
  of dg algebras
  \begin{equation*}
    A=\End_{\Cech_*(X)}(E)
    \xra{\mathcal{C}_*} \End_X(\mathcal{C}_*(E))
    \xra{\phi_{K'}} 
    \End_Y(\phi_{K'}(\mathcal{C}_*(E))). 
  \end{equation*}
  Given any 
  $a \in A,$ applying the morphism $\rho:\id \ra
  \mathcal{C}_*$ of functors to the morphism $\mathcal{C}_*(a) \colon
  \mathcal{C}_*(E) \ra 
  \mathcal{C}_*(E)$ shows that
  % \begin{equation*}
  $\rho_{\mathcal{C}_*(E)} \comp \mathcal{C}_*(a) =
  \mathcal{C}_*(\mathcal{C}_*(a)) \comp \rho_{\mathcal{C}_*(E)}.$
  % \end{equation*}
  We then obtain for any
  $f \in \Hom_{Y}(\mathcal{C}_!(F), \phi_{K'}(\mathcal{C}_*(E)))$
  that
  \begin{multline*}
    \phi_{K'}(\rho_{\mathcal{C}_*(E)}) \comp \phi_{K'}(\mathcal{C}_*(a))
    \comp f  
    =\phi_{K'}(\rho_{\mathcal{C}_*(E)} \comp \mathcal{C}_*(a)) \comp 
    f  
    =\phi_{K'}(\mathcal{C}_*(\mathcal{C}_*(a)) \comp
    \rho_{\mathcal{C}_*(E)}) \comp f\\
    =\phi_{K'}(\mathcal{C}_*(\mathcal{C}_*(a))) \comp
    \phi_{K'}(\rho_{\mathcal{C}_*(E)}) \comp f.  
  \end{multline*}
  
  Combining the above (quasi-)isomorphisms of dg $B \otimes
  A^\opp$-modules yields that $\theta_{Y \times X}(K) \cong
  \theta_{Y \times X}(K')$ and 
  $M'$ are isomorphic in $D(B \otimes A^\opp)$.
\end{proof}

\begin{lemma}
  \label{l:C*-tensor-isom-sHomC!dual}
  Let $X$ and $Y$ be schemes over a field $k.$ 
  Assume that $X$ is
  quasi-compact separated
  % let
  % $E \in C(\Qcoh(X))$ be a bounded
  % complex of vector bundles, 
  and fix an ordered finite affine open
  covering $\mathcal{U}=(U_s)_{s 
    \in S}$ of 
  $X.$ 
  % and let
  % $A'=\End_{\Cech_!(X)}(E).$
  Let $L$ be a complex of 
  quasi-coherent 
  sheaves on $Y \times X.$
  Then there is an isomorphism
  \begin{equation*}
    p^*(\mathcal{C}_*((-)^\cek))
    \otimes
    L
    \sira
    \sheafHom(p^*(\mathcal{C}_!(-)),L) 
  \end{equation*}
  of dg functors
  \begin{equation*}
    \Cech_!(X)^\opp \ra C(\Sh(Y \times X)).
  \end{equation*}
  In particular, if 
  $E$ is a bounded complex of vector bundles on $X,$ plugging
  in $E^\cek$ yields an isomorphism
  \begin{equation*}
    \gamma \colon p^*(\mathcal{C}_*(E))
    \otimes
    L
    \sira
    \sheafHom(p^*(\mathcal{C}_!(E^\cek)),L) 
  \end{equation*}
  in $Z^0(C(\Qcoh(Y \times X)))$
  which is compatible with the left action of the dg algebra
  \begin{equation*}
    \End_{\Cech_*(X)}(E) \xsila{(-)^\cek} \End_{\Cech_!(X)}(E^\cek)^\opp  
  \end{equation*}
  % $\End_{\Cech_!(X)}(E^\cek)^\opp \sira \End_{\Cech_*(X)}(E).$
  in the sense that the diagram
  \begin{equation*}
    \xymatrix{
      {\End_{\Cech_*(X)}(E)}
      \ar[d]^-{p^*(\mathcal{C}_*(-)) \otimes L} &&
      {\End_{\Cech_!(X)}(E^\cek)^\opp}
      \ar[ll]_-{(-)^\cek}^-{\sim}
      \ar[d]^-{\sheafHom(p^*(\mathcal{C}_!(-)),L)}\\
      {\End_{C(\Sh(Y \times X))}(p^*(\mathcal{C}_*(E)) \otimes
        L)} \ar[rr]^-{\gamma \comp ? \comp \gamma\inv}_-{\sim} &&
      {\End_{C(\Sh(Y \times X))}(\sheafHom(p^*(\mathcal{C}_!(E^\cek)),L))} 
    }
  \end{equation*}
  % in the category 
  of dg algebras commutes.
\end{lemma}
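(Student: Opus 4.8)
The plan is to obtain the natural isomorphism by upgrading the morphism $\tau'$ of Corollary~\ref{c:compatible} first from a single sheaf to the complex $L$, and then from $\Vb^\subset(X)$ to $\Cech_!(X)$. Recall that $\tau'$ is, for each quasi-coherent sheaf $M$ on $Y\times X$, a natural transformation between the two functors on $\Vb^\subset(X)^\opp$ with object values $p^*(u_*(P^\cek))\otimes M$ and $\sheafHom(p^*(u_!P),M)$, and that it is an isomorphism on every object $(U,P)$ whose structural inclusion $U\hra X$ is affine. First I would note that $\tau'$ is manifestly natural in $M$ (it is assembled from adjunction units and counits, the projection formula, and the canonical isomorphism $P^\cek\otimes(-)\cong\sheafHom(P,-)$, all natural in the second variable); totalizing over the components of $L$ with the standard signs therefore yields a dg natural transformation, still denoted $\tau'$, between the corresponding dg functors $C(\Vb^\subset(X))^\opp\to C(\Sh(Y\times X))$.

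Next I would evaluate on the objects of $\Cech_!(X)$. For a bounded complex of vector bundles $P$, the complex $P^\subset\in C(\Vb^\subset(X))$ is degreewise a finite direct sum of objects $(U_J,P^i|_{U_J})$ with $U_J=\bigcap_{j\in J}U_j$; each $U_J$ is affine because $X$ is separated and the $U_s$ are affine, so Corollary~\ref{c:compatible} makes $\tau'$ an isomorphism on each summand, hence, by additivity and compatibility with shifts, an isomorphism on all of $P^\subset$. Unwinding the definitions then finishes the first part: by the discussion around diagram~\eqref{eq:duality-cech-enhancements} the duality $(-)^\cek$ sends $P^\subset$ to $(P^\cek)_\supset$, and $\mathcal{C}_*$, $\mathcal{C}_!$ are the realizations of $(-)_\supset$ and $(-)^\subset$; so precomposing $\tau'$ with the dg functor $\Cech_!(X)^\opp\to C(\Vb^\subset(X))^\opp$, $P\mapsto P^\subset$, is exactly the asserted isomorphism $p^*(\mathcal{C}_*((-)^\cek))\otimes L\sira\sheafHom(p^*(\mathcal{C}_!(-)),L)$ of dg functors $\Cech_!(X)^\opp\to C(\Sh(Y\times X))$. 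I would also record that both sides take values in $C(\Qcoh(Y\times X))$, since $\mathcal{C}_*(P^\cek)\in C(\Qcoh(X))$ by Lemma~\ref{l:qcqs-preserves-qcoh} (the $U_s$ being affine) and $p^*$, $-\otimes L$ preserve quasi-coherence; evaluating at $E^\cek$ and composing with the canonical isomorphism $(E^\cek)^\cek\cong E$ then gives $\gamma$.

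For the compatibility with the dg algebra action, the plan is to recognise the displayed square as the naturality square of the isomorphism of functors just constructed, evaluated at an endomorphism $b\colon E^\cek\to E^\cek$ of the object $E^\cek$ of $\Cech_!(X)$. Concretely: the dg functor $p^*(\mathcal{C}_*(-))\otimes L$ sends $b^\cek\in\End_{\Cech_*(X)}(E)$ to $p^*(\mathcal{C}_*(b^\cek))\otimes\id_L$, which is the left vertical map; the dg functor $\sheafHom(p^*(\mathcal{C}_!(-)),L)$ sends $b$ (an element of $\End_{\Cech_!(X)}(E^\cek)$, acting through the opposite algebra because $\sheafHom$ reverses composition) to $\sheafHom(p^*(\mathcal{C}_!(b)),L)$, the right vertical map; and, by the very definition of the duality, the top isomorphism $\End_{\Cech_!(X)}(E^\cek)^\opp\xsira{(-)^\cek}\End_{\Cech_*(X)}(E)$ sends $b$ to $b^\cek$ (under $(E^\cek)^\cek\cong E$). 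With these identifications the naturality square for $b$ is verbatim the square in the statement.

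The step I expect to take the most care is the first one: verifying that $\tau'$, originally defined objectwise for a single sheaf out of adjunction and projection-formula isomorphisms, really does assemble, after totalizing over the cohomological degrees of both $P$ and $L$, into a dg natural transformation with all signs consistent. This is routine once the totalization conventions already fixed for $\mathcal{C}_*$, $\mathcal{C}_!$ and the duality $(-)^\cek$ in Section~\ref{sec:vcech-enhanc} are in force, and everything else is formal bookkeeping with the functors introduced there.
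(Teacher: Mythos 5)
Your proposal is correct and follows essentially the same route as the paper, whose proof simply invokes Corollary~\ref{c:compatible} together with the fact that $X$ is separated so all the intersections $U_I$ ($\emptyset\neq I\subset S$) are affine. The details you supply (naturality of $\tau'$ in the second variable, totalization over the degrees of $P$ and $L$, and reading the dg-algebra compatibility square as a naturality square at an endomorphism of $E^\cek$) are exactly the routine verifications the paper leaves implicit.
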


\begin{proof}
  This follows from
  Corollary~\ref{c:compatible} because $X$ is separated and all
  the inclusions $U_I \subset X$ for $I \subset S$ are affine.
  % \footnote{Note that $\mathcal{C}_!(E^\cek)$ is a bounded
  % complex, hence no problems with infinite products oder so...}
\end{proof}

\begin{remark}
  \label{rem:coprod-h-inj}
  Let $X$ be a Noetherian separated scheme.
  Since $\can \colon [C^\hinj(\Qcoh(X))] \ra D(\Qcoh(X))$ is
  an equivalence, 
  $[C^\hinj(\Qcoh(X))]$ has arbitrary coproducts. 
  In general, however, it is 
  not true
  that the inclusion functor $\iota \colon [C^\hinj(\Qcoh(X))]
  \ra [C(\Qcoh(X))]$ commutes with coproducts.
  % (compare this statement with
  % Theorem~\ref{t:injective-in-qcoh-vs-all-OX}.\ref{enum:inj-and-inj-qcoh-arbitrary-sums}).
  Surprisingly, this is related to regularity of $X$ as explained
  in (the proof of) the following
  Proposition~\ref{p:coprod-h-inj}. 
\end{remark}

\begin{proposition}
  \label{p:coprod-h-inj}
  Let $X$ be a Noetherian separated scheme. Then the following
  conditions are equivalent where coproducts are formed in 
  $Z^0(C(\Qcoh(X)))$.
  \begin{enumerate}
  \item 
    \label{enum:coprod-hinj}
    all coproducts of h-injective complexes of
    quasi-coherent sheaves 
    % (formed in $Z^0(C(\Qcoh(X)))$ or $[C(\Qcoh(X))]$) 
    are h-injective; 
  \item 
    \label{enum:coprod-fibrant}
    all coproducts of fibrant complexes of
    quasi-coherent sheaves 
    % (formed in $Z^0(C(\Qcoh(X)))$)
    are fibrant;
  \item 
    \label{enum:Perf-DbCoh}
    $\mfPerf(X)=D^b(\Coh(X)).$
  \end{enumerate}
  These conditions imply regularity of $X.$ If $X$ is of finite
  dimension, they are equivalent 
  to regularity of $X.$
\end{proposition}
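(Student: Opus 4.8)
The plan is to prove the three implications \ref{enum:coprod-hinj}$\Leftrightarrow$\ref{enum:coprod-fibrant}, $\lnot$\ref{enum:Perf-DbCoh}$\Rightarrow\lnot$\ref{enum:coprod-hinj}, and \ref{enum:Perf-DbCoh}$\Rightarrow$\ref{enum:coprod-fibrant}; once all three conditions are known to be equivalent, the last two sentences of the proposition are exactly Proposition~\ref{p:regular-vs-singularity-cat} applied to condition~\ref{enum:Perf-DbCoh} (it gives \ref{enum:Perf-DbCoh}$\Rightarrow$``$X$ regular'' in general and the converse when $\dim X<\infty$). For \ref{enum:coprod-hinj}$\Leftrightarrow$\ref{enum:coprod-fibrant} I would use that, by Remark~\ref{rem:fibrant-explained}, a complex of quasi-coherent sheaves is fibrant exactly when it is an h-injective complex of injective quasi-coherent sheaves, and that on the Noetherian scheme $X$ a coproduct of injective quasi-coherent sheaves is injective (Theorem~\ref{t:injective-in-qcoh-vs-all-OX}.\ref{enum:inj-and-inj-qcoh-arbitrary-sums}). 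Thus a coproduct of fibrant complexes is automatically componentwise injective, so under \ref{enum:coprod-hinj} it is even fibrant; conversely, given h-injective complexes one passes to homotopy-equivalent fibrant replacements (a quasi-isomorphism between h-injective complexes is a homotopy equivalence, and a coproduct of homotopy equivalences is one) and invokes \ref{enum:coprod-fibrant} together with the homotopy-invariance of h-injectivity.

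For $\lnot$\ref{enum:Perf-DbCoh}$\Rightarrow\lnot$\ref{enum:coprod-hinj}: if $\mfPerf(X)\neq D^b(\Coh(X))$, then by intelligent truncation and thickness of $\mfPerf(X)$ there is a coherent sheaf $F$ which is not perfect, hence not a compact object of $D(\Qcoh(X))\sira D_\Qcoh(\Sh(X))$. Choose a family $(Y_\lambda)$ of objects witnessing non-compactness of $F$ and h-injective complexes $F_\lambda$ representing the $Y_\lambda$. Since $F$ is a single coherent sheaf on a Noetherian scheme, $\Hom_{\mathcal{O}_X}(F,-)$ commutes with coproducts of quasi-coherent sheaves (the sheaf $\sheafHom(F,-)$ does, as $F$ is finitely presented, and $\Gamma(X,-)$ commutes with filtered colimits of quasi-coherent sheaves), so $\Hom_{C(\Qcoh(X))}(F,\bigoplus_\lambda F_\lambda)=\bigoplus_\lambda\Hom_{C(\Qcoh(X))}(F,F_\lambda)$ as complexes (because $F$ sits in a single degree, so no products over degrees intervene). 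Passing to $H^0$ gives $\Hom_{[C(\Qcoh(X))]}(F,\bigoplus_\lambda F_\lambda)=\bigoplus_\lambda\Hom_{[C(\Qcoh(X))]}(F,F_\lambda)=\bigoplus_\lambda\Hom_{D(\Qcoh(X))}(F,Y_\lambda)$. Were $\bigoplus_\lambda F_\lambda$ h-injective, the left-hand side would equal $\Hom_{D(\Qcoh(X))}(F,\bigoplus_\lambda Y_\lambda)$, contradicting the choice of $(Y_\lambda)$; hence \ref{enum:coprod-hinj} fails.

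The remaining implication \ref{enum:Perf-DbCoh}$\Rightarrow$\ref{enum:coprod-fibrant} is the main obstacle, and I would attack it by reformulation: it suffices to show that \emph{every acyclic complex $\mathcal{A}$ of injective quasi-coherent sheaves on $X$ is contractible}, since then the cone of a fibrant resolution $\bigoplus_\lambda F_\lambda\to\widetilde{F}$ of a coproduct of fibrant complexes — itself componentwise injective by Theorem~\ref{t:injective-in-qcoh-vs-all-OX}.\ref{enum:inj-and-inj-qcoh-arbitrary-sums} — is such an $\mathcal{A}$, so the resolution is a homotopy equivalence and $\bigoplus_\lambda F_\lambda$ is fibrant. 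Under \ref{enum:Perf-DbCoh}, $X$ is regular (Proposition~\ref{p:regular-vs-singularity-cat}). I would then show the cocycle sheaves $\mathcal{Z}^n$ of $\mathcal{A}$ are injective in $\Qcoh(X)$: this is local, and on an affine $U_j=\Spec R_j$ of a finite cover, $\mathcal{A}|_{U_j}$ is an acyclic complex of injective $R_j$-modules (Theorem~\ref{t:injective-in-qcoh-vs-all-OX}.\ref{enum:inj-qcoh-restrict-to-inj-qcoh}) over a regular Noetherian ring. Using that over such a ring every finitely generated module — in particular each $R_j/\primp$ — has finite projective dimension (Auslander--Buchsbaum--Serre, together with the observation that the non-free loci of the iterated syzygies of a finitely generated module form a descending chain of closed subsets of the Noetherian space $\Spec R_j$ with empty intersection), a dimension shift in $\Ext^\bullet_{R_j}(R_j/\primp,-)$ along the short exact sequences $0\to Z^n\to\mathcal{A}^n|_{U_j}\to Z^{n+1}\to 0$ forces $\Ext^{\geq 1}_{R_j}(R_j/\primp,Z^n)=0$ for all primes $\primp$, hence $Z^n$ is injective over $R_j$ by dévissage and Baer's criterion. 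Since a quasi-coherent sheaf on a locally Noetherian scheme that restricts to an injective on each member of an affine open cover is itself injective, each $\mathcal{Z}^n$ is injective; then the sequences $0\to\mathcal{Z}^n\to\mathcal{A}^n\to\mathcal{Z}^{n+1}\to 0$ split and $\mathcal{A}$ decomposes as a coproduct of contractible two-term complexes. The part I expect to require the most care is exactly this last implication — the globalization from the ring-theoretic statement to injectivity of the sheaves $\mathcal{Z}^n$, and keeping the argument uniform when $X$ has infinite Krull dimension, where the projective dimensions of the $R_j/\primp$ are finite but unbounded.
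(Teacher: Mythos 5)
Your proof is correct, but it takes a genuinely different route from the paper's. The paper reduces everything to Krause's theorem \cite{krause-stable-derived}: $[C(\InjQcoh(X))]$ is compactly generated with compacts equivalent to $D^b(\Coh(X))$, the canonical functor $Q\colon [C(\InjQcoh(X))] \ra D(\Qcoh(X))$ has the inclusion of the fibrant complexes as right adjoint, and so preservation of coproducts by that inclusion is equivalent (by the adjunction and compact generation) to $Q$ preserving compacts, i.e.\ to $\mfPerf(X)=D^b(\Coh(X))$; the equivalence \ref{enum:coprod-hinj}$\Leftrightarrow$\ref{enum:coprod-fibrant} and the final regularity statement are handled exactly as you do. You instead split the key equivalence into two bare-hands implications: \ref{enum:coprod-hinj}$\Rightarrow$\ref{enum:Perf-DbCoh} by testing compactness of a non-perfect coherent sheaf $F$ against a coproduct of h-injective resolutions (using that $\Hom(F,-)$ commutes with coproducts in $\Qcoh(X)$ and that $F$ is concentrated in one degree, so no products over degrees appear — this is sound), and \ref{enum:Perf-DbCoh}$\Rightarrow$\ref{enum:coprod-fibrant} by first extracting regularity from Proposition~\ref{p:regular-vs-singularity-cat} and then proving that over a regular Noetherian scheme every acyclic complex of injective quasi-coherent sheaves is contractible, so that a fibrant resolution of a componentwise-injective coproduct is a homotopy equivalence. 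What the paper's route buys is brevity and uniformity, at the price of invoking the full machinery of the stable derived category; what yours buys is a self-contained, more elementary argument, at the price of two standard inputs not available inside the paper and needing outside citations: that injectivity of quasi-coherent sheaves on a locally Noetherian scheme is local on an affine open cover (the paper's Theorem~\ref{t:injective-in-qcoh-vs-all-OX} only gives the restriction direction), and that every finitely generated module over a regular Noetherian ring — possibly of infinite Krull dimension — has finite projective dimension; your parenthetical argument via the stabilizing descending chain of non-free loci of syzygies in the Noetherian space $\Spec R_j$ does establish the latter correctly, and together with the dimension shift and Baer/dévissage step your contractibility claim goes through. As a side effect your argument even shows that regularity alone (without the finite-dimension hypothesis) implies all three conditions, which is slightly stronger than what the proposition asserts; this is consistent with, not contradicted by, Proposition~\ref{p:regular-vs-singularity-cat}.
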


The main argument of the following proof is due to Henning Krause.

\begin{proof}
  Let $C^\fib(\Qcoh(X))$ denote the full dg subcategory of
  $C(\Qcoh(X))$ of fibrant objects. 
  We use results explained in Remark~\ref{rem:fibrant-explained}.

  \ref{enum:coprod-hinj} $\Rightarrow$ \ref{enum:coprod-fibrant}:
  Let $(F_l)_{l \in L}$ be a family of fibrant complexes of
  quasi-coherent sheaves. Since all $F_l$ are h-injective and
  degreewise injective quasi-coherent,
  $\bigoplus F_l$ is h-injective by assumption and degreewise
  injective quasi-coherent by
  Theorem~\ref{t:injective-in-qcoh-vs-all-OX}.\ref{enum:inj-and-inj-qcoh-arbitrary-sums}. Hence
  $\bigoplus F_l$ is fibrant.

  \ref{enum:coprod-fibrant} $\Rightarrow$ \ref{enum:coprod-hinj}:
  Let $(I_l)_{l \in L}$ be a family of h-injective complexes of
  quasi-coherent sheaves. 
  Choose quasi-isomorphisms $I_l \ra F_l$ with $F_l$
  fibrant 
  complexes of quasi-coherent sheaves. 
  These morphisms are homotopy
  equivalences because all $I_l$ and $F_l$ are h-injective. Hence
  $\bigoplus I_l 
  \ra \bigoplus F_l$ is a homotopy equivalence. Since $\bigoplus
  F_l$ is fibrant by assumption, in particular h-injective,
  $\bigoplus I_l$ is h-injective.

  \ref{enum:coprod-hinj} $\Leftrightarrow$ \ref{enum:Perf-DbCoh}:
  We first reformulate \ref{enum:coprod-hinj}.
  % and \ref{enum:coprod-fibrant}.
  % Denote the inclusions by 
  Let $\iota \colon [C^\hinj(\Qcoh(X))] \ra [C(\Qcoh(X))]$ and
  $\iota' \colon [C^\fib(\Qcoh(X))] \ra [C(\Qcoh(X))]$ denote the
  inclusions. Since $Z^0(C(\Qcoh(X))) \ra [C(\Qcoh(X)]$ preserves
  coproducts, condition~\ref{enum:coprod-hinj} holds if and only
  if $\iota$ preserves coproducts, and this is the case if and
  only if $\iota'$ preserves coproducts because
  $[C^\fib(\Qcoh(X))] \sira [C^\hinj(\Qcoh(X))]$ is an
  equivalence.  Moreover, $[C(\InjQcoh(X))]$ is cocomplete 
  (by
  Theorem~\ref{t:injective-in-qcoh-vs-all-OX}.\ref{enum:inj-and-inj-qcoh-arbitrary-sums}) and
  the obvious functor $[C(\InjQcoh(X))] \ra [C(\Qcoh(X))]$
  preserves coproducts 
  and obviously detects them. This implies that 
  $\iota'$ preserves coproducts if and only if $\iota'' \colon
  [C^\fib(\Qcoh(X))] \ra [C(\InjQcoh(X))]$ preserves coproducts.

  Recall from \cite[Thm.~1.1]{krause-stable-derived}
  that $[C(\InjQcoh(X))]$ is compactly generated, and
  that the functor $Q\colon [C(\InjQcoh(X))] \ra
  D(\Qcoh(X))$ 
  induces an equivalence 
  \begin{equation}
    \label{eq:DbCoh-renormalized}
    [C(\InjQcoh(X))]^\cpt \sira D^b(\Coh(X))  
  \end{equation}
  and has a right adjoint functor.
  If we identify
  $D(\Qcoh(X)) \cong
  [C^\fib(\Qcoh(X))]$ this right adjoint is
  given by $\iota''.$ 
  This implies that the functor $\iota''$
  % $\iota \colon [C^\fib(\Qcoh(X))]
  % \ra [C(\Qcoh(X))]$
  preserves coproducts if and only if 
  % its left adjoint 
  $Q$
  preserves compact objects (use the adjunction $(Q, \iota'')$
  and that  
  $[C(\InjQcoh(X))]$ is compactly generated) if and only if
  $\mfPerf(X) = 
  D^b(\Coh(X))$ (use \eqref{eq:DbCoh-renormalized} and
  $D(\Qcoh(X))^\cpt=\mfPerf(X) \subset D^b(\Coh(X))$).
  % and 
  % $[C(\InjQcoh(X))]^\cpt \sira D^b(\Coh(X))$).
  Thus \ref{enum:coprod-hinj} $\Leftrightarrow$
  \ref{enum:Perf-DbCoh}. 
  The last claim follows from 
  Proposition~\ref{p:regular-vs-singularity-cat}.
\end{proof}
  
\appendix

\section{Sheaf homomorphisms and external
  tensor products} 
\label{sec:some-results-sheaf-hom-external-prod}

In this appendix we use calligraphic letters like $\mathcal{E}$,
$\mathcal{F}$ for sheaves and quasi-coherent sheaves, and
ordinary letters like $P$, $Q$ for vector bundles.

\subsection{Some base change isomorphisms}
\label{sec:some-base-change}

\begin{lemma}
  \label{l:base-change-ringed-spaces-open}
  Let $f\colon (Y, \mathcal{O}_Y) \ra (X, \mathcal{O}_X)$ be a
  morphism of ringed spaces. Let $U \subset X$ be an open subset
  and $V:= f\inv(U)$. Consider $U$ and $V$ as ringed spaces with
  structure sheaves $\mathcal{O}_X|_U$ and $\mathcal{O}_Y|_V$ so
  that we have a cartesian diagram
  \begin{equation*}
    \xymatrix{
      {V} \ar[r]^-{v} \ar[d]^-{f'} & {Y} \ar[d]^-{f}\\
      {U} \ar[r]^-{u} & {X}
    }
  \end{equation*}
  of ringed spaces. Then:
  \begin{enumerate}
  \item 
    \label{enum:push-and-restrict-to-open}
    For $\mathcal{G}$ a sheaf of $\mathcal{O}_Y$-modules,
    there is a natural 
    isomorphism $u^*f_*\mathcal{G} \sira f'_*
    v^* \mathcal{G}$ of sheaves of $\mathcal{O}_U$-modules.
  \item 
    \label{enum:proper-base-change-open}
    For $\mathcal{F}$ a sheaf of $\mathcal{O}_U$-modules,
    there is a natural isomorphism 
    $f^* u_!\mathcal{F} \sira v_! f'^*\mathcal{F}$
    of $\mathcal{O}_Y$-modules.
  \end{enumerate}
\end{lemma}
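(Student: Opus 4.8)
The plan is to prove part~\ref{enum:push-and-restrict-to-open} by a direct computation on sections, and then to deduce part~\ref{enum:proper-base-change-open} from it by uniqueness of adjoints.

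For part~\ref{enum:push-and-restrict-to-open}: since $u$ is an open immersion, $u^*$ is just the restriction functor $\mathcal{H} \mapsto \mathcal{H}|_U$ on $\mathcal{O}_X$-modules, and likewise $v^*$ is restriction to $V$. First I would record the elementary fact that for an open subset $W \subseteq U$ one has $f^{-1}(W) \subseteq f^{-1}(U) = V$, so that $f'^{-1}(W) = f^{-1}(W) \cap V = f^{-1}(W)$. Evaluating both sheaves on such a $W$ then gives $(u^* f_* \mathcal{G})(W) = (f_*\mathcal{G})(W) = \mathcal{G}(f^{-1}(W))$ and $(f'_* v^* \mathcal{G})(W) = (\mathcal{G}|_V)(f'^{-1}(W)) = \mathcal{G}(f^{-1}(W))$. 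These identifications are visibly compatible with the restriction maps and with the $\mathcal{O}_X|_U$-module structures --- on both sides $\mathcal{O}_X(W)$ acts on $\mathcal{G}(f^{-1}(W))$ through the ring homomorphism $\mathcal{O}_X(W) \to \mathcal{O}_Y(f^{-1}(W))$ --- so one obtains an equality, and in particular a natural isomorphism, of sheaves of $\mathcal{O}_U$-modules.

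For part~\ref{enum:proper-base-change-open}: I would invoke that extension by zero along an open immersion is left adjoint to restriction, $u_! \dashv u^*$ and $v_! \dashv v^*$, together with $f^* \dashv f_*$ and $f'^* \dashv f'_*$. Composing adjoints, $f^* u_!$ is left adjoint to $u^* f_*$ and $v_! f'^*$ is left adjoint to $f'_* v^*$; by part~\ref{enum:push-and-restrict-to-open} these two right adjoints are naturally isomorphic, so $f^* u_! \cong v_! f'^*$ by uniqueness of left adjoints, which is the claim. Alternatively one can exhibit the morphism by hand: $v^* f^* u_! \mathcal{F} = f'^* u^* u_! \mathcal{F} = f'^* \mathcal{F}$ using $f \circ v = u \circ f'$ and $u^* u_! = \id$, whence $v_! v^* f^* u_! \mathcal{F} \cong v_! f'^* \mathcal{F}$; and the counit $v_! v^* f^* u_! \mathcal{F} \to f^* u_! \mathcal{F}$ is an isomorphism because $f^* u_! \mathcal{F}$ has vanishing stalks over $Y \setminus V$, as $(f^* \mathcal{H})_y \cong \mathcal{H}_{f(y)} \otimes_{\mathcal{O}_{X,f(y)}} \mathcal{O}_{Y,y}$ and $u_! \mathcal{F}$ has vanishing stalks over $X \setminus U$.

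There is essentially no serious obstacle here: both statements are formal consequences of the definitions. The only points that need a little care are the bookkeeping of the $\mathcal{O}$-module structures throughout, the (trivial) observation $f'^{-1}(W) = f^{-1}(W)$ for $W \subseteq U$ used in part~\ref{enum:push-and-restrict-to-open}, and --- in the adjunction argument for part~\ref{enum:proper-base-change-open} --- the remark that the resulting isomorphism of left adjoints is automatically natural in $\mathcal{F}$ and compatible with the realization functors used in the body of the paper, since everything is governed by the universal properties involved.
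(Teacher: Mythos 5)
Your proposal is correct and matches the paper's proof in substance: part (a) is the same section-wise computation $(u^*f_*\mathcal{G})(W)=\mathcal{G}(f^{-1}(W))=(f'_*v^*\mathcal{G})(W)$, and for part (b) the paper's main argument is exactly your backup stalk computation, while it explicitly notes your primary route (deducing (b) from (a) via the adjunctions $u_!\dashv u^*$, $v_!\dashv v^*$, $f^*\dashv f_*$, $f'^*\dashv f'_*$ and uniqueness of left adjoints) as an equivalent alternative. So the only difference is which of the two arguments for (b) you foreground, and both are sound.
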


\begin{proof}
  \ref{enum:push-and-restrict-to-open}:
  The morphism comes from $f'^*u^*f_* =v^*f^*f_* \ra v^*$
  by adjunction.
  % Let $\mathcal{G} \in \Sh(Y).$ 
  For $W \subset U$
  open we have
  $(u^*(f_*\mathcal{G}))(W)=(f_*\mathcal{G})(W)
  =\mathcal{G}(f\inv(W))=
  \mathcal{G}(f'^{-1}(W))
  =
  (v^*\mathcal{G})(f'^{-1}(W))
  =
  (f'_*(v^*\mathcal{G}))(W).$

  \ref{enum:proper-base-change-open}:
  The morphism in the lemma
  % \eqref{eq:proj-fml-open}
  is constructed in the usual way, see
  e.\,g.\ \cite{wolfgang-olaf-locallyproper}.
  The stalk of $f^* u_!\mathcal{F}$ 
  % the sheaf on the left 
  at a point $y \in Y$ is 
  \begin{equation*}
    % (f^* u_!\mathcal{F})_y 
    % =
    (\mathcal{O}_Y \otimes_{f\inv\mathcal{O}_X} f\inv u_!
    \mathcal{F})_y
    =
    \mathcal{O}_{Y,y} \otimes_{\mathcal{O}_{X,f(y)}} (u_!
    \mathcal{F})_{f(y)}
    =
    \begin{cases}
      \mathcal{O}_{Y,y} \otimes_{\mathcal{O}_{X,f(y)}}
      \mathcal{F}_{f(y)} & \text{if $f(y) \in U,$}\\  
      0 & \text{otherwise.}
    \end{cases}
  \end{equation*}
  On the other hand, we have
  $(v_! f'^*\mathcal{F})_y = \mathcal{O}_{V,y}
  \otimes_{\mathcal{O}_{U,f'(y)}} \mathcal{F}_{f'(y)}$ if $y \in
  V,$ and $(v_! f'^*\mathcal{F})_y =0$ otherwise.   
  One may also deduce \ref{enum:proper-base-change-open}
  from \ref{enum:push-and-restrict-to-open} (and conversely)
  using the Yoneda 
  embedding and the obvious adjunctions. 
\end{proof}

\begin{lemma}
  \label{l:push-affine-then-pull}
  Let
  \begin{equation*}
    \xymatrix{
      {Y'} \ar[d]^-{a'} \ar[r]^-{g'} 
      & {Y} \ar[d]^-{a}\\
      {X'} \ar[r]^-{g} 
      & {X}
    }
  \end{equation*}
  be a cartesian diagram of schemes.
  If $a$ is an affine morphism,
  there 
  is
  an isomorphism $g^*a_* \sira a'_*g'^*$ of
  functors $\Qcoh(Y) \ra \Qcoh(X').$
\end{lemma}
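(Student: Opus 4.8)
The plan is to first produce the base change morphism $g^{*}a_{*}\to a'_{*}g'^{*}$ in a way that makes it obviously a natural transformation of functors $\Qcoh(Y)\to\Qcoh(X')$, and then to verify that it is an isomorphism by localizing on $X$ and $X'$ and reducing to an elementary statement about base change of modules along a ring homomorphism. For the construction: since $a\comp g'=g\comp a'$, the unit $\id\to g'_{*}g'^{*}$ of the adjunction $(g'^{*},g'_{*})$ induces a morphism $a_{*}\to a_{*}g'_{*}g'^{*}=(a\comp g')_{*}g'^{*}=(g\comp a')_{*}g'^{*}=g_{*}a'_{*}g'^{*}$, and applying the adjunction $(g^{*},g_{*})$ yields the desired $g^{*}a_{*}\to a'_{*}g'^{*}$. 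I would remark that this lands in quasi-coherent sheaves: $g^{*}$ and $g'^{*}$ always preserve quasi-coherence; $a$ is affine, hence quasi-compact and quasi-separated, so $a_{*}$ preserves quasi-coherence by Lemma~\ref{l:qcqs-preserves-qcoh}; and $a'$, being a base change of the affine morphism $a$, is again affine, so the same applies to $a'_{*}$. Thus the morphism is a well-defined natural transformation $\Qcoh(Y)\to\Qcoh(X')$.

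Next I would check that this morphism is an isomorphism, which is a local question on $X$ and on $X'$: given an affine open $U\subset X$ with preimage $g^{-1}(U)\subset X'$ covered by affine opens $U'$, the square obtained by restricting to $a^{-1}(U)$, $a'^{-1}(U')$, $U$, $U'$ is again cartesian with the vertical map affine, and the base change morphism restricts to the corresponding one; so I may assume $X=\Spec R$ and $X'=\Spec R'$ are affine, with $g$ corresponding to a ring homomorphism $R\to R'$. Since $a$ is affine, $Y=\Spec S$ for the $R$-algebra $S:=\Gamma(Y,\mathcal{O}_{Y})$, and the cartesian square identifies $Y'=\Spec(S\otimes_{R}R')$ with $a'$ and $g'$ induced by the evident ring maps. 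Under the equivalence between $\Qcoh(\Spec S)$ and the category of $S$-modules, an object of $\Qcoh(Y)$ is $\widetilde{M}$ for an $S$-module $M$; then $a_{*}\widetilde{M}$ is $M$ regarded over $R$, so $g^{*}a_{*}\widetilde{M}$ corresponds to $M\otimes_{R}R'$, while $g'^{*}\widetilde{M}$ corresponds to $M\otimes_{S}(S\otimes_{R}R')=M\otimes_{R}R'$ over $S\otimes_{R}R'$, and applying $a'_{*}$ gives $M\otimes_{R}R'$ over $R'$. Both sides are therefore the quasi-coherent sheaf associated with the $R'$-module $M\otimes_{R}R'$, and one identifies the base change morphism constructed above with this canonical isomorphism. (Alternatively one could simply cite \citestacks{02KF} or \cite[Prop.~1.4.15]{EGAIII-i}.)

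I do not expect a genuine obstacle here. The only step requiring a little attention is checking that the abstractly defined base change morphism coincides, in the affine model, with the obvious isomorphism of modules rather than with some twist of it; this is a routine unwinding of the units and counits of the adjunctions involved, and I would only sketch it, noting that on global sections over $U'$ the morphism $g^{*}a_{*}\widetilde{M}\to a'_{*}g'^{*}\widetilde{M}$ is the canonical map $M\otimes_{R}R'\to M\otimes_{R}R'$ arising from $M\to M\otimes_{R}R'$ together with the $R'$-linear structure, which is the identity.
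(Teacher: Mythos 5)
Your proof is correct, and it is more self-contained than what the paper does: the paper disposes of this lemma by citing \cite[Prop.~12.6]{goertz-wedhorn-AGI} for the affine base change isomorphism and invoking Lemma~\ref{l:qcqs-preserves-qcoh} to know that $a_*$ and $a'_*$ land in quasi-coherent sheaves, whereas you construct the base change morphism explicitly from the units and counits of the adjunctions $(g^*,g_*)$, $(g'^*,g'_*)$ and then verify it is an isomorphism by reducing to the affine situation $X=\Spec R$, $X'=\Spec R'$, $Y=\Spec S$, $Y'=\Spec(S\otimes_R R')$ and computing that both sides are the sheaf associated to $M\otimes_R R'$. Your treatment of quasi-coherence (affineness of $a$ and of its base change $a'$, plus Lemma~\ref{l:qcqs-preserves-qcoh}) matches the paper's. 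The only step you pass over quickly is the compatibility of the abstract base change morphism with restriction to open subsets of $X$ and $X'$, which is what legitimizes the localization; this is routine (and is in effect Lemma~\ref{l:base-change-ringed-spaces-open}.\ref{enum:push-and-restrict-to-open} applied to the open immersions involved), but if you were writing this out in full it deserves one explicit sentence. The parenthetical alternative citations at the end are inessential and could simply be replaced by the reference the paper itself uses.
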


\begin{proof}
  See \cite[Prop.~12.6]{goertz-wedhorn-AGI}.
  % or \cite[Cor.~1.5.2]{EGAII} 
  Both
  $a_*$ and $a'_*$
  preserve quasi-coherence 
  by Lemma~\ref{l:qcqs-preserves-qcoh}.
\end{proof}

\begin{lemma}
  \label{l:push-from-open-and-tensor}
  Let $a \colon Y \ra X$ be an
  affine morphism of schemes.
  Given $\mathcal{F} \in \Qcoh(Y)$ and $\mathcal{G} \in \Qcoh(X)$
  there is a canonical isomorphism
  $(a_*\mathcal{F}) \otimes \mathcal{G} \sira a_*(\mathcal{F}
  \otimes a^*\mathcal{G})$ of quasi-coherent sheaves.
  % This
  % isomorphism is natural in $\mathcal{F}$ and $\mathcal{G}.$
  % \footnote{
  %  mittlerweile will vielleicht tensorfaktoren andersrum...
  %  egal, ist wohl nicht viel besser, was Klammern angeht
  % }
\end{lemma}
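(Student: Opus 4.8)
The statement to prove is Lemma~\ref{l:push-from-open-and-tensor}: for an affine morphism $a\colon Y \ra X$ of schemes and quasi-coherent sheaves $\mathcal{F} \in \Qcoh(Y)$, $\mathcal{G} \in \Qcoh(X)$, there is a canonical isomorphism $(a_*\mathcal{F}) \otimes \mathcal{G} \sira a_*(\mathcal{F} \otimes a^*\mathcal{G})$. This is the projection formula in the special case of an affine morphism; the plan is to construct the canonical morphism in the usual way and then check it is an isomorphism by localizing to the affine situation, where it becomes a tautology about modules.

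\textbf{Step 1: construction of the canonical morphism.}
First I would produce the natural map in full generality (no affineness needed). Using the projection $\mathcal{F} \otimes a^*\mathcal{G} \sira \mathcal{F} \otimes a^*\mathcal{G}$ and the adjunction $(a^*, a_*)$, the unit $\mathcal{G} \ra a_*a^*\mathcal{G}$ together with the lax monoidal structure of $a_*$ gives
\begin{equation*}
  (a_*\mathcal{F}) \otimes \mathcal{G} \ra (a_*\mathcal{F}) \otimes (a_*a^*\mathcal{G}) \ra a_*(\mathcal{F} \otimes a^*\mathcal{G}),
\end{equation*}
where the second arrow is the canonical map $a_*(-) \otimes a_*(?) \ra a_*(- \otimes ?)$. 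This is a morphism of $\mathcal{O}_X$-modules, and since $a_*$ preserves quasi-coherence (as $a$ is affine, hence quasi-compact quasi-separated, by Lemma~\ref{l:qcqs-preserves-qcoh}) and $\otimes$ of quasi-coherent sheaves is quasi-coherent, both source and target lie in $\Qcoh(X)$.

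\textbf{Step 2: reduction to the affine case and conclusion.}
Whether this morphism is an isomorphism can be checked on an affine open cover $\{U\}$ of $X$. Since $a$ is affine, $a^{-1}(U) = \Spec B$ with $U = \Spec A$ and $B$ an $A$-algebra; write $\mathcal{F}|_{a^{-1}(U)} = \widetilde{N}$ for a $B$-module $N$ and $\mathcal{G}|_U = \widetilde{M}$ for an $A$-module $M$. Then $a_*\mathcal{F}|_U = \widetilde{N_A}$ (restriction of scalars), $a^*\mathcal{G}|_{a^{-1}(U)} = \widetilde{M \otimes_A B}$, and the morphism of Step~1 becomes the sheafification of the canonical $A$-module map $N_A \otimes_A M \ra (N \otimes_B (B \otimes_A M))_A = (N \otimes_A M)_A$, which is visibly the identity (bijective). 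Hence the morphism is an isomorphism over each $U$, so it is an isomorphism. I would cite \cite[Prop.~12.6]{goertz-wedhorn-AGI} or the analogous affine base-change/projection-formula statement there for the bookkeeping; the only mild care needed is to confirm the local description of $a^*$ and $a_*$ along an affine morphism and that the globally-defined canonical map indeed restricts to the module-level one, which is routine. There is no real obstacle here — this is essentially a definitional unwinding — so the ``hard part'' is merely making sure the naturality and the identification of the local map are stated cleanly rather than any substantive difficulty.
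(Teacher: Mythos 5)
Your proposal is correct and matches the paper's approach: the paper simply cites \cite[Prop.~12.6]{goertz-wedhorn-AGI} together with Lemma~\ref{l:qcqs-preserves-qcoh} for quasi-coherence of $a_*\mathcal{F}$, and your argument just writes out the content of that reference — the canonical map built from the unit and the lax monoidal structure of $a_*$, checked to be an isomorphism affine-locally where it is the module identity $N_A \otimes_A M \cong (N \otimes_B (B\otimes_A M))_A$. No gaps.
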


\begin{proof}
  Use \cite[Prop.~12.6]{goertz-wedhorn-AGI}
  % See \cite[I, Cor.~9.3.9, p.~367]{EGAI-new}; by
  and that $a_*$ preserves
  quasi-coherence by
  Lemma~\ref{l:qcqs-preserves-qcoh}.
\end{proof}

\begin{lemma}[Projection formula for an open embedding]
  \label{l:projection-formula-for-open-embedding}
  Let $(X, \mathcal{O}_X)$ be a ringed space and $U \subset X$ an
  open subset, considered as a ringed space with structure
  sheaf $\mathcal{O}_U=\mathcal{O}_X|_U$.
  Let $\mathcal{F}$ be a sheaf of $\mathcal{O}_U$-modules and
  $\mathcal{G}$ a sheaf of $\mathcal{O}_X$-modules. Then there is
  a natural isomorphism
  of sheaves of $\mathcal{O}_X$-modules
  \begin{equation*}
    % \label{eq:proj-fml-open}
    (u_! \mathcal{F}) \otimes \mathcal{G} \sira
    u_!(\mathcal{F} 
    \otimes u^*\mathcal{G}).
  \end{equation*}
\end{lemma}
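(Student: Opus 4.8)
The plan is to construct the natural morphism formally, by adjunction, and then to verify that it is an isomorphism by a stalk computation.

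First I would fix notation: write $u\colon U\hra X$ for the open immersion. Since $u$ is an open immersion, $u^*$ is just restriction (in particular $u^{-1}\mathcal{O}_X=\mathcal{O}_U$ and $u^*\mathcal{G}=\mathcal{G}|_U$), $u_!$ is extension by zero, $u_!$ is left adjoint to $u^*$, and the unit $\eta_\mathcal{A}\colon\mathcal{A}\to u^*u_!\mathcal{A}$ is an isomorphism for every sheaf $\mathcal{A}$ of $\mathcal{O}_U$-modules. The functor $u^*$ is monoidal, so there is a canonical isomorphism $u^*\bigl((u_!\mathcal{F})\otimes\mathcal{G}\bigr)\cong(u^*u_!\mathcal{F})\otimes u^*\mathcal{G}$; composing it with $\eta_\mathcal{F}^{-1}\otimes\id_{u^*\mathcal{G}}$ yields a morphism $\mathcal{F}\otimes u^*\mathcal{G}\to u^*\bigl((u_!\mathcal{F})\otimes\mathcal{G}\bigr)$, which under the $(u_!,u^*)$-adjunction corresponds to a morphism
\[ \beta\colon u_!(\mathcal{F}\otimes u^*\mathcal{G})\longrightarrow(u_!\mathcal{F})\otimes\mathcal{G}, \]
natural in both $\mathcal{F}$ and $\mathcal{G}$. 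The asserted isomorphism will be $\beta^{-1}$ (alternatively one constructs a morphism directly in the stated direction from the coevaluation map $\mathcal{F}\to\sheafHom(u^*\mathcal{G},\mathcal{F}\otimes u^*\mathcal{G})$ together with the $(u_!,u^*)$- and tensor--hom adjunctions and the locality of $\sheafHom$).

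It remains to check that $\beta$ is an isomorphism, which I would do on stalks. For $x\in U$, the adjunction identity shows that $u^*\beta\circ\eta_{\mathcal{F}\otimes u^*\mathcal{G}}$ is, up to the monoidality isomorphism, equal to $\eta_\mathcal{F}\otimes\id_{u^*\mathcal{G}}$; since all the maps involved are isomorphisms, $\beta|_U$ is an isomorphism, and hence so is $\beta_x$ for every $x\in U$ (concretely both stalks are identified with $\mathcal{F}_x\otimes_{\mathcal{O}_{X,x}}\mathcal{G}_x$, using $(u_!\mathcal{A})_x=\mathcal{A}_x$ and $\mathcal{O}_{U,x}=\mathcal{O}_{X,x}$). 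For $x\notin U$ one has $(u_!\mathcal{B})_x=0$ for any sheaf $\mathcal{B}$ on $U$, so both $\bigl(u_!(\mathcal{F}\otimes u^*\mathcal{G})\bigr)_x$ and $\bigl((u_!\mathcal{F})\otimes\mathcal{G}\bigr)_x=(u_!\mathcal{F})_x\otimes_{\mathcal{O}_{X,x}}\mathcal{G}_x$ vanish, and $\beta_x$ is trivially an isomorphism. Hence $\beta$ is an isomorphism of sheaves of $\mathcal{O}_X$-modules, proving the lemma. The only mildly delicate point---the ``main obstacle'', such as it is---is that $X\setminus U$ need not be open, so the vanishing of both sides off $U$ must be verified stalkwise rather than by restriction to an open complement; everything else is routine manipulation with adjunctions and the monoidality of $u^*$.
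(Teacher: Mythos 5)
Your proposal is correct and follows essentially the same route as the paper: construct the canonical natural morphism by adjunction (the paper cites the "usual" construction) and verify it is an isomorphism stalkwise, identifying both stalks with $\mathcal{F}_x \otimes_{\mathcal{O}_{X,x}} \mathcal{G}_x$ for $x \in U$ and observing both vanish for $x \notin U$. The only cosmetic difference is that you build the comparison map in the opposite direction and invert it, which changes nothing of substance.
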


\begin{proof}
  The morphism in the lemma
  % \eqref{eq:proj-fml-open}
  is constructed in the usual way, see
  e.\,g.\ \cite{wolfgang-olaf-locallyproper}.
  % \cite[6.1]{wolfgang-olaf-locallyproper}.
  To check that it is an isomorphism we consider it on the stalks 
  at an arbitrary point $x \in X.$ 
  The stalks of both sides are compatibly
  identified with 
  $\mathcal{F}_x \otimes_{\mathcal{O}_{X,x}} \mathcal{G}_x$
  if $x \in U$, and vanish otherwise.
\end{proof}

\subsection{Some results for sheaf homomorphisms}
\label{sec:some-results-sheaf}

%\subsection{Some Hom computations}
%\label{sec:some-hom-comp}

% \footnote{
% Kopiere in Werkbank!
% oder zumindest private Teil.
% }

% Our aim is to prove Lemma~\ref{l:sheafHom-UPVQ} below.

%\begin{remark}
Given open subschemes $U$ and $V$ of a scheme $X$ the cartesian
diagram
\begin{equation}
  \label{eq:cart-UV-open-in-X}
  \xymatrix{
    {U \cap V} \ar[r]^-{u'} \ar[d]^-{v'} 
    & {V} \ar[d]^-{v}\\
    {U} \ar[r]^-{u} 
    & {X}
  }
\end{equation}
will be used several times in the following.
%\end{remark}

Recall that an integral domain $R$ is N-1 if
the integral closure of $R$ in its quotient field is a finite
$R$-module (see
\cite[\href{http://stacks.math.columbia.edu/tag/032F}{Def.~032F}]{stacks-project}). Trivially,
any integrally closed (= normal) integral domain is N-1. 
More interestingly, any Nagata integral domain and in particular
any 
(quasi-)excellent integral domain is N-1.
\cite[\href{http://stacks.math.columbia.edu/tag/07QV}{Lemma~07QV},
\href{http://stacks.math.columbia.edu/tag/035S}{Lemma~035S}]{stacks-project}

% for example a Nagata ring
% \cite[\href{http://stacks.math.columbia.edu/tag/032R}{Def.~03%2R}]{stacks-project},  

\begin{lemma}
  % [{cf.\ \cite[p.~1484]{bondal-larsen-lunts-grothendieck-ring}}]
  \label{l:Hom-UPVQ-star}
  Let $X$ be a scheme and let $u \colon U \subset X$ and $v
  \colon V \subset X$ be inclusions of open subschemes. Assume
  that $V$ is an affine
  Noetherian integral scheme such that 
  $\mathcal{O}_V(V)$ is N-1, and that $U
  \cap V$ is affine.  Let $P$ be a vector bundle on $U$ and $Q$ a
  vector bundle on $V.$
  % (this holds for example if $U=X$ and $V$ is affine; of if $X$
  % is separated
  % and $U$ and $V$ are affine).
  Then
  \begin{equation*}
    \Hom_{\mathcal{O}_X}(u_*P, v_*Q)
    =
    \begin{cases}
      \Hom_{\mathcal{O}_V}(P|_V, Q) & \text{if $V \subset
        U,$}\\
      0 & \text{otherwise.}
    \end{cases}
  \end{equation*}
\end{lemma}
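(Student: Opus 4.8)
The plan is to reduce to a computation on $V$ via the adjunction $(v^*, v_*)$, so that
\[
  \Hom_{\mathcal{O}_X}(u_*P, v_*Q) = \Hom_{\mathcal{O}_V}(v^*u_*P, Q).
\]
By base change for the cartesian diagram \eqref{eq:cart-UV-open-in-X} (Lemma~\ref{l:base-change-ringed-spaces-open}.\ref{enum:push-and-restrict-to-open}) we have $v^*u_*P \cong u'_*(v'^*P) = u'_*(P|_{U \cap V})$, where $u' \colon U \cap V \hra V$ is the open inclusion. So the problem becomes: compute $\Hom_{\mathcal{O}_V}(u'_*(P|_{U\cap V}), Q)$ for $Q$ a vector bundle on the affine Noetherian integral $N$-1 scheme $V$, with $U \cap V$ affine open in $V$. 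First I would dispose of the trivial direction: if $V \not\subset U$, then $U \cap V \subsetneq V$ is a proper open subset of the irreducible scheme $V$, and I want to show the Hom group vanishes; if $V \subset U$, then $u' = \id$, $P|_{U\cap V} = P|_V$, and the formula is immediate.

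For the vanishing, the key reduction is that we may assume $P$ is trivial (a finite free module) after shrinking: $\Hom_{\mathcal{O}_V}(u'_*(P|_{U\cap V}), Q)$ is a module over $\mathcal{O}_V(V)$, and a section of it is determined by its restriction to $U \cap V$; more precisely, writing $j = u'$, a homomorphism $j_*(P|_{U \cap V}) \to Q$ restricts to $j^* j_*(P|_{U\cap V}) = P|_{U\cap V} \to Q|_{U\cap V}$, and I would like to argue that this restriction map is injective (so vanishing of $\Hom$ on $V$ follows from understanding the image, which I claim is zero). Writing $R = \mathcal{O}_V(V)$, $K = \operatorname{Frac}(R)$, $W = U \cap V = \Spec S$ for some localization-like $R$-algebra $S \subset K$ (since $V$ is integral and $W$ is affine open in $V$, $S$ sits between $R$ and $K$), and reducing to $P, Q$ trivial of rank one, the statement becomes: the image of
\[
  \Hom_R(S, R) \hookrightarrow \Hom_R(S, K) = \Hom_S(S,K) = K, \qquad \varphi \mapsto \varphi(1),
\]
consists of elements $x \in K$ with $xS \subseteq R$, i.e. the conductor-type ideal $(R : S) = \{x \in K : xS \subseteq R\}$, and this is zero when $S \supsetneq R$ in a suitable sense. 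Here is where the $N$-1 hypothesis enters: if $x \in K^\times$ with $xS \subseteq R$, then $S \subseteq x^{-1}R$ is a finitely generated $R$-module (as a submodule of a finite $R$-module over the Noetherian ring $R$), hence $S$ is integral over $R$; combined with $W = \Spec S \subseteq V = \Spec R$ being an open immersion, one deduces $S = R$, i.e. $W = V$, contradicting $V \not\subset U$. Thus $(R:S) = 0$ and the Hom group vanishes.

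The main obstacle, and the place to be careful, is the honest identification of $U \cap V$ as $\Spec S$ with $R \subseteq S \subseteq K$ and the reduction of the general vector bundle case to the rank-one trivial case. For the first point I would use that $V$ is integral with generic point $\eta$, that $W$ is a nonempty affine open, hence $\eta \in W$ and $\mathcal{O}_{V,\eta} = \mathcal{O}_{W,\eta} = K$, so $S := \mathcal{O}_W(W)$ is an integral domain with fraction field $K$ containing $R$; the open-immersion condition $\Spec S \hra \Spec R$ together with $S$ integral over $R$ forces $S = R$ by, e.g., \citestacks{00GS}-type arguments (a dominant open immersion that is integral, equivalently finite, onto an integral scheme is an isomorphism onto a connected component, hence onto all of $\Spec R$ by irreducibility). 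For the reduction to rank one: both sides of the desired equality are local on $V$ in the sense that one can cover $V$ by basic opens trivializing $Q$ and trivializing $u'_*(P|_W)$ will not literally work, so instead I would work directly with the description $\Hom_R(S \otimes_R (\text{basis of }P|_W \text{ cleared of denominators}), R)$ — more cleanly, since $\Hom_{\mathcal{O}_V}(u'_* M, Q) = \Hom_{\mathcal{O}_W}(M, u'^! Q)$ is false (there is no $u'^!$ for non-proper $u'$), I instead use $\Hom_{\mathcal{O}_V}(u'_*M, Q) \hookrightarrow \Hom_{\mathcal{O}_W}(M, Q|_W)$ via restriction (injective because $u'_*M$ is generated by its sections over $W$... which again needs the integrality/density of $W$), and then identify the image with those $W$-homomorphisms $P|_W \to Q|_W$ extending to $V$; locally on $V$ trivializing $Q$ and picking a finite generating set of $P|_W$ reduces everything to the conductor computation $(R:S)=0$ done above, entry by entry of a matrix. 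I expect the bookkeeping of "restriction to $W$ is injective and the image is the set of extendable homomorphisms" to be the delicate part; everything else is the $N$-1 argument, which is short.
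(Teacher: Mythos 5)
Your proposal is correct, and it follows the paper through the common first reductions: the adjunction $(v^*,v_*)$ together with Lemma~\ref{l:base-change-ringed-spaces-open}.\ref{enum:push-and-restrict-to-open} turn the left-hand side into $\Hom_{\mathcal{O}_V}(u'_*(P|_{U\cap V}),Q)$, the case $V\subset U$ is immediate, and the vanishing statement reduces (since $V$ is affine and $P|_{U\cap V}$, $Q$ are direct summands of finite free modules) to showing $\Hom_A(B,A)=0$ for $A=\mathcal{O}_V(V)\subsetneq B=\mathcal{O}_V(U\cap V)\subset Q(A)$ with $\Spec B\subsetneq\Spec A$ open. At exactly this core step you diverge: the paper isolates the vanishing as Lemma~\ref{l:no-morphisms-BA} and this is where the N-1 hypothesis enters — one passes to the (finite, hence Noetherian) normalization to reduce to $A$ normal, writes $A$ as the intersection of the DVRs $A_\mfp$ at height-one primes, and kills the map after localizing at a suitable $\mfp$. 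Your argument is shorter and in fact stronger: since $B\subset Q(A)$, every $A$-linear $f\colon B\to A$ is multiplication by $x=f(1)$ (your divisibility computation with $qb=p\in A$), so $f\neq 0$ gives $0\neq x$ with $xB\subseteq A$; then $B\subseteq x^{-1}A$ is a finitely generated $A$-module because $A$ is Noetherian, hence integral over $A$, so $\Spec B\to\Spec A$ is finite and therefore closed, while it is by hypothesis an open immersion onto a nonempty proper subset of the irreducible space $\Spec A$ — a contradiction. This uses only that $A$ is a Noetherian domain, so the N-1 hypothesis is not needed for this lemma at all (it is harmless here; Nagata-type hypotheses remain relevant for the sheafified variant Lemma~\ref{l:sheafHom-UPVQ}, where the property is needed on all affine opens). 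What the paper's route buys is the normalization/DVR picture; what yours buys is brevity and a weaker hypothesis.

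Two small points to tidy. First, treat $U\cap V=\emptyset$ separately (trivial: then $u'_*(P|_{U\cap V})=0$); your argument tacitly assumes $U\cap V\neq\emptyset$ when placing the generic point in it. Second, your restriction-injectivity and ``extendable homomorphisms'' bookkeeping can be avoided: since $u'$ is affine, $u'_*(P|_{U\cap V})$ is quasi-coherent, so on the affine scheme $V$ one has $\Hom_{\mathcal{O}_V}(u'_*(P|_{U\cap V}),Q)=\Hom_A(\Gamma(U\cap V,P),\Gamma(V,Q))$, and the latter is a direct summand of $\Hom_A(B,A)^{nm}$ because $\Gamma(U\cap V,P)$ and $\Gamma(V,Q)$ are direct summands of $B^n$ and $A^m$; this is exactly how the paper reduces to the rank-one (conductor) computation, and it makes your ``entry by entry'' step precise.
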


In Remark~\ref{rem:Hom-UPVQ-weaker-assumptions} below we show
that this lemma is not true if we replace ``integral'' and
``N-1'' by ``irreducible'', or by ``reduced''.
Lemma~\ref{l:sheafHom-UPVQ} contains a variant of
this lemma.

\begin{proof}
  We work with the cartesian diagram 
  \eqref{eq:cart-UV-open-in-X}.
  The adjunction $(v^*, v_*)$ and
  Lemma~\ref{l:base-change-ringed-spaces-open}.\ref{enum:push-and-restrict-to-open} 
  yield
  \begin{equation*}
    \Hom_{\mathcal{O}_X}(u_*P, v_*Q)
    = \Hom_{\mathcal{O}_V}(v^*u_*P, Q)
    \sila \Hom_{\mathcal{O}_V}(u'_*v'^*P, Q).
  \end{equation*}
  If $V \subset U$ then $U \cap V =V$ and the claim is trivial.
  Assume that $V \not\subset U.$ So $u' \colon  U \cap V
  \subsetneq V.$ 
  If $U \cap V = \emptyset$ the claim is trivial, 
  so assume that $U \cap V \not=\emptyset.$  

  So we have $\emptyset \not= U \cap V = \Spec B \subsetneq V =
  \Spec A.$ The corresponding ring morphism $A \ra B$ is
  injective  
  but not bijective, and we can view $A \subsetneq B$ as subrings
  of the quotient field $Q(A)$ of the Noetherian N-1 integral
  domain $A.$

  Assume that $u'_*(P|_{U \cap V}) \ra Q$ is a
  nonzero 
  morphism. Since its source and target are quasi-coherent
  sheaves on $V,$ by Lemma~\ref{l:qcqs-preserves-qcoh}, taking
  global  
  sections translates this morphism into a nonzero morphism
  $P|_{U \cap V}(U \cap V) \ra Q(V)$ of $A$-modules. 
  Since $P|_{U \cap V}$ and $Q$ are vector bundles
  on $\Spec B$ and $\Spec A$, respectively, they are direct
  summands of 
  free modules of finite rank.
  In particular we deduce that there
  is a nonzero morphism $B \ra A$ of $A$-modules.
  But this cannot happen by Lemma~\ref{l:no-morphisms-BA}.
\end{proof}

\begin{lemma}
  \label{l:no-morphisms-BA}
  Let $A$ be a Noetherian N-1 integral domain with quotient field
  $Q(A),$ and let $A \subsetneq B \subset Q(A)$ be an
  intermediate ring such that $\Spec B \subsetneq \Spec A$ is
  open. 
  % Assume that
  % \begin{enumerate}
  % \item
  %   \label{enum:normal}
  %   $A$ integrally closed (= normal), or, more generally
  % \item
  %   \label{enum:nagata}
  %   $A$ is an N-1 ring.
  %   % the integral closure of $A$ in $Q(A)$ is
  %   % a finite $A$-module.
  % \end{enumerate}
  Then any morphism $f \colon  B \ra A$ of $A$-modules is zero.
\end{lemma}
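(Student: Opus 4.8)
The plan is to reduce everything to the single element $c:=\phi(1)\in A$: first I would show that any $A$-linear map $\phi\colon B\to A$ is necessarily multiplication by $c$, and then that the case $c\neq 0$ contradicts the hypothesis that $\Spec B\subsetneq\Spec A$ is open.

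The first step is routine. Since $A$ is a domain and $A\subseteq B\subseteq Q(A)$, every $b\in B$ can be written $b=p/q$ with $p,q\in A$ and $q\neq 0$; then $q\,\phi(b)=\phi(qb)=\phi(p)=p\,\phi(1)=pc$, so $\phi(b)=(p/q)\,c=bc$ already as an identity in $Q(A)$, hence in $A$. Thus $\phi$ is multiplication by $c$, and $cB=\phi(B)\subseteq A$. (Only the fact that $A$ is a domain and $B\subseteq Q(A)$ is used here; the ring structure of $B$ is not yet needed.)

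Now I would argue by contradiction: suppose $c\neq 0$. Then $B\subseteq \tfrac1c A$ inside $Q(A)$, and since $A$ is Noetherian, $\tfrac1c A\cong A$ is a Noetherian $A$-module, so its $A$-submodule $B$ is a finitely generated $A$-module. As $B$ is moreover a ring containing $A$, the Cayley--Hamilton (determinant) trick makes every element of $B$ integral over $A$, so $A\subseteq B$ is an integral ring extension; consequently the morphism $\Spec B\to\Spec A$ induced by $A\hookrightarrow B$ is finite, and, being an integral extension of rings, it is surjective by the lying-over theorem. But the hypothesis says precisely that this same morphism is an open immersion whose image is the \emph{proper} open subset $\Spec B\subsetneq\Spec A$, so it cannot be surjective --- a contradiction. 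Hence $c=0$ and $\phi=0$.

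The only point that needs care is the reading of the hypothesis: ``$\Spec B\subsetneq\Spec A$ is open'' should be understood as saying that the canonical morphism $\Spec B\to\Spec A$ is an open immersion onto a proper open subscheme of $\Spec A$, which is exactly the situation in which Lemma~\ref{l:Hom-UPVQ-star} applies it (there $B=\mathcal{O}_X(U\cap V)$, $A=\mathcal{O}_X(V)$, and $U\cap V$ is literally an open subscheme of $\Spec A$). Once that is granted there is no real obstacle: the whole argument uses nothing beyond $A$ being Noetherian, and the genuine ``insight'' is just the observation that $cB\subseteq A$ with $c\neq 0$ forces $B$ to be integral over $A$, which is directly incompatible with $\Spec B\to\Spec A$ being a non-surjective open immersion. (In particular the N-1 hypothesis, and nonemptiness of the open set, are not needed for this lemma; N-1 enters only in the proof of Lemma~\ref{l:Hom-UPVQ-star} itself.)
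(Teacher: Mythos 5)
Your proof is correct, and it takes a genuinely different and more elementary route than the paper's. You observe that any $A$-linear $\phi\colon B\ra A$ is multiplication by $c=\phi(1)$, so a nonzero $\phi$ gives $cB\subseteq A$ with $c\neq 0$; then $B\subseteq c^{-1}A$ is a finitely generated $A$-module by Noetherianity, hence integral over $A$ by the determinant trick, so $\Spec B\ra\Spec A$ is surjective by lying over, contradicting the hypothesis that this morphism identifies $\Spec B$ with a proper open subscheme of $\Spec A$ (your reading of that hypothesis is the intended one, as the application in Lemma~\ref{l:Hom-UPVQ-star} shows). The paper argues differently: it first reduces to the case of a normal domain by passing to the integral closures $\widetilde A\subsetneq\widetilde B$ --- this is exactly where the N-1 hypothesis (to keep $\widetilde A$ Noetherian) and the openness of $\Spec B\subsetneq\Spec A$ (to get $B\otimes_A\widetilde A\sira\widetilde B$ and $\Spec\widetilde B\subsetneq\Spec\widetilde A$) are used --- and then, for normal $A$, writes $A$ as the intersection of the discrete valuation rings $A_\mfp$ over the minimal nonzero primes, picks $\mfp$ with $b\notin A_\mfp$ for some $b\in B\setminus A$, and localizes to reduce to the fact that any $A_\mfp$-linear map $Q(A)\ra A_\mfp$ vanishes. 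Thus the paper needs the openness only for the reduction step (for normal $A$ its argument requires no openness at all), whereas your argument uses the non-surjectivity of $\Spec B\ra\Spec A$ essentially but dispenses with normalization, the DVR structure theory, and the N-1 hypothesis altogether; in particular it shows that N-1 could be dropped from this lemma, and consequently from the hypotheses of Lemma~\ref{l:Hom-UPVQ-star} as well.
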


% \footnote{ 
%   In case $B=A_g$ for some $g \in A$ 
%   one can argue as follows.  The composition $A \ra A_g=B
%   \xra{f} A$ is multiplication by some element $a_0 \in A.$ Then
%   $a_0=f(1).$ Define $a_n:=f(1/g^n)$ for $n \geq 1.$ Then
%   $ga_n=a_{n-1}.$ The ascending chain of ideals $(a_0) \subset
%   (a_1) \subset (a_2) \subset \dots$ eventually becomes
%   stationary, hence $a_{n+1} \subset (a_n)$ for $n \gg 0.$ So
%   $a_{n+1}=ta_n$ for some $t \in A$ and hence $a_{n+1}
%   =ta_n=tga_{n+1}.$ Since $A$ is an integral domain this implies
%   $tg=1,$ so $g$ is a unit in $A$ and $A=A_g=B.$ But this
%   contradicts $A \subsetneq B.$
% }

\begin{proof}
  % \ref{enum:nagata}
  % \textbf{Step 1:}  
  Let $\tildew{A}$ (resp.\ $\tildew{B}$) be the integral closure
  of $A$ (resp.\ $B$) in $Q(A).$ 
  Then the multiplication map $B
  \otimes_A \tildew{A} \ra \tildew{B}$ is an isomorphism and
  $\tildew{A} \subsetneq \tildew{B}$ 
  and 
  $\Spec \tildew{B} \subsetneq \Spec \tildew{A}$ is open
  (by
  \cite[Prop.~12.43]{goertz-wedhorn-AGI}).  
  Both vertical arrows in the commutative diagram
  \begin{equation*}
    \xymatrix{
      & {B} \ar[r]^-{f} \ar[d] & {A} \ar[d]\\
      {\tildew{B}} & {B \otimes_A \tildew{A}} \ar[l]_-{\sim}
      \ar[r]^-{f \otimes \id} & {A \otimes_A \tildew{A}}
      \ar[r]^-{\sim} & {\tildew{A}} 
    }
  \end{equation*}
  are injective since they correspond to the inclusions $B
  \subset \tildew{B}$ and $A \subset \tildew{A}.$
  (The isomorphism $B \otimes_A \tildew{A} \sira \tildew{B}$ is
  an easy instance of  
  Lemma~\ref{l:push-affine-then-pull}.)
  Hence 
  % to show that $f$ is zero 
  it is enough to
  show that the  
  morphism $f \otimes \id$ of $\tildew{A}$-modules is zero.
  Since $A$ is N-1 the ring $\tildew{A}$ is Noetherian, so it
  is enough to prove the lemma under the additional assumption
  that $A$ 
  is integrally closed (= normal).
  
  % \textbf{Step 2:} 
  % \ref{enum:normal}
  % It remains to prove the lemma under the
  % additional 
  % assumption that 
  Let $A$ be normal. (In the rest of the proof we do not need
  that $\Spec B \subset \Spec A$ is open.)
  Let $b \in B$ with $b \not\in A.$
  % (Valera hat nun den Divisor von $b$ in $X$ betrachtet. Er ist
  % nicht effektiv, sei $D \subset X$ alias $\mfp \subset A$ ein
  % Primdivisor, der negativen Koeffizienten hat. Dann lokalisiere
  % an $\mfp.$ Valera meinte, dass man wohl $A$ normal
  % braucht. Stimmts? Es folgt, wie ich es verstehe.)
  As a normal Noetherian integral domain, $A$ is the intersection
  of the discrete valuation rings $A_\mfp,$
  where $\mfp$ runs through all
  minimal
  nonzero prime ideals $\mfp \subset A$ (see
  \cite[Thm.~8.10]{reid-undergrad-comm-alg}).  
  Hence there is a nonzero minimal prime ideal $\mfp \subset A$
  such that $b \not\in A_\mfp.$ Let $S:= A \setminus \mfp.$ We
  localize the composition 
  $A \ra B \xra{f} A$ of $A$-linear maps at $S$ and obtain
  $A_\mfp$-linear maps
  \begin{equation*}
    A_\mfp \ra S\inv B \xra{S\inv f} A_\mfp
  \end{equation*}
  The first morphism takes place in $Q(A)$ and is injective
  % (since localization is exact) 
  but
  not bijective since $b \in B \subset S\inv B$ but $b \not\in
  A_\mfp.$ 
  % In particular $A_\mfp$ is not a field.
  Since $A_\mfp$ is a discrete valuation ring this implies that
  $S\inv B =Q(A).$
  Then $S\inv f \colon  S\inv B =Q(A) \ra A_\mfp$ is $A_\mfp$-linear and then
  certainly zero. (If $R$ is an integral domain which is not a
  field, any
  $R$-linear morphism $\varphi \colon Q(R) \ra R$ is zero:
  let $r:= \varphi(1);$ if $r=0$ then $\varphi=0$;
  otherwise any $0 \not= s \in R$ is invertible in $R$:
  $s\varphi(1/rs)=\varphi(1/r)=1.$)
  Hence the composition $B \xra{f} A \ra A_\mfp$ is
  zero, and so is $f.$
\end{proof}

\begin{remark}
  \label{rem:Hom-UPVQ-weaker-assumptions}
  We give two examples that
  Lemma~\ref{l:Hom-UPVQ-star}
  is not true if $V$ is only required to be irreducible (resp.\
  reduced) instead of integral with $\mathcal{O}_V(V)$ N-1. 
  Both
  examples are simple in the sense that we have $U \subsetneq
  X=V$ and the vector bundles on 
  $U$ and $V$ are the structure sheaves. 
  Let
  $k$ be a field. 
  \begin{enumerate}
  \item
    Let $A:= k[T,\epsilon]/((T^2-1)\epsilon, \epsilon^2)$
    and $B:= A_T=A[T\inv]=k[T, T\inv,
      \epsilon]/((T^2-1)\epsilon, \epsilon^2).$  
    Then $A=k[T] \oplus \frac{k[T]}{(T^2-1)} \epsilon$ and
    $B=k[T,T\inv] \oplus \frac{k[T]}{(T^2-1)} \epsilon$ as
    $k[T]$-modules.
    Then the composition $B \ra k[T,T\inv] \xra{1 \mapsto
      \epsilon}  
    \frac{k[T]}{(T^2-1)} \epsilon \ra A$ is a nonzero morphism
    of $A$-modules. 
    Hence if we define $X=V =\Spec A$ and $U=\Spec B$ and
    $Q=\mathcal{O}_X$ and $P=\mathcal{O}_U$ we see that
    $\Hom_{\mathcal{O}_X}(u_*P, v_*Q) 
    = \Hom_A(B,A)\not=0$
    but $V \not\subset U.$ Note that $X=V$ is an
    affine irreducible (non-reduced) Noetherian excellent scheme
    and 
    $U=U\cap V$ is 
    a strict principal open subset.
  \item 
    Let $A:= k[S,T]/(ST)$ and
    $B:=A[(S+1)\inv]=k[S,T,(S+1)\inv]/(ST).$ 
    Any element of $B$ can be written as
    $\frac{f(S)+g(T)T}{(S+1)^n}$ with $f \in k[S],$ $g \in k[T]$
    and $n \in \DN.$ Mapping such an element to $(f(0)+g(T)T)T^2$
    yields a well-defined nonzero morphism $f \colon B \ra A$ of
    $A$-modules. 
    Then $X=V=\Spec A$ is affine reduced (non-irreducible but
    connected) Noetherian excellent and $U=
    \Spec B$ is a strict
    principal open
    subset but 
    $\Hom_{\mathcal{O}_X}(u_*\mathcal{O}_U,
    v_*\mathcal{O}_V) \not=0.$
  \end{enumerate}
\end{remark}

\begin{lemma}
  \label{l:sheafHom-UPVQ}
  Let $X$ be a scheme and assume that $u \colon U \subset X$ and
  $v \colon V \subset X$ are affine inclusions of open subschemes.
  We assume that $V$ is a
  Nagata
  % Noetherian\footnote{
  % loeschbar, falls locally noetherian enough.
  % } 
  integral
  scheme.
  % Assume 
  % that $V$ is an affine Noetherian integral scheme and that $U
  % \cap V$ is affine.  
  Let $u' \colon  U \cap V \subset V$ be the
  open immersion. 
  Let $P$ be a vector bundle on $U$ and $Q$ a
  vector bundle on $V.$
  Then
  \begin{equation}
    \label{eq:sheafHom-UPVQ}
    \sheafHom_{\mathcal{O}_X}(u_*P, v_*Q)
    \sira
    v_* u'_! \sheafHom_{\mathcal{O}_{U \cap V}}(P|_{U \cap V},
    Q|_{U \cap V}), 
  \end{equation}
  and taking global sections yields
  \begin{equation}
    \label{eq:sheafHom-UPVQ-global}
    \Hom_{\mathcal{O}_X}(u_*P, v_*Q)
    =
    \begin{cases}
      \Hom_{\mathcal{O}_V}(P|_V, Q) & \text{if $V \subset
        U,$}\\
      0 & \text{otherwise.}
    \end{cases}
  \end{equation}
\end{lemma}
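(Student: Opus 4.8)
The plan is to establish the sheaf isomorphism \eqref{eq:sheafHom-UPVQ} first, by purely formal manipulations together with one local computation resting on Lemma~\ref{l:no-morphisms-BA}, and then to read off \eqref{eq:sheafHom-UPVQ-global} by taking global sections. Throughout I would use the cartesian square \eqref{eq:cart-UV-open-in-X} and write $W:=U\cap V$.

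First I would rewrite the left-hand side. For an open immersion $v\colon V\subset X$, a sheaf $\mathcal{F}$ on $X$ and a sheaf $\mathcal{G}$ on $V$, evaluating on an open $O\subseteq X$ shows that both $\sheafHom_{\mathcal{O}_X}(\mathcal{F},v_*\mathcal{G})$ and $v_*\sheafHom_{\mathcal{O}_V}(v^*\mathcal{F},\mathcal{G})$ have sections over $O$ canonically equal to $\Hom_{\mathcal{O}_{O\cap V}}(\mathcal{F}|_{O\cap V},\mathcal{G}|_{O\cap V})$ (using Lemma~\ref{l:base-change-ringed-spaces-open}.\ref{enum:push-and-restrict-to-open} to compute $(v_*\mathcal{G})|_O$ and an adjunction), hence they are naturally isomorphic. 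Taking $\mathcal{F}=u_*P$, and then applying Lemma~\ref{l:base-change-ringed-spaces-open}.\ref{enum:push-and-restrict-to-open} once more to \eqref{eq:cart-UV-open-in-X} to get $v^*u_*P\cong u'_*(P|_W)$, yields a natural isomorphism $\sheafHom_{\mathcal{O}_X}(u_*P,v_*Q)\cong v_*\,\mathcal{S}$ with $\mathcal{S}:=\sheafHom_{\mathcal{O}_V}\!\big(u'_*(P|_W),Q\big)$. Since $u'$ is an open immersion, $(u'_*(P|_W))|_W=P|_W$, so $\mathcal{S}|_W$ is the vector bundle $\mathcal{H}:=\sheafHom_{\mathcal{O}_W}(P|_W,Q|_W)$. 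Thus \eqref{eq:sheafHom-UPVQ} will follow once I show $\mathcal{S}_x=0$ for every $x\in V\setminus W$: then the counit $u'_!(u'^*\mathcal{S})\to\mathcal{S}$ is an isomorphism, so $\mathcal{S}\cong u'_!\mathcal{H}$, and combining with the first isomorphism gives \eqref{eq:sheafHom-UPVQ}.

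The vanishing of $\mathcal{S}_x$ is the only substantial step, and it is where the hypotheses on $V$ are used. It is local, so I would pass to an affine open $\Spec A\subseteq V$ containing $x$; then $A$ is a Nagata integral domain (open subschemes of Nagata integral schemes are Nagata and integral), $W\cap\Spec A=\Spec B$ is affine because $u$ is affine, and $P|_{\Spec B}$, $Q|_{\Spec A}$ come from finitely generated projective modules $N$ over $B$ and $M$ over $A$. Since $u'$ is affine, $u'_*(P|_{\Spec B})=\widetilde{({}_AN)}$, so $\Gamma(D(g),\mathcal{S})=\Hom_{A_g}(N_g,M_g)$ for every $g\in A$. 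For $g$ with $x\in D(g)$ one has $x\in\Spec A_g\setminus\Spec B_g$; discarding the trivial cases where $N_g$ or $M_g$ vanishes, one is then exactly in the situation of Lemma~\ref{l:no-morphisms-BA}: $A_g$ is a Noetherian N-1 domain, $A_g\subsetneq B_g\subseteq Q(A_g)$, and $\Spec B_g\subsetneq\Spec A_g$ is open. Since $N_g$ (resp.\ $M_g$) is a direct summand of a finite free $B_g$- (resp.\ $A_g$-) module, a nonzero $A_g$-linear map $N_g\to M_g$ would produce a nonzero $A_g$-linear map $B_g\to A_g$, which Lemma~\ref{l:no-morphisms-BA} forbids. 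Hence $\Gamma(D(g),\mathcal{S})=0$ for all $g$ with $x\in D(g)$, so $\mathcal{S}_x=0$.

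Finally, for \eqref{eq:sheafHom-UPVQ-global} I would apply $\Gamma(X,-)$ to \eqref{eq:sheafHom-UPVQ}, so the left side becomes $\Gamma(V,u'_!\mathcal{H})$. If $V\subseteq U$ then $W=V$ and $u'=\id_V$, giving $\Gamma(V,\mathcal{H})=\Hom_{\mathcal{O}_V}(P|_V,Q)$; if $W=\emptyset$ the group is $0$; and if $\emptyset\neq W\subsetneq V$, then $W$ is dense in the irreducible $V$, while a global section $s$ of $u'_!\mathcal{H}$ is a section of the locally free sheaf $\mathcal{H}$ on $W$ extending by zero across the nonempty closed set $V\setminus W$. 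Injectivity of $\Gamma(W,\mathcal{H})\to\mathcal{H}_\eta$ ($\eta$ the generic point of $V$) forces $s_\eta\neq0$ whenever $s\neq0$, which is incompatible with $s$ vanishing near any point of $V\setminus W$; so $s=0$ and the group is $0$. I expect the formal steps (Step 1, the ``concentrated on $W$'' formalism, and the global-sections computation) to be routine; the real obstacle, and essentially the whole role of the Nagata hypothesis, is the stalk-vanishing of $\mathcal{S}$, i.e.\ its reduction to Lemma~\ref{l:no-morphisms-BA}.
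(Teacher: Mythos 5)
Your proof is correct. It shares the essential ingredients with the paper's argument — the adjunction/base-change reduction into $V$, the vanishing result Lemma~\ref{l:no-morphisms-BA}, and the "support of a section of a locally free sheaf on an integral scheme" phenomenon — but it is organized differently. The paper constructs the map via $u_!P\ra u_*P$, identifies the ambient sheaf with $v_*u'_*\sheafHom(P|_{U\cap V},Q|_{U\cap V})$, and then verifies the isomorphism onto the subsheaf $v_*u'_!\sheafHom(P|_{U\cap V},Q|_{U\cap V})$ by comparing sections over \emph{arbitrary affine opens of $X$}, citing Lemma~\ref{l:Hom-UPVQ-star} for the left-hand side and Lemma~\ref{l:shriek-extension-of-structure-sheaf} for the right-hand side; the global formula \eqref{eq:sheafHom-UPVQ-global} is then again read off from Lemma~\ref{l:shriek-extension-of-structure-sheaf} with $T=V$. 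You instead check the \emph{stalkwise} vanishing of $\mathcal{S}=\sheafHom_{\mathcal{O}_V}(u'_*(P|_{U\cap V}),Q)$ outside $U\cap V$ (re-running, in the localized rings $A_g\subsetneq B_g\subset Q(A_g)$, exactly the direct-summand-of-free reduction to Lemma~\ref{l:no-morphisms-BA} that underlies Lemma~\ref{l:Hom-UPVQ-star}), deduce $\mathcal{S}\cong u'_!\sheafHom_{\mathcal{O}_{U\cap V}}(P|_{U\cap V},Q|_{U\cap V})$ from the counit, and prove the global-sections vanishing by the generic-point/closed-support argument, which is the special case $T=Y$ of Lemma~\ref{l:shriek-extension-of-structure-sheaf} proved inline. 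Your route is thus a little more self-contained and avoids quoting those two auxiliary lemmas, at the cost of repeating their core arguments; the paper's route is shorter given that it has already established them, and its section-over-affine-opens check also pins down that the isomorphism is induced by the canonical map coming from $u_!P\ra u_*P$, a point your construction gets only implicitly via the counit. All hypothesis bookkeeping in your argument (Nagata and integrality passing to affine opens and localizations, hence Noetherian N-1, and the strictness $A_g\subsetneq B_g$ witnessed by the point $x$) is sound.
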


In particular, $\sheafHom_{\mathcal{O}_X}(u_*P, v_*Q)$ is not
quasi-coherent in general. 

\begin{corollary}
  \label{c:sheafHom-UPVQ}
  If $X$ is a Nagata integral scheme, $u \colon U \subset X$
  is an affine inclusion of an open subscheme, and $P$ is a
  vector bundle on $U,$ then 
  \begin{equation*}
    (u_*P)^\cek
    % = \sheafHom_{\mathcal{O}_X}(u_*P, \mathcal{O}_X)
    \sira u_!(P^\cek). 
  \end{equation*}
  If $U \subsetneq X$ then 
  $\Hom_{\mathcal{O}_X}(u_*P, \mathcal{O}_X)=0.$
\end{corollary}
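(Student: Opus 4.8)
The plan is to deduce both assertions from Lemma~\ref{l:sheafHom-UPVQ} by specializing the second open subscheme $V$ to be all of $X$.

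First I would apply Lemma~\ref{l:sheafHom-UPVQ} with $V := X$, $v := \id_X \colon X \subset X$ (which is trivially an affine open inclusion), and $Q := \mathcal{O}_X$, a vector bundle on $V$. By hypothesis $X$ is a Nagata integral scheme, so all hypotheses of that lemma are satisfied. Since $U \cap V = U$, the open immersion $u'$ appearing in Lemma~\ref{l:sheafHom-UPVQ} is just $u$ itself, and moreover $P|_{U \cap V} = P$ and $Q|_{U \cap V} = \mathcal{O}_X|_U = \mathcal{O}_U$, so that $\sheafHom_{\mathcal{O}_{U \cap V}}(P|_{U \cap V}, Q|_{U \cap V}) = \sheafHom_{\mathcal{O}_U}(P, \mathcal{O}_U) = P^\cek$. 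Because $v_* = \id$, the isomorphism \eqref{eq:sheafHom-UPVQ} then reads
\[
  (u_*P)^\cek = \sheafHom_{\mathcal{O}_X}(u_*P, \mathcal{O}_X)
  \sira v_* u'_!\, P^\cek = u_!(P^\cek),
\]
which is the first claim.

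For the second claim I would simply read off \eqref{eq:sheafHom-UPVQ-global} in this same specialization: with $V = X$ and $Q = \mathcal{O}_X$ the condition ``$V \subset U$'' becomes ``$X \subset U$'', i.e.\ $U = X$, so whenever $U \subsetneq X$ the formula yields $\Hom_{\mathcal{O}_X}(u_*P, \mathcal{O}_X) = 0$. (Alternatively, one may apply $\Gamma(X, -)$ to the isomorphism just established and observe that a nonzero section of the torsion-free sheaf $P^\cek$ on the integral, hence irreducible, scheme $U$ has dense support in $X$ and therefore cannot have support closed in $X$ when $U \subsetneq X$; thus $\Gamma(X, u_!(P^\cek)) = 0$.)

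Since everything reduces to substituting $V = X$ into the already-proved Lemma~\ref{l:sheafHom-UPVQ}, there is no genuine obstacle here; the only point requiring a moment's care is verifying that the hypotheses of that lemma (affineness of the two inclusions, and Nagata-ness and integrality of $V$) survive the substitution, which they do precisely because $V = X$ and $X$ is assumed to be a Nagata integral scheme.
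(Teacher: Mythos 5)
Your proposal is correct and is exactly the paper's argument: the paper proves the corollary by applying Lemma~\ref{l:sheafHom-UPVQ} with $V=X$ and $Q=\mathcal{O}_X$, which is precisely your specialization, and your verification of the hypotheses and the reading of \eqref{eq:sheafHom-UPVQ} and \eqref{eq:sheafHom-UPVQ-global} is sound.
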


% Remark~\ref{rem:Hom-UPVQ-weaker-assumptions}
% shows that the second statement of this corollary is not true if
% we replace ``Nagata integral'' by ``irreducible'' or
% ``reduced''. 
% We have not checked whether the assumptions for the first
% statement can be weakened. 

\begin{proof}[Proof of Corollary~\ref{c:sheafHom-UPVQ}]
  Apply Lemma~\ref{l:sheafHom-UPVQ} to $V=X$ and
  $Q=\mathcal{O}_X.$
\end{proof}

\begin{remark}
  If $X$ is a scheme, $u \colon U \subset X$ the inclusion of
  an open subscheme, and $\mathcal{F}$ a sheaf on $U$ we have
  $(u_!\mathcal{F})^\cek=\sheafHom(u_!\mathcal{F}, \mathcal{O}_X)
  \cong u_*\sheafHom(\mathcal{F},
  u^*\mathcal{O}_X)=u_*(\mathcal{F}^\cek).$ Hence in the setting
  of Corollary~\ref{c:sheafHom-UPVQ} the sheaves $u_!P$ and
  $u_*P$ satisfy $u_!P \cong ((u_!P)^\cek)^\cek$
  and $u_*P \cong (u_*P)^\cek)^\cek.$
\end{remark}

\begin{proof}[Proof of Lemma~\ref{l:sheafHom-UPVQ}]
  We work with the cartesian diagram 
  \eqref{eq:cart-UV-open-in-X}.
  Consider the morphism
  $\sheafHom_{\mathcal{O}_X}(u_*P, v_*Q)
  \ra
  \sheafHom_{\mathcal{O}_X}(u_!P, v_*Q)$
  coming from $u_!P \ra u_*P.$
  Its target
  \begin{multline*}
    \sheafHom_{\mathcal{O}_X}(u_!P, v_*Q)
    =
    v_*\sheafHom_{\mathcal{O}_V}(v^*u_!P, Q)
    \sila
    v_*\sheafHom_{\mathcal{O}_V}(u'_!v'^*P, Q)\\
    =
    v_*u'_*\sheafHom_{\mathcal{O}_{U \cap V}}(P|_{U \cap V},
    Q|_{U \cap V})
  \end{multline*}
  (use the adjunctions $(v^*,v_*)$ and $(u'_!, u'^!=u'^*)$ and
  Lemma~\ref{l:base-change-ringed-spaces-open}.\ref{enum:proper-base-change-open})
  contains
  the right-hand side of \eqref{eq:sheafHom-UPVQ} as a subsheaf
  ($v_*$ being left exact). 
  We claim that it induces an isomorphism onto this subsheaf.
  To see this we show that both sides of \eqref{eq:sheafHom-UPVQ}
  have the same evaluation at an arbitrary 
  affine open subset $W \subset X.$
  The commutative diagrams 
  \begin{equation*}
    \xymatrix{
      {U \cap W} \ar[r]^-{u''} \ar[d]^-{w'} & {W} \ar[d]^-{w}\\
      {U} \ar[r]^-{u} & {X,}
    }
    \quad \quad
    \xymatrix{
      {V \cap W} \ar[r]^-{v''} \ar[d]^-{w''} & {W} \ar[d]^-{w}\\
      {V} \ar[r]^-{v} & {X,}
    }
  \end{equation*}
  and 
  Lemma~\ref{l:base-change-ringed-spaces-open}.\ref{enum:push-and-restrict-to-open} 
  imply that
  $(u_*P)|_W
  =w^*u_*P \sira u''_*w'^*P
  =u''_*(P|_{U \cap W})$
  and
  $(v_*Q)|_W
  =w^*v_*Q \sira v''_*w''^*Q=v''_*(Q|_{V \cap W}).$
  This and Lemma~\ref{l:Hom-UPVQ-star}
  (note that both $V \cap W$ and $U \cap V \cap W$ are affine
  since $v$ and $u$ are affine, and that $V \cap W$ is either
  empty or 
  Nagata 
  (by
  \cite[\href{http://stacks.math.columbia.edu/tag/033X}{Lemma~033X}]{stacks-project}),
  in particular
  Noetherian, and
  integral 
  with $\mathcal{O}_{V \cap W}(V \cap W)$ N-1)  
  show that
  \begin{align*}
    \sheafHom_{\mathcal{O}_X}(u_*P, v_*Q) (W)
    & =
    \Hom_{\mathcal{O}_W}(u''_*(P|_{U \cap W}), 
    v''_*(Q|_{V \cap W}))\\
    & =
    \begin{cases}
      \Hom_{\mathcal{O}_{V \cap W}}(P|_{V \cap W}, Q|_{V \cap W})
      & \text{if $V \cap W \subset U,$}\\
      0 & \text{otherwise.}
    \end{cases}
  \end{align*}
  On the other hand we have
  \begin{equation*}
    (v_* u'_! \sheafHom_{\mathcal{O}_{U \cap V}}(P|_{U \cap V},
    Q|_{U \cap V}))(W)  
    =
    (u'_! \sheafHom_{\mathcal{O}_{U \cap V}}(P|_{U \cap V}, Q|_{U
      \cap V}))(V \cap W) 
  \end{equation*}
  Note that $E:=\sheafHom_{\mathcal{O}_{U \cap
      V}}(P|_{U \cap V}, Q|_{U \cap V})$ is a 
  vector bundle on $U \cap V$ satisfying
  \begin{equation*}
    E(U \cap V \cap W)
    = \Hom_{\mathcal{O}_{U \cap V \cap W}}(P|_{U \cap V \cap W},
    Q|_{U \cap V \cap W}),
  \end{equation*}
  and that in case $V \cap W \subset U$ we have $U \cap V \cap W=
  V \cap W.$ Hence it is sufficient to check that
  \begin{equation*}
%    \label{eq:uprime-shriek}
    (u'_!E)(V \cap W) =
    \begin{cases}
      E(V \cap W)
      & \text{if $V \cap W \subset U,$}\\
      0 & \text{otherwise.}
    \end{cases}
  \end{equation*}
  Since the inclusion morphism $u' \colon U \cap V \hra V$ is
  affine, 
  % \eqref{eq:uprime-shriek} 
  this follows from
  % the following
  Lemma~\ref{l:shriek-extension-of-structure-sheaf}, applied to
  $r=u' \colon R= U\cap V \hra Y=V$
  and $T=V \cap W.$ 
  This establishes the isomorphism \eqref{eq:sheafHom-UPVQ}.

  If we evaluate it at $X$
  we obtain 
  $\Hom_{\mathcal{O}_X}(u_*P, v_*Q) = (u'_! E)(V),$
  and Lemma~\ref{l:shriek-extension-of-structure-sheaf} applied
  to the same morphism as above but $T=V$ shows
  \eqref{eq:sheafHom-UPVQ-global} because $V \subset U \cap V$ is
  equivalent to $V \subset U$ and to $V = U \cap V.$
\end{proof}

\begin{lemma}
  \label{l:shriek-extension-of-structure-sheaf}
  Let $r \colon  R \subset Y$ be the
  inclusion of an open subscheme in an integral scheme $Y$ (it is
  sufficient to assume that $R$ is integral and $Y$ is
  irreducible).  
  Assume that $r$ is an affine morphism.
  If 
  $E$ is a vector bundle on $R$ and
  $T \subset Y$ is an arbitrary open subset, then
  \begin{equation*}
    (r_!E)(T) =
    \begin{cases}
      E(T)
      & \text{if $T \subset R,$}\\
      0 & \text{otherwise.}
    \end{cases}
  \end{equation*}
\end{lemma}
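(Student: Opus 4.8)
The plan is to argue directly from the definition of the extension by zero: $r_!E$ is the sheaf associated to the presheaf $P$ on $Y$ with $P(W)=E(W)$ when $W\subseteq R$ and $P(W)=0$ otherwise, so that its stalk at $y$ is $E_y$ for $y\in R$ and $0$ for $y\notin R$. Since restriction to an open set commutes with sheafification and $P|_R$ is already the sheaf $E$, we get $(r_!E)|_R=E$; this immediately yields $(r_!E)(T)=E(T)$ whenever $T\subseteq R$. The work is therefore entirely in the case $T\not\subseteq R$, where I must show $(r_!E)(T)=0$. The case $T=\emptyset$ is trivial, so I assume $T\neq\emptyset$; since $Y$ is irreducible and $R\neq\emptyset$ is open, $R$ is dense in $Y$, and in particular every nonempty open of $Y$ meets $R$.

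Now fix $s\in(r_!E)(T)$ with $T\not\subseteq R$ and $T\neq\emptyset$. First I would record that $E$, being locally free (hence torsion free) on the integral scheme $R$, injects into its generic stalk: for $U\subseteq R$ a nonempty open with generic point $\eta$, the restriction $E(U)\to E_\eta$ is injective (on an affine open $\Spec A\subseteq R$ the sheaf $E$ corresponds to a finitely generated projective, hence torsion-free, $A$-module, which embeds in its localization at the generic point). Granting this, it suffices to prove $s_\eta=0$: then $s|_{T\cap R}=0$ (here $T\cap R$ is nonempty by density of $R$), so every germ of $s$ vanishes — at points of $T\cap R$ because $s|_{T\cap R}=0$, and at points of $T\setminus R$ because the stalk of $r_!E$ is zero there — and hence $s=0$ since $r_!E$ is a sheaf.

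To see $s_\eta=0$, I would pick a point $y_0\in T\setminus R$ (which exists as $T\not\subseteq R$). By the construction of the sheafification there is an open $W$ with $y_0\in W\subseteq T$ and an element $t\in P(W)$ inducing $s|_W$. Since $y_0\in W$ and $y_0\notin R$ we have $W\not\subseteq R$, hence $P(W)=0$, hence $t=0$ and $s$ has all germs zero on $W$; in particular $s|_{W\cap R}=0$ as a section of $E$. By density of $R$ the open set $W\cap R$ is nonempty, and it contains $\eta$ (the generic point of $R$ lies in every nonempty open of $R$), so $s_\eta=0$, as required.

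The proof is short, and the delicate point — the one place the hypotheses are genuinely used — is the interplay of the two assumptions: irreducibility of $Y$ is what forces the ad hoc neighborhood $W$ extracted near a boundary point to meet $R$, and integrality of $R$ (through torsion-freeness of the vector bundle $E$) is what lets one promote vanishing on the small open $W\cap R$ to vanishing on all of $T\cap R$ by passing to the generic point. Without irreducibility of $Y$ the statement genuinely fails, as one already sees for $Y=Y_1\sqcup Y_2$ a disjoint union with $R=Y_1$: then $(r_!E)(Y)\cong E(Y_1)\neq 0$ although $Y\not\subseteq R$. Note finally that the hypothesis that $r$ be affine, included in the statement for use in Lemma~\ref{l:sheafHom-UPVQ}, is not needed for the argument above.
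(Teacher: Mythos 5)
Your proof is correct, and it takes a genuinely different route from the paper's. The paper argues via the closed-support description $(r_!E)(T)=\{s\in E(T\cap R)\mid \supp s\subset T \text{ closed}\}$: assuming $s\neq 0$, it picks a point of $T\cap R$ with nonzero germ, uses affineness of $r$ to make $A\cap R$ affine for an affine open $A\subset T$, so that $E|_{A\cap R}$ is a direct summand of a finite free sheaf, projects $s$ to a nonzero regular function on the integral scheme $A\cap R$ to conclude $\supp(s|_{A\cap R})=A\cap R$, and then combines closedness of the support with irreducibility (propagating over an affine cover of $T$) to force $T\subset R$. You instead argue at the generic point $\eta$ of $R$: the sheafification description of $r_!$ shows that $s$ vanishes identically on a neighborhood $W$ of any chosen point of $T\setminus R$; density of $R$ in the irreducible $Y$ gives $\eta\in W\cap R$, so $s_\eta=0$; and torsion-freeness of a vector bundle on the integral scheme $R$ (injectivity of $E(T\cap R)\to E_\eta$) upgrades this to $s|_{T\cap R}=0$, whence $s=0$ by checking stalks. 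Your route is slightly shorter and, as you observe, never uses that $r$ is affine --- a hypothesis the paper's proof genuinely does use (to get $E|_{A\cap R}$ globally a summand of a free sheaf) --- so your argument proves the lemma without that assumption; the paper's argument, in turn, stays entirely within the closed-support picture of $r_!$ used in the surrounding results and avoids the sheafification formalism. Both proofs use integrality of $R$ and irreducibility of $Y$ at the same essential points (a nonzero section of a vector bundle on an integral scheme does not vanish at the generic point, and every nonempty open of $Y$ meets $R$), and your disjoint-union counterexample is in the same spirit as Remark~\ref{rem:shriek-extension-counterexample-to-lemma}.
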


\begin{proof}
  By definition of $r_!$ we have
  \begin{equation*}
    (r_!E)(T) = \{s \in E(T \cap R) \mid
    \supp s \subset T \text{ closed}\}.
  \end{equation*}
  So if $T \subset R$ this is equal to $E(T)$ because 
  any section of $E(T)$ has closed support in $T.$

  Now let $T$ be arbitrary and assume that $s \in
  (r_!E)(T)$ is nonzero. Then there is some $t \in T
  \cap R$
  such that the germ $s_t$ is nonzero.
  Let $A \subset T$ be an affine open neighborhood of $t.$
  Since $r$ is affine, $A \cap R$ is affine, and hence 
  $E|_{A \cap R}$ is a direct summand of
  a finite direct sum of copies of $\mathcal{O}_{A \cap R}.$
  In particular,
  there is a morphism $E|_{A \cap R} \ra
  \mathcal{O}_{A \cap R}$
  such that the 
  image $s'$ of $s|_{A \cap R}$ in $\mathcal{O}_{A \cap R}(A \cap
  R)$ is nonzero. Since $A \cap R$ is an integral scheme we have
  $\Supp s' =A \cap R.$ This implies that $\supp (s|_{A \cap R}) =A
  \cap R.$
  Hence $s|_A \in (r_!E)(A)$ when viewed as an element
  of $E(A \cap R)$ has support $A \cap R,$ so $A \cap R
  $ is closed in $A.$

  If $B \subset T$ is an arbitrary non-empty affine open subset,
  then $A \cap B \not= \emptyset$ because $T$ is irreducible. 
  Since the stalk of $s$ at all points of $A \cap B$ is nonzero
  we can repeat the above argument and see that
  $B \cap R$ is closed in $B.$ This means that $T \cap R$ is
  closed in 
  $T.$ Since $T \cap R$ is non-empty and open in $T$ and $T$ is
  irreducible we have $T \cap R=T,$ hence $T \subset R.$ This
  proves the lemma.
\end{proof}

% \begin{remark}
%   \label{rem:sheafHom-UPVQ}
%   \footnote{
%     weglassen?
%   }
%   If $W \subset X$ is an arbitrary open subscheme we have
%   $\sheafHom(P, Q)|_W =\sheafHom(P|_W, Q|_W).$ Hence
%   the equality
%   in Lemma~\ref{l:sheafHom-UPVQ}
%   can be rewritten as
%   \begin{equation*}
%     \sheafHom(\leftidx{_U}{P}{}, \leftidx{_V}{Q}{})
%     =
%     v_* u'_! u'^*v^*\sheafHom(P, Q)
%   \end{equation*}
%   In particular, in case $U=X$ this gives
%   $\sheafHom(P, v_*v^*(Q))
%   =
%   v_* v^*\sheafHom(P, Q)$
%   which is a special case of
%   the statement in
%   Remark~\ref{rem:Cech-of-dual}.
% \end{remark}

\begin{remark}
  \label{rem:shriek-extension-counterexample-to-lemma} 
  We give two examples showing that
  Lemma~\ref{l:shriek-extension-of-structure-sheaf} 
  is not true if $Y$ is only required to be irreducible (resp.\
  reduced) instead of integral. 
  Let $k$ be any algebraically closed field.
  \begin{enumerate}
  \item 
    \label{enum:shriek-extension}
    Let $A=k[U,V]/(UV,V^2)$ and $a \in k[U] \subset A$ with
    $a \not\in k$ and $a(0)\not=0.$ Let $R:= \Spec A_a \subsetneq
    Y:=\Spec A$ and let 
    $T=Y.$ So $Y$ and $R$ are irreducible but not reduced. 
    The section $s:= V \in \mathcal{O}_R(T \cap
    R)=\mathcal{O}_R(R)=A_a$ is nonzero 
    and its support $\Supp s = \{(U,V)\} \subset R$ is closed in
    $T=Y.$ Hence it defines a nonzero element of
    $(r_!\mathcal{O}_R)(T)$ even though $T \not\subset R.$
    We may rewrite this as
    \begin{equation*}
      % \label{eq:Hom-shriek-counterexample}
      \Hom_{\mathcal{O}_Y}(\mathcal{O}_Y,
      (r_!\mathcal{O}_R))\sira 
      (r_!\mathcal{O}_R)(Y) \not=0.
    \end{equation*}
  \item 
    Let $Y=\Spec k[U,V]/(UV)$ which is 
    reduced and connected but not irreducible.
    Let $R := Y_U$ (the $U$-axis without zero, which is integral)
    and $T=Y_{U+V}$
    ($Y$ without the origin).
    Then $(r_!\mathcal{O}_R)(T)=\mathcal{O}_R(R)\not=0$ even
    though $T \not\subset R.$
    % (This example has the shortcoming that $T$ is not
    % connected.) 
  \end{enumerate}
\end{remark}

\begin{lemma}
  \label{l:Hom-UPVQ-shriek}
  Let $X$ be a scheme and let $u \colon U \subset X$ and $v
  \colon V \subset X$ be inclusions of open subschemes.  We
  assume that $U$ is an affine integral scheme and that $U \cap
  V$ is affine (it is sufficient to assume that $U$ is affine
  irreducible and that $U \cap V$ is affine integral). Let $P$ be
  a 
  vector bundle on $U$ and $Q$ a vector bundle on $V.$
  Then
  \begin{equation*}
    \Hom_{\mathcal{O}_X}(u_!P, v_!Q)
    =
    \begin{cases}
      \Hom_{\mathcal{O}_U}(P, Q|_U) & \text{if $U \subset
        V,$}\\
      0 & \text{otherwise.}
    \end{cases}
  \end{equation*}
\end{lemma}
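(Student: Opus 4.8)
\textbf{Plan for the proof of Lemma~\ref{l:Hom-UPVQ-shriek}.}
The strategy is to reduce everything to the affine scheme $U$ by using the adjunction $(u_!, u^*)$, and then to invoke the already-proved Lemma~\ref{l:Hom-UPVQ-star} on the restriction to $U$. First I would observe that $\Hom_{\mathcal{O}_X}(u_!P, v_!Q) = \Hom_{\mathcal{O}_U}(P, u^*v_!Q)$ by the adjunction $(u_!, u^*)$ (recall $u_!$ is left adjoint to $u^*=u^!$). Next, using the cartesian square~\eqref{eq:cart-UV-open-in-X} with the open immersions $u'\colon U \cap V \subset U$ and $v'\colon U \cap V \subset V$, base change for lower-shriek along an open immersion (Lemma~\ref{l:base-change-ringed-spaces-open}.\ref{enum:proper-base-change-open}) gives $u^*v_!Q \cong u'_!v'^*Q = u'_!(Q|_{U\cap V})$. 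Thus the question reduces to computing $\Hom_{\mathcal{O}_U}(P, u'_!(Q|_{U\cap V}))$ on the affine integral scheme $U$, where $u'\colon U \cap V \hookrightarrow U$ is an affine open immersion (affine since $U\cap V$ is assumed affine) between schemes with $U$ affine integral and $U\cap V$ affine integral.

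Now I would dualize: since $P$ is a vector bundle, $\Hom_{\mathcal{O}_U}(P, u'_!(Q|_{U\cap V})) = \Hom_{\mathcal{O}_U}(\mathcal{O}_U, P^\cek \otimes u'_!(Q|_{U\cap V}))$, and by the projection formula for an open embedding (Lemma~\ref{l:projection-formula-for-open-embedding}) we have $P^\cek \otimes u'_!(Q|_{U\cap V}) \cong u'_!\bigl(u'^*(P^\cek)\otimes Q|_{U\cap V}\bigr) = u'_!\bigl((P^\cek\otimes Q)|_{U\cap V}\bigr)$. Write $E := (P^\cek \otimes Q)|_{U\cap V} = \sheafHom_{\mathcal{O}_{U\cap V}}(P|_{U\cap V}, Q|_{U\cap V})$, which is a vector bundle on $U\cap V$. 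Then $\Hom_{\mathcal{O}_U}(u_!P, v_!Q) = \Gamma(U, u'_!E)$. Applying Lemma~\ref{l:shriek-extension-of-structure-sheaf} to $r = u' \colon R = U\cap V \hookrightarrow Y = U$ (which is affine, with $U$ integral hence irreducible and $U\cap V$ integral) and $T = U$, we get $\Gamma(U, u'_!E) = E(U)$ if $U \subset U\cap V$ and $0$ otherwise. Since $U\subset U\cap V$ is equivalent to $U \subset V$ (equivalently $U\cap V = U$), in the first case $E(U) = E(U\cap V) = \Hom_{\mathcal{O}_{U}}(P, Q|_U)$, which is exactly the claimed value.

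\textbf{Main obstacle.} The routine bookkeeping is checking that the compositions of all these canonical isomorphisms (adjunction, base change, projection formula, global sections) are the expected ones and behave functorially; each individual step is clean because of the lemmas already available. The genuinely delicate input is Lemma~\ref{l:shriek-extension-of-structure-sheaf} — namely the fact that a section of $E$ over $T\cap R$ extends to a section of $r_!E$ over $T$ only when $T\subset R$, which uses integrality of $R$ and irreducibility of $Y$ in an essential way (the support of a nonzero section of a vector bundle on an integral scheme is the whole space). The remark following that lemma (Remark~\ref{rem:shriek-extension-counterexample-to-lemma}) shows this cannot be weakened, so one must be careful to verify the integrality/irreducibility hypotheses are met at the point of application — here they hold because $U$ is affine integral (hence irreducible) and $U\cap V$ is affine integral, which is exactly what the statement of Lemma~\ref{l:Hom-UPVQ-shriek} assumes. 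I expect the whole argument to be short once these reductions are lined up; the parenthetical weakening to "$U$ affine irreducible, $U\cap V$ affine integral" follows because only irreducibility of the ambient scheme $Y=U$ and integrality of $R=U\cap V$ are used in Lemma~\ref{l:shriek-extension-of-structure-sheaf}.
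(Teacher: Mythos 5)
Your proposal is correct and follows essentially the same route as the paper: the identical adjunction-plus-open-base-change reduction to $\Hom_{\mathcal{O}_U}(P,\,(U\cap V\hookrightarrow U)_!(Q|_{U\cap V}))$, followed by the same underlying support argument (nonzero sections of a vector bundle on an integral scheme have full support, and a nonempty open-closed subset of the irreducible $U$ is all of $U$). The only difference is in packaging: you reach Lemma~\ref{l:shriek-extension-of-structure-sheaf} via the projection formula and cite it as a black box with $T=Y=U$, whereas the paper reduces to the structure sheaf using that $P$ and $Q|_{U\cap V}$ are direct summands of finite free modules and then repeats the support argument inline; your check of the hypotheses (affineness of $U\cap V\subset U$, and the weakened "irreducible $U$, integral $U\cap V$" variant) is accurate.
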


Remark~\ref{rem:shriek-extension-counterexample-to-lemma}.\ref{enum:shriek-extension}
shows that this lemma is not true if we replace ``integral''
by ``irreducible''.

\begin{proof}
  We work with the cartesian diagram 
  \eqref{eq:cart-UV-open-in-X}.
  The adjunction $(u_!, u^!=u^*)$ and
  Lemma~\ref{l:base-change-ringed-spaces-open}.\ref{enum:proper-base-change-open} 
  yield
  \begin{equation*}
    \Hom_{\mathcal{O}_X}(u_!P, v_!Q)
    = \Hom_{\mathcal{O}_U}(P, u^*v_!Q)
    \sira 
    \Hom_{\mathcal{O}_U}(P, v'_!u'^*Q).
  \end{equation*}
  If $U \subset V$ then $U \cap V = U$ and the claim is clear.
  
 Assume that $\Hom_{\mathcal{O}_U}(P, v'_!u'^*Q) \not=0.$
 We need to show that $U \subset V.$
 Since $U$ and $U \cap V$ are affine and $P$ and $u'^*Q$ are
 vector bundles we see that
  \begin{equation*}
    0 \not= 
    \Hom_{\mathcal{O}_U}(\mathcal{O}_U, v'_! \mathcal{O}_{U \cap V})
    \sira
    (v'_!(\mathcal{O}_{U \cap V})) (U).
  \end{equation*}
  Pick a nonzero element of the right hand side.
  It is given by an element $s \in
  \mathcal{O}_{U \cap V}(U \cap V)$ whose support is
  closed in $U,$ cf.\ the proof of
  Lemma~\ref{l:shriek-extension-of-structure-sheaf}.
  % (with $R=U \cap V \xra{r=v'} Y=U$ und $T=U$)
  % shows that this vanishes if $U \not\subset U \cap V.$
  Since $U \cap V$ is integral we necessarily have $\Supp s = U
  \cap V.$  This set is
  non-empty and open and closed in the irreducible set $U,$ hence
  $U 
  \cap V=U$ and $U \subset V.$
  This proves the lemma.
\end{proof}

% \footnote{
% Gegenbeispiele? Dass integral noetig? Hoffentlich sind obige
% Beispiele auch hier g\"ultig.
% }

% \newpage

% \subsection{Some results for the external tensor product}
% \label{sec:some-results-extern}

\subsection{External tensor product}
\label{sec:extern-tens-prod}

Let $X$ and $Y$ be schemes over a field $k$ with projection
morphisms $X \xla{p} X \times Y \xra{q} Y.$
We explain some properties and compatibilities of the bifunctor
\begin{equation*}
  % \label{eq:boxtimes-def}
  (-\boxtimes?):= p^*(-) \otimes q^*(?) \colon \Sh(X) \times
  \Sh(Y) \ra \Sh(X \times Y).
\end{equation*}

\begin{lemma}
  \label{l:boxtimes-exact}
  Given schemes $X$ and $Y$ over a field $k$ the bifunctor
  $\boxtimes$
  % $\boxtimes \colon \Sh(X) \times \Sh(Y) \ra \Sh(X \times Y)$ 
  is exact. 
\end{lemma}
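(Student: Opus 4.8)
The plan is to check exactness of $\boxtimes$ separately in each of its two arguments; by the evident symmetry $X\leftrightarrow Y$ it suffices to treat the first one, i.e.\ to show that for each fixed $\mathcal{G}\in\Sh(Y)$ the functor
\begin{equation*}
  (-)\boxtimes\mathcal{G}=p^*(-)\otimes q^*\mathcal{G}\colon\Sh(X)\ra\Sh(X\times Y)
\end{equation*}
is exact. Since exactness of a sequence of sheaves can be tested on stalks and the stalk functors are exact, it is enough to prove that for every point $z\in X\times Y$ the functor $\mathcal{F}\mapsto(p^*\mathcal{F}\otimes q^*\mathcal{G})_z$ on $\Sh(X)$ is exact.

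First I would make this stalk explicit. Set $x=p(z)$ and $y=q(z)$, and pick affine open neighbourhoods $\Spec A$ of $x$ in $X$ and $\Spec B$ of $y$ in $Y$; then $\Spec A\times\Spec B=\Spec(A\otimes_k B)$ is an affine open of $X\times Y$ containing $z$, so $\mathcal{O}_{X\times Y,z}$ is a localization of $A\otimes_k B$, in particular flat over $A\otimes_k B$. Using the standard formula $(p^*\mathcal{F})_z\cong\mathcal{O}_{X\times Y,z}\otimes_{\mathcal{O}_{X,x}}\mathcal{F}_x$ (and likewise for $q$), together with the purely algebraic identity $(C\otimes_R M)\otimes_C(C\otimes_S N)\cong C\otimes_{R\otimes_k S}(M\otimes_k N)$ valid for any $R\otimes_k S$-algebra $C$, any $R$-module $M$ and any $S$-module $N$, one obtains a natural isomorphism
\begin{equation*}
  (p^*\mathcal{F}\otimes q^*\mathcal{G})_z\;\cong\;\mathcal{O}_{X\times Y,z}\otimes_{A\otimes_k B}\bigl(\mathcal{F}_x\otimes_k\mathcal{G}_y\bigr),
\end{equation*}
where $\mathcal{F}_x\otimes_k\mathcal{G}_y$ carries its $A\otimes_k B$-module structure coming from the $A$-action on $\mathcal{F}_x$ and the $B$-action on $\mathcal{G}_y$.

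Granting this, exactness is immediate. Given a short exact sequence $0\ra\mathcal{F}'\ra\mathcal{F}\ra\mathcal{F}''\ra0$ in $\Sh(X)$, taking stalks at $x$ yields a short exact sequence of $A$-modules; tensoring it over $k$ with $\mathcal{G}_y$ preserves exactness because $k$ is a field (every $k$-module is flat); and applying $\mathcal{O}_{X\times Y,z}\otimes_{A\otimes_k B}(-)$ preserves exactness because $\mathcal{O}_{X\times Y,z}$ is a localization, hence flat, over $A\otimes_k B$. Thus $\mathcal{F}\mapsto(p^*\mathcal{F}\otimes q^*\mathcal{G})_z$ is exact for every $z$, so $(-)\boxtimes\mathcal{G}$ is exact, and by symmetry so is $\mathcal{G}\mapsto\mathcal{F}\boxtimes\mathcal{G}$; hence $\boxtimes$ is an exact bifunctor.

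The only real content — and the step to be careful with — is the displayed stalk formula: one must verify that the two pullback--localization identifications for $p^*\mathcal{F}$ and $q^*\mathcal{G}$ are compatible with the $A\otimes_k B$-algebra structure on $\mathcal{O}_{X\times Y,z}$ so that the algebraic identity applies. This is routine but genuinely uses the field hypothesis: over an arbitrary base ring the functor $\mathcal{F}_x\mapsto\mathcal{F}_x\otimes\mathcal{G}_y$ need not be exact, and the lemma would fail. Everything else in the argument is formal.
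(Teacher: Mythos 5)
Your proof is correct and follows essentially the same route as the paper: both reduce to the stalkwise facts that tensoring over the field $k$ is exact and that the local ring of $X\times Y$ at a point of an affine chart $\Spec(A\otimes_k B)$ is a localization of $A\otimes_k B$ (resp.\ of $A_\mfp\otimes_k B_\mfq$), hence flat. The paper merely packages this as a global factorization of $\boxtimes$ through $(p\inv(-)\otimes_{\ul{k}}q\inv(?))\otimes_{p\inv\mathcal{O}_X\otimes_{\ul{k}}q\inv\mathcal{O}_Y}\mathcal{O}_{X\times Y}$ with the flatness checked on stalks, whereas you compute the stalk of $\mathcal{F}\boxtimes\mathcal{G}$ directly; the content is the same.
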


\begin{proof}
  For $\mathcal{F} \in \Sh(X)$ and $\mathcal{G} \in
  \Sh(Y)$ we have
  \begin{equation*}
    \mathcal{F} \boxtimes \mathcal{G} = p\inv \mathcal{F}
    \otimes_{p\inv \mathcal{O}_X} \mathcal{O}_{X \times Y}
    \otimes_{q\inv \mathcal{O}_Y} q\inv
    \mathcal{G} 
    \sira 
    (p\inv\mathcal{F} \otimes_{\ul{k}}
    q\inv\mathcal{G}) \otimes_{p\inv\mathcal{O}_X \otimes_{\ul{k}}
      q\inv \mathcal{O}_Y} \mathcal{O}_{X \times Y}
  \end{equation*}
  (where $\ul{k}$ is the constant sheaf with stalk $k$)
  and the morphism 
  $p\inv\mathcal{O}_X \otimes_{\ul{k}} q\inv \mathcal{O}_Y
  \ra \mathcal{O}_{X \times Y}$ of sheaves of rings is
  flat because for $\Spec A \subset X$ and $\Spec B \subset Y$ open
  it is 
  given at the stalk at
  $\mfr \in \Spec (A \otimes B)$ with $\mfp=p(\mfr)$ and
  $\mfq=q(\mfr)$ by $A_\mfp \otimes B_\mfq \ra (A \otimes B)_\mfr.$
\end{proof}

\begin{lemma}
  \label{l:diagonal-restriction-of-extension}
  Let $X$ be a scheme over a field $k,$ and let
  $u \colon  U \subset X$ and $v \colon  V \subset X$ be
  affine immersions of open subschemes.
  Then
  $\Delta^*(\leftidx{_{U \times V}}{(\mathcal{E} \boxtimes \mathcal{F})}{})
  % = 
  % \Delta^*(\leftidx{_U}{\mathcal{E}}{} \boxtimes 
  % \leftidx{_V}{\mathcal{F}}{})
  \sira
  \leftidx{_{U \cap V}}{(\mathcal{E} \otimes \mathcal{F})}{}$    
  canonically,
  for $\mathcal{E}$ and $\mathcal{F}$ quasi-coherent sheaves
  on $X.$
\end{lemma}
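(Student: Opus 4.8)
The plan is to reduce the statement to the affine case by a local computation, since both sides are sheaves on $X$ and the assertion is about a canonical comparison morphism. First I would construct the comparison morphism. On $U \times V$ we have the tautological identification $\leftidx{_{U \times V}}{(\mathcal{E} \boxtimes \mathcal{F})}{} = (u \times v)_* (u \times v)^*(\mathcal{E} \boxtimes \mathcal{F})$, where $u \times v \colon U \times V \hra X \times X$. Pulling back along the diagonal $\Delta \colon X \ra X \times X$ and using that $\Delta^{-1}(U \times V) = U \cap V$, base change for the cartesian square formed by $\Delta$, $u \times v$, the diagonal $\Delta' \colon U \cap V \ra U \times V$, and the open immersion $U \cap V \hra X$ (this is exactly the affine base change of Lemma~\ref{l:base-change-ringed-spaces-open}.\ref{enum:push-and-restrict-to-open}, valid since $u \times v$ is affine by hypothesis) gives $\Delta^*(u \times v)_* N \sira (U\cap V \hra X)_* \Delta'^* N$ for $N = (u \times v)^*(\mathcal{E}\boxtimes\mathcal{F})$. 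So it remains to identify $\Delta'^*(u\times v)^*(\mathcal{E}\boxtimes\mathcal{F})$ with $(\mathcal{E}\otimes\mathcal{F})|_{U\cap V}$.

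For that last identification, observe that $(u \times v) \comp \Delta' \colon U \cap V \ra X \times X$ equals $\Delta_X \comp (U \cap V \hra X)$, where $\Delta_X$ is the diagonal of $X$. Hence $\Delta'^*(u\times v)^*(\mathcal{E}\boxtimes\mathcal{F}) = (U\cap V \hra X)^* \Delta_X^*(\mathcal{E}\boxtimes\mathcal{F})$. Now $\Delta_X^*(\mathcal{E}\boxtimes\mathcal{F}) = \Delta_X^*(p^*\mathcal{E} \otimes q^*\mathcal{F}) = (p\comp\Delta_X)^*\mathcal{E} \otimes (q\comp\Delta_X)^*\mathcal{F} = \mathcal{E}\otimes\mathcal{F}$ using that pullback is monoidal and $p \comp \Delta_X = q \comp \Delta_X = \id_X$; restricting to $U \cap V$ yields $(\mathcal{E}\otimes\mathcal{F})|_{U\cap V}$, which is $\leftidx{_{U\cap V}}{(\mathcal{E}\otimes\mathcal{F})}{}$ regarded as a sheaf on $U \cap V$ (no further pushforward, since we are matching the argument of the $(U\cap V \hra X)_*$ above). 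Composing all the isomorphisms produces the desired canonical identification.

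I do not expect a serious obstacle here; the proof is a string of adjunction and base-change identities. The one point requiring a little care is bookkeeping of which functors are pushforwards along which open immersions, so that the two sides of Lemma~\ref{l:base-change-ringed-spaces-open}.\ref{enum:push-and-restrict-to-open} are lined up correctly: concretely, one must check that the cartesian square has $\Delta$ and $u \times v$ on the ``outer'' sides and $U \cap V \hra X$, $\Delta'$ on the ``inner'' sides, and that the affineness hypothesis needed (affineness of $u\times v$, equivalently of $u$ and $v$) is supplied by the assumption that $u$ and $v$ are affine immersions. Since all the sheaves involved are quasi-coherent and all the relevant morphisms are affine or open immersions, there are no derived-functor subtleties, and the canonicity of the resulting isomorphism follows from the canonicity of each constituent. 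Thus the proof can be written in a few lines: ``\emph{This follows by combining Lemma~\ref{l:base-change-ringed-spaces-open}.\ref{enum:push-and-restrict-to-open} applied to the cartesian square above with the monoidality of pullback and the identities $p\comp\Delta = q\comp\Delta = \id$.}''
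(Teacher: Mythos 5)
Your argument is the same as the paper's: form the cartesian square with $U\cap V$, $U\times V$, $X$, $X\times X$, apply base change to rewrite $\Delta^*(u\times v)_*(u\times v)^*(\mathcal{E}\boxtimes\mathcal{F})$ as a pushforward from $U\cap V$, and then use monoidality of pullback together with $p\comp\Delta=q\comp\Delta=\id$ to identify the fibre term with $(\mathcal{E}\otimes\mathcal{F})|_{U\cap V}$.

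One correction to the justification, though: the base-change step is \emph{not} an instance of Lemma~\ref{l:base-change-ringed-spaces-open}.\ref{enum:push-and-restrict-to-open}. That lemma treats the restriction along an \emph{open} immersion $u\colon U\subset X$ of a pushforward $f_*$ along an arbitrary morphism of ringed spaces, and it has no affineness hypothesis at all; in your square the pullback is along the diagonal $\Delta$, which is a closed (not open) immersion, so its hypotheses are not met. The statement you actually need — $\Delta^*(u\times v)_*\mathcal{G}\sira s_*\delta^*\mathcal{G}$ for $\mathcal{G}$ quasi-coherent and $u\times v$ affine — is Lemma~\ref{l:push-affine-then-pull}, and this is where the quasi-coherence of $\mathcal{E}$ and $\mathcal{F}$ (and the affineness of $u$ and $v$) is genuinely used; for arbitrary $\mathcal{O}$-modules this base change can fail. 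With that citation repaired, your proof coincides with the one in the paper.
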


\begin{proof}
  Consider
  the pullback
  diagram
  \begin{equation*}
    \xymatrix{
      {U \cap V} \ar[r]^-{s} \ar[d]^-{\delta} &
      {X} \ar[d]^-{\Delta}\\
      {U \times V} \ar[r]^-{u \times v} &
      {X \times X.}
    }
  \end{equation*}
  If $\mathcal{G}$ is a 
  quasi-coherent sheaf on $U \times V,$ there
  is
  a natural isomorphism 
  $\Delta^* (u \times v)_*(\mathcal{G}) \sira s_*\delta^*(\mathcal{G}),$
  by Lemma~\ref{l:push-affine-then-pull}.
  In particular, we obtain
  \begin{equation*}
    \Delta^*(u \times v)_* (u \times v)^*(\mathcal{E} \boxtimes \mathcal{F})
    \sira
    s_*\delta^*(u \times v)^*(\mathcal{E} \boxtimes \mathcal{F})
    =
    s_*s^*\Delta^*(\mathcal{E} \boxtimes \mathcal{F})
    \sira
    s_*s^*(\mathcal{E} \otimes \mathcal{F}).
  \end{equation*}
\end{proof}

\begin{lemma}
  \label{l:boxtimes-restriction-extension}
  Let $u \colon U \ra X$ and $v \colon V \ra Y$ be morphisms of
  schemes over a field $k.$ Then
  \begin{enumerate}
  \item 
    \label{enum:boxtimes-restriction}
    Let $\mathcal{E} \in \Sh(X)$ and $\mathcal{F} \in \Sh(Y).$
    Then
    $u^*\mathcal{E} \boxtimes v^*\mathcal{F} \sira (u \times v)^*(\mathcal{E} \boxtimes \mathcal{F})$
    naturally. 
  \end{enumerate}
  % \item 
  %   \label{enum:boxtimes-restriction-qcoh}
  %   Let $\mathcal{E} \in \Qcoh(X)$ and $\mathcal{F} \in \Qcoh(Y).$
  %   Then
  %   $(u \times v)^*(\mathcal{E} \boxtimes \mathcal{F})
  %   = u^*\mathcal{E} \boxtimes v^*\mathcal{F}$ naturally.
  % \end{enumerate}
  Assume in addition that
  $U$ and $V$ are open subschemes of $X$ and $Y,$
  respectively.
  \begin{enumerate}[resume]
  \item
    \label{enum:boxtimes-!-extension}
    Let $\mathcal{E}' \in \Sh(U)$ and $\mathcal{F}' \in \Sh(V).$
    Then 
    $u_!\mathcal{E}' \boxtimes v_!\mathcal{F}' \sira (u \times v)_!(\mathcal{E}' \boxtimes \mathcal{F}')$
    naturally. In particular, 
    $\leftidx{^U}{\mathcal{E}}{} \boxtimes \leftidx{^V}{\mathcal{F}}{}
    \sira
    \leftidx{^{U \times V}}{(\mathcal{E} \boxtimes \mathcal{F})}{}$ 
    for $\mathcal{E} \in \Sh(X)$ and $\mathcal{F} \in \Sh(Y).$
  \end{enumerate}
  Assume in addition that $u$ and $v$ are
  % $u \colon U \subset X$ and $v \colon V \subset Y$ be 
  affine morphisms.
  % immersions of open subschemes of $X$ and $Y,$
  % respectively, $u \colon U \subset X$ and $v\colon V
  % \subset Y.$
  \begin{enumerate}[resume]
  \item 
    \label{enum:boxtimes-*-extension}
    Let $\mathcal{E}' \in \Qcoh(U)$ and $\mathcal{F}' \in \Qcoh(V).$
    Then
    $u_*\mathcal{E}' \boxtimes v_*\mathcal{F}' \sira (u \times v)_*(\mathcal{E}' \boxtimes \mathcal{F}')$
    naturally. 
    In particular, $\leftidx{_U}{\mathcal{E}}{} \boxtimes \leftidx{_V}{\mathcal{F}}{}
    \sira
    \leftidx{_{U \times V}}{(\mathcal{E} \boxtimes \mathcal{F})}{}$
    for $\mathcal{E} \in \Qcoh(X)$ and $\mathcal{F} \in \Qcoh(Y).$
  \end{enumerate}
\end{lemma}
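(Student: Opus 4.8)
\textbf{Plan of proof for Lemma~\ref{l:boxtimes-restriction-extension}.} The three assertions are all of the same shape: each compares the external tensor product formed upstairs with the one formed downstairs, and each reduces to a well-known compatibility of $\otimes$, $(-)^*$, $(-)_!$ or $(-)_*$ with base change, already recorded (for $(-)^*$ and $(-)_*$) in Lemma~\ref{l:push-affine-then-pull} and Lemma~\ref{l:base-change-ringed-spaces-open}. The plan is to first construct the natural morphism in each case, then check it is an isomorphism, which in every instance can be done either on stalks or by an explicit chain of already-established isomorphisms. The ``in particular'' clauses in \ref{enum:boxtimes-!-extension} and \ref{enum:boxtimes-*-extension} are then the special case where the sheaves upstairs are restrictions, i.e.\ $\mathcal{E}'=u^*\mathcal{E}$ and $\mathcal{F}'=v^*\mathcal{F}$, combined with part \ref{enum:boxtimes-restriction}: one has $\leftidx{^U}{\mathcal{E}}{}=u_!u^*\mathcal{E}$ and $\leftidx{_U}{\mathcal{E}}{}=u_*u^*\mathcal{E}$ by definition, so $u_!u^*\mathcal{E}\boxtimes v_!v^*\mathcal{F}\sira (u\times v)_!((u^*\mathcal{E})\boxtimes(v^*\mathcal{F}))\sira (u\times v)_!(u\times v)^*(\mathcal{E}\boxtimes\mathcal{F})=\leftidx{^{U\times V}}{(\mathcal{E}\boxtimes\mathcal{F})}{}$, and similarly with $*$.

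\textbf{Part \ref{enum:boxtimes-restriction}.} Write $p_X\colon X\times Y\to X$, $q_Y\colon X\times Y\to Y$ and $p_U\colon U\times V\to U$, $q_V\colon U\times V\to V$ for the projections, so that $p_X\comp(u\times v)=u\comp p_U$ and $q_Y\comp(u\times v)=v\comp q_V$ as morphisms $U\times V\to X$ (resp.\ to $Y$). Then
\begin{equation*}
  (u\times v)^*(\mathcal{E}\boxtimes\mathcal{F})
  =(u\times v)^*(p_X^*\mathcal{E}\otimes q_Y^*\mathcal{F})
  \sira (u\times v)^*p_X^*\mathcal{E}\otimes(u\times v)^*q_Y^*\mathcal{F}
  \sira p_U^*u^*\mathcal{E}\otimes q_V^*v^*\mathcal{F}
  =u^*\mathcal{E}\boxtimes v^*\mathcal{F},
\end{equation*}
using that $(-)^*$ is monoidal and the transitivity of pullback. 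This is an isomorphism on the nose, giving the first claim.

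\textbf{Part \ref{enum:boxtimes-!-extension}.} For $\mathcal{E}'\in\Sh(U)$, $\mathcal{F}'\in\Sh(V)$ with $U\subset X$, $V\subset Y$ open, one has the two cartesian squares
\begin{equation*}
  \xymatrix{
    {U\times V}\ar[r]^-{q_V}\ar[d]_-{u\times v} & {V}\ar[d]^-{v}\\
    {X\times Y}\ar[r]^-{q_Y} & {Y}
  }
  \qquad
  \xymatrix{
    {U\times V}\ar[r]^-{p_U}\ar[d]_-{u\times v} & {U}\ar[d]^-{u}\\
    {X\times Y}\ar[r]^-{p_X} & {X}
  }
\end{equation*}
(the second one uses that $p_X^{-1}(U)=U\times Y\supset U\times V$, but in fact the square with $p_X$ and $p_U$ is cartesian because $U,V$ are open). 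The plan is to compute, using the projection formula for an open embedding (Lemma~\ref{l:projection-formula-for-open-embedding}) and the base change isomorphism $f^*u_!\sira v_!f'^*$ for an open immersion (Lemma~\ref{l:base-change-ringed-spaces-open}.\ref{enum:proper-base-change-open}):
\begin{align*}
  (u\times v)_!(\mathcal{E}'\boxtimes\mathcal{F}')
  &=(u\times v)_!\big(p_U^*\mathcal{E}'\otimes q_V^*\mathcal{F}'\big)\\
  &\sira (u\times v)_!\big(p_U^*\mathcal{E}'\otimes (u\times v)^*q_Y^*v_!\mathcal{F}'\big) && \text{(base change for $q$)}\\
  &\sira (u\times v)_!\big(p_U^*\mathcal{E}'\big)\otimes q_Y^*v_!\mathcal{F}' && \text{(projection formula)}\\
  &\sira (u\times v)_!\big((u\times v)^*p_X^*u_!\mathcal{E}'\big)\otimes q_Y^*v_!\mathcal{F}' && \text{(base change for $p$)}\\
  &\sira p_X^*u_!\mathcal{E}'\otimes q_Y^*v_!\mathcal{F}' && \text{(projection formula)}\\
  &=u_!\mathcal{E}'\boxtimes v_!\mathcal{F}'.
\end{align*}
Alternatively, and perhaps more cleanly, one checks directly on stalks: at a point $z\in X\times Y$ with images $x=p_X(z)$, $y=q_Y(z)$, both $(u\times v)_!(\mathcal{E}'\boxtimes\mathcal{F}')_z$ and $(u_!\mathcal{E}'\boxtimes v_!\mathcal{F}')_z$ are naturally $\mathcal{O}_{X\times Y,z}\otimes_{\mathcal{O}_{X,x}\otimes\mathcal{O}_{Y,y}}(\mathcal{E}'_x\otimes_k\mathcal{F}'_y)$ if $x\in U$ and $y\in V$, and vanish otherwise (since $z\in U\times V$ iff $x\in U$ and $y\in V$); one then verifies the chosen natural morphism induces this identification. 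The ``in particular'' then follows as explained above.

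\textbf{Part \ref{enum:boxtimes-*-extension}.} Now $u,v$ are affine morphisms and $\mathcal{E}'\in\Qcoh(U)$, $\mathcal{F}'\in\Qcoh(V)$. The same two squares above are cartesian, and $u\times v$ is affine. The plan is the direct analogue of the previous computation, replacing $(-)_!$ by $(-)_*$, the base change isomorphism of Lemma~\ref{l:base-change-ringed-spaces-open}.\ref{enum:proper-base-change-open} by that of Lemma~\ref{l:push-affine-then-pull} (valid since $u$, $v$, $u\times v$ are affine and we stay within quasi-coherent sheaves), and the projection formula of Lemma~\ref{l:projection-formula-for-open-embedding} by Lemma~\ref{l:push-from-open-and-tensor}:
\begin{align*}
  (u\times v)_*(\mathcal{E}'\boxtimes\mathcal{F}')
  &=(u\times v)_*\big(p_U^*\mathcal{E}'\otimes q_V^*\mathcal{F}'\big)\\
  &\sira (u\times v)_*\big(p_U^*\mathcal{E}'\otimes (u\times v)^*q_Y^*v_*\mathcal{F}'\big) && \text{(base change for $q$)}\\
  &\sira (u\times v)_*\big(p_U^*\mathcal{E}'\big)\otimes q_Y^*v_*\mathcal{F}' && \text{(Lemma~\ref{l:push-from-open-and-tensor})}\\
  &\sira (u\times v)_*\big((u\times v)^*p_X^*u_*\mathcal{E}'\big)\otimes q_Y^*v_*\mathcal{F}' && \text{(base change for $p$)}\\
  &\sira p_X^*u_*\mathcal{E}'\otimes q_Y^*v_*\mathcal{F}' && \text{(Lemma~\ref{l:push-from-open-and-tensor})}\\
  &=u_*\mathcal{E}'\boxtimes v_*\mathcal{F}'.
\end{align*}
For this one must know that $p_U^*\mathcal{E}'$ and $p_X^*u_*\mathcal{E}'$ are quasi-coherent (clear) so that Lemma~\ref{l:push-from-open-and-tensor} applies, and that all the $*$-pushforwards involved preserve quasi-coherence, which is Lemma~\ref{l:qcqs-preserves-qcoh} (the morphisms $u$, $v$, $u\times v$ are affine, hence quasi-compact quasi-separated). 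Again the ``in particular'' is the case $\mathcal{E}'=u^*\mathcal{E}$, $\mathcal{F}'=v^*\mathcal{F}$ combined with \ref{enum:boxtimes-restriction}.

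\textbf{Main obstacle.} None of the three parts is deep; the only points that require a little care are (i) pinning down the natural morphism so that the ``$\sira$'' really is canonical (rather than just asserting the existence of an abstract isomorphism), which matters because these isomorphisms are used later in Proposition~\ref{p:kuenneth-formula} and in section~\ref{sec:four-mukai-funct} where compatibility with various actions is needed; and (ii) making sure in part \ref{enum:boxtimes-*-extension} that every intermediate sheaf is quasi-coherent so that Lemmata~\ref{l:push-affine-then-pull}, \ref{l:push-from-open-and-tensor} and \ref{l:qcqs-preserves-qcoh} are legitimately applicable. If one prefers to avoid bookkeeping of canonical morphisms in the chains above, the cleanest route for \ref{enum:boxtimes-!-extension} is the stalk computation indicated, and for \ref{enum:boxtimes-*-extension} one may reduce to the affine case $X=\Spec A$, $Y=\Spec B$, $U=\Spec A'$, $V=\Spec B'$ with $A\to A'$, $B\to B'$, where the statement becomes the evident isomorphism $(A'\otimes_A M)\otimes_k(B'\otimes_B N)\otimes_{A'\otimes B'}(A'\otimes B')\sira \dots$ of modules over $A\otimes B$ after extending scalars, i.e.\ elementary commutative algebra.
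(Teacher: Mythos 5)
You take essentially the paper's route: part (a) is the compatibility of pullback with tensor product, part (b) rests on Lemma~\ref{l:base-change-ringed-spaces-open}.\ref{enum:proper-base-change-open} together with the projection formula Lemma~\ref{l:projection-formula-for-open-embedding}, and part (c) is a map constructed by adjunction and checked locally, where it reduces to the evident computation for $\Spec A'\to\Spec A$, $\Spec B'\to\Spec B$ --- exactly the reduction you offer at the end, and exactly how the paper argues. Two of your written justifications are wrong as stated, though. First, neither displayed square is cartesian: $(X\times Y)\times_Y V=X\times V$ and $(X\times Y)\times_X U=U\times Y$, not $U\times V$. Your steps labelled ``base change'' survive only because all they actually use is $(u\times v)^*q_Y^*=q_V^*v^*$ together with $v^*v_!\cong\id$ (resp.\ $v^*v_*\cong\id$) for the open immersion $v$; they are not applications of Lemma~\ref{l:base-change-ringed-spaces-open} or Lemma~\ref{l:push-affine-then-pull} to those squares. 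Second, and more seriously, the last step of each chain is not an instance of the lemma you cite. With $\mathcal{H}=p_X^*u_!\mathcal{E}'$ and $\mathcal{G}=q_Y^*v_!\mathcal{F}'$, Lemma~\ref{l:projection-formula-for-open-embedding} only converts $(u\times v)_!\bigl((u\times v)^*\mathcal{H}\bigr)\otimes\mathcal{G}$ into $(u\times v)_!(u\times v)^*(\mathcal{H}\otimes\mathcal{G})$, and in (c) Lemma~\ref{l:push-from-open-and-tensor} likewise yields $(u\times v)_*(u\times v)^*(\mathcal{H}\otimes\mathcal{G})$; what remains --- that the counit $(u\times v)_!(u\times v)^*(\mathcal{H}\otimes\mathcal{G})\to\mathcal{H}\otimes\mathcal{G}$, resp.\ the unit $\mathcal{H}\otimes\mathcal{G}\to(u\times v)_*(u\times v)^*(\mathcal{H}\otimes\mathcal{G})$, is an isomorphism --- carries the real content of the step and still needs an argument (vanishing of the stalks of $\mathcal{H}\otimes\mathcal{G}$ outside $U\times V$ in the $!$-case, the affine computation in the $*$-case).

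These defects are repairable, and your fallbacks (the stalk computation for (b), the reduction to the affine case for (c)) are sound and by themselves complete the proof; the affine reduction is precisely the paper's proof of (c). If you want the chain of isomorphisms to go through literally, factor $u\times v=(u\times\id_Y)\comp(\id_U\times v)$ and pull out one factor at a time: the squares $U\times Y\to X\times Y$ over $u\colon U\to X$ and $X\times V\to X\times Y$ over $v\colon V\to Y$ are genuinely cartesian, and applying base change and the projection formula once for each factor --- this is what the paper's ``apply the two lemmata twice'' means --- never produces the problematic final step.
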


\begin{proof}
  Part \ref{enum:boxtimes-restriction} follows from the
  usual isomorphism encoding compatibility of pullback and tensor
  product.
  % It is used for the 
  % , and use \ref{enum:boxtimes-restriction}. 
  % Part \ref{enum:boxtimes-restriction-qcoh} is
  % \cite[Cor.~9.1.5]{EGAI}.
  % \footnote{
  %   oder \cite[I, Cor.~(3.3.4)]{EGAI-new}
  % }
  For part \ref{enum:boxtimes-!-extension}, apply 
  % the projection formula for an open embedding 
  Lemma~\ref{l:base-change-ringed-spaces-open}.\ref{enum:proper-base-change-open} 
  and Lemma~\ref{l:projection-formula-for-open-embedding}
  twice.
  The morphism in part \ref{enum:boxtimes-*-extension} is
  constructed using the adjunction $((u \times v)^*, (u \times
  v)_*).$ That it is an isomorphism
  can be checked locally on $X \times Y.$ Since $u$ and $v$
  are affine we can assume that
  $u\colon U=\Spec A' \ra X=\Spec A$ and $v \colon V=\Spec B' \ra
  Y=\Spec B.$ In this case the claim is obvious.
\end{proof}

% \begin{corollary}
%   \label{c:leftidx-boxtimes}
%   Let $X$ and $Y$ be schemes over a field $k,$ and let
%   $u \colon  U \subset X$ and $v \colon  V \subset Y$ be
%   affine immersions of open subschemes.
%   For $\mathcal{E} \in \Qcoh(X)$ and $\mathcal{F} \in \Qcoh(Y)$ we have
%   $\leftidx{_{U \times V}}{(\mathcal{E} \boxtimes \mathcal{F})}{}=
%   \leftidx{_U}{\mathcal{E}}{} \boxtimes \leftidx{_V}{\mathcal{F}}{}.$
% \end{corollary}

% \begin{proof}
%   Obvious from Lemma~\ref{l:boxtimes-restriction-extension}.
%   % From Lemma~\ref{l:boxtimes-restriction-extension}
%   % we obtain
%   % \begin{equation*}
%   %   \leftidx{_{U \times V}}{(\mathcal{E} \boxtimes \mathcal{F})}{}=
%   %   (u \times v)_* (u \times v)^*(\mathcal{E} \boxtimes \mathcal{F})
%   %   =(u \times v)_* (u^*\mathcal{E} \boxtimes v^*\mathcal{F})
%   %   =u_*u^*\mathcal{E} \boxtimes v_*v^*\mathcal{F}
%   %   =\leftidx{_U}{\mathcal{E}}{} \boxtimes \leftidx{_V}{\mathcal{F}}{}.
%   % \end{equation*}
% \end{proof}

\begin{corollary}
  \label{c:combined}
  Let $X$ be a scheme over a field $k,$ and let
  $u \colon  U \subset X$ and $v \colon  V \subset X$ be
  affine immersions of open subschemes.
  Then
  \begin{equation*}
    \Delta^*(\leftidx{_U}{\mathcal{E}}{} \boxtimes \leftidx{_V}{\mathcal{F}}{})
    \sira
    \Delta^*(\leftidx{_{U \times V}}{(\mathcal{E} \boxtimes \mathcal{F})}{})
    \sira
    \leftidx{_{U \cap V}}{(\mathcal{E} \otimes \mathcal{F})}{}
  \end{equation*}
  for $\mathcal{E} \in \Qcoh(X)$ and $\mathcal{F} \in \Qcoh(X).$
\end{corollary}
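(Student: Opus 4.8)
The plan is to deduce both displayed isomorphisms by concatenating two results already established, since the corollary is really just a bookkeeping composition.

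First I would invoke Lemma~\ref{l:boxtimes-restriction-extension}.\ref{enum:boxtimes-*-extension}, applied with $Y=X$ and with the given affine open immersions $u$ and $v$. Unwinding the notation, $\leftidx{_U}{\mathcal{E}}{}=u_*u^*\mathcal{E}$ and $\leftidx{_V}{\mathcal{F}}{}=v_*v^*\mathcal{F}$, and $\leftidx{_{U\times V}}{(\mathcal{E}\boxtimes\mathcal{F})}{}=(u\times v)_*(u\times v)^*(\mathcal{E}\boxtimes\mathcal{F})$; the quoted part of that lemma then provides a canonical isomorphism $\leftidx{_U}{\mathcal{E}}{}\boxtimes\leftidx{_V}{\mathcal{F}}{}\sira\leftidx{_{U\times V}}{(\mathcal{E}\boxtimes\mathcal{F})}{}$ of quasi-coherent sheaves on $X\times X$. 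Applying the (exact) inverse image functor $\Delta^*$ to this isomorphism yields the first arrow of the corollary.

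For the second arrow I would simply cite Lemma~\ref{l:diagonal-restriction-of-extension}, which is exactly the statement that $\Delta^*(\leftidx{_{U\times V}}{(\mathcal{E}\boxtimes\mathcal{F})}{})\sira\leftidx{_{U\cap V}}{(\mathcal{E}\otimes\mathcal{F})}{}$ canonically for quasi-coherent $\mathcal{E}$, $\mathcal{F}$ on $X$; its proof rests on the affine base-change isomorphism of Lemma~\ref{l:push-affine-then-pull} for the cartesian square relating $U\cap V$, $X$, $U\times V$, and $X\times X$, together with the identity $\Delta^*(\mathcal{E}\boxtimes\mathcal{F})\sira\mathcal{E}\otimes\mathcal{F}$. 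Composing the two arrows gives the corollary.

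There is no genuine obstacle here. The only point worth recording is that the morphisms built into Lemma~\ref{l:boxtimes-restriction-extension}.\ref{enum:boxtimes-*-extension} and Lemma~\ref{l:diagonal-restriction-of-extension} are the canonical base-change and projection-formula isomorphisms, so that their composite agrees with the natural comparison map; this is immediate from the explicit constructions and can be checked, if desired, locally on $X\times X$ where everything reduces to the obvious identity of modules over $A_\mathfrak{p}\otimes_k B_\mathfrak{q}$.
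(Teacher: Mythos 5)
Your proof is correct and follows exactly the paper's argument: the paper also deduces the corollary by combining Lemma~\ref{l:boxtimes-restriction-extension}.\ref{enum:boxtimes-*-extension} (to which one applies $\Delta^*$) with Lemma~\ref{l:diagonal-restriction-of-extension}. Nothing further is needed.
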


\begin{proof}
  Use Lemmata~\ref{l:diagonal-restriction-of-extension}
  and \ref{l:boxtimes-restriction-extension}.\ref{enum:boxtimes-*-extension}.
\end{proof}

\begin{remark}
  \label{rem:boxtimes-exact}
  If $X$ and $Y$ are schemes over a field $k$ we have seen in 
  Lemma~\ref{l:boxtimes-exact} that the bifunctor $\boxtimes \colon
  \Sh(X) \times \Sh(Y) \ra \Sh(X \times Y)$  
  is exact.
  Hence we use the same symbol for the induced functor
  % It follows from \cite[18.6]{KS-cat-sh}
  % that its left derived functor
  \begin{equation*}
    (-\boxtimes ?) \colon D(\Sh(X)) \times
    D(\Sh(Y)) \ra D(\Sh(X \times Y)).
  \end{equation*}
  It is easy to see that 
  this functor is isomorphic to 
  $\bL p^*(-) \otimes^\bL \bL q^*(?)$ which is isomorphic to  
  $p^*(-) \otimes^\bL q^*(?)$ because $p$ and $q$
  are flat.
\end{remark}

\section{Triangulated categories in terms of dg
endomorphism algebras }
\label{sec:descr-triang-categ}

We work with dg categories over an arbitrary commutative ground
ring.

\begin{proposition}
  \label{p:homotopy-categories-triang-via-dg-algebras} 
  Let $\mathcal{C}$ be a dg category with a full pretriangulated
  dg subcategory $\mathcal{I}.$ Let $z \colon P \ra I$ be a
  closed degree zero morphism 
  in
  $\mathcal{C}$ 
  with $I \in
  \mathcal{I}$ such that $z^* \colon \Hom_{\mathcal{C}}(I, J) \ra
  \Hom_{\mathcal{C}}(P, J)$ is a quasi-isomorphism for all $J \in
  \mathcal{I},$ and let $B$ be a dg algebra together with a
  morphism $\beta \colon B \ra \End_\mathcal{C}(P)$ of dg
  algebras such that the composition $B
  \xra{\beta} \End_\mathcal{C}(P) \xra{z_*}
  \Hom_\mathcal{C}(P,I)$ is a quasi-isomorphism.
  \begin{enumerate}
  \item 
    \label{enum:hoI-via-B-small}
    If $[\mathcal{I}]$ is Karoubian 
    %(so $\mathcal{I}$ is
    %triangulated in To\"en's terminology, cf.\
    %\cite[Lemma~\ref{mm-l:triang-versus-pretriang-and-karoubi}]{valery-olaf-matrix-factorizations-and-motivic-measures})
    and $I$ is a
    classical generator of $[\mathcal{I}]$ then the functor
    \begin{equation*}
      % \label{eq:Hom-from-I}
      \res^{\End_\mathcal{C}(P)}_{B} \comp \Hom_\mathcal{C}(P,-) \colon 
      [\mathcal{I}] \ra \per(B)
    \end{equation*}
    is an equivalence of triangulated categories.
    % where $B:= \End_\mathcal{C}(P).$
  \item 
    \label{hoI-via-B}
    If $[\mathcal{I}]$ has all coproducts and $I$ is a 
    compact generator of $[\mathcal{I}]$ then the functor
    \begin{equation*}
    %   \label{eq:Hom-from-I}
    \res^{\End_\mathcal{C}(P)}_{B} \comp \Hom_\mathcal{C}(P,-) \colon 
    [\mathcal{I}] \ra D(B)
    \end{equation*}
    is an equivalence of triangulated categories.
    % where $B:= \End_\mathcal{C}(P).$
  \end{enumerate}
\end{proposition}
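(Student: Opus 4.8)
The plan is to run a single dévissage argument for the triangulated functor
$[\Phi] := \res^{\End_\mathcal{C}(P)}_{B} \comp \Hom_\mathcal{C}(P,-) \colon [\mathcal{I}] \ra D(B)$.
Here $\Hom_\mathcal{C}(P,-)$ is a dg functor from $\mathcal{I}$ to $C(\End_\mathcal{C}(P))$ (the $\End_\mathcal{C}(P)$-action being pre-composition), restriction along $\beta$ turns it into a dg functor to $C(B)$, and passing to homotopy categories gives the asserted triangulated functor to $D(B)$; concretely, $[\Phi]J$ is the complex $\Hom_\mathcal{C}(P,J)$ with the $B$-module structure induced by $\beta$. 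First I would pin down the image of the generator: $[\Phi]I = \Hom_\mathcal{C}(P,I) =: N$, and the hypothesis that $z_* \comp \beta \colon B \ra N$ is a quasi-isomorphism says precisely that $N$ is isomorphic in $D(B)$ to the free rank-one right $B$-module $B$. Hence $[\Phi]I$ is a classical generator of $\per(B)$ (relevant for (a)) and a compact generator of $D(B)$ (relevant for (b)); in particular, since $[\Phi]$ is triangulated and $[\mathcal{I}] = \thick(I)$ in case (a), the functor does land in $\per(B)$ as claimed.

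The heart of the proof is to show that $[\Phi]$ is fully faithful on the object $I$, i.e.\ that the map $\Hom_{[\mathcal{I}]}(I,[n]I) \ra \Hom_{D(B)}([\Phi]I,[n][\Phi]I)$ is an isomorphism for every $n \in \DZ$. Write $E := \End_\mathcal{C}(I)$, so the source is $H^n(E)$. Now $N$ carries the structure of an $E$-$B$-bimodule (left action of $E$ by post-composition, right action of $B$ by pre-composition through $\beta$), the morphism $\zeta := z_* \comp \beta \colon B \ra N$ is a quasi-isomorphism of right $B$-modules, and $B$ is h-projective as a right $B$-module; therefore $\Hom_{D(B)}(N,[n]N) = H^n \Hom_{C(B)}(B,N) = H^n(N)$, with the identification given by pre-composition with $\zeta$ and evaluation at $1 \in B$. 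Tracing $[\Phi]$ through: a closed degree-$n$ endomorphism $g$ of $I$ goes to $g_* = (g \comp -) \colon N \ra N$, and $g_* \comp \zeta$ sends $1$ to $g \comp z$; so the composite $H^n(E) \ra \Hom_{D(B)}(N,[n]N) \isomoto H^n(N)$ is exactly $H^n(z^*)$, where $z^* \colon E \ra N$, $g \mapsto g \comp z$, is the quasi-isomorphism provided by the hypothesis specialised to $J = I$. This is the only genuinely delicate point; matching the $E$-$B$-bimodule structure of $N$, the h-projective replacement $\zeta$, and the natural transformation $z^*$ is where one must be careful, but there is no deeper idea hidden here.

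It then remains to feed this into the standard dévissage. In case (b) one first checks that $[\Phi]$ preserves coproducts: compactness of $I$ (and its shifts) makes $\Hom_\mathcal{C}(I,-) \colon [\mathcal{I}] \ra D(k)$ coproduct-preserving, the natural transformation $z^*$ identifies $\Hom_\mathcal{C}(P,-)|_{[\mathcal{I}]}$ with $\Hom_\mathcal{C}(I,-)|_{[\mathcal{I}]}$ as functors to $D(k)$ (the hypothesis on $z^*$ makes it a pointwise quasi-isomorphism), and the forgetful functor $D(B) \ra D(k)$ is conservative and preserves coproducts, so $[\Phi]$ is continuous. Next, a five-lemma argument (using compactness of $I$ and of $B$ in case (b), nothing extra in case (a)) shows that the objects $Y$ with $\Hom_{[\mathcal{I}]}(I,[n]Y) \isomoto \Hom_{D(B)}([\Phi]I,[n][\Phi]Y)$ for all $n$ form a thick (resp.\ localizing) subcategory containing $I$, hence all of $[\mathcal{I}]$; a second such argument, fixing $Y$ and varying $X$, upgrades this to full faithfulness of $[\Phi]$. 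Finally, since $[\Phi]$ is fully faithful and $[\mathcal{I}]$ is Karoubian (resp.\ cocomplete, hence Karoubian), its essential image is a thick (resp.\ localizing) subcategory of $\per(B)$ (resp.\ $D(B)$) containing the classical (resp.\ compact) generator $[\Phi]I \cong B$, so $[\Phi]$ is essentially surjective, hence an equivalence. I expect the full-faithfulness-on-$I$ computation of the second paragraph to be the main obstacle; an essentially equivalent route is to invoke the standard description $[\mathcal{I}] \sira \per(E)$ resp.\ $D(E)$ via $\Hom_\mathcal{C}(I,-)$ and transport along the derived Morita equivalence $D(E) \sira D(B)$ given by the invertible bimodule $N$, but this only repackages the same calculation.
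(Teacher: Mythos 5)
Your proposal is correct and follows essentially the same route as the paper: the key computation identifying $\Hom_{D(B)}([\Phi]I,[n][\Phi]I)$ with $H^n(\Hom_\mathcal{C}(P,I))$ via the quasi-isomorphism $z_*\comp\beta$ and recognizing the composite as $H^n(z^*)$ is exactly the paper's commutative square, and your continuity argument plus Keller-style d\'evissage and the generator/Karoubian argument for essential surjectivity is precisely what the paper does (it simply cites Keller's Lemma~4.2 for the final step, which you spell out).
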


\begin{proof}
  \ref{enum:hoI-via-B-small}:
  The upper horizontal arrow in the following commutative diagram
  is 
  an isomorphism since all the other maps are isomorphisms.
  \begin{equation}
    \label{eq:HomP-res}
    \xymatrix{
      {\Hom_{[\mathcal{I}]}(I,[m]I)} 
      \ar@{}[d]|-{\verteq}
      \ar[rrr]^-{\res^{\End_\mathcal{C}(P)}_{B} \comp
        \Hom_\mathcal{C}(P,-)} &&& 
      {\Hom_{D(B)}(\Hom_\mathcal{C}(P,I),[m]\Hom_\mathcal{C}(P,I))}
      \ar[dd]_-{(z_* \comp \beta)^*}^-{\sim} \\ 
      {H^m(\Hom_{\mathcal{C}}(I,I))} 
      \ar[d]_-{H^m(z^*)}^-{\sim} &&&
      % {\Hom_{D(B)}(B,[m]\Hom_\mathcal{C}(P,I))}
      % \ar@{}[d]|-{\verteq}
      \\ 
      {H^m(\Hom_{\mathcal{C}}(P,I))} 
      &&&
      {\Hom_{D(B)}(B,[m]\Hom_\mathcal{C}(P,I)).}
      \ar[lll]_-{\can}^-{\sim} 
    }
  \end{equation}
  Hence our functor is full and faithful. 
  The quasi-isomorphism
  $z_* \comp \beta \colon  B \ra \Hom_\mathcal{C}(P,I)$ of dg
  $B$-modules shows that it maps $I$ to a classical generator of
  the Karoubian category $\per(B).$ Since $[\mathcal{I}]$ is
  Karoubian we see that our functor is essentially surjective.

  \ref{hoI-via-B}:
  We first claim that the functor
  $\Hom_\mathcal{I}(I,-) \colon  [\mathcal{I}] \ra D(A)$
  preserves all coproducts where $A=\End_\mathcal{I}(I).$
  Let $(M_r)_{r \in R}$ be a family of objects of
  $[\mathcal{I}],$ and let 
  $\bigoplus M_r$ together with morphisms $s_r \colon  M_r \ra
  \bigoplus M_r$ be their coproduct in $[\mathcal{I}].$
  Lifts of the $s_r$ to closed degree zero morphisms in
  $\mathcal{I}$ induce a morphism
  \begin{equation*}
    \bigoplus \Hom_\mathcal{I}(I,M_r) \ra
    \Hom_\mathcal{I}(I,\bigoplus M_r) 
  \end{equation*}
  in $D(A).$ Taking the $m$-th cohomology 
  we obtain
  % Applying $H^m$ we obtain
  % the morphism 
  % \begin{equation*}
  %  \bigoplus \Hom_{[\mathcal{I}]}(I,[m]M_i) \ra
  %  \Hom_{[\mathcal{I}]}(I,\bigoplus [m]M_i).
  % \end{equation*}
  an isomorphism since $I$ is compact in
  $[\mathcal{I}].$ 

  Using the quasi-isomorphisms
  $z^* \colon  \Hom_{\mathcal{C}}(I, J) \ra \Hom_{\mathcal{C}}(P,
  J)$ for $J=M_r$ and $J=\bigoplus M_r$
  one shows that $\Hom_\mathcal{C}(P,-) \colon [\mathcal{I}] \ra
  D(B)$ preserves all
  coproducts, and this is clear for
  $\res^{\End_\mathcal{C}(P)}_{B} \colon D(\End_\mathcal{C}(P)) \ra
  D(B).$
  By assumption we have a quasi-isomorphism
  $z_* \comp \beta \colon  B \ra \Hom_\mathcal{C}(P,I)$ of dg
  $B$-modules and hence 
  $\Hom_\mathcal{C}(P,I)$ is a compact generator of $D(B).$
  The commutative diagram~\eqref{eq:HomP-res}
  from the proof of \ref{enum:hoI-via-B-small}
  shows that the functor $\res^{\End_\mathcal{C}(P)}_{B} \comp
  \Hom_\mathcal{C}(P,-)$ is full and 
  faithful on all shifts of $I.$
  Then one proceeds as in the proof of
  \cite[Lemma~4.2]{Keller-deriving-dg-cat}.  
\end{proof}

% \newpage

\section{\v{C}ech enhancements for locally integral schemes}
\label{sec:vcech-enhanc-loc-integral}

%\section{Perfect complexes in the locally integral case}
%\label{sec:homol-smoothn-perf}
% \section{Homological smoothness and perfect complexes 
%   for locally integral schemes}
%   % in the locally integral case}
% \label{sec:homol-smoothn-perf}

%\section{Enhancements and smoothness in the locally integral case}
%\label{sec:enhanc-smoothn-local}

The aim of this section is to provide some results which are
used in \cite{valery-olaf-matrix-factorizations-and-motivic-measures}.
We employ them for proving
Theorem~\ref{t:mfPerf-Cechobj-smooth-vs-diagonal-sheaf-perfect}. 

In this section we asume that $X$ is a scheme 
satisfying the following 
condition~\ref{enum:GSP+}.
\begin{enumerate}[label=(GSP+)]
\item
  \label{enum:GSP+}
  $X$ is 
  Nagata,
  % Noetherian,
  locally integral, 
  i.\,e.\ all local rings $\mathcal{O}_{X,x}$ are integral
  domains,
  and satisfies condition~\ref{enum:GSP}.
\end{enumerate}
This condition implies that the irreducible and
connected components of $X$ coincide; in particular, $X$ is the
finite disjoint union of integral schemes
(\cite[Ex.~3.16, 
p.~90]{goertz-wedhorn-AGI}).

The condition ``Nagata'' in \ref{enum:GSP+} can be replaced by 
``Noetherian'' if $X$ is excellent, for example if $X$ is of
finite type over a field or the integers.

% \subsection{Enhancements}
% \label{sec:enhancements-alt}

% \subsubsection{Object oriented texorpdfstring{$*$}{*}-\v{C}ech enhancements}

% \subsection{Object oriented $*$-\v{C}ech enhancements}
% \label{sec:object-orient-star-vcech}

\subsection{\v{C}ech enhancements}
\label{sec:vcech-enhancements}

We repeat 
(in a more general setting)
the definition
of the enhancement explained in
\cite[Lemma~6.7]{bondal-larsen-lunts-grothendieck-ring}.
% since we need to generalize it slightly.

Fix an ordered finite affine open covering $\mathcal{U}=(U_s)_{s \in S}$ 
of $X.$
% For any vector bundle $P$ on $X$ we can consider its (finite
% ordered) $*$-\v{C}ech resolution $\mathcal{C}_*(P),$ see
% \eqref{eq:F-*-Cech-resolution}. This is a bounded complex of
% quasi-coherent sheaves.
Denote by $\mfPerf_\Cechobjstar(X)$
the smallest full dg subcategory of $C(\Qcoh(X))$ that
contains all  
objects $\mathcal{C}_*(P),$ for $P$ a vector bundle on $X,$ is closed under shifts in
both directions, under cones of
closed degree zero morphisms and under taking homotopy equivalent
objects. Then $\mfPerf_\Cechobjstar(X)$ is strongly pretriangulated.
If $R$ is a bounded complex of vector bundles we have 
$\mathcal{C}_*(R) \in \mfPerf_\Cechobjstar(X)$
(use brutal truncation on $R$).
% (called stupid truncation in \cite[Def.~11.3.11]{KS-cat-sh}) on
% $R$). 

% The following proposition is essentially taken from 
% \cite[Lemma 6.7]{bondal-larsen-lunts-grothendieck-ring}.
% We repeat its proof since we need it to prove some similar
% results.

\begin{proposition}
  [{cf.\ \cite[Lemma 6.7]{bondal-larsen-lunts-grothendieck-ring}}]
  \label{p:cech-*-object-enhancement}
  The canonical functor 
  \begin{equation*}
    \epsilon \colon  [\mfPerf_\Cechobjstar(X)] \ra \mfPerf(X)
  \end{equation*}
  is an equivalence of triangulated categories.
  Hence the dg category $\mfPerf_\Cechobjstar(X)$
  is naturally an enhancement of $\mfPerf(X).$ 
  We call it the \define{(object oriented) $*$-\v{C}ech
    enhancement}.
  % of $\mfPerf(X).$ 
\end{proposition}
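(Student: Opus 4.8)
The strategy is to reduce the statement to the analogous statement for one of the \v{C}ech enhancements already constructed in Section~\ref{sec:vcech-enhanc}, namely the $*$-\v{C}ech enhancement $\Cech_*(X)$ of Proposition~\ref{p:abstract-cech-*-object-enhancement}. There is an obvious dg functor $\Cech_*(X) \ra \mfPerf_\Cechobjstar(X)$ sending a bounded complex $R$ of vector bundles to $\mathcal{C}_*(R)$; indeed, by construction $\Hom_{\Cech_*(X)}(P,Q) = \Hom_{C(\Vb_\supset(X))}(P_\supset, Q_\supset)$ and there is a natural map from this to $\Hom_{C(\Qcoh(X))}(\mathcal{C}_*(P), \mathcal{C}_*(Q))$ induced by the realization functor \eqref{eq:real-supset}. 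Composing with the inclusion $\mfPerf_\Cechobjstar(X) \hra C(\Qcoh(X))$ recovers the realization $\mathcal{C}_*$. So the first step is to set up this dg functor $\Cech_*(X) \ra \mfPerf_\Cechobjstar(X)$ carefully and observe that, on homotopy categories, it fits into a commutative triangle with $\epsilon$ and the equivalence $\mathcal{C}_* \colon [\Cech_*(X)] \sira \mfPerf(X)$ of Proposition~\ref{p:abstract-cech-*-object-enhancement}.

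\textbf{Essential surjectivity.} Since $X$ satisfies \ref{enum:GSP} (being a \ref{enum:GSP+}-scheme), every object of $\mfPerf(X) \cong \mfPerf'(X)$ is isomorphic to a bounded complex $R$ of vector bundles, hence to $\mathcal{C}_*(R)$, which lies in $\mfPerf_\Cechobjstar(X)$ (apply brutal truncation to $R$ and use closure under shifts and cones). Thus $\epsilon$ is essentially surjective.

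\textbf{Fully faithfulness.} This is the heart of the argument. One must show $\epsilon$ is fully faithful on $[\mfPerf_\Cechobjstar(X)]$. I would argue as follows: the dg functor $\Cech_*(X) \ra \mfPerf_\Cechobjstar(X)$ is \emph{quasi-fully faithful}, i.e. induces quasi-isomorphisms on all $\Hom$-complexes. Granting this, since $\mfPerf_\Cechobjstar(X)$ is by definition generated (under shifts, cones, and homotopy equivalence) by the objects $\mathcal{C}_*(P)$, which are exactly the images of objects of $\Cech_*(X)$, the induced functor $[\Cech_*(X)^\pretr] \ra [\mfPerf_\Cechobjstar(X)]$ is an equivalence (both sides being the pretriangulated/idempotent-complete hull generated by the same quasi-fully-faithful image, and $\mfPerf_\Cechobjstar(X)$ is strongly pretriangulated). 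Then $\epsilon$ is identified with $\mathcal{C}_* \colon [\Cech_*(X)] \sira [\mfPerf_\Cechobjstar(X)] \sira \mfPerf(X)$ up to the equivalence $[\Cech_*(X)] \sira [\Cech_*(X)^\pretr]$ established in Proposition~\ref{p:abstract-cech-*-object-enhancement}. For the quasi-full-faithfulness of $\Cech_*(X) \ra \mfPerf_\Cechobjstar(X)$: on $\Hom$-complexes this is the dual/$*$-analogue of Proposition~\ref{p:abstract-cech-!-bounded-above-full-faithful}, namely that $\Hom_{[C(\Vb_\supset(X))]}(P_\supset, Q_\supset) \ra \Hom_{D(\Qcoh(X))}(\mathcal{C}_*(P), \mathcal{C}_*(Q))$ is an isomorphism for bounded complexes $P$, $Q$ of vector bundles; combined with the fact that for $P,Q$ bounded complexes of vector bundles the canonical map $\Hom_{[C(\Qcoh(X))]}(\mathcal{C}_*(P), \mathcal{C}_*(Q)) \ra \Hom_{D(\Qcoh(X))}(\mathcal{C}_*(P), \mathcal{C}_*(Q))$ is an isomorphism because $\mathcal{C}_*(Q)$ has $\Gamma$-acyclic components (Lemma~\ref{l:u'-ls-qls-acyclic}) and one computes morphisms out of the \v{C}ech complex $\mathcal{C}_*(P)$ degreewise using Lemma~\ref{l:Hom-u!Pv!Q-v*Qu*P} — essentially the $*$-variant of the argument proving Proposition~\ref{p:abstract-cech-*-object-enhancement} (cf.\ the proof there, which refers back to Propositions~\ref{p:abstract-cech-!-bounded-above-full-faithful} and \ref{p:abstract-cech-!-object-enhancement}).

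\textbf{Main obstacle.} The delicate point is the comparison between morphisms computed in the small combinatorial category $\Cech_*(X) \subset C(\Vb_\supset(X))$ and morphisms computed in $C(\Qcoh(X))$ between the genuine \v{C}ech complexes $\mathcal{C}_*(P)$. The realization functor \eqref{eq:real-supset} is not full in general (Remark~\ref{rem:not-full}, Remark~\ref{rem:realization-full}), so one cannot simply say $\Hom_{\Cech_*(X)}(P,Q) = \Hom_{C(\Qcoh(X))}(\mathcal{C}_*(P), \mathcal{C}_*(Q))$ on the nose. However, the hypothesis \ref{enum:GSP+} — $X$ Nagata and locally integral — is precisely what makes the realization \eqref{eq:*-cech-realization} full and faithful, via Lemma~\ref{l:Hom-UPVQ-star} (see Remark~\ref{rem:realization-full}). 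So here $\Cech_*(X) \ra \mfPerf_\Cechobjstar(X)$ really is an isomorphism of dg categories onto its full dg image, and the obstacle dissolves: $[\Cech_*(X)] \sira [\mfPerf_\Cechobjstar(X)]$ is an equivalence and composing with Proposition~\ref{p:abstract-cech-*-object-enhancement} gives that $\epsilon$ is an equivalence. I would state explicitly that condition~\ref{enum:GSP+} is used exactly at this point (fullness of the realization), and that without it one would only get an equivalence onto the triangulated subcategory generated by the $\mathcal{C}_*(P)$ via the weaker Proposition~\ref{p:abstract-cech-*-object-enhancement} after passing to pretriangulated hulls — which is still enough for essential surjectivity but the identification of $\Hom$'s needs the stronger hypothesis.
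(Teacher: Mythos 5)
Your reduction strategy (identify $[\mfPerf_\Cechobjstar(X)]$ with $[\Cech_*(X)]$ via the realization and then invoke Proposition~\ref{p:abstract-cech-*-object-enhancement}) would indeed work \emph{if} the realization \eqref{eq:*-cech-realization} were full and faithful, and your essential surjectivity argument matches the paper's. But the claim on which your ``Main obstacle'' paragraph hinges is false in the stated generality: Remark~\ref{rem:realization-full} asserts fullness and faithfulness of \eqref{eq:*-cech-realization} only for $X$ \emph{integral} (Noetherian separated, Nagata), and it explicitly records that the statement fails for merely locally integral Nagata schemes, with the counterexample $X=U\sqcup V$ and covering $\{X,U,V\}$ (for a clopen immersion $u$ one has $u_*=u_!$, so $\Hom_{\mathcal{O}_X}(u_*\mathcal{O}_U,\mathcal{O}_X)\neq 0$, while the corresponding Hom in $\Vb_\supset(X)$ vanishes since $X\not\subset U$). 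Condition~\ref{enum:GSP+} only demands locally integral, so your identification of $\Cech_*(X)$ with the full dg subcategory of $C(\Qcoh(X))$ on the objects $\mathcal{C}_*(P)$ is not available, and the ``obstacle'' does not dissolve.

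Your fallback justification of the key comparison, namely that
$\Hom_{[C(\Qcoh(X))]}(\mathcal{C}_*(P),[m]\mathcal{C}_*(Q))\ra\Hom_{D(\Qcoh(X))}(\mathcal{C}_*(P),[m]\mathcal{C}_*(Q))$
is an isomorphism ``because the components of $\mathcal{C}_*(Q)$ are $\Gamma$-acyclic and one computes degreewise via Lemma~\ref{l:Hom-u!Pv!Q-v*Qu*P}'', is also insufficient: $\Gamma$-acyclicity of the components does not make $\mathcal{C}_*(Q)$ h-injective, and Lemma~\ref{l:Hom-u!Pv!Q-v*Qu*P} only identifies $\Hom(u_*P,v_*Q)$ when $V\subset U$; it gives no vanishing when $V\not\subset U$. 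That vanishing is exactly Lemma~\ref{l:Hom-UPVQ-star}, which requires the target's open subscheme to be affine, Noetherian, integral with N-1 ring of functions --- and this is where local integrality is used correctly in the paper: after reducing by brutal truncation and passage to direct summands to targets $\leftidx{_{V}}{Q}{}$ with $V=U_I$ (Lemma~\ref{l:CD-isom-CP-VQ}), one decomposes $V$ into its connected components $V'$, which \emph{are} integral, and then (i) proves $\Hom_{[C(\Qcoh(X))]}(P,[n]\leftidx{_{V}}{Q}{})\sira\Hom_{D(\Qcoh(X))}(P,[n]\leftidx{_{V}}{Q}{})$ using $\bR j_*=j_*$, $\bL j^*=j^*$ for the affine open immersion $j\colon V\subset X$ and affineness of $V$, and (ii) proves that $p^*\colon\Hom_{C(\Qcoh(X))}(\mathcal{C}_*(P),\leftidx{_{V'}}{Q}{})\ra\Hom_{C(\Qcoh(X))}(P,\leftidx{_{V'}}{Q}{})$ is a quasi-isomorphism by identifying the source, via Lemma~\ref{l:Hom-UPVQ-star}, with the augmented chain complex of a non-empty simplex (diagram~\eqref{eq:diagram-for-CD-isom-CP-VQ}). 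Some argument of this kind is unavoidable: without it your proposal either rests on the false fullness claim or, in its quasi-fully-faithful form, essentially presupposes the statement to be proved. (In the special case where $X$ is integral your route is a legitimate alternative to the paper's proof.)
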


The proof of this proposition is similar to that of Proposition~\ref{p:abstract-cech-!-bounded-above-full-faithful}.

\begin{proof}
  By condition~\ref{enum:GSP}, any object of $\mfPerf(X)$ is
  isomorphic to a bounded complex $R$ of vector bundles, and then
  to $\mathcal{C}_*(R).$ This shows that $\epsilon$ is
  essentially surjective.

  Let $P$ and $Q$ be vector bundles on $X.$
  In order to prove that $\epsilon$ is full and faithful it is
  enough to show that 
  \begin{equation*}
    \Hom_{[C(\Qcoh(X))]}(\mathcal{C}_*(P),
    [m]\mathcal{C}_*(Q))
    \ra
    \Hom_{D(\Qcoh(X))}(\mathcal{C}_*(P), [m]\mathcal{C}_*(Q))
  \end{equation*}
  is an isomorphism, for any $m \in \DZ.$
  Applying brutal truncation to 
  $\mathcal{C}_*(Q)$ and passing 
  to direct summands reduces this to the 
  claim of the following Lemma~\ref{l:CD-isom-CP-VQ}.
\end{proof}

\begin{lemma}
  \label{l:CD-isom-CP-VQ}
  Let $P$ and $Q$ be vector bundles on $X$ and let $n \in \DZ.$
  Assume that $V=U_I$ for some non-empty
  subset $I \subset S.$
  % $V \subset X$ is an affine open subscheme contained
  % in one of the sets $U_s,$ for $s \in S.$ 
  Then 
  \begin{equation*}
    \Hom_{[C(\Qcoh(X))]}(\mathcal{C}_*(P), [n]\leftidx{_V}Q)
    \ra
    \Hom_{D(\Qcoh(X))}(\mathcal{C}_*(P), [n]\leftidx{_V}Q)
  \end{equation*}
  is an isomorphism.
\end{lemma}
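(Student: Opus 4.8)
The plan is to prove Lemma~\ref{l:CD-isom-CP-VQ} by a dévissage that reduces everything to an affine, essentially trivial, computation, in the same spirit as the proof of Proposition~\ref{p:abstract-cech-!-bounded-above-full-faithful}, but with the roles of $*$ and $!$ interchanged. Write $v\colon V=U_I\hookrightarrow X$ for the affine inclusion. First I would choose a quasi-isomorphism $\leftidx{_V}Q=v_*v^*Q\to \mathcal J$ with $\mathcal J$ a bounded below complex of injective quasi-coherent sheaves on $X$; since $X$ is Noetherian (part of condition~\ref{enum:RES}/\ref{enum:GSP+}), $\mathcal J$ is a bounded below complex of injective sheaves, hence h-injective (Remark~\ref{rem:fibrant-explained}), so $\Hom_{[C(\Qcoh(X))]}(-,[n]\mathcal J)=\Hom_{D(\Qcoh(X))}(-,[n]\mathcal J)$. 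Thus the claim becomes: the map
\begin{equation*}
  \Hom_{[C(\Qcoh(X))]}(\mathcal{C}_*(P), [n]\leftidx{_V}Q)
  \to
  \Hom_{[C(\Qcoh(X))]}(\mathcal{C}_*(P), [n]\mathcal J)
\end{equation*}
is an isomorphism, i.e. applying $\Hom_{C(\Qcoh(X))}(\mathcal{C}_*(P),-)$ to the quasi-isomorphism $\leftidx{_V}Q\to\mathcal J$ yields a quasi-isomorphism of complexes of abelian groups.

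Next I would unwind $\mathcal C_*(P)$. It is a bounded complex whose components are finite products of sheaves $\leftidx{_{U_J}}{P}{}=w_*w^*P$ for $\emptyset\neq J\subset S$ (with $w\colon U_J\hookrightarrow X$ affine), sitting in non-negative degrees. Using that $\Hom_{C(\Qcoh(X))}(-,-)$ turns finite products and brutal truncations in the first variable into finite products and (co)filtrations, and passing to a single component and shifting, it suffices to show: for every $\emptyset\neq J\subset S$ the map
\begin{equation*}
  \Hom_{C(\Qcoh(X))}(\leftidx{_{U_J}}{P}{}, [m]\leftidx{_V}Q)
  \to
  \Hom_{C(\Qcoh(X))}(\leftidx{_{U_J}}{P}{}, [m]\mathcal J)
\end{equation*}
is a quasi-isomorphism as $m$ varies; equivalently that $\Hom_{C(\Qcoh(X))}(\leftidx{_{U_J}}{P}{},-)$ sends the quasi-isomorphism $\leftidx{_V}Q\to\mathcal J$ to a quasi-isomorphism. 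By Lemma~\ref{l:Hom-u!Pv!Q-v*Qu*P} (the $*$-version) applied to $U_J\subset X$ and any quasi-coherent $\mathcal G$ on $X$, one has $\Hom_{\mathcal O_X}(\leftidx{_{U_J}}{P}{},\mathcal G)=\Hom_{\mathcal O_{U_J}}(P|_{U_J},\mathcal G|_{U_J})$; since $U_J$ is affine this is $\Hom_{\mathcal O_{U_J}(U_J)}(\Gamma(U_J,P),\Gamma(U_J,\mathcal G))$. As $\Gamma(U_J,P)$ is a finitely generated projective module over the ring $\mathcal O_{U_J}(U_J)$ and $\Gamma(U_J,-)$ is exact on $\Qcoh(U_J)$ (affineness), the functor $\Hom_{C(\Qcoh(X))}(\leftidx{_{U_J}}{P}{},-)$ is, up to natural isomorphism, the exact functor $\Hom_{\mathcal O_{U_J}(U_J)}(\Gamma(U_J,P),-)\circ\Gamma(U_J,(-)|_{U_J})$, which preserves quasi-isomorphisms. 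This gives the desired statement and finishes the proof.

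I expect the only genuine subtlety, rather than outright obstacle, to be the bookkeeping that lets one pass from $\mathcal{C}_*(P)$ to a single building block $\leftidx{_{U_J}}{P}{}$: one must argue that the double-complex spectral sequence (or a finite filtration by brutal truncations, which is legitimate because $\mathcal{C}_*(P)$ is a \emph{bounded} complex) degenerates to a single column, so that checking quasi-isomorphism on each row suffices; this is exactly the use of the uniform boundedness argument via \cite[Thm.~1.9.3]{KS} that appears in the proof of Proposition~\ref{p:abstract-cech-!-bounded-above-full-faithful}. Note also that, in contrast to the $!$-case, there is no need to invoke Lemma~\ref{l:no-morphisms-BA}, Nagata-ness, or any integrality hypothesis here: the source $\mathcal{C}_*(P)$ is already a complex of quasi-coherent sheaves and we compare it with another complex of quasi-coherent sheaves inside $D(\Qcoh(X))$, so the pathologies of $\sheafHom(u_*P,v_*Q)$ being non-quasi-coherent never enter. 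Consequently the lemma, and with it Proposition~\ref{p:cech-*-object-enhancement}, holds under the stated hypotheses, and in fact the argument works verbatim for any \ref{enum:GSP}-scheme that is Noetherian.
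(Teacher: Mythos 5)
Your opening reductions are fine: replacing $\leftidx{_V}{Q}{}$ by a bounded below complex $\mathcal J$ of injective quasi-coherent sheaves and reformulating the lemma as ``$\Hom_{C(\Qcoh(X))}(\mathcal C_*(P),-)$ sends $\leftidx{_V}{Q}{}\to\mathcal J$ to a quasi-isomorphism'' is legitimate, and so is the finite filtration of the bounded complex $\mathcal C_*(P)$ by its components. The gap is the key computational step. The identity you invoke, $\Hom_{\mathcal O_X}(\leftidx{_{U_J}}{P}{},\mathcal G)=\Hom_{\mathcal O_{U_J}}(P|_{U_J},\mathcal G|_{U_J})$ for an arbitrary quasi-coherent $\mathcal G$, is false, and Lemma~\ref{l:Hom-u!Pv!Q-v*Qu*P} does not assert it: its $*$-part computes $\Hom(v_*Q,u_*P)$ only when the \emph{target} is itself a pushforward from a smaller open (equivalently, it is the adjunction for maps \emph{into} a pushforward), whereas you need maps \emph{out of} the pushforward $\leftidx{_{U_J}}{P}{}=w_*(P|_{U_J})$ into an arbitrary $\mathcal G$, for which there is no adjunction. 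Concretely, for $X=\DP^1$ with the standard cover $U_0,U_1$ and $P=Q=\mathcal O$, Lemma~\ref{l:Hom-UPVQ-star} gives $\Hom_{\mathcal O_X}(\leftidx{_{U_1}}{\mathcal O}{},\leftidx{_{U_0}}{\mathcal O}{})=0$, while your formula would give $\mathcal O(U_0\cap U_1)=k[t,t^{-1}]\neq 0$; and the derived discrepancy is just as bad, since $\Hom_{D(\Qcoh(X))}(\leftidx{_{U_1}}{\mathcal O}{},[1]\leftidx{_{U_0}}{\mathcal O}{})\cong\Ext^1_{k[t]}(k[t,t^{-1}],k[t])\cong k[[t]]/k[t]\neq 0$, so $\Hom_{C(\Qcoh(X))}(\leftidx{_{U_J}}{P}{},-)$ does \emph{not} send $\leftidx{_V}{Q}{}\to\mathcal J$ to a quasi-isomorphism. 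Hence no componentwise argument of this shape can work: the lemma is genuinely a statement about the totalized complex $\mathcal C_*(P)$, in which these higher Ext contributions cancel.

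This cancellation is exactly what the paper's proof organizes. It compares with $P$ via the resolution $p\colon P\to\mathcal C_*(P)$: the map $\Hom_{[C(\Qcoh(X))]}(P,[n]\leftidx{_V}{Q}{})\to\Hom_{D(\Qcoh(X))}(P,[n]\leftidx{_V}{Q}{})$ is handled by the correctly oriented adjunction $(j^*,j_*)$ and affineness of $V$, and the remaining point is that $p^*\colon\Hom_{C(\Qcoh(X))}(\mathcal C_*(P),\leftidx{_{V'}}{Q}{})\to\Hom_{C(\Qcoh(X))}(P,\leftidx{_{V'}}{Q}{})$ is a quasi-isomorphism for each connected component $V'$ of $V$; this rests on the vanishing $\Hom_{\mathcal O_X}(\leftidx{_{U_J}}{P}{},\leftidx{_{V'}}{Q}{})=0$ unless $V'\subset U_J$ (Lemma~\ref{l:Hom-UPVQ-star}), which turns the Hom complex into the augmented chain complex of a simplex. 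That vanishing is precisely where the Nagata (N-1) and integrality hypotheses of \ref{enum:GSP+} enter, and Remark~\ref{rem:Hom-UPVQ-weaker-assumptions} shows it fails without them. So your closing claims --- that neither Lemma~\ref{l:no-morphisms-BA} nor integrality is needed, and that the lemma (hence Proposition~\ref{p:cech-*-object-enhancement}) holds for every Noetherian \ref{enum:GSP}-scheme --- are not established by your argument; as it stands the proof does not go through even under the stated hypotheses.
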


\begin{proof}
  % We can assume that $V \not=\emptyset.$
  Let $p \colon P \ra \mathcal{C}_*(P)$ be the \v{C}ech
  resolution and consider the commutative diagram
  \begin{equation}
    \label{eq:diagram-for-CD-isom-CP-VQ}
    \xymatrix{
      {\Hom_{[C(\Qcoh(X))]}(\mathcal{C}_*(P), [n]\leftidx{_V}{Q}{})}
      \ar[r] \ar[d]_-{p^*} &
      {\Hom_{D(\Qcoh(X))}(\mathcal{C}_*(P), [n]\leftidx{_V}{Q}{})}
      \ar[d]_-{p^*}^-{\sim} \\
      {\Hom_{[C(\Qcoh(X))]}(P, [n]\leftidx{_V}{Q}{})}
      \ar[r]^-{\epsilon'} &
      {\Hom_{D(\Qcoh(X))}(P, [n]\leftidx{_V}{Q}{}).}
    }
  \end{equation}
  The right vertical map $p^*$ is an isomorphism since $p$ is a
  quasi-isomorphism. We will prove that $\epsilon'$ and the left
  vertical map $p^*$ are isomorphisms.
 
  Let us show first that $\epsilon'$ is an isomorphism.
  Note that $j \colon V \ra X$ is affine and open, hence 
  $\bR j_*=j_*$ and $\bL j^*=j^*,$ and recall that
  $\leftidx{_V}{Q}{}=j_*j^*(Q).$ Hence the adjunction
  $(j^*, j_*)$ show that it is enough to prove that
  \begin{equation*}
    \Hom_{[C(\Qcoh(V))]}(j^*(P), [n]j^*(Q))
    \ra
    \Hom_{D(\Qcoh(V))}(j^*(P), [n]j^*(Q))
  \end{equation*}
  is an isomorphism. But this is clear since $V$ is affine and
  $j^*(P)$ corresponds 
  to a projective module over $\mathcal{O}_V(V).$

  Note that $V$ is the finite disjoint union of its connected 
  components which are integral schemes. 
  Let $V'$ be a connected component of $V.$
  (Alternatively we could also assume without loss of generality
  that $X$ and hence $V$ are integral.)
  In order to show that the left vertical map $p^*$ in
  \eqref{eq:diagram-for-CD-isom-CP-VQ} is an
  isomorphism it is sufficient to prove that
  \begin{equation}
    \label{eq:CP-UIQ}
    p^* \colon  \Hom_{C(\Qcoh(X))}(\mathcal{C}_*(P),
    \leftidx{_{V'}}{Q}{}) \ra 
    \Hom_{C(\Qcoh(X))}(P, \leftidx{_{V'}}{Q}{})
  \end{equation}
  is a quasi-isomorphism.
  The right hand side is equal to 
  $\Hom_{C(\Qcoh(V'))}(P|_{V'}, Q|_{V'}).$ Its degree zero
  component is
  \begin{equation*}
    H:=\Hom_{\mathcal{O}_{V'}}(P|_{V'}, Q|_{V'}),
  \end{equation*}
  and all other components vanish.
  The graded components of the left hand side are direct sums
  of objects
  \begin{equation*}
    \Hom_{\mathcal{O}_X}(\leftidx{_{U_J}}{P}{},
    \leftidx{_{V'}}{Q}{})
  \end{equation*}
  for non-empty $J \subset S.$
  % Note that $V$ is a normal integral Noetherian scheme and that
  % $U_J \cap V$ is affine. 
  Lemma~\ref{l:Hom-UPVQ-star} 
  says that this is equal to $H$ if $V' \subset U_J,$ and zero
  otherwise.
  By assumption, $M(V'):=\{s \in S \mid V' \subset U_s\}$ is
  non-empty, and we have $V' \subset U_J$ if and only if $J
  \subset M(V').$ 
  Hence the 
  left hand side of \eqref{eq:CP-UIQ} is the chain complex
  \begin{equation*}
    \ldots \ra 
    \prod_{s_0, s_1 \in M(V'), \; s_0 < s_1} H \ra
    \prod_{s_0 \in M(V')} H \ra 0 \ra 
    \dots
  \end{equation*}
  of a (non-empty) simplex with coefficients in $H.$ The map
  \eqref{eq:CP-UIQ} is the augmentation map to $H$ which is
  a homotopy equivalence and in particular a quasi-isomorphism.
  This proves the lemma.
\end{proof}

\begin{remark}
  \label{rem:objects-of-cech-object-enhancement}
  If $R$ is a bounded complex of vector bundles on $X$ then
  we have observed above that $\mathcal{C}_*(R) \in
  \mfPerf_\Cechobjstar(X).$ Conversely, any object of 
  $\mfPerf_\Cechobjstar(X)$ is homotopy equivalent to an object of
  this form. Indeed, given $P \in \mfPerf_\Cechobjstar(X),$ there is
  a finite complex $R$ of vector bundles such that $P$ and ($R$
  and) $\mathcal{C}_*(R)$ are isomorphic in $\mfPerf(X),$
  by condition~\ref{enum:GSP}. 
  But then
  Proposition~\ref{p:cech-*-object-enhancement} shows that $P$ and
  $\mathcal{C}_*(R)$ are already isomorphic in
  $[\mfPerf_\Cechobjstar(X)].$
\end{remark}

\begin{remark}
  \label{rem:realization-vs-locally-integral}
  The realization functor \eqref{eq:*-cech-realization}
  defines a 
  quasi-equivalence $\mathcal{C}_* \colon \Cech_*(X) \ra
  \mfPerf_\Cechobjstar(X)$ because both dg categories enhance
  $\mfPerf(X),$ by
  Propositions~\ref{p:abstract-cech-*-object-enhancement} and
  \ref{p:cech-*-object-enhancement} (use
  \cite[Lemma~2.5]{valery-olaf-matrix-factorizations-and-motivic-measures}). 
%  \cite[Lemma~\ref{mm-l:dg-functor-qequi-test}]{valery-olaf-matrix-factorizations-and-motivic-measures}). 
  In
  particular, these two enhancements of $\mfPerf(X)$ are
  equivalent. 
  Remark~\ref{rem:objects-of-cech-object-enhancement} says that
  any object of $\mfPerf_\Cechobjstar(X)$ is homotopy equivalent
  to an object in the image of this realization functor.  If $X$
  is assumed to be integral the realization functor
  is not only full but also faithful, as already observed in
  Remark~\ref{rem:realization-full}.
\end{remark}

% \subsubsection{Equivalence of enhancements}
% \label{sec:equiv-enhanc}

% % As in the previous section, $\mathcal{U}$ (resp.\ $\mathcal{V}$)
% % is a finite affine open covering of $X$ (resp.\ $Y$), and we
% % consider the covering $\mathcal{U} \times \mathcal{V}$ on $X
% % \times Y.$

% \begin{lemma}
%   \label{l:enhancements-inj-Cechobj-equivalent}
%   The enhancements $\mfPerf^\inj(X)$ and $\mfPerf_\Cechobjstar(X)$
%   of $\mfPerf(X)$ are equivalent.
% \end{lemma}

% \begin{proof}
%   This is essentially
%   \cite[Lemma~6.2]{bondal-larsen-lunts-grothendieck-ring}, cf.\
%   the method we used to prove
%   \cite[Prop.~\ref{semi:p:Cech-enhancement}]{valery-olaf-matfak-semi-orth-decomp}.
% \end{proof}

\subsubsection{Version for arbitrary sheaves}
\label{sec:vers-arbitr-sheav}

In the following section~\ref{sec:lifting-duality} we need a
small generalization of the previous constructions and results
(cf.\ Remark~\ref{rem:why-not-Qcoh}).
%
% We will need later on (cf.\ Remark~\ref{rem:why-not-Qcoh}) the
% following trivial generalization of the previous constructions.
%
% Denote by $D(\Sh(X))$ the (unbounded) derived category of
% sheaves on $X.$ The obvious functor $D(\Qcoh(X)) \ra D(\Sh(X))$
% is full and faithful, and its essential image consists of all
% complexes with quasi-coherent cohomology (see for example
% \cite[Cor.~5.5]{neeman-homotopy-limits}).
% Let $\mfPerf'(X)$ denote the essential image of $\mfPerf(X)$
% under this functor; so $\mfPerf(X) \ra \mfPerf'(X)$ is an
% equivalence, and all the enhancements of $\mfPerf(X)$ can be
% considered as
% enhancements of $\mfPerf'(X).$ 

Recall
the equivalence $\mfPerf(X)
\sira \mfPerf'(X).$
% from section~\ref{sec:deriv-categ-sheav}
% In particular, all
% enhancements of 
% $\mfPerf(X)$ can be considered as enhancements of $\mfPerf'(X).$
Denote by $\mfPerf'_\Cechobjstar(X)$ the smallest full dg subcategory of
$C(\Sh(X))$ that contains all objects of $\mfPerf_\Cechobjstar(X)$
and is 
closed under taking homotopy equivalent objects;  
then the inclusion $\mfPerf_\Cechobjstar(X) \ra \mfPerf'_\Cechobjstar(X)$
of dg categories is a quasi-equivalence and 
% it is clear that
$\mfPerf'_{\Cechobjstar}$ 
% naturally 
is an enhancement of $\mfPerf'(X).$ 
% We also call it the (object oriented) $*$-\v{C}ech enhancement. 
% Let $\mfPerf'_\inj(X)$ denote the
% full dg subcategory of $C(\Sh(X))$ consisting of bounded below
% complexes of injective sheaves that belong to $\mfPerf'(X).$
% This 
% is naturally an enhancement of $\mfPerf'(X).$ All the
% enhancements of 
% $\mfPerf'(X)$ we have defined are equivalent.

\subsection{Lifting the duality}
\label{sec:lifting-duality}

Keep the ordered finite affine open covering
$\mathcal{U}=(U_s)_{s \in S}$ from above and abbreviate
$\mathcal{O}=\mathcal{O}_X.$ The functor
\begin{equation*}
  \bR \sheafHom(-, \mathcal{O}) \colon D(\Sh(X))^\opp \ra
  D(\Sh(X))  
\end{equation*}
induces an
auto-equivalence $D \colon  \mfPerf'(X)^\opp \ra \mfPerf'(X)$
satisfying $\id \sira D^2.$
% \footnote{ 
%   eventuell Referenz geben. Jedoch ist sie hier nicht so wichtig,
%   da wir annehmen, dass $X$ ein 
%   \ref{enum:GSP}-Schema ist.
%   (\cite[\href{http://stacks.math.columbia.edu/tag/08DQ}{Lemma
%   08DQ}]{stacks-project}).
% }
If $E$ is a bounded complex of vector bundles we have $D(E) \cong
E^\cek =\sheafHom(E,\mathcal{O})$ and $E=(E^\cek)^\cek.$
Our aim is to show that we can lift the auto-duality $D$ to the
enhancement $\mfPerf'_\Cechobjstar(X).$
For this we consider the dg functor
\begin{equation*}
  \tildew{D}:= \sheafHom(-, \mathcal{C}_*(\mathcal{O})) \colon 
  C(\Sh(X))^\opp \ra C(\Sh(X)).
\end{equation*}

\begin{remark}
  \label{rem:why-not-Qcoh}
  In general, $\tildew{D}$ does not preserve
  $C(\Qcoh(X))$ (Lemma~\ref{l:sheafHom-UPVQ}).
  This forces us to leave the quasi-coherent world and to work
  with $\mfPerf'_\Cechobjstar(X)$ instead of $\mfPerf_\Cechobjstar(X).$ 
\end{remark}

\begin{remark}
  \label{rem:Cech-of-dual}
  If $E$ and $F$ are vector bundles on $X$ and $v \colon  V \subset X$ is
  an open subscheme, then
  % \begin{equation*}
  %   \label{eq:leftidx-V-from-second}
  %   \leftidx{_V}{\sheafHom(E,F)}{} =
  $v_*v^*\sheafHom(E,F)
  = v_*\sheafHom(v^*(E),v^*(F))
  = \sheafHom(E,v_*v^*(F))$
  % = \sheafHom(E,\leftidx{_V}{F}{})  
  % \end{equation*}
  where the first equality is
  obvious (see Lemma~\ref{l:pullback-sheafHom})
  % \footnote{
  % EGA I, chapter 0 paragraph 6, Gleichung (6.7.6.1), Seite 60
  % oder \cite[0, 5.4.9]{EGAI-new}
  % } 
  and the second equality is
  the adjunction. 
  Hence we can and will identify
  $\mathcal{C}_*(E^\cek)=\sheafHom(E,
  \mathcal{C}_*(\mathcal{O})).$
\end{remark}

\begin{lemma} 
  \label{l:sheafHomCC-vs-CsheafHom} 
  Let $E$ be a vector bundle (or a bounded complex of vector
  bundles) on $X.$ Consider the canonical quasi-isomorphism
  $\alpha \colon E \ra \mathcal{C}_*(E).$ Then the induced morphism
  % $\tildew{D}(\alpha) \colon \tildew{D}(\mathcal{C}_*(E)) \ra
  % \tildew{D}(E),$ explicitly given by  
  % \begin{equation*}
  %   \alpha^* \colon 
  %   \sheafHom(\mathcal{C}_*(E),\mathcal{C}_*(\mathcal{O})) 
  %   \ra \sheafHom(E,\mathcal{C}_*(\mathcal{O}))=
  %   \mathcal{C}_*(\sheafHom(E,\mathcal{O}))  
  %   = \mathcal{C}_*(E^\cek),
  % \end{equation*}
  \begin{equation*}
    \tildew{D}(\alpha) \colon 
    \tildew{D}(\mathcal{C}_*(E)) =
    \sheafHom(\mathcal{C}_*(E),\mathcal{C}_*(\mathcal{O})) 
    \ra 
    \tildew{D}(E)
    =
    \sheafHom(E,\mathcal{C}_*(\mathcal{O}))
%    =\mathcal{C}_*(\sheafHom(E,\mathcal{O}))  
    = \mathcal{C}_*(E^\cek)
  \end{equation*}
  is a homotopy equivalence.
\end{lemma}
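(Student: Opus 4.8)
The plan is to reduce the statement to a local, affine computation and then to invoke the full-faithfulness results already established for the $*$-\v{C}ech enhancement. First I would observe that, by Remark~\ref{rem:Cech-of-dual}, both source and target of $\tildew{D}(\alpha)$ are (naive) totalizations built out of sheaves of the form $\leftidx{_{U_I}}{(E^\cek)^p}{}$, so the morphism $\tildew{D}(\alpha) = \sheafHom(\alpha, \mathcal{C}_*(\mathcal{O}))$ is the morphism of complexes obtained by applying $\sheafHom(-, \mathcal{C}_*(\mathcal{O}))$ to the \v{C}ech resolution $\alpha\colon E \ra \mathcal{C}_*(E)$. Since brutal truncation (together with passing to direct summands and shifts) reduces everything to the case where $E$ is a single vector bundle sitting in one degree, I may and will assume $E$ is a vector bundle.

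Next, using that $\mathcal{C}_*(\mathcal{O})$ is a bounded complex of sheaves and that $\sheafHom(-, -)$ commutes with finite totalizations in the first variable, I would rewrite $\tildew{D}(\alpha)$ as the totalization of a double complex whose rows (indexed by the degree $b$ of $\mathcal{C}_*(\mathcal{O})$) are morphisms of the form
\begin{equation*}
  \sheafHom(\mathcal{C}_*(E), \mathcal{C}_*(\mathcal{O})^b)
  \ra
  \sheafHom(E, \mathcal{C}_*(\mathcal{O})^b),
\end{equation*}
and each $\mathcal{C}_*(\mathcal{O})^b$ is a finite product of sheaves $\leftidx{_{U_J}}{\mathcal{O}}{} = (u_J)_*\mathcal{O}_{U_J}$ for affine inclusions $u_J\colon U_J \hra X$. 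Because the rows of both double complexes are uniformly bounded, it suffices by \cite[Thm.~1.9.3]{KS} to show each such row is a quasi-isomorphism, and this in turn reduces to showing that for each affine open $V = U_J$ and each vector bundle $E$ on $X$, the restriction-along-$\alpha$ map
\begin{equation*}
  \sheafHom(\mathcal{C}_*(E), (u_J)_*\mathcal{O}_{U_J})
  \ra
  \sheafHom(E, (u_J)_*\mathcal{O}_{U_J})
\end{equation*}
is a quasi-isomorphism. Now $\sheafHom(\leftidx{_{U_I}}{E}{}, (u_J)_*\mathcal{O}_{U_J})$ can be computed via Lemma~\ref{l:sheafHom-UPVQ}, using condition~\ref{enum:GSP+} (locally integral and Nagata, so each $U_I \cap U_J$ is affine integral with $\mathcal{O}$ N-1 on each connected component): one gets a sheaf supported on $V$ that is nonzero exactly when $V \subset U_I$. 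Arguing connected-component-wise on $V$ exactly as in the proof of Lemma~\ref{l:CD-isom-CP-VQ}, the source complex becomes the (augmented) simplicial chain complex of the nonempty simplex on the set $M(V) = \{s \in S \mid V \subset U_s\}$ with coefficients in a fixed sheaf, and $\alpha^*$ is the augmentation to that coefficient sheaf — a homotopy equivalence.

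The main obstacle is the bookkeeping needed to make ``$\tildew{D}(\alpha)$ is the totalization of these rows'' precise and to identify, compatibly in $b$ and $I$, the sheaves $\sheafHom(\leftidx{_{U_I}}{E}{}, \leftidx{_{U_J}}{\mathcal{O}}{})$ via Lemma~\ref{l:sheafHom-UPVQ} so that the differentials are genuinely the simplicial ones. Once that identification is in place the argument is a direct transcription of Lemma~\ref{l:CD-isom-CP-VQ}. Since both $\tildew{D}(\mathcal{C}_*(E))$ and $\tildew{D}(E) = \mathcal{C}_*(E^\cek)$ are then seen to be termwise finite products of flat sheaves (being $\leftidx{_V}{(\text{vector bundle})}{}$'s) and the morphism is a quasi-isomorphism between them, it is in fact a homotopy equivalence: both are objects of $\mfPerf'_\Cechobjstar(X)$, so by Proposition~\ref{p:cech-*-object-enhancement} (or rather its arbitrary-sheaf variant of Section~\ref{sec:vers-arbitr-sheav}) a quasi-isomorphism between them is an isomorphism in the homotopy category $[\mfPerf'_\Cechobjstar(X)]$, i.e.\ a homotopy equivalence.
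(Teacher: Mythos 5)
Your reduction to rows indexed by the degree of $\mathcal{C}_*(\mathcal{O})$ is fine and parallels the paper's first filtration (``by target''), but the identification of each row as the augmented chain complex of the simplex on $M(V)$ is where the argument breaks: it conflates the sheaf-level statement of Lemma~\ref{l:sheafHom-UPVQ} with its global-sections consequence. That lemma gives $\sheafHom(\leftidx{_{U_I}}{E}{},\leftidx{_V}{\mathcal{O}}{})\cong v_*l_!\,\sheafHom(E|_{U_I\cap V},\mathcal{O}_{U_I\cap V})$, which is nonzero whenever $U_I\cap V\neq\emptyset$; only the global Hom vanishes unless $V\subset U_I$ (that is Lemma~\ref{l:Hom-UPVQ-star}, and it is exactly this global statement that powers Lemma~\ref{l:CD-isom-CP-VQ}, which concerns $\Hom$-complexes, not $\sheafHom$-complexes). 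So your row has many nonzero terms beyond those indexed by $M(V)$, and the ``direct transcription'' of Lemma~\ref{l:CD-isom-CP-VQ} does not go through. The paper's proof uses a different key point: by \eqref{eq:individual-term} the sheaf $\sheafHom(\leftidx{_{U_J}}{E}{},\leftidx{_{U_I}}{\mathcal{O}}{})$ depends only on $U_{I\cup J}$, and one then filters the fixed-$I$ piece a second time by $K=J\setminus I$; each subquotient is the augmented chain complex of a non-empty simplex (over the subsets $J'\subset I$) with \emph{constant} coefficients $\sheafHom(\leftidx{_{U_{I\cup K}}}{E}{},\leftidx{_{U_I}}{\mathcal{O}}{})$, hence contractible, and since the filtrations are degreewise split this yields contractibility of $\Cone(\tildew{D}(\alpha))$ outright.

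There is a second, independent gap at the end: even granting a quasi-isomorphism, your upgrade to a homotopy equivalence via Proposition~\ref{p:cech-*-object-enhancement} presupposes that $\tildew{D}(\mathcal{C}_*(E))$ lies in $\mfPerf'_\Cechobjstar(X)$, which is not known at this stage --- that membership is precisely what Corollary~\ref{c:duality-lift-well-defined} later deduces \emph{from} the present lemma (via the homotopy equivalence with $\mathcal{C}_*(E^\cek)$), so the step is circular. (Also, the terms of $\tildew{D}(\mathcal{C}_*(E))$ are of the form $v_*l_!(\text{bundle on }U_I\cap U_J)$, not $\leftidx{_V}{(\text{vector bundle on }X)}{}$, so the descriptive claim about its components is off, and a quasi-isomorphism of bounded complexes of sheaves is in any case not automatically a homotopy equivalence.) The paper avoids all of this by proving contractibility of the cone directly, which gives the homotopy equivalence without any appeal to the enhancement.
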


\begin{proof}
  Using brutal truncation we can assume that $E$ is
  a vector bundle.
  Write $\alpha^*:= [-1]\tildew{D}(\alpha).$
  We will show that the complex
  $\Cone(\alpha^*)=\sheafHom(\Cone(\alpha),
  \mathcal{C}_*(\mathcal{O}))$  is
  contractible (this certainly implies the lemma).
  We can also assume that $X$ is irreducible: 
  decompose $E$ into a direct sum of vector bundles according
  to the decomposition of $X$ into irreducible components.

  First we filter $\Cone(\alpha^*)$ according to the
  ``target'', so that the subquotients are the complexes
  $\Cone(\alpha^*)_I:=\sheafHom(\Cone(\alpha), \leftidx{_{U_I}}{\mathcal{O}})$ 
  labeled by non-empty subsets
  $I\subset S.$ 
  Fix such an $I.$
  It suffices to prove that
  each $\Cone(\alpha^*)_I$ is contractible.
  Note that $\Cone(\alpha^*)_I$ as a graded sheaf
  is the direct sum of all
  $\sheafHom(\leftidx{_{U_J}}{E}{},\leftidx{_{U_I}}{\mathcal{O}})$
  (shifted to the obvious degree)
  where 
  $J\subset S$ (here $J = \emptyset$ is allowed, then
  $U_\emptyset=X$ and $\leftidx{_{U_\emptyset}}{E}{}=E$).

  Observe that the sheaf
  $\sheafHom(\leftidx{_{U_J}}{E}{},\leftidx{_{U_I}}{\mathcal{O}})$
  depends only on (the fixed set $U_I$ and) the intersection $U_J
  \cap U_I=U_{I \cup J}.$ Indeed, let $i \colon U_I\ra X$
  % , $j \colon U_J\ra X$ 
  and $l \colon U_I\cap U_J\ra U_I$ be the open immersions. Then
  \begin{equation}
    \label{eq:individual-term}
    \sheafHom(\leftidx{_{U_J}}{E}{},
    \leftidx{_{U_I}}{\mathcal{O}})
    \sira
    i_*l_!\sheafHom(E|_{U_I\cap U_J},\mathcal{O}_{U_I\cap U_J}) 
    \sila
    \sheafHom(\leftidx{_{U_{I \cup J}}}{E}{},
    \leftidx{_{U_I}}{\mathcal{O}})
  \end{equation}
  by Lemma~\ref{l:sheafHom-UPVQ} (note that the inclusion
  morphism $U_J \subset X$ is 
  % and $U_{I \cup J} \subset X$ are
  affine, even for $J=\emptyset,$ and that $U_I$ is
  Nagata integral (if non-empty) since $X$ is Nagata integral) or
  trivially if 
  $U_I =\emptyset.$ So it is 
  natural to consider all subsets $J \subset S$ for which $I \cup
  J$ is constant.

  Accordingly, we filter the complex $\Cone(\alpha^*)_I$ 
  % according to the "source" 
  so that the subquotients $\Cone(\alpha^*)_I^K$ are
  labeled by subsets $K \subset S \setminus I$ (possibly empty)
  and are (as graded sheaves) direct sums of all
  $\sheafHom(\leftidx{_{U_J}}{E}{},\leftidx{_{U_I}}{\mathcal{O}})$
  (shifted to the obvious degree) 
  where $K \subset J \subset (I \cup K)$ (so $I \cup J$ is
  constant and equal to $I \cup K$). Fix such a $K.$

  It follows from \eqref{eq:individual-term} that the complex
  $\Cone(\alpha^*)_I^K$ is isomorphic to the augmented chain
  complex of a (non-empty) simplex with coefficients in
  $\sheafHom(\leftidx{_{U_{I \cup K}}}{E}{},
  \leftidx{_{U_I}}{\mathcal{O}}),$ and hence
  contractible. This implies that $\Cone(\alpha^*)_I$ 
  % and hence
  % $\Cone(\alpha^*)$ are 
  is contractible.
\end{proof}

% \begin{corollary}
%   \label{c:double-dual-homotopy-equi}
%   The complex
%   $\sheafHom(\sheafHom(\mathcal{C}_*(E),
%   \mathcal{C}_*(\mathcal{O})),\mathcal{C}_*(\mathcal{O}))$ is
%   homotopy equivalent to $\mathcal{C}_*(E^{\cek
%     \cek})=\mathcal{C}_*(E).$ 
% \end{corollary}

% \begin{proof}
% %  By Lemma~\ref{l:sheafHomCC-vs-CsheafHom} 
%   The complex
%   $\sheafHom(\mathcal{C}_*(E),\mathcal{C}_*(\mathcal{O}))$
%   is homotopy equivalent to 
%   $\mathcal{C}_*(E^\cek),$ hence
%   $\sheafHom(\sheafHom(\mathcal{C}_*(E),
%   \mathcal{C}_*(\mathcal{O})),\mathcal{C}_*(\mathcal{O}))$ is
%   homotopy equivalent to
%   $\sheafHom(\mathcal{C}_*(E^\cek),\mathcal{C}_*(\mathcal{O}))$,
%   which 
%   by the same lemma is homotopy equivalent to
%   $\mathcal{C}_*(E^{\cek\cek})=\mathcal{C}_*(E).$
% \end{proof}

\begin{corollary}
  \label{c:duality-lift-well-defined}
  The dg functor $\tildew{D}$ induces a dg functor 
  \begin{equation*}
    \tildew{D} = \sheafHom(-, \mathcal{C}_*(\mathcal{O})) \colon 
    \mfPerf'_\Cechobjstar(X)^\opp \ra \mfPerf'_\Cechobjstar(X)
  \end{equation*}
  which lifts
  the duality $D$ in the sense that the diagram
  \begin{equation*}
    \xymatrix{
      {[\mfPerf'_\Cechobjstar(X)]^\opp} \ar[r]^-{[\tildew{D}]}
      \ar[d]^-\sim &
      {[\mfPerf'_\Cechobjstar(X)]}
      \ar[d]^-\sim \\
      {\mfPerf'(X)^\opp} \ar[r]^-{D} & {\mfPerf'(X)}
    }
  \end{equation*}
  commutes up to an isomorphism of functors.
\end{corollary}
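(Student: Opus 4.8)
\textbf{Plan of proof for Corollary~\ref{c:duality-lift-well-defined}.}
The goal is to verify that $\tildew{D}=\sheafHom(-,\mathcal{C}_*(\mathcal{O}))$ restricts to a dg functor on $\mfPerf'_\Cechobjstar(X)$ and that, on homotopy categories, it agrees with the duality $D=\bR\sheafHom(-,\mathcal{O})$ up to natural isomorphism. First I would observe that $\tildew{D}$ is a well-defined dg functor $C(\Sh(X))^\opp \ra C(\Sh(X))$ for purely formal reasons, so the only substantial point is that it carries $\mfPerf'_\Cechobjstar(X)$ into itself. Since $\mfPerf'_\Cechobjstar(X)$ is by definition the closure of $\mfPerf_\Cechobjstar(X)$ under homotopy equivalences, and $\mfPerf_\Cechobjstar(X)$ is the smallest full dg subcategory of $C(\Qcoh(X))$ containing all $\mathcal{C}_*(P)$ for $P$ a vector bundle and closed under shifts, cones of closed degree zero morphisms, and homotopy equivalent objects, it suffices (because $\tildew{D}$ is a dg functor and hence sends shifts to shifts and cones of closed degree zero morphisms to cones, up to the usual sign) to check that $\tildew{D}(\mathcal{C}_*(P)) \in \mfPerf'_\Cechobjstar(X)$ for every vector bundle $P$ on $X$. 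This is exactly what Lemma~\ref{l:sheafHomCC-vs-CsheafHom} delivers: the canonical quasi-isomorphism $\alpha\colon P \ra \mathcal{C}_*(P)$ induces a \emph{homotopy equivalence} $\tildew{D}(\alpha)\colon \tildew{D}(\mathcal{C}_*(P)) \ra \tildew{D}(P)=\sheafHom(P,\mathcal{C}_*(\mathcal{O}))=\mathcal{C}_*(P^\cek)$ (using the identification from Remark~\ref{rem:Cech-of-dual}), and $\mathcal{C}_*(P^\cek) \in \mfPerf_\Cechobjstar(X)$ since $P^\cek$ is a vector bundle. Hence $\tildew{D}(\mathcal{C}_*(P))$ is homotopy equivalent to an object of $\mfPerf_\Cechobjstar(X)$, so it lies in $\mfPerf'_\Cechobjstar(X)$. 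This establishes that $\tildew{D}$ restricts to a dg functor $\mfPerf'_\Cechobjstar(X)^\opp \ra \mfPerf'_\Cechobjstar(X)$.

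For the commutativity of the square, I would argue as follows. The complex $\mathcal{C}_*(\mathcal{O})$ is, by construction, a finite complex whose components are finite products of sheaves $\leftidx{_{U_J}}{\mathcal{O}}{}$ with $\emptyset\neq J\subset S$; since all inclusions $U_J\ra X$ are affine (being intersections of affine opens in a separated scheme), Lemma~\ref{l:u'-ls-qls-acyclic} shows each $\leftidx{_{U_J}}{\mathcal{O}}{}$ is $\Gamma$-acyclic, but more to the point $\mathcal{O}\ra\mathcal{C}_*(\mathcal{O})$ is a quasi-isomorphism. Now choose a bounded-below complex of injective sheaves $\mathcal{C}_*(\mathcal{O})\ra\mathcal{I}$ (equivalently a fibrant replacement, cf.\ Remark~\ref{rem:fibrant-explained}); then the composite $\mathcal{O}\ra\mathcal{C}_*(\mathcal{O})\ra\mathcal{I}$ exhibits $\mathcal{I}$ as an h-injective resolution of $\mathcal{O}$. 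For an object $R$ of $\mfPerf'_\Cechobjstar(X)$ — which we may take, by Remark~\ref{rem:objects-of-cech-object-enhancement}, to be of the form $\mathcal{C}_*(E)$ for $E$ a bounded complex of vector bundles (up to homotopy equivalence) — I claim the canonical map $\tildew{D}(R)=\sheafHom(\mathcal{C}_*(E),\mathcal{C}_*(\mathcal{O}))\ra\sheafHom(\mathcal{C}_*(E),\mathcal{I})$ is a quasi-isomorphism; since $\mathcal{C}_*(E)$ is a bounded complex whose terms are finite products of sheaves $\leftidx{_{U_J}}{(E^p)}{}$, which are flat, the target represents $\bR\sheafHom(\mathcal{C}_*(E),\mathcal{O})\cong\bR\sheafHom(E,\mathcal{O})=D(E)$ in $D(\Sh(X))$. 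To see this map is a quasi-isomorphism I would use that both $\sheafHom(\mathcal{C}_*(E),-)$ applied to the quasi-isomorphism $\mathcal{C}_*(\mathcal{O})\ra\mathcal{I}$ gives a quasi-isomorphism because $\mathcal{C}_*(E)$ is a bounded complex of flat sheaves, hence $\sheafHom(\mathcal{C}_*(E),-)$ preserves quasi-isomorphisms between suitably bounded complexes; alternatively one reduces to a single $\leftidx{_{U_J}}{(E^p)}{}$ by brutal truncation and invokes Corollary~\ref{c:cplx-sheafhom-flat-inj-is-inj} together with the fact that $\mathcal{C}_*(\mathcal{O})\to\mathcal{I}$ is a quasi-isomorphism between complexes with $\Gamma$-acyclic (here: flat-tensor-acyclic) terms. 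This identifies $[\tildew{D}]$ with $D$ on the image of $\mathcal{C}_*$, and by naturality of all the maps involved the identification is functorial, which together with Remark~\ref{rem:objects-of-cech-object-enhancement} gives the claimed commuting square up to natural isomorphism.

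I expect the main obstacle to be the second paragraph: verifying carefully that $\sheafHom(\mathcal{C}_*(E),\mathcal{C}_*(\mathcal{O}))$ computes $\bR\sheafHom$ in $D(\Sh(X))$, i.e.\ that replacing $\mathcal{O}$ by the non-injective but ``good enough'' resolution $\mathcal{C}_*(\mathcal{O})$ does not change the derived Hom. The delicate point is that $\mathcal{C}_*(E)$ is only a bounded complex of \emph{flat} sheaves, not projective, so one needs the source to be flat and the relevant computation to commute with the bounded totalization; the fact that all the relevant complexes are bounded (or bounded below) and that $\sheafHom(\text{flat},-)$ sends bounded-below quasi-isomorphisms to quasi-isomorphisms (via Corollary~\ref{c:cplx-sheafhom-flat-inj-is-inj} and a spectral sequence / brutal truncation argument, cf.\ \cite[Thm.~1.9.3]{KS}) is what makes it work. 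Once this is in place, everything else is formal bookkeeping with the identifications already recorded in Remark~\ref{rem:Cech-of-dual} and Lemma~\ref{l:sheafHomCC-vs-CsheafHom}.
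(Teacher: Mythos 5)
Your first paragraph reproduces the paper's argument for the first assertion (reduce via the structure of $\mfPerf'_\Cechobjstar(X)$ to objects $\mathcal{C}_*(E)$ and apply Lemma~\ref{l:sheafHomCC-vs-CsheafHom}), so that part is fine. The problem sits exactly at the step you yourself flag as the main obstacle. You claim that $\iota_*\colon \sheafHom(\mathcal{C}_*(E),\mathcal{C}_*(\mathcal{O}))\ra\sheafHom(\mathcal{C}_*(E),\mathcal{I})$ is a quasi-isomorphism ``because $\mathcal{C}_*(E)$ is a bounded complex of flat sheaves, hence $\sheafHom(\mathcal{C}_*(E),-)$ preserves quasi-isomorphisms between suitably bounded complexes''. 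That inference is invalid: flatness of the first variable governs the tensor product, not $\sheafHom$, and $\sheafHom(F,-)$ with $F$ flat does not send bounded-below quasi-isomorphisms to quasi-isomorphisms in general (already $\Hom_{\DZ}(\DQ,-)$ applied to an injective resolution of $\DZ$ fails, since $\Ext^1_{\DZ}(\DQ,\DZ)\neq 0$). Corollary~\ref{c:cplx-sheafhom-flat-inj-is-inj} does not close this gap either: it only asserts that $\sheafHom(\text{flat},\text{injective})$ is again a bounded below complex of injectives, and the componentwise reduction you sketch would require knowing that $\sheafHom(\leftidx{_{U_J}}{(E^p)}{},\mathcal{C}_*(\mathcal{O}))$ already computes the derived Hom, which is precisely what is at stake. (Also note that flatness of the $*$-extensions $\leftidx{_{U_J}}{(E^p)}{}$ is nowhere recorded in the paper -- only the $!$-extensions are stated to be flat -- although for affine open immersions it does hold; in any case flatness is not the relevant property here.)

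The quasi-isomorphism you need is nevertheless true, but for a different reason: apply $\sheafHom(-,?)$ to $\alpha\colon E\ra\mathcal{C}_*(E)$ and $\iota\colon\mathcal{C}_*(\mathcal{O})\ra\mathcal{I}$ to get a commutative square; the left vertical map $\tildew{D}(\alpha)$ is a homotopy equivalence by Lemma~\ref{l:sheafHomCC-vs-CsheafHom}, the bottom map $\sheafHom(E,\iota)$ is a quasi-isomorphism since $\sheafHom(E,-)$ is exact for $E$ a bounded complex of vector bundles, and the right vertical map $\sheafHom(\alpha,\mathcal{I})$ is a quasi-isomorphism since $\sheafHom(-,I)$ is exact for every injective sheaf $I$ and $\Cone(\alpha)$ is bounded and acyclic; two out of three then yields the top map. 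With this repair your route works, and it has the merit of producing the comparison as an honest zigzag of natural transformations of dg functors, so naturality is automatic. The paper avoids the comparison with an injective resolution altogether: it defines the isomorphism of functors objectwise as the composite $D(\mathcal{C}_*(E))\sira D(E)\cong E^\cek\ra\mathcal{C}_*(E^\cek)\sila\tildew{D}(\mathcal{C}_*(E))$, the last arrow being the homotopy equivalence of Lemma~\ref{l:sheafHomCC-vs-CsheafHom}, checks compatibility with morphisms directly, and invokes Remark~\ref{rem:objects-of-cech-object-enhancement} for sufficiency.
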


\begin{proof}
  Let $E$ be a bounded complex of vector bundles. 
  Lemma~\ref{l:sheafHomCC-vs-CsheafHom} shows that   
  $\tildew{D}(\mathcal{C}_*(E))$
  % $\sheafHom(\mathcal{C}_*(E), \mathcal{C}_*(\mathcal{O}))$
  is homotopy equivalent to $\mathcal{C}_*(E^\cek)$
  and hence in $\mfPerf'_\Cechobjstar(X).$
  This implies the first claim.
  % It follows that $\tildew{D}$ maps objects of  
  % $\mfPerf'_\Cechobjstar(X)^\opp$ to objects of $\mfPerf'_\Cechobjstar(X).$
  % Certainly  
  % $\sheafHom(-, \mathcal{C}_*(\mathcal{O}))$ preserves
  % homotopy equivalences. 
  % This proves the first claim.

  For the second claim we need to define an isomorphism between
  the two compositions in the above diagram. On
  $\mathcal{C}_*(E)$ for $E$ as above we define it to be the
  isomorphisms $D(\mathcal{C}_*(E)) \sira D(E) \cong E^\cek$
  followed by the isomorphism induced by the
  quasi-equivalence $E^\cek \ra
  \mathcal{C}_*(E^\cek)$ followed by the inverse of the
  isomorphism induced by the
  homotopy equivalence $\tildew{D}(\mathcal{C}_*(E)) \ra
  \tildew{D}(E)=\mathcal{C}_*(E^\cek)$ from
  Lemma~\ref{l:sheafHomCC-vs-CsheafHom}.  This is easily checked
  to be compatible with
  morphisms, and sufficient by
  Remark~\ref{rem:objects-of-cech-object-enhancement}.
\end{proof}

The canonical morphism
\begin{align}
  \label{eq:theta-F}
  \theta_F  \colon  F &
  \ra
  \tildew{D}^2(F)=\sheafHom(\sheafHom(F,
  \mathcal{C}_*(\mathcal{O})),\mathcal{C}_*(\mathcal{O})),\\
  \notag
  f & \mapsto (\lambda \mapsto \lambda(f)),
\end{align}
(for $F \in C(\Sh(X))$) defines a morphism
$\theta \colon  \id \ra \tildew{D}^2$ of dg functors 
$C(\Sh(X)) \ra C(\Sh(X)),$
and, by Corollary~\ref{c:duality-lift-well-defined}, 
also of dg functors
$\mfPerf'_\Cechobjstar(X) \ra \mfPerf'_\Cechobjstar(X).$

% \footnote{
%   Das folgende Lemma
%   sollte doch fast aus
%   Corollar~\ref{c:duality-lift-well-defined} folgen.
%   Tut es im wesentlichen wohl auch.
  
%   Ich denke, es folgt, dass $\tildew{D}^2$ isomorph zur
%   Identit\"at ist. Wir brauchen jedoch explizit den Morphismus,
%   um die A-Aopp-Operation verfolgen zu koennen. 
% }

\begin{lemma} 
  \label{l:map-to-double-dual-homotopy-equi}
  For each $F \in \mfPerf'_\Cechobjstar(X),$ the morphism
  $\theta_F$ 
  in \eqref{eq:theta-F}
  is a homotopy equivalence.
\end{lemma}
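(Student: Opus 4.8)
The statement to prove is that for every $F\in\mfPerf'_\Cechobjstar(X)$ the canonical morphism $\theta_F\colon F\to\tildew D^2(F)$ of \eqref{eq:theta-F} is a homotopy equivalence. The plan is to reduce to the case $F=\mathcal{C}_*(E)$ for $E$ a bounded complex of vector bundles, and there to compare $\theta_F$ with the corresponding statement for the ordinary double-dual of vector bundles, for which the result is classical.

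\textbf{Reduction step.} First I would use Remark~\ref{rem:objects-of-cech-object-enhancement}: every object of $\mfPerf'_\Cechobjstar(X)$ is homotopy equivalent to one of the form $\mathcal{C}_*(E)$ with $E$ a bounded complex of vector bundles on $X$. Since $\theta$ is a morphism of dg functors on $\mfPerf'_\Cechobjstar(X)$ (by Corollary~\ref{c:duality-lift-well-defined}) and the class of homotopy equivalences is stable under conjugation by homotopy equivalences, it is enough to prove the claim for $F=\mathcal{C}_*(E)$. Moreover, by brutal truncation on $E$ (the same device used in the proofs of Proposition~\ref{p:abstract-cech-!-bounded-above-full-faithful} and Lemma~\ref{l:sheafHomCC-vs-CsheafHom}) and stability of homotopy equivalences under cones and direct summands, one reduces to $E=P$ a single vector bundle sitting in degree zero. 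As in the proof of Lemma~\ref{l:sheafHomCC-vs-CsheafHom}, one may further decompose $X$ into its finitely many (integral) connected components and thereby assume $X$ is integral; $P$ then splits accordingly.

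\textbf{The core computation.} Now consider the commutative square of dg functors relating $\tildew D=\sheafHom(-,\mathcal{C}_*(\mathcal{O}))$ to the naive double-dual. Using Remark~\ref{rem:Cech-of-dual} we identify $\tildew D(\mathcal{C}_*(P))=\sheafHom(\mathcal{C}_*(P),\mathcal{C}_*(\mathcal{O}))$ and recall the homotopy equivalence $\tildew D(\alpha)\colon \tildew D(\mathcal{C}_*(P))\xra{\sim}\tildew D(P)=\sheafHom(P,\mathcal{C}_*(\mathcal{O}))=\mathcal{C}_*(P^\cek)$ from Lemma~\ref{l:sheafHomCC-vs-CsheafHom} (here $\alpha\colon P\to\mathcal{C}_*(P)$ is the canonical quasi-isomorphism). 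Applying $\tildew D$ once more and again invoking Lemma~\ref{l:sheafHomCC-vs-CsheafHom} (with $E$ replaced by $P^\cek$) yields a homotopy equivalence $\tildew D^2(\mathcal{C}_*(P))\xra{\sim}\tildew D(\mathcal{C}_*(P^\cek))\xra{\sim}\mathcal{C}_*(P^{\cek\cek})=\mathcal{C}_*(P)$, where the last equality uses $P^{\cek\cek}=P$ for a vector bundle. I would check, by unwinding the adjunctions defining $\tildew D(\alpha)$ and the evaluation map $\theta$, that the composition of $\theta_{\mathcal{C}_*(P)}$ with this chain of homotopy equivalences is homotopic to the identity of $\mathcal{C}_*(P)$; equivalently, that the square
\begin{equation*}
  \xymatrix{
    {P} \ar[r]^-{\alpha} \ar[d]_-{\cong} &
    {\mathcal{C}_*(P)} \ar[d]^-{\theta_{\mathcal{C}_*(P)}} \\
    {(P^\cek)^\cek} \ar[r]^-{\sim} &
    {\tildew D^2(\mathcal{C}_*(P))}
  }
\end{equation*}
commutes up to homotopy, with left vertical arrow the canonical (iso)morphism $P\sira (P^\cek)^\cek$ and bottom arrow built from the two instances of Lemma~\ref{l:sheafHomCC-vs-CsheafHom}. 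Since $\alpha$ and all three other arrows are homotopy equivalences, so is $\theta_{\mathcal{C}_*(P)}$.

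\textbf{Main obstacle.} The only real work is the last compatibility: one must verify that the identification of $\tildew D(\mathcal{C}_*(P))$ with $\mathcal{C}_*(P^\cek)$ supplied by Lemma~\ref{l:sheafHomCC-vs-CsheafHom} is compatible, under a second application of $\tildew D$, with the double-evaluation morphism $\theta$. This is a diagram chase through the adjunction isomorphisms $(j^*,j_*)$ and $(j_!,j^*)$ on each open $U_I$ together with the projection-formula identifications of Remark~\ref{rem:Cech-of-dual} and Lemma~\ref{l:sheafHom-UPVQ}; no new idea is needed, but the bookkeeping of signs and of the \v{C}ech differentials must be done carefully. Everything else is a formal consequence of stability of homotopy equivalences under the operations used in the reduction step.
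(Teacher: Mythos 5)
Your reduction to $F=\mathcal{C}_*(P)$ with $P$ a vector bundle is fine and matches the paper's first step, but the crux of your argument is precisely the part you defer: the homotopy-commutativity of your comparison square. As set up, this is more delicate than ``careful bookkeeping''. Note first that your chain $\tildew{D}^2(\mathcal{C}_*(P))\ra\tildew{D}(\mathcal{C}_*(P^\cek))\ra\mathcal{C}_*(P)$ does not exist as written: since $\tildew{D}$ is contravariant, applying it to the homotopy equivalence $\tildew{D}(\alpha)\colon\tildew{D}(\mathcal{C}_*(P))\ra\tildew{D}(P)=\mathcal{C}_*(P^\cek)$ of Lemma~\ref{l:sheafHomCC-vs-CsheafHom} produces a map \emph{into} $\tildew{D}^2(\mathcal{C}_*(P))$, so the comparison with $\mathcal{C}_*(P^{\cek\cek})$ is a zigzag of homotopy equivalences, and your square only makes sense after choosing a homotopy inverse. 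Verifying that such a square commutes up to homotopy in $C(\Sh(X))$, with the \v{C}ech differentials and all the adjunction identifications in play, is exactly the kind of computation one wants to avoid, and you give no argument for it; as it stands this is a genuine gap.

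The idea you are missing, and which the paper uses to bypass any homotopy-commutative diagram chase, is that $\theta_F$ is a \emph{closed degree zero} morphism between objects of the enhancement $\mfPerf'_\Cechobjstar(X)$, so by (the $\mfPerf'$-version of) Proposition~\ref{p:cech-*-object-enhancement} it is a homotopy equivalence as soon as it is a quasi-isomorphism. This converts the problem into one about quasi-isomorphisms, where only \emph{strictly} commutative diagrams are needed: naturality of $\theta$ along $\alpha_E\colon E\ra\mathcal{C}_*(E)$ gives $\theta_{\mathcal{C}_*(E)}\comp\alpha_E=\tildew{D}^2(\alpha_E)\comp\theta_E$ with $\tildew{D}^2(\alpha_E)$ a homotopy equivalence by Lemma~\ref{l:sheafHomCC-vs-CsheafHom}, reducing to $\theta_E$ for the vector bundle $E$; and $\theta_E$ fits into the on-the-nose commutative square with the canonical isomorphism $E\sira E^{\cek\cek}$, the map $i_*\colon E^{\cek\cek}\ra\tildew{D}(E^\cek)$ induced by $i\colon\mathcal{O}\ra\mathcal{C}_*(\mathcal{O})$ (a quasi-isomorphism, identified with $\alpha_{E^\cek}$ via Remark~\ref{rem:Cech-of-dual}), and $\tildew{D}(i_*)$ (a homotopy equivalence). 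No homotopy has to be constructed anywhere. If you want to salvage your route, you would have to carry out the homotopy verification explicitly; the paper's mechanism makes that unnecessary.
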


\begin{proof}
  Assume that
  $F=\mathcal{C}_*(E)$ for $E$ a vector bundle on $X.$
  It is certainly enough to show the claim in this special case.
  % Applying Lemma~\ref{l:sheafHomCC-vs-CsheafHom} twice shows 
  % that the obvious morphisms
  % \begin{equation*}
  %   \sheafHom(\sheafHom(\mathcal{C}_*(E),
  %   \mathcal{C}_*(\mathcal{O})), \mathcal{C}_*(\mathcal{O})) 
  %   \la 
  %   \sheafHom(\mathcal{C}_*(E^\cek), \mathcal{C}_*(\mathcal{O})) 
  %   \ra \mathcal{C}_*(E^{\cek\cek})
  %   = \mathcal{C}_*(E)
  % \end{equation*}
  % are homotopy equivalences. In particular, the object on the
  % left is in $\mfPerf'_\Cechobjstar(X).$ 
  Note that $\theta_F$ is a closed degree
  zero morphism in $\mfPerf'_\Cechobjstar(X).$ Hence 
  (the $\mfPerf'$-version of)
  Proposition~\ref{p:cech-*-object-enhancement} 
  shows that it is enough to prove
  that $\theta_F$ is a quasi-isomorphism.

  The canonical quasi-isomorphism $\alpha_E  \colon E\ra
  \mathcal{C}_*(E)$ induces the 
  commutative diagram
  \begin{equation*}
    \xymatrix{
      {E} \ar[r]^-{\theta_E} \ar[d]^-{\alpha_E} &
      {\tildew{D}^2(E)
        =\sheafHom(\sheafHom(E,\mathcal{C}_*(\mathcal{O})),
        \mathcal{C}_*(\mathcal{O}))}
      \ar[d]^-{\tildew{D}^2(\alpha_E)} \\
      {\mathcal{C}_*(E)} \ar[r]^-{\theta_F} &
      {\tildew{D}^2(\mathcal{C}_*(E))
        =\sheafHom(\sheafHom(\mathcal{C}_*(E), 
        \mathcal{C}_*(\mathcal{O})),\mathcal{C}_*(\mathcal{O}))}
    }
  \end{equation*}
  of closed degree zero morphisms.
  The morphism $\tildew{D}(\alpha_E)$ is a homotopy equivalence by 
  Lemma~\ref{l:sheafHomCC-vs-CsheafHom}, so a fortiori 
  the morphism $\tildew{D}^2(\alpha_E)$
  is a homotopy equivalence. Therefore it suffices to prove that
  $\theta_E$ is a quasi-isomorphism.

  Let $i \colon \mathcal{O} \ra
  \mathcal{C}_*(\mathcal{O})$ 
  be the canonical quasi-isomorphism.
  Note that the morphism 
  \begin{equation*}
    i_* \colon  E^\cek=\sheafHom(E, \mathcal{O}) \ra 
    \sheafHom(E,
    \mathcal{C}_*(\mathcal{O}))    
    =\tildew{D}(E)
  \end{equation*}
  coincides with the canonical
  quasi-isomorphism 
  % \begin{equation*}
  $\alpha_{E^\cek} \colon  E^\cek \ra
  \mathcal{C}_*(E^\cek)$
  % \end{equation*}
  if we identify the targets according to 
  Remark~\ref{rem:Cech-of-dual}.
  Lemma~\ref{l:sheafHomCC-vs-CsheafHom} then implies that
  $\tildew{D}(i_*)$ is a homotopy equivalence.
  It is easy to check that the diagram
  \begin{equation*}
    \xymatrix{
      {E} \ar[r]^-{\theta_E} 
      \ar[d]^-{\sim} &
      {\tildew{D}^2(E)}
 %    {   =\sheafHom(\sheafHom(E,\mathcal{C}_*(\mathcal{O})),
 %       \mathcal{C}_*(\mathcal{O}))}
      \ar[d]^-{\tildew{D}(i_*)} \\
      {E^{\cek\cek}=\sheafHom(E^\cek, \mathcal{O})}
      \ar[r]^-{i_*} &
      {
        \sheafHom(E^\cek,
        \mathcal{C}_*(\mathcal{O}))
        =
        \tildew{D}(E^\cek)
      } 
    }
  \end{equation*}
  commutes, where the left vertical arrow is the canonical
  isomorphism. 
  Obviously, $i_*$ is a quasi-isomorphism. Hence the same is true
  for $\theta_E.$ 
\end{proof} 

\begin{corollary}
  \label{c:duality-lifted-to-enhancement}
  The dg functor 
  % \begin{equation*}
  $\tildew{D}= \sheafHom(-, \mathcal{C}_*(\mathcal{O})) \colon 
  \mfPerf'_\Cechobjstar(X)^\opp \ra \mfPerf'_\Cechobjstar(X)$
  % \end{equation*}
  is a quasi-equivalence. The induced functor $[\tildew{D}]$ on
  homotopy categories is an equivalence and a duality in the
  sense that the natural morphism $\theta \colon  \id \ra
  [\tildew{D}]^2$ is an isomorphism.
\end{corollary}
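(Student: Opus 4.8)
The final statement to prove is Corollary~\ref{c:duality-lifted-to-enhancement}: that $\tildew{D}=\sheafHom(-,\mathcal{C}_*(\mathcal{O}))$ is a quasi-equivalence on $\mfPerf'_\Cechobjstar(X)^\opp$, that $[\tildew{D}]$ is an equivalence, and that $\theta\colon\id\ra[\tildew{D}]^2$ is an isomorphism of functors.

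The plan is to assemble this from the three immediately preceding results. First I would recall from Corollary~\ref{c:duality-lift-well-defined} that $\tildew{D}$ does send $\mfPerf'_\Cechobjstar(X)^\opp$ into $\mfPerf'_\Cechobjstar(X)$ and that $[\tildew{D}]$ lifts the genuine duality $D\colon\mfPerf'(X)^\opp\ra\mfPerf'(X)$, i.e.\ the square relating $[\tildew{D}]$ and $D$ via the enhancement equivalences commutes up to natural isomorphism. Then I would invoke Lemma~\ref{l:map-to-double-dual-homotopy-equi}, which says that for every $F\in\mfPerf'_\Cechobjstar(X)$ the canonical morphism $\theta_F\colon F\ra\tildew{D}^2(F)$ is a homotopy equivalence; since $\theta$ is a morphism of dg functors on $\mfPerf'_\Cechobjstar(X)$ (as noted just after \eqref{eq:theta-F}), this exactly means $\theta\colon\id\ra[\tildew{D}]^2$ is an isomorphism of functors on $[\mfPerf'_\Cechobjstar(X)]$, which is the third assertion.

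It remains to deduce that $\tildew{D}$ is a quasi-equivalence. For quasi-full-faithfulness, I would argue that for $F,G\in\mfPerf'_\Cechobjstar(X)$ the map $\Hom_{\mfPerf'_\Cechobjstar(X)}(F,G)\ra\Hom_{\mfPerf'_\Cechobjstar(X)}(\tildew{D}(G),\tildew{D}(F))$ is a quasi-isomorphism: composing with the quasi-isomorphism induced by $\theta_F$ (and using that $\tildew D$ applied twice recovers $F$ up to the homotopy equivalence $\theta_F$), one reduces to showing that $\tildew{D}$ followed by $\tildew{D}$ again is the identity on morphisms up to the homotopies witnessing $\theta_F,\theta_G$; concretely, since $[\tildew D]$ lifts $D$ and $D$ is an equivalence on $\mfPerf'(X)$ (it is the restriction of $\bR\sheafHom(-,\mathcal{O})$, an auto-equivalence with $\id\sira D^2$), the induced map on $H^{*}$ of Hom-complexes is an isomorphism, because $H^{*}\Hom_{\mfPerf'_\Cechobjstar(X)}(F,[n]G)=\Hom_{\mfPerf'(X)}(\bar F,[n]\bar G)$ and on these groups $\tildew D$ acts as the equivalence $D$. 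For quasi-essential-surjectivity on the homotopy categories, $[\tildew{D}]$ corresponds under the enhancement equivalence to $D$, which is essentially surjective on $\mfPerf'(X)$; alternatively, for any bounded complex of vector bundles $E$, the object $\mathcal{C}_*(E)$ is hit up to homotopy equivalence by $\tildew{D}(\mathcal{C}_*(E^\cek))$ via Lemma~\ref{l:sheafHomCC-vs-CsheafHom}, and every object of $\mfPerf'_\Cechobjstar(X)$ is homotopy equivalent to such a $\mathcal{C}_*(E)$ by Remark~\ref{rem:objects-of-cech-object-enhancement}. Hence $[\tildew{D}]$ is an equivalence and, being quasi-fully faithful and quasi-essentially surjective, $\tildew{D}$ is a quasi-equivalence; together with the isomorphism $\theta\colon\id\ra[\tildew D]^2$ this gives the duality statement.

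The only genuinely delicate point is the quasi-full-faithfulness: one must be careful that the Hom-complex in $\mfPerf'_\Cechobjstar(X)$ is the actual sheaf-Hom complex of complexes of $\mathcal{O}_X$-modules (not just quasi-coherent ones, which is why we are forced into $\mfPerf'_\Cechobjstar(X)$ per Remark~\ref{rem:why-not-Qcoh}), and that $\tildew D$ on these Hom-complexes agrees, after passing to cohomology, with the action of the derived duality $D$ on $\Hom_{\mfPerf'(X)}$. I expect this to be a short diagram chase using Proposition~\ref{p:cech-*-object-enhancement} (in its $\mfPerf'$-version) to identify $H^{*}\Hom_{\mfPerf'_\Cechobjstar(X)}$ with $\Hom_{D(\Sh(X))}$, together with the compatibility square from Corollary~\ref{c:duality-lift-well-defined} and the fact that $\theta_F$ is a homotopy equivalence; no new computation beyond what the preceding lemmas already supply should be needed.
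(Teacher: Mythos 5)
Your proposal is correct, but it does more work than the paper's own proof, which is essentially two lines: Lemma~\ref{l:map-to-double-dual-homotopy-equi} shows that $\theta\colon\id\ra[\tildew{D}]^2$ is an isomorphism of functors, and this alone already forces $[\tildew{D}]$ to be an equivalence --- for a contravariant functor $T$ with $\id\cong T\circ T^{\opp}$, passing to opposite categories gives $\id\cong T^{\opp}\circ T$ as well, so $T$ and $T^{\opp}$ are mutually quasi-inverse; since $\tildew{D}$ is compatible with shifts and the relevant Hom-groups in the homotopy category compute the cohomology of the Hom-complexes, this is exactly quasi-full-faithfulness plus essential surjectivity, i.e.\ $\tildew{D}$ is a quasi-equivalence. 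Your route instead re-derives quasi-full-faithfulness from the compatibility square of Corollary~\ref{c:duality-lift-well-defined} together with the $\mfPerf'$-version of Proposition~\ref{p:cech-*-object-enhancement} (identifying $H^\ast\Hom$ in the enhancement with $\Hom$ in $\mfPerf'(X)$, where $\tildew{D}$ acts as the equivalence $D$), and essential surjectivity from Lemma~\ref{l:sheafHomCC-vs-CsheafHom} plus Remark~\ref{rem:objects-of-cech-object-enhancement}. All of these steps are sound, and the ``delicate point'' you flag is handled correctly by exactly the diagram chase you describe; the only comment is that none of this extra input is needed once $\theta$ is known to be an isomorphism, which is why the paper's argument is shorter and does not have to revisit the comparison with the derived duality $D$ at all.
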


\begin{proof}
  Lemma~\ref{l:map-to-double-dual-homotopy-equi}
  shows that $\theta \colon  \id \ra [\tildew{D}]^2$ is an isomorphism. 
  In particular, $[\tildew{D}]$ is an equivalence, and
  $\tildew{D}$ is a quasi-equivalence.
  % by Lemma~\ref{l:dg-functor-qequi-test}.
\end{proof}

\subsection{Products and enhancements}
\label{sec:prod-enhanc}

We work now over a field $k.$ 
Let $X$ and $Y$ be $k$-schemes such that $X,$ $Y,$ and $X \times
Y$ satisfy condition~\ref{enum:GSP+}.

\subsubsection{Enhancements for products}
\label{sec:products}

% We need a slight generalization of the object oriented
% $*$-\v{C}ech 
% enhancement for products. 

Fix ordered finite affine open coverings $\mathcal{U}=(U_s)_{s \in S}$ 
of $X$
and $\mathcal{V}=(V_t)_{t \in T}$ of $Y.$
On $X \times Y$ we consider the finite affine open covering
$\mathcal{W}:= \mathcal{U} \times \mathcal{V}=(W_{(s,t)})_{(s,t)
  \in S \times T}$ where $W_{(s,t)}=U_s \times V_t$ (and equip
$S \times T$ with some total ordering).  
Let $\mfPerf_\Cechobjstarbox(X \times Y)$ be the smallest full dg
subcategory 
of $C(\Qcoh(X \times Y))$ that contains all objects
$\mathcal{C}_*(P) \boxtimes \mathcal{C}_*(Q)$ for $P$ and
$Q$ vector bundles on $X$ and $Y,$
respectively,
all objects $\mathcal{C}_*(R)$ for $R$ a vector bundle on $X
\times Y,$ and is closed under shifts in both directions, cones
of  
closed degree zero morphisms and under taking homotopy equivalent
objects. It is strongly pretriangulated.
We will see in
Corollary~\ref{c:cech-*-object-enhancement-product}
that we have defined nothing new.

\begin{proposition}
  \label{p:cech-*-object-enhancement-product}
  The dg category $\mfPerf_\Cechobjstarbox(X \times Y)$
  is naturally an enhancement of $\mfPerf(X \times Y).$ 
  % and of $D^b(\Coh(X \times Y)).$
  % i.\,e.\ 
  % the canonical functor 
  % \begin{equation*}
  %   \epsilon \colon  [\mfPerf_\Cechobjstarbox(X \times Y)] \ra \mfPerf(X
  %   \times Y)
  % \end{equation*}
  % is an equivalence of triangulated categories.
  % We call it the \define{(generalized object oriented)
  % $*$-\v{C}ech 
  % enhancement}.
  % of $\mfPerf(X \times Y).$ 
\end{proposition}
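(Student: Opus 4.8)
The plan is to show that $\mfPerf_\Cechobjstarbox(X \times Y)$ is just the ``object oriented $*$-\v{C}ech enhancement'' $\mfPerf_\Cechobjstar(X \times Y)$ attached to the covering $\mathcal{W}=\mathcal{U}\times\mathcal{V}$ of $X \times Y$, so that the proposition follows immediately from Proposition~\ref{p:cech-*-object-enhancement}. Concretely, I would prove $\mfPerf_\Cechobjstarbox(X \times Y) = \mfPerf_\Cechobjstar(X \times Y)$ as (strictly full) dg subcategories of $C(\Qcoh(X \times Y))$; since both are strongly pretriangulated and enhance the same category inside $\mfPerf(X \times Y)$, this identification is what needs checking.

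First I would observe the inclusion $\mfPerf_\Cechobjstar(X \times Y) \subset \mfPerf_\Cechobjstarbox(X \times Y)$: the former is by definition the smallest full dg subcategory of $C(\Qcoh(X \times Y))$ containing all $\mathcal{C}_*(R)$ for $R$ a vector bundle on $X \times Y$ and closed under shifts, cones of closed degree zero morphisms, and homotopy equivalences; the latter contains all those generators and is closed under the same operations, so it contains $\mfPerf_\Cechobjstar(X \times Y)$. For the reverse inclusion, the only thing to check is that each extra generator $\mathcal{C}_*(P) \boxtimes \mathcal{C}_*(Q)$ lies in $\mfPerf_\Cechobjstar(X \times Y)$. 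The key computation is that for vector bundles $P$ on $X$ and $Q$ on $Y$, one has a canonical identification (an equality, in fact) of complexes
\begin{equation*}
  \mathcal{C}_*(P) \boxtimes \mathcal{C}_*(Q)
  =
  \mathcal{C}_*(P \boxtimes Q)
\end{equation*}
where on the right the $*$-\v{C}ech complex is taken with respect to the product covering $\mathcal{W}=\mathcal{U}\times\mathcal{V}$ and $P \boxtimes Q$ is a vector bundle on $X\times Y$. This follows by unwinding: each term of $\mathcal{C}_*(P)\boxtimes\mathcal{C}_*(Q)$ is a $\boxtimes$-product $\leftidx{_{U_I}}{P}{}\boxtimes\leftidx{_{V_J}}{Q}{}$, and Lemma~\ref{l:boxtimes-restriction-extension}.\ref{enum:boxtimes-*-extension} gives $\leftidx{_{U_I}}{P}{}\boxtimes\leftidx{_{V_J}}{Q}{} \sira \leftidx{_{U_I\times V_J}}{(P\boxtimes Q)}{}$ naturally (the inclusions $U_I\subset X$ and $V_J\subset Y$ being affine), and $U_I\times V_J = W_{(I,J)}$ for the corresponding multi-index in $S\times T$; one then checks compatibility with the \v{C}ech differentials, so the two double complexes (hence their totalizations) agree up to the obvious reindexing $S\times T$-multi-indices versus pairs of $S$- and $T$-multi-indices. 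Since $P \boxtimes Q$ is a vector bundle on $X\times Y$, the object $\mathcal{C}_*(P \boxtimes Q)$ is a generator of $\mfPerf_\Cechobjstar(X \times Y)$ by definition, so indeed $\mathcal{C}_*(P)\boxtimes\mathcal{C}_*(Q)\in\mfPerf_\Cechobjstar(X \times Y)$, completing the reverse inclusion.

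Having established $\mfPerf_\Cechobjstarbox(X \times Y) = \mfPerf_\Cechobjstar(X \times Y)$, the proposition is immediate: $X \times Y$ satisfies condition~\ref{enum:GSP+} (in particular \ref{enum:GSP}) by hypothesis, so Proposition~\ref{p:cech-*-object-enhancement} applies to $X \times Y$ with the covering $\mathcal{W}$ and shows that the canonical functor $\epsilon\colon [\mfPerf_\Cechobjstar(X \times Y)] \ra \mfPerf(X \times Y)$ is an equivalence of triangulated categories; hence $[\mfPerf_\Cechobjstarbox(X \times Y)] \sira \mfPerf(X \times Y)$ as well, which is the assertion. I would also remark that $\mfPerf_\Cechobjstarbox(X\times Y)$ was set up with the $P\boxtimes Q$-generators precisely because those are the objects one manipulates when relating endomorphism dg algebras over $X\times Y$ to tensor products of such algebras over $X$ and $Y$ (as in Proposition~\ref{p:tensor-product-of-endos-vs-endos-of-boxproduct} and the proof of Theorem~\ref{t:mfPerf-abstract-Cechobj-smooth-vs-diagonal-sheaf-perfect}); the content of the present proposition is the reassurance that enlarging the generating set does not enlarge the enhancement.

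The main obstacle is the bookkeeping in the identification $\mathcal{C}_*(P)\boxtimes\mathcal{C}_*(Q)=\mathcal{C}_*(P\boxtimes Q)$: one must match signs and differentials of the totalized triple complex (two \v{C}ech directions plus the internal gradings of $P$ and $Q$ if they are allowed to be complexes) against those of the single \v{C}ech double complex over $X\times Y$, and one must make sure the chosen total ordering on $S\times T$ is compatible, which it is up to the canonical isomorphisms between \v{C}ech resolutions for different orderings noted in Section~\ref{sec:vcech-resolutions-}. This is entirely routine given Lemma~\ref{l:boxtimes-restriction-extension}, but it is the only place where genuine verification (rather than formal nonsense about smallest subcategories) is needed. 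Everything else — closure properties, the appeal to Proposition~\ref{p:cech-*-object-enhancement} — is formal.
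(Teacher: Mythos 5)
Your reduction to Proposition~\ref{p:cech-*-object-enhancement} hinges on the claimed identification $\mathcal{C}_*(P)\boxtimes\mathcal{C}_*(Q)=\mathcal{C}_*(P\boxtimes Q)$ (the right-hand side taken for the product covering $\mathcal{W}=\mathcal{U}\times\mathcal{V}$), and this identification is false: the two complexes are not even isomorphic as graded objects. A term $\leftidx{_{U_I}}{P}{}\boxtimes\leftidx{_{V_J}}{Q}{}$ of the left-hand side sits in degree $(|I|-1)+(|J|-1)$, whereas the corresponding open set $U_I\times V_J$ arises in $\mathcal{C}_*(P\boxtimes Q)$ as $W_K$ for subsets $K\subset S\times T$ (e.g.\ $K=I\times J$) of cardinalities unrelated to $|I|+|J|-1$, and moreover many distinct subsets $K$ yield the same open set, so the \v{C}ech complex for $\mathcal{W}$ has many more terms. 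Already for $|S|=|T|=2$ the degree-one piece of $\mathcal{C}_*(P)\boxtimes\mathcal{C}_*(Q)$ has four factors while that of $\mathcal{C}_*(P\boxtimes Q)$ has six. The two objects are of course quasi-isomorphic (both resolve $P\boxtimes Q$), but the closure condition defining $\mfPerf_\Cechobjstar(X\times Y)$ is closure under \emph{homotopy equivalence}, not quasi-isomorphism, so a quasi-isomorphism does not place $\mathcal{C}_*(P)\boxtimes\mathcal{C}_*(Q)$ in $\mfPerf_\Cechobjstar(X\times Y)$. In the paper the homotopy equivalence between $\mathcal{C}_*(P)\boxtimes\mathcal{C}_*(Q)$ and $\mathcal{C}_*(P\boxtimes Q)$, and hence the equality $\mfPerf_\Cechobjstar(X\times Y)=\mfPerf_\Cechobjstarbox(X\times Y)$ (Corollary~\ref{c:cech-*-object-enhancement-product}), is \emph{deduced from} the proposition you are trying to prove, via full-faithfulness of $[\mfPerf_\Cechobjstarbox(X\times Y)]\ra\mfPerf(X\times Y)$; so as written your route is circular.

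The missing substance is precisely what the paper's proof supplies: one must show directly that for the extra generators the canonical map
\begin{equation*}
  \Hom_{[C(\Qcoh(X\times Y))]}\bigl(\mathcal{C}_*(P)\boxtimes\mathcal{C}_*(Q),\,[m]B\bigr)
  \ra
  \Hom_{D(\Qcoh(X\times Y))}\bigl(\mathcal{C}_*(P)\boxtimes\mathcal{C}_*(Q),\,[m]B\bigr)
\end{equation*}
is an isomorphism for $B$ of either form. This is done by a variant of Lemma~\ref{l:CD-isom-CP-VQ}: using $P\boxtimes Q\ra\mathcal{C}_*(P)\boxtimes\mathcal{C}_*(Q)$, Lemma~\ref{l:boxtimes-restriction-extension}.\ref{enum:boxtimes-*-extension} to rewrite the terms as $\leftidx{_{U_{S'}\times V_{T'}}}{(P\boxtimes Q)}{}$, and Lemma~\ref{l:Hom-UPVQ-star} on $X\times Y$ (here condition~\ref{enum:GSP+} for $X\times Y$ enters), one identifies the relevant Hom-complex with the augmented chain complex of a \emph{product of two simplices}, whose augmentation is a homotopy equivalence; essential surjectivity is then inherited from $\mfPerf_\Cechobjstar(X\times Y)\subset\mfPerf_\Cechobjstarbox(X\times Y)$. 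Your formal observations (closure properties, the inclusion of the smaller enhancement) are fine, but the step you dismissed as routine bookkeeping is where the actual proof has to live, and it cannot be carried out as an identification of complexes.
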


\begin{proof}
  We essentially repeat the proof of
  Proposition~\ref{p:cech-*-object-enhancement}. 
  Note that $\mfPerf_\Cechobjstar(X \times Y) \subset
  \mfPerf_\Cechobjstarbox(X \times Y).$ 
  It remains to prove that
  \begin{equation*}
    \Hom_{[C(\Qcoh(X \times Y))]}(A, [m]B)
    \ra
    \Hom_{D(\Qcoh(X \times Y))}(A, [m]B)
  \end{equation*}
  is an isomorphism for any $m \in \DZ$ in the four cases where
  $A$ and $B$ are of the form $\mathcal{C}_*(P) \boxtimes
  \mathcal{C}_*(Q)$ or $\mathcal{C}_*(R).$ Note that
  $\mathcal{C}_*(P) \boxtimes \mathcal{C}_*(Q)$ is built up
  from objects $\leftidx{_{U_{S'}}}{P}{} \boxtimes
  \leftidx{_{V_{T'}}}{Q}{} \sira \leftidx{_{U_{S'} \times V_{T'}}}{(P
    \boxtimes Q)}{}$ for non-empty subsets $S' \subset S$ and $T'
  \subset T$ (see Lemma~\ref{l:boxtimes-restriction-extension}.\ref{enum:boxtimes-*-extension}), and that
  $\mathcal{C}_*(R)$ is built up from objects
  $\leftidx{_{U_{S'} \times V_{T'}}}{R}{}$ (where $S'$ (resp.\
  $T'$) is the image of some non-empty subset of $S \times T$
  under the first (resp.\ second) projection).
  % Note that 
  % $\mathcal{C}_*(R)$ is built up from objects 
  % $\leftidx{_{W_K}}{R}{},$ for $K \subset S \times T,$
  % and that
  % $\mathcal{C}_*(P) \boxtimes \mathcal{C}_*(Q)$ is built
  % up 
  % from objects 
  % \begin{equation*}
  %   \leftidx{_{U_{\pr_1(K)}}}{P}{} \boxtimes
  %   \leftidx{_{V_{\pr_2(K)}}}{Q}{} 
  %   =
  %   \leftidx{_{U_{\pr_1(K)} \times V_{\pr_2(K)}}}{(P \boxtimes
  %     Q)}{}     
  %   =
  %   \leftidx{_{W_K}}{(P \boxtimes Q)}{}
  % \end{equation*}
  % where $K \subset S \times T$ and $\pr_1\colon S \times T \ra S$ and
  % $\pr_2\colon S \times T \ra T$ are the projections; 
  % the first equality is trivial and the second equality follows
  % from Lemma~\ref{l:boxtimes-restriction-extension}.\ref{enum:boxtimes-*-extension}.
  Hence the two cases where $A$ is of the form $\mathcal{C}_*(R)$
  can be treated as in the proof of
  Proposition~\ref{p:cech-*-object-enhancement}. The remaining two
  cases follow from the following claim.

  Let $P,$ $Q,$ $R$ be vector bundles on $X, Y,$ and $X \times Y,$
  respectively, 
  and let $n \in \DZ.$
  Assume that $V'$ is an irreducible component of $U_I \times
  V_J$ where $I \subset S$ and $J \subset T$ are non-empty 
  subsets. Then 
  \begin{equation*}
    \Hom_{[C(\Qcoh(X \times Y))]}(\mathcal{C}_*(P) \boxtimes
    \mathcal{C}_*(Q), [n]\leftidx{_{V'}}R)
    \ra
    \Hom_{D(\Qcoh(X \times Y))}(\mathcal{C}_*(P) \boxtimes
    \mathcal{C}_*(Q), [n]\leftidx{_{V'}}R)
  \end{equation*}
  is an isomorphism.

  This claim is proved as Lemma~\ref{l:CD-isom-CP-VQ},
  using $P \boxtimes Q \ra \mathcal{C}_*(P) \boxtimes
  \mathcal{C}_*(Q).$
  The analog of the map $\epsilon'$ in
  \eqref{eq:diagram-for-CD-isom-CP-VQ} 
  is an isomorphism by the same argument.
  The analog of the left vertical map $p^*$ 
  comes from the
  augmentation map of the chain complex of the product of two 
  (non-empty) simplices and is therefore an isomorphism:
  consider $M(V'):=\{(s,t) \in S \times T \mid V' \subset
  U_s \times V_t\}$ and note that $M(V') = M(\pr_1(V')) \times
  M(\pr_2(V'))$ where $X \xla{\pr_1} X \times Y \xra{\pr_2} Y$
  are the projections and
  $M(\pr_1(V'))$ and $M(\pr_2(V'))$ are defined in the obvious
  way.  
\end{proof}

\begin{corollary}
  \label{c:cech-*-object-enhancement-product}
  We have 
  $\mfPerf_\Cechobjstar(X \times Y)=\mfPerf_\Cechobjstarbox(X
  \times Y).$ 
\end{corollary}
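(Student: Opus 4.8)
The statement to prove is Corollary~\ref{c:cech-*-object-enhancement-product}, namely that $\mfPerf_\Cechobjstar(X \times Y) = \mfPerf_\Cechobjstarbox(X \times Y)$ as (full) dg subcategories of $C(\Qcoh(X \times Y))$. One inclusion is immediate from the definitions: every generating object $\mathcal{C}_*(R)$ of $\mfPerf_\Cechobjstar(X \times Y)$ is among the generators of $\mfPerf_\Cechobjstarbox(X \times Y)$, and $\mfPerf_\Cechobjstarbox(X \times Y)$ is closed under shifts, cones of closed degree zero morphisms, and homotopy equivalences, so $\mfPerf_\Cechobjstar(X \times Y) \subset \mfPerf_\Cechobjstarbox(X \times Y)$. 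This inclusion was already noted in the proof of Proposition~\ref{p:cech-*-object-enhancement-product}.

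For the reverse inclusion the plan is to show that each extra generator of $\mfPerf_\Cechobjstarbox(X \times Y)$, namely $\mathcal{C}_*(P) \boxtimes \mathcal{C}_*(Q)$ for $P$ a vector bundle on $X$ and $Q$ a vector bundle on $Y$, already lies in $[\mfPerf_\Cechobjstar(X \times Y)]$ (indeed, is homotopy equivalent to an object of $\mfPerf_\Cechobjstar(X \times Y)$). Since $\mfPerf_\Cechobjstar(X \times Y)$ is closed under homotopy equivalence, this suffices: once all generators of $\mfPerf_\Cechobjstarbox(X \times Y)$ are contained in $\mfPerf_\Cechobjstar(X \times Y)$, and the latter is closed under shifts, cones and homotopy equivalences, minimality of $\mfPerf_\Cechobjstarbox(X \times Y)$ forces $\mfPerf_\Cechobjstarbox(X \times Y) \subset \mfPerf_\Cechobjstar(X \times Y)$. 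The key point is therefore: $P \boxtimes Q$ is a vector bundle on $X \times Y$, so $\mathcal{C}_*(P \boxtimes Q) \in \mfPerf_\Cechobjstar(X \times Y)$ by definition (using brutal truncation, or directly since $P \boxtimes Q$ sits in a single degree); it then remains to produce a homotopy equivalence between $\mathcal{C}_*(P) \boxtimes \mathcal{C}_*(Q)$ and $\mathcal{C}_*(P \boxtimes Q)$.

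To obtain this homotopy equivalence, I would first observe that $P \boxtimes Q \ra \mathcal{C}_*(P) \boxtimes \mathcal{C}_*(Q)$ is a quasi-isomorphism: the maps $P \ra \mathcal{C}_*(P)$ and $Q \ra \mathcal{C}_*(Q)$ are quasi-isomorphisms and $\boxtimes$ is exact and computed naively (Lemma~\ref{l:boxtimes-exact}, Remark~\ref{rem:boxtimes-exact}), so $P \boxtimes Q \ra \mathcal{C}_*(P) \boxtimes \mathcal{C}_*(Q)$ is a quasi-isomorphism of complexes of quasi-coherent sheaves. Likewise $P \boxtimes Q \ra \mathcal{C}_*(P \boxtimes Q)$ is a quasi-isomorphism. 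Both $\mathcal{C}_*(P) \boxtimes \mathcal{C}_*(Q)$ and $\mathcal{C}_*(P \boxtimes Q)$ are bounded complexes of quasi-coherent sheaves on $X \times Y$ (the former because each $\leftidx{_{U_{S'}}}{P}{} \boxtimes \leftidx{_{V_{T'}}}{Q}{} \sira \leftidx{_{U_{S'} \times V_{T'}}}{(P \boxtimes Q)}{}$ by Lemma~\ref{l:boxtimes-restriction-extension}.\ref{enum:boxtimes-*-extension}), hence are objects of $\mfPerf(X \times Y)$, and both are built out of the generating objects so that they lie in $\mfPerf_\Cechobjstarbox(X \times Y)$. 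Now invoke Proposition~\ref{p:cech-*-object-enhancement-product}: $\mfPerf_\Cechobjstarbox(X \times Y)$ is an enhancement of $\mfPerf(X \times Y)$, i.e.\ $[\mfPerf_\Cechobjstarbox(X \times Y)] \sira \mfPerf(X \times Y)$, so a quasi-isomorphism in $Z^0(C(\Qcoh(X \times Y)))$ between two objects of $\mfPerf_\Cechobjstarbox(X \times Y)$ is an isomorphism in $[\mfPerf_\Cechobjstarbox(X \times Y)]$. In particular $\mathcal{C}_*(P) \boxtimes \mathcal{C}_*(Q)$ and $\mathcal{C}_*(P \boxtimes Q)$ become isomorphic in $[\mfPerf_\Cechobjstarbox(X \times Y)]$; but an isomorphism there is precisely a homotopy equivalence in $C(\Qcoh(X \times Y))$ (as $[\mfPerf_\Cechobjstarbox(X \times Y)]$ is a full subcategory of $[C(\Qcoh(X \times Y))]$), with an explicit homotopy inverse constructed via a zig-zag through a common h-injective resolution as in Remark~\ref{rem:objects-of-cech-object-enhancement}. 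Since $\mathcal{C}_*(P \boxtimes Q) \in \mfPerf_\Cechobjstar(X \times Y)$ and the latter is closed under homotopy equivalence, we conclude $\mathcal{C}_*(P) \boxtimes \mathcal{C}_*(Q) \in \mfPerf_\Cechobjstar(X \times Y)$, which is what was needed.

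The only mild subtlety — the main obstacle, such as it is — is bookkeeping: one must make sure that the homotopy equivalence $\mathcal{C}_*(P) \boxtimes \mathcal{C}_*(Q) \simeq \mathcal{C}_*(P \boxtimes Q)$ can be chosen with both maps living inside $C(\Qcoh(X \times Y))$ (equivalently $C(\Sh(X \times Y))$ after the standard identifications), so that it genuinely realizes a homotopy equivalence in the dg category $\mfPerf_\Cechobjstar(X \times Y)$ rather than merely in the derived category. This is handled exactly as in the proof of Proposition~\ref{p:abstract-cech-!-bounded-above-full-faithful} and Remark~\ref{rem:objects-of-cech-object-enhancement}: pass to a bounded-below complex of injective quasi-coherent sheaves receiving $P \boxtimes Q$, note that the two resolutions map quasi-isomorphically (hence homotopy equivalently, both being bounded complexes of vector-bundle-type sheaves) to it, and compose. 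Alternatively one can note that $\mfPerf_\Cechobjstar(X \times Y)$ is already closed under homotopy equivalence by construction and that any quasi-isomorphism of bounded complexes of quasi-coherent sheaves that both lie in a common strongly pretriangulated enhancement is automatically a homotopy equivalence inside that enhancement; this is just the statement that an enhancement is a full subcategory and its homotopy category embeds into the derived category. No step requires new ideas beyond those already deployed for Propositions~\ref{p:cech-*-object-enhancement} and \ref{p:cech-*-object-enhancement-product}.
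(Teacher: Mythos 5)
Your proposal is correct and follows essentially the same route as the paper: both inclusions are handled identically, and the key step — noting that $\mathcal{C}_*(P)\boxtimes\mathcal{C}_*(Q)$ and $\mathcal{C}_*(P\boxtimes Q)$ are isomorphic in $\mfPerf(X\times Y)$ and then using Proposition~\ref{p:cech-*-object-enhancement-product} to upgrade this to a homotopy equivalence inside the enhancement, whence membership in $\mfPerf_\Cechobjstar(X\times Y)$ by closure under homotopy equivalence — is exactly the paper's argument. The ``mild subtlety'' you raise is automatic, since the enhancement is a full dg subcategory of $C(\Qcoh(X\times Y))$, so an isomorphism in its homotopy category is literally a homotopy equivalence there.
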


\begin{proof}
  Obviously 
  $\mfPerf_\Cechobjstar(X \times Y) \subset \mfPerf_\Cechobjstarbox(X
  \times Y).$ 
  Let $P$ and $Q$ be vector bundles on $X$ and $Y,$ respectively.
  Since $\mathcal{C}_*(P) \boxtimes \mathcal{C}_*(Q)$ and
  $\mathcal{C}_*(P \boxtimes Q)$ are isomorphic in $\mfPerf(X
  \times Y),$ they are isomorphic in 
  $\mfPerf_\Cechobjstarbox(X \times Y),$
  by Proposition~\ref{p:cech-*-object-enhancement-product}.
  This means that 
  $\mathcal{C}_*(P) \boxtimes \mathcal{C}_*(Q)$ is homotopy
  equivalent to 
  $\mathcal{C}_*(P \boxtimes Q)$ and hence already in
  $\mfPerf_\Cechobjstar(X \times Y).$
\end{proof}

% There is another similar result we need.  
Let $\Delta \colon X \ra X \times X$ be the diagonal (closed)
immersion. We consider the product covering $\mathcal{U} \times
\mathcal{U}$ of $X \times X.$

\begin{lemma}
  \label{l:boxtimes-cech-object-to-diagonal-cech-K-iso-D}
  Assume that $X \times X$ satisfies condition~\ref{enum:GSP+}.
  Let $P, Q, R$ be vector bundles on $X,$ and let $m \in \DZ.$
  Then the canonical map
  \begin{equation*}
    \leftidx{_{[C(\Qcoh(X \times X))]}^m}{(\mathcal{C}_*(P) \boxtimes
      \mathcal{C}_*(Q),
      \Delta_*(\mathcal{C}_*(R)))}{}
    \ra
    \leftidx{_{D(\Qcoh(X \times X))}^m}{(\mathcal{C}_*(P) \boxtimes
      \mathcal{C}_*(Q),
      \Delta_*(\mathcal{C}_*(R)))}{}
  \end{equation*}
  is an isomorphism.
  Here we write 
  $\leftidx{_?^m}{(-,-)}{}$ instead of $\Hom_{?}(-,[m]-).$
\end{lemma}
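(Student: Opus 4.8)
The strategy is to reduce this statement, via the structural decomposition of $\mathcal{C}_*(P)\boxtimes\mathcal{C}_*(Q)$ and $\Delta_*(\mathcal{C}_*(R))$ into basic pieces, to a computation with sheaves of the form $\leftidx{_{V}}{N}{}$ on $X$, exactly in the spirit of the proofs of Lemma~\ref{l:CD-isom-CP-VQ} and Proposition~\ref{p:cech-*-object-enhancement-product}. First I would observe that $\mathcal{C}_*(P)\boxtimes\mathcal{C}_*(Q)$ is built, by brutal truncation and passage to direct summands, out of objects $\leftidx{_{U_I}}{P}{}\boxtimes\leftidx{_{V_J}}{Q}{}$ for non-empty $I,J\subset S$; by Lemma~\ref{l:boxtimes-restriction-extension}.\ref{enum:boxtimes-*-extension} these are $\leftidx{_{U_I\times U_J}}{(P\boxtimes Q)}{}$. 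Likewise $\Delta_*(\mathcal{C}_*(R))$ is the totalization of the complex whose terms are $\Delta_*(\leftidx{_{U_K}}{R}{})$ for non-empty $K\subset S$. Since both complexes are bounded, the usual spectral sequence (or repeated five-lemma) argument reduces the claim to showing, for each such basic piece, that
\begin{equation*}
  \Hom_{[C(\Qcoh(X\times X))]}(\leftidx{_{U_I\times U_J}}{(P\boxtimes Q)}{}, [n]\Delta_*(\leftidx{_{U_K}}{R}{}))
  \ra
  \Hom_{D(\Qcoh(X\times X))}(\leftidx{_{U_I\times U_J}}{(P\boxtimes Q)}{}, [n]\Delta_*(\leftidx{_{U_K}}{R}{}))
\end{equation*}
is an isomorphism.

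\textbf{Reduction to the diagonal.} For the left-hand (homotopy) side I would use adjunction: writing $w_{IJ}\colon U_I\times U_J\subset X\times X$ for the open immersion, $\leftidx{_{U_I\times U_J}}{(P\boxtimes Q)}{}=(w_{IJ})_*(w_{IJ})^*(P\boxtimes Q)$, and $(w_{IJ})^*\Delta_*(\leftidx{_{U_K}}{R}{})$ may be computed by the flat base change along the cartesian square relating $U_I\times U_J$ to the diagonal, namely the diagonal of $U_I\cap U_J$ sits inside $U_I\times U_J$. So $\Hom$ out of this object into $[n]\Delta_*(\leftidx{_{U_K}}{R}{})$, in either category, becomes $\Hom$ on $U_I\times U_J$ out of $(w_{IJ})^*(P\boxtimes Q)$ into the pushforward of $\leftidx{_{U_I\cap U_J\cap U_K}}{R|}{}$ from the diagonal $U_I\cap U_J\hookrightarrow U_I\times U_J$. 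Using Corollary~\ref{c:combined} (and the fact that all the relevant intersections are affine, since each $U_s$ is affine and $X$ is separated, with $U_I\cap U_J$ Nagata locally integral as an open subscheme of $X$) one identifies this with $\Hom$ on $U_I\cap U_J$ out of $(P\otimes Q)|_{U_I\cap U_J}$ into $[n]\leftidx{_{U_I\cap U_J\cap U_K}}{(R|)}{}$. Over the affine scheme $U_I\cap U_J$ the comparison of homotopy-$\Hom$ and derived-$\Hom$ is clear because $(P\otimes Q)|_{U_I\cap U_J}$ corresponds to a finitely generated projective module; this handles the basic pieces.

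\textbf{Assembling and the main obstacle.} Having proved the comparison on basic pieces, I would run the filtration/totalization bookkeeping twice — first over the terms of the target $\Delta_*(\mathcal{C}_*(R))$ (finitely many, so a finite induction on the length of the brutal truncation suffices, using the long exact sequences attached to $\sigma^{\geq p}$), then over the terms of the source $\mathcal{C}_*(P)\boxtimes\mathcal{C}_*(Q)$ — to conclude that the canonical map is an isomorphism for $A=\mathcal{C}_*(P)\boxtimes\mathcal{C}_*(Q)$ and $B=\Delta_*(\mathcal{C}_*(R))$ themselves; since every arrow in the relevant commuting squares is induced by the identity on the underlying objects and since derived $\Hom$ out of a bounded complex is computed termwise up to the spectral sequence, this is routine. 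The main obstacle I anticipate is the base-change step in the previous paragraph: one must be careful that the cartesian square identifying $(w_{IJ})^*\Delta_*$ with (diagonal pushforward)$\circ$(restriction) is valid at the level of quasi-coherent sheaves — this is where Lemma~\ref{l:push-affine-then-pull} and the affineness of the closed immersion $\Delta$ restricted to these affine opens are needed — and that the local-integrality/Nagata hypothesis on $X\times X$ (condition~\ref{enum:GSP+}) is invoked exactly where Corollary~\ref{c:combined} and the $\leftidx{_V}{(-)}{}$-sheaf-$\Hom$ computations of Lemma~\ref{l:Hom-UPVQ-star} require it, so that no spurious homomorphisms appear between the $\leftidx{_{U_I\times U_J}}{(P\boxtimes Q)}{}$ and the diagonal pieces. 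Once that local picture is pinned down, the homological assembly is a formal consequence of the boundedness of all complexes involved.
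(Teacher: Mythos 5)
There is a genuine gap, and it is at the very first step. You d\'evisse the source $\mathcal{C}_*(P)\boxtimes\mathcal{C}_*(Q)$ into its individual terms $\leftidx{_{U_I\times U_J}}{(P\boxtimes Q)}{}$ and reduce to the comparison map for each such term against each term $\Delta_*(\leftidx{_{U_K}}{R}{})$ of the target. That implication is formally correct (brutal truncations are termwise split, so they give triangles in the homotopy category), but the statement you reduce to is false: for a single sheaf as source and a single sheaf as target, the homotopy $\Hom$ vanishes in all nonzero degrees, whereas the derived $\Hom$ computes $\Ext^n_{X\times X}\bigl(\leftidx{_{U_I\times U_J}}{(P\boxtimes Q)}{},\,\Delta_*(\leftidx{_{U_K}}{R}{})\bigr)$, which is in general nonzero for $n>0$. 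Concretely, take $X=\DA^1=\Spec k[x]$ with covering $\{X,D(x)\}$ and $P=Q=R=\mathcal{O}$: with $S=k[x,y]$ the relevant group is $\Ext^1_S(S_x,\,S/(x-y))\cong{\varprojlim}^1\bigl(k[x]\xla{\cdot x}k[x]\xla{\cdot x}\cdots\bigr)\neq 0$, while the homotopy $\Hom$ into the shift is zero. So the comparison is only true for the totalized \v{C}ech complexes, not termwise in the source; this is precisely why the paper's argument (as in Lemma~\ref{l:CD-isom-CP-VQ}) never resolves the source: it uses the quasi-isomorphism $p\boxtimes q\colon P\boxtimes Q\ra\mathcal{C}_*(P)\boxtimes\mathcal{C}_*(Q)$ to replace the source by the vector bundle $P\boxtimes Q$ on the derived side (where the comparison is proved via the adjunction $(\Delta^*,\Delta_*)$, $\Delta$ being affine, and affineness of $V$), and separately proves the \emph{chain-level} statement that $(p\boxtimes q)^*$ is a quasi-isomorphism on the whole homotopy $\Hom$ complex into $\Delta_*(\leftidx{_{V'}}{R}{})$, identified via Corollary~\ref{c:combined} and Lemma~\ref{l:Hom-UPVQ-star} with the augmented chain complex of a product of two simplices. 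Your plan has no analogue of this totalized computation and cannot be repaired by bookkeeping over truncations.

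A secondary error compounds this: your treatment of the ``basic piece'' rests on the identification $\Hom_{X\times X}\bigl((w_{IJ})_*(w_{IJ})^*M,\,N\bigr)\cong\Hom_{U_I\times U_J}\bigl((w_{IJ})^*M,\,(w_{IJ})^*N\bigr)$, which is not an adjunction: maps \emph{out of} a $*$-extension along an open immersion are exactly the delicate objects of Lemma~\ref{l:Hom-UPVQ-star}, and Remark~\ref{rem:Hom-UPVQ-weaker-assumptions} shows they need not be controlled by restriction without the integrality/Nagata hypotheses. The legitimate moves here are the adjunction for maps \emph{into} $\Delta_*$ (valid since $\Delta_*=\bR\Delta_*$, $\Delta$ being a closed immersion), followed by Corollary~\ref{c:combined} to compute $\Delta^*$ of the source terms, and then Lemma~\ref{l:Hom-UPVQ-star} on $X$ (after passing to irreducible components $V'$ of $U_I$, where integrality is available) to evaluate the resulting $\Hom$ groups -- which is the route the paper takes inside its proof of the quasi-isomorphism $(p\boxtimes q)^*$.
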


\begin{proof}
  We follow the idea of the proof of
  Proposition~\ref{p:cech-*-object-enhancement}.
  We need to show the
  following two claims where $V=U_I$ for some non-empty subset $I
  \subset S.$
  \begin{enumerate}
  \item
    \label{enum:boxtimes-PQR-KD}
    $\leftidx{_{[C(\Qcoh(X \times X))]}}{(P \boxtimes Q,
      [n]\Delta_*(\leftidx{_{V}}{R}{}))}{}
    \ra
    \leftidx{_{D(\Qcoh(X \times X))}}{(P \boxtimes Q,
      [n]\Delta_*(\leftidx{_{V}}{R}{}))}{}$
    is an isomorphism, for any $n \in \DZ.$
  \item
    \label{enum:boxtimes-PQR-KK}
    Let 
    $p \colon P \ra \mathcal{C}_*(P)$ and $q \colon Q \ra
    C_*(Q)$ denote the obvious maps, and let $V' \subset V$ be
    an irreducible component. 
    Then
    \begin{equation*}
      (p \boxtimes q)^* \colon  
      \leftidx{_{C(\Qcoh(X \times X))}}{(\mathcal{C}_*(P) \boxtimes
        \mathcal{C}_*(Q),
        \Delta_*(\leftidx{_{V'}}{R}{}))}{}
      \ra
      \leftidx{_{C(\Qcoh(X \times X))}}{(P \boxtimes Q,
        \Delta_*(\leftidx{_{V'}}{R}{}))}{}    
    \end{equation*}
    is a quasi-isomorphism.
  \end{enumerate}

  Proof of \ref{enum:boxtimes-PQR-KD}:
  Note that 
  $\Delta$ is affine as a closed immersion and hence
  $\bR \Delta_*=\Delta_*.$ Moreover we have 
  $\bL \Delta^*(P \boxtimes Q)=
  \Delta^*(P \boxtimes Q) =
  P \otimes Q.$
  Using the obvious adjunctions we can hence proceed as in the
  proof of Proposition~\ref{p:cech-*-object-enhancement}.

  % \footnote{
  %   ALT:
    
  %   This is certainly true for $n=0.$ So assume $n\not=0.$ Then
  %   the left hand side obviously vanishes.  Let $j \colon V \ra X.$ Then
  %   $\bR j_*=j_*$ and $\bL j^*=j^*.$ Since $\Delta$ is affine as
  %   a closed immersion we have $\bR \Delta_*=\Delta_*.$ Also note
  %   that $\bL \Delta^*(P \boxtimes Q)= \Delta^*(P \boxtimes Q) =
  %   P \otimes Q.$ Hence the right hand side is equal to
  %   \begin{equation*}
  %     % \leftidx{_{D(\Qcoh(X \times X))}}{(P \boxtimes Q,
  %     % [n]\Delta_*j_*j^*(R))}{}
  %     % =
  %     \leftidx{_{D(\Qcoh(V))}}{(j^*\Delta^*(P \boxtimes Q),
  %       [n]j^*(R))}{}
  %     =\leftidx{_{D(\Qcoh(V))}}{(j^*(P \otimes Q),
  %       [n]j^*(R))}{}
  %     =0.
  %   \end{equation*}
  % }

  Proof of \ref{enum:boxtimes-PQR-KK}:
  The right hand side is concentrated in degree zero and equal to
  \begin{equation*}
   H:=\Hom_{\mathcal{O}_X}((P \otimes Q)|_{V'}, R|_{V'})
  \end{equation*}
  there.
  The graded components of the left hand side are direct sums of
  objects 
  \begin{equation*}
    \Hom_{\mathcal{O}_{X \times X}}
    (\leftidx{_{U_K}}{P}{} \boxtimes
   \leftidx{_{U_L}}{Q}{},
    \Delta_*(\leftidx{_{V'}}{R}{}))
    \sila
    \Hom_{\mathcal{O}_X}
    (\leftidx{_{U_K\cap U_L}}{(P \otimes Q)}{},
    \leftidx{_{V'}}{R}{})
  \end{equation*}
  where $K, L \subset S$ are non-empty subsets and the
  isomorphism follows from Corollary~\ref{c:combined}.  Using
  Lemma~\ref{l:Hom-UPVQ-star} this implies that $(p \boxtimes
  q)^*$ is the augmentation map of the chain complex of the
  product of two (non-empty) simplices with coefficients in $H$
  and therefore a quasi-isomorphism.
  % 
  % Lemma~\ref{l:Hom-UPVQ-star} says that this is equal to 
  % $H$ if $V \subset U_K \cap U_L$ and otherwise equal to $0.$
  % Let
  % \begin{equation*}
  %   M(V):=\{s \in S \mid V \subset U_s\}.
  % \end{equation*}
  % Then $V \subset U_K \cap U_L$ if and only if all elements of
  % $K$ and all elements of $L$ are contained in $M(V).$
  % 
  % Hence the 
  % left hand side of \eqref{eq:PQR-qiso} is the complex
  % \begin{equation*}
  %   \dots \ra 
  %   \prod_{k_0 < k_1 \in M(V), \; l_0 \in M(V)} H 
  %   \times 
  %   \prod_{k_0 \in M(V), \; l_0 < l_1 \in M(V)} H 
  %   \ra
  %   \prod_{k_0 \in M;\; l_0 \in M} H \ra 0 \ra 
  %   \dots
  % \end{equation*}
  % with 
  % $\prod_{k_0 \in M;\; l_0 \in M} H$ sitting in degree zero and
  % evident differentials.
  % This complex is isomorphic to
  % the chain complex of a product of two simplices (with
  % coefficients $H$),
  % and \eqref{eq:PQR-qiso}
  % is the augmentation map to $H.$
  % Hence this map is a quasi-isomorphism (even a homotopy
  % equivalence). 
\end{proof}

\subsubsection{Enhancements and external tensor product}
\label{sec:enhanc-extern-tens}

We come back to the product situation $X \times Y$ with covering
$\mathcal{U} \times \mathcal{V}.$

\begin{lemma}
  \label{l:dg-functor-boxtimes-full-and-faithful}
  The obvious dg functor
  \begin{equation}
    \label{eq:dg-functor-boxtimes}
    \boxtimes \colon  \mfPerf_\Cechobjstar(X) \otimes
    \mfPerf_\Cechobjstar(Y) \ra 
    \mfPerf_{\Cechobjstar}(X\times Y)
  \end{equation}
  is quasi-fully faithful, i.\,e.\ induces quasi-isomorphisms
  between morphisms spaces.
\end{lemma}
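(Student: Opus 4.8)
The statement asserts that the external tensor product dg functor $\boxtimes\colon \mfPerf_\Cechobjstar(X)\otimes\mfPerf_\Cechobjstar(Y)\to\mfPerf_\Cechobjstar(X\times Y)$ is quasi-fully faithful. The plan is to reduce, by brutal truncation and passage to direct summands, to computing the morphism complex between objects of the form $\mathcal{C}_*(P)$ and $\mathcal{C}_*(P')$ on $X$ (resp.\ $\mathcal{C}_*(Q)$, $\mathcal{C}_*(Q')$ on $Y$) for vector bundles $P,P',Q,Q'$, and to show that the natural map
\begin{equation*}
  \Hom_{\mfPerf_\Cechobjstar(X)}(\mathcal{C}_*(P),\mathcal{C}_*(P'))
  \otimes
  \Hom_{\mfPerf_\Cechobjstar(Y)}(\mathcal{C}_*(Q),\mathcal{C}_*(Q'))
  \to
  \Hom_{\mfPerf_\Cechobjstar(X\times Y)}(\mathcal{C}_*(P)\boxtimes\mathcal{C}_*(Q),\mathcal{C}_*(P')\boxtimes\mathcal{C}_*(Q'))
\end{equation*}
is a quasi-isomorphism of complexes of $k$-modules. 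Here I would use that $\mfPerf_\Cechobjstar$ is generated under shifts, cones, and homotopy equivalences by the $\mathcal{C}_*(P)$, so that quasi-full-faithfulness on these generators propagates; tensoring with a fixed object and the exactness of $\boxtimes$ (Lemma~\ref{l:boxtimes-exact}, Lemma~\ref{l:boxtimes-restriction-extension}) keep everything under control, and the tensor product over the field $k$ is exact, so it commutes with taking cohomology.

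The core computation, after unwinding $\mathcal{C}_*$ as built from the objects $\leftidx{_{U_{S'}}}{P}{}$ (resp.\ $\leftidx{_{V_{T'}}}{Q}{}$), reduces the left-hand side to a tensor product of two complexes, each of which is (a shift of) the augmented simplicial cochain complex of a nonempty simplex with coefficients in $\Hom_{\mathcal{O}_V}(P|_V,P'|_V)$ (resp.\ $\Hom_{\mathcal{O}_{V''}}(Q|_{V''},Q'|_{V''})$), by the computation already carried out in the proof of Lemma~\ref{l:CD-isom-CP-VQ} (using Lemma~\ref{l:Hom-UPVQ-star}, which applies because $X$, $Y$, $X\times Y$ are Nagata locally integral, so the relevant intersections are affine Nagata integral or empty). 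On the right-hand side, $\mathcal{C}_*(P)\boxtimes\mathcal{C}_*(Q)$ is built from objects $\leftidx{_{U_{S'}}}{P}{}\boxtimes\leftidx{_{V_{T'}}}{Q}{}\cong\leftidx{_{U_{S'}\times V_{T'}}}{(P\boxtimes Q)}{}$ (Lemma~\ref{l:boxtimes-restriction-extension}.\ref{enum:boxtimes-*-extension}), and the morphism complex into $\leftidx{_{V'}}{(P'\boxtimes Q')}{}$ for $V'$ an irreducible component of a product $U_I\times V_J$ computes to the augmented cochain complex of the \emph{product} of two nonempty simplices, with coefficients in $\Hom_{\mathcal{O}_{V'}}((P\boxtimes Q)|_{V'},(P'\boxtimes Q')|_{V'})$ — here I would again invoke Lemma~\ref{l:Hom-UPVQ-star} together with the fact that $M(V')=M(\pr_1(V'))\times M(\pr_2(V'))$, exactly as in the proof of Proposition~\ref{p:cech-*-object-enhancement-product}. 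Under the isomorphism $\Hom((P\boxtimes Q)|_{V'},(P'\boxtimes Q')|_{V'})\cong \Hom(P|_{\pr_1(V')},P'|_{\pr_1(V')})\otimes_k\Hom(Q|_{\pr_2(V')},Q'|_{\pr_2(V')})$ (valid since vector bundles are locally free of finite rank and $V'$ is integral), the map $\boxtimes$ becomes the Künneth comparison for the cochain complexes of the two simplices, which is a quasi-isomorphism because the cochain complex of a nonempty simplex is quasi-isomorphic to $k$ concentrated in degree zero and $k$ is a field.

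The main obstacle I anticipate is bookkeeping: carefully matching up the filtrations on both sides so that the subquotients really are the simplicial cochain complexes one expects, and checking that the natural comparison map $\boxtimes$ is compatible with these filtrations (so that it suffices to compare associated graded pieces). A secondary subtlety is the identification $\Hom_{\mathcal{O}_{V'}}((P\boxtimes Q)|_{V'},(P'\boxtimes Q')|_{V'})\cong\Hom\otimes_k\Hom$ on the level of the relevant integral affine opens — this needs the local-freeness of the bundles and a clean statement about $\Hom$ of external tensor products over affine opens, which can be checked directly on $\Spec A\times\Spec B$. Once these are in place, the quasi-isomorphism follows from the Künneth formula for complexes of $k$-modules together with the contractibility (after augmentation) of simplicial cochain complexes of nonempty simplices, exactly in the spirit of the arguments already given for Lemma~\ref{l:CD-isom-CP-VQ} and Proposition~\ref{p:cech-*-object-enhancement-product}.
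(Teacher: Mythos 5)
Your overall strategy (reduce to the generating objects $\mathcal{C}_*(P)$, filter by brutal truncation of the targets, compare associated graded pieces) is workable, but it is both more roundabout than the paper's argument and flawed in one key step as written. The paper computes no cohomology at the generator level: it proves the stronger statement that already on the individual \v{C}ech terms the map
\begin{equation*}
  \boxtimes\colon\Hom_{\mathcal{O}_X}(\leftidx{_T}{E}{},\leftidx{_{T'}}{E'}{})\otimes\Hom_{\mathcal{O}_Y}(\leftidx{_U}{F}{},\leftidx{_{U'}}{F'}{})\ra\Hom_{\mathcal{O}_{X\times Y}}(\leftidx{_{T\times U}}{(E\boxtimes F)}{},\leftidx{_{T'\times U'}}{(E'\boxtimes F')}{})
\end{equation*}
is an \emph{isomorphism}: by Lemma~\ref{l:Hom-UPVQ-star} both sides vanish unless $T'\subset T$ and $U'\subset U$ (equivalently $T'\times U'\subset T\times U$), and in that case both reduce to $\Hom$'s of finitely generated projective modules over the affine rings, where the comparison is obvious. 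Hence $\boxtimes$ induces an isomorphism, not merely a quasi-isomorphism, of Hom complexes between objects $\mathcal{C}_*(E)\boxtimes\mathcal{C}_*(F)$, and the passage to arbitrary objects of the enhancements is the same d\'evissage you invoke; no K\"unneth formula or simplex combinatorics is needed at all.

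The step in your argument that fails is the per-component identification $\Hom_{\mathcal{O}_{V'}}((P\boxtimes Q)|_{V'},(P'\boxtimes Q')|_{V'})\cong\Hom(P|_{\pr_1(V')},P'|_{\pr_1(V')})\otimes_k\Hom(Q|_{\pr_2(V')},Q'|_{\pr_2(V')})$ for $V'$ a connected component of $U_I\times V_J$: take $X=Y=\Spec K$ with $K/k$ finite separable, so that $U_I\times V_J=\Spec(K\otimes_k K)$ splits into several copies of $\Spec K$; for such a component the left-hand side is $K$ while your right-hand side is $K\otimes_k K$. Relatedly, the simplex index sets need not match component by component ($M(\pr_1(V'))$ can contain $M(U_I)$ strictly), so the comparison is not literally a K\"unneth map between the chain complexes of two simplices and of their product. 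The repair is to compare \emph{globally} over the affine open $U_I\times V_J$, i.e.\ after summing over its components: both associated graded pieces have cohomology concentrated in a single degree, equal to $\Hom_{\mathcal{O}_X(U_I)}(P,P')\otimes_k\Hom_{\mathcal{O}_Y(V_J)}(Q,Q')$ resp.\ $\Hom_{\mathcal{O}(U_I\times V_J)}(P\boxtimes Q,P'\boxtimes Q')$, and the induced map is the standard isomorphism for finitely generated projective modules over $A\otimes_k B$. With that correction your route goes through, but it only recovers, after a filtration argument, what the paper's proof establishes directly at the cochain level.
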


\begin{proof}
  Corollary~\ref{c:cech-*-object-enhancement-product}
  % The definition of the category 
  % $\mfPerf_{\Cechobjstarbox}(X \times Y)$
  makes sure that this functor is well-defined.
  We can assume without loss of generality that $X$ and $Y$ are
  integral. 
  Let $T \subset X,$ $T' \subset X$, $U \subset Y$ and 
  $U' \subset Y$ be affine open subsets, and let
  $E,$ $E'$ be vector bundles on $X$ and $F,$ $F'$ be vector
  bundles 
  on $Y.$ We claim that the 
  morphism
  \begin{equation*}
    \boxtimes \colon 
    \Hom_{\mathcal{O}_X}(\leftidx{_T}{E}{}, \leftidx{_{T'}}{E'}{})
    \otimes
    \Hom_{\mathcal{O}_Y}(\leftidx{_U}{F}{}, \leftidx{_{U'}}{F'}{})
    \ra
    \Hom_{\mathcal{O}_{X \times Y}}(
    \leftidx{_{T \times U}}{(E \boxtimes F)}{}, 
    \leftidx{_{T' \times U'}}{(E' \boxtimes F')}{})
  \end{equation*}
  is an isomorphism
  (cf.\ Lemma~\ref{l:boxtimes-restriction-extension}.\ref{enum:boxtimes-*-extension}).
  This is trivial if one of the sets $T,$ $T',$ $U,$ $U'$ is
  empty, 
  and otherwise the condition
  ($T' \subset T$ and $U' \subset U$) is equivalent to 
  the condition $T' \times U' \subset T \times U.$
  Now we use
  Lemma~\ref{l:Hom-UPVQ-star}.
  If these conditions do not hold, both sides are zero;
  if they hold we need to show that
  \begin{multline*}
    \boxtimes \colon 
    \Hom_{\mathcal{O}_{T'}}(E|_{T'}, E'|_{T'})
    \otimes
    \Hom_{\mathcal{O}_{U'}}(F|_{U'}, F'|_{U'})\\
    \ra
    \Hom_{\mathcal{O}_{T' \times U'}}((E \boxtimes F)|_{T' \times U'},
    (E' \boxtimes F')|_{T' \times U'})
  \end{multline*}
  is an isomorphism. But since $T'$ and $U'$ are affine and $E$
  and $F$ are vector bundles this is obvious.
  This proves our claim, and it is easy to deduce the lemma.
  %
  % This claim implies that 
  % the functor \eqref{eq:dg-functor-boxtimes}
  % would be full and faithful 
  % if in the definition of $\mfPerf_\Cechobjstar(X)$ and
  % $\mfPerf_\Cechobjstar(Y)$ we had 
  % not required that these categories are closed under homotopy
  % equivalences. The lemma follows.
\end{proof}

\begin{remark}
  \label{rem:dg-lift-of-boxtimes}
  Consider the composition
  \begin{equation}
    \label{eq:dg-lift-of-boxtimes}
    \mfPerf_\Cechobjstar(X) \times \mfPerf_\Cechobjstar(Y)
    \xra{\otimes}
    \mfPerf_\Cechobjstar(X) \otimes \mfPerf_\Cechobjstar(Y)
    \xra{\boxtimes}
    \mfPerf_\Cechobjstar(X \times Y)
  \end{equation}
  where the first functor is the obvious dg bifunctor and the
  second functor is the dg functor
  \eqref{eq:dg-functor-boxtimes}.
  Passing to homotopy categories defines the upper row in the
  following diagram.
  \begin{equation*}
    \xymatrix{
      {[\mfPerf_\Cechobjstar(X)] \times [\mfPerf_\Cechobjstar(Y)]}
      \ar[r]^-{\otimes} \ar[d]^-{\epsilon \times \epsilon} &
      [\mfPerf_\Cechobjstar(X) \otimes \mfPerf_\Cechobjstar(Y)]
      \ar[r]^-{\boxtimes} &
      {[\mfPerf_\Cechobjstar(X \times Y)]} \ar[d]^-{\epsilon}\\
      {\mfPerf(X) \times \mfPerf(Y)}
      \ar[rr]^-{\boxtimes} &&
      {\mfPerf(X \times Y)}.
    }
  \end{equation*}
  This diagram is commutative. This means that
  the dg bifunctor \eqref{eq:dg-lift-of-boxtimes} is a lift of 
  the bifunctor $\boxtimes \colon  {\mfPerf(X) \times \mfPerf(Y)}
  \ra {\mfPerf(X \times Y)}$ of triangulated
  categories to the indicated enhancements.
\end{remark}

% \subsubsection{Equivalence of enhancements}
% \label{sec:equiv-enhanc}

% % As in the previous section, $\mathcal{U}$ (resp.\ $\mathcal{V}$)
% % is a finite affine open covering of $X$ (resp.\ $Y$), and we
% % consider the covering $\mathcal{U} \times \mathcal{V}$ on $X
% % \times Y.$

% \begin{lemma}
%   \label{l:enhancements-Cechobj-box-equivalent}
%   The enhancements
%   % ($\mfPerf^\inj(X \times Y)$ and) 
%   $\mfPerf_\Cechobjstar(X \times Y)$
%   and $\mfPerf_\Cechobjstar(X \times Y)$ of $\mfPerf(X \times
%   Y)$ are equivalent.
% \end{lemma}

% \begin{proof}
%   % Clearly, 
%   The inclusion
%   $\mfPerf_\Cechobjstar(X \times Y) \subset 
%   \mfPerf_\Cechobjstar(X \times Y)$ is 
%   % obviously 
%   a
%   quasi-equivalence.
% \end{proof}

\subsubsection{Version for arbitrary sheaves on products}
\label{sec:vers-arbitr-sheav-products}

% Similar as in section~\ref{sec:vers-arbitr-sheav},
% $\mfPerf'_\Cechobjstar(X \times Y)$ is defined as the closure of
% $\mfPerf_\Cechobjstar(X \times Y)$ in $C(\Sh(X \times Y))$ under
% taking homotopy 
% equivalent objects; it is an enhancement of $\mfPerf'(X \times
% Y).$
% % also called 
% % generalized (object oriented) $*$-\v{C}ech enhancement.
It is clear that the previous results of
% section~\ref{sec:geom-mean-homol} 
section~\ref{sec:prod-enhanc}
remain true if we 
replace $\mfPerf_\Cechobjstar$ by $\mfPerf'_\Cechobjstar$ and
$\mfPerf$ by $\mfPerf'$ and
$C(\Qcoh(-))$ by
$C(\Sh(-))$ and
$D(\Qcoh(-))$ by
$D(\Sh(-)).$ 

%subsection{The geometric meaning of homological smoothness of texorpdfstring{$\mfPerf^\inj(X)$}{Perfinj(X)}}

% \subsection{The geometric meaning of homological smoothness of $\mfPerf^\inj(X)$}
% \label{sec:geom-mean-homol}
 
% % Our aim is to prove
% % Theorem~\ref{t:mfPerf-Cechobj-smooth-vs-diagonal-sheaf-perfect}. 

% We can now prove the main result of this section.
% Recall that 
% $\mfPerf_\Cechobjstar(X),$
% $\mfPerf'_\Cechobjstar(X),$
% $\mfPerf^\inj(X),$ and
% $\mfPerf'_\inj(X)$
% are  equivalent enhancements
% of $\mfPerf(X) \sira \mfPerf'(X).$
% \footnote{
%   , so they are in particular quasi-equivalent as dg categories, 
%   and recall that smoothness over $k$ is invariant under
%   quasi-equivalence. 

%   mention somewhere that either all or none of them are smooth
%   over $k.$ 
% }
% Here we have tacitly fixed an ordered finite affine open covering of
% $X.$

% % Our aim is to show the following theorem.

% \footnote{ 
%   Have "uniqueness" of enhancement result for
%   $\mfPerf(X)$ from \cite{lunts-orlov-enhancement}?  

%   Well Theorem 7.9 there is stated for quasi-projective schemes,
%   so 
%   strictly speaking not... but maybe obvious generalization?

%   Frage Stellari!
% }

\subsection{Homological smoothness and the structure sheaf of the
  diagonal -- the locally integral case}
\label{sec:homol-smoothn-struct}

\begin{theorem} 
  \label{t:mfPerf-Cechobj-smooth-vs-diagonal-sheaf-perfect}
  Let $X$ be a scheme over a field $k$ such that $X$ and $X
  \times X$
  satisfy 
  condition~\ref{enum:GSP+}.
  Let $\Delta \colon  X \ra X \times X$ be the diagonal (closed)
  immersion. 
  Then $\mfPerf(X)$ is 
  % (homologically)
  smooth over $k$ if and only if $\Delta_*(\mathcal{O}_X) \in
  \mfPerf(X \times X).$
\end{theorem}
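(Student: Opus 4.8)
The plan is to reduce Theorem~\ref{t:mfPerf-Cechobj-smooth-vs-diagonal-sheaf-perfect} to Theorem~\ref{t:mfPerf-abstract-Cechobj-smooth-vs-diagonal-sheaf-perfect}, which already establishes the equivalence of $k$-smoothness of $\mfPerf(X)$ with the condition $\Delta_*(\mathcal{O}_X) \in \mfPerf(X \times X)$ under weaker hypotheses. First I would observe that condition~\ref{enum:GSP+} for $X$ implies in particular that $X$ is a Noetherian \ref{enum:GSP}-scheme (since \ref{enum:GSP+} explicitly includes \ref{enum:GSP}, and ``Nagata'' together with the finite-dimensionality built into \ref{enum:GSP}--\ref{enum:RES} bookkeeping gives Noetherianness; more directly, a Nagata scheme is Noetherian by definition). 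Likewise condition~\ref{enum:GSP+} for $X \times X$ makes $X \times X$ Noetherian. Hence the hypotheses of Theorem~\ref{t:mfPerf-abstract-Cechobj-smooth-vs-diagonal-sheaf-perfect} are satisfied, and the conclusion follows immediately.

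That said, the point of including this theorem in the appendix (as the text indicates, for use in \cite{valery-olaf-matrix-factorizations-and-motivic-measures}) is presumably that one wants a proof using the \emph{object-oriented} \v{C}ech enhancements $\mfPerf_\Cechobjstar$ rather than the morphism-oriented $\Cech_*$, together with the lifted duality $\tildew{D}$ from section~\ref{sec:lifting-duality}. So the second, self-contained approach I would carry out mirrors the proof of Theorem~\ref{t:mfPerf-abstract-Cechobj-smooth-vs-diagonal-sheaf-perfect} step by step, replacing each ingredient by its locally-integral analog. Concretely: fix ordered finite affine open coverings of $X$ and $X \times X$ (the product covering $\mathcal{U} \times \mathcal{U}$); pick a vector bundle $E$ on $X$ that is a classical generator of $\mfPerf'(X)$ (exists by \cite{bondal-vdbergh-generators}, and may be taken a vector bundle by \ref{enum:GSP}); form the enhancement $\mfPerf'_\Cechobjstar(X)$ and the dg algebra $A := \End_{\mfPerf'_\Cechobjstar(X)}(E)$, whose $k$-smoothness is equivalent to that of $\mfPerf(X)$ by Proposition~\ref{p:cech-*-object-enhancement} (and its $\mfPerf'$-version) plus \cite[Prop.~2.18]{valery-olaf-matrix-factorizations-and-motivic-measures}. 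Then use the lifted duality $\tildew{D} = \sheafHom(-, \mathcal{C}_*(\mathcal{O}_X))$ of Corollary~\ref{c:duality-lifted-to-enhancement} in place of $\mathcal{D}_\mathcal{J}$, and Lemma~\ref{l:sheafHomCC-vs-CsheafHom} in place of the h-injectivity statements, to identify $A^\opp$ with $\End$ of $\tildew{D}(\mathcal{C}_*(E)) \simeq \mathcal{C}_*(E^\cek)$.

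The heart of the argument is then to show that $F := \Hom_{C(\Sh(X \times X))}(P, -)$, for $P := \mathcal{C}_*(E^\cek) \boxtimes \mathcal{C}_*(E)$ (or the quasi-isomorphic $\tildew{D}(\mathcal{C}_*(E)) \boxtimes \mathcal{C}_*(E)$), sends an h-injective lift of $\Delta_*(\mathcal{O}_X)$ to an object isomorphic to $A^\opp$ in $D(A^\opp \otimes A)$, exactly as in the string of adjunction isomorphisms at the end of the proof of Theorem~\ref{t:mfPerf-abstract-Cechobj-smooth-vs-diagonal-sheaf-perfect}. Here the key local input is Lemma~\ref{l:boxtimes-cech-object-to-diagonal-cech-K-iso-D}, which says precisely that $\Hom$ out of a box-product \v{C}ech object into $\Delta_*$ of a \v{C}ech object is computed correctly in the homotopy category; combined with Corollary~\ref{c:combined} (diagonal restriction of external tensor products of $\leftidx{_U}{(-)}{}$-extensions) and Lemma~\ref{l:Hom-UPVQ-star} (the vanishing/identification of $\Hom(u_*P, v_*Q)$ using the Nagata/N-1 hypothesis), it replaces the homotopy-equivalence Corollary~\ref{c:obtain-homotopy-equiv} used in the morphism-oriented proof. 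Finally one invokes Proposition~\ref{p:homotopy-categories-triang-via-dg-algebras}.\ref{hoI-via-B} to turn the quasi-isomorphism of dg algebras $A^\opp \otimes A \to \End_{C(\Sh(X\times X))}(P)$ (after composing with a resolution into an h-injective) into an equivalence $[C^\hinj_\Qcoh(\Sh(X\times X))] \sira D(A^\opp \otimes A)$ restricting to $\mfPerf'(X \times X) \sira \per(A^\opp \otimes A)$, under which $\Delta_*(\mathcal{O}_X)$ corresponds to $A^\opp$; then $\mfPerf(X)$ is $k$-smooth iff $A$ is iff $A^\opp \in \per(A^\opp \otimes A)$ iff $\Delta_*(\mathcal{O}_X) \in \mfPerf(X \times X)$.

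The main obstacle I expect is bookkeeping the passage between $C(\Qcoh(-))$ and $C(\Sh(-))$: the lifted duality $\tildew{D}$ does not preserve quasi-coherence (Remark~\ref{rem:why-not-Qcoh}), which is exactly why one must work with $\mfPerf'_\Cechobjstar$ and $C(\Sh(-))$ throughout, yet the generation and compactness statements and the final smoothness criterion are most naturally phrased on the quasi-coherent side; one must check carefully that every object appearing (the h-injective resolution $\mathcal{J}$ of $\mathcal{O}_X$, the target $T$, the complex $\mathcal{K}$ resolving $\Delta_*\mathcal{O}_X$, and so on) lives where it needs to, using Theorem~\ref{t:injective-in-qcoh-vs-all-OX} repeatedly. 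A secondary subtlety is that one does not a priori have a dualizing complex, so one genuinely uses $\mathcal{C}_*(\mathcal{O}_X)$ (rather than an injective resolution of $\mathcal{O}_X$) as the dualizing object on the enhancement level, and must verify the adjunction chain goes through with $\Delta^* q^* \mathcal{C}_!(E) = \mathcal{C}_!(E)$-type identifications surviving — these are routine given Corollary~\ref{c:combined} but need to be stated with care.
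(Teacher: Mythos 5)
Your proposal is correct and follows essentially the same route as the paper: the reduction to Theorem~\ref{t:mfPerf-abstract-Cechobj-smooth-vs-diagonal-sheaf-perfect} is exactly the paper's own remark (a Nagata \emph{quasi-compact} scheme is Noetherian — quasi-compactness comes from \ref{enum:GSP}, not from Nagata alone), and your self-contained argument via $\mfPerf'_\Cechobjstar$, the lifted duality $\tildew{D}$, Lemma~\ref{l:sheafHomCC-vs-CsheafHom}, Lemma~\ref{l:boxtimes-cech-object-to-diagonal-cech-K-iso-D} and Proposition~\ref{p:homotopy-categories-triang-via-dg-algebras}.\ref{hoI-via-B} is the proof the paper gives, up to the $A$ versus $A^\opp$ bookkeeping in the choice of $P$. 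One small slip: in the locally integral setting the adjunction chain at the end involves only $*$-\v{C}ech objects and the double-dual homotopy equivalence $\theta$ of Lemma~\ref{l:map-to-double-dual-homotopy-equi}, not $\mathcal{C}_!(E)$, which belongs to the morphism-oriented proof in the main text.
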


This theorem is weaker than
Theorem~\ref{t:mfPerf-abstract-Cechobj-smooth-vs-diagonal-sheaf-perfect}
because a Nagata quasi-compact scheme is Noetherian. Anyway we
include a proof based on the results of this section.

\begin{proof}
  Fix an ordered finite affine open covering 
  $\mathcal{U}=(U_s)_{s \in S}$ of $X.$ 
  We can test $k$-smoothness of $\mfPerf(X) \sira \mfPerf'(X)$ on
  the 
  enhancement  
  $\mfPerf'_\Cechobjstar(X),$ cf.\
  Remark~\ref{rem:realization-vs-locally-integral}
  and section~\ref{sec:vers-arbitr-sheav}.
 
  As in the proof of
  Theorem~\ref{t:mfPerf-abstract-Cechobj-smooth-vs-diagonal-sheaf-perfect}
  we find a vector bundle
  $E$ on $X$ that is a classical generator of $\mfPerf'(X)$
  and see that 
  $k$-smoothness of $\mfPerf(X)$ is equivalent to
  $k$-smoothness of the dg algebra
  \begin{equation*}
    A:=\End_{C(\Sh(X)}(\mathcal{C}_*(E)),  
  \end{equation*}
  i.\,e.\ to the condition $A \in \per(A \otimes A^{\opp}).$
  We also know that
  $E^\cek$ is a classical generator of $\mfPerf'(X)$
  and that 
  $E \boxtimes E^\cek$ is a classical
  generator of $\mfPerf'(X \times X).$

  We use the $*$-\v{C}ech enhancement $\mfPerf'_\Cechobjstar(X
  \times X) 
  =\mfPerf'_\Cechobjstarbox(X \times X)$ of $\mfPerf'(X \times
  X)$ with 
  respect to $\mathcal{U} \times \mathcal{U}.$
  A lift of $E \boxtimes E^\cek$ to this enhancement is given by
  the object
  \begin{equation*}
    P:=\mathcal{C}_*(E) \boxtimes \tildew{D}(\mathcal{C}_*(E))
  \end{equation*}
  because Lemma~\ref{l:sheafHomCC-vs-CsheafHom}
  provides the homotopy equivalence
  \begin{equation}
    \label{eq:P-hequi-boxtimes}
    \id \boxtimes \tildew{D}(\alpha) \colon 
    P \ra \mathcal{C}_*(E)
    \boxtimes \mathcal{C}_*(E^\cek). 
  \end{equation}
  Note that
  \begin{equation*}
    \tildew{D} \colon  A^\opp =
    \End_{C(\Sh(X))}(\mathcal{C}_*(E))^\opp
    \ra
    \End_{C(\Sh(X))}(\tildew{D}(\mathcal{C}_*(E)))
  \end{equation*}
  is a quasi-isomorphism of dg algebras
  % since $\tildew{D}$ is a quasi-equivalence (see
  by Corollary~\ref{c:duality-lifted-to-enhancement}. 
  Since we work over a field,
  this and
  Lemma~\ref{l:dg-functor-boxtimes-full-and-faithful}
  imply that both arrows in 
  \begin{multline}
    \label{eq:AoAopp-EndP}
    A \otimes A^\opp
    \xra{\id \otimes \tildew{D}}
    \End_{C(\Sh(X))}(\mathcal{C}_*(E))
    \otimes
    \End_{C(\Sh(X))}(\tildew{D}(\mathcal{C}_*(E)))\\
    \xra{\boxtimes}
    \End_{C(\Sh(X \times X))}(P)
    % \xra{\boxtimes}
    % \End_{C(\Sh(X \times X))}(\mathcal{C}_*(E) \boxtimes
    % \tildew{D}(\mathcal{C}_*(E)))
  \end{multline}
  are quasi-isomorphisms of dg algebras.  

  As in the proof of
  Theorem~\ref{t:mfPerf-abstract-Cechobj-smooth-vs-diagonal-sheaf-perfect}
  we use the 
  enhancement $C^\hinj_\Qcoh(\Sh(X \times X))$ of  
  $D(\Qcoh(X \times X)) \sira D_\Qcoh(\Sh(X \times X)).$
  % consisting of h-injective 
  % complexes of sheaves with quasi-coherent cohomologies.
  Let $z \colon  P \ra I$ be a quasi-isomorphism with  
  $I \in C^\hinj(\Sh(X \times X)).$
  % (= an h-injective resolution).
  Note that $I$ is isomorphic to the vector bundle $E \boxtimes
  E^\cek$ in $D(\Sh(X \times X))$ and hence a compact generator of 
  $[C^\hinj_\Qcoh(\Sh(X \times X))] \sira
  D_\Qcoh(\Sh(X \times X)).$
  % (\cite[Thms.~2.1.2 and 3.1.1]{bondal-vdbergh-generators}).
  We now can apply
  Proposition~\ref{p:homotopy-categories-triang-via-dg-algebras}.\ref{hoI-via-B}
  to the dg subcategory $C^\hinj_\Qcoh(\Sh(X \times X))$ of
  $C(\Sh(X \times X))$ 
  and $\beta$ there the composition in
  \eqref{eq:AoAopp-EndP}.
  Note for this that
  \begin{equation*}
    z_* \colon \Hom_{C(\Sh(X \times X))}(P,P) \ra \Hom_{C(\Sh(X \times X))}(P,I)
  \end{equation*}
  is a quasi-isomorphism: the induced map on the $m$-th
  cohomology 
  identifies with the isomorphism
  \begin{equation*}
    z_* \colon \Hom_{D(\Sh(X \times X))}(P,[m]P) \ra
    \Hom_{D(\Sh(X \times X))}(P,[m]I) 
  \end{equation*}
  since $P$ is in the enhancement $\mfPerf'_\Cechobjstar(X \times
  X)$ of $\mfPerf'(X \times X) \subset D(\Sh(X \times X))$ and
  $I$ is h-injective. We deduce that
%  \begin{equation}
%    % \label{eq:Hom-from-P-equi}
%    \Hom_{C(\Sh(X \times X))}(P,-) \colon  [C^\hinj_\Qcoh(\Sh(X
%    \times X))] \ra D(\End_{C(\Sh(X\times X))}(P))
%  \end{equation*} 
%  is an equivalence of triangulated categories. Restricting along
%  the quasi-isomorphisms 
%  \eqref{eq:AoAopp-EndP} yields the equivalence
  \begin{equation*}
    F:=\Hom_{C(\Sh(X \times X))}(P,-) \colon 
    [C^\hinj_\Qcoh(\Sh(X \times X))] \sira D(A \otimes A^\opp)
  \end{equation*}
  is an equivalence of triangulated categories.
  The category on the left identifies with
  $D_\Qcoh(\Sh(X \times X)),$
  and $F$ induces an equivalence 
  $\mfPerf'(X \times X) \sira \per(A \otimes A^\opp)$ on the
  subcategories of compact objects.
  Hence it is enough to
  show that $F$ maps (an h-injective lift of)
  $\Delta_*(\mathcal{O}_X)$ to (an object isomorphic to) $A.$

  The \v{C}ech resolution $\mathcal{O}_X \ra
  \mathcal{C}_*(\mathcal{O}_X)$ yields a resolution
  $\Delta_*(\mathcal{O}_X) \ra
  \Delta_*(\mathcal{C}_*(\mathcal{O}_X).$
  Let $\Delta_*(\mathcal{C}_*(\mathcal{O}_X)) \ra T$ be a
  quasi-isomorphism with $T \in C^\hinj_\Qcoh(\Sh(X \times X)).$
  So $T$ is an h-injective lift of $\Delta_*(\mathcal{O}_X).$
  Note that the induced map
  \begin{equation*}
    % \label{eq:identify-F(T)}
    \Hom_{C(\Sh(X \times X))}(P,
    \Delta_*(\mathcal{C}_*(\mathcal{O}_X))) \ra
    F(T)=\Hom_{C(\Sh(X \times X))}(P,T)
  \end{equation*}
  is a quasi-isomorphism (of dg $A \otimes A^\opp$-modules):
  use the homotopy equivalence 
  \eqref{eq:P-hequi-boxtimes},
  Lemma~\ref{l:boxtimes-cech-object-to-diagonal-cech-K-iso-D},
  $D(\Qcoh(X \times X)) \sira D_\Qcoh(\Sh(X \times X)),$
  and the h-injectivity of $T.$
  
  The obvious adjunctions of dg functors provide isomorphisms
  of dg $A \otimes 
  A^\opp$-modules 
  \begin{align*}
    \Hom_{C(\Sh(X \times X))}(P,
    \Delta_*(\mathcal{C}_*(\mathcal{O}_X)))
    & \sira
    \Hom_{C(\Sh(X))}(\Delta^*(\mathcal{C}_*(E) \boxtimes
    \tildew{D}(\mathcal{C}_*(E))),
    \mathcal{C}_*(\mathcal{O}_X))\\ 
    & =
    \Hom_{C(\Sh(X))}(\mathcal{C}_*(E) \otimes
    \tildew{D}(\mathcal{C}_*(E)),
    \mathcal{C}_*(\mathcal{O}_X))\\
    & \sira
    \Hom_{C(\Sh(X))}(\mathcal{C}_*(E), \sheafHom(
    \tildew{D}(\mathcal{C}_*(E)),
    \mathcal{C}_*(\mathcal{O}_X)))\\
    & =
    \Hom_{C(\Sh(X))}(\mathcal{C}_*(E),
    \tildew{D}^2(\mathcal{C}_*(E))).
  \end{align*}
  Now use
  Lemma~\ref{l:map-to-double-dual-homotopy-equi}.
  The canonical morphism
  $\theta_{\mathcal{C}_*(E)} \colon  \mathcal{C}_*(E) 
  \ra
  \tildew{D}^2(\mathcal{C}_*(E))$ is a homotopy equivalence,
  so 
  \begin{equation*}
    (\theta_{\mathcal{C}_*(E)})_* \colon 
    \Hom_{C(\Sh(X))}(\mathcal{C}_*(E),
    \mathcal{C}_*(E))
    \ra
    \Hom_{C(\Sh(X))}(\mathcal{C}_*(E),
    \tildew{D}^2(\mathcal{C}_*(E)))
  \end{equation*}
  is a homotopy equivalence; moreover, it is a morphism of 
  dg $A \otimes A^\opp$-modules. The object on the left is the
  diagonal dg $A \otimes A^\opp$-module $A.$ 
  
  The above (quasi-)isomorphisms provide an isomorphism $F(T)
  \cong 
  A$ in $D(A \otimes A^\opp).$ This shows that $F$ maps
  $\Delta_*(\mathcal{O}_X)$ to $A.$
\end{proof}

% \bibliographystyle{amsalpha}
% \bibliography{literatur}

\def\cprime{$'$} \def\cprime{$'$} \def\cprime{$'$} \def\cprime{$'$}
  \def\Dbar{\leavevmode\lower.6ex\hbox to 0pt{\hskip-.23ex \accent"16\hss}D}
  \def\cprime{$'$} \def\cprime{$'$}
\providecommand{\bysame}{\leavevmode\hbox to3em{\hrulefill}\thinspace}
\providecommand{\MR}{\relax\ifhmode\unskip\space\fi MR }
% \MRhref is called by the amsart/book/proc definition of \MR.
\providecommand{\MRhref}[2]{%
  \href{http://www.ams.org/mathscinet-getitem?mr=#1}{#2}
}
\providecommand{\href}[2]{#2}

\end{document}